\documentclass[12pt,notitlepage]{amsart}
\usepackage{latexsym,amsfonts,amssymb,amsmath,amsthm, url, hyperref,xypic}
\usepackage[utf8]{inputenc}
\usepackage[T1]{fontenc}
\usepackage{lmodern}
\usepackage{graphicx}
\usepackage{color}
\usepackage{xcolor}
\usepackage[normalem]{ulem}
\usepackage{enumitem}
\pagestyle{headings}

\let\turc\c
\usepackage{etoolbox}
\robustify\turc 
\newcommand{\bpm}{\begin{pmatrix}}
\newcommand{\epm}{\end{pmatrix}}
\newcommand{\Z}{\ensuremath{\mathbb{Z}}}
\newcommand{\Q}{\ensuremath{\mathbb Q}}
\newcommand{\R}{\ensuremath{\mathbb R}}
\newcommand{\eps}{\varepsilon}

\DeclareMathOperator{\SL}{SL}
\DeclareMathOperator{\Gal}{Gal}

\usepackage[inner=1.0in,outer=1.0in,bottom=1.0in, top=1.0in]{geometry}

\newcommand{\mz}{\ensuremath{\mathbb Z}}
\newcommand{\mr}{\ensuremath{\mathbb R}}

\newcommand{\mq}{\ensuremath{\mathbb Q}}
\newcommand{\mc}{\ensuremath{\mathbb C}}

\newcommand{\fa}{\ensuremath{\mathfrak a}}

\newcommand{\fc}{\ensuremath{\mathfrak c}}
\newcommand{\fp}{\ensuremath{\mathfrak p}}
\newcommand{\fg}{\ensuremath{\mathfrak g}}

\newcommand{\shortmod}{\ensuremath{\negthickspace \negthickspace \negthickspace \pmod}}

\newcommand{\intR}{\int_{-\infty}^{\infty}}

\newcommand{\sumstar}{\sideset{}{^*}\sum}


\DeclareMathOperator{\GL}{GL}
\DeclareMathOperator{\PGL}{PGL}
\DeclareMathOperator{\SO}{SO}

\DeclareMathOperator{\lcm}{lcm}

\newcommand{\cF}{\mathcal{F}}
\newcommand{\cC}{\mathcal{C}}
\newcommand{\cO}{\mathcal{O}}
\newcommand{\cB}{\mathcal{B}}

\renewcommand{\cH}{\mathcal{H}}
\newcommand{\A}{\mathbb{A}}
\newcommand{\N}{\mathbb{N}}
\newcommand{\C}{\mathbb{C}}
\newcommand{\G}{\mathbb{G}}
\DeclareMathOperator{\St}{St}
\DeclareMathOperator{\Res}{Res}
\DeclareMathOperator{\Ind}{Ind}
\DeclareMathOperator{\cInd}{{\it c}-Ind}
\renewcommand{\P}{\mathbb{P}}
\newcommand{\Hom}{\text{Hom}}
\newcommand{\End}{\text{End}}
\DeclareMathOperator{\Kl}{Kl}

\newcommand{\Span}{\text{Span}}

\DeclareMathOperator{\vol}{vol}

\DeclareMathOperator{\sgn}{sgn}
\DeclareMathOperator{\supp}{supp}
\DeclareMathOperator{\imag}{Im}
\DeclareMathOperator{\real}{Re}
\DeclareMathOperator{\Tr}{Tr}
\DeclareMathOperator{\Nm}{Nm}
\DeclareMathOperator{\disc}{disc}


\newcommand{\zxz}[4]{\begin{pmatrix} #1 & #2 \\ #3 & #4 \end{pmatrix}}
\newcommand{\lb}{\left(}
\newcommand{\rb}{\right) }

\newcommand{\F}{\ensuremath{{\mathbb{F}}}}

\newcommand{\fin}{{\rm fin}}

\theoremstyle{plain}		
	\newtheorem{mytheo}{Theorem} [section]
	
	\newtheorem{myprop}[mytheo]{Proposition}
	\newtheorem{mypropdef}[mytheo]{Proposition/Definition}
	\newtheorem{mycoro}[mytheo]{Corollary}
     \newtheorem{mylemma}[mytheo]{Lemma}
	\newtheorem{mydefi}[mytheo]{Definition}

	\newtheorem{myhypothesis}[mytheo]{Hypothesis}
	\newtheorem*{geom_assumption}{Geometric Assumptions}
	\newtheorem*{spec_assumption}{Spectral Assumption}

\theoremstyle{remark}		
	
	\newtheorem{myrema}[mytheo]{Remark}
	\newtheorem*{myexam}{Example}
	\newtheorem*{myexams}{Examples}

\numberwithin{equation}{section}
\numberwithin{figure}{section}

\title[A generalized P/B-K formula for analytic applications]{A generalized $\PGL(2)$ Petersson/Bruggeman-Kuznetsov formula for analytic applications}
\author{Yueke Hu} 
 \address{Yau Mathematical Sciences Center\\ Tsinghua University\\
	Beijing 100084\\
	China}
\email{yhumath@tsinghua.edu.cn}
 \author{Ian Petrow}
\address{  Department of Mathematics \\
 University College London \\
 25 Gordon Street \\
  London WC1H 0AY \\
   United Kingdom}
\email{i.petrow@ucl.ac.uk}

 \author{Matthew P. Young}
 
 \address{Department of Mathematics \\
 	  Rutgers University \\
 	 Piscataway \\
 	  NJ 08854-8019 \\
 		U.S.A.}		
 \email{mpy4@rutgers.edu}
 

\begin{document}

 \maketitle
  \begin{abstract}
 We develop generalized Petersson/Bruggeman-Kuznetsov (PBK) formulas for specified local components at non-archimedean places. In fact, we introduce two hypotheses on non-archimedean test function pairs $f \leftrightarrow \pi(f)$, called geometric and spectral hypotheses, under which one obtains `nice' PBK formulas by the adelic relative trace function approach. Then, given a supercuspidal representation $\sigma$ of $\PGL_2(\Q_p)$, we study extensively the case that $\pi(f)$ is a projection onto the line of the newform if $\pi$ is isomorphic to $\sigma$ or its unramified quadratic twist, and $\pi(f) = 0$ otherwise. As a first application, we prove an optimal large sieve inequality for families of automorphic representations that arise in our framework. 
 \end{abstract}
 \tableofcontents

\section{Introduction}\label{intro}

\subsection{Motivation}\label{sec:motivation}
 The Bruggeman-Kuznetsov formula, one of the core tools of analytic number theory since the late 1970s, can be stated in its simplest form as follows.   Given a test function $h_\infty(t)$ one defines the Kuznetsov transform of it by \begin{equation}\label{eq:htoH}
 H_\infty(x) = \frac{i}{2} \int_{-\infty}^\infty \frac{J_{2it}(x)}{\cosh (\pi t)} h_\infty(t) t\,dt.
\end{equation}

 For sufficiently well-behaved test functions $h_\infty$ (see \eqref{hinfty_conditions}) and integers $m,n$ with $mn>0$ one has
\begin{multline}
\label{KuzFormClassic} 
\sum_{u \in \cB_0} h_\infty(t_u) a_u(m)\overline{a_u(n)}+ \frac{1}{4\pi} \int_{-\infty}^\infty h_\infty(t) \frac{\pi}{|\zeta(1+2it)|^2} \lambda_t(m) \overline{\lambda_t(n)}\,dt \\
= \delta_{m=n} \frac{1}{4\pi} \intR h_{\infty}(t) t \tanh(\pi t) dt 
+ 
 \sum_{c \in \N} \frac{S(m,n;c)}{c} H_{\infty}\Big(\frac{4 \pi \sqrt{|mn|}}{c}\Big) 
\end{multline}
where $\cB_0$ is an orthonormal basis of Hecke-Maass waveforms $u$ on $\SL_2(\Z) \backslash \mathcal{H}$ normalized by $\vol( \SL_2(\Z) \backslash \mathcal{H})=\pi/3$, 
$1/4 + t_u^{2}$ is the Laplace eigenvalue of $u$, 
$a_u(m)$ are the Fourier coefficients given by
$$ u(x+iy) = 2 \sqrt{y} \left( \frac{\cosh (\pi t_u)}{\pi}\right)^{1/2} \sum_{n \neq 0} a_u(n) K_{it_u}(2 \pi |n| y)e(nx),$$
the $\lambda_t(n) = \sum_{ab=|n|}(b/a)^{it}$, 
and $$S(m,n;c)= \sumstar_{x \shortmod{c}} e\left(\frac{mx + n \overline{x}}{c}\right)$$ is the Kloosterman sum, in which the $*$ indicates that the sum runs over invertible residue classes modulo $c$ and $\overline{x}$ denotes the inverse of $x$ modulo $c$. There is also an opposite-sign version of \eqref{KuzFormClassic} that holds in the case $mn<0$ with the modification that $H_\infty(x)$ is replaced by a function $H_\infty^-(x)$ (see \eqref{Hinfty-}).

Even more classical is the  holomorphic counterpart to \eqref{KuzFormClassic}, i.e. the Petersson formula:
\begin{equation}
\label{PeterssonClassic} 
\sum_{f \in \cB_\kappa} a_{f}(m) \overline{a_{f}(n)}
= \frac{\kappa-1}{4\pi} \left(\delta_{m=n} + 2 \pi i^{-\kappa} \sum_{c\in \N} \frac{S(m,n;c)}{c} J_{\kappa -1}\Big(\frac{4 \pi \sqrt{mn}}{c}\Big)\right),
\end{equation}
where $\cB_\kappa$ is an orthonormal basis of holomorphic cusp forms  $f$ for $\SL_2(\Z)$ of weight $\kappa$ and Fourier coefficients $a_f(m)$ given by $$f(z) = \left( \frac{(4\pi)^\kappa}{\Gamma(\kappa)}\right)^{1/2} \sum_{n\geq 1} a_f(n) n^{\frac{\kappa-1}{2}}e(nz).$$

In the representation-theoretic framework for automorphic forms, the parity $p(u)$ (see \eqref{opposition_sign_parity}) and spectral parameter $t_u$ or weight $\kappa=\kappa_f$ parametrize the possible archimedean local components $\pi_\infty$ of trivial central character cuspidal automorphic representations $\pi$ of $\GL_2/\Q$.  
Therefore, the above Bruggeman-Kuznetsov (both $mn>0$ and $mn<0$ cases) and Petersson formulas can be combined to give a spectral summation device for automorphic forms on $\PGL_2/\Q$ with specified local representation at infinity (and that are unramified at all finite places). 

The goal of this paper is to analogously develop Petersson/Bruggeman-Kuznetsov (PBK) formulas at finite places $p$ that allow control on the associated representations of $\GL_2(\Q_p)$ at those places (as well as at $\infty$).   
To generate such formulas, we use the adelic relative trace formula approach to the PBK formulas of Jacquet \cite{JacquetRelativeTraceFormula} and Zagier \cite{ZagierEisSeriesAndSTF1, Joyner}, as exposited by Knightly and Li \cite{KLPetersson, knightly_kuznetsovs_2013}. We restrict our attention to automorphic forms over $\Q$ in this paper, but many of the local aspects of our work should carry over to more general non-archimedean local fields. 

In this perspective, one chooses a test function $f$ on the group $\GL_2(\A)$ for the pre-trace formula and then integrates along left and right unipotent orbits to obtain the PBK formula. To aid this strategy and to produce a reasonably explicit formula, we introduce two assumptions on the test function $f$ called the geometric and spectral assumptions. The geometric assumptions place a constraint on the support of the local test function $f_p$ on $\GL_2(\Q_p)$ and allow us to establish the standard properties of the geometric side of the formula. The spectral assumption puts a strong constraint on the integral operators $\pi(f)$ 
and allows us to explicate the spectral side of the formula.  The result is Theorem \ref{theoGeomSpec}. 

As an application of Theorem \ref{theoGeomSpec}, we give a harmonically-weighted Weyl-Selberg Law 
for the family of cusp forms $\cF_0(f)$ cut out by our chosen test function $f$ and interpret the leading constant in terms of local Plancherel volumes.  For this result, see Corollary \ref{thmWeylLaw}.  In a similar context, Palm \cite[Thm.\ 3.2.1]{Palm} gave a Weyl law for cusp forms with specified local components as an application of the Selberg trace formula. 

As a second application of Theorem \ref{theoGeomSpec}, we give an axiomatized Large Sieve Inequality for the families $\cF_0(f)$ cut out by $f$. Under additional local hypotheses (stated in Section \ref{sec:intro:LSI}) these large sieve inequalities are optimally strong: the estimate is of the shape $\ll (X|\cF|)^\eps (X + |\cF|)\|\mathbf{a}\|_2^2$, where $X$ is the length of summation of the sequence $\mathbf{a}$ and $|\cF|$ is the cardinality of the family of cusp forms.

Probably the most important part of the paper however are the examples. Most notably, in Section \ref{sec:SCKloostermanSum} we give an elegant expression for the generalized Kloosterman sum that arises from a specified (trivial central character) supercuspidal representation $\sigma$ of $\GL_2(\Q_p)$. 
See Theorem \ref{KlSumThm} for this formula.  

For the specified supercuspidal formula, we set the local test function $f_p$ equal to the diagonal newform matrix coefficient of $\sigma$ restricted to a maximal compact subgroup, building on earlier work of the first author \cite{Hu}. 
This test function $f_p$ generates a generalized PBK formula that selects on the spectral side automorphic forms with local component at $p$ isomorphic to either $\sigma$ or at most two other supercuspidal representations of the same conductor. This is essentially the narrowest possible support on the spectral side under the geometric assumptions. 
For precise statements, see Theorems \ref{cor:SpecAssumption_supercuspidal} and \ref{cor:SpecAssumption_supercuspidal_p2}.

In a parallel fashion, given a primitive Dirichlet character $\chi$ modulo a power of $p$ with $\chi^2 \neq 1$, we construct local test functions $f_\chi$ in Section \ref{sec:egPS} whose generalized PBK formula selects on the spectral side automorphic forms with local component at $p$ isomorphic to a principal series representations $\pi(\chi |\cdot|_p^{it} ,\chi^{-1}|\cdot|_p^{-it})$ for some $t\in \R$. The generalized Kloosterman sum on the geometric side of the formula \eqref{PSgenKLsum2} is in complete analogy with the supercuspidal Kloosterman sum mentioned above. Again, this generalized PBK formula has the narrowest possible support on the spectral side under the geometric assumptions (see Lemma \ref{transport}). 

These examples lay the groundwork for future important analytic applications. That we can produce several interesting examples that satisfy both the geometric and spectral hypotheses shows that while these two hypotheses together may appear fairly restrictive, they nonetheless contain the families of greatest interest to us.

An important feature of the Bruggeman-Kuznetsov (BK) formula is that the integral transform \eqref{eq:htoH} relating the test function $h_\infty$ on the spectral side to the archimedean test function $H_\infty$ on the geometric side is relatively simple and can often be analyzed effectively using 
standard techniques such as stationary phase estimates. The situation (at present) with finite places is not quite as clean: the local test function $f_p$ on $\GL_2(\Q_p)$ continues to play a strong role in the formula whereas the test function $f_\infty$ on $\GL_2(\R)$ can be completely suppressed from the classical BK formula. 

Nonetheless, we develop the sequence of transforms $$h_p \to f_p \to H_p$$ to some extent, where $h_p: \pi \mapsto \pi(f_p)$ is an operator-valued function on the unitary dual of $\PGL_2(\Q_p)$ (assumed to be projections with finite-dimensional image) and $H_p$ are the generalized (local) Kloosterman sums defined in \eqref{eq:KloostermanLocalFormula}. Indeed, Proposition \ref{mainprop} gives an expression for $f_p$ as an integral transform (of sorts) of $h_p$ in terms of matrix coefficients over the unitary dual $\PGL_2(\Q_p)$ with respect to Plancherel measure. Then $H_p$ is by definition an integral of $f_p$ against additive characters. 
Furthermore, Section \ref{sec:examples} gives a list of transform pairs $h_p \to H_p$ for which one can mostly forget about the function $f_p$ on the group entirely. 

As already mentioned, much of this paper builds on previous work of the first author \cite{Hu}. To be definite, our paper contains the following new features:
\begin{itemize}
\item an axiomatized treatment of PBK formulas given by by adelic test functions (see the geometric and spectral assumptions in Section \ref{intro:statement_of_results}),
\item some preliminary applications of the axiomatized PBK formulas (see Section \ref{sec:applications}), 
\item a test function with smallest possible spectral support around a specified supercuspidal representation $\sigma$ with even conductor exponent, i.e.\ we achieve $l_0=0$ in the sense of \cite[Rem.\ 1.4]{Hu}, see the discussion around Theorem \ref{cor:SpecAssumption_supercuspidal} in this paper, 
\item a more natural expression for the generalized Kloosterman sum associated to a specified supercuspidal representation (compare \cite[Def.\ 4.6]{Hu} and Theorem \ref{KlSumThm}), and
\item a detailed treatment of the dihedral $p=2$ case, which is often omitted for convenience but hides many technical complications (see Section \ref{sec:egSupercuspidal_even}).
\end{itemize}

\subsection{Statement of results 
}\label{intro:statement_of_results}
   Let $G=\GL_2$ and $\overline{G} =\PGL_2$. Write $\cH_{\rm fin} =C_c^\infty (\overline{G}(\A_{\rm fin}))$ for the \emph{non-archimedean Hecke algebra} of $\overline{G}$, that is the space of locally constant functions on $G(\A_{\rm fin})$ that are invariant by and compactly supported modulo center $Z(\A_{\rm fin})$. Define the \emph{local Hecke algebra} $\cH_p= C_c^\infty (\overline{G}(\Q_p))$ similarly.

Let $K_p = G(\Z_p)$ and $ZK_p = Z(\Q_p)G(\Z_p)$ for $p<\infty$.  We say that a pure tensor $f= \bigotimes_p f_p \in \cH_\fin$ is \emph{ramified} at a prime $p$ if $f_p$ is not a constant multiple of $1_{ZK_p}$. 
Let $K = \prod_p K_p$ be the standard maximal compact subgroup of $G(\A_{\rm fin})$ and let $K(N)$ be the principal congruence subgroup of $K$.  
The minimal $N \in \N$ such that $f \in \cH_{\rm fin}$ is bi-$K(N)$-invariant is called the \emph{level} of $f$. 

Our generalized Bruggeman-Kuznetsov formula is an equality between a spectral sum of Fourier coefficients/Hecke eigenvalues over a family of automorphic forms and a geometric sum of generalized Kloosterman sums over a set of admissible moduli. In the next several paragraphs, we define these objects.

For an irreducible admissible representation $(\pi, V_\pi)$ with $\pi_{\rm fin}$ the underlying $\overline{G}(\A_{\rm fin})$-representation  and $f\in \cH_\fin$, we write $\pi(f): V_{\pi} \to V_{\pi}$ for the integral operator 
\begin{equation}\label{pif}
\pi(f): v \mapsto \int_{\overline{G}(\A_{\rm fin})} f(g) \pi_{\rm fin}(g)v\,dg.
\end{equation}
Note that $\pi(f):=\pi_{\rm fin}(f)$ but we have dropped the subscript to avoid cluttering the notation.
\begin{mydefi}[Family cut out by $f$]\label{F0(f)def}
We write $\cF_0(f)$ for the set of cuspidal automorphic representations $\pi$ that are spherical at $\infty$ and such that $\pi(f): V_{\pi} \to V_{\pi}$ is not the zero map.  
\end{mydefi}
The family $\cF_0(f)$ has no restrictions on the archimedean spectral parameters of the representation it contains. Such restrictions will be imposed in our formulas in the standard way: by selecting a test function $h_\infty$.  
Note that $\cF_0(f)$ is a harmonic family in the sense of \cite{SarnakShinTemplier}, and at least in spirit every harmonic family on $\PGL_2$ over $\Q$ arises in this way.

For $\pi$ a cuspidal automorphic representation, write $\cB(\pi)$ for an orthonormal basis of $V_\pi$ (with respect to \eqref{aut_inner_prod}). Let $K_\infty = \SO_2(\R)$.
The subspace $V_\pi^{K_\infty \times K(N)}$ of fixed vectors in $\pi$ is finite-dimensional, and for cuspidal $\pi$ let $u=u_\varphi$ be the classical Maass waveform with respect to $\Gamma(N)$ corresponding to $\varphi \in V_\pi^{K_\infty \times K(N)}$ by $u(x+iy) = \varphi( \left( \begin{smallmatrix} y & x \\ & 1\end{smallmatrix}\right) \times 1_{\rm fin}).$ Recall from \eqref{k=0FCclassical} the Fourier coefficients $a_u(m)$ for $m\in \frac{1}{N}\Z$ of a Maass form $u$ for $\Gamma(N)$.

Let $h_\infty(t)$ be a test function as in the classical Kuznetsov formula.  Iwaniec and Kowalski \cite[(15.19)]{IK} give the following sufficient conditions: 
For some $\delta>0$ \begin{equation}\label{hinfty_conditions}
\begin{cases} 
h_\infty(t) \text{ is holomorphic in } |\mathrm{Im}(t)| \leq 1/2+\delta, \\
h_{\infty}(t)\ll (1+|t|)^{-2-\delta}, \text{ and } \\
h_\infty(t)= h_\infty(-t) \text{ for all } t.
\end{cases}
\end{equation}

Let $\psi_p: \Q_p\to \C^\times$ be the standard additive character $\psi_p(x)= e(\{x\}_p)$ and $\psi_{\rm fin} : \A_{\rm fin} \to \C^\times$ be given by $\psi_{\rm fin} = \prod_p \psi_p$.
\begin{mydefi}\label{def:GenKloostermanDef}
For $f \in \cH_\fin$, $m,n \in \Q$ and $c \in \Q_+$, the generalized Kloosterman sum (attached to $f$) that appears in this paper is defined as
\begin{equation}
\label{GenKloostermanDef}
H(m_1,m_2;c) = \iint_{\A_{\rm fin}^2} f\left( \left( \begin{smallmatrix}1 & -t_1 \\ & 1 \end{smallmatrix}\right) \left( \begin{smallmatrix} & -c^{-2} \\ 1 &  \end{smallmatrix}\right)  \left( \begin{smallmatrix}1 & t_2 \\ & 1 \end{smallmatrix}\right) \right) \psi_{\rm fin}(m_1t_1- m_2 t_2)\,dt_1\,dt_2.
\end{equation}
\end{mydefi}
Note that $H(m_1, m_2;c)$ is in fact a finite sum since $f\in \cH_\fin$. While the sum $H(m,n;c)$ is \`a priori defined for all $m,n \in \Q$, it vanishes unless $m,n \in \frac{1}{N}\Z$ (see Theorem \ref{thmKP}(1)). We also define local generalized Kloosterman sums $H_p(m,n;c)$ by the same formula \eqref{GenKloostermanDef} but where $\A_{\rm fin}, \psi_\fin,$ and $f$ are replaced by their local versions $\Q_p, \psi_p,$ and $f_p$ (to be definite, see \eqref{eq:KloostermanLocalFormula}).  

When $f=\bigotimes_p f_p$ is a pure tensor, one has $H(m,n;c) = \prod_p H_p(m,n;c)$. Of course the generalized Kloosterman sum $H(m,n;c)$ depends on $f \in \cH_{\rm fin}$ and the local $H_p(m,n;c)$ depend on $f_p\in \cH_p$, but these are suppressed in the notation. Recall, we also defined the transform $H_\infty(x)$ of $h_\infty(t)$ by \eqref{eq:htoH}, as in the classical Kuznetsov formula. 

Next we define the index set of the sum on the geometric side of our formula. 
\begin{mypropdef}[Admissible moduli]\label{intro:admissiblemodulus}
We say $c \in \Q_+$ is an \emph{admissible modulus} if there exists a pair $(m,n)\in \Q^2$ so that $H(m,n;c)\neq 0,$ 
and write $\mathcal{C}(\cF)\subseteq \Q_+$ for the set of admissible moduli. 
If $f = \bigotimes_p f_p \in \cH_{\rm fin}$ is a pure tensor and $\supp f_p$ is contained in $\{g \in G(\Q_p): \det g \in \Z_p^\times(\Q_p^\times)^2\}$ for all $p$, then there exists $q' \in \Q_+$ such that $\cC(\cF) \subseteq q'\Z$.  
\end{mypropdef}
See Lemma \ref{lem:existance_of_geom_cond_without_hypotheses} for a proof.  Under the mild hypothesis in the second sentence of Proposition/Definition \ref{intro:admissiblemodulus}, it is not too hard to show that one has an ``unrefined'' Bruggeman-Kuznetsov formula of the shape
\begin{multline}\label{intro:GeneralizedKuznetsov1}
\sum_{ \pi \in \cF_0(f)} h_\infty(t_\pi)  \sum_{\varphi \in \mathcal{B}(\pi)}a_{u_{\pi(f)\varphi}}(m_1) \overline{a_{u_\varphi}(m_2) } + (\text{ cts. })  
\\ = (\text{ diag.\ weight })\delta_{m_1=m_2} +  \sum_{c \in \cC(\cF)  } \frac{H(m_1,m_2;c)}{c}H_\infty\left(\frac{4 \pi \sqrt{ m_1m_2}}{c}\right).
\end{multline}
For more details, see Theorem \ref{MT}. On its own, \eqref{intro:GeneralizedKuznetsov1} is not very useful without additional information on $f$. 

We next introduce the geometric and spectral assumptions alluded to in Section \ref{sec:motivation}, which permit a practically useful refinement of \eqref{intro:GeneralizedKuznetsov1}. 
Let $a(y)= \left(\begin{smallmatrix} y & 0 \\ 0 & 1 \end{smallmatrix}\right)$ and $A\subset G$ be as in Section \ref{notation-groupsandsubgroups}. 
\begin{geom_assumption} 
\hfill
\begin{enumerate} 
\item\label{geo2} The function $f \in \cH_{\fin}$ is bi-$A(\widehat{\Z})$-invariant. 
\item\label{geo3} There exists $y\in \Q_+$ such that $\supp f \subseteq a(y)^{-1}ZK  a(y)$. 
\end{enumerate}
\end{geom_assumption}
 We say that a rational number $y \in \Q_+$ for which Geometric Assumption \eqref{geo3} holds ``controls the support of $f$''. Caution: $y$ is not necessarily uniquely determined by  $f$.  Note, Geometric Assumption \eqref{geo3} ensures that the hypothesis on the support of $f$ of Theorem \ref{MT} is satisfied. Another useful fact to keep in mind is that under Geometric Assumption \eqref{geo3}, the function $f$ is ramified at $p$ if and only if $p$ divides the level of $f$, for which see Section \ref{level_vs_ramification}.

\begin{myrema}
Under Geometric Assumption \eqref{geo3}, the test function $f$ has support contained in $ZK'$ for some maximal compact open subgroup $K'$ of $G(\A_{\rm fin})$. One might hope to relax Geometric Assumption \eqref{geo3} to the more natural-sounding condition of being contained in a maximal compact subgroup.
However, we do not pursue this generalization since the relaxed condition that $\supp f \subseteq ZK'$ for some compact open $K'$ together with Geometric Assumption \eqref{geo2} already imply assumption \eqref{geo3} at odd primes, and at $p=2$ there is essentially only a single additional example allowed under the relaxed condition, of which we know no practical application. See Lemma \ref{x=0} for a formal statement.  
\end{myrema}

The geometric assumptions control the set of admissible moduli $\cC(\cF)$ as follows. 
\begin{mypropdef}[Geometric conductor]\label{def:geom_cond}
Suppose Geometric Assumption \eqref{geo3} holds. The set of admissible moduli $\cC(\cF)$ is non-empty. Thus, there exists a unique maximal by divisibility $q' \in \Q_+$ such that $\cC(\cF) \subseteq q'\Z$. We write $k(\cF)$ for the maximal such $q'$ and call it the geometric conductor of $\cF$. The geometric conductor satisfies $k(\cF) \geq y$ for any $y$ controlling the support of $f$.
\end{mypropdef}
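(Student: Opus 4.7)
The plan is to read a local divisibility condition on admissible moduli directly from the support constraint on $f$, and then take the prime-by-prime minimum over $\cC(\cF)$ to build $k(\cF)$. First I fix $c \in \cC(\cF)$. By Definition \ref{intro:admissiblemodulus} and \eqref{GenKloostermanDef}, there must exist $(t_1,t_2) \in \A_\fin^2$ with
$$M(t_1,t_2;c) := \begin{pmatrix} -t_1 & -c^{-2}-t_1t_2 \\ 1 & t_2 \end{pmatrix} \in \supp f.$$
Geometric assumption \eqref{geo3} lets me conjugate by $a(y)$ and rewrite this as $a(y) M(t_1,t_2;c) a(y)^{-1} \in Z(\A_\fin) K$; the lower-left entry of the conjugated matrix is $y^{-1}$ and its determinant remains $c^{-2}$. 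At a single place $p$, the condition that this matrix be a scalar multiple of an element of $\GL_2(\Z_p)$ forces a $z_p \in \Q_p^\times$ with $v_p(z_p) = -v_p(c)$ (from the determinant) and $v_p(z_p) \leq -v_p(y)$ (from the lower-left entry); combining these yields $v_p(c) \geq v_p(y)$ for every prime $p$ and every $c \in \cC(\cF)$.

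With this lower bound in hand, I set $v_p^\star := \min_{c \in \cC(\cF)} v_p(c)$ (assuming $\cC(\cF)\neq\emptyset$; otherwise the proposition is vacuous). The minimum is attained in $\Z$ since $v_p$ takes integer values and is bounded below by $v_p(y)$. Fixing any $c_0 \in \cC(\cF)$ gives $v_p(y) \leq v_p^\star \leq v_p(c_0)$, which vanishes for almost all $p$, so $k(\cF) := \prod_p p^{v_p^\star} \in \Q_+$ is well-defined. Then $\cC(\cF) \subseteq k(\cF)\Z$ by construction. For maximality, any $q' \in \Q_+$ with $\cC(\cF)\subseteq q'\Z$ satisfies $v_p(q') \leq v_p(c)$ for all $c \in \cC(\cF)$, hence $v_p(q') \leq v_p^\star$ for every $p$, i.e.\ $q' \mid k(\cF)$. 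Uniqueness of $k(\cF)$ as the maximum follows by antisymmetry of divisibility on $\Q_+$, and the bound $k(\cF) \geq y$ in divisibility (hence numerically, since both are positive rationals) is immediate from $v_p^\star \geq v_p(y)$.

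The main (and essentially only) non-trivial step is the first paragraph: the local support analysis translating $\supp f \subseteq a(y)^{-1} Z K a(y)$ into the inequality $v_p(c) \geq v_p(y)$. Everything after that is routine $p$-adic bookkeeping, and the only edge case to watch for is the degenerate situation $\cC(\cF) = \emptyset$, which is handled by convention.
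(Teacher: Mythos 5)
Your first paragraph is correct, and it is essentially the first assertion of the paper's Lemma \ref{ccondition}: if $H(\cdot,\cdot;c)\not\equiv 0$ then some matrix $M(t_1,t_2;c)$ lies in $\supp f\subseteq a(y)^{-1}ZKa(y)$, and comparing the determinant with the lower-left entry place by place forces $v_p(c)\geq v_p(y)$, i.e.\ $c\in y\N$. The bookkeeping that follows (defining $v_p^\star=\min_{c\in\cC(\cF)}v_p(c)$, forming $k(\cF)=\prod_p p^{v_p^\star}$, maximality, uniqueness, and $k(\cF)\geq y$) is fine --- but all of it is conditional on $\cC(\cF)\neq\varnothing$.

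That is exactly where the genuine gap lies. If $\cC(\cF)=\varnothing$ the proposition is not vacuous; it is false as stated: every $q'\in\Q_+$ then satisfies $\cC(\cF)\subseteq q'\Z$, and $\Q_+$ has no maximal element under divisibility (for any $q'$, the element $2q'$ is strictly larger), so no maximal $q'$ exists and the inequality $k(\cF)\geq y$ has no meaning. There is no convention in the statement to appeal to, so non-emptiness of $\cC(\cF)$ must be proved, and it is not obvious: \emph{a priori} a nonzero $f$ satisfying the geometric assumptions might have all of its generalized Kloosterman sums vanish identically. The paper devotes Lemma \ref{lem:admmodulus} to precisely this point (it is explicitly cited, together with Lemma \ref{ccondition}, as the proof of the proposition, and the text between the two lemmas flags the issue). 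The idea there is a positivity/Plancherel-type argument: using the bi-$K(M)$-invariance of $f'$ supplied by Lemma \ref{*defect}, one averages $|H(m/N,n/N;c)|^2$ over $m,n$ modulo $cN$ and identifies the result with an integral of $|f'|^2$ over an explicit region in $\widehat{\Z}^2$; for a suitable admissible choice of $c$ and a point $g\in K$ with $f'(g)\neq 0$, that region contains a positive-measure set on which $|f'|=|f'(g)|>0$, whence some $H(m,n;c)\neq 0$ and $c\in\cC(\cF)$. (This step is also where the standing assumptions of Section \ref{sec:KloostermanSums}, in particular geometric assumption \eqref{geo2}, are in force.) Your argument needs this non-vanishing input, or some substitute for it; as written it only proves the implication ``if $\cC(\cF)\neq\varnothing$, then the maximal $q'$ exists, divides every admissible modulus, and is a multiple of $y$.''
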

For a proof, see Lemmas \ref{ccondition} {and \ref{lem:admmodulus}. With additional information on the support of $f$, the geometric conductor $k(\cF)$ can be determined exactly (see Section \ref{sec:control_on_geom_cond}).  
One also has that $k(\cF) = \prod_p p^{k_p}$ for ``local geometric conductors'' $k_p$ defined analogously by the non-identical vanishing of $H_p$, for which see Theorem \ref{thmKP}(6). 

In addition to controlling $\cC(\cF)$, the geometric assumptions also endow the generalized Kloosterman sums $H(m,n;c)$ with many of the same basic structural properties as the standard Kloosterman sums, as in \cite[Ch.\ 4.3]{IwaniecTopics}. 
For a detailed list of these, see Theorem \ref{thmKP}.

We now move on to the spectral assumption. 
Let $\overline{G}(\Q_p)^{\wedge}$ denote the unitary dual of $\overline{G}(\Q_p)$, i.e. the space of isomorphism classes of smooth irreducible unitary representations of $\overline{G}(\Q_p)$ on a complex vector space.

\begin{mydefi}[Newform projector]\label{def:newformproj}
We say that $f_p \in \cH_p$ is a newform projector if for all generic $(\pi, V) \in \overline{G}(\Q_p)^{\wedge}$ the operator $\pi(f_p):V \to V$ either projects onto the line generated by the newform $\varphi_0 \in V$ or is 0. 
\end{mydefi}

With future and past applications in mind, we also want to allow the classical PBK formula with level structure at finitely many primes (as in \cite{KLPetersson, knightly_kuznetsovs_2013}, recovering the classical formulae). Let $\nu(n) = [\SL_2(\Z):\Gamma_0(n)] = n \prod_{p \mid n} (1+p^{-1})$.

\begin{spec_assumption}
 We say that a pure tensor $f \in \cH_{\rm fin}$ satisfies the spectral assumption if it admits a representative $\prod_p f_p$ such that for all $p$ the function $f_p$ is either a newform projector or there exists $c \in \Z_{\geq 0}$ so that $f_p=\nu(p^c)1_{ZK_0(p^c)}$. 
\end{spec_assumption}
Note, when $c=0$ the test function $1_{ZK_p}$ is itself a newform projector, but when $c\geq 1$ the test function $\nu(p^c)1_{ZK_0(p^c)}$ is not.

The main purpose of the spectral assumption is to simplify the left hand (spectral) side of \eqref{intro:GeneralizedKuznetsov1} (but see also Section \ref{sec:spec_consequences}). Indeed, writing $\pi =  \bigotimes'_v \pi_v$ and $V = \bigotimes'_v V_v$, the operator $\pi(f)$ is an orthogonal projection onto the subspace 
\begin{equation}\label{pifdef}V_f:=V_\infty \otimes \bigotimes_{p: f_p \text{ newform proj.}} \C \varphi_{0,p} \otimes \bigotimes_{\substack{p : f_p=\nu(p^c)1_{ZK_0(p^{c})} \\ \text{ with } c\geq 1}} V_p^{K_0(p^c)}\end{equation}
of $V_{}^{K(N)}$, where $\varphi_{0,p}$ is an $L^2$-normalized newvector in $V_p$ if $\pi_p(f_p) \neq 0$, and $\varphi_{0,p} =0$ otherwise.  For the implementation of this, see Theorem \ref{theoGeomSpec_FC}.

For our intended applications, we need generalized PBK formulas in terms of Hecke eigenvalues in lieu of Fourier coefficients. These are made possible by the spectral assumption.
 If $f$ is a newform projector, then the space $V_f^{K_\infty}$ is 1-dimensional so that there is essentially only one choice of basis $\cB_f(\pi)$. On the other hand, if $f$ is the classical test function with $c\geq 1$ at some primes, then $\dim V_f^{K_\infty}>1$ and the problem of choosing an orthonormal basis for this space that recovers Hecke eigenvalues from Fourier coefficients has been studied by many authors e.g.\ \cite{ILS, Rouymi, NgBasis,BlomerMilicevic2ndMoment,BBDDM}. Indeed, following e.g.\ \cite[\S 7]{PetrowTraces} there exists an orthonormal basis $\cB_f(\pi)$ of $V_f^{K_\infty}$ and weights $w(\pi, f) \in \C$ such that for all $m_1,m_2 \in \N$ and $(m_1m_2,N)=1$
\begin{equation}\label{Eq:SpectralAssumption}
 \sum_{\varphi \in \cB_f(\pi)} a_{u_{\varphi}}(m_1) \overline{a_{u_\varphi}(m_2)} = w(\pi,f) \lambda_{\pi}(m_1)\overline{\lambda_\pi(m_2)},
\end{equation}
 where $\lambda_\pi(m)$ are the Hecke eigenvalues of $\pi$ normalized so that the Ramanujan conjecture predicts that $|\lambda_\pi(m)|\leq d(m)$. 
 Note that the left hand side of \eqref{Eq:SpectralAssumption} is independent of the choice of orthonormal basis $\cB_f(\pi)$, and therefore so is $w(\pi,f)$.

To continue our discussion, we introduce the ``naive Rankin-Selberg $L$-series''. For $\Pi$ a standard (in the sense of \cite[\S 2.2.1]{MichelVenkateshGL2}) generic automorphic representation of $\overline{G}$, let
\begin{equation}\label{naiveRS}
\mathcal{L}_\Pi(s) = \sum_{n \geq 1} \frac{|\lambda_\Pi(n)|^2}{n^s},
\end{equation} and following a notation of Michel and Venkatesh, write $\mathcal{L}_\Pi^*(1)$ for its leading Laurent series coefficient at $s=1$. For $\pi \in \cF_0(f)$  with $q(\pi)$ the (finite) conductor of $\pi$, write
\begin{equation}
r_\pi(p)^{-1} := \begin{cases} (1+p^{-1}) \sum_{\alpha \geq 0} \frac{\lambda_\pi(p^{2\alpha})}{p^\alpha} & \text{ if } p \nmid q(\pi), \\
(1-p^{-2})^{-1} & \text{ if } p \, \| \, q(\pi), \\
1 & \text{ if } p^2 \mid q(\pi). \end{cases}
\end{equation}
Then, for $f$ of level $N$ the weights $w(\pi,f)$ in \eqref{Eq:SpectralAssumption} are given by 
\begin{equation}\label{weights_explicit}
w(\pi,f) = \frac{1}{2 \xi(2) \mathcal{L}_\pi^*(1)} \prod_{\substack{ p^2 \mid N/q(\pi) \\ p \nmid q(\pi)}} (1-p^{-2})^{-1} \prod_{p \mid N/q(\pi)} r_\pi(p)^{-1}=: \frac{1}{2 \xi(2) \mathcal{L}_\pi^*(1)}\frac{1}{\rho_\pi(N/q(\pi))}.
\end{equation}
In \eqref{weights_explicit}, the weights $\rho_\pi(\ell)$ defined on the right are exactly the same weights $\rho_f(\ell)$ or $\rho_E(\ell)$ defined in \cite[\S 2.4]{PetrowYoungWeyl}, with $f$ being the newform in $\pi$. In particular, we have $w(\pi,f) = ((1+|t_\pi|)N)^{o(1)}$ by \cite{HLAppendix, IwaniecSmallEvals}. Note that the factor $2 \xi(2) =  \vol(\overline{G}(\Q) \backslash \overline{G}(\A))$ appearing in \eqref{weights_explicit} is a global volume factor (see e.g.\ \cite[\S 4.1.2]{MichelVenkateshGL2} for a more general statement).

The spectral assumption also allows us to give a motivated expression for the diagonal term constant in the generalized PBK formula in terms of Plancherel volumes.  \begin{mydefi}[Local family]\label{localfamily}
For $f = \bigotimes_p f_p \in \cH_{\rm fin}$, the subspace
\begin{equation}
\cF_p(f):= \{\pi \in {\overline{G}(\Q_p)}^{\wedge}: \pi(f_p) \neq 0\}
\end{equation}
is called the local family of $f$ at $p$.
\end{mydefi} Let $\alpha$ be the quasicharacter of $\Q_p^\times$ defined by $\alpha: x \mapsto |x|_{p}$. For $\pi$ a smooth irreducible unitary generic representation of $\overline{G}(\Q_p)$ set 
\begin{equation}\label{Appidef}
\mathcal{L}_\pi(1) =  \begin{cases}
\frac{(1-p^{-2})}{ ( 1-e^{2 i \theta}p^{-1})(1-p^{-1})(1-  e^{-2 i \theta}p^{-1} )}    & \text{ if } \pi \simeq \pi(\alpha^{i\theta/\log p}, \alpha^{-i\theta/\log p}),  \\ 
(1+ \frac{1}{p})^{-1} & \text{ if } c(\pi) = 1, \\
  (1-\frac{1}{p})     & \text{ if } c(\pi) \geq 2, 
 \end{cases}
 \end{equation} 
 where in the first line either $\theta \in [0, \pi]$, or $\theta= i \tau \log p$ or $\pi + i \tau \log p$ with $\tau \in (0,1/2)$. 
If $\Pi = \pi_\infty \otimes \bigotimes'_p \pi_p$ is a standard generic automorphic representation of $\overline{G}$, then the leading Laurent series coefficient $\mathcal{L}_\Pi^*(1)$ admits the Euler product factorization
\begin{equation}\label{residue_product_formula}
 \mathcal{L}_{\Pi}^*(1) = \prod_p \mathcal{L}_{\pi_p}(1),
\end{equation}
 in the regularized sense of \cite[\S 4.1.5]{MichelVenkateshGL2}.

Let $f_\infty$ be the bi-$K_\infty$-invariant function on $\GL_2^+(\R)$ defined by \cite[(3.5) and Prop.\ 3.7]{knightly_kuznetsovs_2013} in terms of $h_\infty$. Then, by the Plancherel theorem (see (3.17) of loc.\ cit.) we have
\begin{equation}\label{eq:archPlancherelIntro}f_\infty(1) = \frac{1}{4 \pi} \int_{-\infty}^\infty h_\infty(t) \tanh(\pi t) t \,dt.
\end{equation}
Define
\begin{equation}\label{f_Adef}
 f_\A = f_\infty \cdot f. 
\end{equation}
Suppose that $f$ satisfies the spectral assumption. For each place $v$, define the diagonal weight $\delta_v$ at $v$ as follows. If $v=\infty$, set $\delta_\infty = f_\infty(1)$. If $v=p<\infty$ and $f_p$ is a newform projector, set  
\begin{equation}\label{eq:deltapnewformproj}
\delta_p =  \int_{\cF_p(f)} \frac{1}{\mathcal{L}_\pi(1)} \, d \widehat{\mu}(\pi),
\end{equation}
and if $f_p= \nu(p^c)1_{ZK_0(p^c)}$ for some $c\in \Z_{\geq 0}$, set
\begin{equation}\label{eq:deltapclassical}
\delta_p= \int_{\cF_p(f)} \dim \pi^{K_0(p^c)}\, d \widehat{\mu}(\pi).
\end{equation}
Note that $\delta_p=1$ for all but finitely many $p$. Finally, set $\delta_\fin= \prod_p \delta_p$ and $\delta = \delta_\infty \delta_\fin$.   

\begin{mytheo}\label{theoGeomSpec}
Let $f \in \cH_{\rm fin}$ be a pure tensor satisfing the geometric and spectral assumptions. 
For all $m_1,m_2 \in \Z$ with $m_1m_2 >0$ and $(m_1m_2,N)=1$ we have 
\begin{multline}\label{GeneralizedKuznetsov2}\sum_{ \pi \in \cF_0(f)} h_\infty(t_\pi) w(\pi, f) \lambda_\pi(m_1)\overline{\lambda_\pi(m_2)} + (\text{ cts. })  \\
=  \delta_{m_1=m_2}\delta  +  \sum_{c \equiv 0 \shortmod{k(\cF)} } \frac{H(m_1,m_2;c)}{c}H_\infty\left(\frac{4 \pi \sqrt{ m_1m_2}}{c}\right),
\end{multline}
where $(\text{ cts. })$ is a similar continuous spectrum term that we give explicitly in \eqref{ctsdots}.
\end{mytheo}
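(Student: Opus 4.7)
The starting point is the ``unrefined'' generalized Bruggeman/Kuznetsov formula \eqref{intro:GeneralizedKuznetsov1} (Theorem \ref{MT}), which already holds under the geometric assumption since geometric condition \eqref{geo3} implies $\supp f \subseteq a(y)^{-1}ZKa(y) \subseteq \{g : \det g \in \Z_p^\times (\Q_p^\times)^2\}$ locally at each $p$. The task is then to refine each of the three ingredients---spectral side, geometric side, and diagonal term---using the additional structure provided by the two assumptions.

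\emph{Spectral side.} By the spectral assumption, $\pi(f)$ is an orthogonal projection onto the subspace $\pi_f$ described in \eqref{pifdef}. I would choose an orthonormal basis $\cB(\pi)$ adapted to this projection, so that the double sum $\sum_\varphi a_{u_{\pi(f)\varphi}}(m_1)\overline{a_{u_\varphi}(m_2)}$ collapses to $\sum_{\varphi \in \cB(\pi_f)} a_{u_\varphi}(m_1)\overline{a_{u_\varphi}(m_2)}$. Because $h_\infty(t_\pi)$ is nonzero only on Maass parameters (i.e.\ $K_\infty$-fixed vectors), the basis may be taken in $\pi_f^{K_\infty}$. Next, I would apply the Hecke-eigenvalue identity \eqref{Eq:SpectralAssumption} together with the explicit weights \eqref{weights_explicit}, using the coprimality hypothesis $(m_1m_2, N)=1$, to convert the Fourier-coefficient inner product into $w(\pi,f)\lambda_\pi(m_1)\overline{\lambda_\pi(m_2)}$. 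The same recipe applies to the continuous spectrum contribution, producing the explicit expression indicated by \eqref{ctsdots}.

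\emph{Geometric side.} The geometric assumption gives, via Proposition/Definition \ref{def:geom_cond}, the containment $\cC(\cF) \subseteq k(\cF)\Z$. Since $H(m_1, m_2; c)$ vanishes identically outside $\cC(\cF)$, we may rewrite $\sum_{c \in \cC(\cF)}$ as $\sum_{c \equiv 0 \shortmod{k(\cF)}}$ without altering the sum, yielding the geometric side of \eqref{GeneralizedKuznetsov2}.

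\emph{Diagonal term.} The diagonal constant in Theorem \ref{MT} is essentially $f_\A(1) = f_\infty(1) \prod_p f_p(1)$. At the archimedean place, \eqref{eq:archPlancherelIntro} identifies $f_\infty(1)$ with $\delta_\infty$. At each finite $p$, I would invoke the local Plancherel formula on $\overline{G}(\Q_p)$, together with Proposition \ref{mainprop}, to express $f_p(1) = \int_{\overline{G}(\Q_p)^\wedge} \Tr(\pi(f_p))\, d\widehat{\mu}(\pi)$. The spectral assumption then computes $\Tr(\pi(f_p))$ explicitly: for a newform projector, the trace is the indicator of $\cF_p(f)$ scaled by the reciprocal $\mathcal{L}_\pi(1)^{-1}$ arising from the square of the local newvector norm (recovering \eqref{eq:deltapnewformproj}); for the classical test function $\nu(p^c) 1_{ZK_0(p^c)}$, the trace equals $\dim \pi^{K_0(p^c)}$ (recovering \eqref{eq:deltapclassical}). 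Taking the product over all places yields $\delta = \delta_\infty \prod_p \delta_p$.

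\emph{Main obstacle.} The delicate step is the last one: tracking local volume normalizations and the appearance of the local factors $\mathcal{L}_{\pi_p}(1)^{-1}$ through the $h_p \to f_p$ transform of Proposition \ref{mainprop}, and reconciling these with the global regularized product \eqref{residue_product_formula}. In particular, the $\mathcal{L}_\pi(1)^{-1}$ factor in \eqref{eq:deltapnewformproj} and the $\mathcal{L}_\pi^*(1)^{-1}$ factor in the global weight \eqref{weights_explicit} must cohere with the factorization of Plancherel measure relative to the chosen Haar measures on $\overline{G}(\A)$, and the resulting global constant $\delta$ must match the one obtained from the adelic pre-trace formula; getting these bookkeeping factors right is the main technical burden.
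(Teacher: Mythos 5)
Your spectral and geometric steps follow the paper's route exactly: apply Theorem \ref{MT}, use the spectral assumption to collapse the basis sum onto $\pi_f^{K_\infty}$ (this intermediate statement is Theorem \ref{theoGeomSpec_FC}), use Lemmas \ref{ccondition} and \ref{lem:admmodulus} to replace $\cC(\cF)$ by $k(\cF)\N$, and then insert \eqref{Eq:SpectralAssumption} (and its Eisenstein analogue) using $(m_1m_2,N)=1$. One caveat on that last step: the paper does not simply quote \eqref{Eq:SpectralAssumption}; a substantial part of its proof of Theorem \ref{theoGeomSpec} is constructing the basis $GS_f$ and verifying the weights \eqref{weights_explicit} and \eqref{weights_explicit_Eis} through comparisons of the Petersson, canonical and Eisenstein norms. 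Treating \eqref{Eq:SpectralAssumption} as given is defensible since it is stated with the explicit weights in the introduction, but it is part of the work.

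The genuine gap is in your diagonal term. The diagonal constant coming out of Theorem \ref{MT} is \emph{not} $f_\A(1)$: it is $\delta_{m_1=m_2}\,f_\infty(1)\int_{\A_{\rm fin}} f\left(\left(\begin{smallmatrix} 1 & t \\ & 1\end{smallmatrix}\right)\right)\psi_{\rm fin}(-mt)\,dt$, a Fourier transform of $f$ along the unipotent, and for a newform projector this unipotent integral does not equal $f_p(1)$. Your claim that $\Tr\pi(f_p)$ for a newform projector is ``the indicator of $\cF_p(f)$ scaled by $\mathcal{L}_\pi(1)^{-1}$ arising from the square of the local newvector norm'' is incorrect: $\Tr\pi(f_p)=1$ on $\cF_p(f)$, so the Plancherel formula gives $f_p(1)=\widehat{\mu}(\cF_p(f))$ with no $\mathcal{L}_\pi(1)^{-1}$ factor, and indeed $\delta_p$ and $f_p(1)$ differ in general (they agree only up to $1+O(p^{-1})$, cf.\ Section \ref{diagwt_vs_f1}). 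The factor $\mathcal{L}_\pi(1)^{-1}$ in \eqref{eq:deltapnewformproj} arises instead from Proposition \ref{prop:nonarchmt}: one expands $f_p$ via Proposition \ref{mainprop} into diagonal newform matrix coefficients, integrates $\Phi_{\pi,v_0}\left(\left(\begin{smallmatrix}1&t\\&1\end{smallmatrix}\right)\right)\psi_p(-mt)$ in $t$, which produces Ramanujan sums $R_{p^i}(m)$; the hypothesis $p\nmid m$ (this is where coprimality enters the diagonal term, not only the Hecke-eigenvalue step) kills all $i\geq 2$ and leaves $1-\Phi_{\pi,v_0}\left(\left(\begin{smallmatrix}1&p^{-1}\\&1\end{smallmatrix}\right)\right)$, which is identified with $\mathcal{L}_\pi(1)^{-1}$ only after the explicit case-by-case evaluation of the newform matrix coefficient at $n(p^{-1})$ (unramified principal series, Steinberg and its twist, and $c(\pi)\geq 2$, via Hu's formulas and a Kirillov-model computation). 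Without this computation your argument would output $\delta_p=\widehat{\mu}(\cF_p(f))$ rather than \eqref{eq:deltapnewformproj}, so the stated theorem would not be reached; this is a missing ingredient rather than mere bookkeeping.
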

\begin{myrema}
The assumption $(m_1 m_2, N) = 1$ appearing in Theorem \ref{theoGeomSpec} is a helpful simplification at this stage of the presentation, but is not crucial.
The intermediate step Theorem \ref{theoGeomSpec_FC} towards Theorem \ref{theoGeomSpec} does not require the condition $(m_1m_2,N)=1$, but leaves the spectral side in terms of Fourier coefficients. Instead of inserting \eqref{Eq:SpectralAssumption} into Theorem \ref{theoGeomSpec_FC} to obtain Theorem \ref{theoGeomSpec}, one can use e.g.\ \cite[(15)]{PetrowYounghybrid} restricted to a single old-class, which requires square-free level but avoids any coprimality condition. The coprimality condition is also used in Section \ref{sec:MTcomputation} to compute the diagonal term, but this section can be easily generalized with some additional computation.  
\end{myrema}

\begin{myrema}\label{rem:other_arch_cases}
Theorem \ref{theoGeomSpec} also holds for other choices of archimedean test functions. For example, let $\kappa\geq 2$ be even and let $\pi_\kappa$ be the discrete series representation of $\GL_2(\R)$ of weight $\kappa$. Define $\cF_\kappa(f)$ as in Definition \ref{F0(f)def} to be the set of cuspidal automorphic representations $\pi$ with $\pi_\infty \simeq \pi_\kappa$ and such that $\pi(f): V_{\pi} \to V_{\pi}$ is not the zero map. Define $V_f^\kappa$ to be the weight $\kappa$ isotypic subspace of $V_f$.  Choose $f_\infty$ to be given by the Bergman kernel \eqref{holo:f_infty}, in particular one has  $f_\infty(1) = \frac{\kappa-1}{4 \pi}$. Then, under the same hypotheses as Theorem \ref{theoGeomSpec} with $\cF_0(f)$ and $V_f^{K_\infty}$ replaced by $\cF_\kappa(f)$ and $V_f^\kappa$, respectively, we have 
\begin{multline}\label{theoGeomSpec_holomorphic}
\sum_{ \pi \in \cF_\kappa(f)}  w(\pi, f) \lambda_\pi(m_1)\overline{\lambda_\pi(m_2)} \\
= \delta_{m_1=m_2} \delta +  \frac{(\kappa-1)}{2} i^{-\kappa} \sum_{c \equiv 0 \shortmod{k(\cF)} } \frac{H(m_1,m_2;c)}{c}J_{\kappa-1}\left(\frac{4 \pi \sqrt{ m_1m_2}}{c}\right),
\end{multline}
with $w(\pi,f) = (\kappa N)^{o(1)}$. 
For $\kappa \geq 4$ the archimedean aspects of the holomorphic forms variation \eqref{theoGeomSpec_holomorphic} were worked out in \cite{KLPetersson} and while a relative trace formula proof of the $\kappa=2$ case strictly speaking has not appeared in the literature, it is expected to follow from a limiting argument and in any case is well-known from the classical Poincar\'e series approach to the Petersson formula. We give more details on the holomorphic/discrete series case in  Sections \ref{sec:holomorphicunrefined} and \ref{theoGeomSpecProof}. 

Similarly, one expects the opposite-sign case of Theorem \ref{theoGeomSpec} in which $m_1m_2<0$ to hold, but at present there is not a relative trace formula proof for this case.  The shape of the formula would be similar but 
with an additional factor of $p(\pi)$ on the spectral side, where
\begin{equation}\label{opposition_sign_parity}
p(\pi) = \text{ parity of } \pi = \begin{cases} \text{ eigenvalue of } u_\varphi, \varphi \in V_\pi \text{ under the involution } x+iy \mapsto -x+iy, \text{ or } \\ (-1)^\epsilon \text{ where } \pi_\infty \simeq \pi(\sgn^\epsilon |\cdot|^s , \sgn^\epsilon |\cdot|^{-s}), \end{cases}
\end{equation}  
and the factor $H_\infty(x)$ on the geometric side of the formula is replaced with  
\begin{equation}\label{Hinfty-}
H_\infty^-(x) = \frac{1}{\pi} \int_{-\infty}^\infty K_{2it}(x) \sinh (\pi t) h(t) t \,dt.
\end{equation}

We posit
that when $\kappa=2$ the formula \eqref{theoGeomSpec_holomorphic} holds, and when $m_1m_2<0$ the formula \eqref{GeneralizedKuznetsov2} with modifications \eqref{opposition_sign_parity} and \eqref{Hinfty-} holds, and assume these to be so in the following discussion. Since this paper concerns non-archimedean aspects, and for the sake of brevity, we do not provide any details for these assertions.
\end{myrema}

\begin{myrema}
Since we have not modified any archimedean aspects of the classical PBK formulas when deducing Theorem \ref{theoGeomSpec} etc., we also get the ``backwards'' Kuznetsov formula as in \cite[\S 16.4]{IK} mutatis mutandis. Indeed, for the rest of this paragraph let $\Phi \in C^2([0,\infty))$ with $$\Phi(0)=0 \quad \text{ and } \quad \Phi^{(a)}(x) \ll_a (1+x)^{-\alpha}$$ 
for $a=0,1,2$ and some $\alpha>2$. Let $\mathcal{M}_\Phi(t)$ be the Hankel transform of $\Phi$ as defined in \cite[16.40]{IK} and $\mathcal{N}_f(k)$ be the Neumann coefficients of $\Phi$ as defined in \cite[16.41]{IK}. Let $f\in \cH_{\rm fin}$ be as in Theorem \ref{theoGeomSpec} with associated generalized Kloosterman sum $H(m,n;c)$. If $m_1m_2>0$, then 
\begin{multline}\label{PBKinverted_even}
\sum_{c \equiv 0 \shortmod{k(\cF)} } \frac{H(m_1,m_2;c)}{c}\Phi\left(\frac{4 \pi \sqrt{ m_1m_2}}{c}\right) \\
= \frac{4}{\pi} \sum_{ \pi \in \cF_0(f)} \mathcal{M}_\Phi(t_\pi) \cosh (\pi t_\pi)  \sum_{\varphi \in \cB_f(\pi)} a_{u_{\varphi}}(m_1) \overline{a_{u_\varphi}(m_2)}  + (\text{ cts. }) \\ 
+ \sum_{\substack{\kappa>0 \\ \kappa \equiv 0 \shortmod 2}} \frac{(4\pi)^\kappa}{\Gamma(\kappa)} \mathcal{N}_\Phi(\kappa) \sum_{ \pi \in \cF_\kappa(f)}   \sum_{\varphi \in \cB_f(\pi)} a_{u_{\varphi}}(m_1) \overline{a_{u_\varphi}(m_2)} , 
\end{multline}
where $(\text{ cts. })$ is a continuous spectrum term given in   
\eqref{GeneralizedKuznetsov2_FC} with $h_\infty(t)$ there replaced by $\frac{4}{\pi} \cosh (\pi t) \mathcal{M}_\Phi(t)$. See Section \ref{sec:holomorphicunrefined} for definitions of $u_\varphi$ and $a_u$ in the holomorphic / discrete series case. Meanwhile, if $m_1m_2<0$, then we set $\mathcal{K}_\Phi(t)$ to be the integral transform of $\Phi$ given in \cite[(16.44)]{IK}. In this case, we have 
\begin{multline}\label{PBKinverted_odd}
\sum_{c \equiv 0 \shortmod{k(\cF)} } \frac{H(m_1,m_2;c)}{c}\Phi\left(\frac{4 \pi \sqrt{ |m_1m_2|}}{c}\right) \\
= \frac{4}{\pi} \sum_{ \pi \in \cF_0(f)}  \mathcal{K}_\Phi(t_\pi) \cosh (\pi t_\pi)  \sum_{\varphi \in \cB_f(\pi)} a_{u_{\varphi}}(m_1) \overline{a_{u_\varphi}(m_2)}  + (\text{ cts. }), 
\end{multline}
where similarly $(\text{ cts. })$ is a continuous spectrum term given in  \eqref{GeneralizedKuznetsov2_FC} with $h_\infty(t)$ there replaced by $\frac{4}{\pi} p(\pi_{\chi,\chi^{-1}}) \cosh (\pi t) \mathcal{K}_\Phi(t)$. 
\end{myrema}

\begin{myrema} To check that the geometric and spectral assumptions hold for a pure tensor $f \in \cH_{\rm fin}$, it suffices to check them for $f_p$ at the finitely many primes $p$ where $f$ is ramified. The following data appearing in Theorem \ref{theoGeomSpec} can also be computed locally:
\begin{itemize}
\item the local families $\cF_p(f)$,
\item the local levels $N_p:= p^{v_p(N)}$,
\item the diagonal weights $\delta_v$, 
\item the generalized Kloosterman sums $H_p(m,n;c)$, and
\item the local geometric conductors $k_p$. 
\end{itemize}
To produce completely explicit cases of Theorem \ref{theoGeomSpec}, it suffices to produce appropriate local test functions $f_p$ and perform purely local computations of the relevant data. We do this for several  key examples in Section \ref{sec:examples} of the paper. 
\end{myrema}

\begin{myexams}
Choose a finite set $S$ of primes. Let $f = \bigotimes_p f_p \in \cH_{\rm fin}$ be a pure tensor such that if $p \not \in S$, then $f_p =  1_{ZK_p}$, and if $p \in S$, then $f_p \in \cH_p$ may be chosen from any of the following:
\begin{itemize}
\item for some integer $c\geq 0$, the classical test function $f_{\leq c}$ defined in \eqref{classical_test_fcn}, 
\item for a non-quadratic character $\chi$ of $\Z_p^\times$, the function $f_\chi$ defined in \eqref{fchi_def}, or 
\item for a quadratic extension $E$ of $\Q_p$ and a character $\xi$ of $E^\times$ satisfying the hypotheses in the first paragraph of Section \ref{sec:examples_supercuspidal}, the function $f_{\xi}$ defined in \eqref{sec7_fxi_def},
\item for $(E/\Q_p,\xi)$ as in the previous point and $1\leq n<c(\xi')$ with $\xi'$ a twist-minimal character underlying $\xi$, the function $f_{\xi,n}$ defined in \eqref{fxin_def}, or 
\item for some integer $c\geq 3$, the test function $f_{=c}$ introduced by Nelson, see \eqref{f=c_def}. 
\end{itemize}
 Then $f$ satisfies the geometric and spectral assumptions (see Sections \ref{classic_gsassumptions}, \ref{PS_gsassumptions}, \ref{SC_gsassumptions}, \ref{sec:neighborhood_of_a_SC}, and \ref{sec:f=c}).

Let $\cF_0(f)$ be the family of cuspidal automorphic representations cut out by $f$ as in Definition \ref{F0(f)def}. In particular, $\cF_0(f)$ consists of $\GL_2/\Q$ cuspidal automorphic representations $\pi$ of trivial central character (spherical at infinity) whose local components $\pi_p$ at finite places are constrained to lie in the local families 
$$\cF_p(f) = \begin{cases} \cF_{\leq c} & \text{ if } f_p = f_{\leq c}, \\ 
\cF_\chi & \text{ if } f_p= f_{\chi}, \\ 
\cF_{\xi} & \text{ if } f_p= f_{\xi}, \\
\cF_{\xi, n} & \text{ if } f_p= f_{\xi,n}, \text{ and }\\
\cF_{=c} & \text{ if } f_p = f_{=c}
\end{cases}
$$
(see Definition \ref{localfamily}), where: 
\begin{itemize}
\item For $c\geq 0$
\begin{equation} 
\cF_{\leq c} = \{ \pi \in {\overline{G}}(\Q_p)^{\wedge}: c(\pi) \leq c\}.
\end{equation}
\item For $\chi \in {\Z_p^{\times}}^{\wedge}$ not quadratic
\begin{equation}
\cF_\chi = \{ \pi(\mu,\mu^{-1}) \in  {\overline{G}}(\Q_p)^{\wedge}: \mu \vert_{\Z_p^\times} = \chi\}.
\end{equation}
\item For $(E/\Q_p,\xi)$ as above, let $\sigma= \sigma(\rho)$ be the (trivial central character) supercuspidal representation of $G(\Q_p)$ corresponding to $\rho = \Ind_{E}^{\Q_p} \xi$ under the Local Langlands Correspondence (LLC). Writing 
 $\eta$ for the unramified quadratic character of $\Q_p^\times$, we have
\begin{equation}\label{intro:locfam:SC}
\cF_{\xi} = \begin{cases} \{\sigma\}  & \text{ if } E/\Q_p \text{ is unramified and } p \neq 2, \\ 
\{\sigma, \sigma \times \eta\} & \text{ if }  E/\Q_p  \text{ is ramified,} \\
\{ \sigma( \Ind_{E}^{\Q_p} \xi_1) : c(\xi_1 \xi^{-1})\leq 1, \xi_1\vert_{\Q_p^\times} = \xi \vert_{\Q_p^\times} \} & \text{ if } E/\Q_p \text{ is unramified and } p = 2.
\end{cases} 
\end{equation}
The set in the last line consists of 3 supercuspidal representations of the same conductor as $\sigma$. For interpretation, it may be helpful to recall that when $p \neq 2$, the extension $E/F$ is unramified if and only if $c(\sigma)$ is even. See Section \ref{sec:parametrization_dihedrals} for a quick overview of the parametrization of dihedral trivial central character supercuspidal representations in terms of pairs $(E/\Q_p,\xi)$. 
\item For $1\leq n<c(\xi')$, we have $$\cF_{\xi,n}= \{ \sigma( \Ind_{E}^{\Q_p} \xi_1) : c(\xi_1 \xi^{-1})\leq n, \xi_1\vert_{\Q_p^\times} = \xi \vert_{\Q_p^\times} \}.$$
\item For $c\geq 3$, we have $$ \cF_{=c} = \{\pi \in {\overline{G}}(\Q_p)^{\wedge} : c(\pi ) = c\}.$$
\end{itemize}
See Sections \ref{sec:locfam-classical}, \ref{sec:locfam-PS}, \ref{sec:locfam-SC}, \ref{sec:neighborhood_of_a_SC}, and \ref{sec:f=c} for more details.

The level $N$ of $f$ satisfies
\begin{equation}\label{intro:level}
v_p(N) = \begin{cases} 
c & \text{ if } f_p= f_{\leq c} \text{ or } f_{=c},\\ 
2c(\chi) & \text{ if } f_p=f_{\chi}, \\ 
c(\sigma) & \text{ if } f_p = f_{\xi} \text{ or } f_{\xi,n}.
\end{cases}
\end{equation}

On the geometric side of the formula, the diagonal weights $\delta_p$ may be given explicitly by 
\begin{equation}\label{diagwt_egs}
\delta_p = \begin{cases} \nu(p^c) & \text{ if } f_p = f_{\leq c}, \\
\frac{\nu(p^{c(\chi)})}{1-p^{-1}} & \text{ if } f_p = f_\chi, \\
\text{ see } \eqref{diagwt:SC} & \text{ if } f_p=f_\xi, \\
\text{ see } \eqref{diagwt:nbhd_of_a_SC} & \text{ if } f_p = f_{\xi,n}, \\ 
p^c(1-p^{-2}) & \text{ if } f_p =f_{=c}.
\end{cases}\end{equation}
for which see \eqref{diagwt:classical}, \eqref{diagwt:PS} and Section \ref{sec:f=c}. In the supercuspidal cases, we write $d=v_p(\disc (E/\Q_p))$. The geometric conductor $k(\cF)= \prod_p p^{k_p}$ and the local geometric conductors for the above test functions are given explicitly by
\begin{equation}\label{kp:3egs}
k_p = \begin{cases} c & \text{ if } f_p = f_{\leq c}, \\
c(\chi) & \text{ if } f_p = f_\chi, \\
c(\xi) & \text{ if } f_{p} = f_{\xi} \text{ with }d = 0, \\
\frac{c(\xi)}{2} +1 & \text{ if } f_p = f_{\xi} \text{ with } d = 1 \text{ or } 2, \\
\frac{c(\xi)}{2} +2 & \text{ if } f_p = f_{\xi} \text{ with } d = 3, \\
\text{ see } \eqref{kp_nbhd_of_a_SC_formula} & \text{ if } f_p = f_{\xi,n}, \\
c-1 & \text{ if } f_p = f_{=c},
\end{cases}\end{equation}
for which see Sections \ref{kp:classical},  \ref{kp:PS}, \ref{kp:SC}, \ref{sec:neighborhood_of_a_SC}, and \ref{sec:f=c}. 

Lastly, for $c \equiv 0 \pmod {k(\cF)}$, the generalized Kloosterman sum is given by $$H(m,n;c) = \prod_{p \mid c} H_p(m,n;c),$$ where each local Kloosterman sum $H_p$ can be explicitly described as follows. For each $p$, let us write $c=c_0p^k$ with $(c_0,p)=1$, and where we \emph{assume that} $k\geq k_p$ (otherwise $H(m,n;c)=0$). Write $\overline{c_0}$ for the inverse of $c_0$ modulo $p^k$. 

If $ f_p = f_{\leq \fc}$ (including the case $\fc=0$), then we have
\begin{equation}
H_p(m,n;c) = \delta_p S(\overline{c_0}m, \overline{c_0}n; p^k).
\end{equation}
If $f_p=f_\chi$ with $\chi$ not quadratic, then we have
\begin{equation}
H_p(m,n;c) = \delta_p \sumstar_{\substack{x,y \shortmod{p^k} \\ xy =mn\overline{c_0}^2}} \overline{\chi(x)}\chi(y)e\left( \frac{x+y}{p^k}\right)
\end{equation}
when $(mn,p)=1$ and $H_p(m,n;c) = 0$ otherwise. 

If $f_p = f_{\xi}$ for a pair $(E/\Q_p,\xi)$ satisfying the hypotheses in the first paragraph of Section \ref{sec:examples_supercuspidal}, then 
\begin{equation}
\label{intro:SCklSum}
H_p(m,n;c)=\delta_p \overline{\gamma} p^{-\frac{d}{2}} \sum_{\substack{u \in (\cO_E/p^k\cO_E)^{\times} \\ \Nm(u) = mn\overline{c_0}^2}} \xi(u) e\left( -\frac{\Tr u}{p^{k}} \right) \end{equation}
when $(mn,p)=1$ and $H_p(m,n;c) = 0$ otherwise. 
In \eqref{intro:SCklSum}, $\Nm, \Tr: E \to \Q_p$ are the field norm and trace, and $\gamma$ is the Langlands constant associated to $E$ and the additive character $\psi_p=e(\{.\}_p)$ of $\Q_p$. See Remark \ref{Remark_following_KlSumThm} following Theorem \ref{KlSumThm} for more detailed information on $\gamma$ and Propositions \ref{statphase_bounds} and \ref{Katz_AG_bounds} for bounds on $H_p(m,n;c)$. 

In \cite[Def.\ 4.6]{Hu} the first author gave an alternative formula for $H_p(m,n;c)$ that at first glance looks quite different from \eqref{intro:SCklSum}. However, these two formulas are in fact equal (up to leading constants) whenever the former formula is valid, as can be seen by computing the Fourier/Mellin transform of both formulas and applying $p$-adic stationary phase analysis.

If $f_p = f_{\xi,n}$, then $H_p(m,n;c)$ is exactly the same as in $\eqref{intro:SCklSum}$, but with $\delta_p$ and $k_p$ given by \eqref{diagwt:nbhd_of_a_SC} and \eqref{kp_nbhd_of_a_SC_formula} in lieu of \eqref{diagwt:SC} and \eqref{kp_SC_formula}. 
\end{myexams}

\subsection{Relations between parameters}
The reader may have already observed that the families of automorphic forms in this paper have several different parameters associated with them. These include:
\begin{itemize}
\item the level $N$ of $f$,
\item the primes $p$ at which $f$ is ramified,
\item the conductors $q(\pi)$ of representations $\pi \in \cF_0(f)$ and the conductor exponents $c(\pi)$ of local representations $\pi \in \cF_p(f)$,
\item the geometric conductor $k(\cF)$ and local geometric conductors $k_p$, 
\item the value $f(1)$ and local values $f_p(1)$, and 
\item the diagonal weight $\delta_\fin$ and local diagonal weights $\delta_p$. 
\end{itemize}
We explicate some of the relations between the above quantities. 

\subsubsection{Level versus ramification}\label{level_vs_ramification}
Under Geometric Assumption \eqref{geo3}, $p \mid N$ if and only if $f$ is ramified at $p$. Indeed, it is clear that $p \mid N$ implies $p$ is ramified for $f$. For the other direction, suppose $p \nmid N$ so that $f_p$ is bi-$ZK_p$-invariant. Then, by the Cartan decomposition, the function $f_p$ is determined by its values on $\sigma_i = \left( \begin{smallmatrix} p^i & \\ & 1 \end{smallmatrix}\right)$ for $i\geq 0$. However, no $\sigma_i$ with $i>0$ lies in a subgroup of the form $a(y)^{-1}ZK_pa(y)$ for any $y \in \Q_+$, since powers of $\sigma_i$ escape any compact modulo center set. Therefore $f_p$ is only supported on $\sigma_0$ and hence is a constant multiple of $1_{ZK_p}$. 
\subsubsection{Level versus conductors of representations}
Suppose that $f$ satisfies Geometric Assumption \eqref{geo2}. Then, any $\pi \in \cF_0(f)$ has $q(\pi) \mid N^2$. Indeed, by Geometric Assumption \eqref{geo2} $f$ is bi-$K_d(N)$-invariant, so any $\pi \in \cF_0(f)$ has a non-zero $K_d(N)$-fixed vector, 
 and hence a non-zero $K_0(N^2)$-fixed vector, since $a(N)^{-1} K_d(N)a(N) = K_0(N^2)$. See Section \ref{notation-groupsandsubgroups} for the definitions of various groups and subgroups. 
 
 If $f$ satisfies the spectral assumption, then $\pi \in \cF_0(\pi)$ satisfies $q(\pi) \mid N$. Indeed, any $\pi \in \cF_0(f)$ has a non-zero $K(N)$-fixed vector that is also a $K_0(M)$-fixed vector for some $M$ by the spectral assumption. Then $\pi$ has a non-zero $K(N)K_0(M) = K_0(g)$-fixed vector where $g=(N,M)$ (see Section \ref{sec:PSlevel}), so in particular $\pi$ has a non-zero $K_0(N)$-fixed vector.  

On the other hand, there is in general no lower bound on the conductors of $\pi$ that appear in $\cF_0(f)$ in terms of the level $N$ of $f$. Indeed, level $1$ forms appear as oldforms in the classical BK formula of level $N$, which is a special case of our framework.
\subsubsection{Level versus geometric conductor}
Suppose that $f$ has level $N$, that $f(1) \neq 0$ and that $f$ satisfies the geometric assumptions.  Then we have $k(\cF) \mid N$ (see Corollary \ref{cor:admmodulus}). 
On the other hand, under the geometric and spectral assumptions we also have that $k_p \geq 0$, see Lemma \ref{specControlonH}(4). 

\subsubsection{Conductors of representations versus $f(1)$}\label{sec:cond_of_reps_vs_f1}
Suppose that $f\neq 0$ satisfies the geometric and spectral assumptions. Let us work locally at $p$. If $f_p=f_{\leq c}$ is the classical test function it is clear that $$\{c(\pi) : \pi \in  \cF_p(f) \} = \{0,\ldots, c\},$$ so we henceforth assume that $f_p$ is a newform projector. 

Since $f\neq 0$, then $\cF_p(f) \neq 0$. If $\cF_p(f)$ contains an irreducible principal series representation $\pi(\chi,\chi^{-1})$ with $\chi\vert_{\Z_p^\times}$ not quadratic, then by Lemma \ref{transport} it contains $\pi(\chi \alpha^{it},\chi^{-1} \alpha^{-it})$ for all $t \in \R$. Suppose that $\cF_p(f)$ only contains $\pi(\chi,\chi^{-1})$ with $\chi\vert_{\Z_p^\times}$ non-trivial quadratic. Then, by Remark \ref{rem_following_lem_transport} it also contains a special representation. Thus, $\cF_p(f)$ either contains a square-integrable representation, or for some $\chi$ with $\chi \vert_{\Z_p^\times}$ not quadratic it contains $\cF_\chi = \{\pi(\chi \alpha^{it},\chi^{-1} \alpha^{-it}):t \in \R\} \subseteq \cF_p(f)$, or it contains $\cF_p(f)= \{\pi : \pi \text{ unramfied }\}$. Thus, since $f_p$ is a newform projector, by the Plancherel formula we have 
\begin{equation}
f_p(1) = \widehat{\mu}(\cF_p(f)) \geq \begin{cases} \widehat{\mu}(\{ \pi \} ) & \text{ if }  \pi \in \cF_p(f) \text{ is square integrable, } \\ 
\widehat{\mu}(\cF_\chi) & \text{ if } \pi(\chi,\chi^{-1}) \in \cF_p(f), \chi \text{ not quadratic, } \\ 
1 & \text{ if } c(\pi)= 0 \text{ for all } \pi \in \cF_p(f).
\end{cases}
\end{equation}
Let $d_\mu(\pi)$ denote the formal degree of $\pi$. If $\pi$ is square integrable, then $\widehat{\mu}(\{\pi\}) = d_\mu(\pi)  \gg p^{c(\pi \times \pi)/2}\gg p^{\lceil c(\pi)/2\rceil}$ by e.g.\ \cite[Thm.\ 2.1]{IchinoLapidMao}, with an absolute implied constant. 
In the principal series case, one has $\widehat{\mu}(\cF_\chi)=\nu(p^{c(\chi)})$. In all cases, if $ \pi \in \cF_p(f)$ with $f$ a newform projector satisfying the geometric assumptions, then $f_p(1) \gg p^{\lceil c(\pi)/2\rceil}$ with an absolute implied constant. 

In the other direction, if we set $c_{\rm max} = \max \{c(\pi) : \pi \in \cF_p(f)\}$, then 
\begin{equation}
f(1) \leq \int_{\cF_p(f)} \dim V_\pi^{K_0(p^{c_{\rm max}})}\,d\widehat{\mu}(\pi) = \nu(p^{c_{\rm max}}) \leq \tfrac{3}{2}p^{c_{\rm max}}.
\end{equation}

\subsubsection{Level versus $f(1)$}\label{sec:level_vs_f1}
Suppose that $f$ satisfies the spectral assumption. Working locally, suppose $c= \max \{c(\pi): \pi \in \cF_p(f)\}$. Then, by Proposition \ref{mainprop}, $f_p$ is bi-$K_0(p^{c})$-invariant and also bi-$K(N)$-invariant, so that $f$ is bi-$K_0(N)$-invariant.  If $f_p$ is a newform projector, then by the Plancherel formula and newform theory
$$f_p(1) = \widehat{\mu}(\cF_p(f)) \leq \widehat{\mu}(\{\pi: c(\pi) \leq v_p(N)\}) = \nu * \mu(N_p) \leq N_p.$$
On the other hand if $f_p$ is the classical test function, then $f_p(1)=\nu(N_p)$. Therefore globally, 
\begin{equation}\label{eq:level_vs_f1}
f(1) \leq \nu(N).
\end{equation} 

In the other direction, we work locally and assume that $f_p$ satisfies the geometric and spectral assumptions. If $f_p$ is the classical test function, then $f_p(1)=\nu(N_p)$, so suppose $f_p$ is a newform projector. Let $\pi \in \cF_p(f)$ be a representation of maximal conductor exponent. Then by Proposition \ref{mainprop}, $f_p$ is bi-$K_0(p^{c(\pi)})$-invariant, thus $N_p \mid p^{c(\pi)}$, and so by Section \ref{sec:cond_of_reps_vs_f1}, we have \begin{equation}\label{eq:N_vs_f1}
N_p^{1/2} \leq p^{c(\pi)/2} \leq p^{\lceil c(\pi)/2\rceil} \ll f_p(1).
\end{equation}
\subsubsection{Diagonal weight versus $f(1)$}\label{diagwt_vs_f1}
Suppose that $f$ satisfies the spectral assumption. In the first case when $f_p$ is a newform projector, we have  by inspecting \eqref{Appidef} that $\delta_p = (1+O(p^{-1})) f_p(1)$ and moreover
$\tfrac{1}{6} f_p(1) \leq \delta_p \leq 2 f_p(1)$, so that $\delta_p$ is non-vanishing. In the second case that $f_p$ is the classical test function, the situation is even simpler as we have $\delta_p = f_p(1)$. Then, by \eqref{eq:level_vs_f1} and \eqref{eq:N_vs_f1} one has \begin{equation}\label{eq:delta_vs_f1}
\delta_\fin = f(1)N^{o(1)} = f(1)^{1+o(1)}.
\end{equation}

\subsection{Weighted Weyl-Selberg Law and equidistribution}\label{sec:introWeylLaw}
In this section and in Section \ref{sec:intro:LSI}, we consider \emph{families} of automorphic representations. That is, we consider  sequences of varying test functions $f$ or $f_\A$ with some parameter, usually $f(1)$ or $f_\A(1)$, going to infinity. Recall by the Plancherel formula (see e.g.\ \eqref{PlDef}), that $f_p(1)$  is equal to an integral over the local family $\cF_p(f)$ of representations with respect to Plancherel measure, and that the spectral assumption implies that $f(1)\geq 0$. 

To streamline the exposition in this section and the next, we only present our results in detail for Maass cusp forms, but similar results hold for the holomorphic / discrete series variation. Accordingly, we now  choose the archimedean test function $h_\infty$ to be one of either \begin{equation}
\label{eq:hdefWindowVersion}
h_\infty(t) = \frac{t^2+\frac14}{T^2} \Big[ \frac{1}{\cosh\left( \frac{t-T}{\Delta} \right)} + \frac{1}{\cosh\left( \frac{t+T}{\Delta} \right)}  \Big],
\end{equation}
where $1 \leq \Delta < T/100$,
to give a smooth approximation to the small window $T -\Delta< \pm t \leq T + \Delta$, or alternatively
\begin{equation}
\label{eq:hdefInitialSegmentVersion}
h_\infty(t) = \frac{t^2+\frac14}{T^2} \exp\Big(-\Big(\frac{t}{T}\Big)^2\Big)
\end{equation}
for a smooth approximation to the large window $|t| \leq T$.  We call $\pm [T-\Delta,T+\Delta]$ the effective support of \eqref{eq:hdefWindowVersion} and $[-T,T]$ the effective support of \eqref{eq:hdefInitialSegmentVersion}.

With these weights, we have the following crude bound. 
\begin{mylemma}\label{lemWeylLaw} 
Let $h_\infty$ be one of the two test functions given by \eqref{eq:hdefWindowVersion} or \eqref{eq:hdefInitialSegmentVersion}. If $f \in \cH_{\rm fin}$ and $w(\pi,f)$ are as in Theorem \ref{theoGeomSpec}, then for all $m_1,m_2 \in \Z$ with $m_1m_2 >0$ and $(m_1m_2,N)=1$ we have 
\begin{equation}\label{eq:lemWeylLaw}
\sum_{ \pi \in \cF_0(f)} h_\infty(t_\pi) w(\pi, f) \lambda_\pi(m_1)\overline{\lambda_\pi(m_2)} + (\text{ cts. })  \\
= \delta_{m_1=m_2} \delta + O\left( \frac{f_\A(1)m_1m_2}{T^2 k(\cF)}\right).
\end{equation}
\end{mylemma}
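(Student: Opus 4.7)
The plan is to apply Theorem \ref{theoGeomSpec} directly and then bound the resulting geometric side by the triangle inequality. After this reduction, the claim reduces to the estimate
\begin{equation*}
\sum_{c \equiv 0 \shortmod{k(\cF)}} \frac{|H(m_1,m_2;c)|}{c} \left|H_\infty\left(\frac{4\pi\sqrt{m_1 m_2}}{c}\right)\right| \ll \frac{f_\A(1) m_1 m_2}{T^2 k(\cF)}.
\end{equation*}

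The argument rests on two ingredients. First, a trivial bound $|H(m_1,m_2;c)| \ll f(1)$ on the generalized Kloosterman sum. This follows from the definition \eqref{GenKloostermanDef} together with the support constraint coming from geometric assumption \eqref{geo3}: bounding the integrand in absolute value by $\|f\|_\infty$ and using the properties collected in Theorem \ref{thmKP} (in particular the analogue of $|S(m,n;c)| \leq c$), one sees that the normalized sum is controlled by the peak value of $f$, which itself is absorbed into $f(1)$ via the factorization of $f$ into newform projectors and classical test functions.

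Second, a Bessel-type bound $|H_\infty(x)| \ll x^2 f_\infty(1) / T^2$ valid for $x \ll 1$ (with $|H_\infty(x)| \ll f_\infty(1)$ being a safe fallback elsewhere). The normalizing factor $(t^2+1/4)/T^2$ built into both \eqref{eq:hdefWindowVersion} and \eqref{eq:hdefInitialSegmentVersion} is precisely what produces the $x^2$ gain: using the Bessel equation $(\nu^2-x^2)J_\nu(x) = x(xJ_\nu'(x))'$ with $\nu = 2it$, one converts the $(t^2+1/4)/T^2$ factor in the integrand of \eqref{eq:htoH} into a second-order differential operator in $x$, and integration by parts then extracts $x^2/T^2$ (the lower-order term, of size $(1-x^2)/T^2 \cdot f_\infty(1)$, is handled by a routine majorization using the asymptotics $|\Gamma(1+2it)\cosh(\pi t)|^{-1} \ll |t|^{-1/2}$ for large $t$).

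Combining,
\begin{equation*}
\sum_{c \equiv 0 \shortmod{k(\cF)}} \frac{|H(m_1,m_2;c)|}{c} \left|H_\infty\left(\frac{4\pi\sqrt{m_1 m_2}}{c}\right)\right| \ll \frac{f(1) f_\infty(1) m_1 m_2}{T^2} \sum_{c \equiv 0 \shortmod{k(\cF)}} \frac{1}{c^3},
\end{equation*}
and since the residual sum is $O(k(\cF)^{-3}) \leq O(k(\cF)^{-1})$ and $f_\A(1) = f_\infty(1) f(1)$, the claimed estimate follows. The main technical obstacle is verifying the small-$x$ bound on $H_\infty(x)$ uniformly in the two archimedean weights, but this is essentially standard; all other steps are purely bookkeeping once Theorem \ref{theoGeomSpec} is in hand.
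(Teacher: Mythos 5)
Your overall strategy is the same as the paper's (apply Theorem \ref{theoGeomSpec}, then estimate the off-diagonal trivially via a pointwise bound on $H$ and a small-argument bound on $H_\infty$), but your first ingredient is false as stated. The uniform-in-$c$ bound $|H(m_1,m_2;c)| \ll f(1)$ cannot hold: already for the unramified test function $f=\bigotimes_p 1_{ZK_p}$ (so $f(1)=1$) one has $H(m,n;c)=S(m,n;c)$, whose size is typically of order $\sqrt{c}$, not $O(1)$. The correct trivial bound is the one in Theorem \ref{thmKP}(5), namely $|H(m_1,m_2;c)| \leq \|f\|_{L^\infty}\, c\, y$, together with $\|f\|_{L^\infty} \leq f(1)$ from Lemma \ref{projection_upper_bound}; it grows linearly in the modulus, exactly as $|S(m,n;c)|\le c$ does. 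With this corrected bound your computation still closes, but the bookkeeping changes: each term is $\ll y f(1) f_\infty(1) m_1m_2/(T^2 c^2)$, the $c$-sum is $\sum_{c \equiv 0 \pmod{k(\cF)}} c^{-2} \asymp k(\cF)^{-2}$, and one finishes with $y \leq k(\cF)$ (Lemma \ref{ccondition} and Proposition/Definition \ref{def:geom_cond}), giving $f_\A(1)m_1m_2/(T^2 k(\cF))$. This is precisely the paper's argument; your $\sum c^{-3}$ and the step ``$O(k(\cF)^{-3})\le O(k(\cF)^{-1})$'' are artifacts of the incorrect bound.

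Your second ingredient, $H_\infty(x) \ll f_\infty(1)(x/T)^2$, is exactly the paper's Lemma \ref{Hboundprop} (quoted from Jutila--Motohashi, with the initial-segment weight treated similarly, and valid for all $x>0$, so no large-$x$ fallback is needed). However, the derivation you sketch does not work: there is no $x$-integration in \eqref{eq:htoH} to integrate by parts in, and rewriting $t^2 J_{2it}(x)$ via the Bessel equation as $x$-derivatives of $J_{2it}$ extracts no power of $x$ (for small $x$ one has $|(x/2)^{2it}|=1$, so $xJ_{2it}'(x)$ is comparable in size to $J_{2it}(x)$); likewise the majorization $|\Gamma(1+2it)\cosh(\pi t)|^{-1}\ll |t|^{-1/2}$ yields no $x$-decay at all. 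The genuine source of the $x^2$ gain is a contour shift in $t$: the weights \eqref{eq:hdefWindowVersion} and \eqref{eq:hdefInitialSegmentVersion} are entire, the factor $t^2+\tfrac14$ cancels the pole of $1/\cosh(\pi t)$ at $t=\pm i/2$, so one may move the integration to $\mathrm{Im}\,t=-1$ (staying below the next pole at $-3i/2$) and use $J_\nu(x)\ll (x/2)^{\mathrm{Re}\,\nu}/|\Gamma(\nu+1)|$ with $\mathrm{Re}(2it)=2$. Either carry out that shift or cite the literature as the paper does; as written, this step is a gap.
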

We emphasize that we made no effort for optimality in Lemma \ref{lemWeylLaw}, including at the archimedean place, but are rather just recording a simple bound.
As an illustration, we only use the trivial bound on the generalized Kloosterman sums in this proof, and any non-trivial bound on the ramified part of the generalized Kloosterman sums would improve the error term in \eqref{eq:lemWeylLaw}.  
Note that by \eqref{eq:delta_vs_f1} the main term in \eqref{eq:lemWeylLaw} is larger than the error term as soon as there exists $\delta>0$ so that $\frac{T^2 k(\cF)}{m_1m_2} \gg f_\A(1)^\delta$. Particularly pleasing is the shape of the main term in the following corollary. 
\begin{mycoro}[Harmonically-weighted Weyl-Selberg Law]\label{thmWeylLaw}
Let $h_\infty$ be one of the two test functions given by \eqref{eq:hdefWindowVersion} or \eqref{eq:hdefInitialSegmentVersion}. If $f$ satisfies Geometric Assumption \eqref{geo3} and is a newform projector,  then we have
\begin{equation}\label{eq:thmWeylLaw}
\sum_{\pi \in \cF_0(f)}  \frac{h_\infty(t_\pi)}{\mathcal{L}_\pi^*(1)}  + (cts.) = \vol( \overline{G}(\Q) \backslash \overline{G}(\A)) f_\infty(1) \prod_p \int_{\cF_p(f)} \frac{1}{\mathcal{L}_{\pi_p}(1)} \,d\widehat{\mu}(\pi_p)  + O\left( \frac{f_\A(1)}{T^2 k(\cF)}\right).
\end{equation}
\end{mycoro}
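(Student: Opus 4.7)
The plan is to deduce Corollary~\ref{thmWeylLaw} from Lemma~\ref{lemWeylLaw} by specializing to $m_1 = m_2 = 1$ and interpreting both sides in the language of Plancherel measure and the Rankin--Selberg residue, using crucially the fact that $f$ is a newform projector.

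First, I would set $m_1 = m_2 = 1$ in Lemma~\ref{lemWeylLaw}. The coprimality condition $(m_1 m_2, N) = 1$ is automatic, and $\lambda_\pi(1) = 1$, so Lemma~\ref{lemWeylLaw} yields
\[
\sum_{\pi \in \cF_0(f)} h_\infty(t_\pi)\, w(\pi, f) + (\text{cts.}) = \delta + O\!\left( \frac{f_\A(1)}{T^2 k(\cF)}\right).
\]
It remains to rewrite $w(\pi,f)$ and $\delta$ in the desired form, and to peel off a constant factor of $2\xi(2)$.

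Second, I would simplify $w(\pi, f)$. Because each $f_p$ is a newform projector, the space $\pi_f^{K_\infty}$ defined in \eqref{pifdef} is one-dimensional, spanned by the normalized newform $\varphi_0 \in \pi$, so the basis $\cB_f(\pi)$ of \eqref{Eq:SpectralAssumption} has a single element. Consequently, for every $\pi \in \cF_p(f)$ one has $c(\pi_p) = v_p(N)$, i.e.\ $q(\pi) = N$, so $N/q(\pi) = 1$ and the factor $\rho_\pi(N/q(\pi))$ appearing in \eqref{weights_explicit} is identically $1$. This gives the clean identification
\[
w(\pi, f) = \frac{1}{2 \xi(2)\, \mathcal{L}_\pi^*(1)}.
\]

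Third, I would evaluate $\delta = \delta_\infty \cdot \delta_\fin$. Since $f$ is a newform projector at every finite prime, \eqref{eq:deltapnewformproj} gives
\[
\delta_p = \int_{\cF_p(f)} \frac{1}{\mathcal{L}_{\pi_p}(1)}\, d\widehat{\mu}(\pi_p),
\]
while the archimedean Plancherel identity \eqref{eq:archPlancherelIntro} gives $\delta_\infty = f_\infty(1)$. Collecting everything and multiplying through by $2\xi(2) = \vol(\overline{G}(\Q) \backslash \overline{G}(\A))$, the claimed main term emerges, and the error term absorbs this absolute constant.

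The only genuinely substantive step here is the verification that $\rho_\pi(N/q(\pi)) = 1$ whenever $f$ is a pure newform projector — equivalently, that $q(\pi) = N$ for every $\pi \in \cF_0(f)$. This follows from the local discussion in Section~\ref{sec:cond_of_reps_vs_f1} together with Proposition~\ref{mainprop}: the newform projector $f_p$ factors through the newform line and forces any $\pi \in \cF_p(f)$ to have conductor equal to $v_p(N)$. Everything else in the proof is bookkeeping of normalizations.
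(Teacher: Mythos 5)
Your overall route is the paper's: specialize Lemma \ref{lemWeylLaw} to $m_1=m_2=1$, identify $\delta_\infty=f_\infty(1)$ and $\delta_p$ via \eqref{eq:deltapnewformproj}, and pull out the constant $2\xi(2)=\vol(\overline{G}(\Q)\backslash\overline{G}(\A))$; the treatment of the diagonal term is fine. The gap is in your identification of $w(\pi,f)$. You claim that a newform projector forces $c(\pi_p)=v_p(N)$, i.e.\ $q(\pi)=N$, for every $\pi\in\cF_0(f)$, ``by Section \ref{sec:cond_of_reps_vs_f1} and Proposition \ref{mainprop}.'' Those results give only divisibility and inequality relations (e.g.\ $q(\pi)\mid N$, and bounds relating $N_p$, the maximal conductor in $\cF_p(f)$, and $f_p(1)$); they do not pin the conductor of every member of the family to the level, and the claim is in fact false under the corollary's hypotheses. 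For instance, at a single prime the sum $f_\chi+f_\xi$ with $c(\chi)=c_0+1$ and $E/\Q_p$ ramified is again a newform projector whose support is controlled by $y=p^{c_0+1}$ (so geometric assumption \eqref{geo3} holds), but its local family mixes conductor exponents $2c_0+1$ and $2c_0+2$, so the members of smaller conductor have $q(\pi)\neq N$. Relatedly, \eqref{weights_explicit} is not the right tool here: its derivation in Section \ref{theoGeomSpecProof} uses the Gram--Schmidt oldform basis $GS_f$, which is an orthonormal basis of $\pi_f^{K_\infty}$ only when the full oldspace is selected; for a newform projector with $q(\pi)<N$ that derivation does not apply, so you cannot simply quote \eqref{weights_explicit} and set $\rho_\pi(N/q(\pi))=1$.

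The fix is the paper's (shorter) argument, which never compares $q(\pi)$ with $N$: since $f$ is a newform projector, $\cB_f(\pi)$ consists of the single $L^2$-normalized newform $\varphi_0$, so for $(m,N)=1$ one has $a_{u_{\varphi_0}}(m)=a_{u_{\varphi_0}}(1)\lambda_\pi(m)$, hence \eqref{Eq:SpectralAssumption} holds with $w(\pi,f)=|a_{u_{\varphi_0}}(1)|^2$, and this equals $\frac{1}{2\xi(2)\mathcal{L}^*_\pi(1)}$ by the first-Fourier-coefficient computation cited as \cite[(6.4)]{PetrowYoungCoset}. With that substitution, your remaining steps (the values of $\delta_p$ and $\delta_\infty$, and multiplying through by $2\xi(2)$, which is harmless in the error term) go through and recover \eqref{eq:thmWeylLaw}.
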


See Section \ref{sec:WeylLaw} for the proofs of Lemma \ref{lemWeylLaw}  and Corollary \ref{thmWeylLaw}. For the holomorphic/discrete series variation alluded to at the beginning of Section \ref{sec:introWeylLaw}, note that equation \eqref{H_infty_near0} used in the proof of Lemma \ref{lemWeylLaw} may no longer hold due the possible presence of weight $2$ holomorphic forms. In this situation, the trivial bound \eqref{Hbound_proof_of_lemWeylLaw} on Kloosterman sums is no longer sufficient to prove the analogue of Lemma \ref{lemWeylLaw}. Instead, we may factor $H(m,n;c)$ by Theorem \ref{thmKP}(3) and use the classical Weil bound on the unramified factor to conclude the analogues of Lemma \ref{lemWeylLaw}  and Corollary \ref{thmWeylLaw}.

\begin{myrema}
We have called Corollary \ref{thmWeylLaw} a Weyl-Selberg Law (following terminology of Venkov \cite{VenkovWeylSelberg}, e.g.) and not a Weyl Law, as the left side of \eqref{eq:thmWeylLaw} includes continuous as well as cuspidal spectrum. Moreover we emphasize that Corollary \ref{thmWeylLaw} is only a \emph{harmonically-weighted} Weyl-Selberg Law, since we have made no attempt to obtain a sharp cut-off in the archimedean aspect and have retained the weight $\mathcal{L}_\pi^*(1)^{-1}$ in the non-archimedean aspect. Despite these nominal caveats, Corollary \ref{thmWeylLaw} is the statement that turns out to be useful elsewhere in this paper. 
We also mention that there is a well-known method for removing the harmonic weights, as in \cite[Section 3]{KowalskiMichel}.  In addition, the continuous spectrum contribution to \eqref{eq:thmWeylLaw} may often be bounded in a straightforward fashion using explicit information on the Eisenstein series.  A particularly simple case occurs if each $\pi \in \mathcal{F}_0(f)$ is supercuspidal at some prime $p$, since then the continuous spectrum is empty.
\end{myrema}

\begin{myrema}
As mentioned in the introduction, a Weyl law for cusp forms with specified local components was obtained by Palm \cite[Thm.\ 3.2.1]{Palm} in his thesis. We also would like to point out the nice recent work of Knightly \cite{Knightly_counting_supercuspidal_newforms}, who obtained, among other results, dimension formulas for spaces of cusp forms with specified supercuspidal local components using a simple trace formula. In a different direction, Kim, Shin and Templier \cite{KimShinTemplier} gave asymptotics for automorphic representations with specified supercuspidal local components in a very general setting. 
\end{myrema}

\begin{myrema}
Corollary \ref{thmWeylLaw} can be interpreted as an instance of a general equidistribution statement for cusp forms. Let $\mathcal{A}_0(G/k)$ be the set of all unitary cuspidal automorphic representations of $G$ over a number field $k$. Drawing inspiration from the work of Brumley and Mili\'cevi\'c \cite[\S 1.1, \S 2]{BrumleyMilicevic}, who studied the universal family $\mathcal{A}_0(\GL_n/k)$ ordered by analytic conductor, one expects that for any sufficiently well-behaved test function $h$ on $\mathcal{A}_0(G/k)$
\begin{equation}\label{eq:equidistribution}
\sum_{\pi \in \mathcal{A}_0(G/k)} h(\pi) \sim \vol( G(k) \backslash G(\A)^1) \int_{\pi \in {G(\A)^1}^{\wedge}} h(\pi) \, d \widehat{\mu}(\pi),
\end{equation}
 as the average analytic conductor of the effective support of $h$ tends to infinity. 
 Indeed, Brumley and Mili\'cevi\'c (Thm.\ 1.2) prove for $G=\GL_n$ over a number field that if $h$ is the indicator function of forms having analytic conductor $\leq Q$ that \eqref{eq:equidistribution} holds as $Q \to \infty$ with an explicit effective savings of $(\log Q)^{-1}$ over the main term. To see this, follow the proof of their Theorem 1.2, but instead of the final sentence of loc.\ cit.\ Lemma 12.1, use the final displayed equation in loc.\ cit.\ Proof of Proposition 6.1 and Corollary 6.2 to express the main term of loc.\ cit.\ (12.2) summed over all $\mathfrak{q}$ and $\mathfrak{d} \mid \mathfrak{q}$ as the adelic Plancherel volume of a conductor ball. 

Corollary \ref{thmWeylLaw} is also an instance of \eqref{eq:equidistribution} in the case that $G = \PGL_2$ and $k=\Q$ and with $h$ the harmonic weights given by $h(\pi) = h_\infty(t_\pi)/\mathcal{L}_\pi^*(1)$.  
\end{myrema}

\subsection{Large sieve inequality}\label{sec:intro:LSI}  
As  remarked at the beginning of Section \ref{sec:introWeylLaw}, here we consider \emph{families} of automorphic representations, which in practice means that certain implied constants should hold uniformly within a given family. To streamline the exposition we focus on the Maass form / spherical at $\infty$ case, but claim that the results of this section go through in the holomorphic / discrete series variation. See Remark \ref{rem:LSI_holomorphic}.

We now propose a framework for optimal large sieve inequalities. Let $\cF$ be a finite set of cuspidal automorphic representations of $\GL_2$ over $\Q$ with trivial central character all having the same (finite) conductor $q=q(\cF)$.  
Suppose that there exists a pure tensor $f \in \cH_{\rm fin}$ and an $h_\infty$ as in the PBK formula such that 
$\cF \subseteq \cF_0(f)$ and with the effective support (see Section \ref{sec:introWeylLaw} for definition) of $h_\infty$ containing the spectral parameters $\{ t_\pi : \pi \in \cF\}$. We will show in Theorem \ref{thm:abstractversion} that $\cF$ satisfies an optimal large sieve inequality if the test function $f$ satisfies the hypotheses introduced next. 

Let  $T= T(\cF)$ be the infimum of the $T\geq 0$ such that the set of spectral parameters $\{t_\pi : \pi \in \cF\}$ is contained in $[-T,T]$. 
\begin{myhypothesis}[Trace formula (TF)]\label{hypTF}
Suppose that $f \in \cH_{\rm fin}$ satisfies the hypotheses of Theorem \ref{theoGeomSpec}. 
\end{myhypothesis}
We assume that Hypothesis \ref{hypTF} (TF) holds for the remainder of this section. 
The next hypothesis encodes the assumption that $\cF_0(f)$ is not too much larger than $\cF$. 
\begin{myhypothesis}[$\cF_0(f)$ not much larger than $\cF$ (NmL)]\label{hypNmL}
We suppose that 
\begin{itemize}
\item $f \in \cH_{\rm fin}$  has  $N \mid q^{\infty}$ and  $\cF \subseteq \cF_0(f)$,
\item if $T\ll q^\eps$ for all $\eps>0$, then $h_\infty$ is given by  \eqref{eq:hdefInitialSegmentVersion}  with the spectral parameters of $\cF$  in the effective support of $h_\infty$, 
\item if there exists uniform $C, \delta>0$ such that $T> C q^\delta$, then $h_\infty$ is given by \eqref{eq:hdefWindowVersion} with $T^\eps \ll \Delta \ll T^{1-\eps}$ and the spectral parameters of $\cF$ in the effective support of $h_\infty$,
\end{itemize} 
and $f$, $h_\infty$ are such that
\begin{equation}\label{not_too_much_larger}
\sum_{\pi \in \cF_0(f)} h_\infty(t_\pi) w(\pi, f) + (\text{ cts. })= |\cF| (qT)^{o(1)}
\end{equation}
where the weights $w(\pi, f)$ are as in Theorem \ref{theoGeomSpec}.
\end{myhypothesis}
 In practice, the existence of appropriate $h_\infty$ satisfing Hypothesis \eqref{hypNmL} (NmL) can often be achieved after partitioning $\cF$ into subsets according to the sizes of the archimedean spectral parameters $t_\pi$ of $\pi \in \cF$ relative to $q$. See the example at the end of this section.
 
Next, we need a hypothesis asserting some control on the generalized Kloosterman sums $H(m,n;c)$ of $f$. In fact, we do not need a bound on $H(m,n;c)$ itself, but only on its Fourier/Mellin transform for ramified moduli. Note that $c\in \Z$ for any non-vanishing $H(m,n;c)$  by Hypothesis \ref{hypTF} (TF), see Lemma \ref{specControlonH}(4).  For a Dirichlet character $\chi \pmod{c}$, let
 \begin{equation}
 \label{eq:KloostermanFourierTransform}
  \widehat{H}(\chi) = \frac{1}{\varphi(c)} \sumstar_{y \shortmod{c}} H(y,1;c) \overline{\chi}(y),
  \end{equation}
  so that Fourier inversion gives
  \begin{equation}
  \label{eq:FourierInversion}
  H(y,1;c) = \sum_{\chi \shortmod{c}} \widehat{H}(\chi)  \chi(y).
  \end{equation}
   \begin{myhypothesis}[Fourier transform bound (FTB)]\label{hypFTB}
Suppose that for any $c \mid N^\infty$ and $\chi \pmod{c}$ we have
 \begin{equation}\label{eq:hypFTB}
  \|\widehat{H}\|_{\infty} := \max_{\chi \shortmod{c}} |\widehat{H}(\chi)| \ll f(1) c^{\varepsilon}
 \end{equation}  uniformly in $f$ and  for all $\eps>0$. 
\end{myhypothesis}
Hypothesis FTB reduces to checking local statements at each $p\mid N$. Indeed, suppose $\chi$ is a Dirichlet character modulo $c$ with factorization $\chi = \prod_{p \mid c} \chi_p$ and for each $p \mid c$ we write $c=c_0p^{v_p(c)}$. Then, by \eqref{Hfactorization}, Lemma \ref{modulusshift} and \eqref{Hpm,n=Hpmn} we have $$\widehat{H}(\chi) = \prod_{p \mid c}\overline{\chi}(c_0)^2 \widehat{H}_p(\chi_p,v_p(c)),$$
where for $\alpha$ a Dirichlet character with $p$-power conductor (equivalently, a character of $\Z_p^\times$) and $k\geq 0$ we have set
\begin{equation}\label{intro:HhatLocalDef}
\widehat{H}_p(\alpha,k) = \frac{1}{\varphi(p^k)} \sumstar_{y \shortmod{p^k}} H_p(y,1;p^k) \overline{\alpha(y)}= \zeta_p(1)\int_{\Z_p^\times} H_p(y,1;p^k) \overline{\alpha(y)} \,dy
\end{equation}
with $dy$ the additive Haar measure that gives $\Z_p$ volume 1. Thus, to verify Hypothesis \ref{hypFTB} (FTB), it suffices to show that for $p \mid N$ and all $\alpha$ with $p$-power conductor and $k\geq 0$ that \begin{equation}\label{hyp_FTB_local_version}
\widehat{H}_p(\alpha,k) \ll f_p(1),
\end{equation}
with implicit constants independent of $p, \alpha, k, f_p$, but possibly depending on the family in which $f$ varies. 

Note also that if Hypothesis \ref{hypFTB} (FTB) holds (with $c|N^{\infty}$), then the bound \eqref{eq:hypFTB} holds for any character $\chi$ of any modulus $c$ since at primes away from $N$ the generalized Kloosterman sum $H(m,n;c)$ reduces to the classical Kloosterman sum, and we easily derive the required bounds.

Finally, we state our last hypothesis.  
\begin{myhypothesis}[Conductor versus size of family (CvF)]\label{hypCvF}
 We suppose that
 \begin{equation}
  k(\cF) \gg f(1)^{1-\eps}
 \end{equation}
uniformly in $f$ and  for all $\eps>0$. 
\end{myhypothesis} 
Again, note that to verify Hypothesis \ref{hypCvF} (CvF), it suffices (using \eqref{eq:N_vs_f1}) to show for $p \mid N$ that \begin{equation}\label{hypCvF_local_version}
p^{k_p} \gg f_p(1)
\end{equation}
with implicit constants independent of $p, f_p$, but possibly depending on the family in which $f$ varies.  

Here is an example application of Hypothesis \ref{hypCvF} (CvF), which is moreover used in the proof of the following theorem. 
\begin{mylemma}\label{fA(1)_smaller_than_harmonic_family}
Let $h_\infty$ be one of the two test functions given by \eqref{eq:hdefWindowVersion} or \eqref{eq:hdefInitialSegmentVersion}. If $f \in \cH_{\rm fin}$ satisfies Hypotheses \ref{hypTF} (TF) and \ref{hypCvF} (CvF), and $w(\pi,f)$ are as in Theorem \ref{theoGeomSpec}, then 
\begin{equation}\label{eq:fA(1)_smaller_than_harmonic_family}
f_\A(1) \ll_\eps f(1)^\eps\Big(\sum_{\pi \in \cF_0(f)} h_\infty(t_\pi) w(\pi, f) + (\text{ cts. }) \Big).
\end{equation}
\end{mylemma}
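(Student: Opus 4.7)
The plan is to apply Lemma \ref{lemWeylLaw} with $m_1 = m_2 = 1$ and use Hypothesis \ref{hypCvF} (CvF) to check that the main term dominates the error term. Setting $m_1 = m_2 = 1$ trivially satisfies the coprimality assumption $(m_1 m_2, N) = 1$, and $\lambda_\pi(1) = 1$. Lemma \ref{lemWeylLaw} then yields
\begin{equation}\label{eq:pf_plan_weyl}
\sum_{\pi \in \cF_0(f)} h_\infty(t_\pi) w(\pi, f) + (\text{cts.}) = \delta + O\!\left(\frac{f_\A(1)}{T^2 k(\cF)}\right).
\end{equation}
The left hand side of \eqref{eq:pf_plan_weyl} is a sum of non-negative quantities: $h_\infty \geq 0$ for the two choices \eqref{eq:hdefWindowVersion} and \eqref{eq:hdefInitialSegmentVersion}, the harmonic weights $w(\pi,f) > 0$ by \eqref{weights_explicit}, and the continuous spectrum contribution at $m_1 = m_2$ is likewise non-negative.

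Next, I evaluate the main term in terms of $f_\A(1)$. By \eqref{f_Adef} we have $f_\A(1) = f_\infty(1) f(1)$, and by \eqref{eq:delta_vs_f1} (combined with the implicit bound $N \leq f(1)^{O(1)}$ coming from \eqref{eq:N_vs_f1}), we obtain $\delta_\fin = f(1)^{1+o(1)}$ as $f(1) \to \infty$. Hence
\begin{equation}\label{eq:pf_plan_delta}
\delta = f_\infty(1)\,\delta_\fin = f_\A(1)\, f(1)^{o(1)}.
\end{equation}
For the error term, Hypothesis \ref{hypCvF} (CvF) gives $k(\cF) \gg_\eps f(1)^{1-\eps}$, and since $T \geq 1$ (the spectral parameters are real),
\begin{equation}\label{eq:pf_plan_err}
\frac{f_\A(1)}{T^2 k(\cF)} \ll_\eps f_\A(1)\, f(1)^{-1+\eps}.
\end{equation}

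Combining \eqref{eq:pf_plan_delta} and \eqref{eq:pf_plan_err}, the error term is smaller than the main term $\delta$ by a factor of at most $f(1)^{-1+O(\eps)}$, which tends to $0$ as $f(1) \to \infty$ (for $\eps$ sufficiently small). Therefore the main term dominates \eqref{eq:pf_plan_weyl}, giving
\[
\sum_{\pi \in \cF_0(f)} h_\infty(t_\pi) w(\pi, f) + (\text{cts.}) \gg \delta \gg_\eps f_\A(1)\, f(1)^{-\eps},
\]
which upon rearrangement is exactly \eqref{eq:fA(1)_smaller_than_harmonic_family}. There is no real obstacle here: the lemma is a straightforward consequence of the non-trivial content already packaged into Lemma \ref{lemWeylLaw} and Hypothesis \ref{hypCvF} (CvF), the only conceptual point being the identification $\delta \asymp f_\A(1)$ up to $f(1)^{o(1)}$ coming from the local analysis in Section \ref{diagwt_vs_f1}.
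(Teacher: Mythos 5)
Your proof is correct and follows essentially the same route as the paper: apply Lemma \ref{lemWeylLaw} with $m_1=m_2=1$, identify $\delta = f_\A(1)f(1)^{o(1)}$ via \eqref{eq:delta_vs_f1}, and use Hypothesis CvF to make the error term subordinate. The positivity remarks about $h_\infty$, $w(\pi,f)$ and the continuous spectrum are harmless but not needed, since the domination of the main term over the error already gives the lower bound on the spectral side.
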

\begin{proof}
By Lemma \ref{lemWeylLaw} with $m_1=m_2=1$, \eqref{eq:delta_vs_f1}, and the definition $\delta_\infty = f_\infty(1)$, we have 
$$ f_\A(1)\Big( f(1)^{o(1)} + O \Big(\frac{1}{T^2 k(\cF)}\Big)\Big) =  \sum_{\pi \in \cF_0(f)} h_\infty(t_\pi)w(\pi, f) + ({\rm cts.}).$$ 
By Hypothesis CvF, we have that the sum in parentheses on the left is non-vanishing and $\gg f(1)^{-\eps}$ for $f_\A(1)$ sufficiently large.
\end{proof}

Recall we write $\lambda_\pi(n)$ for the $n$th Hecke eigenvalue of $\pi$, normalized so that the Ramanujan conjecture predicts that $|\lambda_\pi(n)|\leq d(n)$ for all $n \in \N$.

\begin{mytheo}[Optimal Large Sieve Inequality]\label{thm:abstractversion}
Suppose that $\cF$ is a finite set of trivial central character automorphic representations for $\GL_2$ over $\Q$, all with (finite) conductor $q$ and spectral parameters contained in $[-T,T]$. 
Suppose that there exists a pure tensor $f \in \cH_{\rm fin}$ such that hypotheses TF, NmL, FTB and CvF hold for $f,\cF$. Then for any sequence of complex numbers $(a_n)_{n \in \N}$ we have
\begin{equation}\label{eq:LSIgoal} 
 \sum_{\pi \in \cF} \Big| \sum_{n \leq X} a_n \lambda_\pi(n)\Big|^2 \ll_\eps (|\cF|+ X) (XqT)^\eps
\sum_{n \leq X} |a_n|^2
. 
\end{equation}
\end{mytheo}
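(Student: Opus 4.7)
My plan is to apply the generalized PBK formula of Theorem \ref{theoGeomSpec} to the opened bilinear form, in the Deshouillers--Iwaniec spirit. First I would dyadically decompose the spectral interval $[-T,T]$ into $O(\log T)$ windows of radius $\Delta\geq 1$, and for each window choose $h_\infty$ as in \eqref{eq:hdefWindowVersion} so that $h_\infty(t_\pi)\gg 1$ whenever $t_\pi$ lies in the window. Since $w(\pi,f)\gg (qT)^{-\eps}$ by the standard bounds recorded after \eqref{weights_explicit}, and since the continuous-spectrum contribution is manifestly nonnegative when the square $|\sum a_n\lambda_\pi(n)|^2$ is opened, positivity yields
\begin{equation*}
\sum_{\pi\in\cF}\bigg|\sum_{n\leq X}a_n\lambda_\pi(n)\bigg|^2
\ll (qT)^{\eps}\sum_{\text{windows}}\Big(\sum_{\pi\in\cF_0(f)}h_\infty(t_\pi)w(\pi,f)\bigg|\sum_n a_n\lambda_\pi(n)\bigg|^2+({\rm cts.})\Big).
\end{equation*}
Splitting each index as $n=n_1n_2$ with $n_1\mid N^\infty$ and $(n_2,N)=1$, and using multiplicativity of Hecke eigenvalues across coprime factors, reduces the matter to bilinear forms in $(n_2,m_2)$ with $(n_2m_2,N)=1$, to which Theorem \ref{theoGeomSpec} applies.

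\textbf{Diagonal contribution.} Opening the square and applying Theorem \ref{theoGeomSpec} decomposes the right-hand side into a diagonal $\delta_{n=m}\,\delta \sum|a_n|^2$ and an off-diagonal sum of generalized Kloosterman sums. By Hypothesis NmL together with \eqref{eq:delta_vs_f1} and Lemma \ref{fA(1)_smaller_than_harmonic_family} (applied with $m_1=m_2=1$), one has $\delta\ll f_\A(1)\ll |\cF|(qT)^\eps$ per window, so summing over the $O(\log T)$ windows yields the $|\cF|(XqT)^\eps\|a\|_2^2$ term in \eqref{eq:LSIgoal}.

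\textbf{Off-diagonal contribution.} The remaining term has the form
\begin{equation*}
\sum_{n,m\leq X} a_n\bar a_m \sum_{c\equiv 0\shortmod{k(\cF)}}\frac{H(n,m;c)}{c}H_\infty\!\left(\tfrac{4\pi\sqrt{nm}}{c}\right).
\end{equation*}
I would factor $c=c_N c'$ with $c_N$ supported on primes dividing $N$ and $(c',N)=1$; multiplicativity $H(n,m;c)=\prod_p H_p(n,m;c)$ then splits the generalized Kloosterman sum into a ramified part at $c_N$ and a (possibly twisted) standard Kloosterman sum $S$ at $c'$. Using Lemma \ref{modulusshift} to reduce the ramified factor to $H_{\rm ram}(nm\overline{c'}{}^2,1;c_N)$ and expanding in Dirichlet characters $\chi \bmod c_N$ via \eqref{eq:FourierInversion}, Hypothesis FTB supplies the uniform bound $|\widehat H(\chi)|\ll f(1)c_N^\eps$. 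For each fixed $\chi$, the sum over $c'$ together with $S$ and $H_\infty$ becomes a Deshouillers--Iwaniec-type bilinear form in the character-twisted sequence $(a_n\chi(n))_n$, bounded by $X(XqT)^\eps\|a\|_2^2$ via the classical additive large sieve (Weil plus dyadic decomposition of $c'$). The outer sum over $c_N$ is controlled by Hypothesis CvF: the number of admissible ramified moduli with $c_N\leq Y$ is $\ll Y/k(\cF)\ll Yf(1)^{-1+\eps}$, which exactly cancels the $f(1)$ factor coming from FTB; the number of characters $\chi \bmod c_N$ contributes only $(qT)^\eps$ since $c_N\mid N^\infty$ is polynomially bounded.

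\textbf{Main obstacle.} The chief technical hurdle will be executing this final balance with sufficient uniformity. Cauchy--Schwarz must be applied in the correct variable so that the FTB bound is paired against the CvF sparsity (and not vice versa), and the archimedean cut-off forcing $c\asymp \sqrt{nm}/T$ must be uniformly respected throughout the joint summation over $(c_N,c',\chi)$. The cleanest implementation likely requires first reducing the off-diagonal, via Cauchy--Schwarz, to an $L^2$ expression in the character-twisted sequences $(a_n\chi(n))$, after which the classical Deshouillers--Iwaniec argument furnishes the desired $X(XqT)^\eps\|a\|_2^2$ bound. Combining with the diagonal contribution then gives \eqref{eq:LSIgoal}.
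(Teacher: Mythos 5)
Your overall skeleton (reduce to coefficients coprime to the ramified primes, open the square and apply Theorem \ref{theoGeomSpec} with positivity, bound the diagonal via \eqref{eq:delta_vs_f1}, Lemma \ref{fA(1)_smaller_than_harmonic_family} and NmL, factor $c=c_Nc'$, expand the ramified Kloosterman factor in Dirichlet characters via \eqref{eq:FourierInversion}, separate variables archimedeanly, and finish with an additive large sieve balanced by FTB and CvF) is the same as the paper's. But the final bookkeeping contains a genuine error. You claim that ``the number of characters $\chi \bmod c_N$ contributes only $(qT)^\eps$''; this confuses the number of admissible ramified \emph{moduli} $c_N \mid N^\infty$ (which is indeed $(qT)^{o(1)}$) with the number of \emph{characters} modulo $c_N$, which is $\varphi(c_N)\asymp c_N \geq k(\cF) \gg f(1)^{1-\eps}$, i.e.\ comparable to the size of the family. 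With the correct count your strategy of bounding each $\chi$-twisted bilinear form separately and then summing over $\chi$ fails quantitatively: for fixed $(c_N,\chi)$ the hybrid large sieve over moduli $c'\asymp C/c_N$ gives (with the FTB factor $f(1)$ included) roughly $\frac{f_\infty(1)f(1)}{CP}\bigl(P(C/c_N)^2+X\bigr)\|\mathbf{a}\|^2$, and since $CP\asymp X$ the ``$X$'' (large-sieve diagonal) piece alone is $\asymp f_\A(1)\|\mathbf{a}\|^2$ per character; multiplying by the $\varphi(c_N)\gtrsim k(\cF)\asymp f(1)$ characters produces a contribution of size about $|\cF|\,f(1)\,\|\mathbf{a}\|^2$, which is far larger than the target $(|\cF|+X)(XqT)^\eps\|\mathbf{a}\|^2$ (e.g.\ for a supercuspidal family with $X\ll|\cF|$ this is off by a factor $\approx f(1)$). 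Your CvF cancellation ``$\#\{c_N\leq Y\}\ll Y/k(\cF)$ cancels $f(1)$'' only addresses the other (spacing) term; the loss occurs in the diagonal term of the large sieve, exactly where keeping the characters outside the mean square is fatal.

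The paper avoids this by never bounding per $\chi$: after FTB one keeps the full $\chi$-average inside the square, uses the elementary inequality $\sum_{\chi \shortmod{c_N}}|\sum_n b_n\chi(n)|^2 \leq \sum_{u \shortmod{c_N}}|\sum_{(n,c_N)=1} b_n e_{c_N}(un)|^2$, and then runs a single hybrid large sieve (Lemma \ref{lemma:largesieveclassicalvariant2}) over the combined well-spaced points $y/c_0+u/c_N$, whose spacing $\gg c_N/C^2$ yields \eqref{eq:SCboundLSIproof}; there the first term is cancelled by CvF and the second contributes only $f_\A(1)\ll|\cF|(qT)^\eps$ via Lemma \ref{qandf(1)}. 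You should restructure your off-diagonal argument this way (your ``main obstacle'' paragraph gestures at it, but the plan as written does not do it). A secondary point: Hypothesis NmL fixes a single $h_\infty$ (window or initial-segment) whose effective support contains the spectral parameters of $\cF$ and for which \eqref{not_too_much_larger} holds; your decomposition of $[-T,T]$ into $O(\log T)$ windows would require an NmL-type bound for each window's test function, which the hypotheses do not supply, whereas the paper works directly with the single $h_\infty$ given by NmL and handles the two regimes $T\gg q^\delta$ and $T=q^{o(1)}$ via Lemmas \ref{lemma:archimedeanWindowVersion} and \ref{lemma:archimedeanInitialSegmentVersion}.
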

\begin{myrema}\label{rem:LSI_holomorphic}
Theorem \ref{thm:abstractversion} also holds in the holomorphic / discrete series variation if we interpret the spectral parameter of a weight $\kappa$ holomorphic form to be $\frac{\kappa-1}{2}$ and make the natural modifications to Hypothesis \eqref{hypNmL} (NmL). Indeed following e.g.\ \cite[p.\ 84-86]{IwaniecTopics}, we may consider an average of weights $\kappa$ either in a wide window of width $T$ near the bottom of the spectrum cf.\ \eqref{eq:hdefInitialSegmentVersion}, or in a narrow window of width $\Delta$ at height $T$ in the spectrum cf.\ \eqref{eq:hdefWindowVersion}. It is by now well-known that the function $H_\infty(x)$ resulting from a smooth sum as above of several copies of the Petersson formula \eqref{theoGeomSpec_holomorphic}  enjoys substantially the same properties as the $H_\infty(x)$ appearing in the Maass form case. For instance, see \cite[\S 7]{YoungHybrid} for details and in particular an analogue of Lemma \ref{lemma:archimedeanWindowVersion}.
\end{myrema}

\begin{myrema}Hypotheses TF, FTB and CvF hold for the test functions $f_{\leq c}$, $f_\chi$, $f_\xi$, and $f_{\xi,n}$ presented in the `Examples' of Section \ref{intro:statement_of_results}, for which see Sections \ref{OLSI:classical}, \ref{OLSI:PS}, \ref{hypfrom_intro:SC} and \ref{sec:neighborhood_of_a_SC}. On the other hand, Hypothesis \ref{hypCvF} (CvF) \emph{fails} for the test function $f_{=c}$ in (horizontal) $p$-aspect. Indeed, for $f_p=f_{=c}$, one has $p^{k_p}=p^{c-1}$ while $f_{=c}(1) = 
p^{c}(1+O(p^{-1}))
$.

The features $\cF \subsetneq \cF_0(f)$ and Hypothesis \eqref{hypNmL} (NmL) of our framework for Large Sieve Inequalities serve to patch up the above issue with the test function $f_{=c}$, as explained in the forthcoming example.  
In addition, these conditions are used at the archimedean place, 
since we want $\cF$ to be finite, but only have access to holomorphic spectral weight functions $h_\infty$, which in particular cannot have compact support. \end{myrema}
\begin{myexam}
The classical Spectral Large Sieve Inequality is a special case of Theorem \ref{thm:abstractversion}. Indeed, set $$\mathcal{S}_{p^c,T} = \{\pi \in \mathcal{A}_{0}(\PGL_2/\Q) : c(\pi_p) = c \text{ and } |t_\pi| \leq T\}$$ with $T^2p^c\to \infty$.

To see this, first we partition the space $S:=[0,T] \cup (0,i/2)$ of potential spectral parameters into the disjoint union of $S_0:=([0,q^\eps] \cup (0,i/2))\cap S$ and its complement $(q^\eps,T]\cap S$. We further $(1+T^{-\eps})$-adically partition $(q^\eps,T]\cap S$, i.e. decompose it as a disjoint union $$(q^\eps,T] \cap S = \bigcup_{i>0}  [\frac{T}{(1+T^{-\eps})^{i-1}}, \frac{T}{(1+T^{-\eps})^i})\cap S  =: S_1 \cup S_2 \cup \ldots$$ with $S_i$ empty for $i$ sufficiently large. Let $\mathcal{S}^{(i)}_{p^c,T} := \{\pi \in \mathcal{S}_{p^c,T}: t_\pi \in S_i\}$, so that $\mathcal{S}_{p^c,T} = \cup_{i\geq 0} \mathcal{S}^{(i)}_{p^c,T}$ is a partition of $\mathcal{S}_{p^c,T}$ into $\ll T^{\eps}$ non-empty subsets. Thus, to prove an optimal LSI for $\mathcal{S}_{p^c,T}$ it suffices to do so for each $\mathcal{S}^{(i)}_{p^c,T}$.  For $i=0$ we choose $h_\infty$ to be the test function in \eqref{eq:hdefInitialSegmentVersion}  of width $\min(T,q^\eps)$, while for $i>0$ we choose $h_\infty$ to be the test function in \eqref{eq:hdefWindowVersion} with $\Delta =T^{1-\eps}$.

We take $f$ equal to $f_{\leq c}$ at $p$ and unramified elsewhere. Then $f$ satisfies Hypotheses TF, FTB and CvF (since this choice of $f$ satisfies $k_p = c$ in \eqref{kp:3egs}). 

We check Hypothesis \ref{hypNmL} (NmL). Since $N=p^c$, we have $N \mid p^\infty$ and $\mathcal{S}_{p^c,T}^{(i)}\subset \cF_0(f)$. In either case $i=0$ or $i>0$ the hypothesis on $h_\infty$ in the second or third bullet points of Hypothesis \ref{hypNmL} (NmL) hold by construction. The last statement \eqref{not_too_much_larger} of Hypothesis \ref{hypNmL} (NmL) is given by Lemma \ref{lemWeylLaw}. The Optimal Large Sieve Inequality \eqref{eq:LSIgoal} then holds for $\cF = \mathcal{S}_{p^c,T}^{(i)}$ by Theorem \ref{thm:abstractversion}, and for $\mathcal{S}_{p^c,T}$ by trivial summation over $i\geq 0$. 
\end{myexam}

\subsection{Moments of $L$-functions} 
Let  $\sigma$ be a supercuspidal representation of $\GL_2(\Q_p)$ with trivial central character. Let $\mathcal{S}_\sigma$ be the family of automorphic representations
\begin{equation}\label{Ssigma_def}
\mathcal{S}_\sigma := \{ \pi \in \mathcal{A}_{0}(\PGL_2/\Q) : \pi_p \simeq \sigma, \, \pi_\infty \text{ spherical with } |t_\pi| \leq 1000\}.
\end{equation}
Note that we have $\#\mathcal{S}_{\sigma} \asymp p^{\lceil c(\sigma)/2 \rceil}$ by Corollary \ref{thmWeylLaw}, \eqref{diagwt_egs} and \eqref{c(pi)c0}.
It is well-known that a large sieve inequality may be used to estimate certain moments of $L$-functions; see \cite[Section 7.9]{IK} for the method.
As a simple application of Theorem \ref{thm:abstractversion}, we have the following
Lindel\"of-on-average upper bound. 
\begin{mycoro} 
\label{coro:2ndmoment}
Let $\sigma$ be a supercuspidal representation of $\GL_2(\Q_p)$ with trivial central character. For all $\eps>0$  we have  
\begin{equation}
\sum_{\pi \in \mathcal{S}_\sigma} |L(1/2,\pi)|^2 \ll_\eps (p^{\lceil c(\sigma)/2\rceil})^{1+\eps}.
\end{equation}
\end{mycoro}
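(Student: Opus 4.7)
The plan is to combine an approximate functional equation for $L(1/2,\pi)$ with the optimal Large Sieve Inequality of Theorem \ref{thm:abstractversion}. For $\pi \in \mathcal{S}_\sigma$, with $\pi$ unramified at all finite places other than $p$ (which is the interpretation consistent with the size estimate $\#\mathcal{S}_\sigma \asymp p^{\lceil c(\sigma)/2\rceil}$ via Corollary \ref{thmWeylLaw}), the analytic conductor is $C(\pi) \asymp p^{c(\sigma)}$, since $|t_\pi|\leq 1000$. A standard approximate functional equation (e.g.\ \cite[Thm.\ 5.3]{IK}) writes
$$L(1/2,\pi) = \sum_{n\geq 1}\frac{\lambda_\pi(n)}{\sqrt n}V\!\left(\frac{n}{X}\right) + \varepsilon(\pi)\sum_{n\geq 1}\frac{\lambda_\pi(n)}{\sqrt n}\widetilde V\!\left(\frac{n}{X}\right),$$
with $X \asymp p^{c(\sigma)/2}$ and $V, \widetilde V$ smooth of rapid decay. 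A smooth dyadic decomposition in $n$ combined with Cauchy--Schwarz reduces bounding $\sum_{\pi\in\mathcal{S}_\sigma}|L(1/2,\pi)|^2$ to bounding
$$\sum_{\pi \in \mathcal{S}_\sigma} \Big| \sum_{n} a_n\lambda_\pi(n)\Big|^2$$
for $O(\log X)$ sequences $(a_n)$ supported in $n\asymp Y \leq X$, with $\|a\|_2^2 \ll 1$.

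The next step is to apply Theorem \ref{thm:abstractversion} with $\cF = \mathcal{S}_\sigma$. Choose the test function $f = \bigotimes_v f_v$ with $f_p = f_\xi$ for an appropriate pair $(E/\Q_p,\xi)$ with $\sigma \simeq \sigma(\mathrm{Ind}_{E}^{\Q_p}\xi)$ as in the Examples of Section \ref{intro:statement_of_results}, $f_v = 1_{ZK_v}$ at every other finite $v$, and $h_\infty$ as in \eqref{eq:hdefInitialSegmentVersion} with $T \asymp 1$ so that all spectral parameters of $\mathcal{S}_\sigma$ lie in its effective support. Hypotheses TF, FTB, and CvF for this $f$ are supplied by the Examples and by Section \ref{hypfrom_intro:SC}. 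For Hypothesis NmL, the crucial point is that by \eqref{intro:locfam:SC} the local family $\cF_\xi$ consists of at most three supercuspidal representations, all of conductor equal to $c(\sigma)$, so $\cF_0(f)$ differs from $\mathcal{S}_\sigma$ at worst by a factor of $3$ in cardinality; the harmonically weighted count on the left of \eqref{not_too_much_larger} is then evaluated by Corollary \ref{thmWeylLaw}, yielding the required bound $|\mathcal{S}_\sigma|(qT)^{o(1)}$ with $q = p^{c(\sigma)}$.

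With the hypotheses verified, Theorem \ref{thm:abstractversion} gives, for each dyadic block,
$$\sum_{\pi \in \mathcal{S}_\sigma}\Big|\sum_{n\asymp Y}a_n\lambda_\pi(n)\Big|^2 \ll_\varepsilon (|\mathcal{S}_\sigma| + Y)(YqT)^\varepsilon \|a\|_2^2.$$
Since $Y\leq X\asymp p^{c(\sigma)/2}\leq p^{\lceil c(\sigma)/2\rceil}\asymp |\mathcal{S}_\sigma|$, both terms in $|\mathcal{S}_\sigma|+Y$ are $\ll p^{\lceil c(\sigma)/2\rceil}$, and $q = p^{c(\sigma)}$ while $T\asymp 1$. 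Summing over the $O(\log X)$ dyadic blocks (the resulting logarithmic factors absorbed into $p^\varepsilon$) and symmetrically treating the dual sum yields the stated bound $(p^{\lceil c(\sigma)/2\rceil})^{1+\varepsilon}$.

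The main obstacle is verifying Hypothesis NmL carefully: one must ensure that the ambient family $\cF_0(f_\xi)$ does not exceed $\mathcal{S}_\sigma$ by more than an $(qT)^\varepsilon$ factor, which hinges on the at-most-three-element structure of $\cF_\xi$ recorded in \eqref{intro:locfam:SC}, and on the archimedean test function $h_\infty$ giving a harmonic weight of size $\asymp 1$ on the bounded spectral window $|t_\pi|\leq 1000$. Once NmL is in hand, the rest of the argument is the standard passage from the large sieve to a Lindel\"of-on-average second moment estimate via the approximate functional equation.
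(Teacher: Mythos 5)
Your proposal is correct and follows the same route the paper intends: the paper's own ``proof'' of Corollary \ref{coro:2ndmoment} is simply the citation to \cite[Section 7.9]{IK} for the standard passage from a large sieve to a Lindel\"of-on-average second moment (approximate functional equation with $X\asymp p^{c(\sigma)/2}$, dyadic decomposition, Cauchy--Schwarz, then Theorem \ref{thm:abstractversion}). Your verification of Hypothesis NmL via Corollary \ref{thmWeylLaw} and the bounded-size local family $\cF_\xi$ from \eqref{intro:locfam:SC} is exactly the right mechanism; you might additionally note that the continuous spectrum term vanishes identically since $\pi_p$ is supercuspidal, and that the finitely many exceptional cases (non-dihedral or small-conductor $\sigma$ at $p=2$, for which $f_\xi$ is not defined) are absorbed into the implied constant.
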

Corollary \ref{coro:2ndmoment} also holds if in \eqref{Ssigma_def} we replace the word ``spherical'' with ``discrete series'' and take the spectral parameter $t_\pi$ of a weight $\kappa$ discrete series to be $\frac{\kappa-1}{2}$ cf. Remark \ref{rem:LSI_holomorphic}. 

Let $\chi$ be a character of $\Q_p^\times$ whose restriction to $\Z_p^\times$ is not quadratic. Theorem \ref{thm:abstractversion} also gives a Lindel\"of-on-average upper bound for the 2nd moment of central values of $L$-functions over the family 
\begin{equation}\label{Schi_def}
\mathcal{S}_\chi := \{ \pi \in \mathcal{A}_{0}(\PGL_2/\Q) : \pi_p \simeq \pi(\chi \alpha^{i \theta},\chi^{-1}\alpha^{-i \theta}) \text{ for some } \theta \in \R \text{ and } |t_\pi| \leq 1000\}.
\end{equation}
However, such a second moment estimate already follows easily from previous cubic moment estimates \cite[Thm.\ 1.2]{PetrowYoungCoset} by H\"older's inequality. 

Since Corollary \ref{coro:2ndmoment} follows from a large sieve inequality, it cannot give a subconvex bound by general principles.  However, when $c(\sigma)$ is even, dropping all but one term recovers the convexity bound. In a forthcoming work, we intend to give a Lindel\"of-on-average bound for the cubic moment of central values of $L$-functions over $S_{\sigma}$ and similar families, which will recover strong subconvex bounds for these $L$-functions. 

\subsection{Acknowledgements} We thank Matteo Di Scipio for pointing out several minor mistakes in an earlier version of this paper, and especially for providing the justification for exchanging the summation and integration in Section \ref{sec:secondcellterms} of this paper. We thank Han Wu for a discussion surrounding the adelic pre-trace formula. Finally, we thank the referees for their thorough reading of the paper and their helpful comments. 

Part of this work was carried out on a visit of the 2nd and 3rd authors to the R\'enyi Institute / Erd\H os Center, Budapest. We thank the R\'enyi Institute and the organizers of the conference ``Automorphic Forms in Budapest 2022'' (Gergely Z\'abr\'adi,
P\'eter Maga, 
Gergely Harcos, 
\"Ozlem Imamo\u{g}lu, and 
\'Arp\'ad T\'oth) 
for their support and a pleasant and productive working conditions.

\subsection{Notation, conventions, and measure normalizations}
\subsubsection{Fields}
Sections \ref{sec:Unrefined_KTF} to \ref{sec:spec_assumption} are focused on the relative trace formula set-up for the PBK formula over the rationals $\Q$. Accordingly, in these sections we write $\Q_p$ for the field of $p$-adic numbers with ring of integers $\Z_p$ and absolute value $|\cdot|_p$. Let $\A$ and $\A_{\rm fin}$ denote the adeles and finite adeles of $\Q$, and $\widehat{\Z} = \prod_p \Z_p $ the maximal compact open subgroup of $\A_\fin$. 

On the other hand, the setting of Section \ref{sec:testfunctions} is that of general non-archimedean local fields (even though in Sections \ref{sec:egSupercuspidal_odd} through \ref{sec:intermediatefamily} we restrict to the case that the base field is $\Q_p$). Here, we write $F$ for a non-archimedean local field with ring of integers $\cO$ and absolute value $|\cdot|_F$. We recall the rest of the notation for non-archimedean local fields in Section \ref{sec:tesetfunctions_basic}. 

In any section of the paper, we write $\alpha$ for the quasicharacter of $F^\times$ defined by $\alpha: x \mapsto |x|_F$. 

\subsubsection{Additive characters}
Outside of Section \ref{sec:testfunctions}, we take $\psi$ to be the standard additive character $\psi : \A/ \Q \to \C^\times$, that is, $\psi = \prod_v \psi_v$, where \begin{equation}\label{psidef}\psi_v (x) = \begin{cases} e(-x_\infty) & \text{ if } v = \infty \\ e(\{x_p\}_p) & \text{ if } v=p \end{cases} \qquad (x \in \A),\end{equation} where $\{ \cdot \}_p : \Q_p \to \Q$ is the fractional part function.  

At the outset of Section \ref{sec:testfunctions}, $\psi$ is an arbitrary additive character of the non-archimedean local field $F$. We say $\psi \neq 1$ has conductor $c(\psi) = n$ if $\fp^n$ is the largest fractional ideal of $F$ on which $\psi$ is trivial.  In Section \ref{compactinductionbackground} \emph{only} we take $\psi$ to have conductor $1$ to match a convention in the compact induction theory of Bushnell-Henniart-Kutzko. On the other hand, from Remark \ref{remarkafterQ2lem} until the end of Section \ref{sec:testfunctions}, we assume that $\psi$ has conductor 0 (e.g.\ the one in \eqref{psidef}). 

If $E/F$ is a field extension, we denote by $\psi_E$ the additive character $\psi \circ \Tr_{E/F}$ of $E$. 

\subsubsection{Groups and subgroups}\label{notation-groupsandsubgroups}
Let $G$ be the algebraic group $G= \GL_2$, $Z$ be the subgroup of diagonal matrices of $G$, and $\overline{G}  = Z \backslash G= \PGL_2$. 

Let $N\subset B \subset G$ be the standard upper-triangular unipotent and Borel algebraic subgroups of $G$. Let $A$ be the subgroup of matrices of the form $a(y):= \left(\begin{smallmatrix} y & 0 \\ 0 & 1 \end{smallmatrix}\right)$ for $y$ in any commutative ring $R$. We have $B= ZAN=ZNA$. For any $x,t\in R$ let  $$n(x) = \left( \begin{smallmatrix} 1 & x\\ 0 & 1 \end{smallmatrix} \right) \quad \text{ and } \quad z(t)= \left( \begin{smallmatrix} t & 0\\ 0 & t \end{smallmatrix} \right).$$ 

Let $K_p=G(\Z_p)$ be the standard maximal compact subgroup of $G(\Q_p)$, and $K_\infty = \SO_2(\R)$. We write $Z = Z(\Q_p)$ when the prime $p$ is clear from context, e.g.\ $ZK_p$ denotes $Z(\Q_p)G(\Z_p)$. 

Let $K= \prod_{p} K_p = G(\widehat{\Z})$.  We also use the subgroups $K(N)\subseteq K_d(N)$,  $K_1(N)\subseteq K_0(N)$ of $K$ given by 
$$K(N)= \{\left(\begin{smallmatrix} a & b \\ c & d \end{smallmatrix} \right) \in K : a \equiv d \equiv 1 \shortmod{N}, b \equiv c \equiv 0 \shortmod{N}\},$$
$$K_d(N)= \{\left(\begin{smallmatrix} a & b \\ c & d \end{smallmatrix} \right) \in K : b \equiv c \equiv 0 \shortmod{N}\},$$
$$K_1(N)= \{\left(\begin{smallmatrix} a & b \\ c & d \end{smallmatrix} \right) \in K :  d \equiv 1 \shortmod{N}, c \equiv 0 \shortmod{N}\},$$
$$K_0(N)= \{\left(\begin{smallmatrix} a & b \\ c & d \end{smallmatrix} \right) \in K : c \equiv 0 \shortmod{N}\}.$$
We use the same notation for the corresponding subgroups of $K_p$. For $* = \varnothing, d, 1,$ or $0$, we set as usual $\Gamma_*(N) = K_*(N) \cap \SL_2(\Z)$. 

For $m+n\geq 0$ let us define $K_0(n,m)\subset G(\Q_p)$ to be the compact open subgroup 
\begin{equation}\label{K0mn_def} K_0(m,n) = \begin{cases} \left\{ \zxz{\Z_p^\times}{(p^m)}{(p^n)}{\Z_p^\times}\right\} & \text{ if } m+n>0, \\ a(p^{-m})K_pa(p^{m}) & \text{ if } m+n=0.\end{cases}\end{equation}

For an algebraic group $H$ over $\Q$, let $[H]$ denote the adelic quotient $[H]:= H(\Q) \backslash H(\A)$.  

\subsubsection{Measure normalizations}\label{measurenormalizations}
We choose $dx$ to be Lebesgue measure on $\R$ and $d^\times x=dx/|x|$ on $\R^\times$. For $F$ a non-archimedean local field, we take $dx$ to be the Haar measure on $F$ that gives the maximal compact subgroup $\cO$ measure 1. We set the Haar measure $d^\times x$ on $F^\times$ to be given by $d^\times x = \zeta_F(1)dx/|x|_F$.  Here $\zeta_F(1)=\zeta_\fp(1)=(1-\Nm \fp^{-1})^{-1}.$ 

We let $dk$ be the Haar probability measure on $K_\infty$. Take the measures on $Z(\R)$, $A(\R)$ and $N(\R)$ induced by $dx$ and $d^\times x$. These together determine a Haar measure on $G(\R)$ by the Iwasawa decomposition. 
Let $dg$ be the Haar measure on $G(\Q_p)$ that gives $\vol(K_p)=1$. 

For $H$ one of the algebraic groups in Section \ref{notation-groupsandsubgroups}, we give $H(\A)$ and $H(\A_\fin)$ the associated product measures. We give $\overline{G}(\A)$ and $ \overline{G}(\A_\fin)$ the quotient measure. With these choices we have $\vol(\Q \backslash \A) = 1$ and $\vol([\overline{G}]) =2\xi(2)= \pi/3$.

Each cuspidal automorphic representation $\pi$ (resp.\ global principal series $\pi_{\chi,\chi^{-1}}$ in the induced model) is endowed with the inner product
\begin{equation}\label{aut_inner_prod}\langle \varphi_1, \varphi_2\rangle = \int_{[\overline{G}]} \varphi_1(g) \overline{\varphi_2(g)}\,dg \quad \left( \text{resp.\ } \langle \phi_1, \phi_2\rangle =\int_{K_\infty \times K} \phi_1(k) \overline{\phi_2(k)}\,dk \right).\end{equation}

If $H$ is a unimodular $p$-adic linear algebraic group and $\mu$ is a Haar measure on $H$, then there exists a unique $\sigma$-finite measure $\widehat{\mu}$ called the \emph{Plancherel measure} on the unitary dual $H^\wedge$ such that the \emph{Plancherel formula} \eqref{plancherelformula} holds. In particular, for any locally constant compactly supported function $f$ on $H$, one has
\begin{equation}\label{Plancherel}
f(1) = \int_{\pi \in H^\wedge} \Tr \pi(f)\, d\widehat{\mu}(\pi), 
\end{equation}
which we also refer to as the Plancherel formula. For more details, see Section \ref{sec:localspectral}.

\subsubsection{Test functions and Hecke algebras}
Write $\cH_{\rm fin} =C_c^\infty (\overline{G}(\A_{\rm fin}))$ for the \emph{non-archimedean Hecke algebra} of $\overline{G}= \PGL_2$, that is the space of locally constant functions on $G(\A_{\rm fin})$ that are invariant by and compactly supported modulo center the $Z(\A_{\rm fin})$. Define the \emph{local Hecke algebra} $\cH_p= C_c^\infty (\overline{G}(\Q_p))$ similarly. 

 Throughout this paper (with the exception of in Section \ref{sec:pre-trace}) we will always assume the $f \in \cH_{\rm fin}$ that we use as test functions are non-zero pure tensors, i.e.\ that $f$ admits a representative $\prod_p f_p$ with $f_p \in \cH_p :=C_c^\infty( \overline{G}(\Q_p))$ for each $p < \infty$, which we may moreover assume satisfy $f_p(1) =1$ for all but finitely many $p$.
Note, if $f = \bigotimes_p f_p \in \cH_{\rm fin}$ is a pure tensor, then $f_p$ is a constant multiple of the indicator function $1_{ZK_p}$ for all but finitely many $p$ (see e.g.\ \cite[\S 1.3]{Cartier}).  
We say that such an $f$ is \emph{ramified} at  $p$ if $f_p$ is not a constant multiple of $1_{ZK_p}$. 

Let $N \in \N$ be minimal such that $f \in \cH_{\rm fin}$ is bi-$K(N)$-invariant.  We call $N$ the \emph{level} of $f$ and define similarly the \emph{local level} $N_p$ of $f_p\in \cH_p$. If $f= \bigotimes_pf_p$, then $N_p=p^{v_p(N)}$. 

If $f \in \cH_\fin$ and $(\pi,V)$ with $\pi =  \pi_\infty \otimes \pi_\fin$ and $V = V_\infty \otimes V_{\fin}$ is an irreducible admissible representation of $\overline{G}(\A)$, then define $\pi(f) \in \End(V_{\fin})$ to be given by \begin{equation}\label{pif_def_notn_section} \pi(f): v \mapsto \int_{\overline{G}(\A_{\rm fin})} f(g) \pi_{\rm fin}(g)v\,dg.\end{equation} If $f \in \cH_p$ and $(\pi,V) $ is an irreducible admissible representation of $\overline{G}(\Q_p)$, then define similarly $\pi(f)\in \End(V)$ by \eqref{pif_def_notn_section} with $\Q_p$ in place of $\A_\fin$ and $\pi$ in place of $\pi_\fin$. In Section \ref{sec:localspectral}, where $H$ is a unimodular $p$-adic linear algebraic group with a Haar measure $dg$, we define $\pi(f) \in \End(V)$ for $f \in L^1(H)$ and $(\pi,V)$ an irreducible admissible representation of $H$ by \eqref{pif_def_notn_section} with $H$ in place of $\overline{G}(\A_\fin)$ and $\pi$ in place of $\pi_\fin$.

\subsubsection{Principal series representations}
We use the notations $\pi(\mu_1,\mu_2)$ and $\pi_{\mu_1,\mu_2}$ interchangably for the (normalized) principal series representation induced from the (local or global) characters $\mu_1,\mu_2$. See e.g.\ \cite[\S 3.7, 4.5]{Bump} for definitions. 

\subsubsection{Newform in the Kirillov model}
Let $\psi$ be a non-trivial additive character 
of a non-archimedean local field $F$.  
The following well-known result will be used repeatedly.  \begin{mylemma}\label{NewformKirillov}
Suppose $\pi$ is a smooth irreducible unitary generic representation of $G(F)$ with trivial central character and conductor exponent $c(\pi)\geq 2$. Then a newform $f$ in the Kirillov model $\mathcal{K}(\pi,\psi)$ of $\pi$ is given by $f(x) = 1_{\cO^\times}(x)= 1_{1,0}(x)$ for $x \in F^\times$.
\end{mylemma}
\begin{proof} See e.g.\ \cite[\S 2.4]{Schmidt:02a}.\end{proof}

\subsubsection{Miscellaneous}
Let $\nu(n) = [\SL_2(\Z):\Gamma_0(n)] = n \prod_{p \mid n} (1+p^{-1})$. In this paper we take $\N= \{1, 2, 3, \ldots \}$.

\section{The unrefined relative trace formula}\label{sec:Unrefined_KTF}
The purpose of this section is to prove the following relative  trace formula (cf.\ \eqref{intro:GeneralizedKuznetsov1}) under minimal hypotheses.  \begin{mytheo}[Unrefined generalized BK formula]\label{MT}
 Suppose $f= \bigotimes_p f_p \in \cH_{\rm fin}$ is non-zero and that for each $p$ that $f_p$ is 
 supported inside the subgroup of matrices $g \in G(\Q_p)$ with $v_p(\det g) \in 2\Z$. 
 
For $m_1,m_2 \in \frac{1}{N}\Z$ with $m_1m_2>0$ we have that
\begin{multline}\label{GeneralizedKuznetsov1}\sum_{ \pi \in \cF_0(f)} h_\infty(t_\pi)  \sum_{\varphi \in \mathcal{B}(\pi)}a_{u_{\pi(f)\varphi}}(m_1) \overline{a_{u_\varphi}(m_2) } \\ + \frac{1}{4\pi}\sum_{ \chi \in \cF_E(f)} \sum_{\phi \in \mathcal{B}(\chi, \chi^{-1})} \int_{-\infty}^\infty h_\infty(t) a_{u_{E(\pi_{it}(f)\phi_{it})}}(m_1) \overline{a_{u_{E(\phi_{it})}}(m_2)}\,dt  \\
=  f_\infty(1) \sqrt{\frac{m_2}{m_1}} \int_{\A_{\rm fin}} f\left( \left( \begin{smallmatrix}  m_2/m_1   & t \\ & 1\end{smallmatrix} \right) \right) \psi_{\rm fin}(-m_{1}t)\,dt 
 + \sum_{c \in \cC(\cF)  } \frac{H(m_1,m_2;c)}{c}H_\infty\left(\frac{4 \pi \sqrt{ m_1m_2}}{c}\right),
\end{multline}
as absolutely convergent sums/integrals.
Here:
\begin{itemize}
\item $N$ is the level of $f$,
\item $\mathcal{F}_{E}(f)$ is the Eisenstein series analogue of $\cF_0(f)$; for definition see \eqref{def:cFEf},
\item $\mathcal{B}(\pi)$ (resp.\ $\mathcal{B}(\chi,\chi^{-1})$) is an orthonormal basis consisting of pure tensors for $V_\pi^{K_\infty \times K(N)}$ (resp.\ $V_{\chi,\chi^{-1}}^{K_\infty \times K(N)}$),
\item $u_\varphi$ (resp.\ $u_{E(\phi_{it})}$) defined by $u_\varphi(x+iy) = \varphi( \left( \begin{smallmatrix} y & x \\ & 1\end{smallmatrix}\right) \times 1_{\rm fin})$ is the classical $\Gamma(N)$-Maass form (resp.\ Eisenstein series) corresponding to $\varphi \in \mathcal{B}(\pi)$ (resp.\ $E(\phi_{it})$ for $\phi \in \mathcal{B}(\chi ,\chi^{-1})$),
\item $a_{u_\varphi}(m_i)$ (resp.\ $a_{u_{E(\phi_{it})}}(m_i)$) are the Fourier coefficients of $u_\varphi$ (resp.\ $u_{E(\phi_{it})}$)as defined in Section \ref{sec:FE}, especially \eqref{k=0FCclassical}, 
\item $h_\infty(t) \in PW(\C)$, where $PW(\C)$ is the Paley-Wiener space defined in \cite[\S 3.3]{knightly_kuznetsovs_2013}, 
\item $H_\infty(x)$ is the transform of $h_\infty(t)$ as in \eqref{eq:htoH}, and
\item the $H(m,n;c)$ 
are generalized Kloosterman sums defined in \eqref{GenKloostermanDef}.
 \end{itemize}
\end{mytheo}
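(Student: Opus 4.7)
The approach is the standard adelic relative trace formula, as developed by Jacquet, Zagier, and Knightly--Li, adapted to general non-archimedean test functions $f$ satisfying the hypothesis on $v_p(\det g)$. First I would form the archimedean test function $f_\infty$ associated to $h_\infty$ as in \cite{KLPetersson, knightly_kuznetsovs_2013} and set $f_\A = f_\infty \cdot f$. Then I would assemble the automorphic kernel
\[
K(x,y) = \sum_{\gamma \in Z(\Q) \backslash G(\Q)} f_\A(x^{-1} \gamma y)
\]
on $[\overline{G}] \times [\overline{G}]$, and compute the spectral and geometric expansions of
\[
I(m_1, m_2) = \int_{[N] \times [N]} K(n(x_1), n(x_2)) \, \psi(-m_1 x_1) \overline{\psi(-m_2 x_2)} \, dx_1 \, dx_2.
\]

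For the spectral expansion, the decomposition of $L^2([\overline{G}])$ into cuspidal, residual, and continuous parts gives $K = K_{\rm cusp} + K_{\rm res} + K_{\rm Eis}$. Integrating $K_{\rm cusp}(n(x_1), n(x_2))$ against $\psi(-m_1 x_1) \overline{\psi(-m_2 x_2)}$ extracts the $(m_1, m_2)$ Fourier coefficients of $\pi(f_\A)\varphi$ and $\varphi$; after choosing the archimedean test function $f_\infty$ properly so that $\pi_\infty(f_\infty)$ acts as multiplication by $h_\infty(t_\pi)$ on spherical vectors (this is the content of \cite[Prop.\ 3.7]{knightly_kuznetsovs_2013}), one obtains the cuspidal line of \eqref{GeneralizedKuznetsov1}. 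The same computation for $K_{\rm Eis}$ produces the continuous spectrum term, and the residual spectrum contribution vanishes for $m_1 m_2 > 0$ because constant functions have no nontrivial Whittaker coefficient. The family restriction $\pi \in \cF_0(f)$ (respectively $\chi \in \cF_E(f)$) arises automatically from the fact that $\pi(f) = 0$ kills the cuspidal contribution.

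For the geometric expansion I would apply the Bruhat decomposition $G(\Q) = B(\Q) \sqcup B(\Q) w N(\Q)$ to the sum over $\gamma$, writing $\gamma \in Z(\Q) \backslash G(\Q)$ as either a unipotent element $n(t)$ (the "identity orbit") or as $\gamma = n(t_1) z(c^{-1}) w n(t_2)$ with $c \in \Q_+$ (the "relevant" orbits). Folding the integrals $\int_{[N]}$ against $\psi$ on each orbit gives: (i) on the identity orbit, the diagonal term $\delta_{m_1 = m_2}$ multiplied by $f_\infty(1)$ and the finite integral $\int_{\A_\fin} f(n(t))\psi_\fin(-mt)\,dt$; and (ii) on each big-cell orbit, an integral of the form
\[
\frac{1}{c} H_\infty\!\left(\frac{4 \pi \sqrt{m_1 m_2}}{c}\right) H(m_1, m_2; c),
\]
where $H(m_1, m_2; c)$ is precisely the generalized Kloosterman sum defined in \eqref{GenKloostermanDef}, after identifying the archimedean Bessel transform using the classical Kuznetsov computation recalled in \cite{knightly_kuznetsovs_2013}. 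Only those $c$ with $H(m_1, m_2; c) \neq 0$ contribute, i.e.\ $c \in \cC(\cF)$.

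The main obstacle will be establishing absolute convergence of both sides, particularly controlling the double integral/sum on the geometric side. Here is precisely where the hypothesis $v_p(\det g) \in 2\Z$ on $\supp f_p$ is crucial: it forces the Bruhat parameter $c$ to lie in $\Q_+$ (up to a scalar square), which makes the geometric sum locally finite modulo each level, and ensures one can swap the orders of integration/summation. Once absolute convergence is in hand, the equality of the two expansions yields \eqref{GeneralizedKuznetsov1}. The choice of orthonormal basis $\mathcal{B}(\pi)$ consisting of pure tensors is permissible because $\pi^{K_\infty \times K(N)}$ is finite-dimensional and factorizes as a tensor product of local fixed vector spaces; the correspondence between adelic vectors and classical Maass forms $u_\varphi$ and the matching of Fourier coefficients is a routine translation handled in Section \ref{sec:FE}.
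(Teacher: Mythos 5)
Your overall route is the same as the paper's (adelic pre-trace formula, integration along $N\times N$ against additive characters, Bruhat decomposition into a first-cell/diagonal orbit and big-cell orbits, archimedean aspects deferred to Knightly--Li), but as written there is a concrete gap at the archimedean step. The quantity you propose to compute, $I(m_1,m_2)=\iint_{[N]^2}K(n(x_1),n(x_2))\psi(-m_1x_1)\overline{\psi(-m_2x_2)}\,dx_1dx_2$, does \emph{not} yield \eqref{GeneralizedKuznetsov1}: even after arranging that $\pi_\infty(f_\infty)$ acts by $h_\infty(t_\pi)$ on spherical vectors, the cuspidal contribution to this $I$ is $\sum_\pi h_\infty(t_\pi)\sum_\varphi W_{\pi(f)\varphi}(a(-m_1))\overline{W_\varphi(a(-m_2))}$, which carries the archimedean Whittaker values, i.e.\ extra factors proportional to $\cosh(\pi t_\pi)K_{it_\pi}(2\pi|m_1|)K_{it_\pi}(2\pi|m_2|)$, and correspondingly the big-cell orbital integrals are not yet the transform $H_\infty(4\pi\sqrt{m_1m_2}/c)$. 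The paper (following Knightly--Li) instead evaluates the kernel at $n_1a(y_1)$, $n_2a(y_2)$ with the normalization $(y_1y_2)^{-1/2}$, imposes $w=m_1y_1=m_2y_2$, and integrates over $w\in(0,\infty)$; it is precisely this $w$-integration that converts the spectral side into $h_\infty(t_\pi)$ times products of normalized Fourier coefficients, the first cell into $\tfrac12 f_\infty(1)$ times the finite integral, and the second cell into $\tfrac12\,c^{-1}H_\infty(4\pi\sqrt{m_1m_2}/c)$. Your proposal needs this device (or an equivalent Bessel-inversion argument); simply ``identifying the Bessel transform'' from the kernel restricted to pure unipotents is not available, since the $t$-dependent Bessel factors cannot be divided out.

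A second, smaller point: you attribute the hypothesis that $\supp f_p$ lies in $\{v_p(\det g)\in 2\Z\}$ to absolute convergence and the interchange of sums and integrals. That is not its role. Convergence and the interchange come from the pre-trace formula (absolute and uniform convergence on compacta, Theorem \ref{pre-trace}) together with the compactness of $[N]^2$ and the decay conditions \eqref{hinfty_conditions}. The determinant hypothesis is what shapes the geometric side: in the first cell it forces $m_2/m_1\in\Z_p^\times$ for all $p$, so together with $\supp f_\infty\subseteq G^+(\R)$ only $m_1=m_2$ survives and one gets the clean diagonal term; in the second cell it forces the orbit parameter $\mu\in\Z_p^\times(\Q_p^\times)^2$ for all $p$, hence $\mu=1/c^2$ with $c\in\Q_+$, which is exactly what turns the big-cell contribution into the sum over $c\in\cC(\cF)$ of the generalized Kloosterman sums \eqref{GenKloostermanDef}. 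With these two corrections your argument matches the paper's proof.
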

\begin{myrema} Theorem \ref{MT} should also extend to the opposite-sign case in which $m_1m_2<0$ with the only modification being that the archimedean factor $H_\infty(x)$ on the geometric side of the formula is replaced with $H_\infty^-(x)$ as defined in \eqref{Hinfty-}. 
Note that the operator $\pi(f)$, being non-archimedean, does not affect the parity of $\varphi$ and  in the opposite-sign case that the diagonal term always vanishes. For the holomorphic forms variation of Theorem \ref{MT}, see Section \ref{sec:holomorphicunrefined}. \end{myrema}

\subsection{Pre-trace formula}\label{sec:pre-trace}
The starting point for Theorem \ref{MT} is an adelic pre-trace formula. While such formulas have appeared in the literature for a long time, we state a recent version of this formula with particularly convenient hypotheses due to Luo, Pi and Wu \cite[Thm.\ 2.2]{LuoPiWu}, which is the special case $F=\Q$ of their more general results. The greater generality of Luo-Pi-Wu's formula is not necessary for the results of this paper, for which Knightly-Li's version would already suffice.
We do not assume that any adelic test function is a pure tensor in this subsection unless explicitly stated otherwise.  

Following \cite[\S 7.1.2]{Wallach1} we define the space of rapidly decreasing functions on $ \overline{G}(\R)$ by
$$\mathcal{S}(\overline{G}(\R)) = \{ f \in C^\infty(\overline{G}(\R)) :  \sup_{g\in \overline{G}(\R)} \|g\|^r|L(X)R(Y)f(g)|< \infty \text{ for all } X,Y \in \mathcal{U}(\fg),\, r \in \Z_{\geq 0}\},$$
 where $\| \cdot \|$ is the norm on $\overline{G}(\R)$ defined in \cite[\S 2.A.2.1]{Wallach1} and $\mathcal{U}(\fg)$ is the universal enveloping algebra of the complexified Lie algebra of $\overline{G}(\R)$ and $L$ and $R$ are the left and right translations. 
 Let $\cH_{\rm fin}$ be the space of locally constant and compactly supported functions on $\overline{G}(\A_{\rm fin})$. 

One defines Schwartz space on $\overline{G}(\A)$ as $$\mathcal{S}(\overline{G}(\A)):=\mathcal{S}(\overline{G}(\R)) \otimes \cH_{\rm fin}.$$
Given $f \in \mathcal{S}(\overline{G}(\A))$ and a cuspidal automorphic representation $(\pi,V)$ of $G$, we denote by $\mathcal{B}(\pi)$ any orthonormal basis of $V$ consisting of $K_{\infty}$-isotypic pure tensors that respect the orthogonal direct sum $V^{K(N)} \oplus (V^{K(N)})^\perp$.  
Similarly, if $\chi_1,\chi_2$ are two Hecke characters, then we denote by $\mathcal{B}(\chi_1,\chi_2)$ any orthonormal basis of the global principal series representations $(\pi_{\chi_1,\chi_2}, V_{\chi_1,\chi_2})$ consisting of $K_{\infty}$-isotypic vectors that respect the orthogonal direct sum $V^{K(N)} \oplus (V^{K(N)})^\perp$.

If $\chi_1,\chi_2$ are finite-order, then we have a Hilbert space isomorphism $V_{\chi_1,\chi_2} \to V_{\chi_1 |\cdot|^s, \chi_2 |\cdot|^{-s}}$ 
 for $s \in \C$ given by $\phi \mapsto \phi_s$, where $\phi_s$ is defined by $\phi_s(g) = |a/d|^s \phi(g)$ and where $g = \left( \begin{smallmatrix} a &  \\  & d\end{smallmatrix}\right)\left( \begin{smallmatrix} 1 & x  \\  & 1\end{smallmatrix}\right)k \in G(\A)$. Similarly, we introduce the shorthand notation $\pi_s:= \pi_{\chi_1 |\cdot|^s, \chi_2 |\cdot|^{-s}}$ when the finite-order characters are clear from context. Lastly, for $\phi \in V_{\chi_1,\chi_2}$ and $g \in G(\A)$ we define the Eisenstein series  $E(\phi_s, g)$ for $\real(s)>1/2$ by
\begin{equation}\label{EisSeriesDef} E(\phi_s, g) = \sum_{\gamma \in B(\Q) \backslash G(\Q) } \phi_s(\gamma g),
\end{equation}
 and for $s \in \mc$ by meromorphic continuation.

If $\phi \in \mathcal{B}(\chi_1,\chi_2)$ is as above and $\real(s)=0$, then it follows that $\| E(\phi_s, \cdot)\|_{\rm Eis} = 1$, where $\| \cdot \|_{\rm Eis}$ is the norm defined in \cite[\S 2.2.1]{MichelVenkateshGL2}, unless $\chi_1=\chi_2$ is quadratic and $s=0$.  Indeed,  for such $\phi_s$ we have  $$\| E(\phi_s, \cdot)\|_{\rm Eis}^2 := \int_{K_{\infty} \times K} |\phi_s(k)|^2\,dk = \int_{K_{\infty} \times K} |\phi(k)|^2\,dk = \|\phi\|^2 = 1.$$  
Later, in the proof of Theorem \ref{theoGeomSpec} (see Section \ref{theoGeomSpecProof}) we will use Michel-Venkatesh's canonical norm $\| \cdot \|_{\it can}$ on the space of Eisenstein series. For a detailed comparison of $\| \cdot \|_{\rm Eis}$ with $\| \cdot \|_{\it can}$ see \cite[Rem.\ 3 of Thm.\ 6.1]{PetrowYoungCoset}.

Finally we alert the reader that in this section only (Section \ref{sec:pre-trace}) the test function $f \in \mathcal{S}(\overline{G}(\A))$ is a function on all places, not only the non-archimedean ones (as it is elsewhere in this paper), and therefore the operators $R_0(f)$ and $\pi_{it}(f)$ are defined by integrals over $\overline{G}(\A)$ (not merely the non-archimedean places, as is the case elsewhere in this paper).

\begin{mytheo}\label{pre-trace}
For any $f \in \mathcal{S}(\overline{G}(\A))$ and $(x,y) \in G(\A)^2$ we have
\begin{equation}\label{pretraceformula}K_{\rm geom}(x,y) = K_{\rm cusp} (x,y) + K_{\rm cont} (x,y) + K_{\rm res} (x,y),\end{equation}
where
$$K_{\rm geom}(x,y) = \sum_{\gamma \in \overline{G}(\Q)} f(x^{-1} \gamma y),$$
$$K_{\rm cusp}(x,y) = \sum_{\pi \text{ cuspidal} }\sum_{\varphi \in \mathcal{B}(\pi)} R_0(f)\varphi(x) \overline{\varphi(y)},$$
where $\pi$ runs through trivial central character cuspidal representations,
$$K_{\rm cont} (x,y) = \frac{1}{4\pi} \sum_{\chi \text{ finite order}} \sum_{\phi \in \mathcal{B}(\chi,\chi^{-1}) } \int_{-\infty}^\infty E(\pi_{it}(f)\phi_{it}, x) \overline{E(\phi_{it}, y)} \,dt,$$
where $\chi$ runs through finite-order Hecke characters, and 
$$K_{\rm res} (x,y) =  \frac{3}{\pi} \sum_{\chi \text{ quadratic}} \chi( \det x) \overline{\chi(\det y)} \int_{\overline{G}(\A) } f(g) \chi(\det(g)) \,dg ,$$
where $\chi$ runs through quadratic Hecke characters.
The right hand side of \eqref{pretraceformula} converges absolutely and uniformly on compacta in $[G]^2$. 
\end{mytheo}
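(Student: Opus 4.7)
The plan is to apply the spectral decomposition of $L^2([\overline{G}])$ to the right convolution operator $R(f) \colon \phi \mapsto \int_{\overline{G}(\A)} f(g)\,R(g)\phi\,dg$, and then read off the kernel in two ways. First, unfolding the defining integral shows that, for $\phi \in L^2([\overline{G}])$, one has
\begin{equation*}
(R(f)\phi)(x) = \int_{[\overline{G}]} \Big( \sum_{\gamma \in \overline{G}(\Q)} f(x^{-1}\gamma y)\Big) \phi(y)\,dy,
\end{equation*}
so that $K_{\rm geom}(x,y) = \sum_{\gamma \in \overline{G}(\Q)} f(x^{-1}\gamma y)$ is the automorphic kernel of $R(f)$. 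The sum defining $K_{\rm geom}$ is absolutely and locally uniformly convergent because $f_{\fin}$ has compact support mod center on $\overline{G}(\A_\fin)$ (so only finitely many $\gamma$ contribute for $(x,y)$ in a compact set) and $f_\infty \in \mathcal{S}(\overline{G}(\R))$ provides the required rapid decay.

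Next I would apply Langlands' spectral decomposition to $L^2([\overline{G}])$, which for $\PGL_2/\Q$ takes the form $L^2_{\rm cusp} \oplus L^2_{\rm res} \oplus L^2_{\rm cont}$. The cuspidal part decomposes discretely as $\bigoplus_\pi V_\pi$ over trivial central character cuspidal representations; choosing an orthonormal basis $\mathcal{B}(\pi)$ as in the statement, the operator $R(f)|_{L^2_{\rm cusp}}$ is trace class with Schwartz kernel
\begin{equation*}
K_{\rm cusp}(x,y) = \sum_\pi \sum_{\varphi \in \mathcal{B}(\pi)} (R_0(f)\varphi)(x)\,\overline{\varphi(y)}.
\end{equation*}
The continuous part is a direct integral over the unitary principal series induced from finite-order Hecke characters $\chi$, parametrized by $t \in \R$; the standard Eisenstein series spectral expansion of the kernel gives the formula for $K_{\rm cont}$. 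Finally, for $\PGL_2$ the residual spectrum consists exactly of the one-dimensional spaces spanned by $g \mapsto \chi(\det g)$ for quadratic Hecke characters $\chi$, which are residues at $s=1/2$ of the corresponding Eisenstein series; the rank-one projection onto each such line contributes a term of the form $\chi(\det x)\overline{\chi(\det y)}$ weighted by $\vol([\overline{G}])^{-1} \int_{\overline{G}(\A)} f(g)\chi(\det g)\,dg$. With our measure normalization $\vol([\overline{G}]) = \pi/3$, this yields $K_{\rm res}$ with the displayed constant $3/\pi$.

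The main obstacle is justifying the combination of convergence statements simultaneously, i.e.\ showing that the spectrally expanded kernel converges absolutely and locally uniformly on $[\overline{G}]^2$ and coincides pointwise with $K_{\rm geom}$ (rather than merely in the $L^2$-sense). This requires using the Schwartz assumption on $f_\infty$ together with the bi-$K(N)$-invariance of $f_\fin$ to gain rapid decay in spectral parameters: standard integration-by-parts with elements of $\mathcal{U}(\mathfrak{g})$ acting on $f_\infty$ produces arbitrary polynomial decay in the archimedean spectral parameter $t_\pi$ (resp.\ $t$), while the non-archimedean local finiteness of $\mathcal{B}(\pi)^{K(N)}$ restricts the sum over $\pi$ in any bounded range of archimedean parameters to be finite after combining with a polynomial Weyl bound on dimensions. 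This is precisely the content of the appeal to \cite[Thm.\ 2.2]{LuoPiWu}; I would simply quote their theorem rather than reproduce the estimates, noting that $F=\Q$ and that our Schwartz class $\mathcal{S}(\overline{G}(\R))\otimes\cH_\fin$ agrees with theirs. The pointwise identification of the two sides then follows from the fact that both realize the same kernel of the trace-class operator $R(f)$ on $L^2([\overline{G}])$.
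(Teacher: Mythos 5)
Your proposal follows essentially the same route as the paper: both ultimately delegate the analytic content (absolute and locally uniform convergence, pointwise identification) to \cite[Thm.~2.2]{LuoPiWu}, and your background sketch of the automorphic kernel $K_{\rm geom}$, the cuspidal/continuous/residual decomposition, and the $\vol([\overline{G}])^{-1}=3/\pi$ normalization of the residual term is the standard mechanism underlying that citation. The one verification the paper performs that you omit is worth including: the cited theorem expands $K_{\rm cusp}$ and $K_{\rm cont}$ over orthonormal bases consisting of $K_\infty$-isotypic, $K$-\emph{finite} pure tensors, whereas the stated theorem uses bases that respect the orthogonal decomposition $\pi^{K(N)}\oplus(\pi^{K(N)})^\perp$; these match because vectors in $\pi^{K(N)}$ are automatically $K$-finite and $R_0(f)$ (resp.\ $\pi_{it}(f)$) annihilates the complement $(\pi^{K(N)})^\perp$, so the two expansions agree term by term.
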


Theorem \ref{pre-trace} generalizes Corollary 6.12 of \cite{knightly_kuznetsovs_2013}.  
\begin{proof} 
The Theorem is implied by e.g.\ \cite[Thm.\ 2.2]{LuoPiWu}, however the statements of these two theorems differ slightly, so we now pin down a few technical points. 
\begin{itemize}
\item The space of test functions $\mathcal{S}(\overline{G}(\A))$ in this paper is defined differently than the space $\mathcal{S}(\PGL_2(\A))$ in \cite[\S 2.2]{LuoPiWu}. Both spaces are defined to be $\mathcal{S}(\overline{G}(\R)) \otimes C_c^\infty(\overline{G}(\A_{\rm fin}))$ with $\mathcal{S}(\overline{G}(\R))$ having exactly the same definition in the two papers. On the other hand, Luo-Pi-Wu define $C_c^\infty(\overline{G}(\A_{\rm fin}))$ to be the space of smooth compactly supported functions, where ``smooth'' in this context is defined in previous work of Wu  \cite[Def.\ 2.6]{Wu2014}. In this paper we define  $C_c^\infty(\overline{G}(\A_{\rm fin})) = \cH_{\rm fin}$ to consist of locally constant compactly supported functions. However, unpacking definitions, it is easily verified that a function $f$ on $\overline{G}(\A_{\rm fin})$ is smooth and compactly supported if and only if it is locally constant and compactly supported.  
\item Our definition of basis vectors $\varphi \in \mathcal{B}(\pi)$ (resp.\ $\phi \in \mathcal{B}(\chi,\chi^{-1})$) above does not match the definition of $\mathcal{B}(\pi)$ (resp.\ $ \mathcal{B}(\chi,\chi^{-1})$) found in \cite[Thm.\ 2.1]{LuoPiWu}. Indeed, the basis vectors in loc.\ cit.\ are required to be $K_{\infty}$-isotypic and $K$-finite pure tensors. However, Theorem \ref{pre-trace} does follow from \cite[Thm.\ 2.2]{LuoPiWu} since basis vectors in $V^{K(N)}$ are necessarily $K$-finite and the orthogonal complement $(V^{K(N)})^\perp$ is annihilated anyway. 
\item The formula for $K_{\rm cont} (x,y)$ in Theorem \ref{pre-trace} has a sum over finite-order Hecke characters, where the corresponding sum in Luo-Pi-Wu runs over the dual of $\R_+ \Q^\times \backslash \A^\times$. These two sets match by strong approximation for ideles, since $\Q$ has class number one and group of units $\{\pm 1\}$. 
\item Our formula for $K_{\rm cont} (x,y)$ has an action of $\pi_{it}(f)$ on the section $\phi_{it}$, whereas the formula \cite[(9)]{LuoPiWu} for $K_{\rm cont} (x,y)$ has an action of $R(f)$ on the corresponding Eisenstein series. However, for any $g\in G(\A)$, $s \in \C$ with $\real(s)$ sufficiently large and $f \in \mathcal{S}(\overline{G}(\A))$, the double sum-integral 
$$ \int_{\overline{G}(\A)} \sum_{\gamma \in B(\Q) \backslash G(\Q)} |f(h)| |\phi_s(\gamma g h)|\,dh$$
converges absolutely, so that by Fubini's Theorem we have
$$ R(f)E(\phi_s)(g) = E(\pi_s \phi_s)(g).$$
The result also holds for $s$ with $\real(s)=0$ by analytic continuation. 
\end{itemize}

\end{proof}

Note that under the assumption that $f$ is bi-$\omega$-isotypic for some character $\omega$ of $K_\infty$, then the bases $\mathcal{B}(\pi)$ and $\mathcal{B}(\chi,\chi^{-1})$ appearing in Theorem \ref{pre-trace} are in fact finite.

\subsection{Proof of the unrefined PBK formula}

In this section we prove Theorem \ref{MT}. We now assume that $f \in \cH_{\rm fin}$ is a pure tensor and that $f_\infty \in C_c^\infty(G^+(\R))$ is bi-$K_\infty$ and $Z(\R)$-invariant, as in \cite[\S 3.1]{knightly_kuznetsovs_2013}. In particular, $f_\infty \in \mathcal{S}(\overline{G}(\R))$, so Theorem \ref{pre-trace} applies to $f_\A = f_\infty  f$.  There is a bijection between the functions $f_\infty$ and $h_\infty(t)$ in appropriate spaces, as explained in Chapter 3 of \cite{knightly_kuznetsovs_2013}. We follow Knightly and Li closely and treat the archimedean aspects exactly as they do.

For any $m \in \Q$, set \begin{equation}\label{thetamdef}\psi_m(x) = \psi(-mx) = \overline{\psi(mx)},\end{equation} where $\psi$ is the additive character of $\A$ chosen in \eqref{psidef}. We let $y_1,y_2 >0$, $m_1,m_2 \neq 0$ and consider 
\begin{equation}\label{eq:Idef}
I := \frac{1}{\sqrt{y_1 y_2}} \iint_{[N]^2} K_{\rm geom}(n_1 \left( \begin{smallmatrix} y_1 & \\ & 1 \end{smallmatrix}\right) , n_2 \left( \begin{smallmatrix} y_2 & \\ & 1 \end{smallmatrix}\right) ) \overline{\psi_{m_1}(n_1)}\psi_{m_2}(n_2)\,dn_1\,dn_2,
\end{equation}
where 
$\psi_m(n) := \psi_m(x)$ for $n=n(x)$ for $x \in \Q \backslash \A$. 

We next apply Theorem \ref{pre-trace} and compute $I$ in two ways. Note that $(N(\Q) \backslash N(\A))^2$ is compact, so that Theorem \ref{pre-trace} permits us to apply Fubini's theorem and exchange the integral over $[N]^2$ with the sums that define each of $K_{\rm cusp}$, $K_{\rm cont}$, and $K_{\rm res}$. The result is a decomposition
\begin{equation}\label{Idecomp}I = I_{\rm cusp} + I_{\rm cont} + I_{\rm res}.\end{equation}

\subsubsection{Fourier Expansion}\label{sec:FE}
We briefly digress to collect some facts that will be useful in the following. 
Let $(\pi,V)$ be a standard generic automorphic representation (see \cite[\S 2.2.1]{MichelVenkateshGL2}) and let $\varphi \in V$. 
Following \cite[\S 4.1.3]{MichelVenkateshGL2} define the constant term $\varphi_N$ and Whittaker function as
\begin{equation}
\varphi_N(g) = \int_{\Q \backslash \A} \varphi(n(x)g)\,dx, \quad \text{ and } \quad W_\varphi(g) = \int_{\Q \backslash \A} \varphi(n(x)g) \overline{\psi(x)} \,dx.
\end{equation}
Then, for almost every $g \in G(\A)$ one has the Fourier-Whittaker expansion 
\begin{equation}
\varphi(g) = \varphi_N(g) + \sum_{ y \in \Q^\times} W_\varphi(a(y) g).
\end{equation}
The function $\varphi$ is called cuspidal if $\varphi_N(g)= 0$ for almost every $g \in G(\A)$.

On the other hand, from $\varphi$ one produces a classical automorphic form $u$ that has a Fourier expansion as follows. Suppose now that $\varphi \in V$ is right-$K_\infty \times K(N)$-invariant.  
Let $u=u_\varphi$ be defined by $u(x+iy) = \varphi( \left( \begin{smallmatrix} y & x \\ & 1\end{smallmatrix}\right) \times 1_{\rm fin}).$ Since $$(G(\R)^+ \times K(N) ) \cap G(\Q) = \Gamma(N),$$ we have that $u(z)= u(\gamma z) $ for all $\gamma \in \Gamma(N)$, $z \in \cH$. Caution: one cannot recover $\varphi$ from $u$ as the group $K(N)$ does not have determinants surjecting onto $\Z_p^\times$ so that strong approximation may fail. 
We may continue nonetheless. 

Since $\varphi$ is right -$K_\infty$-invariant, it follows that $u$ is an eigenfunction of the hyperbolic Laplacian on $\mathcal{H}$ (see e.g.\ \cite[Prop.\ 4.8]{knightly_kuznetsovs_2013}. Thus $u=u_\varphi$ is a weight 0 Maass form / Eisenstein series for $\Gamma(N)$ and so admits a Fourier expansion of the form 
$$
u(x+iy) = \sum_{n \in \Z} a_u(n/N,y) e\Big(\frac{n}{N}x\Big)
$$ with 
$$
a_u(n/N,y) =\frac{1}{N} \int_0^N u(x+iy) e\Big(-\frac{n}{N}x\Big)\,dx.
$$ 
Writing $m=n/N \neq 0$, we define (following \cite[Thm.\ 6.1]{PetrowYoungCoset}) the Fourier coefficient $a_u(m)$ by
\begin{equation}\label{k=0FCclassical} \frac{a_u(m)}{\sqrt{|m|}} W(my)= a_u(m,y),\end{equation}
where $a_u(m)$ does not depend on $y$ and $W$ is a minimal non-negative weight vector in the Kirillov model of $\pi_\infty$ with norm 1. The Whittaker function $W$ is given explicitly by 
\begin{equation}\label{eq:archWhittakerExplicit} W(y) = (\sgn y)^{\epsilon} \left( \frac{\cosh \pi t}{\pi}\right)^{1/2} 2 \sqrt{|y|} K_{it}(2 \pi |y|),\end{equation} with $t$ is the spectral parameter of $u$ and $\epsilon= 0,1$ according to whether $u$ is even or odd.  

The Fourier-Whittaker coefficients above are related to classical Fourier coefficients at the cusp $\infty$ as follows. For any $m \in \Q^\times$ embedded diagonally in $\A^\times$ and $y \in \R^\times$ we have 
\begin{equation}
W_{\varphi}(a(-m)a(y)) = \int_{\Q \backslash \A}\varphi(n(x)a(y)) \overline{\psi_m(x)}\,dx
\end{equation}
 by the left $\overline{G}(\Q)$-invariance of $\varphi$ and a change of variables. Following the same steps as in 
\cite[Lem.\ 3.6]{Gelbart}, 
 e.g.,  the classical Fourier coefficients are related to the Fourier-Whittaker coefficients by
\begin{equation}
W_{\varphi}(a(-my)) = \begin{cases}  a_{u_\varphi}(m,y) &  \text{ if } m = \frac{n}{N} \in \frac{1}{N}\Z, \\ 0 & \text{ otherwise.} \end{cases} 
\end{equation}

\subsubsection{Cuspidal contribution}\label{sec:cuspcontr}
We now return to the computation of $I_{\rm cusp}$. Swapping the order of summation and integration, we have by e.g.\ Propositions 4.7, 4.8 of \cite{knightly_kuznetsovs_2013} that
\begin{multline*}
I_{\rm cusp} = \frac{1}{\sqrt{y_1 y_2}} \iint_{[N]^2} K_{\rm cusp}(n_1 \left( \begin{smallmatrix} y_1 & \\ & 1 \end{smallmatrix}\right) , n_2 \left( \begin{smallmatrix} y_2 & \\ & 1 \end{smallmatrix}\right) ) \overline{\psi_{m_1}(n_1)}\psi_{m_2}(n_2)\,dn_1\,dn_2 \\
= \frac{1}{\sqrt{y_1 y_2}  } \sum_{ \pi \in \cF_0(f)} h_\infty(t_\pi)\sum_{\varphi \in \mathcal{B}(\pi)} W_{\pi(f)\varphi}(a(-m_1y_1))\overline{W_{\varphi}(a(-m_2y_2))},
\end{multline*}
where $t_\pi$ is the spectral parameter of $\pi$. 

Note that for $\pi \in \cF_0(f)$ and $\varphi \in \cB(\pi)$, both $\varphi$ and $\pi(f) \varphi$ are cuspidal, supported on $G^+(\R)$, and bi-$K_\infty$ and $K(N)$-invariant, so $u_{\pi(f)\varphi}$ admits a classical Fourier expansion. Therefore we have if $m_1,m_2 \in \frac{1}{N} \Z$ and $m_1m_2 \neq 0$ that 
\begin{multline}\label{IcuspbeforeIcuspw}
I_{\rm cusp} = \frac{4}{\pi} (\sgn m_1m_2)^\epsilon \sum_{ \pi \in \cF_0(f)} h_\infty(t_\pi) (\cosh \pi t_\pi) K_{ it_\pi } (2\pi |m_1| y_1) K_{ it_\pi } (2\pi |m_2| y_2) \\ \times \sum_{\varphi \in \mathcal{B}(\pi)} a_{u_{\pi(f)\varphi}}(m_1) \overline{a_{u_\varphi}(m_2) } .
\end{multline}
 Assume now that $m_1,m_2$ have the same sign and introduce a new variable $w \in \R_{>0}$. We impose the constraint $w = m_1y_1 = m_2 y_2$ on $y_1,y_2$ on \eqref{IcuspbeforeIcuspw}, writing $I_{\rm cusp}(w)$ for the formula there with this constraint. Then 
 $$\int_{0}^\infty I_{\rm cusp}(w) \,dw = \frac{1}{2}\sum_{ \pi \in \cF_0(f)} h_\infty(t_\pi)  \sum_{\varphi \in \mathcal{B}(\pi)} a_{u_{\pi(f)\varphi}}(m_1) \overline{a_{u_\varphi}(m_2)}$$
by following the proof of \cite[Prop.\ 7.5]{knightly_kuznetsovs_2013} mutatis mutandis.

\begin{myrema} Under the additional hypothesis of Geometric Assumption \eqref{geo2} and the fact that $\pi$ has trivial central character, it would follow that the $u_\varphi$ appearing here are automorphic for the larger $\Gamma_d(N) \supseteq \Gamma(N)$. \end{myrema}

\subsubsection{Continuous contribution}\label{sec:ctscontr} The computation of $I_{\rm cont}$ in this section is in parallel to that of the cuspidal contribution, mutatis mutandis. In similar fashion to $\cF_0(f)$, define
\begin{equation}\label{def:cFEf}
\cF_E(f) := \{ \chi \in (\Q^\times \backslash \A^1)^\wedge : \text{ there exists } t \in \R \text{ with } \pi_{\chi |\cdot|^{it}, \chi^{-1}|\cdot|^{-it}}(f) \neq 0\},
\end{equation}
where for $\mu \in (\Q^\times \backslash \A^\times)^\wedge$ the global principal series representation $\pi_{\mu,\mu^{-1}}$ is as in Section \ref{sec:pre-trace} and $\pi_{\mu, \mu^{-1}}(f)$ is as in \eqref{pif}. Swapping order of summation and integration by the absolute convergence in Theorem \ref{pre-trace}, we have by e.g.\ \cite[Prop.\ 5.2]{knightly_kuznetsovs_2013}
\begin{multline*}
I_{\rm cont} = \frac{1}{\sqrt{y_1 y_2}} \iint_{[N]^2} K_{\rm cont}(n_1 \left( \begin{smallmatrix} y_1 & \\ & 1 \end{smallmatrix}\right) , n_2 \left( \begin{smallmatrix} y_2 & \\ & 1 \end{smallmatrix}\right) ) \overline{\psi_{m_1}(n_1)}\psi_{m_2}(n_2)\,dn_1\,dn_2 \\
= \frac{1}{4 \pi \sqrt{y_1 y_2}  } \sum_{ \chi \in \cF_E(f)} \sum_{\phi \in \mathcal{B}(\chi, \chi^{-1})} \int_{-\infty}^\infty h_\infty(t) W_{E(\pi_{it}(f)\phi_{it})}(a(-m_1y_1))\overline{W_{E(\phi_{it})}(a(-m_2y_2))} dt.
\end{multline*}
Exactly as in Section \ref{sec:cuspcontr} and with conventions on Fourier coefficients as in Section \ref{sec:FE}, we obtain 
 $$\int_{0}^\infty I_{\rm cont}(w) \,dw = \frac{1}{8\pi}\sum_{ \chi \in \cF_E(f)} \sum_{\phi \in \mathcal{B}(\chi, \chi^{-1})} \int_{-\infty}^\infty h_\infty(t) a_{u_{E(\pi_{it}(f)\phi_{it})}}(m_1) \overline{a_{u_{E(\phi_{it})}}(m_2)} dt.$$

\subsubsection{Residual contribution}\label{sec:rescontr}
By Theorem \ref{pre-trace} we have 
\begin{multline*}
I_{\rm res} = \frac{1}{\sqrt{y_1 y_2}} \iint_{[N]^2} K_{\rm res}(n_1 \left( \begin{smallmatrix} y_1 & \\ & 1 \end{smallmatrix}\right) , n_2 \left( \begin{smallmatrix} y_2 & \\ & 1 \end{smallmatrix}\right) ) \overline{\psi_{m_1}(n_1)}\psi_{m_2}(n_2)\,dn_1\,dn_2 \\
= \frac{1}{\sqrt{y_1 y_2}  } \frac{3}{\pi} \sum_{\chi \text{ quad.}} \chi(y_1) \overline{\chi(y_2)} \int_{\overline{G}(\A)} f_\A(g) \chi(\det g)\,dg \int_{\Q \backslash \A} \overline{\psi_{m_1}(n_1)}\,dn_1 \int_{\Q \backslash \A} \psi_{m_2}(n_2)\,dn_2. 
\end{multline*}
Since $m_1m_2 \neq 0$, the last two integrals both vanish identically. Therefore $I_{\rm res}=0$ for all $y_1,y_2$. 

\subsubsection{Geometric side}\label{sec:geomside}
Recall the definition of $I$ from \eqref{eq:Idef} and insert the formula for $K_{\rm geom}$ from Theorem \ref{pre-trace}. We now exchange order of summation and integration and group the geometric terms according to orbits $ \delta \in N(\Q) \backslash \overline{G}(\Q) / N(\Q)$. To that end, define orbital integrals $I_\delta(f_{\A})$ by  
\begin{equation}\label{orbitalintegral} I_{\delta}(f_{\A}) = \int_{H_\delta(\Q) \backslash H(\A)} f_{\A} \left( \left( \begin{smallmatrix} y_1 & x_1 \\ & 1 \end{smallmatrix}\right)^{-1} \delta  \left( \begin{smallmatrix} y_2 & x_2 \\ & 1 \end{smallmatrix}\right) \right) \frac{\overline{\psi_{m_1}(x_1)} \psi_{m_2}(x_2)}{\sqrt{y_1y_2} }\, d(x_1,x_2),\end{equation}
where $H(\A) = N(\A) \times N(\A) \simeq \A \times \A$ and $H_\delta(\Q)$ is the stabilizer in $H(\Q) = N(\Q) \times N(\Q)$ of $\delta$, where $H(\Q)$ acts on $\overline{G}(\Q)$ on the right by $\gamma . (x,y) = x^{-1} \gamma y,$ and $d(x_1,x_2)$ is the quotient measure coming from $dt_1\,dt_2$.
Using the Bruhat decomposition and following Knightly-Li \cite[\S 7.5]{knightly_kuznetsovs_2013}, we have 
\begin{equation}\label{eq:orbitdecomp} I = I_{ \left( \begin{smallmatrix} m_2/m_1 &  \\ & 1 \end{smallmatrix}\right)}(f_{\A}) + \sum_{\mu \in \Q^\times}  I_{ \left( \begin{smallmatrix}  & -\mu \\ 1 &  \end{smallmatrix}\right)}(f_{\A}).\end{equation}
  For the explicit representatives for the orbits $\delta$ that appear in \eqref{eq:orbitdecomp}, we can be more explicit about the shape of $H_\delta(\Q)$. 

The terms $$\delta = \left( \begin{smallmatrix} m_2/m_1 &  \\ & 1 \end{smallmatrix}\right)$$ are called \emph{first cell terms}, and the terms $$\delta = \left( \begin{smallmatrix}  & -\mu \\ 1 &  \end{smallmatrix} \right)$$ are called \emph{second cell terms}. For the first cell terms, we have $$H_{\left( \begin{smallmatrix} m_2/m_1 &  \\ & 1 \end{smallmatrix}\right)}(\Q) = \{ \left( \left( \begin{smallmatrix} 1 & m_2 t / m_1  \\ & 1 \end{smallmatrix}\right), \left( \begin{smallmatrix} 1 &  t  \\ & 1 \end{smallmatrix}\right)  \right) \in H(\Q) : t \in\Q\},$$
while
$$H_{\left( \begin{smallmatrix}  & -\mu \\ 1 &  \end{smallmatrix} \right)}(\Q) = \{(1,1) \in H(\Q)\}.$$
\subsubsection{First cell terms}\label{sec:firstcellterms}
Exactly as in \cite[Section 7.5.1]{knightly_kuznetsovs_2013}  we get (recall the definition of $\psi$ from \eqref{psidef})
$$I_{ \left( \begin{smallmatrix} m_2/m_1 &  \\ & 1 \end{smallmatrix}\right)}(f_{\A})  = \int_{\A} f_{\A} \left( \left( \begin{smallmatrix} m_2 y_2 & t  \\ & m_1 y_1  \end{smallmatrix}\right) \right) \frac{\psi(-t)}{\sqrt{y_1y_2}}\,dt.$$
 This integral factors into archimedean and non-archimedean parts, say $I_\delta(f_{\A}) = I_\delta(f_\infty) I_{\delta}(f)$. The archimedean part is
$$I_{ \left( \begin{smallmatrix} m_2/m_1 &  \\ & 1 \end{smallmatrix}\right)}(f_\infty)  =  \frac{1}{\sqrt{y_1y_2}} \int_{-\infty}^\infty f_\infty \left(  \left( \begin{smallmatrix} m_2 y_2 & t  \\ & m_1 y_1 \end{smallmatrix}\right) \right) e(t)\,dt,$$ and the finite part $I_{\delta}(f)$ does not depend on $y_1,y_2$. Note that since $f_\infty$ is assumed to be supported on $G^+(\R)$, we have $I_{ \left( \begin{smallmatrix} m_2/m_1 &  \\ & 1 \end{smallmatrix}\right)}(f_\infty)=0$ unless $m_1$ and $m_2$ have the same sign. 

 Now choose $y_1$ and $y_2$ so that $w = y_1m_1 = y_2m_2$, and write $ I_{\delta}(f_\infty,w)=I_{\delta}(f_\infty) $ considered as a function of $w \in \R_{>0}$. By following the proof of \cite[Prop.\ 7.9]{knightly_kuznetsovs_2013} mutatis mutandis, we have 
\begin{multline*}\int_{0}^\infty I_{\left( \begin{smallmatrix} m_2/m_1 &  \\ & 1 \end{smallmatrix}\right)}(f_\A,w)\,dw = I_{\left( \begin{smallmatrix} m_2/m_1 &  \\ & 1 \end{smallmatrix}\right)}(f)\int_{0}^\infty I_{\left( \begin{smallmatrix} m_2/m_1 &  \\ & 1 \end{smallmatrix}\right)}(f_\infty,w)\,dw \\
= I_{\left( \begin{smallmatrix} m_2/m_1 &  \\ & 1 \end{smallmatrix}\right)}(f) \frac{\sqrt{m_1m_2}}{2}f_\infty\left( 1\right)\delta_{m_1m_2>0}.
 \end{multline*} 

Thus, to recover the diagonal term in the formula given in Theorem \ref{MT}, it suffices to calculate the finite part $ I_{\delta}(f) $ for $\delta = \left( \begin{smallmatrix} m_2/m_1 &  \\ & 1 \end{smallmatrix}\right)$.  By the $Z(\A_{\rm fin})$-invariance of $f$, we have 
\begin{equation}\label{Ifinfirstcell} I_{\left( \begin{smallmatrix} m_2/m_1 &  \\ & 1 \end{smallmatrix}\right)}(f) = \int_{\A_{\rm fin} } f \left( \left( \begin{smallmatrix} m_2 & t \\ & m_1 \end{smallmatrix} \right)\right) \psi_{\rm fin}(-t)\,dt = \int_{\A_{\rm fin} } f\left( \left( \begin{smallmatrix} m_2/m_1 & t/m_1 \\ & 1 \end{smallmatrix} \right)\right) \psi_{\rm fin}(-t)\,dt. \end{equation}

Changing variables $t \rightarrow m_1 t$ we have
$$I_{\left( \begin{smallmatrix} m_2/m_1 &  \\ & 1 \end{smallmatrix}\right)}(f) = |m_1|_{\rm fin}\int_{\A_{\rm fin}} f\left( \left( \begin{smallmatrix} m_2/m_1 & t \\ & 1 \end{smallmatrix} \right)\right) \psi_{\rm fin}(-m_1t)\,dt.$$
Note also that by a change of variables $t \to t+N$, the integral vanishes unless $m_1$ and $m_2 \in \frac{1}{N} \Z$. 

Putting together the finite and infinite parts, we have that 
\begin{multline}\label{eq:firstcellfinalformula}\int_{0}^\infty I_{\left( \begin{smallmatrix} m_2/m_1 &  \\ & 1 \end{smallmatrix}\right)}(f_{\A},w)\,dw \\ =  \delta_{m_1,m_2  \in \frac{1}{N} \Z} \delta_{m_1m_2>0}\frac{1}{2}\sqrt{\frac{m_2}{m_1}}f_\infty(1) \int_{\A_{\rm fin}} f\left( \left( \begin{smallmatrix} m_2/m_1  & t \\ & 1 \end{smallmatrix} \right)\right) \psi_{\rm fin}(-m_{1}t)\,dt.\end{multline}
A similar formula also holds with the roles of $m_1$ and $m_2$ reversed. The geometric assumptions permit a further simplification of \eqref{eq:firstcellfinalformula}, and the spectral assumption allows further simplification in terms of Plancherel volumes. For these, see Section \ref{sec:MTcomputation}.  

\subsubsection{Second cell terms}\label{sec:secondcellterms}
In the rest of this subsection, we assume that $m_1$ and $m_2$ have the same sign, since we are following the archimedean computations of Knightly and Li. 

Since $f_\infty$ is supported in $G^+(\R)$ and is bi-$K_\infty$-invariant, we may follow \cite[\S 7.5.2]{knightly_kuznetsovs_2013} for $\delta = \left( \begin{smallmatrix}  & - \mu \\ 1 &  \end{smallmatrix} \right)$ with $\mu \in \Q^\times$ to deduce that if $\mu>0$, then $I_{\delta}(f_\A)= I_{\delta}(f) I_{\delta}(f_\infty)$ with 
$$I_\delta(f_{\infty }) = \frac{1}{\sqrt{y_1y_2} }\iint_{\R^2} k(z_1, \frac{-\mu}{z_2}) e(m_2x_2-m_1x_1)\,dx_1dx_2,$$
 where $k(z_1,z_{2}) = f_\infty(g_1^{-1}g_2)$, $z_j =g_j(i)$, and \begin{equation}\label{IdeltaffinIntegral1}I_\delta(f) = \iint_{\A_{\rm fin}^2} f\left( \left( \begin{smallmatrix}1 & -t_1 \\ & 1 \end{smallmatrix}\right) \delta  \left( \begin{smallmatrix}1 & t_2 \\ & 1 \end{smallmatrix}\right) \right) \psi_{\rm fin}(m_1t_1- m_2 t_2)\,dt_1\,dt_2,\end{equation}
 and if $\mu<0$, then $I_\delta(f_\A) =0$.

Since each $f_p$ is supported on matrices with determinant in $\Z_p^\times (\Q_p^\times)^2$, we see that the integral $I_\delta(f)$ is 0 unless $\mu \in  \Z_p^\times (\Q_p^\times)^2$ for all $p$. Since $\mu \in \Q^\times$ and $\mu>0$, we have that  $I_{\left( \begin{smallmatrix} & - \mu \\ 1 & \end{smallmatrix} \right) }(f) = 0$ unless there exists $s \in \Q^\times$ so that $\mu = s^2$. Let us write $c=1/s$.  
 
We have for $f \in \cH_{\rm fin}$, $m,n \in \Q$ and $c \in \Q_+$ definition \eqref{GenKloostermanDef} that
\begin{equation*}I_\delta(f) = I_{\left(\begin{smallmatrix}  & -c^{-2} \\ 1 &  \end{smallmatrix} \right)}(f) = H(m_1,m_2;c)  = \frac{1}{|c|_{\rm fin}^2} \iint_{\A_{\rm fin}^2} f\left( \left(\begin{smallmatrix} t_1 & \frac{-1-t_1t_2}{c} \\ c & t_2 \end{smallmatrix} \right)\right) \psi_{\rm fin}\left( \frac{m_1t_1-m_2t_2}{c}\right)\,dt_1\,dt_2.\end{equation*}
We next prove the ``proposition part'' of Proposition/Definition \ref{intro:admissiblemodulus}. 
\begin{mylemma}\label{lem:existance_of_geom_cond_without_hypotheses}
Suppose $f = \bigotimes_p f_p \in \cH_{\rm fin}$ is a pure tensor. If $\supp f_p$ is contained in $\{g \in G(\Q_p): \det g \in \Z_p^\times(\Q_p^\times)^2\}$ for all $p$, then there exists $q' \in \Q_+$ such that for all $c \in \Q_+\smallsetminus q'\N$ and all $m_1,m_2 \in\Q$ we have $H(m_1,m_2;c)=0 $.  
\end{mylemma}
\begin{proof}
Let $S \subset G(\A_{\rm fin})$ be a fundamental domain for the support of $f$ in $\overline{G}(\A_{\rm fin})$, whose matrices have determinant in $\widehat{\Z}^\times$.  
Since $S$ is compact, and the projection map $\pi : S \to \A_{\rm fin}$ sending a matrix to its bottom left entry is continuous, the set $\pi(S) \subset \prod_p' p^{k_p}\cO_p$, where $k_p \in \Z$ and $k_p=0$ for almost every $p$. Therefore, $H(m_1,m_2;c)$ vanishes identically unless $c$ has a bounded denominator, depending on the support of $f$. 
\end{proof}

Recall that we choose $y_1$ and $y_2$ so that $w = y_1m_1 = y_2m_2$, and write $ I_{\delta}(f_\infty,w)=I_{\delta}(f_\infty) $. We would like to exchange order of summation and integration in 
$$\int_0^\infty \sum_{c \in \cC(\cF)} I_{\delta}(f_\A,w) \,dw = \int_0^\infty \sum_{c \in \cC(\cF)} H(m_1,m_2;c) I_\delta(f_\infty, w)\,dw$$ without any appeal to an estimate of $H(m_1,m_2;c)$. Since the sum over $c$ converges absolutely by the pre-trace formula, to justify the exchange it suffices to show that $I_\delta(f_\infty, w)$ vanishes identically for all sufficiently large or sufficiently small $w$, uniformly in $c$. 

Following the proof of \cite[Prop.\ 7.11]{knightly_kuznetsovs_2013} and making the change of variables $wc/\sqrt{m_1m_2} \to w$ in the second-to-last displayed equation there, it suffices to show that 
\begin{equation}\label{need_abs_conv}
 \iint_{\R^2}\left| k\left( t_1+iw,\frac{-1}{t_2+ iw}\right) \right|\,dt_1\,dt_2
\end{equation} 
vanishes identically for all sufficiently large or small $w$. Recall here that $k$ is a compactly supported point-pair invariant kernel. We begin by noting that the hyperbolic distance $\rho$ satisfies 
$$\rho(z_1,z_2) \geq \rho(iy_1,iy_2) = |\log(y_1/y_2)|.$$
Therefore, since $\rho$ is invariant under $z \mapsto -1/z$ we have
\begin{equation}\label{eq:distintequality}\rho\left( t_1+iw,\frac{-1}{t_2+ iw}\right) =  \rho\left( t_2+iw,\frac{-1}{t_1+ iw}\right)\geq \rho \left( iw, \frac{iw}{t_1^2+ w^2}\right) = |\log \left(t^2_1 + w^2\right)|.\end{equation}
Next, we apply the change of variables $t_1\to wt_1$ in \eqref{need_abs_conv}, and then observe 
$$\rho(w(t_1+i), \frac{-1}{t_2+iw}) = \rho( wt_2+iw^2, \frac{-1}{t_1+i}) \geq |\log \left(w^2(1+t_1^2)\right)|.$$

There exists $k \in C_c^\infty([0,\infty)$ so that $k(z_1,z_2) = k(\rho(z_1,z_2))$ (explicitly, $k(x) = V(2(\cosh x -1))$ where $V$ is as in \cite[Ch.\ 3]{knightly_kuznetsovs_2013}). Let $Y$ be such that $\supp k \subseteq [0,Y]$, and let $X=\exp Y$, so that $\supp k \left( t_1+iw,\frac{-1}{t_2+ iw}\right)$ is contained in 
$$S_{t_1,t_2} := \{w \in [0,\infty): X^{-1} \leq t_1^2+w^2 \leq X\, \text{ and }  X^{-1} \leq w^2 (1+t_1^2) \leq X \}.$$ Let $ S = \cup_{t_1,t_2} S_{t_1,t_2}$, so that the support of the expression in \eqref{need_abs_conv} is contained in $S$. We claim that $S \subseteq [(1+X)^{-1}, \sqrt{X}]$. First, we show that $S_{t_1,t_2} \subseteq [0,\sqrt{X}]$ for all $(t_1,t_2) \in \R^2$: suppose $w>\sqrt{X}$ and  $w \in S_{t_t,t_2}$, then $X<w^2\leq w^2+t_1^2\leq X$, contradiction. Now we show that $S_{t_1,t_2} \subseteq [(1+X)^{-1}, \infty)$ for all $(t_1,t_2) \in \R^2$: suppose $w<(1+X)^{-1}$ and $w\in S_{t_1,t_2}$. Then, $t_1^2 \leq t_1^2+w^2\leq X$, so $1+t_1^2 \leq 1+X$. Also, $w^2<(1+X)^{-2}$, so
$$ (1+X)w^2 < \frac{(1+X)^2}{X} w^2 <  \frac{1}{X} \leq w^2(1+t_1^2) \leq (1+X)w^2.$$ Contradiction.

Having justified the exchange of summation and integration, the calculation in \cite[\S 7.5.2]{knightly_kuznetsovs_2013} goes through and we find that 
$$\int_0^\infty I_{\delta}(f_\infty,w) \,dw =\frac{i \sqrt{\mu}}{4} \int_{-\infty}^\infty J_{2it}(4 \pi \sqrt{\mu m_1m_2}) \frac{h_\infty(t) t}{\cosh (\pi t)} \,dt = \frac{1}{2} \frac{H_\infty(\frac{4 \pi \sqrt{m_1m_2}}{c})}{c}.$$ 

To conclude Theorem \ref{MT}, take \eqref{Idecomp},  integrate it over $w \in \R_{>0}$ as  above, expand the geometric side  in terms of double cosets as in \eqref{eq:orbitdecomp}, and collect the results of sections \ref{sec:cuspcontr}-\ref{sec:secondcellterms}. 

\subsection{Holomorphic/discrete series variation}\label{sec:holomorphicunrefined}
We only need to  modify the archimedean aspects of the above, and these have already been treated in \cite{KLPetersson}. For the holomorphic forms/ discrete series variation, throughout this  paper one should replace instances of $K_\infty$-fixed vectors to $\omega$-isotypic vectors, where $\omega$ is the weight $\kappa$ character of $K_\infty$ defined by $\omega\left( \begin{smallmatrix} \cos \theta & \sin \theta \\ -\sin \theta & \cos \theta \end{smallmatrix} \right) = e^{i\kappa \theta}$. By \cite[Thm.\ 8.1]{Knapp}, the space of $\omega$-isotypic vectors in $V_\pi$ are at most 1-dimensional, just as the $K_\infty$-fixed vectors are $1$-dimensional. 

We give a few brief details of the derivation. Fix $\kappa \geq 2$ even and let 
\begin{equation}
f_\infty = \frac{1}{\| \Phi_{\pi_\kappa,v_0} \|_2^2} \overline{\Phi_{\pi_\kappa,v_0}},
\end{equation}
where $\pi_\kappa$ is the weight $\kappa$ discrete series representation of $\GL_2(\R)$ (see e.g.\ \cite[\S 11.7]{KnightlyLiTracesOfHeckeOperators}), $v_0$ is an $L^2$-normalized lowest weight vector therein, and $\Phi_{\pi_\kappa,v_0}$ is the associated diagonal matrix coefficient. In completely explicit terms, for $g = \left( \begin{smallmatrix} a & b \\ c & d\end{smallmatrix}\right)$
\begin{equation}\label{holo:f_infty}
f_\infty(g) = \begin{cases} \frac{\kappa-1}{4\pi} \frac{ \det(g)^{\kappa/2} (2i)^\kappa }{(-b+c+(a+d)i)^\kappa} & \text{ if } \det g >0, \\ 0 & \text{ else.} \end{cases}
\end{equation}
The operator $\pi(f_\infty):V_\pi \to V_\pi$ projects onto the line of $v_0$ if $\pi \simeq \pi_\kappa$ and is the 0 operator otherwise. The pre-trace formula holds for this choice of test function at the archimedean place, see \cite[\S 15]{KnightlyLiTracesOfHeckeOperators} where $K_{\rm cont}$ and $K_{\rm res}$ are identically equal to 0. 

As in \eqref{eq:Idef} we consider 
$$ I := \iint_{[N]^2} K_{\rm geom} (n_1, n_2) \overline{\psi_{m_1}(n_1)}\psi_{m_2}(n_2)\,dn_1\,dn_2.$$ Applying the pre-trace formula and exchanging order of integration, we have $I =I_{\rm cusp}$. 

To treat $I_{\rm cusp}$, we need the Fourier expansions from Section \ref{sec:FE}. For $\omega$-isotypic vectors $\varphi \in V_\pi, \,\pi \in \cF_\kappa(f)$, one defines $u=u_\varphi$ by 
$$
u(x+iy)= j( \left(\begin{smallmatrix} y & x \\ & 1\end{smallmatrix}\right), i)^\kappa \varphi (\left(\begin{smallmatrix} y & x \\ & 1\end{smallmatrix}\right) \times 1_{\rm fin} ),
$$ 
where $j(g,z) = (cz+d)(\det g)^{-1/2}$ for 
$g = \left(\begin{smallmatrix} a & b \\ c &d\end{smallmatrix}\right) \in \GL_2^+(\R)$. 
 Then, $u$ is a holomorphic modular form of weight $\kappa$ for $\Gamma(N)$, so admits a Fourier expansion of the form 
 $$
 \sum_{n \in \N } a_u(n/N,y) e\Big(\frac{n}{N}x\Big) \quad \text{ with } \quad a_u(n/N,y) = \frac{1}{N} \int_0^N u(x+iy) e\Big(-\frac{n}{N}x\Big)\,dx.
 $$
The normalized Fourier coefficients $a_u(m)$ are given by 
\begin{equation}
\frac{a_u(m)}{\sqrt{m}} W(my) = y^{\kappa/2} a_u(m,y),
\end{equation}
where $W$ is the vector of minimal weight and norm 1 in the archimedean Kirillov model given explicitly by 
\begin{equation}
W(y) = \begin{cases} \left(\frac{(4\pi y)^\kappa}{\Gamma(\kappa)}\right)^{1/2} e^{-2\pi y} & \text{ if } y>0, \\ 0 & \text{ if } y<0. \end{cases}
\end{equation}  

Continuing with the computation of $I$, by the same steps as in Section \ref{sec:cuspcontr} we have when $m_1,m_2>0$ that 
\begin{multline*} I = I_{\rm cusp} = \sum_{\pi \in \cF_\kappa(f)} \sum_{\varphi \in \cB(\pi)} W_{\pi(f)\varphi}(a(-m_1)) \overline{W_{\varphi}(a(-m_2))} \\ = \frac{(4\pi )^\kappa}{\Gamma(\kappa)} (m_1m_2)^{\frac{\kappa-1}{2}} e^{-2\pi(m_1+m_2)} \sum_{\pi \in \cF_\kappa(f)} \sum_{\varphi \in \cB(\pi)}a_{u_{\pi(f)\varphi}}(m_1)\overline{a_{u_{\varphi}}(m_2)},\end{multline*}
and $I=0$ otherwise. 

On the other hand, $I$ has a geometric expansion into first cell terms and second cell terms \eqref{eq:orbitdecomp}, exactly as in Section \ref{sec:geomside}. For the first cell terms, exactly as in Section \ref{sec:firstcellterms} but using \cite[Prop.\ 3.4]{KLPetersson} for the archimedean aspect, if $m_1,m_2>0$, then  
$$I_{ \left( \begin{smallmatrix} m_2/m_1 &  \\ & 1 \end{smallmatrix}\right)}(f_{\A})  = \delta_{m_1=m_2 \in \frac{1}{N}\N} \frac{(4\pi \sqrt{m_1m_2})^{\kappa-1}}{ \Gamma(\kappa -1)}e^{-2\pi(m_1+m_2)} \int_{\A_{\rm fin}} f\left( \left( \begin{smallmatrix} 1 & t \\ & 1 \end{smallmatrix} \right)\right) \psi_{\rm fin}(-mt)\,dt$$ and $I_{ \left( \begin{smallmatrix} m_2/m_1 &  \\ & 1 \end{smallmatrix}\right)}(f_{\A})$ vanishes otherwise, 
where $m$ is the common value of $m_1=m_2$ when they are equal. For the second cell terms, exactly as in Section \ref{sec:secondcellterms} but using \cite[Prop.\ 3.6]{KLPetersson} for the archimedean aspect, if $m_1,m_2>0$, then 
$$I_{\delta}(f_\A) = \frac{(4 \pi i)^\kappa (\sqrt{m_1m_2})^{\kappa-1}e^{-2\pi(m_1+m_2)}}{2 \Gamma(\kappa -1)}\frac{H(m_1,m_2;c)}{c}J_{\kappa-1}\left(\frac{4\pi\sqrt{m_1m_2}}{c}\right).$$
Altogether, with notation and assumptions as in Theorem \ref{MT}, we have for all $m_1,m_2 \in \frac{1}{N}\N$
\begin{multline}
\sum_{\pi \in \cF_\kappa(f)} \sum_{\varphi \in \cB(\pi)}a_{u_{\pi(f)\varphi}}(m_1)\overline{a_{u_{\varphi}}(m_2)} = 
\delta_{m_1=m_2} \frac{\kappa -1}{4\pi} \int_{\A_{\rm fin}} f\left( \left( \begin{smallmatrix} 1 & t \\ & 1 \end{smallmatrix} \right)\right) \psi_{\rm fin}(-mt)\,dt \\ + \frac{(\kappa-1)i^{-\kappa}}{2} \sum_{c \in \cC(\cF)} \frac{H(m_1,m_2;c)}{c}J_{\kappa-1}\left(\frac{4\pi\sqrt{m_1m_2}}{c}\right).
\end{multline}

\section{Generalized Kloosterman sums}\label{sec:KloostermanSums}
Theorem \ref{MT} has only light hypotheses and follows almost immediately from an inspection of the proof found in \cite{knightly_kuznetsovs_2013}. However, without additional information on $f$, one has little control on the set of admissible moduli $\cC(\cF)$ and the properties of the generalized Kloosterman sums $H(m,n;c)$. In this section we assume the geometric assumptions and work out their consequences for the Kloosterman sums.

\subsection{Preliminaries on support of $f$}\label{sec:prelims_on_support}
We begin by working in somewhat more generality than afforded by the geometric assumptions and for the time being assume in lieu of Geometric Assumption \eqref{geo3} that $f$ has support contained in $ZK'$ where $K'$ is some maximal compact open subgroup of $G(\A_{\rm fin})$. Let $K' = \prod_{p} K_p'$ be the factorization of $K'$ into maximal compact open subgroups $K_p'$ of $G(\Q_p)$, where necessarily $K_p'=K_p$ for all but finitely many $p$.

We first observe that the set of pairs $(y,x) \in \Q_+ \times \A_{\rm fin}/ \widehat{\Z}$ parametrizes the maximal compact subgroups $ZK'$ as follows. Define a map $\phi$ by $\phi: (y,x) \mapsto \left( \begin{smallmatrix} y & x \\ & 1 \end{smallmatrix}\right)^{-1} ZK  \left( \begin{smallmatrix} y & x \\ & 1 \end{smallmatrix}\right)$, where $K = G(\widehat{\Z})$. 

\begin{mylemma}\label{yxcontrollingsupportoff}
The map $\phi$ is well-defined and a bijection between $\Q_+ \times \A_{\rm fin}/ \widehat{\Z}$ and groups $ZK'$, where $K'$ is a maximal compact subgroup of $G(\A_{\rm fin})$. 
\end{mylemma}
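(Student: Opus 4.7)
The plan is to verify the three required properties of $\phi$ — well-definedness, surjectivity, and injectivity — in turn.

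For well-definedness, I must check that translating $x$ by an element $z \in \widehat{\Z}$ leaves $\phi(y,x)$ unchanged. This is immediate from the matrix identity $\left(\begin{smallmatrix} y & x+z \\ & 1 \end{smallmatrix}\right) = \left(\begin{smallmatrix} 1 & z \\ & 1 \end{smallmatrix}\right)\left(\begin{smallmatrix} y & x \\ & 1 \end{smallmatrix}\right)$ combined with the fact that $\left(\begin{smallmatrix} 1 & z \\ & 1 \end{smallmatrix}\right) \in K \subseteq ZK$, so conjugation by this extra factor does nothing to $ZK$.

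For surjectivity, I would use that at each prime $\PGL_2(\Q_p)$ acts transitively on vertices of its Bruhat--Tits tree, so all maximal compact subgroups of $\PGL_2(\Q_p)$ are conjugate. This implies that every subgroup of the required form satisfies $ZK' = g^{-1} Z K g$ for some $g \in G(\A_{\rm fin})$. Applying the Iwasawa decomposition $G(\A_{\rm fin}) = B(\A_{\rm fin})K$ and absorbing the $K$-factor, I may take $g \in B(\A_{\rm fin})$, and modulo center $g = \left(\begin{smallmatrix} y' & x \\ & 1 \end{smallmatrix}\right)$ for some $y' \in \A_{\rm fin}^\times$ and $x \in \A_{\rm fin}$. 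Using the decomposition $\A_{\rm fin}^\times = \Q_+ \cdot \widehat{\Z}^\times$ (a consequence of $\Z$ having class number one), I write $y' = y u$ with $y \in \Q_+$ and $u \in \widehat{\Z}^\times$, and then absorb the factor $\left(\begin{smallmatrix} u & \\ & 1\end{smallmatrix}\right) \in K$ on the right to recover $ZK' = \phi(y,x)$.

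For injectivity, suppose $\phi(y_1,x_1) = \phi(y_2,x_2)$ and set $b_i = \left(\begin{smallmatrix} y_i & x_i \\ & 1 \end{smallmatrix}\right)$. Then $b_2 b_1^{-1}$ normalizes $ZK$. The key structural input is that at each prime the normalizer of $ZK_p$ in $G(\Q_p)$ equals $ZK_p$ itself, which follows from $ZK_p$ being the full stabilizer of a vertex of the Bruhat--Tits tree of $\PGL_2(\Q_p)$. Hence $b_2 b_1^{-1} \in ZK$. Writing the product explicitly as $\left(\begin{smallmatrix} y_2/y_1 & x_2 - (y_2/y_1)x_1 \\ & 1 \end{smallmatrix}\right)$ and unpacking at each prime the condition that this upper triangular matrix lies in $ZK_p$, an entry-and-determinant analysis forces $y_2/y_1 \in \widehat{\Z}^\times \cap \Q_+ = \{1\}$, hence $y_1 = y_2$, and then $x_2 - x_1 \in \widehat{\Z}$. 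The main technical obstacle is really just this last local matrix bookkeeping — showing that an upper triangular matrix with $(2,2)$-entry $1$ lying in $ZK_p$ has its $(1,1)$-entry in $\Z_p^\times$ and $(1,2)$-entry in $\Z_p$; everything else in the lemma follows rather cleanly from structural facts about maximal compact subgroups of $p$-adic groups together with the rational/adelic decomposition of the id\`ele class group.
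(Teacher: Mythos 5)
Your argument follows the same overall structure as the paper's proof and is essentially correct. The well-definedness and surjectivity steps match the paper closely; for injectivity you invoke the fact that $ZK_p$ is its own normalizer in $G(\Q_p)$, whereas the paper simply appeals to an unstated ``direct computation'' ending in the equivalent local conditions $|y_1|_p=|y_2|_p$ and $y_{2,p}x_{1,p}-x_{2,p}y_{1,p}\in y_{1,p}\Z_p$ — your route is arguably cleaner, since the self-normalization property does the entry-and-determinant bookkeeping for you in a uniform way. One directional imprecision is worth flagging in the surjectivity step: to absorb a $K$-factor in the conjugation $g^{-1}ZKg$, that factor must sit on the \emph{left} of $g$, since $(kb)^{-1}ZK(kb)=b^{-1}ZKb$ but $(bk)^{-1}ZK(bk)=k^{-1}(b^{-1}ZKb)k$ does not reduce ($k$ normalizes $ZK$, not $b^{-1}ZKb$). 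You wrote $G(\A_{\rm fin})=B(\A_{\rm fin})K$, which places the $K$-factor on the wrong side, and later speak of absorbing $\left(\begin{smallmatrix} u & \\ & 1\end{smallmatrix}\right)$ ``on the right''; both should be flipped. The factorization you need is $\left(\begin{smallmatrix} yu & x \\ & 1\end{smallmatrix}\right)=\left(\begin{smallmatrix} u & \\ & 1\end{smallmatrix}\right)\left(\begin{smallmatrix} y & x/u \\ & 1\end{smallmatrix}\right)$, and the conclusion should accordingly be $ZK'=\phi(y,x/u)$ rather than $\phi(y,x)$. These are cosmetic: once the sides are straightened out the proof goes through exactly as in the paper, which writes $g=zk\left(\begin{smallmatrix} w & \\ & 1\end{smallmatrix}\right)\left(\begin{smallmatrix} y & x/w \\ & 1\end{smallmatrix}\right)$ and concludes $ZK'=\phi((y,x/w))$.
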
 
\begin{proof}
It is clear that $\left( \begin{smallmatrix} y & x \\ & 1 \end{smallmatrix}\right)^{-1} K  \left( \begin{smallmatrix} y & x \\ & 1 \end{smallmatrix}\right)$ is a maximal compact subgroup of $G(\A_{\rm fin})$.  To see that $\phi$ is well-defined, let $z \in \widehat{\Z}$ and note that 
\begin{equation*}  \left( \begin{smallmatrix} y & x+ z \\ & 1 \end{smallmatrix}\right)^{-1} ZK  \left( \begin{smallmatrix} y & x+ z \\ & 1 \end{smallmatrix}\right) = 
\left( \begin{smallmatrix} y & x \\ & 1 \end{smallmatrix}\right)^{-1} \left( \begin{smallmatrix} 1 & -z \\ & 1 \end{smallmatrix}\right) ZK \left( \begin{smallmatrix} 1 & z \\ & 1 \end{smallmatrix}\right) \left( \begin{smallmatrix} y & x \\ & 1 \end{smallmatrix}\right)  = \left( \begin{smallmatrix} y & x \\ & 1 \end{smallmatrix}\right)^{-1} ZK  \left( \begin{smallmatrix} y & x \\ & 1 \end{smallmatrix}\right).\end{equation*}

We show that $\phi$ is surjective. Any group of the form $ZK'$ with $K'$ a maximal compact subgroup of $G(\A_{\rm fin})$ is equal to $g^{-1} ZKg$ for some $g \in G(\A_{\rm fin})$. 
We may write $g= kb$ by the Iwasawa decomposition and translate by the center to write $g =z k \left( \begin{smallmatrix} y' & x \\ & 1 \end{smallmatrix}\right)  $ for some $y' \in \A_{\rm fin}^\times$, $x \in \A_{\rm fin}$, $k \in K$ and $z \in Z$. Since $\A_{\rm fin}^\times = \Q_+\widehat{\Z}^\times$, let us write $y'=yw$ with $y \in \Q_+$ and $w \in \widehat{\Z}^\times$.  Then $g =z k  \left( \begin{smallmatrix} w &  \\ & 1 \end{smallmatrix}\right)\left( \begin{smallmatrix} y & x/w \\ & 1 \end{smallmatrix}\right)  $, so that $ZK' = \phi((y,x/w))$ with $y \in \Q_+$ and $x/w \in \A_{\rm fin}/\widehat{\Z}$.

To see that $\phi$ is injective, it can be shown by a direct computation that $$b_1^{-1} ZK b_1 = b_2^{-1} ZK b_2$$ for $b_1$ and $b_2$ of the form $\left( \begin{smallmatrix} y_i & x_i \\ & 1 \end{smallmatrix}\right)$ if and only if $|y_1|_p = |y_2|_p$ for all primes $p$ and $y_{2,p}x_{1,p}-x_{2,p}y_{1,p} \in y_{1,p}\Z_p$ for all primes $p$. Since $y \in \Q_+$, its $|y|_p$ determines it uniquely, and plugging this back in,  $x \in \A_{\rm fin}$ is determined modulo $\widehat{\Z}$. 
\end{proof}
Given $f \in \cH_{\rm fin}$ and a maximal compact open subgroup $K'$ such that $\supp f \subseteq ZK'$, we may always pick a representative for $x \pmod{  \widehat{\Z}}$ so that either $x_p=0$ or $v_p(x)<0$ for each prime $p$.  

The next lemma, which was alluded to after the introduction of the geometric assumptions in Section \ref{intro:statement_of_results}, says that Geometric Assumption \eqref{geo3} is only slightly more restrictive than assuming that $\supp f \subseteq ZK'$ for some compact open subgroup $K'$ of $G(\A_{\rm fin})$. 
\begin{mylemma}\label{x=0}
Suppose that $f$ is not identically zero, satisfies Geometric Assumption \eqref{geo2}, and that $\supp f \subseteq ZK'$ for some compact open subgroup $K'$ of $G(\A_{\rm fin})$ with $\phi^{-1}(ZK')=(y,x)$, where $\phi$ is the bijection of Lemma \ref{yxcontrollingsupportoff}. If $v_2(x) \neq -1$, then $f$ satisfies Geometric Assumption \eqref{geo3} and $y$ controls the support of $f$.  
\end{mylemma}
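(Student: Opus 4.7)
The plan is to argue prime-by-prime that the representative of $x \in \A_{\rm fin}/\widehat{\Z}$ (chosen as in the remark preceding this lemma so that at each prime either $x_p = 0$ or $v_p(x_p) < 0$) must in fact satisfy $x_p \in \Z_p$ for every prime $p$. Combined with Lemma~\ref{yxcontrollingsupportoff}, this yields $\supp f \subseteq \phi(y,0) = a(y)^{-1}ZKa(y)$, which is exactly geometric assumption~\eqref{geo3} with $y$ controlling the support.

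Fix a prime $p$ and suppose toward contradiction that $v_p(x_p) = -n$ with $n \geq 1$. Since $f$ is a non-zero pure tensor, $\supp f_p$ is non-empty; pick $g_0 \in \supp f_p$. By geometric assumption~\eqref{geo2}, the entire set $A(\Z_p) g_0 A(\Z_p)$ is contained in $\supp f_p \subseteq Z K_p'$, where $K_p' = b^{-1} K_p b$ with $b = \left(\begin{smallmatrix} y_p & x_p \\ 0 & 1\end{smallmatrix}\right)$. Write $g_0 = z_0 k_0$ with $z_0 \in Z(\Q_p)$ and $k_0 \in K_p'$, and set $\tilde{k}_0 := b k_0 b^{-1} \in K_p$. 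A direct computation gives $b\, a(\alpha)\, b^{-1} = \left(\begin{smallmatrix} \alpha & (1-\alpha)x_p \\ 0 & 1\end{smallmatrix}\right)$, and a determinant check shows that whenever an element of determinant in $\Z_p^\times$ lies in $ZK_p$ it actually lies in $K_p$. Hence for all $\alpha,\beta \in \Z_p^\times$ the matrix
$M(\alpha,\beta) := \left(\begin{smallmatrix} \alpha & (1-\alpha)x_p \\ 0 & 1\end{smallmatrix}\right) \tilde{k}_0 \left(\begin{smallmatrix} \beta & (1-\beta)x_p \\ 0 & 1\end{smallmatrix}\right)$
lies in $K_p$.

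Writing $\tilde{k}_0 = \left(\begin{smallmatrix} A & B \\ C & D\end{smallmatrix}\right)$ and setting $m_p := \min_{\beta \in \Z_p^\times\setminus\{1\}} v_p(1-\beta)$ (so $m_p = 0$ for odd $p$ and $m_2 = 1$), the plan is to extract valuation constraints from $M(\alpha,\beta) \in K_p$ by specializing $\alpha$ and $\beta$. The $(1,1)$ and $(2,2)$ entries immediately yield $v_p(C) \geq n - m_p$; specializing $\beta = 1$ (resp.\ $\alpha = 1$) in the $(1,2)$ entry and then varying the other parameter yields $v_p(D) \geq n - m_p$ (resp.\ $v_p(A) \geq n - m_p$). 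The key subtle step is the cross term $(1-\alpha)(1-\beta)x_p^2 C$ appearing in the $(1,2)$ entry when both $\alpha \neq 1$ and $\beta \neq 1$: subtracting the pieces already known to lie in $\Z_p$ and choosing $\alpha,\beta$ with $v_p(1-\alpha) = v_p(1-\beta) = m_p$ forces the stronger bound $v_p(C) \geq 2(n - m_p)$. Consequently $v_p(AD - BC) \geq 2(n - m_p)$, contradicting $\det \tilde{k}_0 \in \Z_p^\times$ unless $n \leq m_p$. This rules out $v_p(x_p) < 0$ for every odd $p$ and $v_2(x_2) \leq -2$ for $p=2$; the excluded case $v_2(x) = -1$ corresponds precisely to $n = m_2 = 1$, where the argument correctly gives no information.

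The main obstacle will be the bookkeeping in the cross-term step: one must verify that the non-cross pieces of the $(1,2)$ entry really lie in $\Z_p$ under the constraints already derived so that the cross term is cleanly isolated, and in the $p=2$ case check carefully that one can simultaneously choose $\alpha,\beta \in \Z_2^\times$ achieving $v_2(1-\alpha) = v_2(1-\beta) = 1$ so that no accidental cancellation rescues the entries when $n \geq 2$.
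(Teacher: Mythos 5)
Your proof is correct, and its skeleton is the same as the paper's: work prime by prime, write $K_p' = b^{-1}K_p b$ with $b = \left(\begin{smallmatrix} y & x_p \\ & 1\end{smallmatrix}\right)$, use geometric assumption \eqref{geo2} to see that all two-sided $A(\Z_p)$-translates of a point of $\supp f_p$ stay in $ZK_p'$, conjugate via $b\,a(\alpha)\,b^{-1} = \left(\begin{smallmatrix} \alpha & (1-\alpha)x_p \\ & 1\end{smallmatrix}\right)$, and pass from $ZK_p$ to $K_p$ by the determinant. Where you diverge is the endgame. The paper needs only the one-sided translates (your $\beta=1$): since $\tilde{k}_0\in K_p$ must have a unit in its bottom row ($v_p(C)=0$ or $v_p(D)=0$), the $(1,1)$ or $(1,2)$ entry of $M(\alpha,1)$ already fails to lie in $\Z_p$ once $v_p\bigl((1-\alpha)x_p\bigr)<0$, which is achievable exactly when $n>m_p$; that is a one-line contradiction. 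Your two-sided argument instead accumulates the bounds $v_p(A),v_p(C),v_p(D)\ge n-m_p$ and contradicts $\det\tilde{k}_0\in\Z_p^\times$. This works, but note that your ``key subtle step'' (the cross term forcing $v_p(C)\ge 2(n-m_p)$) is superfluous: the basic bounds already give $v_p(AD-BC)\ge n-m_p\ge 1$ whenever $n>m_p$, which kills the unit determinant, so the delicate bookkeeping you flag as the main obstacle can be skipped entirely. Both routes locate the same sharp boundary $n\le m_2=1$ at $p=2$, i.e.\ the excluded case $v_2(x)=-1$.
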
 
\begin{proof}
It suffices to work locally at a prime $p$.  We want to show that $x=0$, so for purposes of contradiction we may assume that $v_p(x) < 0$ (see the sentence immediately following the proof of Lemma \ref{yxcontrollingsupportoff}).  Since $f$ is not identically zero and supported in $b^{-1}ZKb$ for $b = \left(\begin{smallmatrix} y & x \\ & 1\end{smallmatrix}\right)$, we have that $f(b^{-1} k b) \neq 0$ for some $k \in K$. Then $f(a b^{-1} k b a') \neq 0$ for all $a, a' \in A(\mz_p)$ by Geometric Assumption \eqref{geo2}.  Hence $a^{}b^{-1} k b a' \in b^{-1} K b$, 
equivalently, $(b a b^{-1}) k (b a' b^{-1}) \in K$ for all $a,a' \in A(\mz_p)$.  

Suppose $a = a(\alpha)$ with $\alpha \in \mz_p^{\times}$.  By direct calculation,
\begin{equation}
b a b^{-1} = \begin{pmatrix} \alpha & -x(\alpha-1) \\ & 1 \end{pmatrix}.
\end{equation}
Suppose $k =  (\begin{smallmatrix} r & t \\ u & v \end{smallmatrix}) $.  
Then taking $a'=1$, we obtain
\begin{equation}
\label{eq:matrixthingy}
(b a b^{-1}) k (b a' b^{-1})
= \begin{pmatrix} \alpha & -x(\alpha-1) \\ & 1 \end{pmatrix}
\begin{pmatrix}  r & t \\ u & v \end{pmatrix}
= 
\begin{pmatrix}
r \alpha - ux(\alpha-1)  & t \alpha - v x (\alpha-1)  \\  u & v 
\end{pmatrix}.
\end{equation}
For $p \neq 2$, we can choose $\alpha \in \mz_p^{\times}$ so that $\alpha - 1 \in \mz_p^{\times}$, and the assumption that $k \in K$ implies that $v_p(u) = 0$ or $v_p(v) = 0$.  This shows that \eqref{eq:matrixthingy} is not in $K$, since $-u(\alpha-1) x \not \in \mz_p$ or $-v(\alpha-1)x \not \in \mz_p$. If $p=2$ then we also have by hypothesis that $v_p(x) < -1$, and so we can choose $\alpha = 3$ so that $-x(\alpha -1) \not \in \mz_2$. 
\end{proof}
In fact, the hypothesis that $v_2(x)\leq -2$ in Lemma \ref{x=0} is necessary. 
The above calculations show that with $K'_2 = b K_2 b^{-1}$ and $x \in 2^{-1} \mz_2^{\times}$, then $1_{ZK'_2}$ is bi-$A(\mz_2)$-invariant.  Take for instance, $y=1/2$ and $x=-1/2$. Then we can check
\begin{equation}
\label{eq:conjugationFormulap=2}
\begin{pmatrix} 1/2 & -1/2 \\ & 1 \end{pmatrix}^{-1}
\begin{pmatrix} r & t \\ u & v \end{pmatrix}
\begin{pmatrix} 1/2 & -1/2 \\ & 1 \end{pmatrix}
= \begin{pmatrix} r + u/2 & 2t - r + v - u/2 \\ u/2 & v - u/2 \end{pmatrix}.
\end{equation}
Similarly,
\begin{equation}
\label{eq:conjugationFormulap=2_2}
\begin{pmatrix} y &  \\ & 1 \end{pmatrix}^{-1}
\begin{pmatrix} \alpha & \beta \\ \gamma & \delta \end{pmatrix}
\begin{pmatrix} y & \\ & 1 \end{pmatrix}
= \begin{pmatrix} \alpha & \beta/ y\\ \gamma y & \delta \end{pmatrix}.
\end{equation}
The upper-left and lower-right corners of the matrix in \eqref{eq:conjugationFormulap=2_2} 
can never leave $\mz_2$, so there does not exist $y \in \mq_2^{\times}$ such that
$K'_2 \subseteq a(y)^{-1} K_2 a(y)$.

{\bf Standing assumptions.} We henceforth assume that Geometric Assumptions \eqref{geo2} and \eqref{geo3} are in force from here until the end of Section \ref{sec:KloostermanSums} and so they may not be explicitly mentioned in the statements of lemmas, propositions, theorems and corollaries.  

Given $y\in \Q_+$ for which $\supp f \subseteq ZK'$ with $K' = a(y)^{-1}Ka(y)$ as afforded by Geometric Assumption \eqref{geo3}, we write 
 \begin{equation}\label{ffin*def}f'(g) = f(a(y)^{-1}ga(y))\end{equation} so that $f'$ is supported in $ZK$.

\begin{mylemma}\label{*defect} If  $f$ is of level $N$ and has support controlled by $y$, then $f'$ is bi-$K(M)$-invariant, where $M= N \xi$ and $\xi = \lcm(y,y^{-1})$. 
\end{mylemma}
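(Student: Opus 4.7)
The plan is to reduce the claim to a direct matrix calculation: bi-$K(M)$-invariance of $f'$ will follow immediately from bi-$K(N)$-invariance of $f$ once we show that conjugation by $a(y)$ maps $K(M)$ into $K(N)$. More precisely, for $k \in K(M)$ and $g \in \overline{G}(\A_\fin)$,
\[
f'(k g) = f\bigl(a(y)^{-1} k a(y)\cdot a(y)^{-1} g a(y)\bigr),
\]
so it suffices to verify $a(y)^{-1} k a(y) \in K(N)$ (and symmetrically on the right).

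For the computation, write $k = \left(\begin{smallmatrix} a & b \\ c & d\end{smallmatrix}\right) \in K(M)$. Then
\[
a(y)^{-1} k a(y) = \begin{pmatrix} a & y^{-1}b \\ yc & d\end{pmatrix}.
\]
The diagonal entries are unchanged, so $a, d \in \widehat{\Z}^\times$ with $a \equiv d \equiv 1 \pmod{N}$ (in fact $\pmod M$). The determinant is preserved by conjugation, so it remains in $\widehat{\Z}^\times$. The only thing to check is that the off-diagonal entries lie in $N\widehat{\Z}$.

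This is exactly where $\xi = \lcm(y, y^{-1})$ enters. By definition, $\xi$ is the smallest positive integer with $v_p(\xi) \geq |v_p(y)|$ for all primes $p$, which is precisely the condition that both $\xi/y$ and $y\xi$ are positive integers (i.e.\ lie in $\widehat{\Z}$). Since $k \in K(M)$ means $b, c \in M\widehat{\Z} = N\xi \widehat{\Z}$, we get
\[
y^{-1}b \in N(\xi/y)\widehat{\Z} \subseteq N\widehat{\Z}, \qquad yc \in N(y\xi)\widehat{\Z} \subseteq N\widehat{\Z}.
\]
Combined with the diagonal condition, this shows $a(y)^{-1} k a(y) \in K(N)$.

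The same argument works on the right, so $f'(k_1 g k_2) = f'(g)$ for all $k_1, k_2 \in K(M)$. There is no serious obstacle here — the whole proof reduces to unwinding the definition of $\xi$, which is tailored exactly to cancel the $y$-distortion of the off-diagonal entries under conjugation by $a(y)$.
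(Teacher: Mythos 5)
Your proof is correct and follows essentially the same route as the paper's: the paper's "direct calculation" that for each $m \in K(M)$ there is $n \in K(N)$ with $a(y)n = ma(y)$ is exactly your verification that $a(y)^{-1}K(M)a(y) \subseteq K(N)$, which you carry out explicitly using that $\xi/y$ and $y\xi$ are integers. Nothing further is needed.
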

\begin{proof} 
By a direct calculation, we see that for all $m \in K(M)$ that there exists $n \in K(N)$ such that $a(y)n = ma(y)$. Then, 
$$ f'(gm) = f(a(y)^{-1}gma(y)) = f( a(y)^{-1} g  a(y)n) = f'(g).$$ The left invariance is similar. 
\end{proof}
 See Section \ref{sec:MTcomputation} for an application of Lemma \ref{*defect} to the computation of the diagonal term in Theorem \ref{MT}.

We conclude this section by giving a lemma that relates the support of $f$ to its level. 

\begin{mylemma}\label{moduli}
Suppose $f$ is not identically $0$ and has level $N$. Any $y$ controlling the support of $f$ satisfies $yN\in \N$ and $N/y \in \N$. 
\end{mylemma}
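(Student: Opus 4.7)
The plan is to reduce the claim to a statement about a single conjugation containment of compact subgroups, and then verify it locally at each prime by an explicit matrix calculation.

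First I would observe that, since $f$ is not identically zero, I can pick some $g \in \supp f \subseteq a(y)^{-1} Z K a(y)$, so that $g' := a(y) g a(y)^{-1} \in ZK$. The bi-$K(N)$-invariance of $f$ implies that the support is closed under left and right translation by $K(N)$; in particular, for every $k \in K(N)$ the element $k g$ lies in $\supp f$, so conjugation by $a(y)$ gives $(a(y) k a(y)^{-1}) g' \in ZK$. Since $ZK$ is a subgroup of $G(\A_{\rm fin})$ containing $g'$, we conclude that $a(y) k a(y)^{-1} \in ZK$ for all $k \in K(N)$, i.e.
\begin{equation*}
a(y)\, K(N)\, a(y)^{-1} \subseteq ZK.
\end{equation*}

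Next I would unfold this containment prime by prime. Writing $n = v_p(N)$ and $m = v_p(y)$, the condition becomes $a(p^m)\, K_p(p^n)\, a(p^{-m}) \subseteq Z(\Q_p) K_p$. For $n \geq 1$ a general element of $K_p(p^n)$ has the form $\left(\begin{smallmatrix} 1+p^n\alpha & p^n\beta \\ p^n\gamma & 1+p^n\delta\end{smallmatrix}\right)$ with $\alpha,\beta,\gamma,\delta\in \Z_p$; conjugating by $a(p^m)$ turns it into $\left(\begin{smallmatrix} 1+p^n\alpha & p^{n+m}\beta \\ p^{n-m}\gamma & 1+p^n\delta\end{smallmatrix}\right)$. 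The diagonal entries are units in $\Q_p$, so this matrix can only lie in $Z(\Q_p)K_p$ when it already lies in $K_p$ (no non-trivial central scaling is compatible with a unit diagonal entry and unit determinant). Letting $\beta,\gamma$ range over $\Z_p$ forces $n + m \geq 0$ and $n - m \geq 0$. The case $n=0$ is handled analogously (or even more directly) by applying the containment to the Weyl element $\left(\begin{smallmatrix} 0 & 1 \\ 1 & 0\end{smallmatrix}\right)\in K_p$, which after conjugation produces off-diagonal entries of valuation $\pm m$ whose presence in $ZK_p$ forces $m=0$.

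Combining, I get $v_p(N) \geq |v_p(y)|$ for every prime $p$, equivalently $\lcm(y, y^{-1})\mid N$ in the rational sense. This is precisely the statement $yN \in \N$ and $N/y \in \N$. The argument is essentially a one-line matrix check after the group-theoretic reduction, so I do not anticipate a significant obstacle; the only mild subtlety is making sure the reduction to $a(y)K(N)a(y)^{-1}\subseteq ZK$ uses only the existence of a point in $\supp f$ (hence the hypothesis that $f$ is non-zero) and the fact that $ZK$ is a group.
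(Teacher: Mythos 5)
Your proposal is correct and follows essentially the same route as the paper: both arguments use the non-vanishing of $f$, its bi-$K(N)$-invariance, and the group structure of $ZK$ to deduce the containment $a(y)K(N)a(y)^{-1}\subseteq ZK$, and then read off the valuation conditions by conjugating explicit elements. The only difference is cosmetic: the paper simply conjugates the two unipotent matrices $\left(\begin{smallmatrix}1 & N\\ & 1\end{smallmatrix}\right)$ and $\left(\begin{smallmatrix}1 & \\ N & 1\end{smallmatrix}\right)$ to get $Ny,\,N/y\in\widehat{\Z}\cap\Q_+=\N$ at once, whereas you carry out the equivalent check prime by prime with a general element (and a Weyl element when $v_p(N)=0$).
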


\begin{proof}
First we show that $K(N) \subseteq a(y)^{-1}K a(y)  =K' $. To do this, we use that $K'$ is a group.  Let $g\in \supp f \subseteq K'$. Then, since $f$ is right $K(N)$-invariant and $\supp f \subseteq K'$, we have $gk \in K'$ for any $k\in K(N)$. Thus, $k\in g^{-1}K'=K'$. 

Now, $\left( \begin{smallmatrix} 1 & N \\  & 1 \end{smallmatrix}\right) \in K(N)$, so $a(y) \left( \begin{smallmatrix} 1 & N \\  & 1 \end{smallmatrix}\right) a(y)^{-1} = \left( \begin{smallmatrix} 1 & Ny \\  & 1 \end{smallmatrix}\right) \in K$, thus $Ny \in \widehat{\Z}$. Similarly, $a(y) \left( \begin{smallmatrix} 1 & \\ N & 1 \end{smallmatrix}\right) a(y)^{-1}=\left( \begin{smallmatrix} 1 &  \\ N/y  & 1 \end{smallmatrix}\right) \in K$, so $N/y \in \widehat{\Z}$. Since $\widehat{\Z} \cap \Q_+ = \N$, this finishes the proof. 
\end{proof}

Note Lemma \ref{moduli} also shows that if $y$ controls the support of $f$ and $f$ has level $N$, then with $\xi$ as in Lemma \ref{*defect}, $\xi \mid N$, so that the level of $f'$ is at most $N^2$.

\subsection{Control on the geometric conductor}\label{sec:control_on_geom_cond}

Recall  Definition \ref{def:GenKloostermanDef} of the generalized Kloosterman sums $H(m_1,m_2;c)$. The sum $H(m_1,m_2;c)$ vanishes unless both $m_1,m_2 \in \frac{1}{N} \Z$. Indeed, by the left-$K(N)$-invariance of $f$, we have  
$$H(m_1,m_2;c) = \psi_{\rm fin}(m_1 N) H(m_1,m_2;c) ,$$ 
so $H(m_1,m_2;c)=0$ unless $m_1 \in \frac{1}{N}\widehat{\Z} \cap \Q = \frac{1}{N}\Z$, and similarly for $m_2$ by the right-$K(N)$-invariance of $f$. As an aside, the fact that $H(m_1,m_2;c)$ vanishes unless $m_1,m_2 \in \frac{1}{N}\Z$ is in perfect accord with the spectral side and first cell terms of Theorem \ref{MT}.
\begin{mylemma}\label{ccondition}
Let $y\in \Q_+$ control the support of $f$. The generalized Kloosterman sum $H(m_1,m_2;c)=0$ unless $c \in y\N$, in which case 
\begin{equation}
\label{eq:orbitalintegralformulaf*}
H(m_1,m_2;c)= \frac{1}{|c|_{\rm fin}^2}\iint_{\A_{\rm fin}^2} f'\left( \left( \begin{smallmatrix} -t_1 & \frac{-y(1+t_1t_2)}{c} \\ \frac{c}{y} & t_2 \end{smallmatrix}\right)  \right) \psi_{\rm fin}\Big(\frac{m_1t_1- m_2 t_2}{c}\Big)\,dt_1\,dt_2,
 \end{equation}
where $f'$ is given by equation \eqref{ffin*def}. The integration may be restricted to $t_1,t_2 \in \widehat{\Z}$ and $t_1t_2 \equiv -1 \pmod {cy^{-1}\widehat{\Z}}$. 
\end{mylemma}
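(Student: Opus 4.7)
The plan is to express $H(m_1,m_2;c)$ in terms of $f'$ (defined in \eqref{ffin*def}) by a change of variables, and then exploit that $\supp f' \subseteq ZK$ to read off the support constraints.

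First I would compute the matrix product to get
\[
\begin{pmatrix}1 & -t_1 \\ & 1 \end{pmatrix} \begin{pmatrix} & -c^{-2} \\ 1 &  \end{pmatrix}  \begin{pmatrix}1 & t_2 \\ & 1 \end{pmatrix} = \begin{pmatrix}-t_1 & -c^{-2}-t_1t_2 \\ 1 & t_2 \end{pmatrix},
\]
which has determinant $c^{-2}$. Then I would substitute $t_i \mapsto t_i/c$ (producing a Jacobian factor $|c|_\fin^{-2}$) and use the $Z(\A_\fin)$-invariance of $f$ to multiply the matrix argument by the scalar $c$. This transforms the matrix to
\[
\begin{pmatrix}-t_1 & -c^{-1}(1+t_1t_2) \\ c & t_2 \end{pmatrix},
\]
which now has determinant $1$, and the additive character becomes $\psi_\fin((m_1t_1-m_2t_2)/c)$.

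Next, writing $f(g)=f'(a(y)ga(y)^{-1})$ and conjugating by $a(y)$ (which multiplies the $(1,2)$ entry by $y$ and divides the $(2,1)$ entry by $y$), the matrix takes exactly the form appearing in \eqref{eq:orbitalintegralformulaf*}. This yields the claimed formula for $H(m_1,m_2;c)$ as an integral of $f'$.

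For the support analysis, since the matrix has determinant $1$, membership in $ZK=Z(\A_\fin)K$ forces the scalar factor $z_1$ to satisfy $z_1^2 \in \widehat{\Z}^\times$, i.e.\ $z_1 \in \widehat{\Z}^\times$. The requirement then reduces to all four entries lying in $\widehat{\Z}$: the conditions $t_1,t_2\in\widehat{\Z}$ are immediate; the $(2,1)$-entry gives $c/y \in \widehat{\Z}\cap\Q_+=\N$, which is equivalent to $c \in y\N$ (and thus $H(m_1,m_2;c)=0$ otherwise); and the $(1,2)$-entry gives $y(1+t_1t_2)/c \in \widehat{\Z}$, i.e.\ $t_1t_2 \equiv -1 \pmod{cy^{-1}\widehat{\Z}}$. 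There is no real obstacle here; the argument is a careful bookkeeping of substitutions combined with the trivial observation that a determinant-one matrix lies in $ZK$ iff all its entries lie in $\widehat{\Z}$.
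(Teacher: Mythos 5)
Your proposal is correct and follows essentially the same route as the paper's proof: change variables by $c$ to obtain the $\frac{1}{|c|_\fin^2}$ factor and a unit-determinant matrix, conjugate by $a(y)$ to pass to $f'$, and read off the integrality constraints from $\supp f' \subseteq ZK$. The only cosmetic difference is that you carry out the support analysis more explicitly at the end, whereas the paper states it briefly; the underlying argument is identical.
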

\begin{proof}

Following the notation in Section \ref{sec:secondcellterms}, write $\mu = c^{-2}$ with $c \in \Q_+$. Then 
\begin{equation}
\label{eq:matrixcondition}
\left( \begin{smallmatrix}1 & -t_1 \\ & 1 \end{smallmatrix}\right)\left( \begin{smallmatrix} & -\mu \\ 1&  \end{smallmatrix}\right) \left( \begin{smallmatrix}1 & t_2 \\ & 1 \end{smallmatrix}\right)=  \left( \begin{smallmatrix}-t_1 & -\mu -t_1t_2 \\ 1 & t_2 \end{smallmatrix}\right)  \in ZK_p'
  \Longleftrightarrow \left( \begin{smallmatrix}-t_1c & (-\mu -t_1t_2)c \\ c & t_2c \end{smallmatrix}\right) \in K_p'.
  \end{equation}
 Let $t_1' = t_1c$ and $t_2' = t_2c$. Then \eqref{eq:matrixcondition} holds
  if and only if $( \begin{smallmatrix}-t_1' & (-1 -t_1't_2' )/c \\ c & t_2' \end{smallmatrix}) \in K_p'$.
    Changing variables in \eqref{GenKloostermanDef} accordingly, we find
$$H(m_1,m_2; c) = \frac{1}{|c|_{\rm fin}^2}\iint_{\A_{\rm fin}^2} f\left( \left( \begin{smallmatrix} -t_1 & \frac{-1-t_1t_2}{c} \\ c & t_2 \end{smallmatrix}\right)  \right) \psi_{\rm fin}\left(\frac{m_1t_1- m_2 t_2}{c}\right)\,dt_1\,dt_2.$$ 

Recall the definition of $f'$ from \eqref{ffin*def} and note that $f'$ is supported in $ZK$ by Geometric Assumption \eqref{geo3}. 
For $y \in \Q_+$ controlling the support of $f$ as in \eqref{ffin*def}, we have 
$$ a(y) \left( \begin{smallmatrix} -t_1 & \frac{-1-t_1t_2}{c} \\ c & t_2 \end{smallmatrix}\right) a(y)^{-1}
= 
\left( \begin{smallmatrix} -t_1 & \frac{-y(1+t_1t_2)}{c} \\ \frac{c}{y} & t_2 \end{smallmatrix}\right),
$$
from which \eqref{eq:orbitalintegralformulaf*} follows by substitution.
Now this integral vanishes unless $cy^{-1} \in \widehat{\Z}$. Note also that the integration here may be restricted to $t_1,t_2 \in \widehat{\Z}$ and $t_1t_2 \equiv -1 \pmod {cy^{-1} \widehat{\Z}}$. 
\end{proof}
In terms of the geometric conductor $k(\cF)$, Lemma \ref{ccondition} asserts that  $y \mid k(\cF)$.
Technically, we have not defined $k(\cF)$ if $\cC(\cF) = \varnothing$,
but in fact the next Lemma shows that $\cC(\cF)$ is non-empty and indeed provides an upper bound on $k(\cF)$ if one has information about the possible lower-left entries of matrices on which $f'$ is supported. 

\begin{mylemma}\label{lem:admmodulus} Suppose that $f$ has level $N$ and support controlled by $y \in \Q_+$, and $f'$ as in \eqref{ffin*def} has level $M$. 
Suppose that $c \in \mq_{+}$ and $g = (\begin{smallmatrix} g_1 & g_2 \\ g_3 & g_4 \end{smallmatrix}) \in K$ are such that $cN \equiv 0 \pmod{M}$, $f'(g) \neq 0$, $\det(g) \equiv 1 \pmod{cy^{-1}M}$, and $cy^{-1} \equiv g_3 \pmod{cy^{-1}M}$.
Then $c$ is an admissible modulus.
\end{mylemma}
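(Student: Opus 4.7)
The plan is to invoke Lemma \ref{ccondition} to express
$$
H(m_1, m_2; c) = \frac{1}{|c|_\fin^2} \iint_{\widehat{\Z}^2} \tilde{f}(t_1, t_2)\, \psi_\fin\!\left(\tfrac{m_1 t_1 - m_2 t_2}{c}\right) dt_1\, dt_2,
$$
where $\tilde{f}(t_1, t_2) := f'\bigl(\begin{smallmatrix} -t_1 & -y(1+t_1 t_2)/c \\ c/y & t_2 \end{smallmatrix}\bigr)$. Admissibility of $c$ amounts to producing $(m_1, m_2)$ for which this integral is non-zero, and by finite Fourier analysis on a suitable quotient, this reduces to exhibiting a single point $(t_1^0, t_2^0) \in \widehat{\Z}^2$ at which $\tilde{f}$ does not vanish.

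To locate such a point, I would take $t_1^0 := -g_1$ and $t_2^0 := g_4$, both of which lie in $\widehat{\Z}$ since $g \in K$. The key algebraic step is to verify that
$$
\begin{pmatrix} -t_1^0 & -y(1+t_1^0 t_2^0)/c \\ c/y & t_2^0 \end{pmatrix}
= \begin{pmatrix} g_1 & g_2 + M\gamma \\ g_3 + M\delta & g_4 \end{pmatrix}
\quad \text{for some } \gamma, \delta \in \widehat{\Z}.
$$
The $(2,1)$-entry follows from the hypothesis $g_3 \equiv cy^{-1} \pmod{cy^{-1}M\widehat{\Z}}$ combined with $cy^{-1} \in \N$ (an admissibility prerequisite from Lemma \ref{ccondition}). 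For the $(1,2)$-entry $-y(1-g_1g_4)/c$, one uses $\det(g) \equiv 1 \pmod{cy^{-1}M\widehat{\Z}}$ to rewrite $1 - g_1g_4 \equiv -g_2 g_3$ modulo $cy^{-1}M$, and then the congruence on $g_3$ to reduce this to $g_2 \cdot (yg_3/c) \equiv g_2 \pmod{M}$ up to an integer multiple of $M$. Since $\det(g) \equiv 1 \pmod{M}$ (trivial consequence of the $cy^{-1}M$ congruence), it follows that $g^{-1} M(t_1^0, t_2^0) \in K(M)$ in $\PGL_2$, and right-$K(M)$-invariance of $f'$ then yields $\tilde{f}(t_1^0, t_2^0) = f'(g) \neq 0$.

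For the Fourier step, I would compute $M(t_1+\epsilon_1,t_2+\epsilon_2)M(t_1,t_2)^{-1}$ and its mirror image to check that $\tilde{f}$ is invariant under translation by $cy^{-1}M\widehat{\Z}$ in each variable, so $\tilde{f}$ descends to a non-zero function on the finite group $(\Z/cy^{-1}M\Z)^2$. Finite Fourier inversion then produces a non-zero coefficient, i.e., values $m_i = ya_i/M$ for which $H(m_1, m_2; c) \neq 0$. Here the hypothesis $cN \equiv 0 \pmod M$ ensures compatibility of the Fourier frequency lattice with $\tfrac{1}{N}\Z$, which is where $H(\cdot,\cdot;c)$ must be supported by the bi-$K(N)$-invariance of $f$. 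I expect the main technical hurdle to be the entry-by-entry matrix computation of the first step, since it requires synthesizing both congruences modulo $cy^{-1}M$ to produce the required agreement with $g$ modulo $M$; the Fourier-analytic portion is essentially automatic once the non-vanishing point has been secured.
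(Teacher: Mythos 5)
Your proposal is correct and takes essentially the same approach as the paper: both rely on Lemma \ref{ccondition} to express $H(m_1,m_2;c)$ as an integral of $f'$, both locate the non-vanishing point $(t_1,t_2) \equiv (-g_1,g_4)$, both use the determinant and $g_3$ congruences to verify that the resulting matrix agrees with $g$ modulo $M$, and both then conclude non-vanishing of some $H(m_1,m_2;c)$ by finite Fourier theory on $\Z/cN\Z$ — the paper phrases this as Parseval (the $\ell^2$ sum of $H$-values equals the $L^2$ norm of $\tilde f$, which is positive), while you phrase it as Fourier inversion, but these are the same argument.
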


\begin{proof} The idea is to apply a version of the Plancherel formula to $H(m,n;c)$. 
Note by the second sentence of this section and \eqref{eq:orbitalintegralformulaf*}  that $H(m/N,n/N;c)$ is periodic in $m,n$ modulo $cN$. 
By Lemma \ref{ccondition} 
\begin{multline}\label{approxPlancherel}
\frac{1}{(cN)^2}\sum_{m,n \in \Z/cN\Z} |H(m/N,n/N;c)|^2 = \\ \frac{1}{|c|_{\rm fin}^4} \iint_{{\widehat{\Z}^2}}\iint_{{\widehat{\Z}^2}}  f'\left( \left( \begin{smallmatrix} -t_1 & \frac{-y(1+t_1t_2)}{c} \\ \frac{c}{y} & t_2 \end{smallmatrix}\right) \right)  \overline{ f'\left( \left( \begin{smallmatrix} -u_1 & \frac{-y(1+u_1u_2)}{c} \\ \frac{c}{y} & u_2 \end{smallmatrix}\right) \right) }
\delta_{\substack{ t_1\equiv u_1 \shortmod {cN} \\ t_2\equiv u_2 \shortmod {cN} }} \,dt_1\,dt_2\,du_1\,du_2 \\
=  \frac{1}{|c|_{\rm fin}^4(cN)^2} \iint_{{\widehat{\Z}^2}} \left|  f'\left( \left( \begin{smallmatrix} -t_1 & \frac{-y(1+t_1t_2)}{c} \\ \frac{c}{y} & t_2 \end{smallmatrix}\right) \right) \right|^2 \,dt_1\,dt_2,
\end{multline}
using that $Nc \equiv 0 \pmod{M}$ and that
$f'$ is bi-$K(M)$-invariant.
 The set 
 $$S_{g_1,g_4}:=\{(t_1,t_2) \in \widehat{\Z}^2: t_1 \equiv -g_1 \pmod{cy^{-1}M}, \, t_2 \equiv g_4\pmod{cy^{-1}M}\}$$
 has positive measure in $\A_{\rm fin}^2$. For any $(t_1,t_2) \in S_{g_1,g_4}$, we have 
 $$ -1-t_1t_2 \equiv -1+g_1g_4 \equiv g_2g_3 \equiv g_2cy^{-1} \pmod{cy^{-1}M}$$
 by the hypotheses that $\det(g) \equiv 1 \pmod{cy^{-1}M}$ and $g_3 \equiv cy^{-1} \pmod{cy^{-1}M}$. Therefore $$g \equiv \left( \begin{array}{cc} -t_1 & \frac{-y(1+t_1t_2)}{c} \\ \frac{c}{y} & t_2 \end{array}\right) \pmod{M}.$$
Hence  $|  f'\left( \left( \begin{smallmatrix} -t_1 & \frac{-y(1+t_1t_2)}{c} \\ \frac{c}{y} & t_2 \end{smallmatrix}\right) \right)| = |f'(g)|>0$ for all $(t_1,t_2) \in S_{g_1,g_4}$, so that \eqref{approxPlancherel} is non-vanishing by positivity. 
\end{proof}
Lemma \ref{lem:admmodulus} implies that $\cC(\cF)$ is non-empty and hence that $k(\cF)$ exists. The following corollary makes the upper bound on $k(\cF)$ afforded by Lemma \ref{lem:admmodulus} explicit in a special case. 
\begin{mycoro}\label{cor:admmodulus}
Suppose that $f$ has level $N$ and satisfies the geometric assumptions. If $f(1) \neq 0$, then $k(\cF) \mid N$.
\end{mycoro}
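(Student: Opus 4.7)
The plan is to exhibit $c = N$ as an admissible modulus by applying Lemma \ref{lem:admmodulus} with a suitably chosen $g \in K$. By the characterization of $k(\cF)$ as the maximal (in the divisibility order) positive rational with $\cC(\cF) \subseteq k(\cF)\Z$, it suffices to show $N \in \cC(\cF)$, since this immediately gives $k(\cF) \mid N$.

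First I would verify the congruence $cN \equiv 0 \pmod M$ required by Lemma \ref{lem:admmodulus}. By Lemma \ref{*defect}, $M = N\xi$ with $\xi = \lcm(y, y^{-1})$, so this reduces to checking $\xi \mid N$. Lemma \ref{moduli} gives $Ny \in \N$ and $N/y \in \N$, which locally translate to $v_p(N) \geq -v_p(y)$ and $v_p(N) \geq v_p(y)$ for every prime $p$, i.e.\ $v_p(N) \geq |v_p(y)| = v_p(\xi)$. Hence $\xi \mid N$ and so $M \mid N^2 = cN$.

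Next, I would take the test element
\[
g = \begin{pmatrix} 1 & 0 \\ N/y & 1 \end{pmatrix},
\]
which lies in $K$ since $N/y \in \N \subseteq \widehat{\Z}$ (again by Lemma \ref{moduli}) and $\det g = 1 \in \widehat{\Z}^\times$. With this choice, $\det g = 1$ and $g_3 = N/y = cy^{-1}$ exactly, so the congruences $\det g \equiv 1 \pmod{cy^{-1}M}$ and $g_3 \equiv cy^{-1} \pmod{cy^{-1}M}$ both hold trivially. The only remaining hypothesis is $f'(g) \neq 0$: by definition $f'(g) = f(a(y)^{-1} g a(y))$, and a direct conjugation calculation gives
\[
a(y)^{-1}\, g\, a(y) = \begin{pmatrix} 1 & 0 \\ N & 1 \end{pmatrix} \in K(N),
\]
so right-$K(N)$-invariance of $f$ yields $f'(g) = f(1) \neq 0$ by hypothesis. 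Lemma \ref{lem:admmodulus} then shows $N \in \cC(\cF)$, proving $k(\cF) \mid N$.

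There is no substantive obstacle here; the essential insight is that $g_3 = N/y$ has two useful interpretations at once — it equals $cy^{-1}$ on the nose (so the mod $cy^{-1}M$ condition is automatic), and its $a(y)$-conjugate $\left( \begin{smallmatrix} 1 & 0 \\ N & 1 \end{smallmatrix} \right)$ sits in $K(N)$ (so $f'(g)$ reduces to $f(1)$ via bi-invariance). The divisibility facts $N/y \in \N$ and $\xi \mid N$ from Lemma \ref{moduli} are precisely what make both interpretations valid simultaneously.
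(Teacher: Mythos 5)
Your proof is correct and follows essentially the same route as the paper: the same test element $g = \left( \begin{smallmatrix} 1 & 0 \\ N/y & 1 \end{smallmatrix}\right)$, the same use of Lemmas \ref{*defect} and \ref{moduli} to get $M \mid N^2$, and the same appeal to Lemma \ref{lem:admmodulus} with $f'(g) = f\left(\left( \begin{smallmatrix} 1 & 0 \\ N & 1 \end{smallmatrix}\right)\right) = f(1) \neq 0$ via $K(N)$-invariance. The only differences are cosmetic (order of the $f$-versus-$f'$ step and noting the congruence $g_3 \equiv cy^{-1}$ holds exactly).
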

\begin{proof}
Let $y$ control the support of $f$. Since $f(1)\neq 0$ we have $f\left( \left( \begin{smallmatrix} 1 & \\ N & 1 \end{smallmatrix}\right)\right) \neq 0$ and so $f'\left( \left( \begin{smallmatrix} 1 & \\ Ny^{-1} & 1 \end{smallmatrix}\right)\right) \neq 0$, where by definition $f'(g)= f(a(y)^{-1}ga(y))$ (see \eqref{ffin*def}). Writing $M$ for the level of $f'$, we have $M/N \mid \lcm(y ,y^{-1}) \mid N$, by Lemmas \ref{*defect} and \ref{moduli}. Then, Lemma \ref{lem:admmodulus} shows that $N$ is an admissible modulus for $f$, as $N \equiv 0 \pmod{M/N}$, $\det \left( \begin{smallmatrix} 1 & \\ Ny^{-1} & 1 \end{smallmatrix}\right)= 1$, and $N \equiv yNy^{-1}\pmod{MN}$.  
\end{proof}
\begin{myexam}
 Consider the classical case that $f = \nu(N) 1_{ZK_0(N)}$. Then, $\supp f \subseteq ZK'$ with $K' = \left( \begin{smallmatrix} N^{-1} & \\ & 1\end{smallmatrix}\right) K  \left( \begin{smallmatrix} N & \\ & 1\end{smallmatrix}\right)$, so $N$ controls the support of $f$.  Both $f$ and $f'$ have level $N$.

By Lemma \ref{ccondition} applied with $y=N$, we have that $\cC(\cF) \subseteq N\N$. On the other hand, let $g=\left( \begin{smallmatrix} 1 & \\ 1 & 1\end{smallmatrix}\right)$. Then $f'(g) \neq 0$ and $\det g = 1$ with $g_3=1$. Since $c= N \equiv N \pmod {N^2}$, Lemma \ref{lem:admmodulus} shows that $N \in \cC(\cF)$. Thus, $N \mid k(\cF)$, so that $k(\cF) =N$.  
\end{myexam}

\subsection{Kloosterman sum properties}\label{sec:KP}
The main goal of this section is to prove the following.
\begin{mytheo}\label{thmKP} Let $f \in \cH_{\rm fin}$ satisfy the geometric assumptions with level $N$ and support controlled by $y \in \Q_+$ (as defined in Section \ref{sec:prelims_on_support}). 
The generalized Kloosterman sum $H(m,n;c)$ enjoys the following properties:
\begin{enumerate}
\item The sum $H(m,n;c)$ is \`a priori a function of $m,n \in \Q$ and $c \in y\N$, but vanishes unless both $m,n \in \frac{1}{N} \Z$.
\item We have $H(m+ac, n+bc;c) = H(m,n;c)$ for any $a, b \in \Z$.
\item Factoring $c$ as $c=c_0c_{N}$ with $c_0 \in \N$, $(c_0,N)=1$ and $c_{N}$ a product of primes (to positive or negative powers) that divide $N$, we have 
\begin{equation}
\label{KloostermanFactorization}
H(m,n;c) = S(\overline{c_{N}}m, \overline{c_{N}}n;c_0)  H(m \overline{c_0}, n \overline{c_0} ;c_{N}),
\end{equation}
where $\overline{c_{N}}$ is any integer such that $c_{N}\overline{c_{N}} \equiv 1 \pmod {c_0}$,  $\overline{c_0}$ is any integer such that $c_0\overline{c_0} \equiv 1 \pmod {Nc_{N}}$, and $S(m,n;c)$ is the classical Kloosterman sum.

\item If neither the numerator nor the denominator of $n$ is divisible by ramified primes of $f$, then 
\begin{equation}\label{KloostermanFactorization2}
H(m,n;c) = S(\overline{c_{N}}m, \overline{c_{N}}n;c_0) H(mn\overline{c_0}^2, 1;c_{N}).
\end{equation}
\item The sums $H(m,n;c)$ satisfy the trivial bound
\begin{equation}
\label{trivialBoundonK} 
|H(m,n;c)| \leq cy \cdot \| f \|_{L^\infty(G)}.
\end{equation} 
\item Let $k_p \in \Z$ be minimal such that  $H_p(m,n,p^{k})$ is not identically 0, where $H_p$ is the local Kloosterman sum defined in \eqref{eq:KloostermanLocalFormula} below. The geometric conductor factors as
$$k(\cF) = \prod_p p^{k_p}.$$
\end{enumerate}
\end{mytheo}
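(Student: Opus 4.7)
The plan is to exploit the factorization $H(m,n;c) = \prod_p H_p(m,n;c)$ (valid for pure tensors $f$), the reformulation in Lemma \ref{ccondition}, and the invariance properties $f$ inherits from the geometric assumptions, in order to verify each item in turn. Claim (1) has already been established in the text preceding Lemma \ref{ccondition}.

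Claims (2) and (5) follow directly from the formula \eqref{eq:orbitalintegralformulaf*}. For (2), the integration in \eqref{eq:orbitalintegralformulaf*} is effectively over $t_1, t_2 \in \widehat{\Z}$, so replacing $m_1$ by $m_1 + ac$ only multiplies the integrand by $\psi_{\rm fin}(at_1) = 1$ for $a \in \Z$; the variable $m_2$ is handled analogously. For (5), the support of the integrand in \eqref{eq:orbitalintegralformulaf*} is contained in $\{(t_1,t_2) \in \widehat{\Z}^2 : t_1 t_2 \equiv -1 \pmod{cy^{-1}\widehat{\Z}}\}$, a set of measure at most $y/c$; combining this with the prefactor $|c|_{\rm fin}^{-2} = c^2$ and the identity $\|f'\|_\infty = \|f\|_\infty$ yields $|H(m_1, m_2; c)| \leq cy\|f\|_\infty$.

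For (3), the product formula allows one to treat unramified and ramified primes separately. At unramified primes $p \nmid N$, where the normalization forces $f_p = 1_{ZK_p}$, a direct computation in the Bruhat decomposition shows that $H_p(m, n; c) = S(\overline{c^{(p)}} m, \overline{c^{(p)}} n; p^{v_p(c)})$ with $c^{(p)} = c p^{-v_p(c)}$. Iterated application of classical Kloosterman multiplicativity $S(a,b;q_1q_2) = S(\overline{q_2}a, \overline{q_2}b; q_1)S(\overline{q_1}a, \overline{q_1}b; q_2)$ for $(q_1, q_2) = 1$ then assembles $\prod_{p \nmid N} H_p$ into $S(\overline{c_N}m, \overline{c_N}n; c_0)$. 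For the ramified factor $\prod_{p \mid N} H_p(m, n; c)$, at each $p \mid N$ one has $c_0 \in \Z_p^\times$, and the substitution $t_i \mapsto c_0 t_i$ combined with the $Z$-invariance of $f_p$ transforms this product into $H(m\overline{c_0}, n\overline{c_0}; c_N)$. For (4), under the additional hypothesis that $n \in \Z_p^\times$ for each $p \mid N$, I establish the local identity $H_p(a, b; c) = H_p(ab, 1; c)$ for $b \in \Z_p^\times$ via the measure-preserving substitution $t_1 \mapsto bt_1$, $t_2 \mapsto b^{-1}t_2$, using the bi-$A(\Z_p)$-invariance of $f_p$ (from geometric assumption \eqref{geo2}) together with $Z$-invariance to absorb the resulting scalars; applying this at each $p \mid N$ to formula (3) yields (4).

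Finally, (6) follows from the product formula combined with a Chinese Remainder argument. Local admissibility at $p$ (i.e., $H_p(\cdot, \cdot; c) \not\equiv 0$) depends only on $v_p(c)$, since the transformation $c \mapsto c/u$ for $u \in \Z_p^\times$ induces the bijection $H_p(a, b; c) = H_p(au, bu; c/u)$. Define $k_p$ as the minimum $v_p(c)$ for which local admissibility holds. The product formula forces $v_p(c) \geq k_p$ at every $p$ for any $c \in \cC(\cF)$. Conversely, for $c = \prod_p p^{k_p}$, one chooses locally non-vanishing pairs $(m_p, n_p)$ at each prime and uses the local periodicity of $H_p$ in its arguments (the $\Z_p$ version of (2)) together with CRT to lift these choices to global $(m, n) \in (\tfrac{1}{N}\Z)^2$ satisfying $H(m, n; c) = \prod_p H_p(m_p, n_p; c) \neq 0$. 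This identifies the maximal $q' \in \Q_+$ with $\cC(\cF) \subseteq q'\Z$ as $\prod_p p^{k_p}$. The main obstacle in the whole argument will be the explicit identification of the unramified local factor $H_p$ as a classical Kloosterman sum when $v_p(c) > 0$, which requires carefully tracking the $Z$-scaling in the Bruhat decomposition.
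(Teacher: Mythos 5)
Your proposal is correct and follows essentially the same route as the paper: the local factorization $H=\prod_p H_p$, the modulus-shift identity at ramified primes via a change of variables, identification of the unramified local factors with classical Kloosterman sums and twisted multiplicativity for (3), the diagonal substitution $t_1\mapsto nt_1$, $t_2\mapsto t_2/n$ for (4), the volume estimate from Lemma \ref{ccondition} for (5), and a CRT gluing argument for (6), which matches the paper's (contrapositively phrased) argument. One small imprecision: the shift $H_p(m,n;c)=H_p(m\overline{c_0},n\overline{c_0};p^{v_p(c)})$ in (3) requires conjugating by $a(c_0)$ and hence geometric assumption \eqref{geo2}, not just $Z$-invariance, exactly as you invoke it in (4) and (6).
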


Recall from Section \ref{level_vs_ramification} that (under Geometric Assumption \eqref{geo3}) the primes of ramification are precisely those that divide $N$.
\begin{proof}[Proof of Theorem \ref{thmKP}] 
\hfill
\begin{enumerate}[wide]
\item See the second sentence of Section \ref{sec:control_on_geom_cond} and the first assertion of Lemma \ref{ccondition}.

\item This follows immediately from Lemma \ref{ccondition}. 

\item 
 As $f\in \cH_{\rm fin}$ is a pure tensor, we have the factorization
\begin{equation}\label{Hfactorization}H(m_1,m_2;c) = \prod_p H_p(m_1,m_2;c),\end{equation}
where
\begin{equation}
\begin{split}
\label{eq:KloostermanLocalFormula}
H_p(m_1,m_2;c) &=    \iint_{\Q_{p}^2} f_{p}\left( \left( \begin{smallmatrix}-t_1 & -c^{-2} -t_1t_2 \\ 1 & t_2 \end{smallmatrix}\right) \right) \psi_{p}(m_1t_1- m_2 t_2)\,dt_1\,dt_2 \\
&= \frac{1}{|c|_p^2} \iint_{\Q_{p}^2} f_{p}'\left( \left( \begin{smallmatrix}-t_1 & -y(1+ t_1t_2)/c\\ \frac{c}{y} & t_2 \end{smallmatrix}\right) \right) \psi_{p}\left(\frac{m_1t_1- m_2 t_2}{c}\right)\,dt_1\,dt_2.
\end{split}
\end{equation}
Let us factor $N$ as $N= N^{(p)}N_p$, where $N_p \mid p^\infty$ and $p \nmid N^{(p)}$. 
We now state and prove a lemma that will be useful for multiple parts of the proof of Theorem \ref{thmKP}.
\begin{mylemma}\label{modulusshift}
Write $c=c_0 p^{v_p(c)}$ where $c_0 \in \Q_+ \cap \Z_p^\times$. Then, for any $m,n \in \frac{1}{N}\Z$, we have 
$$H_p(m,n;c) = H_p(m/c_0, n/c_0 ;p^{v_p(c)})= H_p(m\overline{c_0},n\overline{c_0};p^{v_p(c)}),$$
where $m/c_0, n/c_0 \in \frac{1}{N_p}\Z_p$, and $\overline{c_0}$ is any integer with $c_0 \overline{c_0} \equiv 1 \pmod{N_pp^{v_p(c)}\Z}$. 
\end{mylemma}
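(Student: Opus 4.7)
The plan is to exploit the Bruhat decomposition together with the bi-$A(\Z_p^\times)$-invariance of $f_p$ afforded by geometric assumption \eqref{geo2}. Writing
$$M(t_1,t_2,c) := \begin{pmatrix} -t_1 & -c^{-2} - t_1t_2 \\ 1 & t_2 \end{pmatrix}, \qquad w := \begin{pmatrix} 0 & -1 \\ 1 & 0 \end{pmatrix},$$
a direct computation gives $M(t_1,t_2,c) = n(-t_1)\, a(c^{-2})\, w\, n(t_2)$. The key point is that the parameter $c$ enters only through the $A$-factor $a(c^{-2})$, and the $p$-adic unit part of $c^{-2}$ can be absorbed into $A(\Z_p^\times)$.

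More precisely, let $c' := p^{v_p(c)}$, so that $(c')^{-2} = c^{-2}c_0^2$ with $c_0^2 \in \Z_p^\times$. Using the $\PGL_2$-relations
$$a(\alpha)\, n(x) = n(\alpha x)\, a(\alpha), \qquad w\, a(\beta) \equiv a(\beta^{-1})\, w \pmod{Z},$$
one computes for any $\alpha,\beta \in \Q_p^\times$,
$$a(\alpha)\, M(t_1,t_2,c)\, a(\beta) \equiv n(-\alpha t_1)\, a(\alpha\beta^{-1} c^{-2})\, w\, n(t_2\beta^{-1}) \pmod{Z}.$$
Taking $\alpha = c_0$ and $\beta = c_0^{-1}$ (both in $\Z_p^\times$), the middle factor becomes $a(c_0^2 c^{-2}) = a((c')^{-2})$, and the above becomes $M(c_0 t_1, c_0 t_2, c')$ modulo $Z$. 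Invoking the $Z$-triviality on $\overline G(\Q_p)$ and bi-$A(\Z_p^\times)$-invariance of $f_p$,
$$f_p(M(t_1,t_2,c)) = f_p(M(c_0 t_1, c_0 t_2, c')).$$

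Plugging this into \eqref{eq:KloostermanLocalFormula} and performing the change of variables $t_i \mapsto t_i/c_0$ (whose Jacobian has absolute value $|c_0|_p^{-2} = 1$) yields the first equality
$$H_p(m,n;c) = \iint_{\Q_p^2} f_p(M(t_1,t_2,c')) \psi_p\!\left(\tfrac{m}{c_0}t_1 - \tfrac{n}{c_0}t_2\right) dt_1\, dt_2 = H_p(m/c_0, n/c_0; p^{v_p(c)}).$$

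For the second equality, we need a local periodicity statement in the first two arguments modulo $p^{v_p(c)}$. From the local analog of \eqref{eq:orbitalintegralformulaf*} (which follows from the same Bruhat computation), the integration in $H_p(\cdot,\cdot; p^k)$ is effectively restricted to $t_1,t_2 \in \Z_p$; therefore, for any $a,b \in \Z_p$ the phase $\psi_p(a t_1 - b t_2)$ is trivial on the region of integration, giving
$$H_p(m + a p^k,\, n + b p^k;\, p^k) = H_p(m,n;p^k).$$
Since $c_0\overline{c_0} \equiv 1 \pmod{N_p p^{v_p(c)}}$ and $m,n \in \tfrac{1}{N_p}\Z_p$, the differences $\tfrac{m}{c_0} - m\overline{c_0}$ and $\tfrac{n}{c_0} - n\overline{c_0}$ lie in $p^{v_p(c)}\Z_p$, and the second equality follows. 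The main subtlety—not really an obstacle—is carefully tracking scalars through $\PGL_2$ and verifying that the periodicity argument is independent of the sign of $v_p(c)$ (which is valid since the effective region of integration is always $\Z_p^2$).
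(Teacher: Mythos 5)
Your argument is correct and is essentially the paper's own proof: the first equality comes from conjugation by $a(c_0)\in A(\Z_p)$ (geometric assumption \eqref{geo2}) together with $Z$-invariance and a unit-Jacobian change of variables (the paper substitutes $t_i\mapsto t_i/c_0$ first and then conjugates, whereas you conjugate via the Bruhat factorization first — a cosmetic difference), and the second equality from the periodicity of $H_p(\cdot,\cdot;p^{v_p(c)})$ under shifts in $p^{v_p(c)}\Z_p$, justified exactly as in the paper by the support restriction of Lemma \ref{ccondition} together with $m\overline{c_0}\equiv m/c_0 \pmod{p^{v_p(c)}\Z_p}$. The only step the paper includes that you omit is the verification that an integer $\overline{c_0}$ with $c_0\overline{c_0}\equiv 1 \pmod{N_p p^{v_p(c)}\Z}$ actually exists (via Lemmas \ref{moduli} and \ref{ccondition}); since the lemma asserts the identity for any such integer, this omission is harmless.
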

\begin{proof}
We have from \eqref{eq:KloostermanLocalFormula}, changing variables $t_i \rightarrow t_i/c_0$ 
and using $Z(\Q_p)$-invariance
$$
H_p(m,n;c) = 
   \iint_{\Q_p^2} f_p\left(  \left( \begin{smallmatrix}-t_1 & (-p^{-2v_p(c)} -t_1t_2 )/c_{0} \\ c_{0} & t_2 \end{smallmatrix}\right)  \right) \psi_p\Big(\frac{mt_1-nt_2}{c_{0}}\Big)\,dt_1\, dt_2.
$$
 We also have
$$\left( \begin{smallmatrix}-t_1 & (-p^{-2v_p(c)} -t_1t_2 )/c_{0} \\ c_{0} & t_2 \end{smallmatrix}\right)  =  \left( \begin{smallmatrix}c_{0} &  \\  &1 \end{smallmatrix}\right)^{-1} \left( \begin{smallmatrix}-t_1 & 
-p^{-2v_p(c)} -t_1t_2  \\ 1 & t_2 \end{smallmatrix}\right) \left( \begin{smallmatrix}c_{0} &  \\  & 1 \end{smallmatrix}\right),$$ so that by Geometric Assumption \eqref{geo2}
$$H_p(m,n;c) = H_p(m/c_{0}, n/c_{0}; p^{v_p(c)}).$$

We claim that an integer $\overline{c_0}$ exists as in the statement of the lemma (despite the fact that $c$ need not be an integer). Indeed, we have by Lemmas \ref{moduli} and \ref{ccondition} that $cN$ is an integer, which we may factor into its $p$-part $p^{v_p(c)}N_p$ and prime-to-$p$-part $c_0N^{(p)}$, both of which are also integers. Then $(p^{v_p(c)}N_p, c_0N^{(p)})=1$, so there exists $a\in \Z$ with $(a,p)=1$ such that $c_0N^{(p)} a \equiv 1 \pmod {p^{v_p(c)}N_p}.$ Setting $\overline{c_0} = a N^{(p)}$, we have that $c_0\overline{c_0} \equiv 1 \pmod  {p^{v_p(c)}N_p}$ with $p \nmid \overline{c_0}$. 

Now, $H_p( \cdot, \cdot; \cdot)$ is a function of $\frac{1}{N_p} \Z_p$ in the first two entries.  Viewing $m\overline{c_0}$ as an element of $\frac{1}{N_p}\Z_p$, we have  $$m\overline{c_0} = \frac{mc_0\overline{c_0}}{c_0} \equiv \frac{m}{c_0} \pmod{ p^{v_p(c)}\Z_p}.$$
By periodicity, we have $H_p(m/c_0, n/c_0;p^{v_p(c)})= H_p(m\overline{c_0},n\overline{c_0};p^{v_p(c)})$, as was to be shown.
\end{proof}

Now we prove (3).
 If $p$ is unramified then following \cite[Prop.\ 3.7]{KLPetersson} we have
\begin{equation}
\label{KpClassical}
H_p(m_1,m_2;c) = 
\sum_{\substack{ t_1,t_2 \in (p^{-v_p(c)} \Z_p / \Z_p)^\times \\ t_1t_2 = c^{-2} \shortmod{\Z_p}}} \psi_p \left(  m_1t_1+m_2 t_2 \right) .
\end{equation}
In particular, writing $c=c_0 p^{v_p(c)}$ we have
\begin{equation}\label{KmncClassical}H_p(m_1,m_2;c) = S(\overline{c_0} m_1 , \overline{c_0}m_2, p^{v_p(c)}).\end{equation}

By Lemma \ref{modulusshift}, we have for any $c \in \Q_+$ that 
\begin{multline*}
H(m,n;c) = \prod_{p \text{ unr}} H_p(m,n;c_0c_{N}) \prod_{p \text{ ram}} H_p(m,n;c_0c_{N}) \\ = \prod_{p \text{ unr}} H_p(m\overline{c_{N}},n\overline{c_{N}};c_0) \prod_{p \text{ ram}} H_p(m\overline{c_0},n\overline{c_0};c_{N}).
\end{multline*}
Let us write $c_0 = c_{00}p^{v_p(c_0)}$, with $(c_{00}, p)=1$. Then by \eqref{KmncClassical} we have for $p$ unramified
$$H_p(m\overline{c_{N}},n\overline{c_{N}};c_0) = S(m\overline{c_{N}c_{00}},n \overline{c_{N}c_{00}},p^{v_p(c_0)}) .$$
Inserting this above and using the twisted multiplicativity of classical Kloosterman sums we get
$$
H(m,n;c) = 
S(m\overline{c_{N}}, n\overline{c_{N}};c_0)  
\prod_{p \text{ ram}} H_p(m\overline{c_0},n\overline{c_0};c_{N}).
$$
For the 2nd factor, note that
\begin{multline*}
 H(m \overline{c_0}, n \overline{c_0} ;c_{N})= \prod_p H_p(m \overline{c_0}, n \overline{c_0} ;c_{N})  
 =  \prod_{p \text{ unr}} H_p(m \overline{c_0}, n \overline{c_0} ;c_{N})  \prod_{p \text{ ram}}  H_p(m \overline{c_0}, n \overline{c_0} ;c_{N})  \\
= \prod_{p \text{ ram}} H_p(m\overline{c_0}, n \overline{c_0} ;c_{N}),
\end{multline*} since for $p$ unramified and $v_p(c) =0$, \eqref{KpClassical} reduces to a single term, so $H_p(m \overline{c_0}, n \overline{c_0} ;c_{N}) = 1$. This concludes the proof of item (3). 

\item  From \eqref{eq:KloostermanLocalFormula} by a change of variables we have for $n \neq 0$
$$H_p(m,n;c) = \iint_{\Q_p^2} f_p\left( \left( \begin{smallmatrix} -nt_1 & -\frac{1}{c^2} - t_1t_2 \\ 1 & t_2/n \end{smallmatrix} \right)\right) \psi_p(mnt_1-t_2)\,dt_1\,dt_2.$$
Observe that 
$$
\begin{pmatrix} -nt_1 & -\frac{1}{c^2} - t_1t_2 \\ 1 & t_2/n \end{pmatrix} =
\begin{pmatrix} n & \\  & n \end{pmatrix}^{-1}
 \begin{pmatrix} n & \\  & 1 \end{pmatrix} 
\begin{pmatrix} -t_1 & -\frac{1}{c^2} - t_1t_2 \\ 1 & t_2 \end{pmatrix} 
\begin{pmatrix} n &  \\  &1 \end{pmatrix}.
$$
Now we suppose that $n \in \Z_p^\times$. Under this additional hypothesis, by $Z(\Q_p)$-invariance and Geometric Assumption \eqref{geo2} we have 
\begin{equation}\label{Hpm,n=Hpmn}
H_p(m,n;c) = H_p(mn,1;c).\end{equation}
Thus, when neither the numerator nor denominator of $n$ is divisible by a ramified prime, 
$$H(m,n;c) = S(m\overline{c_{N}}, n\overline{c_{N}};c_0)  H(mn \overline{c_0}^2, 1 ;c_{N}). $$

\item 
By Lemma \ref{ccondition} we have that 
$$ |H(m_1,m_2;c)| \leq c^2 \| f \|_{L^\infty(G)} \vol\{ (t_1,t_2) \in \widehat{\Z}^2 : t_1t_2=-1\pmod {cy^{-1} \widehat{\Z}}\},$$ 
and that 
\begin{multline*} 
\iint_{\substack{(t_1,t_2) \in \widehat{\Z}^2 \\ t_1t_2 = -1 \shortmod{cy^{-1}\widehat{\Z}}}} 1\,dt_1dt_2 =  \int_{\substack{ t_1 \in \widehat{\Z} \\ t_1 \text{ invertible } \shortmod{cy^{-1}\widehat{\Z}}}} \int_{t_2 \equiv -t_1^{-1} \shortmod{cy^{-1}\widehat{\Z}}} 1 \,dt_1 \,dt_2 \\ =  \frac{1}{cy^{-1}}\int_{ t_1 \in (\widehat{\Z}/cy^{-1}\widehat{\Z})^\times}1 \, dt_1 = \frac{y^2\varphi(cy^{-1})}{c^2},
\end{multline*}
from which \eqref{trivialBoundonK} follows. 

\item  
First we show that $\cC(\cF) \subseteq \left( \prod_p p^{k_p} \right) \Z$. Indeed, let $c \in \cC(\cF)$. Then there exists $m,n$ such that $H(m,n;c) \neq 0$ and thus $H_p(m,n;c) \neq 0$ for all $p$. Using Lemma \ref{modulusshift}, we have $k_p \leq v_p(c)$ for all $p$. Thus, 
$c \in \left( \prod_p p^{k_p} \right)  \Z$, as was to be shown.

Second, we show that $q'=\prod_p p^{k_p}$ is maximal for the property that $\cC(\cF) \subseteq q' \Z$.  Let $S$ denote the set of primes ramified for $f$.

We claim that if $H(\cdot,\cdot;c)$ vanishes identically for some $c$, then there exists a $p \in S$ such that $H_p(\cdot,\cdot;p^{v_p(c)})$ vanishes identically. Indeed, by \eqref{Hfactorization} and the fact that $H_p (m,n; c)=1$ for all $m,n \in \frac{1}{N}\Z/c\Z$ if $p \not \in S$ and $p\nmid c$ (see \eqref{KpClassical}), we have that 
$$
H(m,n;c) = \prod_{p \in S \text{ or } p \mid c} H_p(m,n;c)
$$
for all $m,n \in \frac{1}{N}\Z/c\Z$.  Now, suppose that there exists a prime $\ell \in S$ or $\ell \mid c$ such that $H_\ell(\cdot, \cdot;\ell^{v_p(c)})$ does not vanish identically. Then, there exists $a_\ell,b_\ell \in \frac{1}{\ell^{v_\ell(c)}} \Z/\ell^{v_{\ell}(c)}\Z$ such that $H_\ell( \overline{c_0N_0} a_\ell,  \overline{c_0N_0} b_\ell ; \ell^{v_\ell(c)}) \neq 0$. Then, for all $a_0,b_0 \in \frac{1}{N_0}\Z/c_0\Z$ there exists by the Chinese remainder theorem $m,n \in \frac{1}{N}\Z/c\Z$ such that 
$$ 
\begin{cases}
 N_0 m \equiv a_\ell \pmod {\ell^{v_\ell(c)}}, \\  \ell^{v_\ell(N)} m \equiv a_0 \pmod{c_0}, \end{cases} 
 \quad \text{ and } \quad \begin{cases} N_0 n \equiv b_\ell \pmod {\ell^{v_\ell(c)}}, \\ \ell^{v_\ell(N)} n \equiv b_0 \pmod{c_0}. \end{cases}
 $$
Since $H(\cdot,\cdot;c)$ vanishes identically, we have by Lemma \ref{modulusshift} and the periodicity of the $H_\ell$ (cf. Theorem \ref{thmKP}(2)) that 
$$
0  = H_\ell(\overline{c_0N_0}a_\ell, \overline{c_0N_0}b_\ell; \ell^{v_\ell(c)}) \prod_{\substack{p \in S \text{ or } p \mid c \\ p \neq \ell}} H_p(\overline{\ell}^{v_\ell(cN)} a_0 , \overline{\ell}^{v_\ell(cN)} b_0; c_0).
$$
Since the $H_\ell$ factor is not equal to 0, the second factor must be 0 for all $a_0,b_0\in \frac{1}{N_0}\Z/c_0\Z$ and so vanishes identically. Therefore, $H_p(\cdot,\cdot;p^{v_p(c)})$ vanishes identically for some $p \in S$ or $p\mid c$. The factors at primes $p\mid c$ and $p\not \in S$ are classical Kloosterman sums (see \eqref{KmncClassical}) and by Corollary \ref{cor:admmodulus} with $N=1$ and $f_p=1_{ZK_p}$  these do not vanish identically. Therefore $H_p(\cdot,\cdot;p^{v_p(c)})$ vanishes identically for some $p \in S$.

Now we show that $\prod_p p^{k_p} \in \cC(\cF)$. Suppose not, then $H(\cdot, \cdot ;\prod_p p^{k_p})$ vanishes identically. By the claim, $H_p(\cdot,\cdot; p^{k_p})$ vanishes identically for some $p \in S$. This contradicts the definition of $k_p$. Thus, $\prod_p p^{k_p} \in \cC(\cF)$. If there were a $q'$ such that $\prod_p p^{k_p}$ was a proper divisor of $q'$ and $\cC(\cF) \subseteq q'\Z$, then $\prod_p p^{k_p} \not \in q'\Z$ and yet $\prod_p p^{k_p} \in \cC(\cF)$. Contradiction. So, $q'=\prod_p p^{k_p}$ is maximal for the property that $\cC(\cF) \subseteq q' \Z$.
\end{enumerate}
\end{proof}
Recall that we only claimed the unrefined relative trace formula Theorem \ref{MT} for $h_\infty$ lying in a Paley-Wiener space of functions. However, using Theorem \ref{thmKP}(3)(5), we may follow the same technique as in \cite[Ch.\ 8]{knightly_kuznetsovs_2013} to enlarge the space of admissible archimedean test functions. \begin{mycoro}
If $f$ satisfies the geometric assumptions, then Theorem \ref{MT} holds for all $h_\infty(t)$ that are even, holomorphic in $|\imag t|<A$ for $A$ sufficiently large, and that satisfy $h_\infty(t)\ll (1+|t|)^{-(2+\delta)}$ for some $\delta>0$. 
\end{mycoro}
\begin{proof}
Following the proof of \cite[Thm.\ 8.1]{knightly_kuznetsovs_2013}, the only point to be checked is the convergence of the sum of Kloosterman sums in the proof of Proposition 8.24 of loc.\ cit.  In our case, we factor $H(m,n;c)$ as in Theorem \ref{thmKP}(3), and apply Theorem \ref{thmKP}(5) to the $c_N$-part and the Weil bound to the $c_0$-part to obtain the required absolute convergence. \end{proof}

We end this section with one more consequence of the geometric assumptions that is entirely local in nature. 
\begin{mylemma}\label{transport}
Suppose that $f \in \cH_p$ satisfies Geometric Assumption \eqref{geo3}.  If $\pi(\chi,\chi^{-1}) \in \cF_{p}(f)$ 
 and $s \in \C$ is such that $\pi(\chi \alpha^s, \chi^{-1}\alpha^{-s})$ is irreducible, then $\pi(\chi \alpha^s, \chi^{-1}\alpha^{-s}) \in \cF_{p}(f)$.
\end{mylemma}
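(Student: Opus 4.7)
The plan is to use geometric assumption \eqref{geo3} to reduce to the case $\supp f \subseteq ZK_p$, and then observe that in the induced model the restriction of $\pi(\chi\alpha^s,\chi^{-1}\alpha^{-s})$ to $K_p$ is canonically independent of $s$.

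First, by geometric assumption \eqref{geo3} choose $y \in \Q_+$ with $\supp f \subseteq a(y)^{-1}ZK_pa(y)$, and define $\tilde f(g) = f(a(y)^{-1}ga(y))$, which is supported in $ZK_p$. For any smooth admissible representation $(\pi,V)$ of $\overline{G}(\Q_p)$, the change of variable $g \mapsto a(y)ga(y)^{-1}$ (using that $\overline{G}(\Q_p)$ is unimodular) gives
\begin{equation*}
\pi(\tilde f) = \pi(a(y)) \circ \pi(f) \circ \pi(a(y))^{-1},
\end{equation*}
so $\pi(\tilde f) \neq 0$ if and only if $\pi(f) \neq 0$. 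Replacing $f$ by $\tilde f$, we may therefore assume $\supp f \subseteq ZK_p$.

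Next, I would realize $\pi_s := \pi(\chi\alpha^s,\chi^{-1}\alpha^{-s})$ in the standard induced model. The Iwasawa decomposition $G(\Q_p) = B(\Q_p)K_p$ gives, via restriction to $K_p$, a canonical identification of the underlying space of $\pi_s$ with
\begin{equation*}
V_\chi := \Ind_{B(\Q_p)\cap K_p}^{K_p}\chi,
\end{equation*}
where $\chi$ is extended to $B(\Q_p)\cap K_p$ by $\left(\begin{smallmatrix} a & \ast \\ & d\end{smallmatrix}\right) \mapsto \chi(a/d)$. Crucially, because $\alpha^s$ is trivial on $\Z_p^\times$, this identification does not depend on $s$, and it intertwines the $K_p$-action on the model of $\pi_s$ with right translation on $V_\chi$. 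In particular, the $K_p$-restrictions of $\pi_s$ and $\pi_{s'}$ are canonically the same representation for any two parameters $s,s'$.

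Finally, since $f$ is $Z$-invariant with $\supp f \subseteq ZK_p$ and $\pi_s$ has trivial central character, the operator $\pi_s(f)$ reduces (up to a fixed volume normalization) to
\begin{equation*}
\int_{K_p} f(k)\,\pi_s(k)\,dk,
\end{equation*}
which under the identification above is the \emph{same} operator on $V_\chi$ for every $s$. Consequently, $\pi_s(f) \neq 0$ for one value of $s$ giving an irreducible $\pi_s$ if and only if it is nonzero for every such $s$, proving the lemma. The argument is essentially formal; the only care required is in bookkeeping the conjugation reduction, the Iwasawa identification, and the measure normalization simultaneously, but no substantive obstacle arises.
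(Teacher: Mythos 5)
Your proof is correct and follows essentially the same route as the paper: reduce via conjugation by $a(y)$ to the case $\supp f \subseteq ZK_p$, then observe that the induced models of $\pi(\chi\alpha^s,\chi^{-1}\alpha^{-s})$ for varying $s$ are canonically isomorphic as $ZK_p$-representations (the paper writes this intertwiner explicitly rather than via $\Ind_{B\cap K_p}^{K_p}\chi$), so that $\pi_s(f)$ is the same operator under this identification. No gaps.
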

\begin{proof}
First, for any $\pi \in {\overline{G}(\Q_p)}^{\wedge}$, we have that $\pi(f) = \pi(a(y))^{-1} \pi(f') \pi(a(y))$, 
where $f'$ is defined as in \eqref{ffin*def}, 
so that $\pi(f) \neq 0$ if and only if $\pi(f') \neq 0$. Therefore, it suffices to show the lemma under the assumption that $f$ has support contained in $ZK_p$. 

Recall that if $\chi$ and $ \tilde{\chi}$ are equal when restricted to $\Z_p^\times$, then $\pi(\chi, \chi^{-1}) \simeq \pi(\tilde{\chi} , \tilde{\chi}^{-1})$ as representations of $K_p$. Indeed, using the induced model we define a map $i:\pi(\chi, \chi^{-1}) \to \pi(\tilde{\chi},  \tilde{\chi}^{-1})$ by
$$i: h \mapsto \tilde{h} \quad \text{ where } \quad \tilde{h}: g= bk \mapsto \delta(b)^{1/2} \tilde{\chi}(b) h(k),$$
and it is easy to check that $i$ is a $ZK_p$-intertwiner.

Now let us write $\pi = \pi(\chi, \chi^{-1})$ and $\tilde{\pi} = \pi(\tilde{\chi} , \tilde{\chi}^{-1})$. We have just shown that $(\pi(k)v)\tilde{\phantom{v}} = \tilde{\pi}(k)\tilde{v}$ for all $k \in K_p$ and since $f$ is supported in $ZK_p$, we have
\begin{equation}\label{eq:transport}
(\pi(f)v)\tilde{\phantom{v}} = \int_{{K_p}} f(k) (\pi(k)v)\tilde{\phantom{v}}\,dk = \int_{{K_p}} f(k) \tilde{\pi}(k)\tilde{v}\,dk= \tilde{\pi}(f)\tilde{v}.
\end{equation}
Therefore $\pi(f) \neq 0$ if and only if $\tilde{\pi}(f) \neq 0$. 
\end{proof}
\begin{myrema}\label{rem_following_lem_transport} Consider the remaining case that $\pi(\chi,\chi^{-1}) \in \cF_{p}(f)$ and $\pi(\chi \alpha^s, \chi^{-1}\alpha^{-s})$ is reducible. Suppose in addition to the assumptions of Lemma \ref{transport} that $f\in \cH_p$ is a newform projector and $\chi \vert_{\Z_p^\times}$ is a non-trivial quadratic character. We claim that if $\pi(\chi,\chi^{-1}) \in \cF_p(f)$, then $\St \times \chi$ and $\St \times \chi \eta$ are in $\cF_p(f)$ as well, where $\eta$ is the unramified quadratic character of $\Q_p^\times$. Indeed, write $\pi =\pi(\chi,\chi^{-1})$ so that $\pi(f)v\neq 0$ spans the $1$-dimensional space $V_\pi^{K_0(p^{2c(\chi)})}$.  Let $\tilde{\pi}= \pi(\chi \alpha^s, \chi^{-1}\alpha^{-s})$ for some $s \in \C$ be the reducible principal series representation with subquotient $\St \times \chi$ or $\St \times \chi \eta$. Then nonetheless $\pi \simeq \tilde{\pi}$ as $ZK_p$-representations, so that $(\pi(f)v)\tilde{\phantom{v}}$ spans the $1$-dimensional space $V_{\tilde{\pi}}^{K_0(p^{2c(\chi)})}$.  Finally, by \eqref{eq:transport}, the vector $\tilde{\pi}(f)\tilde{v}$ is $K_0(p^{2c(\chi)})$-invariant and since $c(\chi)>0$ one can check that the $1$-dimensional subquotient of $\tilde{\pi}$ contains no non-zero $K_0(p^{2c(\chi)})$-invariant vectors, we have that $\sigma(f) \neq 0$ with $\sigma =\St \times \chi$ or $\St \times \chi \eta$. \end{myrema}

\section{Proof of the refined relative trace formula and the spectral assumption}\label{sec:spec_assumption}

\subsection{Local spectral decomposition}\label{sec:localspectral}
In this subsection, we work in much more generality than what is required elsewhere in the paper since it is the natural context dictated by the proof we have in mind. Let $H$ be a unimodular $p$-adic linear algebraic group (i.e.\ the $F$-points of a linear algebraic group, for some non-archimedean local field $F$ of characteristic zero). In particular, $H$ is separable and locally compact.  

Let $H^{\wedge}$ denote the unitary dual of $H$, that is, the space of isomorphism classes of continuous irreducible unitary representations of $H$ on a Hilbert space \cite[\S 13.1.4]{Dixmier} endowed with the Fell topology. The unitary dual $H^{\wedge}$ may be equivalently described as the space of isomorphism classes of smooth irreducible unitary representations of $H$ on a complex vector space (for the equivalence, see e.g.\ \cite{HerzigSmoothReps}). With this definition, a result of Sliman \cite[Thm.\ 1.2.3(i)]{Sliman} building on Duflo \cite{Duflo} asserts that if $H$ is a linear algebraic group over a characteristic zero local field, then $H$ is type 1, or equivalently, is postliminal (see \cite[13.9.4, 9.1]{Dixmier}). 

Let $\Sigma$ be the Borel $\sigma$-algebra of $H^{\wedge}$ (see \cite[\S18.5]{Dixmier}). Let $\mu$ be a Haar measure on $H$. Since $H$ is a postliminal unimodular separable locally
compact group there exists a unique $\sigma$-finite measure $\widehat{\mu}$ on $(H^{\wedge},\Sigma)$ such that 
\begin{equation}\label{plancherelformula} \int_H |f(g)|^2 \,d\mu = \int_{H^{\wedge}} \| \pi(f)\|_{\rm HS}^2 \,d\widehat{\mu}\end{equation}
for all $f \in L^1(H) \cap L^2(H)$ \cite[Thm.\ 18.8.2, B30]{Dixmier}. The measure $\widehat{\mu}$ is called the Plancherel measure. 

\begin{myprop}\label{mainprop}
Let $f \in C_c^\infty(H)$.  If for all $\pi \in H^{\wedge}$ the operator $\pi(f):V_\pi \to V_\pi$ is a projection operator onto a finite dimensional subspace, then we have the spectral expansion
\begin{equation}\label{eq:mainprop}
f(g) = \int_{\pi \in H^{\wedge} } \sum_{v \in \cB_f(\pi)} \overline{\Phi_{\pi, v}(g)} \,d\widehat{\mu}(\pi)
\end{equation}
 where $\cB_f(\pi)$ is any orthonormal basis for $\imag \pi(f)$ and $\Phi_{\pi,v}=\langle \pi(g) v, v \rangle$ is the diagonal matrix coefficient of $\pi$ with respect to $v$. The integrand in \eqref{eq:mainprop} is in $L^1(H^{\wedge},\Sigma)$.
\end{myprop}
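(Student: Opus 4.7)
The plan is to combine the Plancherel formula \eqref{plancherelformula} with the hypothesis that $\pi(f)$ is a self-adjoint projection for every $\pi \in H^{\wedge}$, via the polarized version of \eqref{plancherelformula}. The key preliminary observation is that $f$ is idempotent under convolution in a strong sense: writing $f^*(h) := \overline{f(h^{-1})}$, one has $\pi(f * f^*) = \pi(f)\pi(f)^* = \pi(f)^2 = \pi(f)$ for every $\pi \in H^{\wedge}$, so by injectivity of the $L^2$ Plancherel transform $L^2(H) \hookrightarrow L^2(H^{\wedge}, \mathrm{HS})$ (which follows from \eqref{plancherelformula} applied to $f*f^* - f \in L^1 \cap L^2$) one obtains $f * f^* = f$ in $L^2(H)$. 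Since $f \in C_c^\infty(H)$, both $f$ and $f*f^*$ are continuous, so equality holds pointwise.

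With this key identity in hand, the main computation runs as follows. Apply the polarized Plancherel formula
\begin{equation*}
\langle f_1, f_2 \rangle_{L^2(H)} = \int_{H^{\wedge}} \Tr\bigl(\pi(f_1) \pi(f_2)^*\bigr) \, d\widehat{\mu}(\pi)
\end{equation*}
with $f_1 = f$ and $f_2 = L_{g^{-1}} f$, where $(L_h f)(x) = f(h^{-1} x)$ satisfies $\pi(L_h f) = \pi(h)\pi(f)$. The left-hand side equals $\int_H f(x) \overline{f(gx)} \, dx = (f * f^*)(g^{-1}) = f(g^{-1})$ by the preliminary identity, while the integrand on the right simplifies using $\pi(f)^2 = \pi(f)$ to $\Tr(\pi(f)\pi(g)) = \sum_{v \in \cB_f(\pi)} \Phi_{\pi,v}(g)$, where the last equality uses that $\pi(f)$ is the orthogonal projection onto $\Span \cB_f(\pi)$, so by cyclicity of the trace and $\pi(f) v = v$ for $v \in \cB_f(\pi)$ the trace reduces to a sum over $\cB_f(\pi)$. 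Substituting $g \to g^{-1}$ and using $\Phi_{\pi,v}(g^{-1}) = \overline{\Phi_{\pi,v}(g)}$ (by unitarity of $\pi$) yields \eqref{eq:mainprop}. Absolute integrability of the integrand falls out of the same circle of ideas: since $\pi(f)$ is a finite-rank orthogonal projection, $\|\pi(f)\|_{\mathrm{HS}}^2 = \Tr(\pi(f)) = \#\cB_f(\pi) =: n(\pi)$, so $\int_{H^{\wedge}} n(\pi) \, d\widehat{\mu}(\pi) = \|f\|_{L^2(H)}^2 < \infty$ by \eqref{plancherelformula}, while $|\sum_v \overline{\Phi_{\pi,v}(g)}| \leq n(\pi)$ by unitarity of each $\pi$.

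The main technical obstacle I foresee lies in the justification of the polarized Plancherel formula and the attendant measurability questions in the postliminal setting, where $L^2(H^{\wedge}, \mathrm{HS})$ must be interpreted as the $L^2$-section space of a measurable field of Hilbert spaces of Hilbert--Schmidt operators on the fibers $V_\pi$. These points are standard in the abstract harmonic analysis of type I groups as developed in Dixmier. The projection hypothesis is a substantial simplification: it ensures $\pi(f)$ is of trace class for every $\pi$ (not merely almost every), removing any need to restrict to a Plancherel-generic subset of $H^{\wedge}$ or to approximate $f$ by functions in a smaller Plancherel-friendly subspace.
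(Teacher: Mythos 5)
Your argument takes a genuinely different route from the paper's. The paper starts from the pointwise inversion formula $\overline{f(g)}=\int_{H^{\wedge}}\Tr\,\pi(f_g^*)\,d\widehat{\mu}(\pi)$ with $f_g^*(h)=\overline{f(h^{-1}g)}$ (checked first on indicator functions of double cosets of compact open subgroups and extended by linearity to $C_c^\infty(H)$), and then evaluates $\Tr\,\pi(f_g^*)$ term by term using the projection property; you instead polarize the $L^2$ isometry \eqref{plancherelformula} against the translate $L_{g^{-1}}f$ and feed in the convolution identity $f*f^*=f$. Your endgame ($\Tr(\pi(f)\pi(g))=\sum_{v\in\cB_f(\pi)}\Phi_{\pi,v}(g)$, then $g\mapsto g^{-1}$ and unitarity) is fine, as is the integrability bound, and your route has the virtue of needing only the Plancherel isometry rather than the pointwise trace-inversion statement.

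The substantive discrepancy is that every key step of your proof presupposes $\pi(f)^*=\pi(f)$, i.e.\ that the projections are \emph{orthogonal}: you use this in $\pi(f)\pi(f)^*=\pi(f)^2$ (to get $f*f^*=f$), in $\|\pi(f)\|_{\rm HS}^2=\Tr\,\pi(f)$, and in reducing $\Tr(\pi(f)\pi(f)^*\pi(g))$ --- which is what the polarized formula with $f_2=L_{g^{-1}}f$ actually produces --- to $\Tr(\pi(f)\pi(g))$. The hypothesis of the proposition is only that each $\pi(f)$ is a projection (idempotent) with finite-dimensional image, and the Remark immediately following the statement derives $f=f^*$, hence self-adjointness of $\pi(f)$, \emph{from} the proposition; so taking orthogonality as an input is circular relative to the paper's logic. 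The weaker identity $f*f=f$ does follow from idempotence alone (injectivity of the Fourier transform, as in the proof of Lemma \ref{projection_upper_bound}), but it does not repair your computation, because the adjoint $\pi(f)^*$ is built into the polarized pairing $\Tr(\pi(f_1)\pi(f_2)^*)$; this is the structural reason the paper works directly with $\Tr\,\pi(f_g^*)=\Tr(\pi(g)\pi(f)^*)$ rather than polarizing. In fairness, the paper's own computation also tacitly uses a Parseval expansion over a basis respecting $\imag\pi(f)\oplus\ker\pi(f)$, and all projections arising in the paper's applications are orthogonal, so your argument covers the cases that matter; but as a proof of the proposition as stated you should either first establish $f=f^*$ by an independent argument or strengthen the hypothesis to orthogonal projection.
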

\begin{myrema} It follows from the proposition that $f = f^*$ so that the projection operator $\pi(f)$ is self-adjoint and therefore an orthogonal projection. \end{myrema}
\begin{proof}
For a function $f$ on $H$ let $f_g^*$ be the function on $H$ defined by $f^*_g(h) = \overline{f(h^{-1}g)}$. For positive $f \in C^\ast(H)$ (the enveloping $C^\ast$-algebra of $L^1(H)$) the Plancherel theorem (see \cite[\S 18.8.1]{Dixmier}) asserts that
\begin{equation}\label{PlDef}
\overline{f(g)} = \int_{H^{\wedge}} \Tr  \pi(f_g^*)\,d\widehat{\mu}(\pi),
\end{equation}
as traces on $C^\ast(H)$ (see loc.\ cit.\ \S 6 and \S17.2.5). In particular \eqref{PlDef} holds point-wise for positive $f$ that are continuous and compactly supported.   

In particular, the formula \eqref{PlDef} holds when $f$ is the indicator function of a double coset by a compact open subgroup of $H$. Since arbitrary $f \in C_c^\infty(H)$ are a finite linear combinations of indicator functions of double cosets, \eqref{PlDef} extends to $C_c^\infty(H)$ by linearity. We note that for any $g \in H$, the function $\pi \mapsto \Tr  \pi(f_g^*)$ on $(H^{\wedge},\Sigma)$ is measurable and lower semi-continuous, see \cite[Thms.\ 8.8.2(i)c.\ and 18.8.1]{Dixmier}.

Now, for each $\pi \in H^{\wedge}$, choose a representative $(\pi,V)$, an orthonormal basis $\cB_f(\pi)$ for the finite dimensional space $\imag \pi(f)\subseteq V$ (as in the statement of the proposition), and  an orthonormal basis $\cB(\pi)$ for $V$ extending $\cB_f(\pi)$.

We have 
\begin{equation}
\label{mpeq2} 
\Tr  \pi(f_g^*) = \sum_{v \in \cB(\pi)} \langle \pi(f_g^*) v,v\rangle
.
\end{equation}
There are no convergence issues in writing the sum in \eqref{mpeq2}; in fact all the terms with $v \not \in \cB_f(\pi)$ vanish. Indeed $\pi(f_g^*)= \pi(g) \pi(f)^*$, so that $\langle \pi(f_g^*) v,v\rangle =  \langle v, \pi(f) \pi(g^{-1})v\rangle,$ which vanishes if $v$ is not in $\cB_f(\pi)$.  

Pulling the integration outside the inner product, we have 
\begin{equation}\label{traceformula}\Tr \, \pi(f_g^*) =  \sum_{v \in \cB(\pi)} \int_{H} \overline{f(h^{-1}g)} \Phi_{\pi,v}(h)\,d \mu(h) = \sum_{v \in \cB(\pi)} \int_{H} \overline{f(h)} \Phi_{\pi,v}(gh^{-1})\, d \mu(h) .\end{equation}
Now, note that
\begin{align*}
\int_H \overline{f(h)} \Phi_{\pi,v}(gh^{-1})\, d \mu(h) & = \int_H \overline{f(h)} \langle \pi(gh^{-1}) v,v \rangle \,d\mu(h) \\
& = \int_H \overline{f(h)} \langle \pi(h^{-1}) v,  \pi(g^{-1})v \rangle \,d\mu(h) \\
& = \int_H \overline{f(h)} \sum_{w \in \cB'(\pi)} \langle \pi(h^{-1}) v, w \rangle \langle w, \pi(g^{-1})v \rangle \,d\mu(h),
\end{align*}
where $\cB'(\pi)$ is any basis for $(\pi, V)$ extending $\cB_f(\pi)$ and respecting the decomposition $V = \imag \pi(f) \oplus \ker \pi(f)$, which exists because $\pi(f)$ is a projection. Continuing, we have
\begin{multline*}\int_H \overline{f(h)} \Phi_{\pi,v}(gh^{-1})\, d \mu(h) = \int_H \overline{f(h)} \sum_{w \in \cB'(\pi)} \langle  v, \pi(h)w \rangle \langle w, \pi(g^{-1})v \rangle \,d\mu(h) \\ = \sum_{w \in \cB'(\pi)} \langle v, \pi(f) w\rangle \langle w , \pi^{-1}(g)v \rangle .\end{multline*}

Since  $\pi(f)$ is a projection, by definition of $\cB'(\pi)$ we have
$$\int_H \overline{f(h)} \Phi_{\pi,v}(gh^{-1})\, d \mu(h) = \sum_{v_0 \in \cB_f(\pi)} \langle v, \pi(f) v_0\rangle \langle v_0 , \pi^{-1}(g)v \rangle = \sum_{v_0 \in \cB_f(\pi)} \langle v,  v_0\rangle \langle v_0 , \pi^{-1}(g)v \rangle.$$ Inserting this back in \eqref{traceformula} and using that $\cB(\pi)$ is orthonormal, we obtain for each $\pi \in H^{\wedge}$ that
\begin{equation} \label{traceformula2}\Tr \thinspace \pi(f_g^*) =  \sum_{v_0 \in \cB_f(\pi)} \langle v_0,  v_0\rangle \langle v_0 , \pi^{-1}(g)v_0 \rangle = \sum_{v_0 \in \cB_f(\pi)} \Phi_{\pi, v_0}(g).\end{equation}
Inserting \eqref{traceformula2}  back into \eqref{PlDef} and taking conjugates, we obtain the formula in the statement of the proposition. 

Lastly, under the hypothesis that $\pi(f)$ is a projection operator onto a finite-dimensional subspace, it is simple to see that $\Tr \thinspace \pi(f_g^*) \in L^1(H^{\wedge}, \Sigma)$ for all $g\in H$. Indeed, by our hypothesis this function takes only non-negative integer values so that $| \Tr \pi(f_g^*)|\leq \Tr \pi(f_g^*)\pi(f_g^*)^*$, which is integrable over $H^{\wedge}$ by \eqref{plancherelformula} as $f_g^* \in C_c^\infty(H) \subseteq L^1(H) \cap L^2(H)$. 
\end{proof}

\subsection{Computation of the diagonal term}\label{sec:MTcomputation}  
First we only assume that $f = \bigotimes_p f_p\in \cH_{\rm fin}$ satisfies the geometric assumptions. Recall from the unrefined generalized BK formula Theorem \ref{MT} that the diagonal term is given by 
$$\delta_{m_1m_2>0} f_\infty(1)\sqrt{\frac{m_2}{m_1}} \int_{\A_{\rm fin}} f\left( \left( \begin{smallmatrix}  m_2/m_1  & t \\ & 1\end{smallmatrix} \right) \right) \psi_{\rm fin}(-m_1t)\,dt. $$ 
Since the argument of $f$ here has unit determinant, it follows from Geometric Assumption \eqref{geo3} that the integrand vanishes unless $m_2/m_1 \in \Z_p^{\times}$ for all $p$. Therefore, the diagonal term vanishes 
unless $m_1 = m_2  \in \frac{1}{N} \Z$. In that case, we write $m$ for the common value of $m=m_1=m_2$ and we have
\begin{equation}\label{eq:MT_with_geom} \delta_{m_1m_2>0} \sqrt{\frac{m_2}{m_1}} \int_{\A_{\rm fin}} f\left( \left( \begin{smallmatrix}  m_2/m_1  & t \\ & 1\end{smallmatrix} \right) \right) \psi_{\rm fin}(-m_1t)\,dt =  \delta_{m_1=m_2 \in \frac{1}{N} \Z}  \int_{\A_{\rm fin}} f\left( \left( \begin{smallmatrix} 1 & t \\ & 1 \end{smallmatrix} \right)\right) \psi_{\rm fin}(-mt)\,dt.\end{equation}

 As an aside, we can use Lemma \ref{*defect} to give a classical description of the integral in \eqref{eq:MT_with_geom}.  For any $f$ of level $N$ with support controlled by $y$, recall the function $f'$ from \eqref{ffin*def} and set $M= N \lcm(y,y^{-1})$ as in Lemma \ref{*defect}. Then, we have  
\begin{multline}\label{MainTermClassical} \int_{\A_{\rm fin}} f \left( \left( \begin{smallmatrix} 1 & t \\ & 1 \end{smallmatrix} \right)\right) \psi_{\rm fin}(- mt)\,dt 
= y \int_{\widehat{\Z}} f' \left( \left( \begin{smallmatrix} 1 & t \\ & 1 \end{smallmatrix} \right)\right) \psi_{\rm fin}(- y^{-1} mt)\,dt 
\\
= \frac{y}{M} \sum_{t \in \widehat{\Z}/M\widehat{\Z}}  f' \left( \left( \begin{smallmatrix} 1 & t \\ & 1 \end{smallmatrix} \right)\right) e(- y^{-1} mt) \int_{\widehat{\Z}} \psi_{\rm fin} (-y^{-1} mMu )\,du 
\\
= \delta(y^{-1} mM \in \Z) \frac{y}{M} \sum_{t \in \Z/M\Z}  f' \left( \left( \begin{smallmatrix} 1 & t \\ & 1 \end{smallmatrix} \right)\right) e(-y^{-1} mt).
 \end{multline}

\begin{myexam}As a sanity check, let us work this out in the case of the classical Kuznetsov formula for $\Gamma_0(q)$. For this example, $f = \nu(q) 1_{ZK_0(q)}$, where $\nu(q) = [\SL_2(\mz): \Gamma_0(q)]$.
We can take $y=q$, $N=q$, and by direct observation $M=q$ (not using Lemma \ref{*defect}), 
 so that $f'$ is $\nu(q)$ times the indicator function of $ZK_0(q)^\intercal$. All the terms in the above sum vanish except for $t=0$, so the sum reduces to $\nu(q)$ times the indicator of $m \in \Z$. 
Alternately, we can take $y=1$, $N=q$, and $M=q$, in which case $\delta(mM/y \in \Z)=1$ trivially, $f = f'$, and $f'(n(t))$ is $\nu(q)$ times the indicator function of $t\in \widehat{\Z}$. We get that the adelic integral equals $  \frac{\nu(q)}{q} \sum_{t \pmod{q}} e(-mt)$, which is again $\nu(q)$ times the indicator function of $m\in \Z$.\end{myexam}

Now we use the spectral assumption to simplify the left hand side of \eqref{eq:MT_with_geom}.

\begin{myprop}\label{prop:nonarchmt}
If $f_p$ is a newform projector, then for $p \nmid m \in \Z$ we have
\begin{equation}\label{eq:nonarchmt}
\int_{\Q_p} f_{p}\left( \left( \begin{smallmatrix} 1 & t \\ & 1\end{smallmatrix} \right) \right) \psi_{p}(-mt)\,dt =  \int_{\pi \in \cF_p(f)} \frac{1}{\mathcal{L}_{\pi}(1)}\,d \widehat{\mu}(\pi),
\end{equation}
where $\cF_p(f)$ is the local family from Definition \ref{localfamily}, $\mathcal{L}_{\pi}(1)$ is given by formula \eqref{Appidef} and $\widehat{\mu}$ is the Plancherel measure with respect the standard Haar measure $\mu$ on $\overline{G}(\Q_p)$. 
\end{myprop}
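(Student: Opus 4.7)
The plan is to apply the Plancherel expansion of Proposition \ref{mainprop} to $f_p$ evaluated on the unipotent subgroup, perform additive Fourier inversion on $\Q_p$ to pick out the relevant Whittaker value, and then invoke a standard local identity relating the $L^2$-normalized newform Whittaker value at $1$ with the local factor $\mathcal{L}_\pi(1)$.

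Because $f_p$ is a newform projector, for each $\pi \in \cF_p(f)$ the basis $\cB_f(\pi)$ consists of a single $L^2$-normalized newform $\varphi_{0} = \varphi_{0,\pi}$, and $\cB_f(\pi) = \varnothing$ for $\pi \notin \cF_p(f)$. Applying Proposition \ref{mainprop} at $g = n(t)$ gives pointwise in $t \in \Q_p$
\begin{equation*}
f_p(n(t)) = \int_{\cF_p(f)} \overline{\Phi_{\pi,\varphi_{0}}(n(t))}\, d\widehat{\mu}(\pi).
\end{equation*}
Since $f_p$ is compactly supported modulo $Z(\Q_p)$ and $N(\Q_p) \cap Z(\Q_p) = \{1\}$, the function $t \mapsto f_p(n(t))$ has compact support in $\Q_p$, say inside $p^{-N}\Z_p$ for all sufficiently large $N$. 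First I would multiply by $\psi_p(-mt)$, integrate in $t$ over $p^{-N}\Z_p$, and exchange the order of integration by Fubini, using $|\Phi_{\pi,\varphi_0}(n(t))| \leq 1$, the finite additive measure of $p^{-N}\Z_p$, and the Plancherel identity \eqref{Plancherel} applied to $f_p$ to conclude $\widehat{\mu}(\cF_p(f)) = f_p(1) < \infty$. This yields
\begin{equation*}
\int_{\Q_p} f_p(n(t))\psi_p(-mt)\,dt = \int_{\cF_p(f)} I_\pi^{(N)}\,d\widehat{\mu}(\pi), \quad I_\pi^{(N)} := \int_{p^{-N}\Z_p} \overline{\Phi_{\pi,\varphi_{0}}(n(t))}\psi_p(-mt)\,dt.
\end{equation*}

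Next, for each generic $\pi \in \cF_p(f)$ I would pass to the Kirillov model of $\pi$, in which $\varphi_{0,\pi}$ is realized as an $L^2$-normalized function $W_0$ on $\Q_p^\times$ and $n(t)$ acts by multiplication by $\psi_p(t\,\cdot)$, so that $\Phi_{\pi,\varphi_0}(n(t)) = \int_{\Q_p^\times}\psi_p(ty)|W_0(y)|^2\,d^\times y$. Another Fubini interchange (legitimate since $|W_0|^2 \in L^1(d^\times y)$ and $1_{p^{-N}\Z_p}$ is compactly supported) followed by additive Fourier inversion on $\Q_p$ with the self-dual measure gives
\begin{equation*}
I_\pi^{(N)} = p^N \int_{y \equiv -m \shortmod{p^N}} |W_0(y)|^2\,d^\times y.
\end{equation*}
Taking $N$ larger than the uniformly bounded conductor exponents of $\cF_p(f)$, the residue coset lies inside $-m(1+p^{c(\pi)}\Z_p) \subseteq \Z_p^\times$. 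The combination of trivial central character with right-$K_0(p^{c(\pi)})$-invariance of $\varphi_{0,\pi}$ forces $W_0$ to be constant on each multiplicative coset $p^k\Z_p^\times$, so in particular $W_0(y) = W_0(1)$ on this coset. With $d^\times y = \zeta_p(1)\,dy/|y|_p$ and $|y|_p = 1$ for $m \in \Z_p^\times$, the integral collapses to $I_\pi^{(N)} = \zeta_p(1)|W_0(1)|^2$, independent of $N$.

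The proof is then completed by the standard local identity
\begin{equation*}
|W_0(1)|^2 = \frac{1}{\zeta_p(1)\mathcal{L}_\pi(1)},
\end{equation*}
valid for every generic unitary trivial-central-character representation of $\overline{G}(\Q_p)$. The hard part will be verifying this identity uniformly across the three cases of \eqref{Appidef}: for the unramified tempered principal series it follows from a direct geometric series computation using the Casselman--Shalika/Shintani formula for the spherical Whittaker function; for $c(\pi) = 1$ (Steinberg twists and principal series with a single ramified character) and $c(\pi) \geq 2$ (supercuspidals and doubly-ramified principal series) it reduces to the explicit formulas for the newform Whittaker values on $A(\Q_p)$, and in each case matches the corresponding line of \eqref{Appidef}. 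With the local identity in hand, combining the previous steps yields exactly \eqref{eq:nonarchmt}.
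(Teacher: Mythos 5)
Your proposal is correct, and while it shares the paper's skeleton (the spectral expansion of Proposition \ref{mainprop}, an interchange of the $t$-integral with the Plancherel integral, then a case-by-case local identity), it organizes the local computation along a genuinely different line. The paper substitutes the explicit values of the diagonal newform matrix coefficient at $n(t)$ --- quoted from \cite{hu_triple_2017, Hu:17a} for the unramified and $c(\pi)=1$ cases and proved in the Kirillov model for $c(\pi)\geq 2$ --- uses that these values depend only on $v(t)$, and evaluates the $t$-integral via Ramanujan sums \eqref{RamanujanSum}; since $p\nmid m$ only the shells $v(t)\geq 0$ and $v(t)=-1$ survive, reducing \eqref{eq:nonarchmt} to the check $1-\Phi_{\pi,v_0}\left(\begin{smallmatrix}1&p^{-1}\\&1\end{smallmatrix}\right)=\mathcal{L}_\pi(1)^{-1}$ against \eqref{Appidef} (see \eqref{fhatm2}). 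You instead write $\Phi_{\pi,\varphi_0}(n(t))$ uniformly as the additive Fourier transform of $|W_0|^2$ in the Kirillov model and invert, so the entire $t$-integral collapses to $\zeta_p(1)|W_0(1)|^2$, and the case analysis is carried by the single identity $|W_0(1)|^2=(\zeta_p(1)\mathcal{L}_\pi(1))^{-1}$, equivalently $\|W\|^2=\zeta_p(1)\mathcal{L}_\pi(1)$ for the newform normalized by $W(1)=1$ with the pairing \eqref{GinvtInnerProd}. I checked that this identity does hold in all three lines of \eqref{Appidef}: for the unramified case it is the local Rankin--Selberg evaluation $\sum_{k\geq 0}p^{-k}\lambda_\pi(p^k)^2=(1+p^{-1})(1-p^{-1})^{-1}(1-\alpha^2p^{-1})^{-1}(1-\beta^2p^{-1})^{-1}$ with $\alpha,\beta$ the Satake parameters; for $c(\pi)=1$ it is $\sum_{k\geq 0}p^{-2k}=(1-p^{-2})^{-1}$; for $c(\pi)\geq 2$ it is $\|1_{\Z_p^\times}\|^2=1$; so your reduction closes. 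What your route buys: it avoids importing the matrix-coefficient formulas, makes the constant $\mathcal{L}_\pi(1)^{-1}$ structurally transparent as the Whittaker-versus-$L^2$ normalization of the newform (the local analogue of \cite[(6.4)]{PetrowYoungCoset}), and extends painlessly to $p\mid m$, where the same collapse gives $\zeta_p(1)p^{v(m)}|W_0(a(m))|^2$. What the paper's route buys: the values $\Phi_{\pi,v_0}(n(p^{-i}))$ are elementary to quote and are reused elsewhere, and the Ramanujan-sum bookkeeping is immediate once they are in hand; note also that your inner Fourier-inversion step is literally how the paper proves its own $c(\pi)\geq 2$ formula \eqref{RamifiedSpecial}. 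Two small remarks: the condition ``$N$ larger than the conductor exponents'' is superfluous --- all you use is $-m+p^N\Z_p\subseteq\Z_p^\times$ for $N\geq 1$ together with the constancy of $W_0(a(\cdot))$ on units, which follows from invariance under $a(\Z_p^\times)\subset K_0(p^{c(\pi)})$; and your parenthetical ``principal series with a single ramified character'' in the $c(\pi)=1$ case is vacuous for trivial central character, while complementary series and one-dimensional representations carry no Plancherel mass and hence never affect the right-hand side (the paper glosses the same point).
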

On the other hand, if $f_p = \nu(p^c)1_{ZK_0(p^c)}$ for some $c \in \Z_{\geq 0}$, then $\pi(f_p)$ is an orthogonal projection onto $V_{\pi_p}^{K_0(p^c)}$ (containing both old and new forms) and by a direct computation we have that 
$$ \int_{\Q_p} f_{p}\left( \left( \begin{smallmatrix} 1 & t \\ & 1\end{smallmatrix} \right) \right) \psi_{p}(-mt)\,dt = \nu(p^{c}) = \int_{\cF_p(f)} \dim V_\pi^{K_0(p^c)}\, d\widehat{\mu}(\pi) ,$$
where $\dim V_\pi^{K_0(p^c)}= c- c(\pi)+1$ if $c(\pi) \leq c$ and $=0$ otherwise, by newform theory \cite{Casselman_onAL}. 

We begin with a preparatory lemma. Recall that the unramified principal series representation $\pi(\alpha^s,\alpha^{-s})$ is irreducible and unitary if and only if $s$ is either purely imaginary with imaginary part in $[0,\pi/\log p]$, or $s=-\tau$ or $-\tau + \frac{\pi i}{\log p}$ with $\tau$ real and $0<\tau<1/2$. 
\begin{mylemma}\label{Phipiunipotentexplicit}
Let $\pi \in \overline{G}(\Q_p)^\wedge$ be generic and $v_0$ be a unit-length newvector in $V_\pi$. We have 
 \begin{equation}\label{RamifiedSpecial}
\Phi_{\pi, v_0}(\begin{smallmatrix} 1 & t \\ & 1\end{smallmatrix}) = \begin{cases} 
1 & \text{ if }  v(t)\geq  0, \\
\frac{p^{v(t)}}{1+p^{-1}} \frac{p^{-2v(t)s}(p^{s} - p^{-s}p^{-1}) - p^{2v(t)s}(p^{-s}-p^{s}p^{-1})}{p^{s}-p^{-s}} & \text{ if } \pi \simeq \pi(\alpha^s,\alpha^{-s}) \text{ and } v(t)\leq -1, \\
 - p^{1+2v(t)} & \text{ if } c(\pi)=1 \text{ and } v(t) \leq -1, \\
 - \frac{1}{p-1} & \text{ if } c(\pi)\geq 2 \text{ and } v(t) =-1, \\
  0 & \text{ if } c(\pi)\geq 2 \text{ and } v(t) \leq -2. \end{cases}
\end{equation}
\end{mylemma}
\begin{proof}
We use  the Macdonald formula and explicit formulas for the diagonal matrix coefficients of newforms due to the first author \cite[Lem.\ 4.6]{hu_triple_2017} and \cite[Prop.\ 3.1]{Hu:17a}.
Recall that the diagonal newvector matrix coefficient of a trivial central character representation is $K_0(p^c)$-invariant and $Z$-invariant, where $c$ is the conductor exponent of $\pi$.

First, note that if $v(t)\geq 0$, then $n(t) \in K_0(p^c)$ for any generic $\pi \in \overline{G}(\Q_p)^\wedge$. The first line of equation \eqref{RamifiedSpecial} follows. 

We next consider the case that $\pi$ is an unramified principal series representation  and $v(t)\leq -1$. By the Cartan decomposition  
$$\GL_2(\Q_p) = \bigsqcup_{i\geq j} K_p \left( \begin{smallmatrix} p^i & \\ & p^j \end{smallmatrix} \right) K_p,$$
so the matrix coefficient $\Phi_{\pi, v_0}$ is determined by its values on the elements $\sigma_i := \left( \begin{smallmatrix} p^i & \\ & 1 \end{smallmatrix}\right)$ for $i\geq 0$. 
 We have
$$\left( \begin{array}{cc} 1 & t \\ & 1 \end{array} \right) = \left( \begin{array}{cc} t &  \\ & t \end{array} \right) \left( \begin{array}{cc} 0 & 1 \\ -1 & 1/t \end{array} \right) \left( \begin{array}{cc} 1/t^2 &  \\ & 1 \end{array} \right) \left( \begin{array}{cc} 1 & 0 \\ 1/t & 1 \end{array} \right),$$
so that when $v(t)<0$, up to a scalar,  $\left( \begin{smallmatrix} 1 & t \\ & 1 \end{smallmatrix}\right) $ lies in $K_p\sigma_{-2v(t)}K_p$. Since $\pi$ has trivial central character, $\Phi_{\pi, v_0}$ also transforms trivially by scalars, and then by the Macdonald formula (see e.g.\ \cite[Thm.\ 4.6.6]{Bump})
one may read off the claimed formula in the second line of equation \ref{RamifiedSpecial}. 

Next, suppose that $\pi$ is either the Steinberg representation or its unramified quadratic twist. Letting $\omega = \left( \begin{smallmatrix} & 1 \\ -1 & \end{smallmatrix} \right)$, we have
\begin{equation}
\left( \begin{array}{cc} 1 & t \\ & 1 \end{array} \right) = \left( \begin{array}{cc} t &  \\ & t \end{array} \right) \left( \begin{array}{cc} 1 & 0 \\ 1/t & 1 \end{array} \right)  \omega \left( \begin{array}{cc} 1/t^2 &  \\  & 1 \end{array} \right) \left( \begin{array}{cc} 1 & 0 \\ 1/t & 1 \end{array} \right) .
\end{equation}
 If $v(t) \geq 0$, then $\left( \begin{smallmatrix} 1 & t \\ & 1\end{smallmatrix}\right) \in K_1(p)$, and if $v(t)<0$, then by the above
\begin{equation} \left( \begin{array}{cc} t &  \\ & t \end{array} \right)^{-1}\left( \begin{array}{cc} 1 & t \\ & 1 \end{array} \right)  \in K_1(p) \omega \sigma_{-2v(t)} K_1(p).\end{equation} 
The claimed formula in the third line of equation \eqref{RamifiedSpecial} may now be read off from \cite[Lem.\ 4.6]{hu_triple_2017}.

If $\pi$ is a trivial central character supercuspidal or ramified principal series representation then the result in the lemma is \cite[Prop.\ 3.1(i)]{Hu:17a}. If $\pi$ is a ramified twist of the Steinberg representation, then the result is not stated in \cite[Prop.\ 3.1(i)]{Hu:17a}, but follows by identical arguments. We reproduce Hu's proof for sake of completeness. 
  
We compute the matrix coefficient in the Kirillov model. Let $d^\times \alpha$ be the Haar measure on $\Q_p^\times$ that gives $\Z_p^\times$ volume $1$. By Lemma \ref{NewformKirillov} we have
\begin{multline}
\Phi_{\pi, v_0}(\begin{smallmatrix} 1 & t \\ & 1\end{smallmatrix}) =  \int_{\Q_p^\times} \pi (\begin{smallmatrix} 1 & t \\ & 1\end{smallmatrix}) W_0\left( \begin{smallmatrix} \alpha & \\ & 1 \end{smallmatrix}\right) \overline{ W_0\left( \begin{smallmatrix} \alpha & \\ & 1 \end{smallmatrix}\right) }\,d^\times \alpha = \int_{\Q_p^\times}  W_0\left( \begin{smallmatrix} \alpha & \\ & 1 \end{smallmatrix}\right) \overline{ W_0\left( \begin{smallmatrix} \alpha & \\ & 1 \end{smallmatrix}\right) } \psi_p(t \alpha)\,d^\times \alpha \\
= \int_{\Z_p^\times} \psi_p(t \alpha)\,d^\times \alpha,
\end{multline}
where $W_0$ is the vector in the Whittaker model corresponding to $v_0$. Note since $\psi_p$ has conductor 0, we have for $n \in \Z_p$ that 
\begin{equation}\label{RamanujanSum}
\int_{ p^{-i}\Z_p^\times}  \psi_p(n\alpha)\, d\alpha = p^i \int_{\Z_p^\times} \psi_p(\frac{n\alpha}{p^i})\,d\alpha = \sum_{\alpha \in (\Z/p^i\Z)^\times}  e(\frac{n \alpha}{p^i}) =: R_{p^i}(n)
\end{equation}
   is the classical Ramanujan sum.
If $v(t)<0$, then 
\begin{multline}
 \int_{\Z_p^\times} \psi_p(t \alpha)\,d^\times \alpha = \int_{p^{v(t)}\Z_p^\times} \psi_p\left(\frac{t \alpha}{p^{v(t)}}\right)\,d^\times \alpha = \zeta_p(1) \int_{p^{v(t)}\Z_p^\times} \psi_p\left(\frac{t \alpha}{p^{v(t)}}\right)\,\frac{d\alpha}{|\alpha|} \\  \\
 = \frac{1}{\phi(p^{-v(t)})} R_{p^{-v(t)}}(tp^{-v(t)}) 
 = \begin{cases} -\frac{1}{p-1} & \text{ if } v(t)=-1, \\ 0 & \text{ if } v(t) \leq -2\end{cases} 
\end{multline}
by changing multiplicative to additive measure (see Section \ref{measurenormalizations}) and applying \eqref{RamanujanSum}.
\end{proof}
\begin{proof}[Proof of Prop.\ \ref{prop:nonarchmt}]
Since $\overline{G}(\Q_p)$ is a unimodular $p$-adic linear algebraic group, the results of Section \ref{sec:localspectral} apply. Let us define 
\begin{equation}\label{fhatm_def}
\widehat{f}(m) = \int_{\Q_p} f_{p}\left( \left( \begin{smallmatrix} 1 & t \\ & 1\end{smallmatrix} \right) \right) \psi_{p}(-mt)\,dt.
\end{equation}
Since $f_p$ is assumed to satisfy the spectral assumption, by Proposition \ref{mainprop} we have 
\begin{equation}\label{hfm2}
\widehat{f}(m) = \int_{\Q_p} \int_{\cF_p(f)}   \overline{\Phi_{\pi, v_0}(\begin{smallmatrix} 1 & t \\ & 1\end{smallmatrix})} \,d\widehat{\mu}(\pi) \psi_{p}(-mt) \, dt.
\end{equation}

For any generic $\pi \in {\overline{G}(\Q_p)}^{\wedge}$, note that $\Phi_{\pi, v_0}\left( \begin{smallmatrix} 1 & t \\ & 1\end{smallmatrix} \right)$ only depends on $t$ via $v(t)$ and is constant $=1$ if $v(t)\geq 0$. So, from \eqref{hfm2} and \eqref{RamanujanSum} we have
\begin{multline*}\widehat{f}(m) = \int_{\Z_p} \int_{\cF_p(f)} \,d\widehat{\mu}(\pi) \,dt + \sum_{i \geq 1}  \int_{t \in p^{-i}\Z_p^\times}  \psi_p(-mt)\, dt \int_{\cF_p(f)} \Phi_{\pi, v_0}\left( \begin{smallmatrix} 1 & p^{-i} \\ & 1\end{smallmatrix} \right)\, d \widehat{\mu}(\pi) \\
= \int_{\cF_p(f)} \,d\widehat{\mu}(\pi)  + \sum_{i \geq 1} R_{p^i}(m)  \int_{\cF_p(f)} \Phi_{\pi, v_0}\left( \begin{smallmatrix} 1 & p^{-i} \\ & 1\end{smallmatrix} \right)\, d \widehat{\mu}(\pi).
\end{multline*} 
Since $v(m)=0$ by assumption we have $R_p(m)=-1$ and $R_{p^i}(m)=0$ for $i \geq 2$, so 
\begin{equation}\label{fhatm2}  \widehat{f}(m) =  \int_{\cF_p(f)} \,d\widehat{\mu}(\pi)  - \int_{\cF_p(f)} \Phi_{\pi, v_0}\left( \begin{smallmatrix} 1 & p^{-1} \\ & 1\end{smallmatrix} \right) \, d \widehat{\mu}(\pi)= \int_{\cF_p(f)}\left( 1-  \Phi_{\pi, v_0}\left( \begin{smallmatrix} 1 & p^{-1} \\ & 1\end{smallmatrix} \right)\right) \,d\widehat{\mu}(\pi) .\end{equation}
By Lemma \ref{Phipiunipotentexplicit}, we immediately recognize the integrand of \eqref{fhatm2} as $\mathcal{L}_\pi(1)^{-1}$ in the cases that $c(\pi)\geq 1$ (see \eqref{Appidef}).

It remains to treat the case that $\pi$ is unramified. Suppose that $\pi \simeq \pi(\alpha^s, \alpha^{-s})$, and define $\theta$ by $i\theta = s\log p$ so that either $\theta \in [0,\pi]$ is real, or $\theta=i\tau\log p$ or $\theta = \pi +i \tau \log p$ with $0<\tau<1/2$ real. Inserting the second line of equation \eqref{RamifiedSpecial}, we find that the integrand of \eqref{fhatm2} is 
\begin{multline*}= 1- \frac{p^{-1}}{1+p^{-1}} \frac{ e^{2i \theta} (e^{i \theta}-e^{-i \theta}/p) - p^{-2i \theta} (e^{-i \theta} - p^{i \theta}/p)}{e^{i \theta} - e^{-i \theta}} \\
= \frac{1}{1+p^{-1}} \left( 1+ p^{-1} -p^{-1} \frac{ e^{3i\theta}-e^{-3i\theta}}{e^{i \theta}-e^{-i\theta}} + p^{-2} \right) \\
= \frac{ 1-p^{-1} (e^{2 i \theta} + e^{-2 i \theta}) +p^{-2}}{1+p^{-1}},
\end{multline*}  
which matches the definition of $\mathcal{L}_\pi(1)^{-1}$ from \eqref{Appidef}. 
\end{proof}
\begin{myrema}It is possible to generalize Proposition \ref{prop:nonarchmt} to drop the condition that $p \nmid m$, but the resulting formula becomes more complicated, so we have omitted this case.\end{myrema}

\subsection{The spectral assumption}\label{sec:spec_consequences}
Here we record a few consequences of the spectral assumption. 
We begin with a motivational remark. Under the spectral and geometric assumptions, an open subset $\cF_p$ of the local unitary dual ${\overline{G}(\Q_p)}^{\wedge}$ that occurs as the local family $\cF_p(f)$ of some $f_p \in \cH_p$ determines the geometric test function $f_p$ completely (cf.\ Section \ref{sec:motivation}). 
Indeed, by the spectral assumption $f_p$ is either a classical test function, or a newform projector onto $\cF_p$. 
Suppose that $\cF_p$ contains all generic representations of conductor exponents $\leq c$ with $c>0$, and is also a newform projector. Then $f_p$ cannot be compactly supported, since the sum of diagonal newform matrix coefficients of the Steinberg representation and its unramified twist is not compactly supported (see \cite[Lem.\ 4.6]{hu_triple_2017}), yet all generic representations of larger conductor sit in a connected component whose newform projector is compactly supported (see Section \ref{sec:examples}). 

By Geometric Assumption \eqref{geo3}, if $\cF_p$ contains all generic representations of conductor exponents $\leq c$ with $c>0$, then the function $f_p$ must be the classical test function. On the other hand, if $\cF_p$ does not contain all generic representations of conductor exponents $\leq c$ or $c=0$, then it is a newform projector. In either case, Proposition \ref{mainprop} determines $f_p$ uniquely. 

In particular, the notation $\cC(\cF)$ for the set of admissible moduli and $k(\cF)$ for the geometric conductor are justified under the spectral assumption when we interpret $\cF$ as $\prod_p \cF_p$.

Recall the diagonal, unipotent and Borel subgroups $A,N\subset B \subset G$ from Section \ref{notation-groupsandsubgroups}.

\begin{mylemma}\label{specControlonH}
Suppose that $f \in \cH_{\rm fin}$ satisfies the spectral assumption. Then:
\begin{enumerate}
\item $f$ is bi-$B(\widehat{\Z})$-invariant, 
\item $f$ satisfies Geometric Assumption \eqref{geo2},
\item $H(m,n;c)=0$ if $m \not \in \mz$ or $n \not \in \mz$.
\item if in addition $f$ satisfies Geometric Assumption \eqref{geo3}, then $H(m,n;c)=0$ unless $c\in \Z$, i.e.\ $k(\cF) \in \N$. 
\end{enumerate} 
\end{mylemma}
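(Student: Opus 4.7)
The plan proceeds through parts (1)--(4) in order, with (4) crucially depending on (1).

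\textbf{Part (1).} I work prime-by-prime. Under the spectral assumption, each $f_p$ is either the classical test function $\nu(p^c) 1_{ZK_0(p^c)}$, manifestly bi-$K_0(p^c)$-invariant, or a newform projector. In the latter case, I apply Proposition \ref{mainprop} to write $f_p(g) = \int_{\cF_p(f)} \overline{\Phi_{\pi, v_0}(g)}\,d\widehat{\mu}(\pi)$. Since $\pi$ has trivial central character, the newvector $v_0$ is pointwise fixed (not merely stabilized up to a scalar) by $K_0(p^{c(\pi)})$, so each $\Phi_{\pi, v_0}$ is bi-$K_0(p^{c(\pi)})$-invariant. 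Setting $c_p := \max\{c(\pi) : \pi \in \cF_p(f)\}$, which is finite since $f_p$ is bi-$K(p^{N_p})$-invariant for some $N_p$, one deduces bi-$K_0(p^{c_p})$-invariance of $f_p$. As $B(\Z_p) \subseteq K_0(p^{c_p})$, this gives bi-$B(\Z_p)$-invariance; taking the product over primes yields (1).

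\textbf{Part (2)} is immediate from (1) since $A(\widehat{\Z}) \subseteq B(\widehat{\Z})$.

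\textbf{Part (3).} I invoke the left-$N(\widehat{\Z})$-invariance provided by (1). In the integral \eqref{GenKloostermanDef} defining $H(m_1, m_2; c)$, the change of variables $t_1 \mapsto t_1 - s$ for any $s \in \widehat{\Z}$ yields $H(m_1, m_2; c) = \psi_\fin(m_1 s) H(m_1, m_2; c)$. Non-vanishing forces $\psi_\fin(m_1 s) = 1$ for all $s \in \widehat{\Z}$, i.e., $m_1 \in \Z$; the conclusion for $m_2$ is symmetric on the right.

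\textbf{Part (4)} is the heart of the proof and the main obstacle. By Lemma \ref{ccondition}, $H(m,n;c) = 0$ unless $c \in y\N$ for any $y$ controlling the support of $f$, so it suffices to show $v_p(y_p) \geq 0$ at each prime $p$, where $y_p$ denotes the local component of $y$. I argue by contradiction: suppose $v_p(y_p) = -k$ with $k > 0$, so that $\supp f_p \subseteq Z a(p^k) K_p a(p^{-k})$. One checks $f_p(1) > 0$ in both cases ($\widehat{\mu}(\cF_p(f)) > 0$ for a nonzero newform projector, $\nu(p^c) > 0$ for the classical test function), so $1 \in \supp f_p$, and the bi-$K_0(p^{c_p})$-invariance from (1) forces $K_0(p^{c_p}) \subseteq \supp f_p \subseteq Z a(p^k) K_p a(p^{-k})$. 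The contradiction comes from computing, for $h = \begin{pmatrix} \alpha & \beta \\ p^{c_p}\gamma & \delta\end{pmatrix} \in K_0(p^{c_p})$, that $a(p^{-k}) h a(p^k) = \begin{pmatrix} \alpha & p^{-k}\beta \\ p^{c_p+k}\gamma & \delta\end{pmatrix}$; membership of $h$ in $Z a(p^k) K_p a(p^{-k})$ forces (via the unit determinant) the scalar $z \in \Z_p^\times$, and hence $\beta \in p^k \Z_p$, which is impossible when $\beta$ ranges freely over $\Z_p$ and $k > 0$. Therefore $v_p(y_p) \geq 0$ at every prime, so $y \in \N$ and $c \in y\N \subseteq \N$.
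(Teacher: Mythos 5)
Your parts (1)--(3) follow essentially the same route as the paper: part (1) reduces to the bi-$K_0(p^{c(\pi)})$-invariance of newform diagonal matrix coefficients via Proposition \ref{mainprop}, part (2) is immediate from $A \subseteq B$, and part (3) uses the left/right $N(\widehat{\Z})$-invariance together with the change of variables in \eqref{GenKloostermanDef} (the paper shifts by $1$, you shift by a general $s \in \widehat{\Z}$, which is a cosmetic difference).

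For part (4) you take a genuinely different path, and it is correct. The paper's argument is much shorter: it combines the $c$-periodicity $H(m+c,n;c) = H(m,n;c)$ of Theorem \ref{thmKP}(2) with part (3): if $c \notin \Z$ and $m \in \Z$, then $m+c \notin \Z$, so $H(m,n;c) = H(m+c,n;c) = 0$; together with (3) for $m \notin \Z$ this shows $H(\cdot,\cdot;c)$ vanishes identically. Your argument instead works directly with a support-controlling $y \in \Q_+$ from geometric assumption \eqref{geo3}: using the bi-$K_0(p^{c_p})$-invariance established in the course of proving (1) together with $f_p(1) = \|f_p\|_{L^2}^2 > 0$ (Lemma \ref{projection_upper_bound}), you pin $K_0(p^{c_p}) \subseteq \supp f_p \subseteq Za(p^k)K_pa(p^{-k})$, and the unipotent $n(1) \in K_0(p^{c_p})$ is excluded when $k = -v_p(y) > 0$. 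This buys you the slightly sharper fact that any $y$ controlling the support is itself in $\N$, which does not follow from Lemma \ref{moduli} alone (that lemma only gives $yN, N/y \in \N$). The cost is that the argument is longer and more computational. Both proofs ultimately lean on Lemma \ref{ccondition} (directly in your case, via Theorem \ref{thmKP}(2) in the paper's).

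One small caveat for part (4): the argument assumes $f \neq 0$ (so that each $f_p \neq 0$ and $\cF_p(f)$ is nonempty, making $c_p$ well-defined). This is harmless since the statement is vacuous for $f = 0$, but you should make the reduction explicit.
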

\begin{proof}
\begin{enumerate}
\item Since $f$ is a pure tensor, it suffices to check that $f_p$ is $B(\Z_p)$-invariant for each $p$. If $f_p = \nu(p^c) 1_{ZK_0(p^c)}$ for some $c \in \Z_{\geq 0}$, then $f_p$ is clearly $B(\Z_p)$-invariant, so we focus on the case that $f_p$ is a newform projector. In this case, Proposition \ref{mainprop} applies and thus it suffices to check that diagonal matrix coefficients of newforms $\Phi_{\pi, \varphi_0}$ are bi-$B(\Z_p)$-invariant for each $\pi \in \cF_p(f)$. However, if $\pi$ has conductor $p^c$, then $\Phi_{\pi,\varphi_0}$ is clearly bi-$K_0(p^c)$-invariant, and since $B(\Z_p) \subseteq K_0(p^c)$ for all $c$ we are done. 
\item Clear, since $A(\Z_p) \subseteq B(\Z_p)$. 
\item Suppose that $m \not \in \Z$. Then let $p$ be a prime dividing the denominator of $m$ and make a change of variables $t_1 \to t_1+1$ in the definition \eqref{eq:KloostermanLocalFormula} of $H_p(m,n;c)$:
$$H_p(m,n;c) =    \iint_{\Q_{p}^2} f_{p}\left( \left( \begin{smallmatrix}-t_1-1 & -c^{-2} -t_1t_2-t_2 \\ 1 & t_2 \end{smallmatrix}\right) \right) \psi_{p}(mt_1 + m- n t_2)\,dt_1\,dt_2.$$
By the left invariance of $f_p$ by $n(1)$, we obtain
$$ H_p(m,n;c) = \psi_p(m) H_p(m,n;c),$$ so we must have that $H_p(m,n;c)= 0$, thus $H(m, n;c)=0$. If $n \not \in \Z$, then a similar argument works using the right $N(\Z_p)$-invariance of $f_p$.
\item If $c \not \in \Z$, then for any $m,n\in \Z$ we would have $m+c \not \in \Z$ and $$H(m,n;c)= H(m+c,n;c)=0$$ by the $c$-periodicity of $H$ (Theorem \ref{thmKP}(2)) and the previous fact. This shows that $\cC(\cF) \subseteq \N$ and thus the geometric conductor must be an integer. 
\end{enumerate}
\end{proof}

The following lemma will be useful later. 
\begin{mylemma}\label{projection_upper_bound}
Suppose that $H$ is a linear algebraic group over a local field, $f \in C_c^\infty(H)$, and that $\pi(f)$ is a projection operator for all $\pi \in H^{\wedge}$. Then, $f$ attains its maximum at $1\in H$ and $f(1) = \| f\|_{L^2}^2$. 
\end{mylemma}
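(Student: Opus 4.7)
The plan is to reduce the lemma to a direct application of Proposition \ref{mainprop} combined with the Plancherel formula \eqref{plancherelformula}. First I would verify that the hypotheses of Proposition \ref{mainprop} are in force: since $f \in C_c^\infty(H)$, $f$ is bi-invariant under some compact open subgroup $K'$ of $H$, and consequently $\imag \pi(f) \subseteq V_\pi^{K'}$ for every $\pi \in H^\wedge$; the admissibility of irreducible unitary representations of $H$ then forces $\imag \pi(f)$ to be finite-dimensional. Combined with the standing hypothesis that $\pi(f)$ is a projection, this places us in the setting of Proposition \ref{mainprop}, yielding the spectral expansion
\begin{equation}\label{eq:lempla_spexp}
f(g) = \int_{H^\wedge} \sum_{v \in \cB_f(\pi)} \overline{\Phi_{\pi,v}(g)} \, d\widehat{\mu}(\pi),
\end{equation}
with $\cB_f(\pi)$ an orthonormal basis of $\imag \pi(f)$.

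Next I would specialize \eqref{eq:lempla_spexp} at $g = 1$. Since $\Phi_{\pi,v}(1) = \langle v,v\rangle = 1$ for each unit vector $v$, this yields $f(1) = \int_{H^\wedge} d_\pi \, d\widehat{\mu}(\pi)$, where $d_\pi := \dim \imag \pi(f)$. By the remark following Proposition \ref{mainprop}, each $\pi(f)$ is an \emph{orthogonal} projection, so $\|\pi(f)\|_{\rm HS}^2 = \Tr(\pi(f)^*\pi(f)) = \Tr(\pi(f)) = d_\pi$. Plugging this into the Plancherel formula \eqref{plancherelformula} (noting $f \in L^1 \cap L^2$) gives $\|f\|_{L^2}^2 = \int_{H^\wedge} d_\pi \, d\widehat{\mu}(\pi) = f(1)$, establishing the second assertion.

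For the first assertion (maximum at $1$), I would take absolute values in \eqref{eq:lempla_spexp}. Cauchy--Schwarz and unitarity of $\pi(g)$ give $|\Phi_{\pi,v}(g)| = |\langle \pi(g)v, v\rangle| \leq \|v\|^2 = 1 = \Phi_{\pi,v}(1)$ for every $g \in H$, so
\begin{equation*}
|f(g)| \leq \int_{H^\wedge} \sum_{v \in \cB_f(\pi)} |\Phi_{\pi,v}(g)| \, d\widehat{\mu}(\pi) \leq \int_{H^\wedge} d_\pi \, d\widehat{\mu}(\pi) = f(1).
\end{equation*}

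There is no real obstacle here: once Proposition \ref{mainprop} is in hand, the statement follows by recognizing that the Hilbert--Schmidt norm squared of a finite-rank orthogonal projection equals its rank, which matches the integrand of $f(1)$ produced by evaluating the spectral expansion at the identity. The only minor bookkeeping point is to justify that $\pi(f)$ has finite-dimensional image under the stated hypothesis; this is immediate from the bi-$K'$-invariance of $f$ and admissibility.
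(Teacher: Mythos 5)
Your overall strategy (spectral expansion at $g=1$ plus the Plancherel formula, with $\|\pi(f)\|_{\rm HS}^2=\mathrm{rank}\,\pi(f)$ for orthogonal projections) is internally coherent, but it is a genuinely different route from the paper's, and it contains one real gap: Proposition \ref{mainprop} requires that $\pi(f)$ be a projection onto a \emph{finite-dimensional} subspace for every $\pi\in H^{\wedge}$, and this is not among the hypotheses of the lemma. You supply it by asserting that bi-$K'$-invariance of $f$ together with ``admissibility of irreducible unitary representations of $H$'' makes $\imag\pi(f)\subseteq V_\pi^{K'}$ finite-dimensional, and you call this ``immediate.'' It is not: admissibility of all smooth irreducible unitary representations is a deep theorem even for reductive $p$-adic groups (Harish-Chandra/Jacquet/Bernstein), and the lemma is stated for an arbitrary linear algebraic group over a local field, where no such admissibility statement is quoted in the paper (and it is not obvious it holds for non-reductive $H$). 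So as written your argument proves the lemma only under an extra unproven hypothesis. A partial repair would be to note that by \eqref{plancherelformula} the operator $\pi(f)$ is Hilbert--Schmidt for $\widehat{\mu}$-almost every $\pi$, and a Hilbert--Schmidt idempotent has finite rank; but even then Proposition \ref{mainprop} is stated with the finite-rank hypothesis at \emph{every} $\pi$, so you would have to rework its proof rather than cite it.

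The paper avoids all of this by staying entirely on the group side: since $\pi(f)^2=\pi(f)$ for all $\pi$, injectivity of the Fourier transform on $C^\ast(H)$ gives $f\ast f=f$ as functions; evaluating at $1$ and using self-adjointness ($f(y^{-1})=\overline{f(y)}$) gives $f(1)=\|f\|_{L^2}^2$, and Cauchy--Schwarz applied to $f(x)=\int_H f(y)f(xy^{-1})\,dy$ gives $|f(x)|\le\|f\|_{L^2}^2$. This needs no spectral expansion, no finite rank, and no admissibility, which is precisely why the lemma can be stated in the generality it is. (Your remaining steps — $f(1)=\int_{H^{\wedge}}d_\pi\,d\widehat{\mu}$, $\|f\|_{L^2}^2=\int_{H^{\wedge}}d_\pi\,d\widehat{\mu}$ via orthogonality of the projections from the remark after Proposition \ref{mainprop}, and $|\Phi_{\pi,v}(g)|\le 1$ for the maximum — are all fine once the finite-rank issue is resolved.)
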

\begin{proof}
Since $\pi(f)$ is a projection operator for every $\pi \in H^{\wedge}$, we have that $\pi(f)^2 = \pi(f)$ for all $\pi \in H^{\wedge}$. Since the Fourier transform is injective \cite[\S 18.2.3]{Dixmier}, we have that $f * f = f$. Since $f \in C_c^\infty(H)$, 
$$ f(x) = \int_H f(y) f(xy^{-1})dy$$ for all $x \in H$ (not merely almost every), so we may evaluate this at $x=1$ to obtain $f(1) = \int_H f(y) f(y^{-1})dy.$ Since $\pi(f)$ is a projection, it is self-adjoint, so $f$ is self-adjoint as well, that is $f(y^{-1}) = \overline{f(y)}$. Thus, $f(1) = \|f\|_{L^2}^2$. 

Finally, by Cauchy-Schwarz, 
$$|f(x)|= 
\Big| \int_H f(y) f(xy^{-1})dy \Big|
 \leq \Big(\int_H |f(y)|^2 dy \Big)^{1/2} \Big(\int_H |f(xy^{-1})|^2 dy \Big)^{1/2} = \|f\|_{L^2}^2. \qedhere
$$ 
\end{proof}

\subsection{Proof of Theorem \ref{theoGeomSpec}}\label{theoGeomSpecProof}
We begin by deducing the following result from Theorem \ref{MT} and the theory built up in the meantime, assuming the geometric and spectral assumptions. 
\begin{mytheo}\label{theoGeomSpec_FC}
Let $f \in \cH_{\rm fin}$ be a pure tensor satisfing the geometric and spectral assumptions. 
Then, for all $m_1,m_2 \in \Z$ with $m_1m_2 >0$ we have 
\begin{multline}\label{GeneralizedKuznetsov2_FC}\sum_{ \pi \in \cF_0(f)} h_\infty(t_\pi) \sum_{\varphi \in \cB_f(\pi)} a_{u_{\varphi}}(m_1) \overline{a_{u_\varphi}(m_2)}  \\ 
+ \frac{1}{4\pi}\sum_{ \chi \in \cF_E(f)} \sum_{\phi \in \mathcal{B}_f(\chi, \chi^{-1})} \int_{-\infty}^\infty h_\infty(t) a_{u_{E(\phi_{it})}}(m_1) \overline{a_{u_{E(\phi_{it})}}(m_2)}\,dt   \\
= \delta_{m_1=m_2} \delta_\infty  \int_{\A_{\rm fin}} f\left( \left( \begin{smallmatrix} 1 & t \\ & 1\end{smallmatrix} \right) \right) \psi_{\rm fin}(-mt)\,dt +  \sum_{c \equiv 0 \shortmod{k(\cF)} } \frac{H(m_1,m_2;c)}{c}H_\infty\left(\frac{4 \pi \sqrt{ m_1m_2}}{c}\right)
\end{multline}
with notation as in Theorem \ref{MT}, $\cB_f(\pi)$ any orthonormal basis for $V_f^{K_\infty}$ (see \eqref{pifdef}), and $\cB_f(\chi,\chi^{-1})$ any orthonormal basis for the $K_\infty$-fixed space of the image of $\pi(f):V_\pi \to V_\pi$ with $\pi= \pi_{\chi,\chi^{-1}}$ the global principal series representation. 
\end{mytheo}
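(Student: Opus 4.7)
The plan is to deduce Theorem \ref{theoGeomSpec_FC} directly from the unrefined formula Theorem \ref{MT} by using the geometric and spectral assumptions to simplify each term individually. First I would verify that the support hypothesis of Theorem \ref{MT} is automatic under geometric assumption \eqref{geo3}: if $\supp f_p \subseteq a(y)^{-1} Z K_p a(y) = Z \cdot (a(y)^{-1} K_p a(y))$, then determinants of elements of $a(y)^{-1}K_p a(y)$ lie in $\Z_p^\times$, and so $v_p(\det g) \in 2\Z$ throughout $\supp f_p$ as needed.

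The key simplification of the spectral side comes from the fact that under the spectral assumption, $\pi(f):V_\pi \to V_\pi$ is an orthogonal projection onto the subspace $\pi_f$ of \eqref{pifdef}. A direct computation using left-$K(N)$-invariance of $f$ shows that $\pi_f \subseteq V_\pi^{K(N)}$, so I may choose the orthonormal basis $\mathcal{B}(\pi)$ of pure tensors in $\pi^{K_\infty \times K(N)}$ appearing in Theorem \ref{MT} to be a disjoint union of $\mathcal{B}_f(\pi)$ (an orthonormal basis of $\pi_f^{K_\infty}$) and an orthonormal basis of its orthogonal complement. For $\varphi \in \mathcal{B}_f(\pi)$ we have $\pi(f)\varphi = \varphi$, while the remaining basis vectors are annihilated by $\pi(f)$, so that
\[
\sum_{\varphi \in \mathcal{B}(\pi)} a_{u_{\pi(f)\varphi}}(m_1)\overline{a_{u_\varphi}(m_2)} = \sum_{\varphi \in \mathcal{B}_f(\pi)} a_{u_\varphi}(m_1)\overline{a_{u_\varphi}(m_2)}.
\]
The same argument applied to the global principal series produces the corresponding simplification of the continuous spectrum contribution, yielding the left-hand side of \eqref{GeneralizedKuznetsov2_FC}.

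On the geometric side, by Theorem \ref{thmKP}(6) and Proposition/Definition \ref{def:geom_cond} we have $\cC(\cF) \subseteq k(\cF)\Z$; extending the sum in Theorem \ref{MT} trivially by zero over $c \in k(\cF)\Z \setminus \cC(\cF)$ (and noting $k(\cF) \in \N$ by Lemma \ref{specControlonH}(4)) rewrites it as $\sum_{c \equiv 0 \shortmod{k(\cF)}}$. Finally, the diagonal term of Theorem \ref{MT} is already in the desired form after substituting the definition $\delta_\infty = f_\infty(1)$. The only nontrivial bookkeeping is to ensure that $\mathcal{B}(\pi)$ can simultaneously be chosen as pure tensors and to respect the decomposition $\pi_f \oplus \pi_f^\perp$; this follows because $\pi_f$ is by construction itself a pure tensor (a product of local subspaces), so a pure-tensor orthonormal basis of $\pi_f^{K_\infty}$ may be built factor-by-factor and extended analogously to its complement. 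I do not anticipate a substantive obstacle beyond this.
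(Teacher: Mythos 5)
Your proposal is correct and follows essentially the same route as the paper: deduce the statement from Theorem \ref{MT} (whose support hypothesis is guaranteed by geometric assumption \eqref{geo3}), simplify the cuspidal and Eisenstein sums by choosing bases respecting the orthogonal decomposition $\pi = \pi_f \oplus \pi_f^{\perp}$ so that $\pi(f)$ acts as the identity on $\cB_f$ and kills the rest, and restrict the geometric sum to $c \equiv 0 \pmod{k(\cF)}$ using the control on admissible moduli. The only cosmetic difference is in which intermediate lemmas you cite for $\cC(\cF)\subseteq k(\cF)\Z$ (the paper invokes Lemmas \ref{ccondition} and \ref{lem:admmodulus} directly), which is immaterial.
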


\begin{proof}
Recall that Geometric Assumption \eqref{geo3} implies that the hypothesis of Theorem \ref{MT} is satisfied, and so the unrefined Petersson/Kuznetsov formula \eqref{GeneralizedKuznetsov1} holds. We next record how \eqref{GeneralizedKuznetsov1} simplifies to \eqref{GeneralizedKuznetsov2_FC} in the presence of the geometric and spectral assumptions. 

Using the spectral assumption, for each $\pi \in \cF_0(f)$ (resp.\ $\cF_E(f)$, $\cF_{\kappa}(f)$), recall that the image $V_f$ of $\pi(f): V_\pi \to V_\pi$ was explicated in \eqref{pifdef}. Then, we have that $\pi(f)\varphi = \varphi$ if $\varphi  \in V_f$, and $\pi(f)\varphi = 0$ if $\varphi \in V_f^\perp$. We choose the orthonormal bases $\cB(\pi)$ (resp.\ $\cB(\chi,\chi^{-1})$) to respect the direct sum decomposition $\pi = V_f \oplus V_f^\perp$, so that all basis vectors are killed by $\pi(f)$ except in the finite-dimensional subspace $V_f^{K_\infty}$ (resp.\ $V_f^\kappa$, the weight $\kappa$ isotypic subspace of $V_f$). The result of these reductions is that 
$$  \sum_{\varphi \in \mathcal{B}(\pi)}a_{u_{\pi(f)\varphi}}(m_1) \overline{a_{u_\varphi}(m_2)} =  \sum_{\varphi \in \mathcal{B}(V_f^{K_\infty})}a_{u_{\varphi}}(m_1) \overline{a_{u_\varphi}(m_2)},$$
or $V_f^\kappa$ in place of $V_f^{K_\infty}$ in the holomorphic / discrete series case, respectively.  For the Eisenstein series contribution we have similarly
$$ \sum_{\phi \in \mathcal{B}(\chi, \chi^{-1})} a_{u_{E(\pi_{it}(f)\phi_{it})}}(m_1) \overline{a_{u_{E(\phi_{it})}}(m_2)} = \sum_{\phi \in \mathcal{B}_f(\chi, \chi^{-1})} a_{u_{E(\phi_{it})}}(m_1) \overline{a_{u_{E(\phi_{it})}}(m_2)}.$$
Since the spectral assumption guarantees that $f$ is bi-$N(\widehat{\Z})$-invariant (Lemma \ref{specControlonH}(1)), the above sums of Fourier coefficients vanish unless both $m_1,m_2 \in \Z$. 

To derive the geometric side of the formula in \eqref{GeneralizedKuznetsov2_FC} from that of \eqref{GeneralizedKuznetsov1}, we insert the formula \eqref{eq:MT_with_geom} for the diagonal term, and for the off-diagonal we  note that the geometric assumptions imply $\cC(\cF) \subseteq k(\cF)\N$ via Lemmas \ref{ccondition} and \ref{lem:admmodulus}. Note by the spectral assumption that the generalized Kloosterman sums $H(m_1,m_2;c)$ also vanish unless both $m_1,m_2 \in \Z$, by Lemma \ref{specControlonH}(3). 
\end{proof}
\begin{myrema} In Section \ref{sec:MTcomputation} we moreover computed the non-archimedean diagonal term contribution of \eqref{GeneralizedKuznetsov2_FC} in terms of Plancherel volumes, but only under the hypothesis that $(m_1m_2,N)=1$ (otherwise we would have included the result in Theorem \ref{theoGeomSpec_FC}). In fact, one can carry through the computation in Proposition \ref{prop:nonarchmt} without the assumption $p \nmid m$ (i.e.\ $(m_1m_2,N)=1$), but the resulting formula for the diagonal term becomes more complicated and in particular (unlike the factor $\delta_\fin$  from \eqref{eq:deltapnewformproj} and \eqref{eq:deltapclassical}) depends on $m_1,m_2$. We therefore leave this case aside. \end{myrema}

\begin{proof}[Proof of Theorem \ref{theoGeomSpec}.]
As just remarked, in Section \ref{sec:MTcomputation} we deduced the form of the diagonal term in Theorem \ref{theoGeomSpec} from that of Theorem \ref{theoGeomSpec_FC} under the spectral hypothesis using the assumption that $(m_1m_2,N)=1$. We thus obtain the geometric side of Theorem \ref{theoGeomSpec}.

To finish the proof of Theorem \ref{theoGeomSpec}, it remains to express the Fourier coefficients on the spectral side of \eqref{GeneralizedKuznetsov2_FC} in terms of Hecke eigenvalues by appealing to \eqref{Eq:SpectralAssumption} and its analogous statement for Eisenstein series. To state the Eisenstein series version, we abbreviate $\pi_{it}(\chi)= \pi_{\chi_1 \alpha^{it}, \chi_2 \alpha^{-it}}, \, V_{it}(\chi) = V_{\chi_1 \alpha^{it}, \chi_2 \alpha^{-it}}$ and set \begin{equation}
\lambda_{\pi_{it}(\chi)}(n) = \sum_{ab=n} \chi_1(a)\chi_2(b) \left(b/a\right)^{it} \quad (n \in \N).
\end{equation}
Then the Eisenstein series analogue of \eqref{Eq:SpectralAssumption} is that there exists an orthonormal basis $\mathcal{B}_f(\chi, \chi^{-1})$ and weights $w(\pi_{it}(\chi), f) \in \C$ such that for all $m_1,m_2 \in \N$ and $(m_1m_2,N)=1$ we have 
\begin{equation}\label{Eq:SpectralAssumption_Eis}
\sum_{\phi \in \mathcal{B}(\chi, \chi^{-1})_f^{K_\infty}} a_{u_{E(\phi_{it})}}(m_1) \overline{a_{u_{E(\phi_{it})}}(m_2)} = w(\pi_{it}(\chi), f)\lambda_{\pi_{it}(\chi)}(m_1) \overline{\lambda_{\pi_{it}(\chi)}(m_2)}.
\end{equation}
Applying \eqref{Eq:SpectralAssumption} and \eqref{Eq:SpectralAssumption_Eis}, the spectral side of Theorem \ref{theoGeomSpec_FC} becomes the spectral side of Theorem \ref{theoGeomSpec} (note that when $m_1,m_2<0$ we have $a_{u}(m_1)\overline{a_u(m_2)} =  a_{u}(|m_1|)\overline{a_u(|m_2|)}$ for either parity of $u$). Under the assumption in \eqref{Eq:SpectralAssumption_Eis}, the suppressed $(\text{cts.})$ in Theorem \ref{theoGeomSpec} in detail is
\begin{equation}\label{ctsdots}
(\text{ cts. }):= \frac{1}{4\pi} \sum_{\chi \in \cF_E(f)} \int_{-\infty}^\infty h_\infty(t) w(\pi_{it}(\chi), f)\lambda_{\pi_{it}(\chi)}(m_1) \overline{\lambda_{\pi_{it}(\chi)}(m_2)}\,dt.
\end{equation}

We next give a proof of the sentence containing \eqref{Eq:SpectralAssumption} in the introduction, namely, that there exists some orthonormal basis $\cB_f$ of $V_f^{K_\infty}$ (resp.\ $V_f^\kappa$) such that \eqref{Eq:SpectralAssumption} holds. Such a basis $GS_f$ was explicitly constructed by the Gram-Schmidt process in the several works mentioned just before \eqref{Eq:SpectralAssumption}. Indeed, by the spectral assumption (Lemma \ref{specControlonH}) $\varphi \in V_f^{K_\infty}$ (resp.\ $V_f^\kappa)$ is $K_0(N)$-invariant, thus $u_\varphi$ is modular with respect to $\Gamma_0(N)$ (at least). Then, defining the Petersson inner product by
\begin{equation}\label{Petersson_Normalization}\langle u,v \rangle = \frac{1}{[\SL_2(\Z): \Gamma_0(N)]} \iint_{\Gamma_0(N) \backslash \cH} u(z) \overline{v(z)}\, \frac{dx \,dy }{y^2},\end{equation}
 (similarly, for holomorphic forms) one has for any $\varphi_1,\varphi_2 \in V^{K_\infty}_f$ (resp.\ $V_f^\kappa$) that
$$\langle \varphi_1,\varphi_2\rangle = \langle u_{\varphi_1}, u_{\varphi_2}\rangle$$ by strong approximation (see e.g.\ \cite[\S 7.11, (12.20)]{KnightlyLiTracesOfHeckeOperators}). Note for future reference that we also have $\|\varphi\|_{can}^2 = \|\varphi\|_{L^2([\overline{G}])}^2$ by \cite[Rem.\ 3 of Thm.\ 6.1]{PetrowYoungCoset}. 

Next, for $\varphi \in V_f^{\kappa}$ (resp.\ $V_f^{K_\infty}$) let us define $\varphi^{(\delta{})}$ 
 by  $$\varphi^{(\delta)}(g) = \sum_{d\mid \delta} \xi_\delta(d) \varphi\left( a(d) g\right),$$
 in which $a(d) \in G(\R)$ is understood as an adelic matrix via the embedding $G(\R) \hookrightarrow G(\A)$ in the first position, and $\xi_\delta(d)$ is a jointly multiplicative function in $\delta, d$, supported on $d \mid \delta$ and given on primes by 
 \begin{equation}
 \xi_1(1) = 1, \quad \xi_p(p)  =  \left(1- \frac{|\lambda_\pi(p)|^2}{p(1+\frac{1_{p \nmid q(\pi)}}{p})^2}\right)^{-\frac{1}{2}}, \quad \xi_p(1)  = \frac{-\overline{\lambda_\pi(p)}}{\sqrt{p}(1+ 1_{p \nmid q(\pi)}/p)}\xi_p(p),
 \end{equation}
 and for $\nu \geq 2$ on prime powers by 
 \begin{multline}
 \xi_{p^\nu}(p^\nu)   = \left( 1- \frac{|\lambda_\pi(p)|^2}{p(1+\frac{1_{p \nmid q(\pi)}}{p})^2}\right)^{-\frac{1}{2}}\left(1-\frac{1_{p \nmid q(\pi)}}{p^2}\right)^{-\frac{1}{2}}, \quad \xi_{p^\nu}(p^{\nu-1})  = \frac{-\overline{\lambda_\pi(p)}}{\sqrt{p}}\xi_{p^\nu}(p^{\nu}), \\ \xi_{p^{\nu}}(p^{\nu-2})  = \frac{1_{p \nmid q(\pi)}}{p}\xi_{p^\nu}(p^{\nu}),
 \end{multline}
 and $\xi_{p^a}(p^b)= 0$ in all other cases.  
  Let $q(\pi)$ be the (finite) conductor of $\pi$ and $\varphi_0$ be an $L^2$-normalized newform in $V_\pi$. Then by the above discussion on inner products and \cite[Prop.\ 7.1]{PetrowTraces} , the set \begin{equation}
GS_f := \{ \varphi^{(\delta)}_0: \delta \mid N / q(\pi)\}
\end{equation}
 is an orthonormal basis for $V^{\kappa}_f$ (resp.\ $V_f^{K_\infty}$). 
 
 We claim that \eqref{Eq:SpectralAssumption} holds with $w(\pi, f)$ given by the formula \eqref{weights_explicit} for $V_f^\kappa$. The $V_f^{K_\infty}$ case is similar. To check this formula, let us temporarily and for this paragraph only let $M$ and $N$ with $M \mid N$ be as in \cite[\S 7]{PetrowTraces}. 
 Now, the Fourier coefficients $b_g(n)$ of the holomorphic modular forms $g$ that appear in \cite[(7-1)]{PetrowTraces} are related to the Fourier coefficients $a_u(n)$ in \eqref{Eq:SpectralAssumption} by $\nu(N)^{1/2}n^{-\frac{k-1}{2}}b_g(n) =a_g(n)$ due to the different choice of inner products. Then, the sum on the left hand side of \eqref{Eq:SpectralAssumption} is equal to the restriction of the sum $\Delta_{k,N, \epsilon_{0,N}}$ to the single oldclass corresponding to $\pi$ times $\nu(N)/c_k$ (see the first line of \cite[(7-1)]{PetrowTraces} and the first paragraph of loc.\ cit.\ \S 7 for definitions). Note that $\frac{1}{\nu(M)}\|g\|_M^2 = \| \varphi_g\|_{can}^2$ where $\varphi_g \in V_\pi$ is the vector corresponding to $g$ and $\|.\|_{can}$ is as in \cite[\S 2.2.2]{MichelVenkateshGL2}. If we restrict the expression for $\Delta_{k,N, \epsilon_{0,N}}$ in \cite[(7-2)]{PetrowTraces} to a single oldclass and multiply it by $\nu(N)/c_k$, we obtain \eqref{weights_explicit} from \cite[(6.4)]{PetrowYoungCoset}. 

We next give a proof of the Eisenstein series analogue, that is we check that there exists an orthonormal basis $\cB_f(\chi,\chi^{-1})$ such that the sentence containing \eqref{Eq:SpectralAssumption_Eis} holds. In this context, the orthonormal basis analogous to $GS_f$ for the space of Eisenstein series was constructed in \cite[\S 8.5]{YoungExplicit}.  For $\phi_{it} \in V_{it}(\chi)^{K_\infty}$ one has when $\pi_{it}(\chi)$ is non-singular (see \cite[\S2.2.1]{MichelVenkateshGL2}) that \begin{equation}\label{eisnorms} \| \phi_{it}\|^2 = \| E(\phi_{it})\|_{\rm Eis}^2 = \frac{1}{2}\| E(\phi_{it})\|_{can}^2\end{equation} by \cite[Rem.\ 3 of Thm.\ 6.1]{PetrowYoungCoset}, where $\|\cdot\|^2$ on the global principal series was defined in \eqref{aut_inner_prod} and $\|\cdot \|_{can}$ and $\|\cdot\|_{\rm Eis}$ are as in \cite[\S2.2]{MichelVenkateshGL2} (see also Section \ref{sec:pre-trace} of this paper). If $\phi \in V_{it}(\chi)_f^{K_\infty}$, then in addition we have 
\begin{equation}\label{eisnorm_to_formal_norm}{\| \phi_{it}\|^2 =} \frac{1}{4\pi \nu(N)} \langle u_{E(\phi_{it})},u_{E(\phi_{it})}\rangle_{N},\end{equation} 
where $\langle \cdot,\cdot\rangle_N$ is the formal inner product on the space of Eisenstein series of level $N$ defined in \cite[(8.1)]{YoungExplicit}.

When $\pi_{it}(\chi)$ is non-singular, we claim that \eqref{Eq:SpectralAssumption_Eis} holds with 
\begin{equation}\label{weights_explicit_Eis}
w(\pi_{it}(\chi),f) =  \frac{1}{ \xi(2) \mathcal{L}_{\pi_{it}(\chi)}^*(1)}\frac{1}{\rho_\pi(N/q(\pi(\chi)))}.
\end{equation}
Indeed, the left hand side of \eqref{Eq:SpectralAssumption_Eis} is equal to the restriction of \cite[(2.11)]{PetrowYoungWeyl} (which plays the role of \cite[(7-2)]{PetrowTraces} in the Eisenstein case) to a single oldclass times $4 \pi \nu(N)$. Converting to the canonical norm by \eqref{eisnorms} (whence the missing factor of 2 compared to \eqref{weights_explicit}) and \eqref{eisnorm_to_formal_norm}, and using \cite[(6.4)]{PetrowYoungCoset}, we obtain \eqref{weights_explicit_Eis}. 
\end{proof}
Warning: Unlike the cuspidal case, it is \emph{not} generally true that $w(\pi_{it}(\chi),f)  = ((1+|t|)N)^{o(1)}$. Indeed, near singular $\pi_{it}(\chi)$, the weight $w(\pi_{it}(\chi),f)$ may approach zero, as $ \mathcal{L}_{\pi_{it}(\chi)}^*(1) \sim |L(1+2it,\chi^2)|^2$, which blows up to order 2 when $\chi$ is quadratic and $t \to 0$. If $\chi$ is not quadratic, however, it is true that $w(\pi_{it}(\chi),f)  = ((1+|t|)N)^{o(1)}$ by explicit computation. 

\section{Applications}\label{sec:applications}
\subsection{Proof of harmonically-weighted Weyl-Selberg Law}\label{sec:WeylLaw}

\begin{proof}[Proof of Lemma \ref{lemWeylLaw}]
We apply trivial bounds to the sum of generalized Kloosterman sums in Theorem \ref{theoGeomSpec}. 
By Theorem \ref{thmKP}(5), we have
\begin{equation}
\label{Hbound_proof_of_lemWeylLaw}
|H(m,n;c)| 
\leq c y
\|f \|_{L^\infty(G)}.
\end{equation}
Recall $f$ satisfies the Spectral Assumption, so that by Lemma \ref{projection_upper_bound} we have  $\| f \|_{L^\infty(G)} \leq f(1)$. 

Next we need a bound on $H_\infty(x)$ for $x$ small. 
Recall \eqref{eq:archPlancherelIntro}, i.e., the Plancherel formula for the archimedean place:  
$$f_\infty(1) =  \frac{1}{4\pi} \int_{\R} h_\infty(t) \tanh(\pi t) t \,dt.$$
If $h_\infty(t)$ is given by \eqref{eq:hdefWindowVersion} or \eqref{eq:hdefInitialSegmentVersion}, we have by trivial estimates 
\begin{equation}\label{eq:easyVolumeLemma}
f_\infty(1) \asymp \Delta T \quad \text{ or } \quad f_\infty(1) \asymp T^2,
\end{equation}
respectively. In any case we note that
\begin{equation}\label{log_finfty1_log_T}
\log f_\infty (1) \asymp \log T.
\end{equation}

\begin{mylemma}\label{Hboundprop}
For $h_\infty$ as in either \eqref{eq:hdefWindowVersion} or \eqref{eq:hdefInitialSegmentVersion}, we have
\begin{equation}\label{H_infty_near0}
H_\infty(x)  = \frac{i}{2} \int_{-\infty}^\infty J_{2it}(x) \frac{t h_\infty(t)}{\cosh \pi t}\,dt \ll f_\infty (1) \left( \frac{x}{T}\right)^2 .
\end{equation}
\end{mylemma}
\begin{proof}
Here we follow  \cite[(3.10)]{JutilaMotohashi}. Indeed, the key property of $h_\infty$ is that its zeros at $\pm i/2$ permit us to shift the contour in \eqref{H_infty_near0} to the line $\imag(t)=-1$ without crossing any poles.  We have $$H_\infty(x) \ll \int_{-\infty}^\infty \frac{(|t|+1)h_\infty(t)}{\cosh \pi t} |J_{2+2it}(x)|\,dt,$$ which by Poisson's integral formula $$ J_\nu(x) = \frac{(\frac{1}{2}x)^\nu}{\sqrt{\pi} \Gamma( \nu + \frac{1}{2})} \int_{-1}^1 (1-y^2)^{\nu -1/2} \cos(xy)\,dy$$ is $\ll f_\infty(1)x^2/T^2$ as claimed. \end{proof}

Now we apply \eqref{Hbound_proof_of_lemWeylLaw} and Lemma \ref{Hboundprop} to the sum of Kloosterman sums in Theorem \ref{theoGeomSpec} to obtain
\begin{equation}
 \sum_{c \equiv 0 \shortmod{k(\cF)} } \frac{H(m,n;c)}{c}H_\infty\left(\frac{4 \pi\sqrt{mn} }{c}\right) \ll \frac{f_\A(1)mny}{T^{2}}  \sum_{c \equiv 0 \shortmod{k(\cF)} } 
\frac{1}{c^2} \ll \frac{f_\A(1)mn}{T^{2}k(\cF)},
\end{equation}
using that $y \leq k(\cF)$ (see Lemma \ref{ccondition}). 
\end{proof}
We deduce Corollary \ref{thmWeylLaw} from Lemma \ref{lemWeylLaw} by taking $m_1=m_2=1$ and observing that if $f$ is a newform projector, then all $\varphi \in V_f^{K_\infty}$ for $\pi \in \cF_0(f)$ are newforms, so by \cite[(6.4)]{PetrowYoungCoset} we deduce \eqref{Eq:SpectralAssumption} with
\begin{equation}
w(\pi, f) = |a_{u_\varphi}(1)|^2 = \frac{1}{2\xi(2) \mathcal{L}^*_\pi(1)}.
\end{equation}

\subsection{The $\GL_1$ large sieve inequalities}
We present some preliminary results that will be useful in the proof of Theorem \ref{thm:abstractversion}.
We first recall a classical large sieve inequality:
\begin{mylemma}
\label{lemma:classicallargesieve}
 Let $\alpha_r \in \mr$ be a set of points with $\mathop{dist}(\alpha_r - \alpha_s, \mz) \geq \delta > 0$ for $r \neq s$.  Then for any complex numbers ${\bf a} = (a_n)$, we have
 \begin{equation}
  \sum_{r} \Big| \sum_{M \leq n <M+N} a_n e(\alpha_r n) \Big|^2 \ll (\delta^{-1}+N) 
  \| {\bf a} \|^2.
 \end{equation}
\end{mylemma}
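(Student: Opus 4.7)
The plan is to follow the standard route: first invoke duality to convert the stated inequality into its dual form, and then prove the dual form using the Beurling--Selberg majorant (or alternatively Gallagher's Sobolev-type inequality, which gives a comparable bound with less technical overhead).

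By duality of bounded linear operators between $\ell^2$ spaces, the stated inequality with constant $C$ is equivalent to the dual inequality
\begin{equation*}
\sum_{M\leq n<M+N}\Big|\sum_r b_r\, e(\alpha_r n)\Big|^2 \leq C \sum_r |b_r|^2
\end{equation*}
holding for all complex sequences $(b_r)$. I will prove the dual form with $C \ll \delta^{-1}+N$. The advantage of passing to the dual is that once the square is expanded it produces off-diagonal terms of the shape $b_r\overline{b_s}\, e((\alpha_r-\alpha_s)n)$, which are precisely the objects we can control via the $\delta$-separation hypothesis.

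For the dual estimate, I would construct (or invoke) a non-negative Schwartz-type majorant $F:\mathbb{R}\to\mathbb{R}_{\geq 0}$ with $F(x)\geq 1$ for all $x\in[M, M+N)$, with Fourier transform $\widehat{F}$ supported in $[-\delta,\delta]$, and with $\widehat{F}(0)=\int F \ll N+\delta^{-1}$. The Beurling--Selberg extremal function supplies such an $F$ (after a trivial rescaling and translation). Then
\begin{equation*}
\sum_{M\leq n<M+N}\Big|\sum_r b_r e(\alpha_r n)\Big|^2
\leq \sum_{n\in\mathbb{Z}} F(n) \sum_{r,s} b_r \overline{b_s}\, e((\alpha_r-\alpha_s)n).
\end{equation*}
Applying Poisson summation to the inner $n$-sum (for each fixed pair $(r,s)$) yields $\sum_{k\in\mathbb{Z}} \widehat{F}(\alpha_r-\alpha_s+k)$. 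The support condition on $\widehat{F}$ together with the hypothesis $\mathop{dist}(\alpha_r-\alpha_s,\mathbb{Z})\geq \delta$ forces every off-diagonal term to vanish, leaving only $r=s$, which contributes $\widehat{F}(0) \sum_r |b_r|^2 \ll (N+\delta^{-1}) \sum_r |b_r|^2$.

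The main technical step is the existence of the majorant $F$ with the stated properties; this is a classical construction (Beurling--Selberg) and not a genuine obstacle, though it is the only place where the specific interplay between the support of $\widehat{F}$ and the separation parameter $\delta$ is used. An entirely self-contained alternative is to prove the dual inequality via Gallagher's identity $|g(n)|^2 \leq \int_{n-1/2}^{n+1/2} (|g(t)|^2 + |g(t)g'(t)|)\,dt$ applied to $g(t) = \sum_r b_r e(\alpha_r t)$, followed by Cauchy--Schwarz; this avoids the majorant but sacrifices sharp constants, which is irrelevant here since we only need the $\ll$ version.
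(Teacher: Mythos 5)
The paper does not prove this lemma at all: it is simply recalled as ``a classical large sieve inequality,'' so there is no in-paper argument to compare against. Your proposal is a correct rendition of one of the standard proofs (duality plus the Beurling--Selberg/Selberg majorant), and the main steps are sound: duality is legitimate, the Selberg majorant of $[M,M+N)$ of exponential type $2\pi\delta$ has $\widehat F$ supported in $[-\delta,\delta]$ with $\widehat F(0)=N+\delta^{-1}$, and Poisson summation kills the off-diagonal terms because $\operatorname{dist}(\alpha_r-\alpha_s,\Z)\geq\delta$ while $\widehat F$ is continuous and vanishes at the endpoints of its support. Two small points to tidy up: the diagonal actually contributes $\sum_{k\in\Z}\widehat F(k)\,\|\mathbf b\|^2$, not just $\widehat F(0)\|\mathbf b\|^2$, and one should remark that the hypothesis forces $\delta\leq 1/2$ (when there are at least two points), so all $k\neq 0$ terms vanish; and the Beurling--Selberg function is not Schwartz, only $O(x^{-2})$ at infinity, which is still ample for Poisson summation. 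The aside about Gallagher is the one place I would push back: applied in the dual form to $g(t)=\sum_r b_r e(\alpha_r t)$, his inequality leaves you needing $\int_I|g|^2$ and $\int_I|g'|^2$ over an interval of length $\asymp N$, and bounding these by $(N+\delta^{-1})\|\mathbf b\|^2$ is essentially the continuous large sieve again, so it is not the shortcut you suggest. The standard Gallagher route works in the primal form, applying the Sobolev-type inequality to $S(\alpha)=\sum_n a_n e(n\alpha)$ on the disjoint arcs of length $\delta$ around the $\alpha_r$ modulo $1$ and then using Parseval, after recentering the exponent to $n-M-N/2$ so that the derivative contributes a factor $N$ rather than $M+N$.
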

We also need a hybrid version, which is essentially due to Gallagher.
\begin{mylemma}
\label{lemma:Gallagher}
 Let conditions be as in Lemma \ref{lemma:classicallargesieve}, and let $T \geq 1$.  Then
 \begin{equation}
\int_{-T}^{T}  \sum_{r} \Big| \sum_{1 \leq n \leq N} a_n e(\alpha_r n) n^{-it} \Big|^2 \ll (T\delta^{-1} + N) 
\| {\bf a} \|^2.
 \end{equation}
\end{mylemma}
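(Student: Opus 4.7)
The plan is to combine Gallagher's lemma for Dirichlet polynomials with the classical large sieve of Lemma \ref{lemma:classicallargesieve}. First I would apply Gallagher's lemma, which states that for any compactly supported sequence of complex numbers $(b_n)$ and $T \geq 1$,
\begin{equation}
\int_{-T}^{T} \Big| \sum_{n} b_n n^{-it} \Big|^2 \, dt \ll T^2 \int_{0}^{\infty} \Big| \sum_{x < n \leq x(1+1/T)} b_n \Big|^2 \frac{dx}{x},
\end{equation}
to the sequence $b_n = a_n e(\alpha_r n)$ for each $r$. Summing over $r$ and interchanging the sum with the (nonnegative) $x$-integrand via Fubini reduces the problem to bounding, for each $x \in (0, N]$, the short-window classical large sieve quantity
\begin{equation}
\sum_r \Big| \sum_{x < n \leq x(1+1/T)} a_n e(\alpha_r n) \Big|^2.
\end{equation}

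Next, I would invoke Lemma \ref{lemma:classicallargesieve} on the interval $(x, x(1+1/T)]$, whose length is comparable to $x/T$, producing the upper bound $(\delta^{-1} + x/T) \sum_{x < n \leq x(1+1/T)} |a_n|^2$. Inserting this and splitting according to the two terms yields $T^2 \delta^{-1} \int_0^N \sum_{x<n\leq x(1+1/T)} |a_n|^2 \frac{dx}{x}$ plus $T \int_0^N \sum_{x<n\leq x(1+1/T)} |a_n|^2 \, dx$. For each of these I would swap the $x$-integral with the sum over $n$: in the first, each coefficient contributes $\int_{n/(1+1/T)}^{n} \frac{dx}{x} = \log(1+1/T) \ll 1/T$, yielding a total bound $T\delta^{-1}\|\mathbf{a}\|^2$; in the second, each coefficient contributes a preimage of length $\asymp n/T$, giving a bound $\sum_n (n/T)|a_n|^2 \cdot T \leq N\|\mathbf{a}\|^2$ after using $n \leq N$. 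Adding the two produces the asserted bound $(T\delta^{-1} + N)\|\mathbf{a}\|^2$.

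The only substantive input beyond what is available in the paper is Gallagher's lemma itself, which I would cite from a standard reference such as \cite{IK}. Everything else is routine: an application of Fubini, one appeal to Lemma \ref{lemma:classicallargesieve} uniformly in $x$, and two elementary integral evaluations. There is no delicate point or real obstacle here; the argument parallels the standard derivation of the hybrid large sieve from its static counterpart.
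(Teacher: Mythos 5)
Your proof is correct and is essentially the argument the paper has in mind: the paper gives no written proof, instead noting that the statement follows exactly as in Gallagher's Theorem 3, whose proof is precisely this combination of Gallagher's lemma with the static large sieve over well-spaced points, and your write-up supplies those details accurately (including the two integral evaluations giving $T\delta^{-1}\|\mathbf{a}\|^2$ and $N\|\mathbf{a}\|^2$). The only cosmetic remark is that the cleanest citation for the Sobolev-type inequality you invoke is Lemma 1 of \cite{GallagherLargeSieve} itself rather than \cite{IK}.
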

Strictly speaking, Lemma \ref{lemma:Gallagher} does not appear in \cite{GallagherLargeSieve}, but its proof is virtually identical to \cite[Theorem 3]{GallagherLargeSieve}.
We will need the following special case.
\begin{mylemma}
\label{lemma:largesieveclassicalvariant2}
Suppose that $(r,s) = 1$.
We have
\begin{equation}
\label{eq:largesieveclassicalvariant2plain}
\sum_{\substack{c \leq C \\ (c,r) = 1 \\ c \equiv 0 \shortmod{s}}}
\sumstar_{y \shortmod{c}} 
\sum_{u \shortmod{r}}
\Big|
\sum_{n \leq N} a_n e_{r}(nu) e_c(ny) \Big|^2
\ll \Big(\frac{C^2 r}{s} + N\Big) \| {\bf a } \|^2.
\end{equation}
Likewise, for $T \geq 1$ we have
\begin{equation}
\label{eq:largesieveclassicalvariant2hybrid}
\int_{-T}^{T}
\sum_{\substack{c \leq C \\ (c,r) = 1 \\ c \equiv 0 \shortmod{s}}}
\sumstar_{y \shortmod{c}} 
\sum_{u \shortmod{r}}
\Big|
\sum_{n \leq N} a_n n^{-it} e_{r}(nu) e_c(ny)  \Big|^2 dt
\ll \Big(T \frac{C^2 r}{s} + N \Big) \| {\bf a } \|^2.
\end{equation}
\end{mylemma}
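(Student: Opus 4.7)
My plan is to apply Lemma \ref{lemma:classicallargesieve} and Lemma \ref{lemma:Gallagher} directly, reducing both inequalities of Lemma \ref{lemma:largesieveclassicalvariant2} to verifying that the set of frequencies $\alpha(c, y, u) := u/r + y/c \in \R/\Z$ (as $(c, y, u)$ ranges over the prescribed set) is $\delta$-spaced with $\delta \geq s/(C^2 r)$. Using $(c, r) = 1$, I will combine $u/r$ and $y/c$ via the Chinese Remainder Theorem and write $\alpha = v/(cr)$ with $v = uc + yr$ satisfying $(v, c) = 1$.

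Next, I will verify that all of the $\alpha$-points are distinct. For fixed $c$, the Chinese Remainder Theorem gives a bijection between $(u, y)$ and residues $v \pmod{cr}$ coprime to $c$, so within a single modulus $c$ the points are distinct with gaps $\geq 1/(cr)$. Across distinct moduli $c_1 \neq c_2$, I will reduce $v_i/(c_i r)$ to lowest terms: its denominator is $c_i r / d_i$ where $d_i = (v_i, r)$, and since $(c_i, d_i) = 1$ (because $(c_i, r) = 1$), this reduced denominator is divisible by $c_i$. Equality of the reduced fractions would then force $c_1 = c_2$, a contradiction. For the spacing between two distinct points with $c_1 \neq c_2$, I will analyze
\begin{equation*}
\alpha_1 - \alpha_2 \equiv \frac{v_1 c_2 - v_2 c_1}{c_1 c_2 r} \pmod 1;
\end{equation*}
writing $c_i = s c_i'$, a factor of $s$ cancels from numerator and denominator, leaving a nonzero element of $\R/\Z$ whose denominator divides $s c_1' c_2' r \leq C^2 r/s$. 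Hence $\|\alpha_1 - \alpha_2\|_{\R/\Z} \geq s/(C^2 r)$, which is the bottleneck (the within-$c$ gap $1/(Cr)$ is no smaller whenever $C \geq s$, and we may assume $C \geq s$ since otherwise the sums in the lemma are empty).

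With $\delta^{-1} \leq C^2 r/s$ established, both \eqref{eq:largesieveclassicalvariant2plain} and \eqref{eq:largesieveclassicalvariant2hybrid} follow immediately from Lemma \ref{lemma:classicallargesieve} and Lemma \ref{lemma:Gallagher}, respectively. There is no genuine obstacle; the delicate point is recognizing that the hypothesis $s \mid c_1, \ s \mid c_2$ is precisely what enables pulling a factor of $s$ out of both numerator and denominator in the difference of two frequencies, yielding the improved spacing $s/(C^2 r)$ in place of the generic $1/(C^2 r)$ one would obtain from $q_i \leq Cr$ alone.
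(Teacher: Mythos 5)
Your proposal is correct and takes essentially the same route as the paper: both reduce the lemma to Lemma \ref{lemma:classicallargesieve} and Lemma \ref{lemma:Gallagher} by showing the points $u/r+y/c$ are $\delta$-spaced with $\delta^{-1}\leq C^2r/s$, the key observation being that $s\mid c_1$ and $s\mid c_2$ forces the numerator of the difference to be divisible by $s$, improving the generic spacing $1/(C^2r)$ to $s/(C^2r)$. Your CRT repackaging of $u/r+y/c$ as a single reduced fraction $v/(cr)$ with $(v,c)=1$ is only a cosmetic variant of the paper's direct computation of the difference.
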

\begin{proof}
We will derive \eqref{eq:largesieveclassicalvariant2plain} from Lemma \ref{lemma:classicallargesieve}, whereby \eqref{eq:largesieveclassicalvariant2hybrid} will follow immediately from Lemma \ref{lemma:Gallagher}.
For the proof, we only need to understand the spacings of some rational numbers as follows.
We have
\begin{equation}
\Big|\frac{y_1}{c_1} + \frac{u_1}{r} - \frac{y_2}{c_2} - \frac{u_2}{r} \Big| =
\Big|
\frac{r(y_1 c_2 - y_2 c_1) + c_1 c_2(u_1 - u_2)}{c_1 c_2 r}
\Big|.
\end{equation}
Provided that not both $y_1/c_1 = y_2/c_2$ and $u_1 = u_2$, then the numerator is a non-zero integer (since $(c_1 c_2, r) = 1$).  Moreover, the numerator is divisible by $s$ since $s|c_1$ and $s|c_2$.  Therefore the spacing of distinct points is at least $\frac{s}{c_1 c_2 r} \geq \frac{s}{C^2 r}$.
\end{proof}
\subsection{Archimedean analysis -- separation of variables}
In the archimedean aspect, our method of proving the spectral large sieve essentially follows Jutila's refinement \cite{JutilaSpectralLargeSieve} of Deshouillers-Iwaniec \cite{DI}. Jutila's work only considers level $1$ but nicely handles narrow spectral windows in lieu of the full $|t_j| \leq T$ range considered by Deshouillers-Iwaniec.

In this section we record some further properties of the integral transform $H_{\infty}(x)$  when the spectral weight function $h_\infty$ is given by \eqref{eq:hdefWindowVersion} or \eqref{eq:hdefInitialSegmentVersion}.

\begin{mylemma}
\label{lemma:archimedeanWindowVersion}
 Suppose that $h_\infty$ is given as in \eqref{eq:hdefWindowVersion} with $T^{\varepsilon} \ll \Delta \ll T^{1-\varepsilon}$.  If $x \leq \Delta T^{1-\varepsilon}$ then $H_{\infty}(x) \ll_A T^{-A}$, for $A > 0$ arbitrarily large.  
Suppose that $P \gg T^{\varepsilon}$ and $w$ is a fixed smooth weight function on $(0, \infty)$, supported 
on $[1,2]$. 
 If $x \geq \Delta T^{1-\varepsilon}$ then
 \begin{equation}
 \label{eq:MellinTransformOfHinf}
 w(x/P) H_{\infty}(x) = \frac{\Delta T}{P} \int_{|t| \asymp  P} W(t) x^{it} dt + O(T^{-A}),
 \end{equation}
where $W(t) \ll 1$.
\end{mylemma}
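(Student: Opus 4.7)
Plan. The lemma is a standard stationary-phase analysis of the archimedean Bessel-integral defining
\begin{equation*}
H_\infty(x) = \frac{i}{2}\int_{-\infty}^{\infty} \frac{J_{2it}(x)}{\cosh(\pi t)}\, h_\infty(t)\, t\, dt,
\end{equation*}
closely modeled on the techniques of Jutila--Motohashi and Iwaniec--Kowalski. I would treat the two claims separately via contour shifts and a Mellin--Barnes inversion.

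For the negligibility when $x \leq \Delta T^{1-\varepsilon}$: since $h_\infty(t)$ as in \eqref{eq:hdefWindowVersion} extends to an entire function of $t \in \C$ with Gaussian control $|h_\infty(\sigma - iA)| \ll (1+(\sigma/T)^{2}) \exp((A^{2} - (|\sigma|-T)^{2})/\Delta^{2})$ in vertical strips, I shift the contour downward to $\Im t = -A$ for a parameter $A > 0$ to be chosen. On the shifted contour the leading term $(x/2)^{2it}/\Gamma(1+2it)$ of the power series of $J_{2it}(x)$ gives a factor $(x/2)^{2A}$, Stirling controls $|\Gamma(1+2it)\cosh(\pi t)|^{-1} \ll (1+|t|)^{-1/2}(2A)^{-2A}e^{2A+\pi A}$, and the Gaussian growth contributes $e^{A^{2}/\Delta^{2}}$. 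Combining these produces a bound of the form
\begin{equation*}
|H_\infty(x)| \ll T^{O(1)} \exp\Big(2A\log\frac{x}{4A} + \frac{A^{2}}{\Delta^{2}}\Big),
\end{equation*}
which upon optimizing $A \asymp \Delta \log T$ and using $x \leq \Delta T^{1-\varepsilon}$ to force $\log(x/(4A)) \leq -(\varepsilon/2)\log T$ yields the arbitrary polynomial savings $H_\infty(x) \ll_A T^{-A}$.

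For the Mellin expansion when $x \geq \Delta T^{1-\varepsilon}$: I apply the Mellin--Barnes representation
\begin{equation*}
J_{2it}(x) = \frac{1}{2\pi i}\int_{(\sigma_{0})} \frac{2^{s-1}\Gamma(it+s/2)}{\Gamma(1+it-s/2)} x^{-s}\, ds
\end{equation*}
(valid for suitable $\sigma_0$), swap the $s$ and $t$ integrals, and obtain $H_\infty(x) = (2\pi i)^{-1}\int_{(0)} G(s) x^{-s}\, ds$ with
\begin{equation*}
G(s) = \frac{i\cdot 2^{s-1}}{2}\int_{-\infty}^{\infty} \frac{\Gamma(it+s/2)}{\Gamma(1+it-s/2)\cosh(\pi t)}\, h_\infty(t)\, t\, dt.
\end{equation*}
Substituting $s = -i\tau$ on the critical line and applying Stirling, the Gamma ratio behaves as $(it)^{-1-i\tau}$ up to smaller terms, turning the $t$-integral into an oscillatory integral with phase $\tau \log|t|$ against the Gaussian of $h_\infty$ centered at $t = \pm T$ of width $\Delta$. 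Stationary phase in $t$ pins the saddle at $t = \pm T$ and yields $G(-i\tau) \ll \Delta T/(1+|\tau|)$ with rapid decay outside a specific range of $\tau$. Multiplying by $w(x/P)$ in $x$ and taking Mellin transforms convolves $G$ with $P^{s'}\widetilde{w}(s')$ (which is Schwartz in $\Im s'$), pinning the effective support of the Mellin variable to $|\tau| \asymp P$. Mellin inversion on this support produces \eqref{eq:MellinTransformOfHinf} with $W(\tau)$ bounded.

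The main technical hurdle is the stationary-phase analysis of $G(-i\tau)$: uniformly tracking the saddle of the phase $\tau \log|t|$ produced by Stirling against the Gaussian bump of $h_\infty$ at $t = \pm T$, and then carefully matching the intrinsic Mellin support of $H_\infty$ with the localization imposed by $w(x/P)$.
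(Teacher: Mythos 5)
Your approach is a genuinely different route from the paper's. The paper dispatches the whole lemma by citing Jutila--Motohashi \cite[(3.19)]{JutilaMotohashi}, which already produces the asymptotic expansion $H_\infty(x) \approx \Delta T\, x^{-1/2}e^{ix} + (\text{lower-order}) + O(T^{-A})$ together with the negligibility range, and then obtains \eqref{eq:MellinTransformOfHinf} by Mellin-inverting that explicit asymptotic (stationary phase in the Mellin variable, as in \cite[p.~256]{DI}), pointing to \cite[Lemma 4.4]{KhanYoung} for the bookkeeping. You instead Mellin--Barnes transform the Bessel kernel first and then attack the $t$-integral; for the second assertion this is a workable alternative, and the saddle at $t = \pm T$ that you identify does lead (after genuine effort with Stirling and the convolution with $\widetilde{w}$) to the stated representation, so I will not dwell on it.

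The real problem is your proof of the \emph{first} assertion. A contour shift in $t$, or equivalently using the leading term $(x/2)^{2it}/\Gamma(1+2it)$ of the Bessel series, only yields negligibility when $x$ is well below the scale $|t|\approx T$, because that is the only regime in which $J_{2it}(x)$ itself has the exponential smallness your argument exploits. Concretely: with your choice $A \asymp \Delta\log T$, the inequality $\log(x/(4A)) \le -(\varepsilon/2)\log T$ requires $x \ll \Delta T^{-\varepsilon/2}\log T$, which is smaller than the claimed threshold $\Delta T^{1-\varepsilon}$ by a factor of roughly $T^{1-\varepsilon/2}$. And the issue is not merely a suboptimal choice of $A$: when $\Delta \gg T^{\varepsilon}$ the threshold $\Delta T^{1-\varepsilon}$ exceeds $T$, and in the range $T \ll x \ll \Delta T^{1-\varepsilon}$ one has $x \gg |t|$, so $J_{2it}(x)$ is not exponentially small but oscillatory of size $\asymp x^{-1/2}$. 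There the smallness of $H_\infty(x)$ comes from cancellation between the Bessel oscillation and the localization of $h_\infty$ at $\pm T$, detected by writing $J_{2it}(x) - J_{-2it}(x)$ via an integral representation in an auxiliary variable $\zeta$ and doing the $t$-integral first (a Fourier/Gaussian transform that localizes $\zeta$) and then a stationary-phase analysis in $\zeta$; this is exactly what \cite[(3.19)]{JutilaMotohashi} carries out. Your write-up skips this regime entirely, so the first half of the lemma is not established by your argument.
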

\begin{proof}
Most of these properties were derived in \cite[Lem.\ 7.1]{YoungHybrid} (cf.\ \cite[(3.19)]{JutilaMotohashi} which treats a different $h_\infty(t)$). In particular, applying the stationary phase method to \cite[(7.1)]{YoungHybrid}, one derives an asymptotic expansion of $H_{\infty}(x)$ with leading term roughly of the form $\Delta T x^{-1/2} e^{ix}$ when $x \gg \Delta T^{1-\eps}$.
The representation \eqref{eq:MellinTransformOfHinf} then follows by Mellin inversion, using stationary phase to bound $W(t)$, cf.\ \cite[p.\ 256]{DI}. For details see \cite[Lemma 4.4]{KhanYoung}.
\end{proof}

\begin{mylemma}
\label{lemma:archimedeanInitialSegmentVersion}
 Suppose that $h_\infty$ is given by \eqref{eq:hdefInitialSegmentVersion}.
If $x \asymp P \gg T^{2+\varepsilon}$, 
and $w(y)$ is a fixed smooth weight function on $(0,\infty)$ supported on $[1,2]$
then  we have
\begin{equation}
  w(x/P) H_{\infty}(x) = \frac{T^2}{P} \int_{|t|  \asymp   P} W(t) x^{it} dt + O(T^{-A}),
\end{equation}
 for some function $W$ with $W(t) \ll T^{\varepsilon}$.   
 In addition, if $w(y)$ is a fixed smooth weight function on $(0, \infty)$ vanishing for $y \geq 2$, then we have
 \begin{equation}
 \label{eq:MellinTransformOfHinfInitialSegmentVersion}
w(x/T^{2+\varepsilon}) H_{\infty}(x) = x^{} \int_{|t| \ll T^{10}} W(t) x^{it} dt + O(T^{-A}),
 \end{equation}
where $W(t) \ll T^4$. 
\end{mylemma}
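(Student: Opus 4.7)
The plan is to treat the two parts by different techniques matched to the size of $x$ relative to $T$, using the representation $H_\infty(x)=\tfrac{i}{2}\int_{\R} J_{2it}(x)\,t\,h_\infty(t)/\cosh(\pi t)\,dt$ and the fact that for $h_\infty$ as in \eqref{eq:hdefInitialSegmentVersion}, the integrand has effective support $|t|\ll T$ with Gaussian decay outside, and $f_\infty(1)\asymp T^2$ by \eqref{eq:easyVolumeLemma}.

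For Part 1 ($x\asymp P\gg T^{2+\eps}$) I would repeat the argument behind Lemma \ref{lemma:archimedeanWindowVersion}, namely the Jutila--Motohashi/Deshouillers--Iwaniec analysis written out in \cite[Lemma 4.4]{KhanYoung}. Since the effective support of $h_\infty$ is $|t|\ll T$ and $P\gg T^{2+\eps}$, we have $t^2/x\ll T^{-\eps}$ throughout the relevant range, which places the Bessel function firmly in its large-argument asymptotic regime. Inserting the standard asymptotic $J_{2it}(x)\sim\sqrt{2/(\pi x)}(e^{i(x-\pi/4)}e^{\pi t}+e^{-i(x-\pi/4)}e^{-\pi t})$ and lower order, and doing stationary phase on the resulting oscillatory integrals in $t$, yields an asymptotic expansion of $H_\infty(x)$ whose leading term is of size $T^2 x^{-1/2}e^{\pm ix}$ (the coefficient $T^2$ matching $f_\infty(1)\asymp T^2$, in parallel with the $\Delta T$ in \eqref{eq:MellinTransformOfHinf}). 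Mellin inverting the localising weight $w(x/P)$ then yields the integral representation in Part 1, and the bound $W(t)\ll T^\eps$ follows from stationary phase on the dual side exactly as in \cite[p.~256]{DI}.

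For Part 2 ($x\ll T^{2+\eps}$) the plan is direct Mellin inversion. Set $F(x):=w(x/T^{2+\eps})H_\infty(x)/x$, which is supported in $[0,2T^{2+\eps}]$, and define $\widehat{F}(t):=\int_0^\infty F(x)x^{-it}dx/x$. Mellin inversion then gives
$$w(x/T^{2+\eps})H_\infty(x) \;=\; x\cdot\frac{1}{2\pi}\int_{-\infty}^\infty \widehat{F}(t)\,x^{it}\,dt,$$
so I would take $W(t):=\widehat{F}(t)/(2\pi)$. The pointwise bound $|W(t)|\ll T^4$ reduces to a crude pointwise bound on $H_\infty(x)$: combining $|J_{2it}(x)|\ll e^{\pi|t|/2}$ with the Gaussian decay of $h_\infty$ gives $|H_\infty(x)|\ll T^{O(1)}$ uniformly in $x>0$, hence $|\widehat{F}(t)|\ll T^{O(1)}\log T\ll T^4$ by trivial integration. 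To truncate to $|t|\ll T^{10}$ I would integrate by parts in $x$ inside $\widehat{F}(t)$ many times: each step pays a factor $t^{-1}$ and differentiates $F$, and the derivatives of $F$ are polynomially bounded in $T$ (by smoothness of $w$ and by differentiating the integral defining $H_\infty$ under the integral sign, using the rapid decay of $h_\infty$), so the tail $|t|\gg T^{10}$ contributes $O(T^{-A})$.

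The main obstacle is the archimedean stationary phase work in Part 1, but this is by now routine and essentially written down in \cite{DI,JutilaMotohashi,KhanYoung}; the only adjustment is replacing the window weight \eqref{eq:hdefWindowVersion} by the initial-segment weight \eqref{eq:hdefInitialSegmentVersion}, which changes the archimedean normalisation from $\Delta T$ to $T^2$ but does not alter the structure of the stationary phase analysis. Part 2 is comparatively trivial as it uses only Mellin inversion together with a crude pointwise bound.
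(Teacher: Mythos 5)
Your overall route is the same as the paper's. For the first claim the paper, like you, re-runs the window-version argument of Lemma \ref{lemma:archimedeanWindowVersion}, simply citing an asymptotic expansion of $H_\infty$ adapted to the weight \eqref{eq:hdefInitialSegmentVersion} (namely \cite[Lemma 10.3]{PetrowYoungWeyl}) in place of \cite[(3.19)]{JutilaMotohashi}; your stationary-phase sketch and the resulting normalization $T^2/P$ with $W\ll T^{\varepsilon}$ match this. For the second claim the paper takes the Mellin transform of $w(x/T^{2+\varepsilon})H_\infty(x)$, proves decay in $|\mathrm{Im}(s)|$ by integration by parts using the derivative bound $x^k H_\infty^{(k)}(x)\ll x(1+x^{2k})T^{k+1}$ from \cite[Lemma 10.2]{PetrowYoungWeyl}, shifts the contour to $\mathrm{Re}(s)=-1$ (whence the factor $x$ in \eqref{eq:MellinTransformOfHinfInitialSegmentVersion}), and truncates at $|t|\ll T^{10}$. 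Your transform $\widehat F$ of $F=wH_\infty/x$ on the central line is exactly the paper's transform evaluated on the shifted line, so the packaging differs only cosmetically.

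There is, however, one step in your Part 2 that fails as written. Since $w$ is only assumed to vanish for $y\geq 2$, the support of $w(x/T^{2+\varepsilon})$ reaches all the way down to $x=0$, and $\widehat F(t)=\int_0^\infty w(x/T^{2+\varepsilon})H_\infty(x)\,x^{-it}\,dx/x^{2}$ carries the measure $dx/x^{2}$. With only the uniform bound $|H_\infty(x)|\ll T^{O(1)}$, this integral diverges at $x=0$, so the claimed ``$|\widehat F(t)|\ll T^{O(1)}\log T$ by trivial integration'' is not available (the $\log$ suggests you accounted for only one of the two factors of $1/x$); indeed $\widehat F$ is not even obviously well defined, the boundary terms at $x\to 0$ in your integrations by parts are not obviously zero, and pointwise Mellin inversion on this line is not justified. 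The missing ingredient is the small-$x$ decay $H_\infty(x)\ll f_\infty(1)(x/T)^2\asymp x^2$ of Lemma \ref{Hboundprop} (by \eqref{eq:easyVolumeLemma}), which is exactly what the paper invokes to legitimize the shift to $\mathrm{Re}(s)=-1$ ``without crossing a pole''; with it, $|\widehat F(t)|\ll\int_0^{2T^{2+\varepsilon}}O(1)\,dx\ll T^{2+\varepsilon}\ll T^4$ as required. Two smaller points: the truncation at $|t|\ll T^{10}$ needs genuine derivative bounds on $H_\infty$ uniform in $x\ll T^{2+\varepsilon}$ — your appeal to differentiating under the integral sign is plausible but should be substantiated (the paper outsources this to \cite[Lemma 10.2]{PetrowYoungWeyl}) — and the quoted bound $|J_{2it}(x)|\ll e^{\pi|t|/2}$ is false (for fixed $x$ one has $|J_{2it}(x)|\asymp e^{\pi|t|}|t|^{-1/2}$), though this is harmless because of the $\cosh(\pi t)$ in the denominator of the transform \eqref{eq:htoH}.
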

\begin{myrema}  The $T$-dependence in the integral in \eqref{eq:MellinTransformOfHinfInitialSegmentVersion} is quite bad, but we will only use this when $T$ is small so there is no significant harm in doing so. \end{myrema}

\begin{proof}
The first statement is similar to that in Lemma \ref{lemma:archimedeanWindowVersion}, but using \cite[Lemma 10.3]{PetrowYoungWeyl} in place of \cite[Lem.\ 7.1]{YoungHybrid} and stationary phase.  For the second statement, we use \cite[Lemma 10.2]{PetrowYoungWeyl}, which gives the derivative bound $x^k H_{\infty}^{(k)}(x) \ll x (1+x^{2k})T^{k+1} \ll 
{(T^{1+\varepsilon})^{5k+3}}$.  Now by standard Mellin inversion, we obtain
\begin{equation}\label{mellin_inversion}w(x/T^{2+\varepsilon}) H_{\infty}(x) = \frac{1}{2 \pi i} \int_{(\sigma)} F(s) x^{-s} ds,\end{equation} where $F(s) = \int_0^{\infty} w(x/T^{2+\varepsilon}) H_{\infty}(x) x^s \frac{dx}{x}$.  Integration by parts shows that $F(s)$ equals
$$\frac{(-1)^k}{s(s+1) \dots (s+k-1)} \int_0^{\infty} 
\frac{\partial^k}{\partial x^k} \Big[ w\Big(\frac{x}{T^{2+\varepsilon}}\Big) H_{\infty}(x) \Big] x^{k+s} \frac{dx}{x}
\ll \frac{(T^{1+\varepsilon})^{5k+3}}{|s(s+1) \dots (s+k-1)|}.
$$
Therefore if $|\text{Im}(s)| \gg T^{6}$, say, then $F(s)$ is very small.  Finally, we take the Mellin formula \eqref{mellin_inversion} and shift the contour to $\text{Re}(s) = -1$ (without crossing a pole, by e.g.\ Lemma \ref{Hboundprop}).  We can then truncate the integral at $|t| \ll T^{10}$ leading to the error term in \eqref{eq:MellinTransformOfHinfInitialSegmentVersion}.
\end{proof}

\subsection{Proof of Theorem \ref{thm:abstractversion}}
\label{section:proofoflargesievebound}
Now we have the tools in place to prove Theorem \ref{thm:abstractversion}.  It suffices to suppose that $a_n$ is supported on $X/2 < n \leq X$, say. We also wish to assume that $a_n=0$ if $(n,q) \neq 1$.  To accomplish this, we note that $|\lambda_{\pi}(p)| \leq 1$ for $p | q$ and $\pi \in \cF$.  
Then we can apply Cauchy's inequality as follows:
\begin{equation}\label{proofofOLSI_cauchystep}
\sum_{\pi \in \mathcal{F}} \Big|\sum_{m | q^{\infty}} \sum_{(n,q) = 1} a_{mn} \lambda_{\pi}(m) \lambda_{\pi}(n) \Big|^2
\ll (qN)^{\varepsilon}
\sum_{m | q^{\infty}}
\sum_{\pi \in \mathcal{F}}
\Big|\sum_{(n,q) = 1} b_n  \lambda_{\pi}(n) \Big|^2
\end{equation}
where $b_n = a_{mn}$.  Applying \eqref{eq:LSIgoal}  with coefficients $a_n$ supported on $(n,q) = 1$ to the interior two sums of \eqref{proofofOLSI_cauchystep} we conclude that \eqref{eq:LSIgoal} holds without the coprime condition after moving the sum over $m\mid q^\infty$ back inside.

Let $f \in \cH_{\rm fin}$ be a test function afforded by the hypotheses for the Large Sieve Inequality as in Section \ref{sec:intro:LSI} and let $h_\infty$ be as in Hypothesis \ref{hypNmL} (NmL) of that section. 
Hypotheses TF and NmL relate the quantities $q$ and $T$ (which pertain to $\cF$) to $f_\infty(1)$ and $f(1)$ (which pertain to $\cF_0(f)$) as follows. 

\begin{mylemma}\label{qandf(1)}
For a finite family of cusp forms $\cF$ all having conductor $q$, spectral parameters contained in $[-T,T]$ and satisfying Hypotheses TF, NmL and CvF of Section \ref{sec:intro:LSI}, we have for the $f$ and $h_\infty$ given by these hypotheses that
\begin{equation}
\label{eq:fAversusqT}
f_\A(1) \ll   |\cF| (qT)^{o(1)} \ll qT^2 (qT)^{o(1)}.
\end{equation}
\end{mylemma}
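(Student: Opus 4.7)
My plan is to combine Lemma~\ref{fA(1)_smaller_than_harmonic_family}, Hypothesis NmL, and a Weyl-law bound on $|\cF|$ in a short bootstrap.

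First I would dispose of the second inequality $|\cF|\ll qT^{2+o(1)}$. This is essentially the classical Weyl law on $\Gamma_0(q)$, but one can also derive it from the toolkit of this paper: apply Corollary~\ref{thmWeylLaw} to the auxiliary classical test function $f_{\mathrm{cl}}=\prod_{p\mid q}\nu(p^{v_p(q)})1_{ZK_0(p^{v_p(q)})}$ (unramified away from $q$), for which the diagonal weight is $\delta\asymp qT^2$ and the hypotheses of the corollary are easily checked. Each element of $\cF$ lies in $\cF_0(f_{\mathrm{cl}})$ and contributes to the spectral side of the Weyl-Selberg law with harmonic weight $1/\mathcal{L}_\pi^*(1)\gg (qT)^{-o(1)}$ by \cite{HLAppendix,IwaniecSmallEvals}, so positivity yields the claim.

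Next, I would apply Lemma~\ref{fA(1)_smaller_than_harmonic_family} (available thanks to Hypotheses TF and CvF) together with Hypothesis NmL to obtain, for every $\eps>0$,
\[
f_\A(1) \ll_\eps f(1)^\eps\Big(\sum_{\pi\in\cF_0(f)}h_\infty(t_\pi)w(\pi,f)+(\text{cts.})\Big) = f(1)^\eps\,|\cF|\,(qT)^{o(1)}.
\]
The remaining task is to absorb the spurious factor $f(1)^\eps$, which is the heart of the argument.

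For that bootstrap, note that $f_\infty(1)\gg 1$ for the two explicit archimedean weights~\eqref{eq:hdefWindowVersion} and~\eqref{eq:hdefInitialSegmentVersion}, so $f(1)\leq f_\A(1)$. Feeding this and the first step into the previous display yields $f_\A(1)\ll_\eps f_\A(1)^\eps\,(qT)^{3+o(1)}$, which rearranges to a polynomial bound $f_\A(1)\ll (qT)^{O(1)}$, and in particular $f(1)^\eps \ll (qT)^{O(\eps)}$. Re-inserting this into the previous display and choosing $\eps$ small in terms of an arbitrary target $\delta>0$ gives $f_\A(1)\ll_\delta |\cF|\,(qT)^\delta$, which is the main inequality. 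I do not anticipate any serious obstacle; the only mildly delicate point is tracking the implicit constants through the bootstrap so that a genuine $o(1)$ exponent in $qT$ emerges from the $\eps$-gains of Lemma~\ref{fA(1)_smaller_than_harmonic_family}.
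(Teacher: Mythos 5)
Your argument is substantially the same as the paper's — apply Lemma~\ref{fA(1)_smaller_than_harmonic_family} plus Hypothesis~\ref{hypNmL} (NmL) and then bound $|\cF|$ by the Weyl law — but you make explicit a bootstrap that the paper leaves implicit, namely how the spurious factor $f(1)^\eps$ from Lemma~\ref{fA(1)_smaller_than_harmonic_family} gets absorbed into $(qT)^{o(1)}$. The paper's one-line ``which is $\ll |\cF|(qT)^{o(1)}$ by NmL'' tacitly uses $\log f(1)\ll\log(qT)$, and your bootstrap ($f_\A(1)\ll_\eps f_\A(1)^\eps(qT)^{3+o(1)}$, hence $f_\A(1)\ll(qT)^{O(1)}$, hence $f(1)^\eps$ is absorbable) is a clean way to justify that; note only that $f_\infty(1)\gg 1$ gives $f(1)\ll f_\A(1)$ rather than $f(1)\leq f_\A(1)$, which of course suffices. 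One genuine slip: you cannot invoke Corollary~\ref{thmWeylLaw} for the auxiliary test function $f_{\mathrm{cl}}=\prod_{p\mid q}\nu(p^{v_p(q)})1_{ZK_0(p^{v_p(q)})}$, since that corollary requires $f$ to be a newform projector, which $f_{\mathrm{cl}}$ is not once some $v_p(q)\geq 1$. Use Lemma~\ref{lemWeylLaw} instead (then $\delta_\fin=\nu(q)$, $w(\pi,f_{\mathrm{cl}})=\tfrac{1}{2\xi(2)\mathcal{L}_\pi^*(1)}\gg(qT)^{-o(1)}$ for $q(\pi)=q$, and positivity gives $|\cF|\ll qT^{2+o(1)}$), or simply cite the classical Weyl law for $\Gamma_0(q)$ directly, as the paper does.
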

\begin{myrema} It is also true that $\log q \ll \log f(1)$ (see Section \ref{sec:cond_of_reps_vs_f1}), but we do not need this for the proof of the Large Sieve Inequality. \end{myrema}
\begin{proof}
By Lemma \ref{fA(1)_smaller_than_harmonic_family} we have 
$$f_\A(1) \ll_\eps f(1)^\eps\Big(\sum_{\pi \in \cF_0(f)} h_\infty(t_\pi) w(\pi, f) + (\text{ cts. }) \Big),$$
which  
is $ \ll  |\cF| (qT)^{o(1)}$ by Hypothesis \ref{hypNmL} (NmL).
Finally, \eqref{eq:fAversusqT} follows from bounding $|\cF|$ by the total number of cuspidal automorphic forms of conductor $q$ and spectral parameters bounded by $T$ (see \cite{MR664496} for a trace formula-free proof of such a ``weak Weyl law''). 
\end{proof}
Let $\mathcal{M} = \sum_{\pi \in \mathcal{F}} | \sum_n a_n \lambda_{\pi}(n)|^2$. By Hypothesis \ref{hypTF} (TF),  the first part of Hypothesis \ref{hypNmL} (NmL) and the positivity of $h_\infty$, we have 
$$\mathcal{M} \ll (qT)^{o(1)} \sum_{\pi \in \cF_0(f)} h_\infty(t_\pi) w(\pi,f)  | \sum_n a_n \lambda_{\pi}(n)|^2 + (\text{cts.}).$$
Opening the square, and applying Theorem \ref{theoGeomSpec}, we encounter
$$\mathcal{D} = \|\mathbf a \|^2    \delta$$
and
$$\mathcal{S} = \sum_m \sum_n a_m \overline{a_n}  \sum_{c \equiv 0 \shortmod{k(\cF)} } \frac{H(m,n;c)}{c}H_\infty\left(\frac{4 \pi \sqrt{ mn}}{c}\right)$$ 
so that $\mathcal{M}$ satisfies 
$$\mathcal{M} \ll (qT)^{o(1)} \left( \mathcal{D} +  \mathcal{S}\right).$$

By \eqref{eq:delta_vs_f1} and Lemma \ref{qandf(1)}, we have that $\mathcal{D} \ll \|\mathbf a \|^2 (qT)^{o(1)} |\cF|$, which is of acceptable size.

Next we focus on the non-diagonal term $\mathcal{S}$.
We apply a dyadic partition of unity to the $c$-sum, and consider the portion with $c \asymp C$, writing $\mathcal{S} = \sum_{C} \mathcal{S}_C$.
 If $C$ is very large, say $C \gg (X|\mathcal{F}|)^{100}$, then we factor $H(m,n;c)$ into ramified and unramified parts as in Theorem \ref{thmKP}(3). Applying the classical Weil bound to the unramified part, and the trivial bound Theorem \ref{thmKP}(5) to the ramified part, we  obtain an acceptable result.  
 
 By the first phrase of Hypothesis \ref{hypNmL} (NmL) and the assumption that $(n,q)=1$, we have $(n,c_{N})=1$, where $c_N$ is the factor of $c$ dividing the level $N$ as in Theorem \ref{thmKP}(3). Applying Theorem \ref{thmKP}(4) to the Kloosterman sum in $\mathcal{S}$, we  
  obtain
\begin{equation}
\mathcal{S}_C = \frac{1}{C}
\sum_{m,n} a_m \overline{a_n}
\sum_{\substack{c_{N} | {N}^{\infty} \\ c_{N} \equiv 0 \shortmod{k(\cF)}}}
\sum_{(c_0, {N}) = 1} \eta\Big(\frac{c_{N} c_0}{C}\Big)
S(m \overline{c_{N}}, n \overline{c_{N}};c_0)
 H(mn \overline{c_0}^2, 1;c_{N}) H_{\infty}\Big(\frac{4 \pi \sqrt{mn}}{c_{N} c_0}\Big),
\end{equation}
where $\eta$ is some fixed dyadically-supported smooth weight function. 

Recall that $H(u, 1; c_{N})$ is periodic in $u$ modulo $c_{N}$ and vanishes if $c_{N} \not \in \N$ by Lemma \ref{specControlonH}(4). 
Next we apply \eqref{eq:FourierInversion}, giving 
\begin{multline}
\mathcal{S}_C = \frac{1}{C}
\sum_{m,n} a_m \overline{a_n}
\sum_{\substack{c_{N} | {N}^{\infty} \\ c_{N} \equiv 0  \shortmod{k(\cF)} }}
\sum_{(c_0, {N}) = 1} \eta\Big(\frac{c_{N} c_0}{C}\Big)  
S(m \overline{c_{N}}, n \overline{c_{N}};c_0)
\\
\times
\sum_{\chi \shortmod{c_{N}}} \widehat{H}(\chi)  \chi(mn \overline{c_0}^2)
  H_{\infty}\Big(\frac{4 \pi \sqrt{mn}}{c_{N} c_0}\Big).
\end{multline}

The analogous step on the archimedean side is to use the Mellin inversion formula from Lemmas \ref{lemma:archimedeanWindowVersion} and \ref{lemma:archimedeanInitialSegmentVersion}. 
If there exists $\delta, C>0$ such that $T \geq Cq^\delta$, then  we have a paramter $\Delta$ satisfying $T^\eps \ll \Delta \ll T^{1-\eps}$ and a corresponding  $h_\infty$ afforded by Hypothesis \ref{hypNmL} (NmL) that is  of the form \eqref{eq:hdefWindowVersion},   so that we may use Lemma \ref{lemma:archimedeanWindowVersion}. 
If $T \ll q^\eps$ then  the  $h_\infty$ afforded by Hypothesis \ref{hypNmL} (NmL) is  of the form \eqref{eq:hdefInitialSegmentVersion} and we may apply Lemma \ref{lemma:archimedeanInitialSegmentVersion}.
In the remainder of the proof of Theorem \ref{thm:abstractversion} below, we focus on the first case that $T \gg q^\delta$. In the second case, $T$ is small compared to $q$ and the large powers of $T$ occurring in Lemma \ref{lemma:archimedeanInitialSegmentVersion} cause no problems and are absorbed by the $q^{o(1)}$ factor. The proof in the range $T = q^{o(1)}$ follows the same steps as the case $T \gg q^\delta$ with minor changes, so we omit the details.

We henceforth assume that there exists $\delta>0$ and an implicit constant such that $T \gg q^\delta$.
Since $\sqrt{mn} \asymp N$ and $c = c_{N} c_0 \asymp C$, we can freely apply a redundant weight function $w(x/P)$ to $H_{\infty}(x)$, where $P = N/C$. 
After this we apply Lemma \ref{lemma:archimedeanWindowVersion}.  Since $T \gg q^{\delta}$, the error term of size $O(T^{-A})$ in \eqref{eq:MellinTransformOfHinf} is satisfactory.  By the first assertion of Lemma \ref{lemma:archimedeanWindowVersion} we may assume $P \gg \Delta T^{1-\varepsilon}$, equivalently, $C \ll \frac{N}{\Delta T^{1-\varepsilon}}$. 
We thus obtain
\begin{multline}
\mathcal{S}_C = \frac{\Delta T}{CP}
\sum_{m,n} a_m \overline{a_n}
\int_{|t| \asymp P} W(t)
\sum_{\substack{c_{N} | {N}^{\infty}, c_{N} \ll C \\ c_{N} \equiv 0 \shortmod{k(\cF)}}}
\sum_{\substack{c_0 \asymp C/c_{N}  \\ (c_0, {N}) = 1}}
S(m \overline{c_{N}}, n \overline{c_{N}};c_0)
\\
\sum_{\chi \shortmod{c_{N}}} \widehat{H}(\chi)  \chi(mn \overline{c_0}^2)
\Big(\frac{\sqrt{mn}}{c_{N} c_0}\Big)^{it} dt  + O\Big(\frac{N \|{\bf a}\|^2}{(qT)^{A}}\Big).
\end{multline}
Opening the definition of the standard Kloosterman sum and reordering the sums, we obtain 
\begin{multline}
\mathcal{S}_C \ll
\frac{f_\infty(1)}{CP}
\int_{|t| \asymp P} |W(t)|
\sum_{\substack{c_{N} | {N}^{\infty}, c_{N} \ll C \\ c_{N} \equiv 0 \shortmod{k(\cF)}}}
\sum_{\substack{c_0 \asymp C/c_{N}  \\ (c_0, {N}) = 1}}
\thinspace
\sumstar_{y \shortmod{c_0}}
\\
\sum_{\chi \shortmod{c_{N}}} |\widehat{H}(\chi)|
\Big|
\sum_{m,n} a_m \overline{a_n} e_{c_0}(my \overline{c_{N}} + n \overline{y c_{N}})
\chi(mn)
(mn)^{\frac{it}{2}}
\Big| dt  + O\Big(\frac{N \|{\bf a}\|^2}{(qT)^{A}}\Big).
\end{multline}
We then apply $|\sum_{m} |\cdot |\sum_n| \leq 2 |\sum_m|^2 + 2 |\sum_n|^2$
and simplify using Hypothesis \ref{hypFTB} (FTB) and Lemma \ref{qandf(1)}, giving 
\begin{multline}
\mathcal{S}_C \ll  \frac{f_\A(1) (NqT)^{\varepsilon}}{CP}
\int_{|t| \asymp P}
\sum_{\substack{c_{N} | {N}^{\infty}, c_{N} \ll C \\ c_{N} \equiv 0 \shortmod{k(\cF)}}}
\sum_{\substack{c_0 \asymp C/c_{N}  \\ (c_0, {N}) = 1}}
\thinspace
\sumstar_{y \shortmod{c_0}}
\sum_{\chi \shortmod{c_{N}}} 
\Big| \sum_n a_n e_{c_0}(ny) \chi(n) n^{it} \Big|^2
 dt
 \\
   + O\Big(\frac{N \|{\bf a}\|^2}{(qT)^{A}}\Big).
\end{multline}
Note by orthogonality of characters that
\begin{equation}
\sum_{\chi \shortmod{d}} \Big| \sum_{n} b_n \chi(n) \Big|^2 = \frac{\phi(d)}{d} \sum_{u \shortmod{d}} \Big| \sum_{(n,d)=1} b_n e_d(un) \Big|^2.
\end{equation}
  This gives
\begin{multline}
\mathcal{S}_C \ll 
\frac{f_\A(1) (NqT)^{\varepsilon}}{CP}
\int_{|t| \asymp P}
\sum_{\substack{c_{N} | {N}^{\infty}, c_{N} \ll C \\ c_{N} \equiv 0 \shortmod{k(\cF)}}}
\sum_{\substack{c_0 \asymp C/c_{N}  \\ (c_0, {N}) = 1}}
\thinspace
\sumstar_{y \shortmod{c_0}}
\sum_{u \shortmod{c_{N}}} 
\Big| \sum_n a_n e_{c_0}(ny) e_{c_{N}}(nu) n^{it} \Big|^2
 dt
 \\
  + O\Big(\frac{N \|{\bf a}\|^2}{(qT)^{A}}\Big).
\end{multline}
Applying Lemma \ref{lemma:largesieveclassicalvariant2} (the $\GL_1$ large sieve), we derive 
\begin{equation}
\label{eq:SCboundLSIproof}
\mathcal{S}_C \ll \frac{f_\A(1) (NqT)^{\varepsilon}}{CP}
\sum_{\substack{c_{N} | {N}^{\infty}, c_{N} \ll C \\ c_{N} \equiv 0 \shortmod{k(\cF)}}}
\Big(\frac{ C^2}{c_{N}} P + N\Big) \| {\bf a } \|_2^2 .
\end{equation}
The bound above breaks into two parts, corresponding to the two terms $\frac{C^2}{c_{N}} P$ and $N$, respectively.  Using $P=\frac{N}{C}$ bounds the latter term as $f_\A(1)(NqT)^{\varepsilon} \| {\bf a} \|_2^2$. By Lemma \ref{qandf(1)} again this is $\ll |\cF|(NqT)^\eps \|{\bf a} \|^2$, which matches the size of the diagonal term.  
Since we are considering the range $C \ll \frac{N}{\Delta T^{1-\varepsilon}}\asymp \frac{N T^\eps}{f_\infty(1)}$,
the former term reduces to 
\begin{equation}
\label{eq:LSIboundfirstTerm}
f(1) N (NqT)^{\varepsilon} \| {\bf a } \|_2^2
\sum_{\substack{c_{N} | {N}^{\infty}, c_{N} \ll C \\ c_{N} \equiv 0 \shortmod{k(\cF)}}}
\frac{1}{c_{N}}
.
\end{equation}
Hypothesis \ref{hypCvF} (CvF) implies this is bounded by $N (NqT)^{\varepsilon} \| {\bf a} \|_2^2$, as needed for Theorem \ref{thm:abstractversion}.

\subsection{Exceptional spectrum} It can be very useful in practice to have a generalization of Theorem \ref{thm:abstractversion} for the exceptional spectrum, with weights taking into account
the size of potential violations of the Ramanujan conjecture.

Compare with \cite[Thm.\ 5]{DI}.
\begin{myprop}\label{prop:exceptional_spec_LSI}
 Let $\mathcal{F}$, $q$, $f$ be as in Theorem \ref{thm:abstractversion}, and suppose that Hypotheses TF, NmL, FTB, and CvF hold for $f$ and $\mathcal{F}$.  
 Suppose that for each $\pi \in \mathcal{F}$, we have $i t_{\pi} \in (0, 1/4)$.  Let $Y \geq 1$.  Then for any sequence of complex numbers 
 $(a_n)_{n \in \N}$ we have
\begin{equation}
\label{eq:LSIgoalExceptional} 
 \sum_{\pi \in \cF} 
 Y^{2 i t_{\pi}}
 \Big| \sum_{n \leq N} a_n \lambda_\pi(n)\Big|^2 \ll_\eps (|\cF|+ NY) (NqY)^\eps
\| {\bf a } \|_2^2
. 
\end{equation}
 \end{myprop}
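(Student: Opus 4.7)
The plan is to mirror the proof of Theorem~\ref{thm:abstractversion} but with an archimedean test function tailored to amplify the exceptional spectrum by the factor $Y^{2it_\pi}$. Concretely I would take
$$h_\infty(t) = \bigl(c_0 + Y^{2it} + Y^{-2it}\bigr)\,e^{-t^2}$$
with an absolute constant $c_0 \geq 2$, so that on the real line $h_\infty(t) = (c_0 + 2\cos(2t\log Y))e^{-t^2} \geq 0$; while for $\pi \in \cF$ with $it_\pi = \tau \in (0,1/4)$ one has $h_\infty(t_\pi) = (c_0 + Y^{2\tau} + Y^{-2\tau})e^{\tau^2} \gg Y^{2it_\pi}$. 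Since $h_\infty$ is entire, even, and has Gaussian decay in $|\real t|$ throughout any bounded-width strip, it satisfies \eqref{hinfty_conditions}.

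Applying Theorem~\ref{theoGeomSpec} with this $h_\infty$ and the test function $f$ afforded by Hypothesis~\ref{hypTF} to the sequence $a_{m_1}\overline{a_{m_2}}$, I would use the non-negativity of $h_\infty$ on $\R$ together with that of the weights $w(\pi,f)$ and $w(\pi_{it}(\chi),f)$ (immediate from \eqref{weights_explicit} and \eqref{weights_explicit_Eis}) to drop the continuous spectrum and all cuspidal terms outside $\cF$, giving the lower bound
$$\sum_{\pi \in \cF} Y^{2it_\pi} w(\pi,f) \Big|\sum_n a_n \lambda_\pi(n)\Big|^2 \leq \text{diag} + \text{off-diag}.$$
Since $w(\pi,f) \gg (qY)^{-\eps}$ for $\pi$ with bounded spectral parameter (cf.\ \cite{HLAppendix,IwaniecSmallEvals}), the left side recovers the target sum up to a factor $(qY)^\eps$. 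The diagonal $\delta_\infty\delta_\fin\|\mathbf{a}\|^2$ is bounded by $|\cF|(qY)^\eps\|\mathbf{a}\|^2$: the Plancherel integral gives $\delta_\infty = \frac{1}{4\pi}\int h_\infty(t)\,t\tanh(\pi t)\,dt \asymp c_0 \asymp 1$ (the twisted pieces $Y^{\pm 2it}e^{-t^2}$ contribute oscillatory integrals decaying like $e^{-(\log Y)^2}$), while $\delta_\fin = f(1) \ll |\cF|(qY)^\eps$ follows from Hypotheses~\ref{hypNmL}, \ref{hypCvF}, and Lemma~\ref{fA(1)_smaller_than_harmonic_family} applied to the standard $h_\infty$ of \eqref{eq:hdefInitialSegmentVersion} with $T \asymp 1$.

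For the off-diagonal, split $H_\infty = H_\infty^{(0)} + H_\infty^{(+)} + H_\infty^{(-)}$ according to the three pieces of $h_\infty$. The untwisted piece $H_\infty^{(0)}$ is treated exactly as in Section~\ref{section:proofoflargesievebound}, yielding $\ll (|\cF|+N)(NqY)^\eps\|\mathbf{a}\|^2$. For the twisted pieces, a Mellin-inversion / stationary-phase analysis (modeled on Lemma~\ref{lemma:archimedeanWindowVersion} but adapted to the Gaussian $t$-localization at scale $\asymp 1$ and the multiplicative shift $Y^{\pm 2it}$) shows that the effective support of $H_\infty^{(\pm)}(x)$ is $x \asymp Y^{\mp 1}$, equivalently the relevant dyadic range of $c$ is $c \asymp NY^{\pm 1}$, with amplitude $\asymp 1$ there. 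Running the rest of the Section~\ref{section:proofoflargesievebound} pipeline---opening Kloosterman sums via \eqref{eq:FourierInversion}, bounding $\widehat{H}(\chi)$ via Hypothesis~\ref{hypFTB}, and applying the hybrid $\GL_1$ large sieve of Lemma~\ref{lemma:largesieveclassicalvariant2} with $t$-length $\asymp 1$ in place of $\asymp P$---gives a bound $\ll (|\cF|+C)(NqY)^\eps\|\mathbf{a}\|^2$ at each dyadic $C \in \{N/Y, N, NY\}$, and the maximum at $C \asymp NY$ is the target. The main obstacle will be the careful archimedean analysis of $H_\infty^{(\pm)}$: confirming that the shift $Y^{\pm 2it}$ translates the Bessel transform's effective support by precisely $Y^{\mp 1}$ (rather than $Y^{\mp 2}$ or $Y^{\mp 1/2}$) and tracking amplitudes so that the final $c$-range and prefactor combine to give $NY$ rather than some other power of $Y$---a technical refinement of Lemma~\ref{lemma:archimedeanWindowVersion} in the spirit of Deshouillers--Iwaniec's proof of their Theorem 5 in \cite{DI}.
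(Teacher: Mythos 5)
Your overall strategy coincides with the paper's: its proof takes $h_\infty(t) = (3+Y^{2it}+Y^{-2it})\cosh(\pi t)e^{-t^2}$, uses nonnegativity on both the tempered and exceptional spectrum together with $h_\infty(t_\pi)\gg Y^{2it_\pi}$ for $it_\pi\in(0,1/4)$, and then reruns the Section \ref{section:proofoflargesievebound} pipeline with the same diagonal bookkeeping you describe. (The extra $\cosh(\pi t)$, which you drop, is there so that the geometric-side transform becomes exactly $H_Z(x)=\tfrac i2\int_\R J_{2it}(x)Z^{2it}e^{-t^2}\,t\,dt$ for $Z\in\{1,Y,Y^{-1}\}$, to which the tabulated formula \cite[17.43.16]{GR} applies directly; this is a convenience, not a substantive difference. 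Likewise your claimed $e^{-(\log Y)^2}$ decay of the twisted diagonal pieces would require shifting past the poles of $\tanh(\pi t)$, but a trivial $O(1)$ bound suffices there anyway.)

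The genuine gap is exactly the step you flag as your ``main obstacle,'' and your guess for its resolution is not the right one. The twisted transform $H_Z$ is not localized at a single dyadic scale $x\asymp Z^{-1}$ in a way you can use: shifting the $t$-contour to the right gives only the one-sided estimate $H_Z(x)\ll_A (xZ)^A$, i.e.\ rapid decay for $x\ll Z^{-1}(qNY)^{-\eps}$, so for the dominant piece $Z=Y$ every modulus $c\ll NY^{1+\eps}$ must be retained (for bounded or $N^{o(1)}$-sized $Y$ these ranges genuinely contribute); restricting to the three dyadic values $C\in\{N/Y,\,N,\,NY\}$ is therefore unjustified, though easily repaired by summing over all dyadic $C\ll NY$, since the per-$C$ bound is monotone in $C$. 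More seriously, the variable separation above the cutoff is not achieved by a stationary-phase refinement of Lemma \ref{lemma:archimedeanWindowVersion} with ``$t$-length $\asymp 1$'': for large $x$ the Bessel kernel oscillates and the relevant Mellin-frequency range is not $O(1)$. The paper instead invokes the exact Mellin--Barnes identity \eqref{eq:HZMellinFormula}, expressing $H_Z(x)$ as a double integral against $Z^{2it}e^{-t^2}\,\frac{2^{s-1}\Gamma(\frac{s+2it}{2})}{\Gamma(1+\frac{s-2it}{2})}\,x^{-s}$, shifts to $\mathrm{Re}(2it)=-\eps$ and $\mathrm{Re}(s)=2\eps$ where the gamma ratio decays like $|\mathrm{Im}(s)|^{-1-\eps}$, and feeds the absolutely convergent $s$-integral into the hybrid $\GL_1$ sieve (Lemma \ref{lemma:largesieveclassicalvariant2}); the only change to \eqref{eq:SCboundLSIproof} is that $C$ now runs up to $(NY)^{1+\eps}$, which is precisely what produces the $NY$ term in \eqref{eq:LSIgoalExceptional}.
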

 Note that by assumption, every $\pi \in \mathcal{F}$ in Proposition \ref{prop:exceptional_spec_LSI} violates the Ramanujan conjecture (Selberg eigenvalue conjecture) at the archimedean place.  For the forms satisfying Ramanujan, then we may take $Y=1$ and obtain a stronger bound from Theorem \ref{thm:abstractversion}.
\begin{proof}
The structure of the proof is the same as that of Theorem \ref{thm:abstractversion}, but the archimedean analysis will be different.  Let
\begin{equation}
 h_{\infty}(t) = (3 + Y^{2it} + Y^{-2it}) \cosh( \pi t) \exp(-t^2),
\end{equation}
which satisfies the required conditions in \eqref{hinfty_conditions}, 
is nonnegative on the spectrum (both tempered and exceptional), and satisfies $h_{\infty}(t) \gg Y^{2i t}$ for $it \in (0, 1/4)$.
We need to understand the integral transform $H_{\infty}(x)$, which we write as
$H_{\infty}(x) = 3 H_1(x) + H_{Y}(x) + H_{1/Y}(x)$, with
\begin{equation}
 H_{Z}(x) = \frac{i}{2} \intR J_{2it}(x) Z^{2it} \exp(-t^2) t dt.
\end{equation}
Shifting contours to the right shows that $H_{Z}(x) \ll_A (xZ)^{A}$ for $A > 0$ arbitrarily large.  
Hence $H_Z(x)$ is effectively supported on $x \gg \frac{Z^{-1}}{(qNY)^{\varepsilon}}$.  

By \cite[17.43.16]{GR}, we have
\begin{equation}
\label{eq:HZMellinFormula}
 H_{Z}(x) = 
 \frac{1}{4 \pi i} 
 \intR
 \int_{(\sigma)} 
 Z^{2it} \exp(-t^2)
 \frac{2^{s-1} \Gamma(\frac{s+ 2it}{2})}{\Gamma(1 + \frac{s-2it}{2})} x^{-s} ds dt,
\end{equation}
valid for $0 < \mathrm{Re}(s + 2it) < 1$, and $\mathrm{Re}(2it) > -1/2$.
We shift contours to $\mathrm{Re}(2it) = - \varepsilon$ and $\mathrm{Re}(s) = 2 \varepsilon$, which is enough to secure absolute convergence of the double integral in \eqref{eq:HZMellinFormula}.

We now follow the same proof as in Section \ref{section:proofoflargesievebound}, using \eqref{eq:HZMellinFormula} as a substitute for \eqref{eq:MellinTransformOfHinfInitialSegmentVersion}.  The only significant change is that the maximal size of $C$ is now $(NY)^{1+\varepsilon}$ instead of $N^{1+\varepsilon} T^{O(1)}$.  This has the effect that the former term in \eqref{eq:SCboundLSIproof} is of size $\ll NY (NqY)^{\varepsilon} \|{\bf a} \|^2$.
\end{proof}

\section{Test functions for supercuspidal representations}\label{sec:testfunctions}
\subsection{Supercuspidal families, background}\label{sec:egSupercuspidalbackground} 

Let $F$ be a $p$-adic field, $(\sigma,V)$ be a supercuspidal representation of $G(F)$, and $\langle, \rangle$ be a unitary pairing on $V$. Let $\varphi_0$ be an $L^2$-normalized newform in $V$ and define the matrix coefficient 
\begin{equation}
\Phi(g) = \langle \sigma(g)\varphi_0, \varphi_0 \rangle.
\end{equation}
 It is 
well-known (see e.g.\ \cite[Cor.\ 10.26]{KnightlyLiTracesOfHeckeOperators})
that the function $$ f=\frac{1}{ \| \Phi \|_2^2} \overline{\Phi}$$ has the property that $\pi(f)$ is a non-zero newform projector supported on the specified $\{\sigma\} \subseteq {G(F)^{\wedge}}$, i.e.\ $\pi(f)$ has the smallest possible (non-zero) support as a function on $G(F)^{\wedge}$.

Although $f$ has compact support modulo center, this control on the support of $f$ on $G(F)$ is insufficient for the purposes of this paper -- we need test functions with support in a compact open \emph{subgroup} of $G(F)$. Instead, we will choose our test functions to be restrictions of the diagonal newform matrix coefficients to appropriate compact open subgroups, and show in Sections \ref{sec:egSupercuspidalbackground}, \ref{sec:egSupercuspidal_odd} and \ref{sec:egSupercuspidal_even} that these retain the property of being newform projectors, and only slightly enlarge the support of $\pi(f)$.

\subsubsection{Basics}\label{sec:tesetfunctions_basic}
Given $F$, let $\cO$ be its ring of integers, $\fp$ its prime ideal, $k_F= \cO/\fp\simeq \F_q$ its residue field and choose a uniformizer $\varpi\in \fp$. We write $$U(i)= \begin{cases} \cO^\times & \text{ if } i=0, \\ 1+ \fp^i & \text{ if } i>0\end{cases}$$ for the standard multiplicative filtration of $\cO^\times$. We will sometimes decorate these notations with a subscript $F$ if we want to emphasize the field of definition.

Let $\psi$ be an additive character of $F$ of conductor exponent $c(\psi)$. Let $E/F$ be a finite extension with residue field extension degree $f=f(E/F)$, ramification exponent $e=e(E/F)$ and valuation of the discriminant $d=d(E/F)$. One extends $\psi$ to an additive character $\psi_E$ of $E$ by $\psi_E = \psi \circ \Tr$. The conductor exponent of $\psi_E$ is then given by 
\begin{equation}\label{add_char_cond}
c(\psi_E) = e c(\psi) - df^{-1},
\end{equation}
see e.g.\ \cite[Lem.\ 2.3.1]{Schmidt:02a}.

For $\chi$ a multiplicative character of $F$, let $c(\chi)$ be its conductor exponent with respect to the filtration $U(i)$. 
\begin{mylemma}[Postnikov]\label{postnikov}
For any integer $i>e_{F/\Q_p}/(p-1)$ the $p$-adic logarithm $\log : U(i) \to \fp^i$ is an isomorphism of topological groups defined by 
\begin{equation*}
	\log \left( 1+u \right) =u-\frac{u^2}{2}+\frac{u^3}{3}+\cdots.
	\end{equation*}
For any character $\chi$ of $F^\times$ and integer $i>e_{F/\Q_p}/(p-1)$ satisfying $c(\chi)\geq \max(i,2)$, there exists a unique $\alpha_\chi \in \varpi^{-c(\chi)+c(\psi_F)}\left( \cO/ \fp^{c(\chi)-i} \cO \right)^\times$ 
such that 
\begin{equation}
\chi(1+u) =\psi_F\left(\alpha_\chi \log (1+u) \right) \quad \text{ for all } u \in \fp^i.
\end{equation}
If $1\leq i $ and $c(\chi)\leq 2i$, then there exists $\alpha_\chi \in F$ with $v(\alpha_\chi)= -c(\chi)+c(\psi_F)$ such that 
\begin{equation}\label{postnikov_v2}
\chi(1+u) = \psi_F(\alpha_\chi u) \quad \text{ for all } u \in \fp^i.
\end{equation} 
If $i< c(\chi)$ then $\alpha_\chi \in \varpi^{-c(\chi)+c(\psi_F)}\left( \cO/ \fp^{c(\chi)-i} \cO \right)^\times$ is uniquely determined by $\chi$. If $i\geq c(\chi)$ then any $\alpha_\chi$ with $v(\alpha_\chi)\geq -i+c(\psi_F)$ satisfies \eqref{postnikov_v2}. 
\end{mylemma}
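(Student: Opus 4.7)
The lemma comprises three assertions: the topological group isomorphism $\log:U(i)\to \fp^i$ when $i>e_{F/\Q_p}/(p-1)$; the existence and uniqueness of $\alpha_\chi$ with $\chi(1+u) = \psi_F(\alpha_\chi \log(1+u))$ under the strong hypothesis $c(\chi)\geq \max(i,2)$; and the linearised version $\chi(1+u) = \psi_F(\alpha_\chi u)$ whenever $c(\chi)\leq 2i$. I would treat each independently rather than attempting to unify them.

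First I would establish the $\log$ isomorphism. The term-by-term estimate $v(u^n/n)\geq ni - v_F(n)$ combined with the standard bound $v_F(n)\leq e_{F/\Q_p}\log_p n$ shows that $\log(1+u)=\sum_{n\geq 1}(-1)^{n+1}u^n/n$ converges absolutely on $\fp^i$ with image in $\fp^i$ precisely when $i>e_{F/\Q_p}/(p-1)$; the homomorphism identity $\log((1+u)(1+v))=\log(1+u)+\log(1+v)$ descends from the formal identity in $\Q[[X,Y]]$ by absolute convergence. The inverse is supplied by the exponential series, whose convergence on $\fp^i$ in the same range is standard.

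The two remaining statements both rest on Pontryagin self-duality of $(F,+)$: every smooth character of $(\fp^i,+)$ has the form $x\mapsto \psi_F(\alpha x)$ for some $\alpha\in F$ that is well-defined modulo $\fp^{c(\psi_F)-i}$, and $v(\alpha)\geq c(\psi_F)-c_0$ precisely when the character is trivial on $\fp^{c_0}$. For the first formula I apply this to the additive character $x\mapsto \chi(\exp(x))$ of $\fp^i$ furnished by Step 1, and read off that $\alpha_\chi$ is unique in the class $\varpi^{c(\psi_F)-c(\chi)}(\cO/\fp^{c(\chi)-i}\cO)^\times$ when $c(\chi)>i$, and in $\varpi^{c(\psi_F)-i}\cO$ when $c(\chi)\leq i$ (then any representative works because $\chi|_{U(i)}=1$). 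For the linearised version, the key identity
\begin{equation}
(1+u)(1+v) = (1+u+v)(1+\delta),\qquad \delta = \frac{uv}{1+u+v} \in \fp^{2i}\subseteq \fp^{c(\chi)},
\end{equation}
together with $\chi(1+\delta)=1$, shows that $u\mapsto\chi(1+u)$ is itself an additive character of $\fp^i$; applying the same duality produces the desired $\alpha_\chi$ with the stated valuation. This route bypasses $\log$ entirely and so requires no lower bound on $i$ beyond $i\geq 1$, matching the hypotheses of the second assertion.

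\textbf{Main obstacle.} The technical work is essentially bookkeeping of conductor exponents against the conductor of $\psi_F$; there is no genuine obstacle. The subtlest point is that in the overlap region $c(\chi)\leq 2i$ and $i>e_{F/\Q_p}/(p-1)$, the two $\alpha_\chi$ produced by the two routes need not coincide, as they differ by a contribution from the quadratic (and higher) Taylor terms of $\log$; one sees this explicitly already for ramified quartic characters of $\Q_2^\times$. This is only a cautionary point, not an obstruction, since the lemma asserts existence separately in each regime rather than producing a single canonical element.
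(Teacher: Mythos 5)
Your proposal is correct: the $\log$/$\exp$ isomorphism for $i>e_{F/\Q_p}/(p-1)$, Pontryagin self-duality of $(F,+)$ applied to the additive character $x\mapsto\chi(\exp x)$ of $\fp^i$, and the identity $(1+u)(1+v)=(1+u+v)\bigl(1+uv(1+u+v)^{-1}\bigr)$ with $uv(1+u+v)^{-1}\in\fp^{2i}\subseteq\fp^{c(\chi)}$ in the half-conductor regime, together with the conductor bookkeeping you describe, constitute the standard proof. The paper gives no argument of its own — it simply cites Bushnell--Henniart \S 1.7--1.8 and Petrow--Young, whose proofs run along exactly these lines — so your route agrees in substance with the paper's.
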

\begin{proof}
See e.g.\ \cite[\S 1.7, 1.8]{BushnellHenniart:06a} and \cite[Lem.\ 2.1]{PetrowYoungCoset}, the proof of which generalizes in a straightforward way. 
\end{proof}

Now let $E/F$ be a quadratic extension of non-archimedean local fields. Recall the notion of a \emph{minimal element} from \cite[\S 13.4]{BushnellHenniart:06a}. We explicate a special case of this notion.
\begin{mydefi}\label{normminlelt}
An element $\alpha_0 \in E$ is called a normalized minimal element if 
\begin{enumerate}
\item $E=F[\alpha_0]$, 
\item $v_E(\alpha_0)= e(E/F)-1$, and 
\item if $E/F$ is unramified, then $\alpha_0 \pmod {\fp_E}$ generates the residue field extension $k_E/k_F$.
\end{enumerate}
\end{mydefi}
If $\alpha$ a minimal element, then one obtains a normalized minimal element $\alpha_0$ by scaling by an appropriate power of $\varpi$, see \cite[(13.4.1)]{BushnellHenniart:06a}. On then other hand,  if $\alpha_0$ is a normalized minimal element, then  $\alpha_0 \varpi^n \in E\smallsetminus F$ is a minimal element for any $n\in \Z$.  A normalized minimal element $\alpha_0$ for $E/F$ moreover satisfies 
\begin{enumerate}
\item $\cO_E = \cO_F[\alpha_0]$, and 
\item if $E/F$ is unramified, then the minimal polynomial $g(x) = x^2 +Ax+B$ of $\alpha_0$ satisfies $v_F(A)\geq 0$ and $v_F(B) =0$. 
\end{enumerate}

Given a character $\chi$ of $E^\times$, we re-normalize the factor $\alpha_\chi$ from Lemma \ref{postnikov} by defining
\begin{equation}\label{ellchidef}
\ell_\chi :=\alpha_\chi \varpi_E^{c(\chi)},
\end{equation}
so that when $c(\psi_F)=0$, the factor $\ell_{\chi}$ lies in the inverse different $\mathfrak{D}_{E/F}^{-1}$ of $E/F$:
$$\ell_\chi \in  \fp_E^{-d} = \{ x \in E : \Tr (xy) \in \cO_F, \, \forall y \in \cO_E\} = g'(\alpha_0)^{-1} \cO_E,$$
see e.g.\ \cite[41.2 Prop.\ (1)]{BushnellHenniart:06a} and \cite[Ch.\ III (2.4) Prop.]{Neukirch}.

\begin{mylemma}[cf.\ Cor.\ 2.12 of \cite{HN18}]\label{normminlelt_lemma}
Suppose $\alpha_0 \in \cO_E$ is a normalized minimal element. We have $v_E(a+b\alpha_0) = \min (v_E(a), v_E(b\alpha_0))$ for any $a,b \in F$. 
\end{mylemma}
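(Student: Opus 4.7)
The plan is to prove the lemma by a direct case analysis on whether $E/F$ is ramified or unramified, using only the defining properties of a normalized minimal element from Definition \ref{normminlelt} and the standard non-archimedean strong triangle inequality $v_E(x+y)\geq \min(v_E(x),v_E(y))$, with equality whenever $v_E(x)\neq v_E(y)$. We may assume $ab\neq 0$, since otherwise the statement is trivial.

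First I would dispose of the ramified case, which is the easier one. If $E/F$ is ramified, then $e(E/F)=2$ and $v_E(\alpha_0)=1$ by Definition \ref{normminlelt}(2). For $a,b\in F^\times$, we have $v_E(a)=2v_F(a)\in 2\Z$ and $v_E(b\alpha_0)=2v_F(b)+1\in 2\Z+1$, so these two valuations can never coincide. The strong triangle inequality then gives equality for free.

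Next I would handle the unramified case. Here $e(E/F)=1$, so $v_E(\alpha_0)=0$ by Definition \ref{normminlelt}(2), and $\bar{\alpha_0}:=\alpha_0 \pmod{\fp_E}$ generates $k_E/k_F$ by Definition \ref{normminlelt}(3). Also $v_E(a)=v_F(a)$ and $v_E(b\alpha_0)=v_F(b)$ for $a,b\in F^\times$. If $v_F(a)\neq v_F(b)$ the strong triangle inequality again gives the result. Otherwise, write $v_F(a)=v_F(b)=n$, set $a=\varpi^n a'$ and $b=\varpi^n b'$ with $a',b'\in \cO_F^\times$, and observe $a+b\alpha_0=\varpi^n(a'+b'\alpha_0)$. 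It then suffices to show $a'+b'\alpha_0 \in \cO_E^\times$. Reducing modulo $\fp_E$, the image is $\bar{a'}+\bar{b'}\bar{\alpha_0}\in k_E$; if this were zero, then $\bar{\alpha_0}=-\bar{a'}/\bar{b'}\in k_F$, contradicting the fact that $\bar{\alpha_0}$ generates $k_E/k_F$ (which is a nontrivial extension). Hence $a'+b'\alpha_0$ is a unit, and $v_E(a+b\alpha_0)=n=\min(v_E(a),v_E(b\alpha_0))$.

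There is no substantive obstacle here; the argument is just a straightforward parity check in the ramified case and a residue field computation in the unramified case. The only thing to be careful about is ensuring the normalization $v_E(\alpha_0)=e(E/F)-1$ is used correctly in each case (giving a uniformizer when ramified and a unit whose reduction generates $k_E/k_F$ when unramified).
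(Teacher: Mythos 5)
Your proof is correct. It differs mildly in route from the paper's: the paper dismisses the ramified case as clear and, in the unramified case, computes $v_E(a+b\alpha_0)=\tfrac12 v_F(\Nm(a+b\alpha_0))=v(b)+\tfrac12 v\bigl(g(-a/b)\bigr)$ via the minimal polynomial $g(x)=x^2+Ax+B$ of $\alpha_0$, then runs through the cases $v(a)>v(b)$, $v(a)<v(b)$, $v(a)=v(b)$, using in the last case that the reduction of $g$ modulo $\fp$ remains irreducible over $k_F$. You instead make the ramified case explicit as a parity statement on $v_E$ (even vs.\ odd valuations can never coincide, so the ultrametric inequality is automatically an equality), and in the unramified case you argue directly in the residue field: after factoring out $\varpi^n$ you show $a'+b'\alpha_0$ is a unit because $\bar a'+\bar b'\bar\alpha_0=0$ would force $\bar\alpha_0\in k_F$, contradicting Definition \ref{normminlelt}(3). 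The two arguments rest on the same key input — that $\bar\alpha_0$ generates the nontrivial residue extension $k_E/k_F$, which is exactly what makes $\bar g$ irreducible in the paper's version — but yours avoids the norm computation and the three-way case split, so it is a bit more elementary and self-contained, while the paper's version records the intermediate valuation $v\bigl(g(-a/b)\bigr)$ explicitly. Either way the conclusion and the hypotheses used (properties (2) and (3) of a normalized minimal element, plus $v_F(B)=0$ implicitly in the paper) match.
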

\begin{proof} {The statement is clear if $E/F$ is ramified, so suppose otherwise.} 
We have
\begin{align*}
v_E(a+b\alpha_0)&=\frac{1}{2}v(\Nm(a+b\alpha_0))=\frac{1}{2}v(a^2+ab\Tr(\alpha_0) +b^2\Nm(\alpha_0))\\
&=v(b)+\frac{1}{2}v\left(g\left(-\frac{a}{b}\right)\right).
\end{align*}
When $v(a)>v(b)$ it is easy to see that $v\left(g\left(-\frac{a}{b}\right)\right)=v(B)=0$; 
When $v(a)<v(b)$, $v\left(g\left(-\frac{a}{b}\right)\right)=2v(a)-2v(b)$;
When $v(a)=v(b)$, 
$v\left(g\left(-\frac{a}{b}\right)\right)=0$ as the congruence class of $g(x)$ is also an irreducible polynomial over $k_F$, thus will not have a solution $-\frac{a}{b}$. 
\end{proof}
Given a normalized minimal element $\alpha_0$ for $E/F$ with minimal polynomial $g(x)=x^2+Ax+B$, we fix the embedding 
\begin{align}\label{Oembedding}
E^\times & \hookrightarrow G(F) \nonumber \\
x+y \alpha_0 & \mapsto \left( \begin{smallmatrix} x & y \\ -By & x-Ay \end{smallmatrix}\right).
\end{align}

We say that a character $\theta$ of $E^\times$ is twist-minimal if $c(\theta) = \min_{\chi} c(\theta \chi_E)$, where $\chi$ runs over characters of $F^\times$ and $\chi_E$ denotes the character $\chi \circ \Nm$ of $E^\times$. 
\begin{mylemma}[cf.\ Lem.\ 2.13 of \cite{HN18}]\label{minimalimpliesminimal}
Suppose $E/F$ is ramified. If a character $\theta$ of $E^\times$ is twist-minimal, then $\alpha_\theta \in E^\times$ is a minimal element for $E/F$. 
\end{mylemma}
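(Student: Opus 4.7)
The plan is to prove the contrapositive: if $v_E(\alpha_\theta)$ is even, then $\theta$ is not twist-minimal. Since $E/F$ is ramified quadratic with $k_E = k_F$, every element of $F^\times$ has even $v_E$-valuation, and an element of $E^\times$ is a minimal element in the sense following Definition \ref{normminlelt} (i.e.\ of the form $\alpha_0 \varpi^n$) precisely when its $v_E$-valuation is odd. This reduces the claim to showing that twist-minimality forces $v_E(\alpha_\theta)$ to be odd.

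To exhibit the contradiction, I would first assume $v_E(\alpha_\theta) = 2m$ and write $\alpha_\theta = \varpi^m \beta$ with $\beta \in \cO_E^\times$. Using $k_E = k_F$, the residue $\bar\beta \in k_F^\times$ lifts to some $\tilde\beta \in \cO_F^\times$, whence
\[
\alpha_\theta - \varpi^m \tilde\beta \in \fp_E^{2m+1}.
\]
Next, I would select a character $\chi$ of $F^\times$ whose Postnikov parameter is $\alpha_\chi = \varpi^m \tilde\beta \in F$; existence follows from Lemma \ref{postnikov} applied over $F$, together with extension from a suitable filtration subgroup of $\cO_F^\times$ to $F^\times$. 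A brief computation gives $c(\chi) = -m = (c(\theta) + d)/2$, which is comparable to $c(\theta)$ and in particular large enough for Postnikov's formulas to apply.

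The key input is the Postnikov parameter of $\chi_E := \chi \circ \Nm_{E/F}$. Exploiting $\log \Nm_{E/F}(1+u) = \Tr_{E/F}\log(1+u)$ and the fact that $\alpha_\chi \in F$ commutes through the trace, one obtains
\[
\chi_E(1+u) = \psi_F\!\left(\Tr_{E/F}(\alpha_\chi \log(1+u))\right) = \psi_E(\alpha_\chi \log(1+u)),
\]
so $\alpha_{\chi_E} = \alpha_\chi$ viewed as an element of $E$. Combined with the additivity of Postnikov parameters under multiplication of characters, this yields
\[
v_E(\alpha_{\theta \chi_E^{-1}}) = v_E(\alpha_\theta - \alpha_\chi) \geq 2m + 1 > v_E(\alpha_\theta),
\]
so $c(\theta \chi_E^{-1}) < c(\theta)$, contradicting twist-minimality.

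The hard part will be keeping careful track of the regimes of validity: the Postnikov expansion of Lemma \ref{postnikov} holds only on a filtration subgroup depending on the conductor, so the identification $\alpha_{\chi_E} = \alpha_\chi$ and the additivity of Postnikov parameters must be read modulo an appropriate power of $\fp_E$. These bookkeeping issues should be routine given the careful statement of Lemma \ref{postnikov} and the standing assumption $c(\psi_F)=0$, but they are what requires the most attention.
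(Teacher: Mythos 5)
Your proof is correct and takes essentially the same route as the paper: both rest on the additivity $\alpha_{\theta\chi_E}=\alpha_\theta+\alpha_\chi$ with $\alpha_\chi\in F$ ranging over Postnikov parameters of characters of $F^\times$, cancel the $F$-component of $\alpha_\theta$ by a suitable twist, and conclude from twist-minimality that $v_E(\alpha_\theta)$ must be odd. The paper argues directly, writing $\alpha_\theta=a+b\beta_0$ for a normalized minimal element $\beta_0$ and using Lemma \ref{normminlelt_lemma} with the choice $\alpha_\chi=-a$ to get $v_E(\alpha_\theta)=v_E(b\beta_0)$, while you run the contrapositive via a residue-field lift of the leading unit — a purely cosmetic difference, with the same bookkeeping about the range of validity of Lemma \ref{postnikov} left implicit in both arguments.
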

\begin{proof}
It suffices to show that $v_E(\alpha_\theta)$ is odd, as $E/F$ is ramified. 
For any character $\chi$ of $F^\times$ we have $\alpha_{\theta\chi_E} = \alpha_{\theta}+ \alpha_\chi$, where $\alpha_\chi \in F$, and so by minimality of $\theta$ we have $v_E(\alpha_{\theta}) = \max_\chi v_E(\alpha_{\theta}+ \alpha_\chi)$. Now let $\beta_0$ be some other normalized minimal element in $E/F$. Then, we write  $\alpha_{\theta} = a+b\beta_0$. By Lemma \ref{normminlelt_lemma}, we get that 
$$ v_E(\alpha_{\theta}) = \max_\chi v_E(\alpha_{\theta}+ \alpha_\chi) = \max_\chi \min( v_E(\alpha_\chi + a), v_E(b\beta_0)),$$
and the maximum is attained when $\chi$ is chosen so that $\alpha_\chi = -a$. Therefore, we have shown that $v_E(\alpha_{\theta}) = v_E(b \beta_0) = 2v_F(b)+ v_E(\beta_0),$ which is odd since $\beta_0$ is a minimal element of $E/F$.
\end{proof}

\subsubsection{Parametrization of dihedral supercuspidals}\label{sec:parametrization_dihedrals}
In this paper we are only interested in projections $\pi(f)$ to \emph{dihedral} supercuspidal representations. 
We next recall some of the dihedral Local Langlands Correspondence (LLC) following Bushnell and Henniart \cite{BushnellHenniart:06a}.

Let $E/F$ be a quadratic extension of non-archimedean local fields and recall that a character $\xi$ of $E^\times$ is called \emph{regular} if $\xi$ does not factor through the norm map $\Nm:E^\times \to F^\times$ (equivalently, if $\xi \neq \xi^\sigma$ for the non-trivial $\sigma \in \Gal(E/F)$). Two pairs $(E/F,\xi), (E'/F,\xi')$ are said to be $F$-isomorphic $\sim_F$ if there is an $F$-isomorphism $j: E \to E'$ such that $\xi = \xi' \circ j$. In the case $E=E'$, this amounts to $\xi = \xi^\sigma$ for some $\sigma \in \Gal(E/F)$.

To each pair $(E/F, \xi)$ consisting of a quadratic extension $E/F$ and a regular character $\xi$, the Weil group $W_F$ representation $\rho= \Ind_{E}^F \xi$ is irreducible. The LLC then associates to $\rho$ an irreducible supercuspidal representation $\pi=\pi(\rho)$ of $G(F)$. The central character of $\pi$ is equal to $\eta_{E/F}\xi \vert_{F^\times}$, where we write $\eta_{E/F}$ for the character of $F^\times$ corresponding to $E/F$ by class field theory, i.e.\ the unique quadratic character of $F^\times$ that is trivial on $\Nm E^\times$. The conductor exponent of $\pi$ satisfies \cite[Thm.\ 2.3.2]{Schmidt:02a}
\begin{equation}\label{c(pi)c0}
c(\pi) = \frac{2}{e}c(\xi) + d.
\end{equation}

Denote by $\mathcal{A}_2^0(F)$ the set of equivalence classes of irreducible supercuspidal representations of $G(F)$.  Let $$\widetilde{\P}_2(F) = \{ (E/F, \xi) : \xi \text{ regular }\}/\sim_F,$$ and define the map
$$ i: \widetilde{\P}_2(F) \to \mathcal{A}_2^0(F)$$
$$ (E/F, \xi) \mapsto \rho= \Ind_{E}^F \xi \mapsto \pi(\rho).$$
In general, the map $i$ is neither injective nor surjective. However, the restriction of $i$ to some special subsets of $\widetilde{\P}_2(F)$ will be injective and one can determine its image as follows.

First suppose that $E/F$ is at most tamely ramified.  Recall \cite[\S 18.2 Def.]{BushnellHenniart:06a} the following
\begin{mydefi}\label{def:admissible_pair}
A pair $(E/F, \xi) \in \widetilde{\P}_2(F)$ is called \emph{admissible} if 
\begin{enumerate}
\item $E/F$ is at most tamely ramified, and
\item if $\xi\vert_{U_E(1)}$ factors through  $\Nm_{E/F}$, then $E/F$ is unramified. 
\end{enumerate} Write $\P_2(F)$ for the set of admissible pairs:
$$ \P_2(F) = \{ (E/F ,\xi) \in \widetilde{\P}_2(F) : (E/F ,\xi) \text{ is admissible } \}/ \sim_F.$$
\end{mydefi}
Let us say that $\pi \in \mathcal{A}_2^0(F)$ is \emph{non-ramified} if there exists an unramified character $\phi \neq 1$ of $F^\times$ such that $\pi \times \phi \simeq \pi$, and denote the set of non-ramified representations by $\mathcal{A}_2^{\rm nr}(F) \subset  \mathcal{A}_2^0(F)$. 
\begin{mytheo}[Tame Parametrization Theorem]\label{TameParametrizationTheorem}
The map $i$ is a bijection of the sets 
\begin{equation}\label{tamebijection}  i:\P_2(F) \rightarrow \mathcal{A}_2^{\rm tame}(F):= \begin{cases}  \mathcal{A}_2^0(F) & \text{ if } p \neq 2, \text{ or } \\ \mathcal{A}_2^{\rm nr}(F) & \text{ if } p=2.\end{cases}\end{equation}
\end{mytheo}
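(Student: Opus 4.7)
The plan is to proceed in three main stages: construction, injectivity, and surjectivity. For the construction, given an admissible pair $(E/F,\xi)$, I will attach to it a supercuspidal representation of $G(F)$ via a compact-induction construction. Let $\alpha_0 \in E$ be a normalized minimal element as in Definition \ref{normminlelt}, fix the embedding $E^\times \hookrightarrow G(F)$ from \eqref{Oembedding}, and form the open-compact-mod-center subgroup $J = E^\times K^m$, where $K^m$ is an appropriate congruence subgroup of $K_F$ of level roughly $\lceil c(\xi)/2 \rceil$ (adjusted for ramification). Using the Heisenberg-type argument of \cite[\S 15, \S 19]{BushnellHenniart:06a}, I would extend $\xi$ (via Postnikov's Lemma \ref{postnikov}) to a character $\Lambda_\xi$ of $J$, and then define $\pi(E/F,\xi) := \cInd_J^{G(F)} \Lambda_\xi$. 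Standard theory (Mackey irreducibility plus cuspidality) shows this is an irreducible supercuspidal; admissibility of the pair is precisely what guarantees that no intertwining forces reducibility.

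Next I would verify that the map $i$ is well-defined on $\sim_F$-equivalence classes, which reduces to the statement that conjugation by the non-trivial $\sigma \in \Gal(E/F)$ normalises $J$ and carries $\Lambda_\xi$ to $\Lambda_{\xi^\sigma}$; this is where regularity of $\xi$ enters, since a non-regular $\xi$ would factor through the norm and the corresponding induced $W_F$-representation would be reducible. For injectivity of $i$, I would argue that the underlying cuspidal type $(J, \Lambda_\xi)$ is uniquely determined by $\pi$ up to $G(F)$-conjugacy (intertwining implies conjugacy, cf.\ \cite[\S 15.1]{BushnellHenniart:06a}), from which it follows that the pair $(E, \xi)$ is determined up to $F$-isomorphism. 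The conductor formula \eqref{c(pi)c0} gives an immediate numerical sanity check.

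For surjectivity, the hard part, I would invoke the exhaustion theorem for tame supercuspidals: every $\pi \in \mathcal{A}_2^0(F)$ (when $p \neq 2$) contains a simple stratum whose associated field extension is at most tamely ramified, and the resulting $\beta$-data produces an admissible pair lying in the preimage of $\pi$. When $p = 2$, wild phenomena obstruct this step for the fully general supercuspidal, but the obstruction vanishes precisely for the non-ramified class $\mathcal{A}_2^{\rm nr}(F)$, which is the content of the dichotomy in \eqref{tamebijection}. Finally, I would verify that $i$ is compatible with the LLC, i.e.\ $i(E/F,\xi)$ really is $\pi(\Ind_E^F \xi)$: this is the matching of central characters (using that $\eta_{E/F}$ is the class-field character of $E/F$) together with matching of $L$- and $\epsilon$-factors via Jacquet-Langlands or the converse theorem.

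The main obstacle is the exhaustion step (surjectivity), which requires the classification of simple strata and the careful handling of the residual case $p=2$; the non-ramified hypothesis is exactly what is needed to avoid the extraordinary supercuspidals arising from wildly ramified quadratic extensions in residue characteristic 2. The construction and injectivity steps are comparatively mechanical once the compact induction framework is in place.
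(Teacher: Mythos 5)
There is a genuine gap, and it sits exactly at the step you dismiss as routine. The map $i$ in the statement is defined \emph{through the LLC}: $(E/F,\xi)\mapsto \pi(\Ind_E^F\xi)$. What you construct is the compact-induction map $(E/F,\xi)\mapsto \cInd_J^{G}\Lambda_\xi$, where $\Lambda_\xi$ extends $\xi$ itself (the map \eqref{TameCompactIndMap} of the paper). These two maps are \emph{not} the same, and your proposed verification that they agree ``by matching central characters and $\epsilon$-factors'' actually fails: the central character of $\cInd_J^G\Lambda_\xi$ is $\xi\vert_{F^\times}$, whereas the central character of $\pi(\Ind_E^F\xi)$ is $\det(\Ind_E^F\xi)=\eta_{E/F}\,\xi\vert_{F^\times}$, and the $\epsilon$-factors likewise differ (by a Langlands $\lambda$-factor type discrepancy). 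The correct statement is that $i(E/F,\xi)=\pi_{\Delta_\xi\xi}$, where $\Delta_\xi$ is the rectifier character of $E^\times$ with $\Delta_\xi\vert_{F^\times}=\eta_{E/F}$ constructed in \cite[\S 34.2--34.4]{BushnellHenniart:06a}; this is precisely what Lemma \ref{lem:tamediagram} of the paper records. Proving that the rectified diagram commutes is not a sanity check via the converse theorem --- it is essentially the proof of the tame local Langlands correspondence for $\GL_2$ and occupies a substantial portion of Bushnell--Henniart's book.

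By contrast, the paper's own proof is a two-line composition of citations: the compact-induction map $\P_2(F)\to\mathcal{A}_2^{\rm tame}(F)$ is a bijection by the Tame Parametrization Theorem \cite[\S 20.2]{BushnellHenniart:06a}, the twist $(E/F,\xi)\mapsto(E/F,\Delta_\xi\xi)$ is a bijection of $\P_2(F)$ by \cite[34.4 Lem.\ (2)]{BushnellHenniart:06a}, and $i$ is their composite by the Tame Langlands Correspondence of loc.\ cit.\ \S 34.4. Your construction/injectivity/exhaustion outline is in effect a sketch of the first of these three inputs (BH \S\S 15, 19--20), which is fine as far as it goes, but without the rectifier your argument proves bijectivity of the wrong map; as written, the conclusion that $i$ (the LLC-defined map) is a bijection does not follow. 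Your remarks on the $p=2$ non-ramified dichotomy are directionally right but would also need to be routed through \cite[34.4 Lem.\ (2)]{BushnellHenniart:06a} rather than through an exhaustion argument alone.
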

\begin{proof} Compose the Tame Parametrization Theorem \cite[\S 20.2]{BushnellHenniart:06a} with loc.\ cit.\ 34.4 Lemma (2). 
\end{proof}
Recall that when $F=\Q_p$, $p\neq 2$ and $\pi$ has trivial central character, we have $c(\pi)$ even if and only if $E/F$ is unramified.

 Now consider the case $F=\Q_2$. 
Let $\P_2(\Q_2)^1_{\geq 9}$ be given by 
$$\P_2(\Q_2)^1_{\geq 9} = \{ (E/\Q_2,\xi) \in \widetilde{\P}_2(\Q_2): \xi \vert_{\Q_2^\times} = \eta_{E/\Q_2} \text{ and } \frac{2}{e}c(\xi) + d \geq 9\}/ \sim_{\Q_2}.$$ 

\begin{mytheo}\label{thm_p2bijection}
The map $(E/\Q_2,\xi) \mapsto \rho=\Ind_E^{\Q_2} \xi$ is a bijection between  $\P_2(\Q_2)^1_{\geq 9}$ and the set of irreducible smooth 2-dimensional representations of $W_{\Q_2}$ with $\det(\rho)=1$ and $c(\rho)\geq 9$. 
\end{mytheo}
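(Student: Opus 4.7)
The plan is to prove the bijection via three steps: (i) surjectivity, using a classical bound on the Artin conductor of primitive $2$-dimensional Weil representations of $W_{\Q_2}$; (ii) matching of invariants (determinant and conductor) on both sides; and (iii) injectivity via Mackey's double-coset formula, taking account of the equivalence $\sim_{\Q_2}$.

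For surjectivity, I would invoke the structural fact that every irreducible smooth $2$-dimensional representation $\rho$ of $W_{\Q_2}$ is either dihedral (induced from a character of $W_E$ for some quadratic extension $E/\Q_2$) or primitive with projective image $A_4$ or $S_4$; the case $A_5$ is excluded because $W_{\Q_2}$ has only (finite quotients of) solvable type. A classical computation going back to Weil's ``exercices dyadiques'' and refined by Koch and Buhler bounds the Artin conductor of such primitive $\rho$ by $8$. Hence any irreducible $\rho$ with $c(\rho) \geq 9$ is dihedral: $\rho \simeq \Ind_{W_E}^{W_{\Q_2}} \xi$ for some quadratic $E/\Q_2$ and a regular character $\xi$ of $W_E$, which via local class field theory $W_E^{\rm ab} \simeq E^\times$ we view as a character of $E^\times$.

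For the matching step, the determinant formula $\det \Ind_{W_E}^{W_{\Q_2}} \xi = \eta_{E/\Q_2} \cdot \xi|_{\Q_2^\times}$ shows that $\det \rho = 1$ is equivalent to $\xi|_{\Q_2^\times} = \eta_{E/\Q_2}^{-1} = \eta_{E/\Q_2}$ (using $\eta_{E/\Q_2}^2 = 1$), and the inductive conductor formula gives $c(\rho) = d(E/\Q_2) + (2/e(E/\Q_2)) c(\xi)$, exactly as in \eqref{c(pi)c0}, matching the defining condition $(2/e)c(\xi) + d \geq 9$ of $\P_2(\Q_2)^1_{\geq 9}$. For injectivity, the functoriality of induction shows the map descends to $\sim_{\Q_2}$-equivalence classes; conversely, if $\Ind_E^{\Q_2}\xi \simeq \Ind_{E'}^{\Q_2}\xi'$ are isomorphic, Mackey's double-coset formula splits into two cases. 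When $E=E'$, standard Mackey-Frobenius analysis yields $\xi' \in \{\xi, \xi^\sigma\}$ for $\sigma$ the non-trivial element of $\Gal(E/\Q_2)$, which is exactly $\sim_{\Q_2}$-equivalence. When $E \neq E'$ the compositum $EE'$ is biquadratic, and the isomorphism forces the self-twist group $X(\rho) = \{\chi : \chi^2=1,\, \rho \otimes \chi \simeq \rho\}$ to contain both $\eta_{E/\Q_2}$ and $\eta_{E'/\Q_2}$; a careful conductor analysis on $\xi^\sigma/\xi$ combined with the determinant constraint $\xi|_{\Q_2^\times} = \eta_{E/\Q_2}$ then shows this configuration produces only identifications already captured by $\sim_{\Q_2}$ after choice of canonical representative.

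The main obstacle is the surjectivity step: while the conductor bound $c(\rho) \leq 8$ for primitive $2$-dimensional representations of $W_{\Q_2}$ is classical, pinning down the precise reference and verifying the bound is the delicate point. The biquadratic injectivity analysis at $p=2$ is also more subtle than at odd primes and requires careful bookkeeping with the seven quadratic characters of $\Q_2^\times$, but this is ultimately routine class-field-theoretic arithmetic once the surjectivity step is established.
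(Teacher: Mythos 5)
Your overall strategy is the one the paper takes: for surjectivity, rule out primitive (exceptional) representations of large conductor --- the paper cites Rio, \S 6, for the statement that every irreducible smooth $2$-dimensional $\rho$ of $W_{\Q_2}$ with $\det(\rho)=1$ and $c(\rho)\geq 8$ is induced --- and for injectivity, use Mackey theory together with the exclusion of representations induced from more than one quadratic extension, which the paper gets from the theory of Bushnell--Henniart \S 41 (no triply-imprimitive $\rho$ with $\det(\rho)=1$ and $c(\rho)\geq 9$). One caveat on your surjectivity step: the assertion that the Artin conductor of a primitive $2$-dimensional representation of $W_{\Q_2}$ is at most $8$ is false as stated, since twisting a primitive representation by a character of large conductor keeps it primitive and makes $c(\rho)$ arbitrarily large; the bound you need (and the one in the cited literature) is for representations with $\det(\rho)=1$, or equivalently for twist-minimal ones. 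Since you are working inside the determinant-one class anyway, this is a matter of stating the hypothesis correctly rather than a fatal error, and the fact that the bound is $7$ or $8$ rather than $9$ is immaterial for the theorem.

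The genuine gap is in your injectivity argument in the case $E\not\simeq E'$. If some $\rho$ with $\det(\rho)=1$ and $c(\rho)\geq 9$ were induced from two non-isomorphic quadratic extensions, then $(E,\xi)$ and $(E',\xi')$ would be two \emph{distinct} elements of $\P_2(\Q_2)^1_{\geq 9}$ --- $\sim_{\Q_2}$-equivalence requires a $\Q_2$-isomorphism $E\to E'$, which does not exist --- mapping to the same $\rho$, and injectivity would simply fail. So your conclusion that ``this configuration produces only identifications already captured by $\sim_{\Q_2}$'' is not a statement that can be true; what must be proved is that the configuration cannot occur at all, i.e.\ that there is no triply-imprimitive $\rho$ with $\det(\rho)=1$ and $c(\rho)\geq 9$, and this non-existence claim is exactly where the paper appeals to Bushnell--Henniart \S 41. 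Your hint does point at the right mechanism, but it must be carried out: $\det(\rho)=1$ gives $\xi|_{\Q_2^\times}=\eta_{E/\Q_2}$, hence $\xi\xi^{\sigma}=(\xi|_{\Q_2^\times})\circ \Nm_{E/\Q_2}=1$, i.e.\ $\xi^{\sigma}=\xi^{-1}$; being induced from a second quadratic field forces $\rho\otimes\chi\simeq\rho$ for some quadratic $\chi\notin\{1,\eta_{E/\Q_2}\}$, which by Mackey translates into $\xi^{-2}=\xi^{\sigma}\xi^{-1}=\chi\circ\Nm_{E/\Q_2}$; thus $\xi^{4}=1$ and in fact $\xi^2$ is a norm-inflated quadratic character, so $c(\xi)$ is bounded by a small absolute constant and $c(\rho)=\tfrac{2}{e}c(\xi)+d\leq 8$, contradicting $c(\rho)\geq 9$. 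Without this computation (or the citation that replaces it), the injectivity half of your proof is incomplete, and as written its conclusion is conceptually wrong.
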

\begin{proof}[Proof sketch]
On the one hand all 2-dimensional smooth irreducible representations $\rho$ of $W_{\Q_2}$ with $\det(\rho)=1$ and Artin conductor $\geq 8$ are induced representations by e.g.
 \cite[\S6]{Rio}  or \cite[Prop.\ 3.9]{CesnaviciusNeururerSaha}, 
	and on the other hand, one can use the theory in \cite[\S 41]{BushnellHenniart:06a} to show that there are no triply-imprimitive representations $\rho$ with $\det(\rho)=1$ and $c(\rho)\geq 9${, so the map is injective}. 
	We omit the details.
\end{proof} 
\begin{mycoro}\label{cor_p2bijection}
The map $i$ from $\P_2(\Q_2)^1_{\geq 9}$ to the set of trivial central character supercuspidal representations $\pi$ of $G(\Q_2)$ with $c(\pi)\geq 9$ is a bijection. 
\end{mycoro}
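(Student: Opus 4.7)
The plan is to deduce Corollary \ref{cor_p2bijection} from Theorem \ref{thm_p2bijection} by transferring the statement across the Local Langlands Correspondence (LLC) for $\GL_2(\Q_2)$. Recall that the LLC restricts to a bijection between isomorphism classes of $2$-dimensional irreducible smooth representations $\rho$ of the Weil group $W_{\Q_2}$ and isomorphism classes of supercuspidal representations $\pi$ of $\GL_2(\Q_2)$. Under this correspondence two invariants are preserved in the appropriate sense: the determinant $\det\rho$ corresponds, via local class field theory, to the central character of $\pi(\rho)$, and the Artin conductor exponent $c(\rho)$ equals the conductor exponent $c(\pi(\rho))$.

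First I would invoke Theorem \ref{thm_p2bijection}, which establishes a bijection between $\P_2(\Q_2)^1_{\geq 9}$ and the set $\mathcal{R}$ of $2$-dimensional irreducible smooth representations $\rho$ of $W_{\Q_2}$ satisfying $\det\rho = 1$ and $c(\rho)\geq 9$, via $(E/\Q_2,\xi)\mapsto \Ind_E^{\Q_2}\xi$. Next I would apply LLC, which restricts to a bijection between $\mathcal{R}$ and the set $\mathcal{S}$ of trivial-central-character supercuspidal representations $\pi$ of $\GL_2(\Q_2)$ with $c(\pi)\geq 9$. The composition of these two bijections coincides with the map $i$ restricted to $\P_2(\Q_2)^1_{\geq 9}$ by the very definition of $i$ in Section \ref{sec:parametrization_dihedrals}, completing the proof.

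The two compatibilities needed under LLC are standard. The determinant formula $\det(\Ind_E^F\xi) = \eta_{E/F}\cdot \xi\vert_{F^\times}$ shows that $\det\rho = 1$ is equivalent to the condition $\xi\vert_{\Q_2^\times} = \eta_{E/\Q_2}$ imposed in the definition of $\P_2(\Q_2)^1_{\geq 9}$, and via LLC plus local class field theory this is equivalent to $\pi(\rho)$ having trivial central character (matching the remark in Section \ref{sec:parametrization_dihedrals} that the central character of $\pi(\rho)$ is $\eta_{E/F}\xi\vert_{F^\times}$). The identity $c(\pi(\rho)) = c(\rho)$, together with formula \eqref{c(pi)c0}, shows that the conductor condition $\tfrac{2}{e}c(\xi) + d\geq 9$ appearing in the definition of $\P_2(\Q_2)^1_{\geq 9}$ translates exactly into $c(\pi)\geq 9$ on the automorphic side. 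There is no real obstacle here: the substantive content is already in Theorem \ref{thm_p2bijection}, whose proof uses the classification results of \cite[\S 41]{BushnellHenniart:06a} and \cite{Rio} to rule out triply-imprimitive Weil representations of sufficiently high conductor; the corollary is merely the straightforward automorphic reformulation of that theorem via the LLC dictionary.
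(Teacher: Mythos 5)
Your proposal is correct and matches what the paper intends: the corollary is stated without a separate proof precisely because it is the composition of the bijection in Theorem \ref{thm_p2bijection} with the (conductor-preserving, determinant-to-central-character) Local Langlands bijection between irreducible $2$-dimensional Weil representations and supercuspidals, which is exactly how the map $i$ is defined in Section \ref{sec:parametrization_dihedrals}. Your verification of the two compatibilities (determinant versus central character, and $c(\rho)=c(\pi(\rho))$ together with \eqref{c(pi)c0}) is the same bookkeeping the paper relies on implicitly.
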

 For any character $\xi$ of $E^\times$, we set
\begin{equation}\label{c0def}
c_0 := c(\xi)/e.
\end{equation}
This quantity was introduced by the first author \cite[(1.3)]{Hu}, where it was denoted $i_0$ and defined in terms of a character $\theta$ giving rise to a representation $\sigma$ of $\overline{G}(\Q_p)$ by compact or parabolic induction cf.\ Section \ref{compactinductionbackground}. On the other hand, in this paper the quantity $c_0$  will generally be defined in terms of the pair $(E/F,\xi)$ corresponding to $\sigma$ via the Local Langlands Correspondence. We observe, however, that $c(\theta)= c(\xi)$ whenever $E/F$ is at most tamely ramified \cite[\S 34.4]{BushnellHenniart:06a}, in particular, whenever $p \neq 2$. Since the first author's paper assumed $p\neq2$, the $c_0$ in this paper may be viewed as an extension of $i_0$ to the wildly ramified case. 
\begin{mylemma}
Suppose that either $(E/F,\xi)$ is an admissible pair with $E/F$ ramified, or that $(E/\Q_2,\xi) \in \P_2(\Q_2)^1_{\geq 9}$ with $E/\Q_2$ ramified. If $\xi \vert_{F^\times} = \eta_{E/F}$, then $c(\xi)$ is even, and in particular the corresponding $c_0$ is an integer. 
\end{mylemma}
\begin{proof}
Suppose that $c(\xi)$ is odd, and let $n=c(\xi)-1$. 
Consider the map induced by inclusion
\begin{equation}\label{eq:newlem1}I: U_F(\frac{n}{2}) / U_F(\frac{n}{2}+1) \to U_E(n)/U_E(n+1).\end{equation}
It is simple to check that $I$ is injective. On the other hand, 
since $E/F$ is totally ramified, the residue fields $k_F \simeq k_E$ of $F$ and $E$ are isomorphic. Recall that for any $m\geq 1$ and any non-archimedean local field $L$ that $U_L(m)/U_L(m+1) \simeq k_L$. Therefore, the map $I$ is in fact an isomorphism. 

Recall \cite[Ch.\ V \S3 Prop.\ 4]{SerreLocalFields} that for a totally ramified cyclic extension $E/F$ of non-archimedean local fields and each integer $m\geq 0$ there is a group homomorphism
\begin{equation}
N_m: U_E(\psi(m))/U_E(\psi(m)+1) \to U_F(m)/U_F(m+1)
\end{equation}
induced by the norm map, where $\psi$ is the Hasse-Herbrand function (see loc.\ cit.\ Chapter IV \S3). By loc.\ cit.\ Chapter V, \S3, Lemma 3 and Corollary 2, the map $N_m$ is surjective whenever $m>d-1$. We apply this with $m=n/2$ to find that whenever $n/2 >d-1$ every element of the domain of $I$ is a norm from $E^\times$ (precisely, it is a norm from $U_E(\psi(\frac{n}{2}))$).

Finally, since $\xi$ has conductor $c(\xi) = n+1$, it restricts to a non-trivial character of $U_E(n)/U_E(n+1)$. But since the inclusion map $I$ is an isomorphism, every element of this group in fact has a representative from $U_F(\frac{n}{2})$, all of which have a representative that is a norm from $E^\times$. But by assumption, $\xi$ is trivial on norms from $E^\times$. Contradiction, at least when $n/2>d-1$. 

Now we proceed by cases to check that the result for $n/2>d-1$ implies the statement in the lemma. First assume that $E/F$ is tamely ramified. Then, $d=1$, so that $n/2>d-1$ if and only if $c(\xi) \geq 3$. Thus, there are no characters $\xi$ as in the statement of the Lemma with odd conductor exponents $\geq 3$. It remains to consider the case $c(\xi)=1$. In this case $\xi \vert_{U_E(1)}$ is trivial, and since $\xi \vert_{\Nm(E^\times)}=1$ by hypothesis, this means that $\xi \vert_{U_E(1)}$ factors through the norm map. Since $E/F$ is ramified, this contradicts the hypothesis that $(E/F,\xi)$ is an admissible pair (Definition \ref{def:admissible_pair}(2)). 

Lastly, consider the case that $\xi$ is a character of $E^\times$, where $E/\Q_2$ is ramified and has $\frac{2}{e}c(\xi)+d\geq 9$. In particular, $c(\xi)\geq 7$, which  
implies $n/2 \geq 3>d-1$. 
Thus, there do not exist pairs $(E/\Q_2,\xi) \in  \P_2(\Q_2)^1_{\geq 9}$ with $E/\Q_2$ ramified and $c(\xi)$ odd. 
\end{proof}

Finally, given a pair $(E/F,\xi)$ and $0\leq n \leq c(\xi)$, define the neighborhood $\xi[n]$ around $\xi$ of radius $n$ by 
\begin{equation}\label{xi_n_def}
\xi[n]=\{\xi_1\in (E^\times)^\wedge: c(\xi_1 \xi^{-1})\leq n, \, \xi_1 \vert_{F^\times} = \xi \vert_{F^\times} \}.
\end{equation}
For $0\leq i \leq n$ define the equivalence relation $\sim_i$ on $\xi[n]$ by $\xi_1\sim_i \xi_1'$ if and only if $c(\xi_1^{-1}\xi_1')\leq i$.
\begin{myrema}\label{size_thetaell_rem} When $0\leq \ell <c_0$ we have (cf.\ \cite[Lem.\ 3.5]{Hu}) that 
$$
\# \theta[\ell e_{E/F}] = 
\begin{cases} 
q^\ell(1+q^{-1}) & \text{ if } e_{E/F} =1, \\ 2q^\ell & \text{ if } e_{E/F} =2.
\end{cases}
$$\end{myrema}

\subsubsection{Compact Induction}\label{compactinductionbackground}
\textbf{Case: $E/F$ at most tamely ramified.} To each $F$-isomorphism class of admissible pairs $(E/F,\theta)$ one associates a supercuspidal representation $\pi_\theta$ by compact induction:
\begin{align}\label{TameCompactIndMap} \P_2(F) & \to \mathcal{A}_2^0(F) \nonumber \\ (E/F, \theta) & \mapsto \pi_{\theta}, \end{align} specifically by the process described in \cite[\S 19]{BushnellHenniart:06a}, culminating in (19.6.3) of loc.\ cit.. 

The map \eqref{TameCompactIndMap} does \emph{not} match the map $i$ in \eqref{tamebijection} that is defined via the LLC. However, in the tame case one patches up this discrepancy by defining, for each $(E/F, \xi) \in \P_2(F)$, an auxiliary character $\Delta_\xi$ of $E^\times$ as in \cite[\S 34.2-34.4]{BushnellHenniart:06a} for which the following lemma holds. 
\begin{mylemma}\label{lem:tamediagram}
The following diagram commutes and all arrows are bijections. 
$$\xymatrixcolsep{8pc}\xymatrix{ \P_2(F) \ar[r]^{(E/F, \xi) \mapsto (E/F, \Delta_\xi \xi)} \ar[rd]_{i} & \P_2(F) \ar[d]^{(E/F,\theta) \mapsto \pi_\theta} \\ &  \mathcal{A}_2^{\rm tame}(F).} 
$$
\end{mylemma}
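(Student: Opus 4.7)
The plan is to verify that each of the three arrows is a bijection and then deduce commutativity from the characterization of the tame rectifier $\Delta_\xi$ in Bushnell--Henniart. The diagonal arrow $i$ has already been shown to be a bijection in Theorem \ref{TameParametrizationTheorem}, so the real work is in (1) checking that the top horizontal arrow preserves admissibility and is bijective, (2) establishing the right vertical arrow is bijective, and (3) verifying that the composition of the top and right arrows matches $i$.

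For step (1), I would recall the construction of the tame rectifier $\Delta_\xi$ from \cite[\S 34.3]{BushnellHenniart:06a}. The rectifier is a character of $E^\times$ built from $\xi$ whose conductor is bounded by a constant depending only on $E/F$ (roughly, $\Delta_\xi$ is tame, i.e.\ trivial on $U_E(1)$). Since admissibility of $(E/F,\xi)$ is a condition on $\xi\vert_{U_E(1)}$ together with the ramification of $E/F$, multiplying $\xi$ by a tame character cannot affect either condition, so $(E/F, \Delta_\xi \xi)\in \P_2(F)$. Regularity is also preserved since $\Delta_\xi$ is defined Galois-equivariantly in a suitable sense. Bijectivity follows from the identity $\Delta_{\Delta_\xi \xi}= \Delta_\xi^{-1}$, which is immediate from the explicit formulas in loc.\ cit. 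For step (2), the Exhaustion Theorem \cite[\S 19.1, 20.2]{BushnellHenniart:06a} tells us that compact induction gives a bijection between $\P_2(F)$ and the set of tame (in the sense of Theorem \ref{TameParametrizationTheorem}) supercuspidals $\mathcal{A}_2^{\rm tame}(F)$; when $p=2$ we restrict to non-ramified representations using \cite[\S 34.4 Lemma (2)]{BushnellHenniart:06a}.

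Finally, commutativity of the diagram is the defining property of $\Delta_\xi$: in \cite[\S 34.4, especially the Tame Parametrization Theorem]{BushnellHenniart:06a}, $\Delta_\xi$ is precisely the character chosen so that the LLC image of $\Ind_E^F \xi$ agrees with the compact induction $\pi_{\Delta_\xi \xi}$. The main obstacle is bookkeeping: matching the conventions of Bushnell--Henniart (who normalize the LLC, central characters, and the induction parameters in particular ways) with the conventions of this paper, and in the $p=2$ case keeping track of the restriction to non-ramified representations. Once the conventions are aligned, the three bijection statements and the commutativity all reduce to citations, and the lemma follows.
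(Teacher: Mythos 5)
Your proposal is correct and follows essentially the same route as the paper: the paper's proof consists precisely of citing the Tame Langlands Correspondence of \cite[\S 34.4]{BushnellHenniart:06a} for commutativity, \cite[34.4 Lem.\ (2)]{BushnellHenniart:06a} for bijectivity of the horizontal rectifier map, and the Tame Parametrization Theorem (Theorem \ref{TameParametrizationTheorem}, itself BH \S 20.2 composed with 34.4 Lemma (2)) for the other two arrows. Your extra discussion of the horizontal arrow (tameness of $\Delta_\xi$ and the claimed identity $\Delta_{\Delta_\xi\xi}=\Delta_\xi^{-1}$) just re-derives what the cited BH Lemma 34.4(2) already provides, so while the precise form of that identity is not needed (and is stated a bit loosely), it does not change the argument.
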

\begin{proof}
The diagram commutes by \cite[34.4 Tame Langlands Correspondence]{BushnellHenniart:06a}. 
The horizontal map is a bijection by \cite[34.4 Lem.(2)]{BushnellHenniart:06a}, and the other two are as well by the Tame Parametrization Theorem (Theorem \ref{TameParametrizationTheorem}). 
\end{proof}
One of the properties of the character $\Delta_\xi$ that can be found in \cite[\S 34.4]{BushnellHenniart:06a} is that $\Delta_\xi \vert_{F^\times}= \eta_{E/F}$, so that $\pi$ has trivial central character if and only if $\Delta_\xi \xi \vert_{F^\times} =1$. For later use, note that if $(E/F, \xi)\in \P_2(F)$ satisfies $\xi \vert_{F^\times}=\eta_{E/F}$, then $x^\sigma = -x$ for any of $x= \alpha_\xi, \alpha_{\Delta_\xi\xi}, \ell_{\xi},$ or $\ell_{\Delta_\xi \xi}$ and $\sigma \in \Gal(E/F)$, $\sigma \neq 1$.

For later use in the $p=2$ non-ramified case, we very briefly describe the construction of the tame compact induction $(E/F,\theta) \mapsto \pi_\theta$ of \eqref{TameCompactIndMap}, referring the reader to \cite[\S 3.2.1]{Hu} and \cite{BushnellHenniart:06a} for more details. 

Given $(E/F, \theta) \in \P_2(F)$, let $\alpha_0$ be a normalized minimal element for $E/F$ and $E^\times \hookrightarrow G(F)$ be the corresponding embedding \eqref{Oembedding}. Then $\theta$ naturally extends to a character $\widetilde{\theta}$ of a subgroup $ZB^1$ of $G(F)$, see \cite[(3.8), Def.\ 3.11]{Hu}. We can further induce and extend $\widetilde{\theta}$ to an irreducible finite-dimensional representation $\Lambda$ of the subgroup $J \subset G(F)$ defined between (3.7) and (3.8) of loc.\ cit. 
If $c(\theta)\geq 2$, then $$ \dim \Lambda = \begin{cases} 1 & \text{ if } c(\theta) \text{ is even, } \\ q & \text{ if } c(\theta) \text{ is odd.}\end{cases}$$ If $\theta$ is also twist-minimal, then $\pi_\theta :=\cInd_J^G \Lambda$ is irreducible and supercuspidal and realizes the map \eqref{tamebijection}, see \cite[\S 19.2-19.4]{BushnellHenniart:06a}. For later use in the proof of Proposition \ref{minlvec_projection_prop}, we note in particular that \cite[Prop.\ 3.14]{Hu} holds in the case $p=2$ and $E/F$ unramified if we add the additional hypothesis that $\pi$ is twist-minimal. 

\textbf{Case: $E/F$ wildly ramified.} We describe the compact induction theory in more detail, following closely \cite{BushnellHenniart:06a}. We would like (for later purposes) a diagram similar to the one appearing in Lemma \ref{lem:tamediagram} by which we can relate the characters $\theta$ and $\xi$ that lead to the same supercuspidal representation by compact induction and the LLC, respectively. Such a relation is given by \cite[44.3 Thm.]{BushnellHenniart:06a}, but to describe it precisely and in a form useful for our purposes, we need to recall the notions of cuspidal types and simple strata. For the next two paragraphs we follow closely \cite[\S12, \S13]{BushnellHenniart:06a}. 

Let $A =M_2(F)$ and consider the $\cO$-orders in $A$ given by $\mathfrak{A}_1 = \left( \begin{smallmatrix} \cO & \cO \\ \cO & \cO \end{smallmatrix}\right)$ and $\mathfrak{A}_2 = \left( \begin{smallmatrix} \cO & \cO \\ \fp & \cO \end{smallmatrix}\right)$. In general, an $\cO$-order $\mathfrak{A}\subset A$ is called a \emph{chain order} in $A$ if it $G(F)$-conjugate to either $\mathfrak{A}_1$ or $\mathfrak{A}_2$. Given a chain order $\mathfrak{A}$, let $\mathfrak{P}= {\rm rad} (\mathfrak{A})$ be its Jacobson radical. In particular, we have ${\rm rad} (\mathfrak{A}_1) = \varpi \mathfrak{A}_1$ and ${\rm rad} (\mathfrak{A}_2) = \left( \begin{smallmatrix}  & 1 \\ \varpi &  \end{smallmatrix}\right)\mathfrak{A}_2.$ Define the filtration on $\mathfrak{A}^\times$ $$ U_{\mathfrak{A}}^k =\begin{cases} \mathfrak{A}^\times & \text{ if } k=0,\\ 1+ \mathfrak{P}^k & \text{ if } k\geq 1.\end{cases}$$ 
For any $F$-subalgebra $E \subset A$ such that $E/F$ is a quadratic field extension, {there is a unique chain order $\mathfrak{A}$ such that $E^\times$ is a subgroup of the normalizer $\mathcal{K}_{\mathfrak{A}}$ {of $\mathfrak{A}^\times$} 
by	\cite[12.4 Prop.(2)]{BushnellHenniart:06a}. For such $\mathfrak{A}$,} we have 
\begin{align}\mathfrak{P}^k\cap E & = \fp_E^k, \notag \\
	 U_{\mathfrak{A}}^k\cap E^\times & =U_E(k). \label{BH_s_12point4}
\end{align}
For an example of these objects with $F=\Q_2$ and $E\hookrightarrow A$ by \eqref{Oembedding}, see Lemma \ref{Q2chainorders}.

A \emph{stratum} is a triple $(\mathfrak{A}, n, a)$ consisting of a chain order $\mathfrak{A}$, an integer $n$ and an element $a \in \mathfrak{P}^{-n}$. We leave the reader to recall the notions of fundamental, simple and ordinary strata from \cite[\S 12.8, 13.1, 45]{BushnellHenniart:06a}, but let us recall explicitly that those fundamental strata $(\mathfrak{A}, n, a)$ for which $\mathfrak{A}$ is conjugate to $\mathfrak{A}_2$ and $n$ is odd are called \emph{ramified simple strata}. 

For use until the end of Section \ref{compactinductionbackground}, let $\psi'$ be an additive character of $F$ of conductor $1$ so that $\psi'$ matches the convention on additive characters in Bushnell and Henniart's book.  Given a stratum $(\mathfrak{A}, n, a)$, let $\psi'_a$ be the character of $U_{\mathfrak{A}}^n$ defined by $x \mapsto \psi' ( \Tr( a (x-1)))$ for $x \in U_{\mathfrak{A}}^n$. If $(\mathfrak{A}, n, a)$ is moreover a simple stratum, then let us take the subring $F[a] \subset A$ with $F$ embedded diagonally in $A$, so that $F[a]/F$ is a quadratic field extension \cite[\S 13.4]{BushnellHenniart:06a}. Define the subgroup \begin{equation}\label{Jalphadef}
J_{a} = F[a]^\times U_{\mathfrak{A}}^{\lfloor \frac{n+1}{2} \rfloor} \subset G(F).
\end{equation}

Recall the following. 
\begin{mydefi}A \emph{cuspidal type of the second kind} in $G(F)$ is a triple $(\mathfrak{A}, J, \Lambda)$ where, $\mathfrak{A}$ is a chain order, $J$ is a subgroup of $\mathcal{K}_{\mathfrak{A}}$ , and $\Lambda$ is an irreducible smooth representation of $J$ such that there exists a simple stratum $(\mathfrak{A}, n, \alpha)$ with $n\geq 1$, $J=J_\alpha$, and for which $\Lambda \vert_{U_{\mathfrak{A}}^{\lfloor n/2 \rfloor +1}}$ is a multiple of $\psi'_\alpha$. 
\end{mydefi}
Let $T(F)$ denote the set of $G(F)$-conjugacy classes of cuspidal types of the second kind. The following is \cite[15.5 Classification Theorem]{BushnellHenniart:06a}. 
\begin{mytheo}[Classification Theorem]
The map \begin{equation}\label{classificationbijection}(\mathfrak{A}, J, \Lambda) \mapsto \cInd_J^G \Lambda\end{equation}
is a bijection $$T(F) \to \{\pi \in \mathcal{A}_2^0(F): \pi \text{ is twist-minimal}, c(\pi)\geq 3\}.$$
\end{mytheo}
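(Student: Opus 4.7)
The statement is the $\GL_2$ specialization of the Bushnell--Kutzko theory of types, exposited in \cite[\S 15]{BushnellHenniart:06a}, and the plan is to follow that source. Unpacking, four logical steps are needed: (i) irreducibility of $\cInd_J^G \Lambda$; (ii) supercuspidality of the induced representation; (iii) exhaustion, i.e., every twist-minimal supercuspidal $\pi$ with $c(\pi)\geq 3$ contains some cuspidal type of the second kind; and (iv) injectivity of the map up to $G(F)$-conjugacy.

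For (i), I would apply Mackey's irreducibility criterion: verify that $\Hom_{J \cap g^{-1}Jg}(\Lambda, {}^g\Lambda) = 0$ for every $g \in G(F) \setminus J$. Since $\Lambda$ restricted to $U_{\mathfrak{A}}^{\lfloor n/2 \rfloor +1}$ is a multiple of $\psi_\alpha$, this reduces to computing the $G(F)$-intertwiner of the character $\psi_\alpha$, which one shows equals exactly $J_\alpha = F[\alpha]^\times U_{\mathfrak{A}}^{\lceil n/2\rceil}$ using non-degeneracy of the trace pairing on $\mathfrak{P}^{-n}/\mathfrak{P}^{-\lfloor n/2 \rfloor}$ (this non-degeneracy is the essential content of simplicity of the stratum $(\mathfrak{A}, n, \alpha)$). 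Step (ii) is then immediate: since $J/Z$ is compact and $\cInd_J^G \Lambda$ is irreducible and admissible, its matrix coefficients are compactly supported modulo center, which forces supercuspidality. Step (iv) follows from the same intertwining analysis applied to two types $(\mathfrak{A}_i, J_i, \Lambda_i)$ realizing the same $\pi$, with a non-zero intertwining operator between them forcing a conjugation relation.

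The bulk of the work lies in step (iii). Given $\pi$ twist-minimal with $c(\pi) \geq 3$, one first locates a fundamental stratum $(\mathfrak{A}, n, a)$ occurring in $\pi$, by restricting $\pi$ to the filtration subgroups $U_{\mathfrak{A}}^k$ and analyzing the minimal level on each chain order up to conjugacy; existence of such a fundamental stratum with $n\geq 1$ is guaranteed by the hypothesis $c(\pi)\geq 3$. Twist-minimality is then invoked to adjust $a$ within its equivalence class so that $(\mathfrak{A}, n, a)$ becomes a simple stratum, with $E = F[\alpha]$ a quadratic extension of $F$. Once a simple stratum is in hand, $\Lambda$ is built on $J_\alpha$ by extending $\psi_\alpha$: for even $n$ this extension is a character, while for odd $n$ one needs the Heisenberg / Stone--von Neumann machinery on the symplectic quotient $U_{\mathfrak{A}}^{\lceil n/2\rceil}/U_{\mathfrak{A}}^{\lfloor n/2\rfloor+1}$, yielding $\dim \Lambda = q$ in the ramified simple stratum case.

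The main obstacle is this promotion from a fundamental to a simple stratum in step (iii): a general fundamental stratum need not be simple, and producing a simple one within $\pi$ requires the full force of twist-minimality together with a careful change-of-basis argument in the Lie algebra $\mathfrak{P}^{-n}/\mathfrak{P}^{-n+1}$, where the distinction between the unramified case (conjugate to $\mathfrak{A}_1$) and the ramified case (conjugate to $\mathfrak{A}_2$ with $n$ odd) surfaces. The remaining technicalities are essentially formal consequences of the intertwining analysis of step (i).
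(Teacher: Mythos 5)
You should first be aware that the paper contains no proof of this statement: it is imported verbatim as \cite[15.5 Classification Theorem]{BushnellHenniart:06a}, so the only ``proof'' in the paper is the citation, and your plan of reproducing the Bushnell--Henniart argument of \S\S 12--15 is the intended route. Your steps (i), (ii), (iv) --- the intertwining computation for $\psi_\alpha$ (note $\lceil n/2\rceil=\lfloor(n+1)/2\rfloor$, matching the paper's definition of $J_\alpha$), supercuspidality from irreducibility of a compact-mod-center induction, and injectivity from the same intertwining analysis --- are consistent with that source.

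Two points in your sketch are wrong or incomplete as written. First, the parity bookkeeping for $\Lambda$ is reversed: since $\lceil n/2\rceil=\lfloor n/2\rfloor+1$ precisely when $n$ is odd, the quotient $U_{\mathfrak{A}}^{\lceil n/2\rceil}/U_{\mathfrak{A}}^{\lfloor n/2\rfloor+1}$ is trivial for odd $n$, so no Heisenberg/Stone--von Neumann input is needed there and $\Lambda$ is a character; the Heisenberg construction with $\dim\Lambda=q$ occurs when $n$ is even, which happens only on the maximal order $\mathfrak{A}_1$ (the unramified case). In particular your claim that $\dim\Lambda=q$ ``in the ramified simple stratum case'' is false: ramified simple strata have $n$ odd by definition and $\Lambda$ is one-dimensional, a fact the paper itself relies on (proof of Lemma \ref{lem:wilddiagram}, quoting \cite[15.6 Prop.\ 1]{BushnellHenniart:06a}) and which matches its tame dictionary ($\dim\Lambda=q$ exactly when $c(\theta)$ is odd, i.e.\ when $n$ is even for $E/F$ unramified). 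Second, in the exhaustion step you attribute the promotion from a fundamental to a simple stratum entirely to twist-minimality; that is not enough. A fundamental stratum occurring in $\pi$ is, up to equivalence, split, essentially scalar, or simple: twist-minimality rules out only the essentially scalar case, while the split case must be excluded by supercuspidality (a split fundamental stratum forces a nonzero Jacquet module, so $\pi$ would embed in a parabolically induced representation). Without invoking cuspidality at this point the construction of a simple stratum inside $\pi$ fails, so as written this step has a genuine gap, even though the Bushnell--Henniart proof you are following does supply the missing argument.
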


We are interested in the ramified dihedral subset of the above classification bijection \eqref{classificationbijection}. 
\begin{mylemma}\label{lem:wilddiagram}
The map $(\mathfrak{A}, J, \Lambda) \mapsto \cInd_J^G \Lambda$  
is a bijection from $$\{ (\mathfrak{A}, J, \widetilde{\theta}) \in T(F): \exists \text{ an ordinary  ramified simple } (\mathfrak{A}, n, \alpha) \text{ with }n\geq 1, J=J_\alpha, \widetilde{\theta} \vert_{U_{\mathfrak{A}}^{\frac{n+1}{2}}} \simeq  \psi'_\alpha\}$$ 
to 
$$\{ \pi \in \mathcal{A}_2^0(F):  \pi \text{ twist-minimal}, c(\pi)\geq 3
, \exists E/F \text{ ramified with } \pi \simeq \pi( \Ind_E^F \xi) \}.$$

In this bijection, we have that $n=c(\pi)-2$ and that $n,c(\pi)$ are necessarily odd. 
\end{mylemma}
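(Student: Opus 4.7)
The plan is to derive this as a restriction of the Classification Theorem \eqref{classificationbijection}, which already establishes that $(\mathfrak{A}, J, \Lambda) \mapsto \cInd_J^G \Lambda$ is a bijection from $T(F)$ to twist-minimal supercuspidal representations $\pi$ with $c(\pi) \geq 3$. It then suffices to prove that the condition on cuspidal types described in the statement (existence of an ordinary ramified simple underlying stratum $(\mathfrak{A}, n, \alpha)$ with $n \geq 1$) corresponds precisely to the condition that $\pi$ is induced from a character $\xi$ of a ramified quadratic extension $E/F$. The key numerical claims will then emerge as by-products of the matching.

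For the forward direction, start with $(\mathfrak{A}, J, \widetilde{\theta})$ such that there exists an ordinary ramified simple stratum $(\mathfrak{A}, n, \alpha)$ with $n \geq 1$, $J = J_\alpha$, and $\widetilde{\theta}\vert_{U_{\mathfrak{A}}^{(n+1)/2}} \simeq \psi_\alpha$. Since the stratum is ramified simple, $E := F[\alpha]$ is a ramified quadratic extension of $F$, and by \cite[12.4 Prop.]{BushnellHenniart:06a} we may arrange $\mathfrak{A}$ so that $\mathfrak{A} \sim \mathfrak{A}_2$ and $E^\times \subset \mathcal{K}_{\mathfrak{A}}$. Next I would extend $\widetilde{\theta}$ to a character $\theta$ of $J$ containing $E^\times$ as in \cite[\S 15.6, \S 19.3]{BushnellHenniart:06a} and then, by the ramified dihedral LLC developed in \cite[\S 41, \S 44]{BushnellHenniart:06a}, exhibit a regular character $\xi$ of $E^\times$ (obtained from $\theta$ by twisting by a suitable rectifier character as in \cite[\S 34.4, \S 44.3]{BushnellHenniart:06a}) such that $\pi = \cInd_J^G \Lambda \simeq \pi(\Ind_E^F \xi)$. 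This produces the desired presentation.

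For the reverse direction, suppose $\pi$ is twist-minimal, $c(\pi) \geq 3$, and $\pi \simeq \pi(\Ind_E^F \xi)$ for some ramified quadratic $E/F$. Pulling $\pi$ back through the Classification Theorem yields a unique type $(\mathfrak{A}, J, \Lambda) \in T(F)$, and I would show that any simple stratum $(\mathfrak{A}, n, \alpha)$ underlying this type is ordinary and ramified. This amounts to verifying: (i) the $F$-algebra generated by $\alpha$ is the unique (up to $G(F)$-conjugacy) embedding of $E$ into $A = M_2(F)$ matching the LLC data for $\pi$, so that $F[\alpha] = E$; (ii) since $E/F$ is ramified, $\mathfrak{A}$ is conjugate to $\mathfrak{A}_2$, and $n$ is odd, which puts the stratum into the ramified simple class; (iii) ``ordinary'' as opposed to ``exceptional'' holds, which at $p = 2$ is the substantive point and follows from the dichotomy of \cite[\S 42, \S 44]{BushnellHenniart:06a}: exceptional types are precisely those whose Langlands parameter is \emph{not} of the form $\Ind_{E/F} \xi$ for any ramified $E$, i.e.\ the non-dihedral (triply imprimitive) cases, which are excluded by hypothesis. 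For odd $p$ this step is automatic since there are no exceptional strata.

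Finally, for the numerical assertions, by the conductor formula for compactly induced supercuspidal representations \cite[\S 15, \S 17]{BushnellHenniart:06a}, $c(\pi) = n + e(\mathfrak{A}\vert \cO)$, and $e(\mathfrak{A}_2 \vert \cO) = 2$ gives $n = c(\pi) - 2$; alternatively this matches \eqref{c(pi)c0} with $e = 2$ and $d \in \{2,3\}$ after using $n = 2 c(\xi)/e$. The oddness of $n$ (and hence of $c(\pi)$) is built into the definition of a ramified simple stratum, i.e.\ that $n$ is not divisible by $e(\mathfrak{A}\vert\cO) = 2$. The main obstacle I foresee is step (iii) of the reverse direction: carefully pinning down, at $p=2$, the equivalence between the ordinary/exceptional dichotomy for simple strata and the dichotomy between dihedral and non-dihedral Langlands parameters, which requires a nontrivial traversal of \cite[\S 41--45]{BushnellHenniart:06a} and the explicit description of the rectifier characters.
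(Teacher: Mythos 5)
Your high-level plan (restrict the Classification Theorem so that injectivity is inherited, then match the two sides) is the same skeleton as the paper's, and your forward direction is in substance the paper's citation of \cite[44.3 Thm.]{BushnellHenniart:06a}. The genuine gap is in your reverse direction, specifically steps (i) and (iii). The dichotomy you invoke --- ``exceptional types are precisely those whose Langlands parameter is not of the form $\Ind_{E}^{F}\xi$ for any ramified $E$, i.e.\ the non-dihedral (triply imprimitive) cases'' --- is not a statement you can quote, and as written it conflates two different notions: \emph{non-dihedral} means primitive (not induced from any quadratic extension), whereas \emph{triply imprimitive} representations are induced, from three quadratic extensions, at least two of which are ramified at $p=2$; such parameters lie in the target set whenever they are twist-minimal of conductor $\geq 3$, so your proposed dichotomy would not by itself place them on the ordinary side. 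What your exclusion argument actually requires is exactly the statement that every twist-minimal $\pi$ with $c(\pi)\geq 3$ whose parameter is induced from a ramified quadratic arises from an \emph{ordinary} ramified simple type with $F[\alpha]\simeq E$ --- and that is precisely the content of \cite[44.3 Thm.\ and \S 44.4]{BushnellHenniart:06a}, which the paper cites directly for the image and for surjectivity. Your step (i) ($F[\alpha]=E$, matching the LLC data) is likewise asserted rather than derived and is part of the same theorem; so the detour through the ordinary/exceptional dichotomy neither avoids nor correctly identifies the Bushnell--Henniart input that carries the argument.

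Two further points. First, to land in the stated domain you must know that the representation $\Lambda$ of the type attached to $\pi$ is a \emph{character}; this holds because $n$ is odd for a ramified simple stratum, so $\Lambda$ is automatically one-dimensional by \cite[15.6 Prop.\ 1]{BushnellHenniart:06a}, but you never address it, and without it surjectivity of the restricted map (whose domain only contains types with $\Lambda=\widetilde{\theta}$ a character) is not established. Second, the conductor bookkeeping is shaky: $c(\pi)=n+e(\mathfrak{A}\vert\cO)$ is not a correct general formula (for twist-minimal $\pi$ one has $c(\pi)=\tfrac{2n}{e}+2$ in Bushnell--Henniart's level-one normalization of $\psi$, which coincides with $n+2$ only because $e=2$ here), and the cross-check ``$n=2c(\xi)/e$'' ignores the shift coming from $c(\psi_E)$ and fails for $d\neq 2$. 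The correct relation $n=c(\pi)-2$ requires keeping track of the fact that Bushnell--Henniart's $n$ is defined relative to an additive character of level $1$ while the paper's $c(\pi)$ uses conductor $0$, a normalization point the paper flags explicitly and you do not mention; the oddness of $n$, by contrast, is fine, since it is built into the definition of a ramified simple stratum.
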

\begin{proof}
The compact induction map has image in the latter set of supercuspidal representations by \cite[44.3 Thm.]{BushnellHenniart:06a}, and is surjective by the discussion in \cite[\S 44.4]{BushnellHenniart:06a}. Note that every irreducible representation $\Lambda$ of $J_\alpha$ for which $\Lambda \vert_{U_{\mathfrak{A}}^{\lfloor n/2 \rfloor +1}}$ is a multiple of $\psi'_\alpha$ is necessarily $1$-dimensional when $n$ is odd \cite[15.6 Prop.\ 1]{BushnellHenniart:06a}, as is the case for a ramified simple strata. Therefore, the restriction to $\Lambda = \widetilde{\theta}$ a character in the set of cuspidal types is no restriction at all. The fact that $n = c(\pi)-2$ follows from \cite[\S 44.4]{BushnellHenniart:06a}, recalling that $n=n(\pi,\psi')$ in Bushnell-Henniart is with respect to $\psi'$ having level 1, whereas our definition of conductor exponent $c(\pi)$ is with respect to an additive character of conductor 0. 
\end{proof}

Suppose that $\pi, (E/F, \xi)$ are as in the image set of Lemma \ref{lem:wilddiagram}.  Recall the element $\alpha_\xi$ associated to the character $\xi$ of $E^\times$ and the additive character $\psi' \circ \Tr$ of $E$ by Lemma \ref{postnikov}.  Suppose $(\mathfrak{A},n, \alpha)$ is an ordinary ramified simple stratum giving rise to a cuspidal type $(\mathfrak{A}, J_\alpha, \widetilde{\theta})$ in the conjugacy class corresponding to $\pi$ in the domain of Lemma \ref{lem:wilddiagram}.

\begin{mylemma}\label{alphathetaalphaxi}
Write $n=2m+1$ and suppose that 
$$ 2 \min(v_E(2)+1,2 \lfloor \frac{d+1}{2}\rfloor)<m+3 \quad \text{ and } \quad d\leq \lfloor \frac{m}{2} \rfloor +1.$$ There exists an isomorphism $E \simeq F[\alpha]$ sending $F$ to $Z$ with respect to which $\alpha_{\xi} \equiv \alpha \mod{ \fp_E^{-\frac{n-3}{2}-\min(v_E(2)+1,2 \lfloor \frac{d+1}{2}\rfloor)}}$. 
\end{mylemma}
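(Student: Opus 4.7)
The plan is to compare $\alpha_\xi$ (arising from Postnikov's description of $\xi$) with $\alpha$ (arising from the simple stratum) by passing through the character $\widetilde\theta$ used in compact induction. Fix any $F$-embedding $i: E \hookrightarrow F[\alpha]$ with $i(F)=Z$; after this identification we may compare $\alpha_\xi$ and $\alpha$ as elements of the same field $E$, with the Galois freedom to be fixed at the very end. By the defining property of the stratum $(\mathfrak{A},n,\alpha)$, the character $\widetilde\theta$ restricted to $U_{\mathfrak{A}}^{m+1}$ equals $\psi_\alpha$, i.e.\ $x\mapsto \psi(\Tr(\alpha(x-1)))$. Restricting further to $U_E(m+1)=U_{\mathfrak{A}}^{m+1}\cap E^\times$ via \eqref{BH_s_12point4}, we obtain
$$\widetilde\theta(1+u) = \psi(\Tr_{E/F}(\alpha u)) = \psi_E(\alpha u),\qquad u\in\fp_E^{m+1}.$$

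Next I would invoke the rectifier theory underlying the wildly ramified case of BH \cite[\S41, \S44]{BushnellHenniart:06a}, which precisely reconciles the compact-induction parameter $\widetilde\theta$ and the LLC parameter $\xi$: there is a character $\mu_\xi$ of $E^\times$ with $\widetilde\theta\vert_{E^\times}=\mu_\xi\cdot\xi$, whose conductor is controlled by
$$c(\mu_\xi)\le \min(v_E(2)+1,\,2\lfloor (d+1)/2\rfloor).$$
The hypothesis $2\min(v_E(2)+1,2\lfloor(d+1)/2\rfloor)<m+3$ guarantees $c(\mu_\xi)\le m+1$, so that Postnikov's linearised form \eqref{postnikov_v2} applies to $\mu_\xi$ on $U_E(m+1)$: we have $\mu_\xi(1+u)=\psi_E(\alpha_{\mu_\xi}u)$ with $v_E(\alpha_{\mu_\xi})=c(\psi_E)-c(\mu_\xi)$. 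At the same time, the hypothesis $d\le \lfloor m/2\rfloor+1$ together with $c(\psi_E)=2c(\psi_F)-d$ (cf.\ \eqref{add_char_cond}) ensures that Postnikov's linearised form applies to $\xi$ on the same subgroup: $\xi(1+u)=\psi_E(\alpha_\xi u)$ for $u\in\fp_E^{m+1}$.

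Combining these three evaluations of $\widetilde\theta$ on $U_E(m+1)$,
$$\psi_E\bigl((\alpha-\alpha_\xi-\alpha_{\mu_\xi})u\bigr)=1\qquad\text{for all }u\in\fp_E^{m+1}.$$
The non-degeneracy of $\psi_E$ on $E/\fp_E^{c(\psi_E)}$ then forces
$$\alpha-\alpha_\xi-\alpha_{\mu_\xi}\in \fp_E^{c(\psi_E)-(m+1)}=\fp_E^{1-m-d}.$$
Since $v_E(\alpha_{\mu_\xi})\ge c(\psi_E)-c(\mu_\xi)\ge 1-m-\min(v_E(2)+1,2\lfloor(d+1)/2\rfloor)$, which is strictly larger than $-n=v_E(\alpha)=v_E(\alpha_\xi)$ (using both hypotheses), the element $\alpha_\xi+\alpha_{\mu_\xi}\in E$ is again a minimal element. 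Finally, since $\alpha$ and $\alpha_\xi+\alpha_{\mu_\xi}$ are two minimal elements in Galois-conjugate-isomorphic ramified quadratic extensions of $F$, one of the two possible $F$-embeddings $E\hookrightarrow F[\alpha]$ (differing by $\Gal(E/F)$) sends $\alpha_\xi+\alpha_{\mu_\xi}$ to an element congruent to $\alpha$ at the claimed level; the error $\alpha_{\mu_\xi}$ is absorbed since its valuation is bounded below by $1-m-\min(v_E(2)+1,2\lfloor(d+1)/2\rfloor)=-\tfrac{n-3}{2}-\min(v_E(2)+1,2\lfloor(d+1)/2\rfloor)$, which yields exactly the modulus in the statement. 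The main obstacle is pinning down the conductor bound on the rectifier $\mu_\xi$ in step 2: this requires a careful traversal of the wildly ramified rectifier construction of BH \cite[Ch.~41]{BushnellHenniart:06a}, where the two possible regimes $v_E(2)+1\lessgtr 2\lfloor(d+1)/2\rfloor$ correspond to the stable/unstable dichotomy in the Gauss-sum computation.
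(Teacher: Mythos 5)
There is a genuine gap, in two places. First, your opening move --- ``fix any $F$-embedding $i:E\hookrightarrow F[\alpha]$ with $i(F)=Z$'' --- assumes precisely (half of) what the lemma asserts: at this point nothing guarantees that the field $F[\alpha]$ generated by the stratum is $F$-isomorphic to $E$; both are merely ramified quadratic extensions of $F$, of which there are several when $p=2$. In the paper the isomorphism is a \emph{conclusion}. The available inputs from \cite[44.3 Thm., \S 44.4]{BushnellHenniart:06a} are congruences between $F$-valued invariants only, namely
$\Tr \alpha_0 \equiv \varpi_F^{\frac{n+1}{2}}\delta_{E/F} + \Tr_{E/F}\alpha_{\xi,0} \pmod{\fp_F^{\lfloor \frac{m+3}{2}\rfloor}}$ and $\det\alpha_0/\Nm\alpha_{\xi,0}\in U_F(\lfloor \frac{m}{2}\rfloor+1)$; the hypothesis $d\leq \lfloor \frac{m}{2}\rfloor+1$ kills the $\delta_{E/F}$ term, one then evaluates the minimal polynomial $g$ of the normalized minimal element $\alpha_0$ at $\alpha_{\xi,0}$ to get $v_E(g(\alpha_{\xi,0}))\geq m+3$, while $v_E(g'(\alpha_{\xi,0}))=\min(v_E(2)+1,2\lfloor\frac{d+1}{2}\rfloor)$, and the first displayed hypothesis of the lemma is exactly the condition allowing Hensel's lemma to produce a root $y_0\in\cO_E$ of $g$ with $y_0\equiv\alpha_{\xi,0}$ at the stated level. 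That root simultaneously furnishes the isomorphism $E\simeq F[\alpha]$ and the congruence; an argument that presupposes the identification cannot deliver it.

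Second, even granting the embedding, your key input --- a character $\mu_\xi$ of $E^\times$ with $\widetilde{\theta}\vert_{E^\times}=\mu_\xi\,\xi$ and $c(\mu_\xi)\leq\min(v_E(2)+1,2\lfloor\frac{d+1}{2}\rfloor)$ --- is asserted rather than proved, and you yourself flag it as ``the main obstacle.'' No such multiplicative rectifier statement with that conductor bound is available off the shelf in Bushnell--Henniart for the wildly ramified dihedral case; their Chapter 44 encodes the comparison between the compact-induction data and the admissible pair precisely through the trace/norm congruences quoted above, and your conductor bound is essentially equivalent to them, so establishing it would amount to redoing the paper's argument. As written, the entire difficulty of the lemma has been displaced into an unproven claim (and the closing choice of ``one of the two possible $F$-embeddings'' to absorb the Galois ambiguity is likewise only asserted, though that is minor by comparison).
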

\begin{proof}
The elements $\alpha$ and $\alpha_\xi$ are minimal by \cite[13.4 Prop.\ (1)]{BushnellHenniart:06a} and Lemma \ref{minimalimpliesminimal}, respectively. 
Let $\alpha_0$ and $\alpha_{\xi,0}$ be the corresponding normalized minimal elements as in Definition \ref{normminlelt}. Let $$g(x) = x^2- (\Tr \alpha_0)  x+ \det \alpha_0$$ be the minimal polynomial of $\alpha_0$. 
By \cite[44.3 Thm.]{BushnellHenniart:06a}, we have that
\begin{equation*} \Tr \alpha_0 \equiv \varpi_F^{\frac{n+1}{2}}  \delta_{E/F}+  \Tr_{E/F} \alpha_{\xi,0}\pmod{\fp_{F}^{\lfloor \frac{m+3}{2}\rfloor}}, \end{equation*}
where $\delta_{E/F} \in \fp_F^{-(d-1)}$ is such that $\eta_{E/F}(1+x) = \psi'(\delta_{E/F}x)$ for all $x \in \fp_F^{1+\lfloor \frac{d-1}{2}\rfloor}$. 
By hypothesis, we have $ \varpi_F^{\frac{n+1}{2}}  \delta_{E/F} \in \fp_{F}^{\lfloor \frac{m+3}{2}\rfloor}$, so 
$$ \Tr \alpha_0 \equiv  \Tr_{E/F} \alpha_{\xi,0}\pmod{\fp_{F}^{\lfloor \frac{m+3}{2}\rfloor}}.$$ Meanwhile, Section 44.4 of loc.\ cit.\ also gives us that $$\frac{\det \alpha_0 }{ \Nm \alpha_{\xi,0}} \in U_F(\lfloor \frac{m}{2} \rfloor +1).$$ Setting $f$ to be the minimal polynomial of $\alpha_{\xi,0}$ we obtain $$0=f(\alpha_{\xi,0}) \in  g(\alpha_{\xi,0}) + \alpha_{\xi,0} \fp_{F}^{\lfloor \frac{m+3}{2}\rfloor}  +  \Nm  \alpha_{\xi,0}\fp_F^{\lfloor \frac{m}{2} \rfloor + 1}.$$ Therefore \begin{equation}\label{vEg}v_E(g(\alpha_{\xi,0}))\geq m+3.\end{equation}

We want to apply Hensel's lemma, so we also need an upper bound on $v_E( g'(\alpha_{\xi,0}))$. We have $$g'(\alpha_{\xi,0}) \equiv  2 \alpha_{\xi,0}- \Tr_{E/F} \alpha_{\xi,0}\pmod{\fp_{F}^{\lfloor \frac{m+3}{2}\rfloor}}.$$ By Lemma \ref{normminlelt_lemma}, we have
$$ v_E(g'(\alpha_{\xi,0}) ) = \min( v_E(2\alpha_{\xi,0}), v_E(-\Tr_{E/F} \alpha_{\xi,0})).$$
Recall that $v_E( \alpha_{\xi,0})=1$, so that by \cite[41.2 Prop.\ (1)]{BushnellHenniart:06a}, we have that $v_F(\Tr_{E/F} \alpha_{\xi,0})= \lfloor \frac{d+1}{2}\rfloor$. Therefore
\begin{equation}\label{vEgprime}v_E(g'(\alpha_{\xi,0}) ) = \min(v_E(2) +1, 2  \lfloor \frac{d+1}{2}\rfloor).\end{equation}
Combining \eqref{vEg} and \eqref{vEgprime} along the hypothesis that $m$ is sufficiently large, we obtain that $v_E( g(\alpha_{\xi',0}))>2v_E(g'(\alpha_{\xi',0}) ) .$ 
 By Hensel's lemma, we get that there exists a unique $y_0 \in \cO_E$ such that $g(y_0)=0$ and $y_0 \equiv \alpha_{\xi,0} \mod \fp_E^{m+3-\min(v_E(2) +1, 2  \lfloor \frac{d+1}{2}\rfloor)}$.

 Letting $y= \varpi_F^{- \lfloor \frac{n+1}{2}\rfloor } y_0$, we get that  $E \simeq F[\alpha]$ by sending $y$ to $\alpha$. Identifying $\alpha$ with $y \in E$, we get that $\alpha \equiv \alpha_{\xi} \mod \fp_E^{-(n+1)+m+3-\min(v_E(2) +1, 2  \lfloor \frac{d+1}{2}\rfloor)}.$
\end{proof}
When $E\simeq F[\alpha]$, the embedding $F[\alpha]^\times\hookrightarrow G(F)$ is conjugate to a standard $E^\times \hookrightarrow G(F)$, so we may work with standard choices of cuspidal types, precisely, the following. 
\begin{mylemma}\label{choice_of_cusp_type}
Suppose that $\pi, (E/F, \xi)$ are as in the image set of Lemma \ref{lem:wilddiagram}. Let $\beta$ be a normalized minimal element for $E/F$ and fix the corresponding embedding $E^\times \hookrightarrow G(F)$ \eqref{Oembedding}. 
If $n$ is sufficiently large in the sense of Lemma \ref{alphathetaalphaxi}, then there exists a representative $(\mathfrak{A}, J, \widetilde{\theta})$ for the conjugacy class of cuspidal types corresponding to $\pi$ with $\mathfrak{A}$ the unique chain order such that $\beta \in  \mathcal{K}_{\mathfrak{A}} $, \begin{equation}\label{Jramdef}
J= E^\times U_{\mathfrak{A}}^{\frac{n+1}{2}}
\end{equation}
 and $\alpha \in E^\times \subset G(F)$.
\end{mylemma}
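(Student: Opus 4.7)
The plan is to apply Lemma~\ref{alphathetaalphaxi} together with the Skolem--Noether theorem and \cite[12.4 Prop.(2)]{BushnellHenniart:06a} to conjugate an arbitrary representative of the cuspidal-type conjugacy class associated to $\pi$ into one adapted to the embedding $E^\times \hookrightarrow G(F)$ fixed by $\beta$ via \eqref{Oembedding}.

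Concretely, I would start with any representative $(\mathfrak{A}_0, J_0, \widetilde{\theta}_0)$ of the $G(F)$-conjugacy class associated to $\pi$ by Lemma~\ref{lem:wilddiagram}, with underlying ordinary ramified simple stratum $(\mathfrak{A}_0, n, \alpha_0)$, so that $J_0 = F[\alpha_0]^\times U_{\mathfrak{A}_0}^{(n+1)/2}$. Since $n$ is sufficiently large by hypothesis, Lemma~\ref{alphathetaalphaxi} yields an $F$-algebra isomorphism $\sigma : E \xrightarrow{\sim} F[\alpha_0]$ identifying $F \subset E$ with $Z \subset A = M_2(F)$; composing with the inclusion $F[\alpha_0] \hookrightarrow A$ gives a unital $F$-algebra embedding $i_\sigma : E \hookrightarrow A$, to be compared with the standard embedding $i_{\mathrm{std}} : E \hookrightarrow A$ from \eqref{Oembedding}. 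The Skolem--Noether theorem applied to $i_\sigma$ and $i_{\mathrm{std}}$ produces $g \in G(F)$ with $i_\sigma(x) = g\, i_{\mathrm{std}}(x)\, g^{-1}$ for every $x \in E$. I then transport the cuspidal type by setting
\begin{equation*}
\mathfrak{A} := g^{-1} \mathfrak{A}_0\, g, \qquad J := g^{-1} J_0\, g, \qquad \widetilde{\theta}(h) := \widetilde{\theta}_0(ghg^{-1}),
\end{equation*}
obtaining a representative of the same conjugacy class.

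Setting $\alpha := g^{-1} \alpha_0\, g = i_{\mathrm{std}}(\sigma^{-1}(\alpha_0))$, the element $\alpha$ lies in $E^\times \subset G(F)$ and $(\mathfrak{A}, n, \alpha)$ remains an ordinary ramified simple stratum. Moreover $g^{-1} F[\alpha_0]^\times g = i_{\mathrm{std}}(E^\times) = E^\times$, so $J = E^\times U_{\mathfrak{A}}^{(n+1)/2}$. Finally, conjugating the inclusion $F[\alpha_0]^\times \subset \mathcal{K}_{\mathfrak{A}_0}$ yields $E^\times \subset \mathcal{K}_{\mathfrak{A}}$, so by \cite[12.4 Prop.(2)]{BushnellHenniart:06a} the chain order $\mathfrak{A}$ is the unique one in $A$ normalized by $E^\times$, and in particular (since $\beta \in E^\times$) by $\beta$. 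The assertion that $\mathfrak{A}$ is the \emph{unique} chain order with $\beta \in \mathcal{K}_{\mathfrak{A}}$ then follows from the standard observation that the elliptic element $\beta$ has a unique fixed point or edge on the Bruhat--Tits tree of $G(F)$, which pins down the chain order uniquely and matches the one from \cite[12.4 Prop.(2)]{BushnellHenniart:06a}. The main input is Lemma~\ref{alphathetaalphaxi}, which identifies the abstract $E$ with the concrete subfield $F[\alpha_0] \subset A$; once this identification is available, the remainder is essentially a formal conjugation argument, with no serious obstacle beyond bookkeeping.
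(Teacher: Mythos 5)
Your proposal is correct and follows essentially the route the paper intends: the paper disposes of this lemma with the one-line remark preceding it (once Lemma \ref{alphathetaalphaxi} gives $E\simeq F[\alpha]$, the embedding $F[\alpha]^\times\hookrightarrow G(F)$ is conjugate to the standard one via \eqref{Oembedding}, and one conjugates the cuspidal type accordingly), which is exactly your Skolem--Noether plus transport argument, with the uniqueness of $\mathfrak{A}$ supplied by \cite[12.4 Prop.(2)]{BushnellHenniart:06a}. Your additional tree-fixed-point remark just makes the literal "unique with $\beta\in\mathcal{K}_{\mathfrak{A}}$" phrasing explicit and is harmless.
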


For a cuspidal type  $(\mathfrak{A}, J, \widetilde{\theta})$ as in Lemma \ref{choice_of_cusp_type}, let $\theta = \widetilde{\theta} \vert_{E^\times}$. For future reference, note that we can always recover $\widetilde{\theta}$ from $\theta$ by the extension \begin{equation}\label{theta_to_thetatilde}
\widetilde{\theta}(\ell (1+x)) = \theta(\ell) \psi'( \Tr(\alpha_{\theta} x)) \quad \ell \in E^\times, 1+x \in  U_{\mathfrak{A}}^{ \frac{n+1}{2}}.
\end{equation}

\begin{mycoro}\label{alphaxialphathetacor}
Suppose $(\mathfrak{A}, J, \widetilde{\theta})$ and  $(\mathfrak{A},n,\alpha)$ are as in Lemma \ref{choice_of_cusp_type}, with $n$ sufficiently large in the sense of Lemma \ref{alphathetaalphaxi}. Then, we have $v_E(\alpha_\xi) = v_E(\alpha)=v_E(\alpha_\theta) = -n$ and $\alpha_\xi \equiv \alpha_\theta \mod{ \fp_E^{-\frac{n-3}{2}-\min(v_E(2) +1, 2  \lfloor \frac{d+1}{2}\rfloor)}}$. 
\end{mycoro}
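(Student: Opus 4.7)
The plan is to treat the valuation claim and the congruence claim separately, then combine them via the triangle inequality. For the valuations: the stratum condition forces $\alpha \in \mathfrak{P}^{-n} \setminus \mathfrak{P}^{-n+1}$, and via the embedding $E \hookrightarrow A$ afforded by Lemma \ref{choice_of_cusp_type} together with the intersection formula $\fp_E^k = \mathfrak{P}^k \cap E$ from \eqref{BH_s_12point4}, this gives $v_E(\alpha) = -n$. For $\alpha_\xi$, Lemma \ref{postnikov} combined with \eqref{ellchidef} yields $v_E(\alpha_\xi) = c(\psi_E) - c(\xi)$; plugging in $c(\xi) = c(\pi) - d = n + 2 - d$ (via \eqref{c(pi)c0} with $e = 2$) and $c(\psi_E) = 2 - d$ (by \eqref{add_char_cond}, using the Bushnell–Henniart convention $c(\psi_F) = 1$ in force throughout Section \ref{compactinductionbackground}) gives $v_E(\alpha_\xi) = -n$. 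The same computation with $c(\theta) = c(\xi) = n + 2 - d$ yields $v_E(\alpha_\theta) = -n$.

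The heart of the proof is to show that $\alpha$ and $\alpha_\theta$ represent the \emph{same} character of $U_E((n+1)/2)$, forcing them to agree modulo a controlled ideal. Indeed, the cuspidal type property $\widetilde\theta \vert_{U_\mathfrak{A}^{(n+1)/2}} \simeq \psi_\alpha$ means $\widetilde\theta(1+x) = \psi(\Tr(\alpha x))$ for $1+x \in U_\mathfrak{A}^{(n+1)/2}$, with $\Tr$ the reduced matrix trace, while specializing \eqref{theta_to_thetatilde} at $\ell = 1$ gives $\widetilde\theta(1+x) = \psi(\Tr(\alpha_\theta x))$ on the same subgroup. Restricting to $1 + x \in U_E((n+1)/2) = U_\mathfrak{A}^{(n+1)/2} \cap E^\times$ (using \eqref{BH_s_12point4}) and noting that for $y \in E \subset A$ the reduced trace equals $\Tr_{E/F}(y)$, both formulas evaluate $\theta(1+x) = \psi_E(\alpha x) = \psi_E(\alpha_\theta x)$ for $x \in \fp_E^{(n+1)/2}$. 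The uniqueness clause of Lemma \ref{postnikov} then forces $\alpha - \alpha_\theta$ to annihilate $\fp_E^{(n+1)/2}$ under $\psi_E$, i.e., to lie in $\fp_E^{-(n+1)/2 + c(\psi_E)} = \fp_E^{-(n-3)/2 - d}$. One must check that Postnikov's range hypothesis is met, i.e., $(n+1)/2 > e_{F/\Q_p}/(p-1)$ and $c(\theta) \geq (n+1)/2$, but both follow from the lower bound on $n$ imposed in Lemma \ref{alphathetaalphaxi}.

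To finish, invoke the standard wild-ramification bound $d \leq e - 1 + v_E(e) = 1 + v_E(2)$ (valid for every ramified quadratic $E/F$) together with the trivial inequality $d \leq 2 \lfloor (d+1)/2 \rfloor$, which together give $d \leq \min(v_E(2)+1,\, 2\lfloor(d+1)/2\rfloor)$. Hence $\fp_E^{-(n-3)/2 - d} \subseteq \fp_E^{-(n-3)/2 - \min(v_E(2)+1,\,2\lfloor(d+1)/2\rfloor)}$, and applying the triangle inequality to
\[
\alpha_\xi - \alpha_\theta = (\alpha_\xi - \alpha) + (\alpha - \alpha_\theta)
\]
with the first summand controlled by Lemma \ref{alphathetaalphaxi} and the second by the preceding paragraph yields the claimed congruence.

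The hard part will be the bookkeeping to align the two descriptions of $\widetilde\theta$: the stratum-theoretic side uses $\psi_\alpha$ built from the reduced trace on $M_2(F)$, whereas the Postnikov side uses $\Tr_{E/F}$, and the non-standard convention $c(\psi_F)=1$ must be propagated consistently through the conductor of $\psi_E$. Once these identifications are made carefully, the proof reduces to matching the two realizations of $\widetilde\theta$ on the abelian subgroup $U_E((n+1)/2)$ and invoking Postnikov uniqueness.
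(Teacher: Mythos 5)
Your proposal is correct and follows essentially the same route as the paper: restrict the cuspidal-type condition to $U_E(\tfrac{n+1}{2}) = U_{\mathfrak{A}}^{\frac{n+1}{2}} \cap E^\times$ via \eqref{BH_s_12point4} to identify $\theta$ with $\psi_\alpha$ there, compare with the Postnikov element $\alpha_\theta$ to obtain $\alpha \equiv \alpha_\theta \pmod{\fp_E^{c(\psi_E)-\frac{n+1}{2}}}$, and conclude with Lemma \ref{alphathetaalphaxi}. The only cosmetic difference is that you assert $c(\theta)=c(\xi)$ to get $v_E(\alpha_\theta)=-n$, whereas the paper reads $c(\theta)=n+c(\psi_E)$ directly off the same restriction identity (equivalently, $v_E(\alpha_\theta)=-n$ already follows from your congruence together with $v_E(\alpha)=-n$), so this is a presentational gloss rather than a gap.
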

\begin{proof}
By \eqref{BH_s_12point4} we have $U_{E}(i) = U_{\mathfrak{A}}^i \cap E^\times$, so that 
\begin{equation}\label{thetatopsi}\theta \vert_{U_{E}(\frac{n+1}{2})} = \psi'_{\alpha}\end{equation} 
and $c(\theta) = n+c(\psi'_{E})$. Then, $$ \alpha_\theta \in \varpi_{E}^{-n} \left( \cO_E/\fp_{E}^{c(\theta)-\lceil c(\theta)/2 \rceil}\right)^\times$$ by Lemma \ref{postnikov} and $\alpha = \alpha_\theta \pmod {\fp_{E}^{c(\psi'_{E})- \lfloor n/2 \rfloor -1}}$ by \eqref{thetatopsi}. Now apply Lemma \ref{alphathetaalphaxi}.
\end{proof}

By Schur's lemma and Frobenius reciprocity for compact inductions, there is a 1-dimensional space of $\varphi \in V_\pi$ such that 
\begin{equation}\label{minlvecdef}
\pi(u) \varphi = \widetilde{\theta}(u) \varphi \quad \text{ for all } u \in J.
\end{equation}
If $\pi$ is as in Lemma \ref{lem:wilddiagram}, then a vector $\varphi$ as in \eqref{minlvecdef} is called a \emph{minimal vector} for $\pi$ (following \cite{HuNelsonSaha:17a, HN18}). Note: a conjugacy class of cuspidal types $(\mathfrak{A}, J, \widetilde{\theta})$ corresponding to such $\pi$ by Lemma \ref{lem:wilddiagram} necessarily has $\dim  \widetilde{\theta}=1$. Therefore, in the case at hand there is no distinction between minimal vectors of type 1 and type 2 cf.\ \cite[Def.\ 3.21]{HN18}.
\begin{mylemma}\label{mx_coeff_ofminlvec}
For $\varphi$ a minimal vector, we have
$$ \frac{\langle \pi(g)\varphi , \varphi\rangle}{\langle \varphi , \varphi\rangle} =\begin{cases} \widetilde{\theta}(g) & \text{ if } g \in J, \\ 0 & \text{ otherwise}. \end{cases}$$
\end{mylemma}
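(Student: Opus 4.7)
The plan is to work in the explicit model of $\pi \simeq \cInd_{J}^{G(F)} \widetilde{\theta}$ as functions $f\colon G(F)\to \C$ that are left-$J$-equivariant (i.e.\ $f(jg) = \widetilde{\theta}(j) f(g)$ for all $j \in J$, $g \in G(F)$), compactly supported modulo $J$, with $G(F)$ acting by right translation. In this model, I exhibit the distinguished function $\varphi_\star$ supported on $J$ with $\varphi_\star(u) = \widetilde{\theta}(u)$ for $u \in J$. The character identity $\widetilde{\theta}(ju) = \widetilde{\theta}(j) \widetilde{\theta}(u)$ ensures $\varphi_\star$ is a well-defined element of the induced space, and its support consists of a single left $J$-coset. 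A direct computation, using that $\varphi_\star(gu)\neq 0$ forces $g \in J$, shows $\pi(u)\varphi_\star = \widetilde{\theta}(u)\varphi_\star$ for all $u \in J$, so $\varphi_\star$ is a minimal vector. Since the space of minimal vectors is one-dimensional, any minimal vector $\varphi$ equals a nonzero scalar times $\varphi_\star$, and as the stated ratio is invariant under $\varphi\mapsto \lambda\varphi$, it suffices to verify the formula for $\varphi_\star$.

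Next I equip $\cInd_J^{G(F)} \widetilde{\theta}$ with the standard inner product
\[
\langle f_1, f_2\rangle = \int_{J\backslash G(F)} f_1(x)\,\overline{f_2(x)}\,dx,
\]
which is well-defined because unitarity of $\widetilde{\theta}$ makes $f_1\overline{f_2}$ left-$J$-invariant, and which is $G(F)$-invariant under right translation (both $J$ and $G(F)$ are unimodular, so such a measure exists, and since $J$ is open in $G(F)$ the quotient is discrete with strictly positive coset weight). Then
\[
\langle \pi(g)\varphi_\star,\varphi_\star\rangle = \int_{J\backslash G(F)} \varphi_\star(xg)\,\overline{\varphi_\star(x)}\,dx.
\]
The factor $\overline{\varphi_\star(x)}$ vanishes unless $x \in J$, so only the trivial coset contributes. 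Choosing $x=1$ as representative and writing $c$ for the (positive) $G(F)$-invariant volume of this coset, the integrand at $x=1$ equals $\varphi_\star(g) \cdot \overline{\widetilde{\theta}(1)} = \widetilde{\theta}(g)$ when $g \in J$ and $0$ otherwise. Hence $\langle \pi(g)\varphi_\star,\varphi_\star\rangle = c\,\widetilde{\theta}(g)$ for $g\in J$ and $0$ elsewhere. Setting $g=1$ yields $\langle \varphi_\star,\varphi_\star\rangle = c$, so the claimed ratio follows.

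The argument is essentially bookkeeping in the compact induction model, so there is no substantial obstacle. The only care required is to check that the explicit $\varphi_\star$ is indeed (a generator of) the line of minimal vectors, and that the left/right equivariance conventions are consistent with the inner product chosen — both of which reduce to the unitarity of $\widetilde{\theta}$ and the character property on $J$.
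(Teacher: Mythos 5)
Your argument is correct: in the compactly induced model the vector supported on the single coset $J$ is indeed the (up to scalar, unique) minimal vector, and the matrix-coefficient computation via the $G$-invariant pairing on $J\backslash G$ (discrete since $J$ is open and contains $Z$) goes through exactly as you say. The paper itself gives no proof — it only cites \cite[Lem.\ 3.1]{HuNelsonSaha:17a} — and your computation is precisely the standard argument underlying that cited lemma, so there is nothing further to flag.
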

\begin{proof}
The lemma follows from  \cite[Lem.\ 3.1]{HuNelsonSaha:17a} along the lines of loc.\ cit.\ Proposition 3.2.
\end{proof}

\subsection{Supercuspidal families, $p\neq 2$}\label{sec:egSupercuspidal_odd}
We assume that $F=\Q_p$ and $p \neq 2$ until the end of this section (although some intermediate results hold more generally). In this case, if $D$ is either a non-square unit or has $v(D)=1$, then $\alpha_0=\sqrt{D}$ is a normalized minimal element for $F(\sqrt{D})/F$. If $(E/F,\xi)$ is an admissible pair with $\xi\vert_{F^\times } = \eta_{E/F}$, then $\ell_{\Delta_\xi \xi} \alpha_0 \in \cO^\times$, see the discussion just before Lemma \ref{normminlelt_lemma} and just after the proof of Lemma \ref{lem:tamediagram}. 

Given a trivial central character supercuspidal $\sigma$ of $\GL_2(F)$ corresponding by the Tame LLC to an admissible pair $(E/F,\xi)$ (up to $F$-equivalence), we define the test function
\begin{equation}\label{fandKprimedef}f_{\xi} := \frac{1}{\|\Phi \vert_{ZK'}\|_2^2}\overline{\Phi} \vert_{ZK'} \quad \text{ with } \quad K' = a(p^{-c_0})K a(p^{c_0}),\end{equation}
where $K$ is the standard maximal compact in $G$, the normalized conductor $c_0$ was defined in \eqref{c0def}, and $\Phi(g)= \langle \sigma(g)\varphi_0,\varphi_0\rangle$ where $\varphi_0$ is an $L^2$-normalized newform in $\sigma$.

We begin by reviewing the previous work of the first author \cite{Hu}.  Let $\phi_{\xi,0}$ be the function on $G(F)$ given by $\phi_{\xi,0}(g) = \langle \pi_{\Delta_\xi \xi}(g)\varphi,\varphi\rangle\vert_{ZB^1}$, where $\varphi$ is an $L^2$-normalized minimal vector in $ \pi_{\Delta_\xi \xi}$ and $Z B^1 \subset J \subset G$ is the subgroup described in Section \ref{compactinductionbackground}. Note that the function $\phi_{\xi,0}(g)$ is equal to the function $\widetilde{\Phi}_{0,0}$ found in  \cite[Def.\ 3.18]{Hu}.

In explicit terms, if $g \in G(F)$ can be written as $g=u \left(\begin{smallmatrix} 1+x & m \\ 0 & 1 \end{smallmatrix}\right)$ or $\left(\begin{smallmatrix} 1+x & m \\ 0 & 1 \end{smallmatrix}\right)u$ with $v(x) \geq \lceil c_0/2 \rceil$, $v(m)  \geq \lfloor c_0/2 \rfloor$ and $u \in Z U_E(1)$ embedded in $ZK$ by the map given in \eqref{Oembedding}, then we set
\begin{equation}\label{fsupercuspidaldef}
\phi_{\xi,0}(g) = \xi(u) \psi(p^{-c_0} \sqrt{D} \ell_{\Delta_\xi \xi} m).
\end{equation} 
If $g$ cannot be written in this way, then we set $\phi_{\xi,0}(g) =0$. See \cite[Cor.\ 3.19]{Hu}. 
In particular, note that $\phi_{\xi,0}$ has support contained in $ZK_0(p)$.

The function $\phi_{\xi,0}$ is a projector but not onto newforms. In order to recover a newform projector, following \cite[Def.\ 3.20]{Hu}, let
\begin{equation}\label{sctestfcn}
\phi_{\xi}(g) := \nu(p^{\lfloor c_0/2 \rfloor+1}) \sumstar_{\alpha,\alpha' \shortmod{p^{\lceil c_0/2\rceil}}} \overline{\phi_{\xi, 0} (a(p^{c_0}\alpha') g a(p^{c_0}\alpha)^{-1})}.
\end{equation}

\begin{mylemma}[Hu]\label{FEtheta_satisfies_geom_and_spec} Suppose $p\neq 2$. 
The function $\phi_{\xi}$ satisfies the spectral and geometric assumptions with support controlled by $y=p^{c_0+1}$.
\end{mylemma}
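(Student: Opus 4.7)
The plan is to verify three properties of $\phi_\xi$: bi-$A(\Z_p)$-invariance (geometric assumption \eqref{geo2}), support controlled by $y = p^{c_0+1}$ (geometric assumption \eqref{geo3}), and the newform projector property (the non-classical case of the spectral assumption). All three follow rather transparently from the explicit formula \eqref{fsupercuspidaldef} for $\phi_{\xi,0}$ together with the averaging structure of \eqref{sctestfcn}, combined with results already established in \cite{Hu}.

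For the geometric assumptions, I would start from the explicit description of $\supp \phi_{\xi,0}$ from \eqref{fsupercuspidaldef}: elements of the form $u \left(\begin{smallmatrix} 1+x & m \\ 0 & 1 \end{smallmatrix}\right)$ (or with the factors reversed) with $u \in ZU_E(1)$ embedded via \eqref{Oembedding}, $v(x)\geq\lceil c_0/2\rceil$ and $v(m) \geq \lfloor c_0/2\rfloor$. In particular $\supp \phi_{\xi,0} \subseteq ZK_0(p)$. For each pair $(\alpha,\alpha')$, the summand $g \mapsto \phi_{\xi,0}(a(p^{c_0}\alpha')g\,a(p^{c_0}\alpha)^{-1})$ has support $a(p^{c_0}\alpha')^{-1}\cdot\supp\phi_{\xi,0}\cdot a(p^{c_0}\alpha)$, and a direct $2\times 2$ matrix computation shows that for $h = \bigl(\begin{smallmatrix}a&b\\c&d\end{smallmatrix}\bigr) \in K_0(p)$ we have $a(p^{c_0}\alpha')^{-1} h\, a(p^{c_0}\alpha) = \bigl(\begin{smallmatrix} a\alpha/\alpha' & b/(p^{c_0}\alpha') \\ cp^{c_0}\alpha & d \end{smallmatrix}\bigr)$, which lands in $a(p^{c_0+1})^{-1} K\, a(p^{c_0+1})$ exactly because $c \in p\Z_p$. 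This verifies \eqref{geo3}. For \eqref{geo2}, I would substitute $\alpha \mapsto \alpha\beta^{-1}$ and $\alpha' \mapsto \alpha'\beta'^{-1}$ in the defining sum \eqref{sctestfcn}, using that these changes permute the starred sum over $(\Z/p^{\lceil c_0/2\rceil}\Z)^\times$, to conclude $\phi_\xi(a(\beta') g a(\beta)^{-1}) = \phi_\xi(g)$ for every $\beta,\beta'\in \Z_p^\times$.

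For the spectral assumption, I would show $\phi_\xi$ is a newform projector by computing $\pi(\phi_\xi)$ for generic irreducible unitary $\pi$. The starting point is that $\phi_{\xi,0}$ is (a scalar multiple of) the restriction to $ZB^1$ of the diagonal matrix coefficient of an $L^2$-normalized minimal vector $\varphi$ in $\pi_{\Delta_\xi \xi}$, which by \cite{Hu} makes $\pi_{\Delta_\xi\xi}(\phi_{\xi,0})$ a rank-one projection onto $\C\varphi$ and makes $\pi(\phi_{\xi,0})$ vanish on representations other than $\sigma = \pi_\xi$ (up to the passage from $\xi$ to $\Delta_\xi\xi$, via Lemma \ref{lem:tamediagram}). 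The averaging over $a(p^{c_0}\alpha)$ and $a(p^{c_0}\alpha')$ with the normalization $\nu(p^{\lfloor c_0/2\rfloor+1})$ is exactly the transformation that carries the minimal vector projector to the newvector projector: concretely, using the translation formula $\sigma(a(p^{c_0}\alpha))\varphi$ to reach a basis of the translates of $\varphi$ under $A$-conjugation, one expects an identity of the form $\varphi_0 = C \sumstar_\alpha \sigma(a(p^{c_0}\alpha))\varphi$ (up to $L^2$-normalization constants matching $\nu(p^{\lfloor c_0/2\rfloor+1})$), from which $\sigma(\phi_\xi)$ becomes the rank-one projection onto $\C\varphi_0$. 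For representations $\pi\not\cong\sigma$, the vanishing $\pi(\phi_\xi) = 0$ is inherited from the corresponding vanishing of $\pi(\phi_{\xi,0})$.

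The main obstacle will be the spectral computation: while Hu's paper essentially contains the pieces, the passage from a minimal-vector projector (on a fairly large subgroup $ZB^1$) to a newvector projector through this explicit averaging requires careful book-keeping of normalization constants and a verification that nothing ``leaks'' to representations related to $\sigma$ by unramified twisting. I expect this will reduce, as in \cite{Hu}, to an explicit calculation of the action of $\sigma(a(p^{c_0}\alpha))$ in the Kirillov model of $\sigma$, where minimal vectors and newforms both have simple descriptions and the transition is essentially a finite Fourier transform on $(\Z/p^{\lceil c_0/2\rceil}\Z)^\times$.
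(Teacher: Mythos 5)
Your treatment of the geometric assumptions is correct and is in substance exactly what the paper does: the paper's proof simply observes that $\supp \phi_{\xi,0} \subseteq ZK_0(p)$ and that conjugating by $a(p^{c_0}\alpha)$, $a(p^{c_0}\alpha')$ therefore places the support of each summand of \eqref{sctestfcn} inside $a(p^{c_0+1})^{-1}ZK a(p^{c_0+1})$ (your explicit $2\times 2$ computation, with the key point being the lower-left entry in $p\Z_p$, is the content of the paper's ``it follows''), and geometric assumption \eqref{geo2} then comes for free from the spectral assumption via Lemma \ref{specControlonH}(2), or alternatively from your change-of-variables argument, which is also fine. For the spectral assumption the paper does not reprove anything: it cites \cite[Prop.\ 3.21]{Hu} verbatim, which is why the lemma carries Hu's name.

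The one genuine flaw is in your sketch of the spectral part. You assert that $\pi(\phi_{\xi,0})$ ``vanishes on representations other than $\sigma$'' and that $\pi(\phi_\xi)=0$ for $\pi\not\cong\sigma$ is inherited from this, and you propose to verify ``that nothing leaks to representations related to $\sigma$ by unramified twisting.'' That verification cannot succeed, because leakage does occur: by Theorem \ref{cor:SpecAssumption_supercuspidal}, when $c(\sigma)$ is odd one has $\pi(\phi_\xi)\neq 0$ precisely for $\pi\simeq\sigma$ and $\pi\simeq\sigma\times\eta$ (the unramified quadratic twist), and when $c(\sigma)$ is even the operator-valued function $\pi\mapsto\pi(\phi_{\xi})$ is supported on the neighbourhood $i(\xi[1]/\sim_0)$, of cardinality $\asymp p$ (it is the sharper test function $f_\xi=\overline{\Phi}\vert_{ZK'}/\|\Phi\vert_{ZK'}\|_2^2$, not $\phi_\xi$, that isolates $\{\sigma\}$ in that case). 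None of this threatens the lemma, because Definition \ref{def:newformproj} only requires that $\pi(\phi_\xi)$ be either zero or a projection onto the newform line, with no constraint on how many $\pi$ receive a nonzero operator; but your proposed route, taken literally, aims at a false intermediate statement and would stall there. The repair is either to prove the weaker (and correct) statement that on every member of the relevant finite family the averaged operator projects onto the newform line --- which is exactly Hu's Proposition 3.21 and the computation the present paper redoes in the $p=2$ setting via Proposition \ref{minlvec_projection_prop} and Lemma \ref{Lem:minimaltonew} --- or simply to cite \cite[Prop.\ 3.21]{Hu} as the paper does.
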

In particular, Theorem \ref{theoGeomSpec} applies with test function $\phi_{\xi}$ at a prime $p$, and with this choice Theorem \ref{theoGeomSpec} recovers the main theorems of \cite{Hu}. 
\begin{proof}
It was shown in \cite[Prop.\ 3.21]{Hu} that $\phi_{\xi}$ is a newform projector, so that $\phi_{\xi}$ satisfies the spectral assumption.  Since $\phi_{\xi,0}$ has support contained in $ZK_0(p)$, it follows that $\phi_{\xi}$ satisfies Geometric Assumption \eqref{geo3} with $y=p^{c_0+1}$. \end{proof}
The following is the main result of this section and extends Lemma \ref{FEtheta_satisfies_geom_and_spec}.
\begin{mytheo}\label{cor:SpecAssumption_supercuspidal} Suppose $p \neq 2$ and $F=\Q_p$. Let $(E/F,\xi) \in \P_2(F)$ have $\xi \vert_{F^\times } = \eta_{E/F}$. 
The test function $f_\xi$ is a newform projector in the sense of Definition \ref{def:newformproj}. 

Write $\sigma$ for the supercuspidal corresponding to $(E/F,\xi)$ by the Tame Parametrization Theorem (Theorem \ref{TameParametrizationTheorem}).
\begin{itemize}
\item If $c(\sigma)$ is even, then \begin{enumerate} \item $\supp \Phi \subseteq ZK'$, i.e. $$f_\xi 
=  \frac{1}{\|\Phi \|_2^2}\overline{\Phi},
$$
\item the operator $\pi(f_\xi)$, $\pi \in {\overline{G}}^{\wedge}$ is non-zero if and only if $\pi \simeq \sigma$, and 
\item the  function $f_\xi$ satisfies Geometric Assumption \eqref{geo3} with $y=p^{c_0}$.
\end{enumerate}
\item If $c(\sigma)$ is odd, then 
\begin{enumerate} 
\item $$f_\xi 
= \phi_{\xi},$$
\item the operator $\pi(f_\xi)$, $\pi \in {\overline{G}}^{\wedge}$ is non-zero if and only if $\pi \simeq \sigma$ or $\pi \simeq \sigma \times \eta$, where $\eta$ is the unramified quadratic character of $F^\times$, and
\item the  function satisfies Geometric Assumption \eqref{geo3} with $y=p^{c_0+1}$.
\end{enumerate}
\end{itemize}
\end{mytheo}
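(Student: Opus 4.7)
The plan is to identify $f_\xi$ with known newform projectors that have already been analyzed, splitting according to the parity of $c(\sigma)$. Recall from \eqref{c(pi)c0} that for $p \neq 2$, $c(\sigma)$ is even if and only if $E/F$ is unramified (in which case $c_0 = c(\xi)$), and $c(\sigma)$ is odd if and only if $E/F$ is ramified (in which case $c_0 = c(\xi)/2$). In both cases the strategy is to relate the newform $\varphi_0$ of $\sigma$ to the minimal vector $\varphi$ underlying Hu's construction, and then expand the diagonal matrix coefficient $\Phi$ as a sum over translates of the minimal vector matrix coefficient $\phi_{\xi,0}$.

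First I would handle the even case. Here, the newform in $\sigma$ is a single translate of a minimal vector: $\varphi_0 = \pi(a(p^{c_0})^{-1})\varphi$ (with $\varphi$ an $L^2$-normalized minimal vector in $\pi_{\Delta_\xi \xi} \simeq \sigma$). Then $\Phi(g) = \langle \pi(a(p^{c_0})) \pi(g) \pi(a(p^{c_0})^{-1}) \varphi, \varphi\rangle$, which by Lemma \ref{mx_coeff_ofminlvec} is supported on $g$ such that $a(p^{c_0}) g a(p^{c_0})^{-1} \in ZJ$, i.e.\ on $g \in a(p^{c_0})^{-1}(ZJ)a(p^{c_0})$. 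Since $E/F$ is unramified, $J \subseteq ZK$, and so $\operatorname{supp} \Phi \subseteq a(p^{-c_0}) ZK a(p^{c_0}) = ZK'$. Hence the restriction in the definition \eqref{fandKprimedef} is trivial: $f_\xi = \|\Phi\|_2^{-2}\overline{\Phi}$. The standard Schur orthogonality / formal degree argument from the start of Section \ref{sec:egSupercuspidalbackground} then shows that $\pi(f_\xi)$ is the orthogonal projection onto $\C\varphi_0$ if $\pi \simeq \sigma$ and is $0$ otherwise, establishing items (1)--(3) with $y = p^{c_0}$.

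Second I would handle the odd case, where the goal is to show $f_\xi = \phi_\xi$ as defined in \eqref{sctestfcn}. Here $\varphi_0$ is expressed as an averaged translate of a minimal vector $\varphi \in \pi_{\Delta_\xi \xi}$ of the approximate shape
\begin{equation*}
\varphi_0 = \nu(p^{\lfloor c_0/2\rfloor+1})^{-1/2} \sumstar_{\alpha \shortmod{p^{\lceil c_0/2\rceil}}} \pi(a(p^{c_0}\alpha)^{-1}) \varphi,
\end{equation*}
following Hu \cite{Hu}. Expanding $\Phi(g) = \langle \pi(g)\varphi_0, \varphi_0\rangle$ as a double sum indexed by $\alpha,\alpha'$ and using Lemma \ref{mx_coeff_ofminlvec} to identify each summand with a translate of $\phi_{\xi,0}$, one obtains a double sum identical in shape to \eqref{sctestfcn}. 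The key verification is that restriction to $ZK'$ kills cross terms and keeps only those translates of $\phi_{\xi,0}$ whose support (contained in $ZK_0(p)$) genuinely lies in $a(p^{-c_0})ZKa(p^{c_0})$. After computing $\|\Phi|_{ZK'}\|_2^2$ from the resulting double sum and matching normalizations, one recovers exactly $\phi_\xi$. Properties (2) and (3) of the odd case then follow from Hu's analysis (\cite[Prop.\ 3.21]{Hu} and Lemma \ref{FEtheta_satisfies_geom_and_spec}): $\phi_\xi$ is a newform projector onto $\sigma$ and its unramified quadratic twist $\sigma \times \eta$ (both have the same underlying cuspidal type $(\mathfrak{A}, J, \widetilde{\theta})$ after accounting for the ambiguity $\theta \mapsto \theta \cdot (\eta \circ \Nm)$, which is non-trivial on $E^\times$ exactly when $E/F$ is ramified and $p\neq 2$), and has support controlled by $y = p^{c_0+1}$ since $\phi_{\xi,0}$ is supported in $ZK_0(p)$.

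The main technical obstacle lies in the odd case: precisely executing the double-sum expansion of $\Phi$ and verifying term-by-term that restriction to $ZK'$ produces the prescribed averaging \eqref{sctestfcn}, rather than spurious additional terms. This requires a careful bookkeeping of supports of $\pi(a(p^{c_0}\alpha'))\pi(g)\pi(a(p^{c_0}\alpha))^{-1}\varphi$ relative to $ZJ$, cross-referenced against $a(p^{-c_0})Ka(p^{c_0})$. A subsidiary subtlety is identifying the extra representation $\sigma \times \eta$ in the local family: the $J$-eigenvalue equation \eqref{minlvecdef} determines the minimal vector only up to the ambiguity in the character-theoretic parametrization of dihedral supercuspidals via compact induction (see Lemma \ref{lem:tamediagram}), and one must check that this ambiguity gives exactly $\sigma$ and $\sigma \times \eta$ when $E/F$ is ramified.
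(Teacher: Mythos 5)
Your treatment of the even case rests on a false identity. You claim that when $E/F$ is unramified the newform is a single translate of a minimal vector, $\varphi_0 = \pi(a(p^{c_0})^{-1})\varphi$, and hence that $\supp \Phi \subseteq a(p^{c_0})^{-1}ZJ\,a(p^{c_0})$. Neither statement is true. Since $\varphi_0$ is $K_1(p^{c(\sigma)})$-fixed, $\Phi \equiv 1$ on $K_1(p^{2c_0})$; but for $u \in \Z_p^\times$ with $u \not\equiv 1 \pmod{p^{\lceil c_0/2\rceil}}$ the element $g=\left(\begin{smallmatrix} u & \\ & 1\end{smallmatrix}\right) \in K_1(p^{2c_0})$ is fixed by conjugation by $a(p^{c_0})$ and does not lie in $ZJ=E^\times U_{\mathfrak{A}_1}^{\lceil c_0/2\rceil}$ (writing $g=e\,m$ with $m \in U_{\mathfrak{A}_1}^{\lceil c_0/2\rceil}$ forces $u \equiv 1 \pmod{p^{\lceil c_0/2\rceil}}$). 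So $\supp\Phi \not\subseteq a(p^{c_0})^{-1}ZJ\,a(p^{c_0})$, and by the same token $\pi(a(p^{c_0})^{-1})\varphi$ is not $K_1(p^{2c_0})$-invariant, hence not a newform: by Lemma \ref{mx_coeff_ofminlvec} its pairing against $\pi(g)$-translates vanishes off a conjugate of $ZJ$, so it cannot be fixed by all of $K_1(p^{2c_0})$. In fact the newform is an average of roughly $p^{\lceil c_0/2\rceil}$ translates of the minimal vector (compare Lemma \ref{Lem:minimaltonew} in the $p=2$ setting, and \cite{Hu}), so the even case cannot be reduced to a one-line support computation for a single translate. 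The statement you are trying to prove here, $\supp\Phi\subseteq ZK'$, is exactly Proposition \ref{MxCoeffSpectralAssumption}(1), and the paper obtains it by a direct analysis of the newform matrix coefficient itself: the support constraints of Lemma \ref{Prop:MC}, the refinement at $i=c(\sigma)/2$ in Lemma \ref{MxCoeffFurther} (a Kirillov-model computation using twist-minimality and $\varepsilon$-factors, not minimal vectors), and the Atkin--Lehner argument of Lemma \ref{Lem:refineSuppPhi}; only then do Schur orthogonality and a Rankin--Selberg evaluation of $\|\Phi\|_2^2$ give items (1)--(3).

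Your odd-case outline is closer to what actually happens, but the decisive step is exactly the one you defer: showing that restriction of the (averaged) minimal-vector expansion of $\Phi$ to $ZK'$ reproduces \eqref{sctestfcn} with no extra terms. The paper does this (extending \cite[Lem.\ 5.2]{Hu}) by the clean observation that $J = ZB^1 \sqcup \left(\begin{smallmatrix} 0 & 1 \\ p & 0\end{smallmatrix}\right) ZB^1$ with $ZB^1 \subseteq ZK$ and the other coset disjoint from $ZK$ (odd valuation of the determinant), after which $\|\Phi\vert_{ZK'}\|_2^2$ is computed from $\|\phi_\xi\|_2^2=\phi_\xi(1)$ and items (2), (3) are quoted from \cite[Prop.\ 3.21]{Hu} and Lemma \ref{FEtheta_satisfies_geom_and_spec}, as you also propose. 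Note also that the appearance of $\sigma\times\eta$ is forced simply because the restriction to $ZK'$ only sees elements of even determinant valuation, on which $\sigma$ and $\sigma\times\eta$ have identical matrix coefficients; no analysis of the compact-induction ambiguity is needed for this point. As it stands, the even case of your proposal is broken and the odd case is an unexecuted plan, so the argument has a genuine gap.
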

\begin{myrema} We have
\begin{equation}\label{sec:7.3fp(1)}
f_\xi(1) = \frac{1}{\|\Phi \vert_{ZK'}\|_2^2} = \begin{cases} (1-p^{-2}) p^{c_0+1} & \text{ if } c(\sigma) \equiv 1 \pmod 2, \\ (1-p^{-1})p^{c_0} & \text{ if } c(\sigma) \equiv 0 \pmod 2 . \end{cases}
\end{equation}
Indeed, the case that $c(\sigma)$ is odd follows from point (1) of Theorem \ref{cor:SpecAssumption_supercuspidal} and Proposition \ref{MxCoeffSpectralAssumption}, below.  In the case that $c(\sigma)$ is even, we have that $\|\Phi \vert_{ZK'}\|_2^2 = \|\Phi \|_2^2$ equals the inverse formal degree $d_\sigma^{-1}$ by point (1) of Theorem \ref{cor:SpecAssumption_supercuspidal}. In particular, it does not depend on the choice of unit vectors used to define $\Phi$, and can e.g.\ be computed as in formulas (A.15) and (A.16) of \cite{HN18} using minimal vectors (see Section 3.1 of loc.\ cit.\ for the relevant background).
\end{myrema}
The first author's test function $\phi_{\xi}$ is a newform projector regardless of the parity of $c(\sigma)$. However, if $c(\sigma)$ is even, then the projection operator-valued function $\pi(\phi_{\xi})$ is supported on the neighborhood $i(\xi[1]/\sim_0)$ around $\sigma$ (which has cardinality $\asymp p$)
, whereas  $\pi(f_\xi)$ is supported on the single point $\sigma$. In this sense, Theorem \ref{cor:SpecAssumption_supercuspidal} is a refinement of \cite{Hu}.

We need several preliminary results before proving Theorem \ref{cor:SpecAssumption_supercuspidal}. 
Let $\chi$ be a character of $\cO^\times$ and define the function $1_{\chi,n}$ on $F^\times$ by 
\begin{equation}\label{1chinx_def}1_{\chi,n} (x) = \begin{cases} \chi(u) & \text{ if } x = u p^{n} \text{ with } u \in \cO^\times, \\ 0 & \text{ otherwise.} \end{cases}\end{equation}
We will use an explicit description of the diagonal matrix coefficient $\Phi$ of the newform due to the first author. To state this, we need the following variant of the Iwasawa decomposition.
\begin{mylemma} \label{Lem:Iwasawadecomp}
For every positive integer $c$,
$$G(F)=\bigsqcup\limits_{0\leq i\leq c} B\zxz{1}{0}{p^i}{1}K_1(p^c).$$
\end{mylemma}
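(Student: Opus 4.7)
The plan is to exploit the Iwasawa decomposition $G(F) = B(F) \cdot K$, which identifies $B(F) \backslash G(F)$ with $\mathbb{P}^1(F)$ via the map sending $g \in G(F)$ to its bottom row up to scaling. Under this identification, the right action of $K$ descends to the standard right action on row vectors, so the lemma becomes equivalent to the assertion that the right action of $K_1(p^c)$ on $\mathbb{P}^1(F)$ has exactly $c+1$ orbits, represented by the points $(p^i:1)$ for $0 \leq i \leq c$, i.e., the bottom rows of the matrices $\gamma_i := \zxz{1}{0}{p^i}{1}$.

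To prove the union covers $G(F)$, I will take $g \in G(F)$ and rescale its bottom row to an integral primitive pair $(c_0,d_0) \in \cO^2$ with $\min(v(c_0),v(d_0))=0$. I set $i = \min(v(c_0),c)$ when $d_0 \in \cO^\times$, and $i = 0$ otherwise (which by primitivity forces $c_0 \in \cO^\times$). In each case I will write down an explicit $k' \in K_1(p^c)$ such that the bottom row of $\gamma_i k'$ is proportional to $(c_0,d_0)$; for instance, in the generic case $d_0 \in \cO^\times$ and $i = v(c_0) < c$, the choice $k' = \operatorname{diag}(c_0 d_0^{-1} p^{-i},\,1) \in K_1(p^c)$ does the job, with similarly short constructions in the boundary cases $i = c$ and $c_0 \in \cO^\times$, $d_0 \in p\cO$.

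For disjointness, I will suppose $\gamma_i = \bar{b}\,\gamma_j\,\bar{k}$ for some $\bar{b} \in B$ and $\bar{k} \in K_1(p^c)$ with $0 \leq i \leq j \leq c$, and equate bottom rows. Writing $\bar{b} = \left(\begin{smallmatrix} * & * \\ 0 & \gamma\end{smallmatrix}\right)$ and $\bar{k} = \left(\begin{smallmatrix} a & * \\ c' & d'\end{smallmatrix}\right)$ with $c' \in p^c\cO$ and $d' \equiv 1 \pmod{p^c}$, and noting that $a \in \cO^\times$ (since $\det \bar{k}$ is a unit and $d' \in \cO^\times$), I obtain $(p^i,1) = \gamma(p^j a + c',\, p^j b'' + d')$. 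When $j \geq 1$ the second coordinate has valuation $0$, forcing $\gamma \in \cO^\times$; the first coordinate then gives $v(p^j a + c') = i$, which equals $j$ if $j < c$ and is $\geq c$ if $j=c$, so $i = j$ in either case. The remaining case $j = 0$ forces $i = 0$ immediately since $0 \leq i \leq j$.

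The main obstacle is purely bookkeeping: the boundary cases $j = c$ (where $p^j a + c' \in p^c\cO$ so valuations degenerate) and $j = 0$ (where $p^j b'' + d'$ need not be a unit) require separate small arguments from the generic case. No deeper input is needed beyond Iwasawa and elementary $p$-adic valuation arithmetic.
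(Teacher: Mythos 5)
Your proof is correct. Note, however, that the paper itself gives no argument for this lemma: it is stated as a ``variant of the Iwasawa decomposition'' and used without proof, so there is nothing to compare against step by step. Your route --- identifying $B\backslash G(F)$ with $\mathbb{P}^1(F)$ via the bottom row and classifying the right $K_1(p^c)$-orbits, with representatives $(p^i:1)$ for $0\leq i\leq c$ --- is the standard way to verify such statements, and both halves check out: the explicit elements of $K_1(p^c)$ you exhibit (e.g.\ $\operatorname{diag}(c_0d_0^{-1}p^{-i},1)$ in the generic case, and an upper-triangular $\left(\begin{smallmatrix} c_0 & d_0-1 \\ 0 & 1\end{smallmatrix}\right)$ when $c_0\in\cO^\times$, $d_0\in p\cO$) do give the covering, and the valuation argument on $(p^i,1)=\gamma(p^ja+c',\,p^jb''+d')$, using $a\in\cO^\times$ and $v(c')\geq c$, correctly forces $i=j$, including the boundary cases $j=c$ and $j=0$ (the reduction to $i\leq j$ being harmless by symmetry of the double-coset relation). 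So your write-up supplies a complete proof of a fact the paper leaves to the reader.
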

\begin{proof} See e.g.\ \cite[Lem.\ 2.1.1]{Schmidt:02a}. \end{proof}
Let $\pi$ now be either a supercuspidal representation of $G(F)$ of conductor exponent $c$ or a principal series representation $\pi(\mu_1,\mu_2)$ with $c(\mu_1)=c(\mu_2) = c/2$ for some $c \geq 2$. By the right $K_1(p^c)$-invariance {of the newform $\varphi_0$} 
of $\pi$ and Lemma \ref{Lem:Iwasawadecomp}, to give a complete description of the diagonal matrix coefficient $\Phi$ of the newform in $\pi$, it suffices to explicate the values of $$\phi_i(a,m):=\Phi(\big( \begin{smallmatrix} a & m \\  0 & 1 \end{smallmatrix}\big)\left( \begin{smallmatrix} 1 & 0 \\  p^i & 1 \end{smallmatrix}\right)).$$
\begin{mylemma}\label{Prop:MC}
Suppose $\pi$ is either a supercuspidal representation of $G(F)$ of conductor exponent $c$ or a principal series representation $\pi(\mu_1,\mu_2)$ with $c(\mu_1)=c(\mu_2) = c/2$ for some $c \geq 2$. 
\begin{enumerate}
\item[(i)] For $c-1\leq i\leq c$, $\phi_i(a,m)$ is supported on $v(a)=0$ and $v(m)\geq -1$. 
\item[(ii)] For $0\leq i<c-1$, $i\neq c/2$, $\phi_i(a,m)$ is supported on $v(a)=\min\{0,2i-c\}$ and $v(m)=i-c$. 
\item[(iii)]
\begin{enumerate} 
\item[(a)]When $c$ is even and $i=c/2>1$, $\phi_i(a,m)$ is supported on $v(a)\geq 0$ and $v(m)=-c/2$. 
\item[(b)] When $i=c/2=1$, $\phi_i(a,m)$ is supported on $v(a)\geq 0$, $v(m)\geq -1$. 
\end{enumerate}
\end{enumerate}
\end{mylemma}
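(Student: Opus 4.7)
The plan is to work in the Kirillov model of $\pi$, where $\Phi(g) = \int_{F^\times} (\pi(g) W_0)(a(t)) \overline{W_0(a(t))}\,d^\times t$, with $W_0$ the newform. For supercuspidal $\pi$ of conductor $p^c$ one has $W_0(a(t)) = \mathbf{1}_{\cO^\times}(t)$, and for the ramified principal series $\pi(\mu_1,\mu_2)$ with $c(\mu_1)=c(\mu_2)=c/2$ the newform in the Kirillov model has a similarly explicit description (cf.\ Schmidt \cite[\S2.4]{Schmidt:02a}). Setting $W_i := \pi(\big(\begin{smallmatrix}1&0\\ p^i&1\end{smallmatrix}\big))W_0$ and using $\big(\begin{smallmatrix}a&m\\0&1\end{smallmatrix}\big) = a(a)n(m/a)$ together with the Whittaker transformation $W(a(y)n(x))=\psi(yx)W(a(y))$, one obtains
\begin{equation*}
\phi_i(a,m) = \int_{F^\times} \psi(tm) W_i(a(ta)) \overline{W_0(a(t))}\,d^\times t.
\end{equation*}
The task reduces to identifying, for each range of $i$, the support of $y\mapsto W_i(a(y))$.

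In case (i), when $i=c$ the element $\big(\begin{smallmatrix}1&0\\ p^i&1\end{smallmatrix}\big)$ lies in $K_0(p^c)$, so right $K_0(p^c)$-invariance (trivial central character) gives $W_c=W_0$, and the integral collapses to $\mathbf{1}_{\cO^\times}(a)\int_{\cO^\times}\psi(tm)\,d^\times t$, a Ramanujan sum that vanishes for $v(m)\leq -2$. For $i=c-1$ one uses the local action of $K_0(p^{c-1})/K_0(p^c)$ on the newvector, or equivalently the explicit values of the Kirillov action of $\big(\begin{smallmatrix}1&0\\ p^{c-1}&1\end{smallmatrix}\big)$ due to Schmidt, to show that $W_{c-1}(a(y))$ is again supported on $v(y)=0$.

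For cases (ii) and (iii) with $0<i<c$, the key input is the Bruhat-type identity
\begin{equation*}
\begin{pmatrix}1&0\\ p^i&1\end{pmatrix} = z(p^i)a(p^{-2i})n(p^i)\,w\,n(p^{-i}),\qquad w=\begin{pmatrix}0&-1\\1&0\end{pmatrix},
\end{equation*}
which lets me rewrite $W_i(a(y))$ as $\psi(yp^{-i})\,(\pi(w)W_0)(a(yp^{-2i})n(p^{-i}))$, i.e., in terms of the dual Whittaker function $\widetilde W_0 := \pi(w)W_0$. Now $\widetilde W_0(a(y))$ has explicit support via the local functional equation (for supercuspidals, $\widetilde W_0(a(y))$ is concentrated at $v(y)=-c$ up to a root of unity and central-character twist; for ramified principal series with inverse characters it is supported at $v(y)=-c/2$). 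Pushing the translation $n(p^{-i})$ through by the Whittaker transformation then forces the support of $y\mapsto W_i(a(y))$ to lie at $v(y)=-c+2i$ when $i\neq c/2$, producing the stated constraints $v(a)=\min\{0,2i-c\}$ and $v(m)=i-c$. For $i=c/2$ the scaling $p^{-2i}$ exactly matches the dual support, so $W_{c/2}(a(y))$ is instead supported on $v(y)\geq 0$; the $m$-integral now sees $\psi$ evaluated on $\cO^\times\cdot p^{i-c}$, pinning $v(m)=-c/2$ (with the degenerate case $c/2=1$, where $W_i$ additionally picks up the unit-support contribution, relaxing the $m$-condition to $v(m)\geq -1$).

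The main obstacle is Step 5 for general $0<i<c$: one must carry out the functional equation calculation for $\pi(w)W_0$ uniformly across supercuspidals, ramified principal series with $\mu_1=\mu_2^{-1}$, and the three parities of $c$, and then correctly track how the subsequent $n(p^{-i})$-translate interacts with this support. The $i=c/2$ case is delicate because the Bruhat decomposition produces exactly the Atkin–Lehner element, and one must distinguish the genuinely special sub-case $c=2$ where the Whittaker support sprawls over $\cO^\times$ rather than being concentrated at a single valuation; this is the source of the dichotomy between (iii)(a) and (iii)(b).
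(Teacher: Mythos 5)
Your overall strategy (express $\phi_i(a,m)=\int_{\cO^\times}\psi(tm)\,W_i(a(ta))\,d^\times t$ in the Kirillov model and pin down the support of $W_i(a(y))$ via the Weyl element and the local functional equation) is essentially the route this paper has in mind: the paper's own proof of the lemma is just a citation to Prop.\ 3.1 of \cite{Hu:17a}, and the very computation you sketch is carried out in full (for $p\neq 2$ twist-minimal supercuspidals) in the proof of Lemma \ref{MxCoeffFurther}, using Yoshida's formula \eqref{YoshidaFormula} together with Gauss-sum expansions and conductor-of-twist bounds. However, your key step is wrong as written. Your Bruhat identity correctly gives $W_i(a(y))=\psi(yp^{-i})\,W_0\big(a(yp^{-2i})\,w\,n(p^{-i})\big)=\psi(yp^{-i})\big(\pi(w)\pi(n(p^{-i}))W_0\big)(a(yp^{-2i}))$, so the unipotent must be applied to $W_0$ \emph{before} the Weyl element; you instead evaluate $\widetilde W_0=\pi(w)W_0$ at $a(yp^{-2i})n(p^{-i})$, which is $W_0(a(yp^{-2i})n(p^{-i})w)$, a different group element, and then ``push $n(p^{-i})$ through by the Whittaker transformation.'' That shortcut erases exactly the content of cases (ii) and (iii): taken literally it would put the support of $W_i(a(y))$ at $v(y)=2i-c$ for every $i$, which is false for $c/2<i<c-1$ (the correct support is $v(y)=0$, whence $v(a)=\min\{0,2i-c\}$), and it provides no mechanism for the weaker conclusion $v(a)\geq 0$ in case (iii). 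Your parenthetical that $\pi(w)W_0$ is supported at $v(y)=-c/2$ for the ramified principal series is also incorrect: with $c(\mu_1)=c(\mu_2)=c/2$ the conductor exponent of $\pi$ is $c$, so the dual newvector sits at $v(y)=-c$.

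The correct mechanism, which your sketch omits, is the twisted-conductor analysis. With the right order of operations, $\pi(n(p^{-i}))W_0$ is the Kirillov function $\psi(p^{-i}y)1_{\cO^\times}(y)$; one expands it in multiplicative characters $1_{\chi,0}$ (the Gauss-sum coefficients vanish unless $c(\chi)=i$ when $i\geq 2$, resp.\ $c(\chi)\leq 1$ when $i\leq 1$), and then applies $\pi(w)1_{\chi,0}=\eps(\tfrac12,\pi\times\chi^{-1},\psi)\,1_{\chi^{-1},-c(\pi\times\chi^{-1})}$. Hence the support is at $v(y)=2i-c(\pi\times\chi^{-1})$, and the whole (ii)/(iii) dichotomy is governed by whether the twisted conductor can drop below $c$: it cannot when $c(\chi)=i\neq c/2$ (giving $v(a)=\min\{0,2i-c\}$, and, from the unit characters of conductor $c-i$ that emerge, $v(m)=i-c$), whereas at $i=c/2$ twists of conductor $c/2$ can lower the conductor (always for $\pi(\mu_1,\mu_2)$, and for non-twist-minimal supercuspidals, e.g.\ at $p=2$), which is why only $v(a)\geq 0$ survives there; the split between (iii)(a) and (iii)(b) is the $c-i=1$ degeneration of the Gauss-sum expansion, not the appearance of the Atkin--Lehner element as such. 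So the plan is salvageable, but the step you flag as ``the main obstacle'' is not merely an obstacle: the specific manipulation you propose for it fails, and the conductor-of-twist input has to be supplied explicitly, as in the proof of Lemma \ref{MxCoeffFurther} or in \cite{Hu:17a}.
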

\begin{proof}
This is a weak version of Proposition 3.1 of \cite{Hu:17a}. 
\end{proof}
\begin{myrema} When $p\neq 2$ and $\pi$ is a trivial central character supercuspidal, the proof of Lemma \ref{MxCoeffFurther} provides a full proof of (a refinement of) Lemma \ref{Prop:MC}.\end{myrema}

Recall that the unique unitary pairing on the Whittaker model of a smooth irreducible (pre-)unitary generic representation of $G(F)$ is given by \cite[Ch.\ 1 Thm.\ 12]{Godement_Notes_on_JL}
\begin{equation}\label{GinvtInnerProd}
\langle W_1,W_2\rangle =\int_{F^\times} W_1(a(y))\overline{W_2(a(y))}d^\times y.
\end{equation}
\begin{mylemma}\label{MxCoeffFurther}
If $\pi$ is a twist-minimal supercuspidal representation of trivial central character and conductor $c$, then in the case that $i=c/2$, the matrix coefficient $\phi_i(a,m)$ has support contained in $v(a)=0$. 
\end{mylemma}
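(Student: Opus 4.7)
The plan is to reduce the computation of $\phi_{c/2}(a,m) = \langle \pi(g_{a,m})\varphi_0,\varphi_0\rangle$ to a sum involving the minimal vector matrix coefficient $\phi_{\xi,0}$, whose support is explicitly described by \eqref{fsupercuspidaldef}. Here $g_{a,m} = \bigl(\begin{smallmatrix}a & m \\ 0 & 1\end{smallmatrix}\bigr)t_{c/2}$; since $p\neq 2$ and $i = c/2$, the conductor $c$ is necessarily even, say $c = 2c_0$, with $E/F$ unramified.

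First, I will use the fact that the newform $\varphi_0$ lies in the linear span of translates $\pi(a(p^{c_0}\beta))\varphi$ of the minimal vector $\varphi$ over $\beta \in \cO^\times/(1+p^{c_0})$, a representation-theoretic fact underlying the averaging formula \eqref{sctestfcn}. Expanding $\Phi(g_{a,m})$ via this decomposition and using the sesquilinear identity $\langle\pi(g)\pi(h)\varphi,\pi(h')\varphi\rangle = \phi_{\xi,0}((h')^{-1}gh)$ will yield
\[
\Phi(g_{a,m}) = \sum_{\beta,\beta'} c_{\beta,\beta'}\,\phi_{\xi,0}\!\bigl(a(p^{c_0}\beta')^{-1}\,g_{a,m}\,a(p^{c_0}\beta)\bigr)
\]
for certain scalars $c_{\beta,\beta'}$. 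It will then suffice to show that for $v(a) > 0$, each summand vanishes individually.

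The main computational step is a direct valuation analysis of the argument
\[
h_{\beta,\beta'} := a(p^{c_0}\beta')^{-1}\,g_{a,m}\,a(p^{c_0}\beta) = \begin{pmatrix}(a+mp^{c_0})\beta/\beta' & m/(p^{c_0}\beta') \\ p^c\beta & 1\end{pmatrix},
\]
whose determinant equals $a\beta/\beta'$ (valuation $v(a)$). By \eqref{fsupercuspidaldef}, $\supp\phi_{\xi,0}\subseteq ZK$, so if $h_{\beta,\beta'} \in \supp\phi_{\xi,0}$ then $h_{\beta,\beta'} = z(\mu)\cdot k$ for some $\mu\in F^\times$ and $k\in K$. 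Membership of $\mu^{-1}h_{\beta,\beta'}$ in $M_2(\cO)$ will force $v(\mu)\leq -c$, from the top-right entry $m/(p^{c_0}\beta')$ of valuation $v(m) - c_0 = -c$ (by Lemma \ref{Prop:MC}(iii)(a)); meanwhile the determinant condition $\det(\mu^{-1}h_{\beta,\beta'})\in\cO^\times$ will force $2v(\mu) = v(a)$. These combine to give $v(a)\leq -2c < 0$, contradicting $v(a) > 0$. Thus every summand is zero and $\Phi(g_{a,m}) = 0$.

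The main obstacle will be verifying the averaging identity $\varphi_0 \in \mathrm{span}\{\pi(a(p^{c_0}\beta))\varphi : \beta \in \cO^\times/(1+p^{c_0})\}$ with enough precision for the argument. This should follow from Mackey theory applied to the compact induction realization of $\pi$ from Section \ref{compactinductionbackground}, combined with the uniqueness (up to scalar) of the $K_0(p^c)$-fixed vector in $\pi$, but confirming the precise form of the scalars $c_{\beta,\beta'}$ will require careful bookkeeping with intertwiner normalizations.
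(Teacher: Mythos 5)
Your valuation argument runs the membership test in the wrong conjugate of $ZK$, and the symptom is that it proves too much. If the newform were a combination of $\pi(a(p^{c_0}\beta))\varphi$ with $\beta\in\cO^\times$, your summands would test whether $g_{a,m}$ lies in $Z\,a(p^{c_0}\beta')K a(p^{c_0}\beta)^{-1}\subseteq Z\,a(p^{c_0})Ka(p^{c_0})^{-1}$, whose elements (up to center) have top-right entries in $p^{c_0}\cO$ and bottom-left entries allowed denominators $p^{c_0}$ --- the opposite cone from the actual support $ZK'$ with $K'=a(p^{-c_0})Ka(p^{c_0})$ in \eqref{fandKprimedef} and Theorem \ref{cor:SpecAssumption_supercuspidal}. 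Concretely, your own computation gives $v(\mu)\leq v(m)-c_0=-c$ and $2v(\mu)=v(a)$, hence $v(a)\leq-2c$; this ``contradiction'' has nothing to do with the assumption $v(a)>0$ and equally rules out $v(a)=0$, so your argument would show $\Phi$ vanishes identically on the whole $i=c/2$ cell. That is false: the paper checks (Section \ref{kp:SC}, using this very lemma together with Lemma \ref{Prop:MC}(iii)) that $\Phi$ is nonvanishing there with $v(a)=0$, which is also why $H_p(m,n;p^{c_0})\neq 0$ in Theorem \ref{KlSumThm}. The faulty premise is the expansion of $\varphi_0$ in terms of $\pi(a(p^{c_0}\beta))\varphi$: the correct translates carry the \emph{negative} power $p^{-c_0}$, as one sees in \eqref{sctestfcn} (where the right-hand factor is $a(p^{c_0}\alpha)^{-1}$) and in the $p=2$ analogue Lemma \ref{Lem:minimaltonew}. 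With $a(p^{-c_0}\beta)$ the matrix becomes $\bigl(\begin{smallmatrix}(a+mp^{c_0})\beta/\beta' & mp^{c_0}/\beta' \\ \beta & 1\end{smallmatrix}\bigr)$, the minimal entry valuation is $0$ while the determinant has valuation $v(a)$, and membership in $ZK$ then genuinely forces $v(a)=0$ --- that is the repairable version of your idea.

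Even after fixing the sign there are gaps you would still need to close. The identity $\langle\pi(g)\pi(h)\varphi,\pi(h')\varphi\rangle=\phi_{\xi,0}((h')^{-1}gh)$ is not correct: $\phi_{\xi,0}$ is by definition the restriction of the minimal-vector matrix coefficient to $ZB^1$, whereas the full coefficient is supported on the larger group $J$ (Lemma \ref{mx_coeff_ofminlvec}; \cite[Prop.\ 3.14]{Hu} in the unramified case); since $J\subseteq ZK$ when $E/F$ is unramified the valuation argument survives, but it must be phrased with the full coefficient, and for $c_0$ odd the ``minimal vector'' data is a $q$-dimensional $\Lambda$, so the expansion of $\varphi_0$ and the scalars $c_{\beta,\beta'}$ need the precise statements from \cite[\S 3.2--3.3]{Hu} rather than Mackey-theory hand-waving. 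Your appeal to Lemma \ref{Prop:MC}(iii)(a) for $v(m)=-c/2$ also silently excludes $c=2$, where only (iii)(b) is available. Finally, the lemma is stated (and used) for every twist-minimal supercuspidal of trivial central character, including $p=2$, while your route is confined to $p\neq 2$ dihedral unramified; the paper's proof is entirely different and uniform in $p$: it works in the Kirillov model, expands $\pi(n(p^{i}))1_{1,-c}$ in multiplicative characters via Gauss sums, applies \eqref{YoshidaFormula} and twist-minimality to get $c-c(\pi\times\chi)=0$, so the relevant Whittaker values are supported on $v(ay)=0$, and pairing against the newform (supported on $v(y)=0$) gives $v(a)=0$. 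If you prefer the minimal-vector strategy, the model to follow is the paper's own $p=2$ treatment in Proposition \ref{Cor:twotestfunsame}.
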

\begin{myrema}\label{twminRem} An exercise with \cite[Prop.\ 3.4]{TunnellLLC} shows that any trivial central character supercuspidal representation with $p\neq 2$ is necessarily twist-minimal, see e.g.\ \cite[Lem.\ 2.1]{HuNelsonSaha:17a}. If $p=2$ then a trivial central character supercuspidal representation is twist-minimal if and only if $c(\pi) =2$ or $c(\pi)$ is odd, see e.g.\ Proposition \ref{prop:twistminimalQ2}. \end{myrema}

\begin{proof}
We work in the Whittaker model. Let $W$ be the newform in the Whittaker model of $\pi$. We have from \eqref{GinvtInnerProd} that 
\begin{equation}\label{mxCoeffInWhittakerModel}
 \Phi(\big( \begin{smallmatrix} a & m \\  0 & 1 \end{smallmatrix}\big)\left( \begin{smallmatrix} 1 & 0 \\  p^i & 1 \end{smallmatrix}\right)) = \int_{F^\times} W\left( a(y) \big( \begin{smallmatrix} a & m \\  0 & 1 \end{smallmatrix}\big)\big( \begin{smallmatrix} 1 & 0 \\  p^i & 1 \end{smallmatrix}\big) \right) \overline{W(a(y))} \, d^\times y.
\end{equation}
Thus, the support of the matrix coefficient is directly related to the support of $W(  \big( \begin{smallmatrix} a & 0 \\  0 & 1 \end{smallmatrix}\big)\big( \begin{smallmatrix} 1 & 0 \\  p^i & 1 \end{smallmatrix}\big) )$, which we can study directly in the Kirillov model. 
First of all, note that 
$$ W\left( a(y) \big( \begin{smallmatrix} a & m \\  0 & 1 \end{smallmatrix}\big)\big( \begin{smallmatrix} 1 & 0 \\  p^i & 1 \end{smallmatrix}\big) \right)  = \psi(my)  W\left( a(y) \big( \begin{smallmatrix} a & 0 \\  0 & 1 \end{smallmatrix}\big)\big( \begin{smallmatrix} 1 & 0 \\  p^i & 1 \end{smallmatrix}\big) \right) $$ 
by the defining property of the Whittaker model. 
Next, note that 
$$\big( \begin{smallmatrix} 1 & 0 \\  p^i & 1 \end{smallmatrix}\big) = \big( \begin{smallmatrix} & 1 \\  1 &  \end{smallmatrix}\big) \big( \begin{smallmatrix} 1 & p^i \\  0 & 1 \end{smallmatrix}\big) 
\big( \begin{smallmatrix} & 1 \\  1 &  \end{smallmatrix}\big) .$$ 
 Now we use the well-known explicit form of the newform in the Kirillov model (Lemma \ref{NewformKirillov}). We want to compute 
$$
\pi(\big( \begin{smallmatrix} & 1 \\  1 &  \end{smallmatrix}\big) 
 \big( \begin{smallmatrix} 1 & p^i \\  0 & 1 \end{smallmatrix}\big) 
 \big( \begin{smallmatrix} & 1 \\  1 &  \end{smallmatrix}\big) ) 1_{1, 0}.
$$
Let $w = \left( \begin{smallmatrix} 0 & -1 \\ 1 & 0 \end{smallmatrix}\right)$. 
By \cite[(9)]{YoshidaOnExtraordinaryReps} (see also \cite[Lem.\ 2.1]{SaitoOnTunnelsFormula}) we have for $\pi$ supercuspidal, $\chi$ a quasicharacter of $F^\times$ and $\psi$ an additive character of conductor 0 that
\begin{equation}\label{YoshidaFormula}
\pi(w)1_{\chi_0,n} = \eps(1/2, \pi \times \chi^{-1}, \psi) 1_{\omega_{\pi 0} \chi^{-1}_0, -c(\pi \times \chi^{-1})-n},
\end{equation}
where $\chi_0 = \chi \vert_{\cO^\times}$, $\omega_{\pi 0}$ is the central character of $\pi$ restricted to $\cO^\times$ (which is trivial here by hypothesis), and $c(\pi\times \chi^{-1})$ is the conductor exponent of $\pi\times \chi^{-1}$. Let us now denote $\eps(\pi) = \eps(1/2, \pi, \psi)$ for simplicity. We get that $\pi(\big( \begin{smallmatrix} & 1 \\  1 &  \end{smallmatrix}\big) )1_{1,0} = \eps(\pi)1_{1,-c}$ and $\pi(n(p^{i}))1_{1,-c} = \psi_{p^{-i}}1_{1,-c}$, where $\psi_{q}$ is the additive character defined by $\psi_q(x)= \psi(x/q)$. Now we convert this additive character to multiplicative characters to use \eqref{YoshidaFormula} again. Since the argument of $\psi_{p^{-i}}$ is restricted to valuation $-c$, for characters $\chi$ we are interested in 
$$\int_{\cO^\times} \psi_{p^{c-i}}(u) \chi(u)\,du = \sumstar_{u_0 \shortmod{p^{c-i}}}  \psi_{p^{c-i}}(u_0) \chi(u_0) \int_{p^{c-i}\cO} \chi(1+\Delta)\,d\Delta,$$
where we have set $u=u_0(1+\Delta)$ with $v(\Delta)\geq c-i$. The interior integral vanishes if $c(\chi)>c-i$ and otherwise equals $p^{i-c}$ (see e.g.\ \cite[(3.9)]{IK}). So, we get 
$$ \int_{\cO^\times} \psi_{p^{c-i}}(u) \chi(u)\,du = p^{-(c-i)} \delta_{c(\chi)\leq c-i} \sumstar_{u_0 \shortmod{p^{c-i}}}  \psi_{p^{c-i}}(u_0) \chi(u_0).$$ 
By e.g.\ \cite[Lem.\ 7.1]{PetrowYoungCoset}, we have that the above Gauss sum vanishes if $c(\chi)< c-i$ and $c-i>1$. If $c-i=1$ and $c(\chi)=0$, then the sum equals $-1$. In summary, 
$$\int_{\cO^\times} \psi_{p^{c-i}}(u) \chi(u)\,du = p^{-(c-i)} \mu(p^{c-i-c(\chi)}) \tau(\chi) \begin{cases}  \delta_{c(\chi)= c-i} & \text{ if } c-i \neq 1, \\  \delta_{c(\chi)\leq c-i} & \text{ if } c-i=1,\end{cases}$$
where {$\mu$ is the M\"obius function,} $\tau(\chi)$ is the Gauss sum of the primitive Dirichlet character corresponding to $\chi$. Therefore, if $c-i \neq 1$ we have
$$\pi(n(p^i))1_{1,-c} = \psi_{p^{-i}} 1_{1,-c} = p^{-(c-i)} \sum_{\chi: c(\chi) = c-i} \tau(\chi) 1_{\chi^{-1},-c},$$
and if $c-i=1$ we have
$$ \pi(n(p^i))1_{1,-c} = \psi_{p^{-i}} 1_{1,-c} = p^{-(c-i)} \sum_{\chi: c(\chi) \leq c-i} \mu(p^{c-i-c(\chi)})\tau(\chi) 1_{\chi^{-1},-c}.$$
Appealing to \eqref{YoshidaFormula} one more time, we finally have if $c-i \neq 1$ that
$$\pi\left( ( \begin{smallmatrix} 1 & 0 \\ p^{i} & 1 \end{smallmatrix})\right)1_{1,0} =  \eps(\pi) p^{-(c-i)} \sum_{\chi: c(\chi) = c-i} \tau(\chi) \eps(\pi \times \chi) \chi(-1)1_{\chi,c-c(\pi \times \chi)}$$
 and if $c-i=1$ that
$$= \eps(\pi) p^{-(c-i)} \sum_{\chi: c(\chi) \leq c-i} \mu(p^{c-i-c(\chi)})\tau(\chi) \eps(\pi \times \chi)  \chi(-1) 1_{\chi,c-c(\pi \times \chi)}.$$
Since $\pi$ is twist-minimal, we have by e.g.\ \cite[Lem.\ 6.2]{PetrowYoungCoset} that if $i\geq c/2$, then $c-c(\pi \times \chi)=0$ and if $i<c/2$, then $c-c(\pi \times \chi)=2i-c$.
Thus, if $c-i \neq 1$ we have 
$$ W\left( a(y) \big( \begin{smallmatrix} a & m \\  0 & 1 \end{smallmatrix}\big)\big( \begin{smallmatrix} 1 & 0 \\  p^i & 1 \end{smallmatrix}\big) \right)  = \psi(my)  \eps(\pi) p^{-(c-i)} \sum_{\chi: c(\chi) = c-i} \tau(\chi) \eps(\pi \times \chi)  \chi(-1) 1_{\chi,\min(0, 2i-c)}(ay),$$
and if $c-i = 1$, then 
\begin{multline*} W\left( a(y) \big( \begin{smallmatrix} a & m \\  0 & 1 \end{smallmatrix}\big)\big( \begin{smallmatrix} 1 & 0 \\  p^i & 1 \end{smallmatrix}\big) \right)  \\ = \psi(my)  \eps(\pi) p^{-(c-i)} \sum_{\chi: c(\chi) \leq c-i} \mu(p^{c-i-c(\chi)}) \tau(\chi) \eps(\pi \times \chi)  \chi(-1) 1_{\chi,\min(0, 2i-c)}(ay). \end{multline*}
Re-inserting these in \eqref{mxCoeffInWhittakerModel}, we only get new information on the support of the matrix coefficients in case (iii) of Lemma \ref{Prop:MC}, and in these cases the support of the matrix coefficient is further restricted to $v(a)=0$. 
\end{proof}
The Atkin-Lehner operator 
can be used to obtain further information when $i\leq c/2$.
\begin{mylemma}\label{Lem:refineSuppPhi}
Suppose that $\pi$ is as in Lemma \ref{Prop:MC} and moreover has trivial central character. If $i\leq c(\pi)/2$, 
then 
\begin{equation}\label{eq:refineSuppPhi}
\Phi(\big( \begin{smallmatrix} a & m \\  0 & 1 \end{smallmatrix}\big)\left( \begin{smallmatrix} 1 & 0 \\  p^i & 1 \end{smallmatrix}\right))
\end{equation} vanishes unless
\begin{itemize}
\item $v(a+mp^i)=0$ if $i>1$, or
\item $v(a+mp^i) \geq i-1$ if $i\leq 1$.
\end{itemize}
\end{mylemma}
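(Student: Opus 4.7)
The natural tool for proving a support refinement of $\phi_i$ for $i \leq c/2$ is the Atkin-Lehner involution $W_{p^c} = \left(\begin{smallmatrix} 0 & -1 \\ p^c & 0 \end{smallmatrix}\right)$. Because $\pi$ has trivial central character and $W_{p^c}^2 = -p^c\,I$ acts by a scalar, the operator $\pi(W_{p^c})$ is an involution preserving the one-dimensional newform line, and hence $\pi(W_{p^c})\varphi_0 = \eps_\pi\varphi_0$ for some sign $\eps_\pi \in \{\pm 1\}$. Consequently
$$\Phi(g_iW_{p^c}) = \eps_\pi\,\Phi(g_i),$$
so $\Phi(g_i) = 0$ if and only if $\Phi(g_iW_{p^c}) = 0$, and support information in cell $c-i$ transfers to support information in cell $i$.

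A direct matrix computation gives
$$g_iW_{p^c} = \left(\begin{smallmatrix} mp^c & -(a+mp^i) \\ p^c & -p^i \end{smallmatrix}\right) = (-p^i)\cdot \left(\begin{smallmatrix} -mp^{c-i} & (a+mp^i)/p^i \\ -p^{c-i} & 1 \end{smallmatrix}\right).$$
The scalar factor $-p^i \in Z(F)$ acts trivially on $\pi$, and right-multiplication by $\left(\begin{smallmatrix} -1 & 0 \\ 0 & 1 \end{smallmatrix}\right)$, which belongs to $K_1(p^c)$ by the definition in Section \ref{notation-groupsandsubgroups} (only the bottom row is constrained), turns the remaining factor into
$$\left(\begin{smallmatrix} a' & m' \\ 0 & 1 \end{smallmatrix}\right)\left(\begin{smallmatrix} 1 & 0 \\ p^{c-i} & 1 \end{smallmatrix}\right)\qquad \text{with}\qquad m' = \frac{a+mp^i}{p^i},\quad a' = -ap^{c-2i}.$$
Combining the trivial central character with right $K_1(p^c)$-invariance of $\Phi$ then gives $\Phi(g_i) = \pm \phi_{c-i}(a',m')$, and the crucial relation $v(a+mp^i) = i + v(m')$ converts support conditions on $m'$ into support conditions on $a+mp^i$.

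The proof would then be completed by applying Lemma \ref{Prop:MC} at index $j=c-i \in [c/2,c]$. For $j \in \{c-1,c\}$ (equivalently $i \in \{0,1\}$), part (i) gives $v(m') \geq -1$ and hence $v(a+mp^i) \geq i-1$; for $c/2 < j < c-1$ (equivalently $1 < i < c/2$), part (ii) gives $v(m') = -i$ and hence $v(a+mp^i)=0$; for $j=c/2>1$, part (iii)(a) gives $v(m') = -c/2 = -i$, again yielding $v(a+mp^i)=0$; and for the edge case $j=c/2=1$, part (iii)(b) gives only $v(m')\geq -1$, matching the weaker bound $v(a+mp^i)\geq 0$ allowed for $i=1$. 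The one place where care is needed is the sign correction in the Iwasawa decomposition: the naive form of $g_iW_{p^c}$ carries the ``wrong'' sign in the bottom-left entry, and it is precisely the asymmetric definition of $K_1(p^c)$---constraining only the lower row---that lets $\left(\begin{smallmatrix} -1 & 0 \\ 0 & 1 \end{smallmatrix}\right)$ live inside $K_1(p^c)$ and absorb this sign.
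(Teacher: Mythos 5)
Your proposal is correct and follows essentially the same route as the paper: apply the Atkin--Lehner element (which acts on the newform by a scalar thanks to the trivial central character), rewrite $g_iW_{p^c}$ up to center and a right factor $\left(\begin{smallmatrix}-1&\\&1\end{smallmatrix}\right)\in K_1(p^c)$ as $\left(\begin{smallmatrix}a'&m'\\&1\end{smallmatrix}\right)\left(\begin{smallmatrix}1&\\p^{c-i}&1\end{smallmatrix}\right)$ with $m'=(a+mp^i)/p^i$, and invoke Lemma \ref{Prop:MC} at index $c-i$. Your explicit case analysis of parts (i), (ii), (iii)(a), (iii)(b) just fills in the step the paper compresses into ``appealing to Lemma \ref{Prop:MC}, one concludes,'' and it checks out.
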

\begin{proof}
Let us write $c=c(\pi)$. 
Since the Atkin-Lehner operator $\left(\begin{smallmatrix}  & 1 \\ -p^{c} \end{smallmatrix}\right)$ acts on the newform by a scalar, if the matrix coefficient \eqref{eq:refineSuppPhi} does not vanish, then $\Phi$ does not vanish on 
\begin{equation}
\label{eq:ALlem1}
\zxz{p^{-i}}{}{}{p^{-i}}\zxz{a}{m}{}{1}\zxz{1}{}{p^i}{1}
\zxz{}{1}{-p^c}{}=
\zxz{-ap^{c-2i}}{p^{-i}(a+mp^i)}{}{1}\zxz{1}{}{p^{c-i}}{1}\zxz{-1}{}{}{1}.
\end{equation}
Appealing to Lemma \ref{Prop:MC}, one concludes the proof of the Lemma.
\end{proof}
Note that Lemma \ref{Lem:refineSuppPhi} gives non-trivial information in cases (ii) and (iii)(a) of Lemma \ref{Prop:MC} since $v(m)= i-c(\pi)$ in those cases, but we do not obtain anything new if $i=c/2=1$.  

\begin{myprop}\label{MxCoeffSpectralAssumption}
Let $p, (E/F,\xi)$ and $\sigma$ be as in Theorem \ref{cor:SpecAssumption_supercuspidal}. 
\begin{enumerate}
\item If $c(\sigma)$ is even, then $\Phi|_{ZK'}=\Phi$.
\item If $c(\sigma)$ is odd, then $\overline{\Phi}|_{ZK'}=\frac{1}{(1-p^{-2})p^{c_0+1}} \phi_{\xi}$.
\end{enumerate}
\end{myprop}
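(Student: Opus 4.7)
\emph{Proof Plan.} The approach is a direct support analysis based on the Iwasawa-type decomposition (Lemma \ref{Lem:Iwasawadecomp}). Since $\Phi$ is $Z(F)$-invariant and bi-$K_0(p^{c(\sigma)})$-invariant (as the newform has trivial central character and is $K_0(p^{c(\sigma)})$-fixed), it suffices to study the restrictions $\phi_i(a,m) = \Phi\!\left(\left(\begin{smallmatrix} a & m \\ 0 & 1\end{smallmatrix}\right)\left(\begin{smallmatrix} 1 & 0 \\ p^i & 1\end{smallmatrix}\right)\right)$ on each Iwasawa cell indexed by $0\leq i\leq c(\sigma)$. The key computation is
\[
a(p^{c_0})\begin{pmatrix} a & m \\ 0 & 1\end{pmatrix}\begin{pmatrix} 1 & 0 \\ p^i & 1\end{pmatrix}a(p^{c_0})^{-1} = \begin{pmatrix} a+mp^i & p^{c_0}m \\ p^{i-c_0} & 1\end{pmatrix},
\]
whose membership in $ZK$ (equivalently, the membership of the original element in $ZK'$) amounts to checking, after scaling by an appropriate power of the uniformizer determined by $v(a)$, that all four entries lie in $\cO$. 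Note also that $a(p^{c_0})K_1(p^{c(\sigma)})a(p^{c_0})^{-1}\subseteq K$ when $c(\sigma)\in\{2c_0,2c_0+1\}$, so the right $K_1(p^{c(\sigma)})$-invariance is preserved under this conjugation.

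\textbf{Part (1), $c(\sigma)=2c_0$.} For each cell $i$ I would input the constraints on $v(a)$, $v(m)$, and $v(a+mp^i)$ from Lemmas \ref{Prop:MC}, \ref{MxCoeffFurther}, and \ref{Lem:refineSuppPhi}, and verify the four integrality conditions. Concretely: for $i\in\{c-1,c\}$ (case (i) of Lemma \ref{Prop:MC}) one has $v(a)=0$, $v(m)\geq -1$, and no scaling is needed; for $0\leq i < c_0$ (case (ii)) one has $v(a)=2(i-c_0)$, $v(m)=i-c$, and scaling by $\varpi^{c_0-i}$ places everything in $\cO$ with the upper-left entry handled by Lemma \ref{Lem:refineSuppPhi}; for $i = c_0$ (case (iii)(a)) the crucial refinement $v(a)=0$ from Lemma \ref{MxCoeffFurther} together with $v(m)=-c_0$ settles the case. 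In every situation the matrix lies in $ZK$, so $\supp\Phi\subseteq ZK'$.

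\textbf{Part (2), $c(\sigma)=2c_0+1$.} Running the analogous Iwasawa analysis first identifies the exact intersection $\supp\Phi\cap ZK'$: cell-by-cell, one verifies this matches the support of $\phi_\xi$, namely $\bigsqcup_{\alpha,\alpha'\shortmod{p^{\lceil c_0/2\rceil}}} Z\cdot a(p^{c_0}\alpha')^{-1}K_0(p)a(p^{c_0}\alpha)$, where the outer double-coset structure arises from those Iwasawa cells $i$ around $c_0$ whose truncation to $ZK'$ is non-empty. On this common support, I would compare values by expressing the newform $\varphi_0$ as a linear combination of $a(p^{c_0}\alpha)$-translates of a minimal vector $\varphi_{\min}$ (cf.~\eqref{minlvecdef}); applying Lemma \ref{mx_coeff_ofminlvec}, the restriction $\overline{\Phi}|_{ZK'}$ becomes a double sum of translates of $\overline{\phi_{\xi,0}}$, which is exactly the form \eqref{sctestfcn} defining $\phi_\xi$. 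The proportionality constant is then pinned down by evaluating both sides at $g=1$, using the explicit value $\phi_{\xi,0}(1)=1$ from \eqref{fsupercuspidaldef} and matching with $f_\xi(1)=(1-p^{-2})p^{c_0+1}$ from \eqref{sec:7.3fp(1)}.

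\textbf{Main obstacle.} Part (1) is essentially bookkeeping. The genuine difficulty is in Part (2), where pinning down the precise change of basis between the newform and translates of the minimal vector (including the correct normalization producing the symmetrization weight $\nu(p^{\lfloor c_0/2\rfloor+1})$ appearing in \eqref{sctestfcn}) requires analyzing how the compact-induction realization of $\sigma$ from $J$ restricts to $K_0(p^{c(\sigma)})$ and locating the $K_0(p^{c(\sigma)})$-fixed line therein. The ramified odd-conductor cell structure forces extra care around $i\in\{c_0,c_0+1\}$, which is where the fact that $\pi(f_\xi)$ is supported on $\{\sigma,\sigma\times\eta\}$ (rather than just $\{\sigma\}$) is being paid for on the geometric side.
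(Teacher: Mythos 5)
Your Part (1) is essentially the paper's argument: the same Iwasawa-cell bookkeeping via Lemmas \ref{Lem:Iwasawadecomp}, \ref{Prop:MC}, \ref{MxCoeffFurther} and \ref{Lem:refineSuppPhi}, so there is nothing to add there. In Part (2), however, you have misplaced the crux. The change of basis expressing the newform as a sum of diagonal translates of a minimal vector, the resulting double-sum expansion of $\Phi$, and the symmetrization weight in \eqref{sctestfcn} are not the new difficulty; that material is already in the first author's earlier work, which is exactly what the paper invokes (its proof ``extends [Hu, Lem.\ 5.2]''). The step your plan silently assumes, and which is the actual content of the paper's proof, is this: by Lemma \ref{mx_coeff_ofminlvec} the minimal-vector matrix coefficient is supported on the \emph{whole} group $J=E^\times K_{\mathfrak{A}_2}(c_0)$ and equals $\widetilde{\theta}$ there, whereas $\phi_{\xi,0}$ is by definition its restriction to the strictly smaller set $ZB^1$, extended by zero. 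So when you restrict the double-sum expansion of $\overline{\Phi}$ to $ZK'$, it does not automatically ``become a double sum of translates of $\overline{\phi_{\xi,0}}$'': a priori the restriction also sees values of the minimal-vector coefficient on $J\smallsetminus ZB^1$. One must show this excess contributes nothing on $ZK'$, i.e.\ that $J\cap ZK = ZB^1$ (and likewise after the diagonal conjugations). The paper does this by writing $J=ZB^1\sqcup \left(\begin{smallmatrix} 0 & 1 \\ p & 0\end{smallmatrix}\right)ZB^1$, using that $E^\times/F^\times U_E(1)$ has order $2$ in the ramified case, and noting that every element of the second coset has determinant of odd valuation, hence lies outside $ZK$ (and outside $ZK'$ after conjugating by $a(p^{c_0}\alpha)$, which does not change the determinant valuation). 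Without this coset/determinant-parity argument your identification of $\overline{\Phi}|_{ZK'}$ with a multiple of $\phi_\xi$ does not follow.

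Two smaller points. Your claimed support equality $\supp\Phi\cap ZK'=\bigsqcup_{\alpha,\alpha'} Z\,a(p^{c_0}\alpha')^{-1}K_0(p)a(p^{c_0}\alpha)$ is not correct as stated: $\supp\phi_{\xi,0}$ is a proper subset of $ZK_0(p)$ (it sits inside $ZB^1$, cf.\ \eqref{fsupercuspidaldef}), and the conjugated copies are not disjoint; fortunately no such exact equality is needed once the coset point above is in place. Also, pinning the constant by quoting \eqref{sec:7.3fp(1)} is circular, since in the paper the odd-conductor value of $f_\xi(1)$ is \emph{deduced} from this proposition; instead evaluate $\phi_\xi(1)$ directly from \eqref{sctestfcn} (as in \eqref{F_Etheta1}, giving $(1-p^{-1})\nu(p^{c_0+1})=(1-p^{-2})p^{c_0+1}$) and compare with $\overline{\Phi}(1)=1$.
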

\begin{proof}
First let us suppose that $c(\sigma)$ is even. From the discussion following Lemma \ref{Lem:Iwasawadecomp} it suffices to consider the matrix coefficients $\phi_i(a,m)$ for $0\leq i \leq 2c_0$. 

When $i\geq c_0$, we have from Lemma \ref{Prop:MC} that the matrix coefficient $\phi_i(a,m)$ is supported in $v(m)\geq-c_0$. Since $p \neq 2$ and $\sigma$ is a trivial central character supercuspidal, it follows  \cite[Prop.\ 3.4]{TunnellLLC} that $\sigma$ is twist-minimal (see Remark \ref{twminRem}). So, by Lemma \ref{MxCoeffFurther}, we have that the matrix coefficient is supported in $v(a)=0$. Then we have
$$g=\zxz{a}{m}{}{1}\zxz{1}{}{p^i}{1}=\zxz{a+mp^i}{m}{p^i}{1}\in K'$$ whenever $g$ is in the support of $\Phi$, 
as its determinant is $a\in \cO^\times$ and its four entries satisfy $v(m)\geq -c_0$, $v(p^i)=i\geq c_0$, $v(a+mp^i)\geq 0$, $v(1)=0$.

When $i<c_0$, we get from Lemma \ref{Prop:MC} that $v(\det(g))=v(a)=2i-c(\sigma)=2i-2c_0$.
So it suffices to check that 
$p^{c_0-i}g \in K'$.
Indeed from Lemma \ref{Lem:refineSuppPhi}, we get $v(p^{c_0-i}(a+mp^i))\geq 0$. From Lemma \ref{Prop:MC} part (ii) we get $v(p^{c_0-i}m)=c_0-i+i-c(\sigma)=-c_0$, 
$v(p^{c_0-i}p^i)=c_0$,
$v(p^{c_0-i})\geq 0$. 

Next suppose that $c(\sigma)$ is odd, so necessarily $E/F$ is ramified and the character $\theta$ of $E^\times$ for which $\sigma\simeq \pi_\theta$ by compact induction (see Section \ref{compactinductionbackground}) has $c(\theta)$ even. In this case the proof of the proposition is an extension of \cite[Lem.\ 5.2]{Hu}. Indeed, begin by recalling the definition of the matrix coefficient $\Phi_{0,0}$ from loc.\ cit.\ Definition 3.18. Then, following the notation and proof  of loc.\ cit.\ Lemma 5.2, it suffices to show that $$\Phi_{0,0} \vert_{ZB^1} = \Phi_{0,0}\vert_{ZK}$$ for all $g \in G$, not just the elements $g_{a,a'}$. 

To see this assertion, recall from \cite[Cor.\ 3.13]{Hu} that $\Phi_{0,0}$ is supported in the subgroup $J = E^\times K_{\mathfrak{A}_2}(c(\theta)/2)$. Now, by the structure of the unit group of a $p$-adic field (and using that $E/F$ is ramified), the group $$ E^\times / F^\times U_E(1)$$ has cardinality 2, its two cosets being represented by $1$ and $\varpi_E$.  Therefore, we have for $B^1 = U_E(1) K_{\mathfrak{A}_2}(c(\theta)/2)$ that 
$$J = ZB^1 \sqcup \left(\begin{smallmatrix} 0 & 1 \\ p & 0 \end{smallmatrix}\right)ZB^1.$$ Finally, note that $ZB^1 \subseteq ZK$ but the other coset is disjoint from $ZK$. This proves the 2nd assertion of the proposition.
\end{proof}
\begin{proof}[Proof of Theorem \ref{cor:SpecAssumption_supercuspidal}]
Given Proposition \ref{MxCoeffSpectralAssumption}, essentially all that remains to prove the Theorem is to compute $\|\Phi|_{ZK'}\|_2^2$.

First suppose that $c(\sigma)$ is even. By the definition \eqref{fandKprimedef} of $f_\xi$ and Proposition \ref{MxCoeffSpectralAssumption} we have that
\begin{equation}\label{fxi_csigmaeven_formula_in_proof} f_\xi=\frac{1}{\|\Phi \vert_{ZK'}\|_2^2}\overline{\Phi} \vert_{ZK'} = \frac{1}{\|\Phi\|_2^{2}}\overline{\Phi}.\end{equation}
The fact that $f_\xi$ is a newform projector as well as points (1), (2), and (3) of the Theorem are clear from equation \eqref{fxi_csigmaeven_formula_in_proof} and orthogonality of matrix coefficients. 

Suppose $c(\sigma)$ is odd. Again, the definition \eqref{fandKprimedef} of $f_\xi$ and Proposition \ref{MxCoeffSpectralAssumption} give us that
\begin{equation}\label{fxi_csigmaodd_formula_in_proof} f_\xi =\frac{1}{\|\Phi \vert_{ZK'}\|_2^2}\overline{\Phi} \vert_{ZK'} = \frac{(1-p^{-2})p^{c_0+1}}{\| \phi_\xi\|_2^2} \phi_\xi.\end{equation}
By Lemmas \ref{FEtheta_satisfies_geom_and_spec}, \ref{projection_upper_bound}, and formula  \eqref{sctestfcn}, we have 
\begin{equation}\label{F_Etheta1}
\| \phi_\xi\|_2^2 = \phi_{\xi}(1)= \nu(p^{\lfloor c_0/2 \rfloor+1}) \sumstar_{a,a' \shortmod{p^{\lceil c_0/2\rceil }}} 1_{a\equiv a' \shortmod{p^{\lceil c_0/2\rceil }}} = (1-p^{-1})\nu(p^{c_0+1}).
\end{equation}
Putting together \eqref{fxi_csigmaodd_formula_in_proof} and \eqref{F_Etheta1}, point (1) of the Theorem holds, namely $f_\xi = \phi_\xi$. The fact that $f_\xi$ is a newform projector and point (3) of the Theorem follow from Lemma \ref{FEtheta_satisfies_geom_and_spec}.
 Point (2) of the Theorem is \cite[Prop.\ 3.21]{Hu}.
\end{proof}

\subsection{Supercuspidal families, $p= 2$}\label{sec:egSupercuspidal_even}
Throughout this section we set $F=\Q_2$. 

Let $\sigma$ be a trivial central character supercuspidal representation of $G(F)$ with $c(\sigma)\geq 9$ and $(E/F,\xi) \in \P_2(F)_{\geq 9}^1$ be the corresponding pair by the bijection $i$ of Corollary \ref{cor_p2bijection}. Throughout this section, we write $e=e_{E/F}$ and  $d=v(\disc E/F)$. Note that $d$  can only take the values $0,2,3$ because of our assumption on $F$. 
Set $c_0= c(\xi)/e_{}$ and recall the neighborhood $\xi[n]$ of $\xi$ from \eqref{xi_n_def}.

\begin{mytheo}\label{cor:SpecAssumption_supercuspidal_p2}
Suppose $\sigma, E, \xi$ are as above, and let $\Phi$ be the diagonal matrix coefficient of an $L^2$-normalized newform in $\sigma$. 
\begin{itemize}
\item If $d =0$,  then $$f = \frac{1}{\| \Phi \vert_{ZK_0(c_0,-c_0)}\|_2^2} \overline{\Phi} \vert_{ZK_0(c_0,-c_0)}$$ is a newform projector. The operator $\pi(f)$ is non-zero if and only if $\pi$ is isomorphic to one of the three representations $i(\xi[1])$. 
\item If $d =2$, then $$f = \frac{1}{\| \Phi \vert_{ZK_0(c_0+1,-c_0-1)}\|_2^2} \overline{\Phi} \vert_{ZK_0(c_0+1,-c_0-1)}$$ is a newform projector. The operator $\pi(f)$ is non-zero if and only if $\pi \simeq \sigma$ or $\sigma \times \eta$ where $\eta$ is the unramified quadratic character of $F^\times$.  
\item If $d =3$ and $c(\sigma)\geq 11$, then $$f = \frac{1}{\| \Phi \vert_{ZK_0(c_0+2,-c_0-1)}\|_2^2} \overline{\Phi} \vert_{ZK_0(c_0+2,-c_0-1)}$$ is a newform projector. The operator $\pi(f)$ is non-zero if and only if $\pi \simeq \sigma$ or $\sigma \times \eta$ where $\eta$ is the unramified quadratic character of $F^\times$.  
\end{itemize}
\end{mytheo}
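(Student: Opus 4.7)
The plan is to follow the template of the proof of Theorem \ref{cor:SpecAssumption_supercuspidal}, proving a $p=2$ analogue of Proposition \ref{MxCoeffSpectralAssumption}: in each of the three cases, I would identify $\overline{\Phi}\vert_{ZK'}$ with (a scalar multiple of) a compact-induction-type function analogous to $\phi_\xi$ in \eqref{sctestfcn}, whose newform-projector property follows by orthogonality of matrix coefficients. The image $\{\pi \in \overline{G}(\Q_2)^{\wedge} : \pi(f) \neq 0\}$ is then read off from the compact induction data, and the normalization constant $\Vert \Phi\vert_{ZK'}\Vert_2^{-2}$ is computed in the Whittaker model via the local functional equation of the self-Rankin-Selberg $L$-function, as at the end of the proof of Theorem \ref{cor:SpecAssumption_supercuspidal}.

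For the ramified case $d=3$ the conductor $c(\sigma) = 2c_0+3$ is odd, so $\sigma$ is twist-minimal by the remark following Lemma \ref{MxCoeffFurther}. The Bushnell-Henniart apparatus of Lemmas \ref{lem:wilddiagram}, \ref{choice_of_cusp_type} and Corollary \ref{alphaxialphathetacor} applies (the hypothesis $c(\sigma) \geq 11$ is precisely what is needed to invoke Corollary \ref{alphaxialphathetacor} for $d=3$), giving a cuspidal type $(\mathfrak{A}, J, \widetilde\theta)$ with $J = E^\times U_{\mathfrak A}^{c_0+1}$ on which $\Phi$ equals $\widetilde\theta$ by Lemma \ref{mx_coeff_ofminlvec}. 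Via the embedding \eqref{Oembedding} and a direct chain-order computation, $J \subseteq ZK_0(c_0+2,-c_0-1)$ (the asymmetry reflects the discriminant shift $d=3$ through \eqref{add_char_cond}); combining this with Lemmas \ref{Prop:MC}, \ref{MxCoeffFurther} and \ref{Lem:refineSuppPhi} applied via the Iwasawa decomposition (Lemma \ref{Lem:Iwasawadecomp}) confines $\mathrm{supp}\,\Phi$ to $ZK'$, so that $\Phi\vert_{ZK'} = \Phi$. The two-element image $\{\sigma, \sigma\times\eta\}$ appears for the same reason as in the odd case of Theorem \ref{cor:SpecAssumption_supercuspidal}: the embedding of $E^\times$ into $G(\Q_2)$ only detects $\xi$ modulo twists by the unramified quadratic character composed with the norm.

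The cases $d=2$ and $d=0$ are more delicate because $c(\sigma)$ is even and greater than $2$, so $\sigma$ is not twist-minimal and Lemma \ref{MxCoeffFurther} does not apply directly. For $d=2$, I would pass to a twist-minimal representative $\sigma' = \sigma \otimes \chi_0$ for an auxiliary unramified quasicharacter $\chi_0$ of $\Q_2^\times$: $\chi_0$ can be chosen to preserve the prescribed $K'$, and then applying Lemma \ref{MxCoeffFurther} to $\sigma'$ transfers to $\sigma$ via $\Phi_\sigma(g) = \chi_0(\det g)^{-1}\Phi_{\sigma'}(g)$. For $d=0$, $E/\Q_2$ is unramified and the fibre $\xi[1]$ has cardinality $3$ by Remark \ref{size_thetaell_rem}; the three supercuspidals in $i(\xi[1])$ share the same cuspidal type up to twist by characters of conductor $\leq 1$ trivial on $\Z_2^\times$. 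Consequently their three diagonal newform matrix coefficients agree on $ZK_0(c_0,-c_0)$ up to scalars, and their sum reconstructs $\Phi\vert_{ZK'}$ after appropriate normalization. The newform-projector property onto $i(\xi[1])$ then follows from orthogonality of matrix coefficients across the three representations.

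The main obstacle will be the $d=0$ case: the 3-to-1 phenomenon $i:\xi[1] \to \{\sigma\}$ has no analogue in the $p\neq 2$ proof of Theorem \ref{cor:SpecAssumption_supercuspidal}, and one must verify that the three distinct compact induction data produce \emph{equal} (not merely $G$-conjugate) restrictions of their matrix coefficients to $ZK_0(c_0,-c_0)$. A secondary obstacle is the explicit chain-order analysis in the ramified cases needed to pin down the minimal $K_0(m,-m')$ containing $J$, where the $p=2$ case behaves differently from odd residue characteristic because of extra $2$-adic contributions to $v_E(2)$ in the hypotheses of Corollary \ref{alphaxialphathetacor}, exactly what forces the boundary hypothesis $c(\sigma)\geq 11$ in the $d=3$ case versus $c(\sigma)\geq 9$ in the other two.
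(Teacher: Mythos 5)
Your proposal has genuine gaps in all three cases, stemming from one structural misreading: you treat the $p=2$ theorem as if it could be proved by the support analysis of the newform matrix coefficient that works for $p\neq 2$, even conductor, whereas the paper's proof is forced to go through \emph{minimal vectors}. Concretely, in the $d=3$ case you claim Lemmas \ref{Prop:MC}, \ref{MxCoeffFurther}, \ref{Lem:refineSuppPhi} confine $\supp\Phi$ to $ZK'$ so that $\Phi\vert_{ZK'}=\Phi$. This is false: here $c(\sigma)=2c_0+3$ is odd, and by Lemma \ref{Prop:MC}(ii) the newform matrix coefficient is nonzero at elements whose determinant has odd valuation, while every element of $ZK_0(m,n)$ has even valuation of determinant; so the restriction is proper, exactly as in the odd-$p$, odd-conductor case, where Proposition \ref{MxCoeffSpectralAssumption}(2) identifies $\overline{\Phi}\vert_{ZK'}$ with a \emph{proper} piece of $\Phi$ by a coset decomposition of $J$. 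You also invoke Lemma \ref{mx_coeff_ofminlvec} for $\Phi$, but that lemma describes the matrix coefficient of the minimal vector, not of the newform; the newform is a Gauss-sum average of translates of the minimal vector (Lemma \ref{Lem:minimaltonew}), and conflating the two skips the entire reconstruction step. In the $d=2$ case your twist-minimality reduction fails at the start: an unramified twist cannot make $\sigma$ twist-minimal, since unramified twists do not change $c(\sigma\times\chi)$ for any $\chi$; the minimizing character $\chi_m$ of Definition \ref{def:twistminimalQ2} is ramified of conductor $j+1$ (Proposition \ref{prop:twistminimalQ2}), and the newform does not transform as $\chi_m(\det g)^{\pm1}$ under a ramified twist — only the minimal vector does, which is why the paper twists minimal vectors, reconstructs the newform via Lemma \ref{Lem:minimaltonew}, and only then compares supports.

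The paper's actual route is: restrict the \emph{minimal-vector} matrix coefficient to $H=ZU_E(\ell)U_{\mathfrak{A}_e}^j$ (resp.\ $ZB^1$), prove the projection property and the exact spectral support via the decomposition $\Ind_H^J(\widetilde\theta'\vert_H)=\bigoplus_{\theta_1'\in\theta'[\ell]}\widetilde{\theta_1'}$ and a $\Hom$-space argument (Proposition \ref{minlvec_projection_prop}) — this, not a "three matrix coefficients agree up to scalars and sum to $\Phi\vert_{ZK'}$" heuristic, is what produces the three-element family $i(\xi[1])$ when $d=0$; then assemble the averaged test function of Definition \ref{p=2_testfcn_def} and show by an explicit support comparison (equations \eqref{Eq:suppcompare1}--\eqref{Eq:suppcompare2} in Proposition \ref{Cor:twotestfunsame}) that it equals $V^{-1}\overline{\Phi}_{\varphi_0}\vert_{ZK_0(m,n)}$. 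Note also that your proposed normalization via the self-Rankin--Selberg functional equation computes $\|\Phi\|_2^2$, which is not what is needed here since the restriction is proper in all three $p=2$ cases; the paper instead gets $\|\Phi\vert_{ZK'}\|_2^2=V$ from the projector identity $f(1)=\|f\|_2^2$ (Lemma \ref{projection_upper_bound}) combined with the volume computation of Lemma \ref{Hvol}. Your one correct diagnostic is the role of the hypothesis $c(\sigma)\geq 11$ for $d=3$: it is indeed what makes Lemma \ref{alphathetaalphaxi}/Corollary \ref{alphaxialphathetacor} applicable, but the rest of the argument needs to be rebuilt along the minimal-vector lines above.
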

\begin{myrema} We have 
\begin{equation}\label{f1p=2_formula}
f(1) 
= \begin{cases}  (1-p^{-2}) p^{c_0+1} & \text{ if } d=0 \text{ or } 2, \\   (1-p^{-2}) p^{c_0+2} & \text{ if } d=3,\end{cases}
\end{equation}
see Proposition \ref{Cor:twotestfunsame} and Lemma \ref{lem:Vexplicit}.
\end{myrema}
Recall the normalized minimal elements $\alpha_0$ from Definition \ref{normminlelt}. 
\begin{mylemma}\label{minl_elts_in_Q2extns}
Any quadratic extension of $F$ is one of the following types and has a normalized minimal element $\alpha_0$ with minimal polynomial $g(x)=x^2+Ax+B$ of the following form.
\begin{enumerate}
\item The unique unramified quadratic extension with $d=0$ and $g(x) = x^2+x+1$. 
\item A ramified quadratic extension with $d=2$ and $v(A)=v(B)=1$.
\item A ramified quadratic extension with $d=3$ and $A=0$ and $v(B)=1$. 
\end{enumerate}
\end{mylemma}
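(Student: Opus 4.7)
The plan is to classify quadratic extensions $E/F$ by the valuation $d = v(\mathrm{disc}(E/F))$ of the discriminant and, in each case, exhibit a normalized minimal element whose minimal polynomial has the prescribed form. The unramified case is handled by a Hensel lift, while the ramified cases are distinguished by the valuation of the middle coefficient of the Eisenstein minimal polynomial of a uniformizer.

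For the (unique) unramified quadratic extension, with $d = 0$, the residue field $k_E = \F_4$ is generated over $k_F = \F_2$ by a root of the separable polynomial $\bar{g}(x) = x^2 + x + 1 \in \F_2[x]$. Hensel's lemma lifts this root to some $\alpha_0 \in \cO_E$ satisfying $g(\alpha_0) = 0$ for $g(x) = x^2 + x + 1 \in \Z_2[x]$. Since $v_E(\alpha_0) = 0 = e-1$ and $\alpha_0 \pmod{\fp_E}$ generates $k_E/k_F$, this $\alpha_0$ is a normalized minimal element with the claimed minimal polynomial.

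Now suppose $E/F$ is ramified, so $e = 2$ and $f = 1$. Any uniformizer $\pi \in E$ is a normalized minimal element, and its minimal polynomial $g(x) = x^2 + Ax + B$ is Eisenstein, i.e.\ $v(A) \geq 1$ and $v(B) = 1$. Because $f = 1$, the discriminant valuation satisfies $d = v_E(g'(\pi)) = v_E(2\pi + A)$. Using $v_E(2\pi) = v_E(2) + v_E(\pi) = 3$ and $v_E(A) = 2 v(A)$, a case distinction gives $v_E(2\pi + A) = 2$ if $v(A) = 1$ and $v_E(2\pi + A) = 3$ if $v(A) \geq 2$. Hence $d \in \{2, 3\}$ in the ramified case, with $d = 2$ exactly when $v(A) = 1$ and $d = 3$ exactly when $v(A) \geq 2$. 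The case $d = 2$ already produces a uniformizer whose minimal polynomial has $v(A) = v(B) = 1$, matching part (2).

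It remains, in the $d = 3$ case, to modify the uniformizer $\pi$ (which has $v(A) \geq 2$) to obtain a normalized minimal element with $A = 0$. Setting $\pi' := \pi + A/2$, the element $A/2 \in F$ satisfies $v(A/2) \geq 1$, so $v_E(A/2) \geq 2 > 1 = v_E(\pi)$ and hence $\pi' \in \cO_E$ with $v_E(\pi') = 1$. Completing the square, the minimal polynomial of $\pi'$ is $x^2 + (B - A^2/4)$; since $v(A^2/4) = 2 v(A) - 2 \geq 2 > 1 = v(B)$, this constant coefficient has valuation $1$, yielding a normalized minimal element of the form required in part (3). The only real work is the routine valuation bookkeeping, so no significant obstacle is anticipated.
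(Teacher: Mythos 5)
Your proof is correct and follows essentially the same route as the paper: in the ramified case any uniformizer is a normalized minimal element with Eisenstein minimal polynomial, and in the $d=3$ case one completes the square to remove the linear coefficient. The only difference is that where the paper cites Bushnell--Henniart (41.1 Lemma) for the discriminant classification, you derive $d=v_E(g'(\pi))=v_E(2\pi+A)\in\{2,3\}$ directly from the different of the Eisenstein polynomial, and you also spell out the unramified Hensel-lift case that the paper leaves implicit; both substitutions are sound.
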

\begin{proof}
It suffices to consider the case that $E/F$ is ramified. In this case, any uniformizer $\varpi_E$ for $E$ is a normalized minimal element. 
The ramified $d=2$ case follows from \cite[41.1 Lem.\ (1)(2)]{BushnellHenniart:06a}, where we caution that in Bushnell-Henniart the symbol $d$ has a different meaning than in this paper. When $d=3$, we again use loc.\ cit., and then complete the square to find a uniformizer for $E$ of the prescribed shape.  
\end{proof} 
Recall the notion of a chain order $\mathfrak{A} \subset M_2(F)$, its normalizer $\mathcal{K}_{\mathfrak{A}}$, and the standard chain orders $\mathfrak{A}_e$, $e=1,2$ from Section \ref{compactinductionbackground}. 
\begin{mylemma}\label{Q2chainorders}
Suppose $\alpha_0 \in E$ is as in Lemma \ref{minl_elts_in_Q2extns}. Using $\alpha_0$ to embed $E^\times \hookrightarrow G(F)$ by \eqref{Oembedding}, the group $E^\times$ normalizes the chain order $\mathfrak{A}_e$. 
The standard order $\mathfrak{A}_e$ is the unique chain order in $A=M_2(F)$ such that $E^\times \subseteq \mathcal{K}_{\mathfrak{A}_e}.$
\end{mylemma}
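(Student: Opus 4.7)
The plan is first to dispose of the uniqueness assertion, then to verify by direct inspection in each of the three possible extension types of Lemma \ref{minl_elts_in_Q2extns} that the embedded image of $E^\times$ sits inside $\mathcal{K}_{\mathfrak{A}_e}$. The uniqueness statement is precisely what is recalled in Section \ref{compactinductionbackground} from \cite[12.4 Prop.~(2)]{BushnellHenniart:06a}: for any $F$-subalgebra $E \subset A$ with $E/F$ a quadratic field extension, there is a unique chain order in $A$ whose normalizer contains $E^\times$. So there is nothing new to prove for the uniqueness; it suffices to exhibit one chain order in each case.

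First I would handle the unramified case $(d=0,\ e=1)$. Here $\cO_E = \cO[\alpha_0]$ with $A=B=1$, so every $x+y\alpha_0 \in \cO_E$ maps under \eqref{Oembedding} to $\left(\begin{smallmatrix} x & y \\ -y & x-y\end{smallmatrix}\right) \in M_2(\cO) = \mathfrak{A}_1$, with determinant $x^2-xy+y^2 = \Nm_{E/F}(x+y\alpha_0)$. When $x+y\alpha_0 \in \cO_E^\times$ this norm is a unit, placing the matrix in $\mathfrak{A}_1^\times = GL_2(\cO)$. Since $E/F$ is unramified, $E^\times = \cO_E^\times \cdot \varpi_F^{\mz}$, and $\varpi_F$ maps to the central matrix $\varpi_F\cdot I$. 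The image therefore lies in $\mathcal{K}_{\mathfrak{A}_1} = F^\times GL_2(\cO)$.

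Next I would handle the ramified cases $(d\in\{2,3\},\ e=2)$ in a uniform way. In both, Lemma \ref{minl_elts_in_Q2extns} gives $v(A)\geq 1$ and $v(B)=1$, so $\alpha_0$ is a uniformizer of $E$ and any $x+y\alpha_0 \in \cO_E$ maps to $\left(\begin{smallmatrix} x & y \\ -By & x-Ay\end{smallmatrix}\right) \in \mathfrak{A}_2$, since $v(-By) \geq 1$. Using $\fp_E = \alpha_0\cO_E$, any element of $\cO_E^\times$ has $x \in \cO^\times$, hence $x-Ay \in \cO^\times$ as well, so the image lands in $\mathfrak{A}_2^\times$. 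For $\alpha_0$ itself the factorization
\[
\begin{pmatrix} 0 & 1 \\ -B & -A\end{pmatrix}
= \begin{pmatrix} 0 & 1 \\ \varpi & 0 \end{pmatrix}
  \begin{pmatrix} -B/\varpi & -A/\varpi \\ 0 & 1 \end{pmatrix}
\]
exhibits $\alpha_0$ as $\Pi \cdot u$, where $u \in \mathfrak{A}_2^\times$ since its diagonal entries $-B/\varpi$ and $1$ are units and $-A/\varpi \in \cO$ (uniformly in $d=2$ and $d=3$). Since $E^\times = \cO_E^\times \cdot \alpha_0^{\mz}$, the full image lands in $\langle \mathfrak{A}_2^\times, \Pi\rangle \cdot F^\times = \mathcal{K}_{\mathfrak{A}_2}$.

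The main thing to be mindful of is that the specific shape of $A$ differs between the $d=2$ and $d=3$ cases ($v(A)=1$ in one, $A=0$ in the other), but only $v(A)\geq 1$ and $v(B)=1$ are used in the displayed factorization, so both cases go through simultaneously. There is no serious obstacle; the lemma is a matter of case-by-case verification of membership in explicit matrix groups.
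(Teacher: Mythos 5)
Your proposal is correct. It differs from the paper's proof mainly in bookkeeping: you split into the three extension types, check by hand that $\cO_E^\times$ lands in $\mathfrak{A}_e^\times$, and in the ramified cases factor $\alpha_0 = \Pi u$ with $\Pi = \left(\begin{smallmatrix} 0 & 1 \\ \varpi & 0\end{smallmatrix}\right)$ and $u \in \mathfrak{A}_2^\times$, then conclude via the decomposition $E^\times = \cO_E^\times\cdot(\text{uniformizer})^{\Z}$ together with the standard generator description of $\mathcal{K}_{\mathfrak{A}_e}$ (of which you only really need the containments $F^\times\mathfrak{A}_e^\times \subseteq \mathcal{K}_{\mathfrak{A}_e}$ and $\Pi \in \mathcal{K}_{\mathfrak{A}_2}$, the latter being equivalent to $\mathfrak{P}_2 = \Pi\mathfrak{A}_2 = \mathfrak{A}_2\Pi$). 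The paper instead argues uniformly in $e$: since $F^\times$ is central, it reduces to showing $\alpha_0$ normalizes $\mathfrak{A}_e$, which it gets from $\alpha_0 \in \mathfrak{P}_e^{\,e-1}$, $\alpha_0^{-1} \in \mathfrak{P}_e^{\,1-e}$ and the two-sided ideal calculus $\mathfrak{P}^i\mathfrak{P}^j = \mathfrak{P}^{i+j}$ from \cite[\S 12.2]{BushnellHenniart:06a}; your factorization $\alpha_0 = \Pi u$ is in effect a hands-on instance of the same membership $\alpha_0 \in \mathfrak{P}_2$. Both arguments cite \cite[12.4 Prop.\ (2)]{BushnellHenniart:06a} for uniqueness, exactly as you do. Your route is more explicit; the paper's avoids case distinctions and any appeal to the generators of the normalizer. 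Either is a complete proof.
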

\begin{proof}
Since $F^\times$ embeds as the center in $G(F)$ under \eqref{Oembedding}, it suffices to check that $\alpha_0 \mathfrak{A}_e \alpha_0^{-1}= \mathfrak{A}_e $. Let $\mathfrak{P}=\mathfrak{P}_e = {\rm rad}( \mathfrak{A}_e)$ be the Jacobson radical of $\mathfrak{A}_e$, explicitly, 
$$\mathfrak{P}_1= \zxz{\fp}{\fp}{\fp}{\fp}, \quad \mathfrak{P}_2=\zxz{\fp}{ \cO}{\fp}{\fp}.$$ Then, it is simple to check from the information in Lemma \ref{minl_elts_in_Q2extns} that $\alpha_0 \in \mathfrak{P}^{e-1}$ and $\alpha_0^{-1} \in \mathfrak{P}^{1-e}$. Since $\mathfrak{P}^i \mathfrak{P}^j = \mathfrak{P}^{i+j}$ for any $i,j \in \Z$ (see \cite[\S 12.2]{BushnellHenniart:06a}), the first assertion of the lemma follows. The second assertion follows from the first by \cite[12.4 Prop.\ (2)]{BushnellHenniart:06a}.
\end{proof}
For $k \geq \ell\geq 0$ set (cf.\ \eqref{Jramdef}) \begin{equation}\label{H_group_def}
H = ZU_E(\ell)U_{\mathfrak{A}_e}^k.
\end{equation}
\begin{mylemma}\label{Hvol}
For $E$ and $\mathfrak{A}_e$ as in Lemma \ref{Q2chainorders}, and $k \geq \ell \geq 1$, we have
$$\vol(Z\backslash H)=\begin{cases}
	\frac{1}{(p^2-1)p^{2k+\ell-2}} &\text{ if }e=1,\\
	\frac{1}{(p^2-1)p^{k+\lfloor \frac{\ell}{2}\rfloor -1}} &\text{ if }e=2.
	\end{cases}$$
\end{mylemma}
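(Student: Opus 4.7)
The plan is to reduce the computation to a product of an index and a volume of a standard congruence subgroup via the quotient-measure formula, then compute each factor from the explicit structure of $\mathfrak{A}_e$ and the filtrations $U_E(i),\,U_{\mathfrak{A}_e}^i$. First I would record that for any compact open $H' \subseteq G$, the standard quotient-measure argument gives
\[
\vol_{\overline G}(Z\backslash ZH') = \frac{\vol_G(H')}{\vol_Z(Z\cap H')}.
\]
Applied to $H' = U_{\mathfrak{A}_e}^k$, this reduces the problem to computing $\vol_G(U_{\mathfrak{A}_e}^k)$, $\vol_Z(Z\cap U_{\mathfrak{A}_e}^k)$, and the index $[H:ZU_{\mathfrak{A}_e}^k]$.

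For the two volumes: using that $U_{\mathfrak{A}_1}^k = 1+\varpi^k M_2(\cO)$ and $|GL_2(\cO/\fp^k)| = (p^2-1)(p-1)p^{4k-3}$ gives $\vol_G(U_{\mathfrak{A}_1}^k)=[(p^2-1)(p-1)p^{4k-3}]^{-1}$; for $e=2$, combining $\vol_G(\mathfrak{A}_2^\times)=(p+1)^{-1}$ (since $\mathfrak{A}_2^\times = K_0(\fp)$) with $|\mathfrak{P}_2^j/\mathfrak{P}_2^{j+1}|=p^2$ for $j\geq 1$ gives $\vol_G(U_{\mathfrak{A}_2}^k)=[(p^2-1)(p-1)p^{2k-2}]^{-1}$. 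For the second volume, the explicit description $\mathfrak{P}_2^{2j}=\varpi^j\mathfrak{A}_2$, $\mathfrak{P}_1^{j}=\varpi^{j}\mathfrak{A}_1$ shows that the diagonal entries of $\mathfrak{P}_e^k$ lie exactly in $\fp^{\lceil k/e\rceil}$, whence $Z\cap U_{\mathfrak{A}_e}^k = U_F(\lceil k/e\rceil)$ and $\vol_Z$ of this equals $[(p-1)p^{\lceil k/e\rceil -1}]^{-1}$.

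The delicate step is identifying the index $[H : ZU_{\mathfrak{A}_e}^k] = [U_E(\ell) : U_E(\ell)\cap ZU_{\mathfrak{A}_e}^k]$. The idea is: if $u = zv \in U_E(\ell)$ with $z\in Z$, $v\in U_{\mathfrak{A}_e}^k$, then $z \in F^\times \cap U_E(\ell)U_{\mathfrak{A}_e}^k \subseteq F^\times \cap U_{\mathfrak{A}_e}^\ell = U_F(\lceil \ell/e\rceil)$, and then $v = z^{-1}u \in U_E(\ell)\cap U_{\mathfrak{A}_e}^k = U_E(k)$ (using \eqref{BH_s_12point4} and $k\geq \ell$). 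This yields
\[
U_E(\ell)\cap ZU_{\mathfrak{A}_e}^k \;=\; U_F(\lceil \ell/e\rceil)\,U_E(k),
\]
and then the index follows from $[U_E(\ell):U_E(k)]=p^{f(k-\ell)}$ together with $[U_F(\lceil \ell/e\rceil) U_E(k):U_E(k)] = [U_F(\lceil \ell/e\rceil):U_F(\lceil k/e\rceil)]$. Plugging in $(e,f)=(1,2)$ gives $p^{k-\ell}$, and $(e,f)=(2,1)$ with $\lceil k/2\rceil - \lceil \ell/2\rceil$ in the denominator gives $p^{\lfloor k/2\rfloor - \lfloor \ell/2\rfloor}$.

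Multiplying the three factors (and using $p=2$, $p^2-1=3$, together with the identity $2k-\lceil k/2\rceil = k+\lfloor k/2\rfloor$ in the ramified case) produces the two formulas in the statement. The main obstacle is not any single calculation but the bookkeeping around ceilings and floors; I would organize the proof so that the group-theoretic identity $U_E(\ell)\cap ZU_{\mathfrak{A}_e}^k = U_F(\lceil \ell/e\rceil)U_E(k)$ is isolated as a lemma, after which the rest is elementary index arithmetic split cleanly across the two cases $e=1$ and $e=2$.
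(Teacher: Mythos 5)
Your proposal is correct and follows essentially the same route as the paper: both reduce $\vol(Z\backslash H)$ via the quotient measure and the normalization $\vol(K)=1$ to the index $[K:U_{\mathfrak{A}_e}^k]$ together with a "toral" index handled through \eqref{BH_s_12point4} ($U_{\mathfrak{A}_e}^k\cap E^\times=U_E(k)$, $F^\times\cap U_E(m)=U_F(\lceil m/e\rceil)$) and the isomorphism theorems. The only difference is organizational — you factor through $ZU_{\mathfrak{A}_e}^k$ and isolate $U_E(\ell)\cap ZU_{\mathfrak{A}_e}^k=U_F(\lceil\ell/e\rceil)U_E(k)$, whereas the paper computes $[\cO_F^\times U_E(\ell)U_{\mathfrak{A}_e}^k:U_{\mathfrak{A}_e}^k]=[\cO_F^\times U_E(\ell):U_E(k)]$ directly — and all your explicit indices and the final floor/ceiling bookkeeping check out.
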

\begin{proof}
The volume is the same as $\vol(\cO_F^\times U_E(\ell)U_{\mathfrak{A}_e}^k)$ by quotient measure.
		By \eqref{BH_s_12point4} and the group isomorphism theorems, we have
		$$
		[\cO_F^\times U_E(\ell)U_{\mathfrak{A}_e}^k:U_{\mathfrak{A}_e}^k]=[\cO_F^\times U_E(\ell):U_E(k)]=(1-p^{-1}) p^{\lceil \frac{\ell}{e} \rceil +\frac{2}{e}(k-\ell)}.
		$$
		On the other hand we have
		$$[G(\cO_F):U_{\mathfrak{A}_e}^k]=\begin{cases}
			(p^2-1)(p^2-p)p^{4(k-1)} &\text{ if }e=1,\\
		(p-1)(p^2-1)p^{2(k-1)} &\text{ if }e=2.
		\end{cases}$$
		Since we take $\vol(G(\cO_F))=1$, the lemma follows by combining these two computations.
\end{proof}

For a multiplicative character $\xi$ of a $p$-adic field $E$, recall the linearization $\alpha_\xi \in E$ from Lemma \ref{postnikov}, which depends on a choice of an additive character $\psi_E$ of $E$ which we have taken to be $\psi_E = \psi \circ \Tr_{E/F}$ for a choice of additive character $\psi$ of $F$. 
\begin{mylemma}\label{Q2lem2}
Suppose $\xi$ is a character of $E^\times$ such that $\xi \vert_{F^\times} = \eta_{E/F}$ and $c(\xi)\geq 2$. Let $\alpha_0$ be one of the normalized minimal elements for $E$ from  Lemma \ref{minl_elts_in_Q2extns} with minimal polynomial $x^2+Ax+B$. There exists $z\in F^\times$ for which 
\begin{equation}\label{yalphaxidef}
\alpha_\xi = z (\frac{A}{2} + \alpha_0).
\end{equation}
Such $z$ satisfies $v(z) = -\frac{c(\xi)}{e} +c(\psi)+1-d$.
\end{mylemma}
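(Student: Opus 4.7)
\medskip

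\noindent\textbf{Proof plan.} The key reformulation is that the desired form $\alpha_\xi = z(A/2+\alpha_0)$ is equivalent to $\Tr_{E/F}(\alpha_\xi)=0$: writing $\alpha_\xi = x+y\alpha_0$ with $x,y\in F$, one has $\Tr(\alpha_\xi)=2x-Ay$, and $\Tr(\alpha_\xi)=0$ forces $\alpha_\xi = y(A/2+\alpha_0)$ when $A\neq 0$ and $\alpha_\xi=y\alpha_0$ when $A=0$, so $z=y$ works in either case. Since Postnikov (Lemma \ref{postnikov}) only determines $\alpha_\xi$ modulo $\fp_E^{c(\psi_E)-i}$ (for suitable $i$), it suffices to show $\Tr(\alpha_\xi)$ can be killed by adjustment within this ambiguity.

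To bound $\Tr(\alpha_\xi)$, I would exploit the hypothesis $\xi\vert_{F^\times}=\eta_{E/F}$, which is equivalent to $\xi^\sigma=\xi^{-1}$ on $E^\times$ (where $\sigma$ generates $\Gal(E/F)$). Indeed, $(\xi\cdot\xi^\sigma)(\alpha) = \xi(\Nm_{E/F}\alpha) = \eta_{E/F}(\Nm_{E/F}\alpha) = 1$ since norms are in the kernel of $\eta_{E/F}$. Applying Postnikov for $u\in\fp_E^i$ with $i=\lfloor c(\xi)/2\rfloor\geq 2$, the relation $\xi^\sigma(1+u)=\xi^{-1}(1+u)$ becomes $\psi_E(\alpha_\xi u^\sigma) = \psi_E(-\alpha_\xi u)$, i.e., $\psi_F(\Tr(u)\cdot\Tr(\alpha_\xi))=1$ for all $u\in\fp_E^i$ (using that $u+u^\sigma=\Tr(u)\in F$). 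Since $\Tr(\fp_E^i) = \fp_F^{\lceil (i+d)/e\rceil}$ by the formula $\mathfrak{D}_{E/F}^{-1}=\fp_E^{-d}$, this gives $\Tr(\alpha_\xi)\in\fp_F^{c(\psi_F)-\lceil(i+d)/e\rceil}$.

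The technical step is then to verify that the Postnikov ambiguity $\beta\in\fp_E^{c(\psi_E)-i}$, writing $\beta=a+b\alpha_0$, has $\Tr(\beta)=2a-Ab$ covering the ideal where $\Tr(\alpha_\xi)$ lives. Using Lemma \ref{normminlelt_lemma} to express $v_E(a+b\alpha_0)=\min(2v_F(a),2v_F(b)+e-1)$, this is a case-by-case check in the three cases of Lemma \ref{minl_elts_in_Q2extns}: for $d=0$ (where $A=1$) one varies $b$; for $d=2$ (where $v_F(A)=1$) both $a$ and $b$ contribute; for $d=3$ (where $A=0$) one varies $a$, and here the lower bound $c(\sigma)\geq 11$ provides the extra room needed to overcome the loss from $v_F(2)=1$.

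For the valuation formula, from $v_E(\alpha_\xi)=-c(\xi)+c(\psi_E)=-c(\xi)+ec(\psi)-d/f$ and the identity $v_E(A/2+\alpha_0)=-e+1-(d-2)_+\cdots$, computed case-by-case via Lemma \ref{normminlelt_lemma} as $-1$ in the unramified case (where $1/2$ dominates), $0$ in the $d=2$ case (where $A/2\in\cO_F^\times$), and $1$ in the $d=3$ case (where $A=0$), dividing by $e$ yields $v_F(z)=-c(\xi)/e+c(\psi)+1-d$ in each case. The main obstacle is the careful bookkeeping in the $d=3$ case at $p=2$, where the loss from $v_F(2)=1$ is largest and one must use the full strength of the hypothesis that $c(\xi)$ is large.
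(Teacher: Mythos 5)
Your trace-zero reformulation, the deduction $\xi^\sigma=\xi^{-1}$ from $\xi\vert_{F^\times}=\eta_{E/F}$, the resulting smallness of $\Tr_{E/F}(\alpha_\xi)$ via Lemma \ref{postnikov}, and the closing valuation computation (via Lemma \ref{normminlelt_lemma} and \eqref{add_char_cond}, with $v_E(\tfrac{A}{2}+\alpha_0)=-1,0,1$ in the cases $d=0,2,3$) are correct and are essentially the paper's own argument; the paper gets the same smallness by noting that $-\alpha_\xi^\sigma$ is again a valid Postnikov representative. The gap is the ``technical step'' you defer. Adjusting $\alpha_\xi$ inside the Postnikov ambiguity $\fp_E^{c(\psi_E)-i}$ (where $i=\lceil c(\xi)/2\rceil$; note \eqref{postnikov_v2} requires the ceiling, not your $\lfloor c(\xi)/2\rfloor$) can only change the trace by an element of $\Tr_{E/F}(\fp_E^{c(\psi_E)-i})=\fp_F^{\lfloor (c(\psi_E)-i+d)/e\rfloor}$ (floor, not the ceiling you wrote), whereas the argument only places $\Tr_{E/F}(\alpha_\xi)$ in $\fp_E^{c(\psi_E)-i}\cap F=\fp_F^{\lceil (c(\psi_E)-i)/e\rceil}$. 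For $d=0$ these two ideals coincide and your adjustment works; but for $d=3$ the first is strictly smaller than the second by exactly one power of $p$ \emph{for every} $c(\xi)$ (and similarly for $d=2$ when $c_0$ is even), so the deficit does not shrink as the conductor grows: the ``extra room'' you invoke from $c(\sigma)\geq 11$ (which is not a hypothesis of the lemma anyway) does not exist, and the covering verification you plan would fail.

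In fact exact equality in \eqref{yalphaxidef} is not merely unproved by this route but false in part of the stated range: if $\alpha_\xi$ were exactly trace zero, then evaluating $\xi(1+x)=\psi_E(\alpha_\xi x)$ at $x\in F$ with $v_F(x)\geq \lceil i/e\rceil$ gives $\eta_{E/F}(1+x)=\psi_F\bigl(x\,\Tr(\alpha_\xi)\bigr)=1$, forcing $d=c(\eta_{E/F})\leq \lceil \lceil c(\xi)/2\rceil/e\rceil$; this fails, e.g., for $d=3$ with $c(\xi)=8$ (inside the range in which the lemma is later applied in Section \ref{sec:examples_supercuspidal}) and for the small-conductor $d=2$ cases allowed by $c(\xi)\geq 2$. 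Accordingly, the paper's proof establishes only the congruence $\alpha_\xi\equiv z(\tfrac{A}{2}+\alpha_0)\pmod{\fp_E^{c(\psi_E)-\lceil c(\xi)/2\rceil}}$, i.e.\ \eqref{yalphaxidef} is to be read modulo the inherent Postnikov ambiguity, which is all that is used downstream, and then computes $v(z)$ exactly as you do. The repair is therefore to abandon the exact-trace-zero upgrade: your second paragraph (with $i=\lceil c(\xi)/2\rceil$ and the floor corrected) already gives the congruence, and your valuation computation then finishes the proof.
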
 
\begin{proof}
First note that for any of the three cases in Lemma \ref{minl_elts_in_Q2extns} we have $A/2 + \overline{\alpha_0} = -(A/2 + \alpha_0)$, 
	using that $F=\Q_2$ has characteristic 0. It follows from $\xi \vert_{F^\times} = \eta_{E/F}$ that $\overline{\alpha_\xi} \equiv - \alpha_\xi \pmod{\fp_E^{c(\psi_E)-\lceil c(\xi)/2 \rceil}}$ cf.\ the discussion in the second paragraph of Section \ref{compactinductionbackground}. So, $\alpha_\xi \equiv z (A/2 + \alpha_0)\pmod{\fp_E^{c(\psi_E)-\lceil c(\xi)/2 \rceil}}$ for some $z \in F^\times$. 

It suffices to calculate the valuation of $z$. We have 
$$ v_E(\alpha_\xi)  = v_E(z) + \min ( v_E(A/2) , v_E(\alpha_0)) = ev(z) + \min ( ev(A)-ev(2), e-1) $$
by Lemma \ref{normminlelt_lemma} and Definition \ref{normminlelt}. At the same time, by Lemma \ref{postnikov} we have $$v_E(\alpha_\xi) = c(\psi_E)-c(\xi).$$ In the three cases of Lemma \ref{minl_elts_in_Q2extns}, we have $v(A) =0,1, \infty$, respectively, so that $$ ev(z) + \min ( ev(A)-ev(2), e-1)= \begin{cases}  v(z) + \min(-v(2),0) = v(z)-1 &  \text{ if } d=0, \\ 2v(z) + \min(2-2v(2),1) = 2v(z) & \text{ if } d=2, \\ 2v(z) +\min(\infty, 1) = 2v(z) +1 &  \text{ if } d=3. \end{cases}$$ Combining these formulas with \eqref{add_char_cond}, we obtain the formula in the Lemma.
\end{proof}
Now fix an additive character $\psi$ of $F$ of conductor 0. 
\begin{myrema}\label{remarkafterQ2lem}
For later purposes we introduce a new parameter $j$, which in the totally ramified case matches the ``thickness'' of the group $J$ in \eqref{Jramdef}, but is merely ad hoc in the unramified case.  Table \ref{table:xi} gives a dictionary between $j$ and the other parameters associated to dihedral supercuspidal $\sigma$ corresponding to $\Ind_{E}^F \xi$ under the LLC, where $z\in F^\times$ is as in \eqref{yalphaxidef} assuming $c(\psi)=0$.

\begin{table}[h!]
\centering
\begin{tabular}{ |c|c|c|c|c| } 
 \hline
 $d$ & $c(\xi)$ & $c_0$ & $v(z)$ & $c(\sigma)$ \\ 
 \hline\hline
$0$ & $j+1$ & $j+1$ & $-j$ & $2j+2$\\ 
\hline
 $2$ & $2j$ & $j$ & $-j-1$ & $2j+2$\\ 
 \hline
 $3$ & $2j-2$ & $j-1$ & $-j-1$ & $2j+1$\\ 
 \hline
\end{tabular}
\caption{}
\label{table:xi}
\end{table}
\end{myrema}

\begin{mydefi}\label{def:twistminimalQ2} For $E/F, \alpha_0$ and $\xi$ as above, write $\chi_m$ for any choice of character of $F^\times$ satisfying \begin{enumerate}  \item $\chi_m(2)=1$, and \item when $d=0$ or $2$,  $\chi_m(1+x) = \psi( z\frac{A}{2} x)$ for all $x$ with $v(x)\geq \lceil \frac{-v(zA/2)}{2}\rceil$. \end{enumerate}
When $d=3$ set $\chi_m=1$. 
\end{mydefi}
 
\begin{myprop}\label{prop:twistminimalQ2}
Suppose that $\sigma$ is a trivial central character supercuspidal representation that corresponds by the LLC to $\Ind_E^F \xi$ with $c(\xi)\geq 2$. Suppose $\alpha_0$ is a normalized minimal element for $E/F$ with minimal polynomial as in Lemma \ref{minl_elts_in_Q2extns}. Then, $\sigma \times \chi_m^{-1}$ is twist-minimal and moreover we have
$$ \min_{\chi} c(\sigma \times \chi) = \begin{cases} c(\sigma)-2=2j & \text{ if } d=0, \\ c(\sigma)-1=2j+1 & \text{ if } d=2,\\
c(\sigma)= 2j+1 & \text{ if } d=3.\end{cases}$$
\end{myprop}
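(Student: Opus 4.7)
The plan is to reduce the problem to a computation in $E^\times$ via the Local Langlands Correspondence and then apply the Postnikov linearization from Lemma \ref{postnikov} together with the valuation formula from Lemma \ref{normminlelt_lemma}.

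First, since $\sigma$ corresponds to $\Ind_E^F \xi$, the twist $\sigma \times \chi$ corresponds to $\Ind_E^F (\xi \cdot \chi_E)$ where $\chi_E = \chi \circ \Nm_{E/F}$. By the conductor formula \eqref{c(pi)c0},
\[
c(\sigma \times \chi) = \frac{2}{e}\,c(\xi \chi_E) + d,
\]
so $\min_\chi c(\sigma \times \chi)$ reduces to minimizing $c(\xi \chi_E)$ over characters $\chi$ of $F^\times$. Since $c(\xi\chi_E) = -v_E(\alpha_{\xi\chi_E}) + c(\psi_E)$, this amounts to maximizing $v_E(\alpha_{\xi\chi_E})$.

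Next, I would establish the key linearization identity $\alpha_{\xi\chi_E} = \alpha_\xi + \alpha_\chi$, where $\alpha_\chi \in F$ is viewed as an element of $E$ via the diagonal embedding. This follows from the multiplicativity $\alpha_{\xi\chi_E} = \alpha_\xi + \alpha_{\chi_E}$ combined with the computation $\chi_E(1+y) = \chi(1+\Tr(y)+\Nm(y)) = \psi(\alpha_\chi\Tr(y)) = \psi_E(\alpha_\chi y)$ for $y \in E$ of sufficiently large valuation, which yields $\alpha_{\chi_E} = \alpha_\chi$. Using Lemma \ref{Q2lem2} to write $\alpha_\xi = z(A/2 + \alpha_0)$, we obtain
\[
\alpha_{\xi\chi_E} = \bigl(\tfrac{zA}{2} + \alpha_\chi\bigr) + z\alpha_0,
\]
with the first summand in $F$ and the second in $F\alpha_0$. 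Lemma \ref{normminlelt_lemma} then gives
\[
v_E(\alpha_{\xi\chi_E}) = \min\bigl(e\,v\bigl(\tfrac{zA}{2}+\alpha_\chi\bigr),\; e\,v(z)+e-1\bigr).
\]

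Then I would maximize the right side over $\alpha_\chi \in F$: the second entry is a fixed upper bound, and the first can be made as large as one likes by choosing $\alpha_\chi$ close to $-zA/2$. The optimum $v_E(\alpha_{\xi\chi_E}) = ev(z)+e-1$ is attained whenever $\chi$ is chosen so that $\alpha_\chi$ matches $-zA/2$ modulo a suitably large power of $\fp$ (and when $d=3$ we have $A=0$ so $\chi = 1$ already attains the bound). Reading off $v(z)$ from Table \ref{table:xi} and plugging into the conductor formulas with $c(\psi_E) = -d/f$ produces the three advertised values: $c(\sigma\times\chi) = 2j,\ 2j+1,\ 2j+1$ in the cases $d=0,2,3$. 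Finally, to verify that $\chi_m^{-1}$ is a valid minimizer, I would observe that Definition \ref{def:twistminimalQ2} is engineered precisely so that $\alpha_{\chi_m} \equiv zA/2$ modulo the appropriate power of $\fp$, whence $\alpha_{\chi_m^{-1}} \equiv -zA/2$ as required (and $\chi_m = 1$ in the $d=3$ case, where no twist is needed).

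The only nontrivial step is justifying the linearization $\alpha_{\xi\chi_E} = \alpha_\xi + \alpha_\chi$ uniformly across the possible conductor ranges: since $F=\Q_2$ has $e_{F/\Q_p}/(p-1) = 1$, Postnikov (Lemma \ref{postnikov}) applies on $U_E(i)$ only for $i \geq 2$, so one must either take $j$ large enough or argue directly for the small-$j$ boundary cases using the explicit form of $\chi_m$ given in Definition \ref{def:twistminimalQ2}. The case $d=3$, where $A=0$ and $\chi_m=1$, is immediate from the observation that $v_E(\alpha_\xi) = ev(z)+e-1$ already achieves the capped maximum, so no twist can reduce the conductor further.
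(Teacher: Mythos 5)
Your proposal is correct and follows essentially the same route as the paper's proof: reduce via the LLC and the conductor formula \eqref{c(pi)c0} to maximizing $v_E(\alpha_{\xi\chi_E})$, write $\alpha_{\xi\chi_E} = z\frac{A}{2} + z\alpha_0 + \alpha_\chi$ using Lemma \ref{Q2lem2}, apply Lemma \ref{normminlelt_lemma} to get $v_E(\alpha_{\xi\chi_E}) = \min\bigl(v_E(z\frac{A}{2}+\alpha_\chi), v_E(z\alpha_0)\bigr)$, maximize by taking $\alpha_\chi = -z\frac{A}{2}$ (which matches Definition \ref{def:twistminimalQ2} of $\chi_m^{-1}$, with $\chi_m=1$ when $d=3$), and then read off the three conductor values from Lemma \ref{Q2lem2} and \eqref{add_char_cond}. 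Your extra justification of the identity $\alpha_{\chi_E}=\alpha_\chi$ and the remark on the applicability range of Postnikov are fine additions but do not change the argument, which the paper carries out in the same way.
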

\begin{proof}
Let $\rho = \Ind_E^F \xi$ be the Galois representation corresponding to $\sigma$ by the LLC. 
We have $\rho \otimes \chi = \Ind_{E}^{F} \xi \chi_E$, where $\chi_E= \chi \circ \Nm$. By \eqref{c(pi)c0}, the formula $c(\xi)= -v_E(\alpha_\xi)+c(\psi_E)$ of Lemma \ref{postnikov}, the Artin conductor $c(\rho \otimes \chi)$ is minimized when the valuation of  $$\alpha_{\xi \chi_E} = \alpha_\xi + \alpha_\chi = z\frac{A}{2} +z\alpha_0 + \alpha_\chi$$ is maximized. Since $\alpha_\chi \in \Q_2$, we have by Lemma \ref{normminlelt_lemma}
\begin{equation}\label{propdef_formula} v_E(\alpha_{\xi \chi_E}) = \min( v_E(z\frac{A}{2} + \alpha_\chi),v_E(z\alpha_0)),\end{equation} which can be maximized by taking $\alpha_\chi = -z\frac{A}{2}$, matching Definition \ref{def:twistminimalQ2} of $\chi_m^{-1}$. This proves the first assertion of the proposition. 

Computing the conductor of $c(\sigma \times \chi_m^{-1})$ in cases using  \eqref{c(pi)c0}, $c(\xi)= -v_E(\alpha_\xi)+c(\psi_E)$, \eqref{propdef_formula}, Lemma \ref{Q2lem2}, and  \eqref{add_char_cond}, we conclude the formula for $\min_{\chi} c(\sigma \times \chi)$ in the second assertion. 
\end{proof}

Given $\sigma, E/F, \alpha_0$ as in Proposition \ref{prop:twistminimalQ2}, let $\sigma'=\sigma \times \chi_m^{-1}$ where $\chi_m$ is as in Definition \ref{def:twistminimalQ2}. In particular, the representation $\sigma'$ is twist-minimal.  Let $\rho' = \Ind_E^F \xi'$ be the corresponding Weil group representation under the LLC.  We have $\alpha_{\xi'} = z \alpha_0$ for $z$ as in Lemma \ref{Q2lem2} and Remark \ref{remarkafterQ2lem}.   
If $E/F$ is ramified let $\varphi' \in \sigma'$ be a minimal vector defined by \eqref{minlvecdef}, and if $E/F$ is unramified let $\varphi' \in \sigma'$ be defined by \cite[Def.\ 3.12]{Hu}.

If $E/F$ is ramified and $c(\sigma')$ is sufficiently large in the sense of Lemma \ref{alphathetaalphaxi}, then we explained in Section \ref{compactinductionbackground} the construction of a character $\theta'$ of $E^\times$ that leads to $\sigma'$ by compact induction. Precisely, if \begin{equation}\label{csigmaprime_suff_large}c(\sigma')\geq \begin{cases} 
7 &  \text{ if } d=2,  \\ 
11 &  \text{ if } d =3,
\end{cases}\end{equation} then Corollary \ref{alphaxialphathetacor} applies, so that in particular $c(\theta') = c(\xi')$.

If $E/F$ is unramified, then let $\theta'$ be the character of $E^\times$ corresponding to $\sigma'$ across the compact induction bijection \eqref{TameCompactIndMap}. Recall that $\theta'= \xi' \Delta_{\xi'}$ with $\Delta_{\xi'}$ unramified in this case \cite[\S34.4]{BushnellHenniart:06a}. 

Table \ref{table:xiprime} gives the conductors of the twist-minimal $\xi'$ and $\sigma'$ in terms of the parameter $j$ introduced in Remark \ref{remarkafterQ2lem}.
\begin{table}[h!]
\centering
\begin{tabular}{ |c|c|c|c| } 
 \hline
 $d$ & $c(\xi')$ & $c(\sigma')$ \\ 
 \hline\hline
$0$ & $j$ & $2j$\\ 
\hline
 $2$ & $2j-1$  & $2j+1$\\ 
 \hline
 $3$ & $2j-2$  & $2j+1$\\ 
 \hline
\end{tabular}
\caption{}
\label{table:xiprime}
\end{table}

We take $\sigma'$ to be given by the compact-induced model $\sigma' = \cInd_J^G \Lambda'$, where, following Section \ref{compactinductionbackground}, the representation $\Lambda'$ is constructed from the above $\theta'$ on $E^\times$ embedded in $G(F)$ by \eqref{Oembedding} with respect to $\alpha_0$. Set $\theta= \theta' .\chi_{m,E}$ and $\Lambda = \Lambda' . \chi_m \circ \det$. Let $\varphi(g) = \chi_m(\det g)\varphi'(g)$, so that $\varphi \in \cInd_J^G \Lambda\simeq \sigma$. We continue to call such a $\varphi$ a minimal vector despite the fact  that $\sigma$ is not necessarily twist-minimal.

Let $\langle,\rangle$ be a unitary pairing on the space of $\cInd_J^G \Lambda \simeq \sigma$, and let $\Phi_\varphi$ be the diagonal matrix coefficient $\Phi_\varphi(g) = \langle \sigma(g)\varphi, \varphi\rangle$ of $\varphi \in \cInd_J^G \Lambda$. For $\ell \geq {1}$ 
 and $j$ as in Remark \ref{remarkafterQ2lem}, the group $H$ as in \eqref{H_group_def}, and the group $ZB^1$ as in Section \ref{compactinductionbackground} (defined in \cite[Def.\ 3.11]{Hu}), set 
\begin{equation}\label{tildePhiphidef}
\widetilde{\Phi}_\varphi =\begin{cases} \Phi_\varphi \vert_{H} & \text{ if } E/F \text{ is ramified,} \\ \Phi_\varphi \vert_{ZB^1} & \text{ if } E/F \text{ is unramified}\end{cases} \text{ and } V=\begin{cases} \vol(Z\backslash H)& \text{ if } E/F \text{ is ramified, } \\ \vol(Z \backslash ZB^1) & \text{ if } E/F \text{ is unramified}.\end{cases}
\end{equation}
\begin{mylemma}\label{lem:Vexplicit}We have that 
\begin{equation}\label{Vexplicit}
V= \begin{cases}  \frac{1}{(1-p^{-2}) p^{c_0+1}} & \text{ if } d=0 \text{ or } 2, \\   \frac{1}{(1-p^{-2}) p^{c_0+2}} & \text{ if } d=3.\end{cases} 
\end{equation}
\end{mylemma}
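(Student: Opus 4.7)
The plan is to apply Lemma \ref{Hvol} together with explicit index computations to the subgroup in each of the three cases, after identifying it via the compact-induction construction of Section \ref{compactinductionbackground}. Throughout, I will translate between $j$, $c_0$ and $c(\sigma)$ using the dictionary in Tables \ref{table:xi} and \ref{table:xiprime}.

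For the ramified cases $d=2,3$, I would first invoke Proposition \ref{prop:twistminimalQ2} to obtain the twist-minimal representation $\sigma'=\sigma\times\chi_m^{-1}$, which has odd conductor $c(\sigma')=2j+1$, so $n=c(\sigma')-2=2j-1$ is the depth of an ordinary ramified simple stratum associated to $\sigma'$. By Lemma \ref{lem:wilddiagram} together with Lemma \ref{choice_of_cusp_type}, the subgroup underlying the compact induction of $\sigma'$ is $J=E^\times U_{\mathfrak{A}_2}^{j}$, and by Lemma \ref{mx_coeff_ofminlvec} the diagonal matrix coefficient of a minimal vector $\varphi'$ is supported exactly on $J$. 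Twisting to $\varphi(g)=\chi_m(\det g)\varphi'(g)$ does not alter the support of $|\Phi_\varphi|$, so the group $H$ in \eqref{tildePhiphidef} is $Z\cdot\mathcal{O}_E^\times\cdot U_{\mathfrak{A}_2}^j=ZU_E(0)U_{\mathfrak{A}_2}^j$. A direct computation in the same spirit as the proof of Lemma \ref{Hvol}, using $\mathcal{O}_E^\times\cap U_{\mathfrak{A}_2}^j=U_E(j)$ from \eqref{BH_s_12point4} together with $[\mathcal{O}_E^\times:U_E(j)]=(p-1)p^{j-1}$ and $[G(\mathcal{O}_F):U_{\mathfrak{A}_2}^j]=(p-1)(p^2-1)p^{2(j-1)}$, gives
\[
\vol(Z\backslash H)=\frac{(p-1)p^{j-1}}{(p-1)(p^2-1)p^{2(j-1)}}=\frac{1}{(p^2-1)p^{j-1}},
\]
which equals $\frac{1}{(1-p^{-2})p^{c_0+1}}$ when $c_0=j$ (the $d=2$ case) and $\frac{1}{(1-p^{-2})p^{c_0+2}}$ when $c_0=j-1$ (the $d=3$ case).

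For the unramified case $d=0$, I would unfold the description of $B^1$ from \cite[Def.\ 3.11]{Hu} (recalled in the paragraph containing \eqref{fsupercuspidaldef}): $ZB^1$ consists of those $g\in G(F)$ expressible as $u\cdot n(x,m)$ with $u\in ZU_E(1)$ embedded via \eqref{Oembedding}, $v(x)\ge\lceil c_0/2\rceil$ and $v(m)\ge\lfloor c_0/2\rfloor$. Using the explicit minimal element $\alpha_0$ with minimal polynomial $x^2+x+1$ from Lemma \ref{minl_elts_in_Q2extns} and the fact that the intersection of $ZU_E(1)$ with the ``unipotent-type'' factor lies in $Z$ (since an element of the form $1+y\alpha_0$ is upper triangular in the embedding only when $y=0$), one obtains an Iwasawa-type product decomposition; combining the indices $[\mathcal{O}_F^\times U_E(1):U_{\mathfrak{A}_1}^{\lceil c_0/2\rceil}\cap(\mathcal{O}_F^\times U_E(1))]$ with $[G(\mathcal{O}_F):U_{\mathfrak{A}_1}^{\lceil c_0/2\rceil}]=(p^2-1)(p^2-p)p^{4(\lceil c_0/2\rceil-1)}$ in the manner of Lemma \ref{Hvol} yields $V=\frac{1}{(1-p^{-2})p^{c_0+1}}$ uniformly in the parity of $c_0$.

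The main obstacle will be the unramified case $d=0$: since $B^1$ does not fit the template \eqref{H_group_def} directly, care is required to track the overlap of $U_E(1)$ with the upper-triangular piece and to handle the two parities of $c_0=j+1$ uniformly. An efficient alternative is to combine Lemma \ref{mx_coeff_ofminlvec} (and its unramified analogue from \cite{Hu}) with Schur orthogonality: if the diagonal matrix coefficient $\Phi_{\varphi'}$ of an $L^2$-normalized minimal vector is a unimodular function supported on $ZB^1$, then $\vol(Z\backslash ZB^1)=\|\Phi_{\varphi'}\|_2^2$, which is computable from the formal degree of $\sigma'$; this bypasses the bookkeeping of the almost-product structure and delivers the stated formula.
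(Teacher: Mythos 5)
Your ramified cases $d=2,3$ are correct and match the paper's approach: you identify $H=ZU_E(1)U_{\mathfrak{A}_2}^j$ (noting $\cO_E^\times=\cO_F^\times U_E(1)$ in the ramified case, so your $Z\cO_E^\times U_{\mathfrak{A}_2}^j$ is the same group), apply the index computation of Lemma \ref{Hvol}, and translate via Table \ref{table:xi}. This reproduces the paper's argument.

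The unramified case $d=0$ has a gap. Your proposed direct computation uses $k=\lceil c_0/2\rceil$ in a Lemma \ref{Hvol}-style index count, but this does not yield the stated formula ``uniformly in the parity of $c_0$.'' When $c_0$ is even one has $ZB^1 = ZU_E(1)U_{\mathfrak{A}_1}^{c_0/2}$ and Lemma \ref{Hvol} gives $\frac{1}{(p^2-1)p^{c_0-1}}$, which is correct. But when $c_0$ is odd, $ZB^1$ is \emph{not} of the form $ZU_E(1)U_{\mathfrak{A}_1}^k$: it sits strictly between $ZU_E(1)U_{\mathfrak{A}_1}^{\lceil c_0/2\rceil}$ (volume $\frac{1}{(p^2-1)p^{c_0}}$) and $ZU_E(1)U_{\mathfrak{A}_1}^{\lfloor c_0/2\rfloor}$ (volume $\frac{1}{(p^2-1)p^{c_0-2}}$), because the $m$-entry in \eqref{fsupercuspidaldef} is only required to have $v(m)\geq\lfloor c_0/2\rfloor$ rather than $\geq\lceil c_0/2\rceil$. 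The naive computation with $k=\lceil c_0/2\rceil$ misses this extra factor of $p$. The paper handles this by observing that $ZB^1$ is a proper intermediate subgroup and interpolating. Your alternative via Schur orthogonality/formal degree also does not immediately close the gap: Lemma \ref{mx_coeff_ofminlvec} gives that $|\Phi_{\varphi'}|$ is the indicator of $J$, not of $ZB^1$, so $\|\Phi_{\varphi'}\|_2^2=\vol(Z\backslash J)$; when $c_0$ is odd one has $\dim\Lambda=q$ so $J\supsetneq ZB^1$, and you would still need to compute the index $[J:ZB^1]$ before you could conclude -- which is exactly the parity bookkeeping you were hoping to avoid.
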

\begin{proof} First consider the case that $d=0$. If $c_0$ is even we have that $ZB^1 =  ZU_E(1)U_{\mathfrak{A}_1}^{c_0/2}$, and if $c_0$ is odd we have that $ZB^1$ is a proper intermediate subgroup between $ZH^1=ZU_E(1)U_{\mathfrak{A}_1}^{\lceil c_0/2\rceil}$ and $ZJ^1=ZU_E(1)U_{\mathfrak{A}_1}^{\lfloor c_0/2\rfloor}$. Lemma \ref{Hvol} computes the volumes of these groups, giving \eqref{Vexplicit} in both $d=0$ cases. Next consider the case that $E/F$ is ramified, in which we have $j=c_0$ if $d=2$ and $j=c_0+1$ when $d=3$ by Remark \ref{remarkafterQ2lem}.  The formula for $V$ then follows directly from Lemma \ref{Hvol}. \end{proof}
Recall the neighborhood $\theta[n]$ of characters around $\theta$ from \eqref{xi_n_def}, and the slightly generalized notion of minimal vectors introduced between the proof of Proposition \ref{prop:twistminimalQ2} and the statement of Lemma \ref{lem:Vexplicit}. 
\begin{myprop}\label{minlvec_projection_prop}
Suppose $\sigma, E/F, \alpha_0, \ell, j$ are as above and satisfy $2-e\leq \ell \leq \min(j, c(\theta')-1)$. The operator $\pi (\overline{\widetilde{\Phi}}_\varphi)$ vanishes unless there exists $\theta_1 \in \theta[\ell] 
$
such that $\pi \simeq \cInd_J^G \Lambda_1$, where $\theta_1$ and $\Lambda_1$ are related as in Section \ref{compactinductionbackground}.  The operator $V^{-1}\pi (\overline{\widetilde{\Phi}}_\varphi)$ is a projection onto the line of the minimal vector in $\pi$ whenever $\pi (\overline{\widetilde{\Phi}}_\varphi)$ is non-vanishing. 
\end{myprop}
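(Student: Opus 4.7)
The plan is to exploit the fact that on $J$ the diagonal matrix coefficient of a minimal vector is, up to a constant, the character $\widetilde{\theta}$, so that $\widetilde{\Phi}_\varphi$ restricted to $H$ (resp.\ $ZB^1$) is literally a character of that subgroup. Concretely, by Lemma \ref{mx_coeff_ofminlvec} applied to $\sigma\simeq \cInd_J^G \Lambda$ (in the ramified case) and the analogous explicit description in \cite[Cor.\ 3.19]{Hu} (in the unramified case), together with the hypothesis $\ell\leq j$ which ensures $H\subseteq J$ (resp.\ $ZB^1\subseteq J$), I would first show $\widetilde{\Phi}_\varphi(h)=\|\varphi\|^2\,\widetilde{\theta}(h)$ for all $h\in H$, with $\widetilde{\theta}$ extended from $\theta$ via \eqref{theta_to_thetatilde}.

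With this in hand, the operator-theoretic argument is short. For any $\pi\in\overline{G}(F)^{\wedge}$ and $v\in V_\pi$, a change of variables in the defining integral gives
\begin{equation*}
\pi(h)\,\pi(\overline{\widetilde{\Phi}}_\varphi)v=\widetilde{\theta}(h)\,\pi(\overline{\widetilde{\Phi}}_\varphi)v\qquad(h\in H),
\end{equation*}
so the image of $\pi(\overline{\widetilde{\Phi}}_\varphi)$ is contained in the $(H,\widetilde{\theta})$-isotypic subspace $V_\pi^{H,\widetilde{\theta}}$; conversely, on this subspace $\pi(\overline{\widetilde{\Phi}}_\varphi)$ acts by the scalar $\|\varphi\|^2\int_H|\widetilde{\theta}|^2\,dh=\|\varphi\|^2 V$ by Lemma \ref{lem:Vexplicit}. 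It remains to classify the $\pi$ for which $V_\pi^{H,\widetilde{\theta}}\neq 0$ and to identify the isotypic line. Frobenius reciprocity together with the compact induction models of \cite[\S 15--19]{BushnellHenniart:06a} shows that a non-trivial $(H,\widetilde{\theta})$-isotypic vector in a supercuspidal $\pi$ forces $\pi\simeq \cInd_J^G \Lambda_1$ for some $\theta_1$ and an $E^\times$-conjugate of $\widetilde{\theta}_1|_H$ equal to $\widetilde{\theta}|_H$.

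The heart of the matter is to translate the equality $\widetilde{\theta}_1|_H=\widetilde{\theta}|_H$ into the condition $\theta_1\in\theta[\ell]$. On the subgroup $ZU_E(\ell)\subseteq H$ this equality gives immediately that $\theta_1\theta^{-1}$ is trivial on $U_E(\ell)$ and on the center, which is precisely $c(\theta_1\theta^{-1})\leq \ell$ and $\theta_1|_{F^\times}=\theta|_{F^\times}$, i.e.\ $\theta_1\in\theta[\ell]$. The (possibly stronger-looking) constraint on $U_{\mathfrak{A}_e}^k$ will, via Lemma \ref{postnikov} and Corollary \ref{alphaxialphathetacor}, be equivalent to an identity of Postnikov parameters $\alpha_{\theta_1}\equiv \alpha_\theta\pmod{\mathfrak{p}_E^{-n+k}}$; under the hypothesis $\ell\leq c(\theta')-1$ and the bounds on $k$ coming from the chain-order formalism, this gives no additional information beyond $c(\theta_1\theta^{-1})\leq \ell$. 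Finally, since the minimal vector of $\pi$ is itself $(H,\widetilde{\theta}_1)$-isotypic and minimal vectors are unique up to scalar, $\dim V_\pi^{H,\widetilde{\theta}}=1$ and it equals the line of the minimal vector, so $V^{-1}\pi(\overline{\widetilde{\Phi}}_\varphi)$ is the orthogonal projection onto it once $\varphi$ is taken to be $L^2$-normalized.

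The main obstacle I anticipate is precisely the last bookkeeping step: carefully checking, across the three cases $d=0,2,3$ of Lemma \ref{minl_elts_in_Q2extns} and the unramified $p=2$ wrinkle involving the character $\Delta_\xi$ of Lemma \ref{lem:tamediagram}, that no parasitic representations sneak in through $E^\times$-conjugation of $\widetilde{\theta}_1$, and that the $(H,\widetilde{\theta})$-isotypic condition on $U_{\mathfrak{A}_e}^k$ does not shrink the parametrizing set $\theta[\ell]$. A secondary obstacle is verifying the inclusion $H\subseteq J$ uniformly; this should follow from $k\geq (n+1)/2$ which is built into the definitions, but needs to be checked case-by-case, particularly in the unramified case where $ZB^1$ is an intermediate subgroup between $ZH^1$ and $ZJ^1$ requiring some care when $c_0$ is odd.
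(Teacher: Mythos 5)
Your skeleton (the matrix coefficient restricted to $H$ is the character $\widetilde{\theta}$, the image of $\pi(\overline{\widetilde{\Phi}}_\varphi)$ lands in the $(H,\widetilde{\theta})$-isotypic subspace, and the operator acts there by the scalar $V$) is the same as the paper's, but the step you dispose of with ``Frobenius reciprocity together with the compact induction models of \cite[\S 15--19]{BushnellHenniart:06a}'' is precisely where the work lies, and as written it is a genuine gap. Frobenius reciprocity only gives $\Hom_G\bigl(\cInd_H^G(\widetilde{\theta}'|_H),\,\pi\times\chi_m^{-1}\bigr)\neq 0$, and $\cInd_H^G$ of a character of the small group $H$ is a huge non-admissible representation, so nothing about $\pi$ follows from this --- in particular you cannot restrict attention to supercuspidal $\pi$, since the proposition must kill the operator on principal series, special representations, and all other supercuspidals as well. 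The paper's proof supplies the missing decomposition: first $\Ind_H^J(\widetilde{\theta}'|_H)=\bigoplus_{\theta_1'\in\theta'[\ell]}\widetilde{\theta_1'}$ (Postnikov plus a $[J:H]$ dimension count, using $\ell\leq j$), hence \eqref{IndDirectSum2}, and then --- this is the actual role of the hypothesis $\ell\leq c(\theta')-1$ --- the verification that \emph{every} $\theta_1'\in\theta'[\ell]$ is again minimal, so that each $\cInd_J^G\widetilde{\theta_1'}$ is irreducible supercuspidal by \cite[15.3 Thm.]{BushnellHenniart:06a}. Only because $\cInd_H^G(\widetilde{\theta}'|_H)$ is a finite direct sum of irreducibles does a nonzero $\Hom$ into the irreducible $\pi\times\chi_m^{-1}$ force $\pi\times\chi_m^{-1}\simeq\cInd_J^G\widetilde{\theta_1'}$; this is exactly what excludes the ``parasitic representations'' you defer to bookkeeping. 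In your write-up the hypothesis $\ell\leq c(\theta')-1$ is instead spent on a Postnikov-parameter argument about the constraint on $U_{\mathfrak{A}_e}^k$, which is not where it is needed, and without the minimality-of-neighbours claim your classification has no support.

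Two secondary points. First, $\dim V_\pi^{H,\widetilde{\theta}}=1$ does not follow from ``minimal vectors are unique up to scalar'': uniqueness of the $(J,\widetilde{\theta}_1)$-eigenline does not bound the multiplicity of $\widetilde{\theta}|_H$ in $\pi|_H$ for the strictly smaller group $H$. The paper avoids needing this by combining the equivariance \eqref{nonzerooperator_implies_sigma_acts_by_char} with the direct computation $\langle \pi(\overline{\widetilde{\Phi}}_\varphi)v,\phi\rangle=\vol(Z\backslash H)\langle v,\phi\rangle$ against the minimal vector $\phi$, using $\theta=\theta_1$ on $U_E(\ell)$ and Lemma \ref{mx_coeff_ofminlvec}. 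Second, the unramified $p=2$ case is not covered by your appeal to Bushnell--Henniart: there $\Lambda_1$ need not be a character, $ZB^1$ is only an intermediate group between $ZH^1$ and $ZJ^1$ when $c_0$ is odd, and the paper does not redo the ramified argument but imports \cite[Props.\ 3.14 and 3.21]{Hu} applied to $\pi\times\chi_m^{-1}$; your proposal would need to say how that case is handled, keeping track of the $\chi_m$-twist throughout.
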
 
\begin{proof}
First assume that $E/F$ is ramified. 
We will show that if $\pi(\overline{\widetilde{\Phi}}_\varphi)$ is non-trivial, then $\pi \simeq \cInd_J^G \widetilde{\theta}_1$ for some $\theta_1\in \theta[\ell]$.   

Taking $\sigma = \cInd_J^G \tilde{\theta}$, we have $\sigma(u)\varphi = \tilde{\theta}(u) \varphi$ for $u \in J$. Let $v \in V_\pi$ be any vector such that $\pi(\overline{\tilde{\Phi}}_\varphi)v \neq 0$. Then, define the non-zero linear map $\tilde{\theta}'\vert_{H} \to \pi \times \chi_m^{-1} \vert_{H}$ by $z \mapsto z \pi(\overline{\tilde{\Phi}}_\varphi)v.$ We check that it is $H$-equivariant: Any $g\in H$ acts by
\begin{align}\label{nonzerooperator_implies_sigma_acts_by_char}
(\pi \times \chi_m^{-1})(g) \left[z \pi(\overline{\tilde{\Phi}}_\varphi)v\right] & = z \int_H \overline{\langle \sigma(h) \varphi,\varphi\rangle} (\pi \times \chi_m^{-1})(g) \pi(h)v\,dh \nonumber \\
& = \chi_m^{-1}(\det g) z \int_H \overline{\langle \sigma(h) \varphi,\varphi\rangle} \pi(gh)v\,dh \nonumber \\
& = \chi_m^{-1}(\det g) z \int_H \overline{\langle \sigma(h) \varphi,\sigma(g) \varphi\rangle} \pi(h)v\,dh \nonumber \\
& = \tilde{\theta}'(g) z \pi(\overline{\tilde{\Phi}}_\varphi)v. 
\end{align}
Therefore, we have
\begin{equation}\label{sigmachiHomequation}
  0 \neq \Hom_H(\tilde{\theta}'|_{H},\pi\times\chi_m^{-1}|_H) = \Hom_G( \cInd_H^G(\tilde{\theta}'|_{H}) ,\pi\times \chi_m^{-1}).
\end{equation}

Next, we claim that
\begin{equation}\label{IndDirectSum}\Ind_H^J(\tilde{\theta}'|_{H})=\bigoplus\limits_{\theta_1'\in \theta'[l]}\tilde{\theta_1'}.\end{equation}
By Postnikov (Lemma \ref{postnikov}), $\alpha_{\theta'} - \alpha_{\theta_1'} \equiv 0 \pmod {\fp_E^{c(\psi_E) - \ell}}$. Then, for any $x \in U_{\mathfrak{A}_e}^{j}$, we have that $\psi ( \Tr((\alpha_{\theta'} - \alpha_{\theta_1'} ) x)) = 1$ by the hypothesis that $j\geq \ell$. Since $\theta_1 \in \theta[\ell]$, we have $c(\theta_1 \theta^{-1}) \leq \ell$, so that $\tilde{\theta}_1' = \tilde{\theta}'$ on $H$. Then, $$\C = \Hom_H(\tilde{\theta}' \vert_{H}, \tilde{\theta}'_1\vert_{H}) = \Hom_J(\Ind_H^J \tilde{\theta}' \vert_{H} ,  \tilde{\theta}'_1).$$ Therefore, each $\tilde{\theta}'_1$ is a sub-representation of $\Ind_H^J \tilde{\theta}' \vert_{H}$, and occurs only once as a sub-representation of it. Moreover, $\theta'[\ell]$ is simply a translate of $\widehat{J/H}$, so the dimension of both sides of \eqref{IndDirectSum} are equal, and therefore the sum of these 1-dimensional sub-representations exhausts $\Ind_H^J \tilde{\theta}'$. 

Since $\cInd$ is additive and transitive, it follows from \eqref{IndDirectSum} that
\begin{equation}\label{IndDirectSum2}\cInd_H^G(\tilde{\theta}'|_{H})=\bigoplus\limits_{\theta_1'\in \theta'[l]}\cInd_J^G\tilde{\theta_1'}.\end{equation}

Next we claim that if $\theta'$ is minimal and $c(\theta')\geq \ell+1$, $\ell\geq 0$, then all $\theta_1'\in \theta'[\ell]$ are minimal. Indeed, since $c(\theta'_1\theta'^{-1})\leq \ell$, we have that $\theta'_1 \vert_{U(\ell)} = \theta' \vert_{U(\ell)}$. Since $c(\theta') \geq \ell+1$, we have $\theta'_1 \vert_{U(c(\theta')-1)} = \theta' \vert_{U(c(\theta')-1)} $, so that $c(\theta')=c(\theta'_1)$ and also $\chi_E\theta'_1 \vert_{U(c(\theta')-1)} = \chi_E\theta' \vert_{U(c(\theta')-1)} $ for any $\chi$.  
Since $\theta'$ is minimal, $\chi_E \theta'$ is non-trivial on $U(c(\theta')-1)$. Thus, $\chi_E \theta'_1$ is non-trivial on $U(c(\theta')-1)= U(c(\theta'_1)-1)$. That is to say, $c(\chi_E \theta'_1)\geq c(\theta'_1)$ for all $\chi$.

The representations $\cInd_J^G\tilde{\theta_1'}$ are irreducible and supercuspidal by \cite[15.3 Thm.]{BushnellHenniart:06a} provided we check hypotheses as follows. Since $\theta_1'$ is minimal, the element $\alpha_{\theta_1'}\in E^\times$ is minimal by Lemma \ref{minimalimpliesminimal} and so by 13.5 Prop.\ of loc.\ cit.\ we have that there exists a chain order $\mathfrak{A}$ such that $(\mathfrak{A}, -v_E(\alpha_{\theta'_1}),\alpha_{\theta'_1})$ is a simple stratum. Since $c(\theta_1') =c(\theta')$, we have that $n:=-v_E(\theta_1')= -v_E(\theta')$ so that $J_{\alpha}$ with $\alpha=\alpha_{\theta'_1}$ as defined in loc.\ cit.\ (15.3.1) matches $J$ as defined in \eqref{Jramdef} with respect to the character $\theta'$. Moreover, $\widetilde{\theta}_1'$ is a $1$-dimensional representation of $J$ that by definition contains the character $\psi_{\alpha}$ of $U_{\mathfrak{A}_e}^{\lfloor \frac{n}{2} \rfloor+1}$. 

Since the $\cInd_J^G\tilde{\theta_1'}$ on the right of \eqref{IndDirectSum2} are irreducible, we conclude from \eqref{sigmachiHomequation} the claim in the second sentence of the proof. 

Choose an orthonormal basis $\mathcal{B}_\pi$ for $V_\pi$ that contains the minimal vector, say, $\phi$. Then, by definition $H$ acts through $\pi$ on $\phi$ by $\pi(h)\phi = \theta_1(h)\phi$ for $h \in H$. Now, we also have $\overline{\tilde{\Phi}}_{\varphi} (h) = \overline{\theta}(h)$ for $h \in H$ by Lemma \ref{mx_coeff_ofminlvec}. So, for any $v \in \mathcal{B}_\pi$ we have $$ \langle \pi (\overline{\tilde{\Phi}}_{\varphi}) v, \phi \rangle = \langle v, \int_{H} \overline{\tilde{\Phi}}_{\varphi} (h^{-1}) \pi(h)\phi dh\rangle =  \vol(Z\backslash H) \langle v, \phi \rangle,$$ since  $\theta = \theta_1$ on $U(l)$. So, for $v \in \mathcal{B}_\pi$, we have $ \pi (\overline{\tilde{\Phi}}_{\varphi}) v = 0 $ unless $\varphi = \phi$. Thus, $\frac{1}{\vol(Z\backslash H)} \pi (\overline{\tilde{\Phi}}_{\varphi})$ is a projection onto the line of the minimal vector in $V_\pi$. 

Now suppose that $E/F$ is unramified. If $\pi(\overline{\widetilde{\Phi}}_\varphi)$ is non-trivial then the same calculation as \eqref{nonzerooperator_implies_sigma_acts_by_char} with $H$ replaced by $ZB^1$ shows that there exists a vector $v \in V_\pi$ such that $(\pi \times \chi_m^{-1})(b)v = \widetilde{\theta}'(b)v$ for all $b \in ZB^1$. Then, the assertions of the proposition follow from \cite[Props.\ 3.14 and 3.21]{Hu} applied to $\pi \times \chi_m^{-1}$. 
\end{proof}
For a moment let us consider the more general situation that $\sigma$ is a supercuspidal  representation of $G(F)$ endowed with a unitary pairing $\langle \cdot, \cdot \rangle_\sigma$. Let $V$ be the space of functions $f$ on $G(F)$ satisfying $f(ng) = \psi(n)f(g)$ for all $n \in N(F)$ and $g \in G(F)$. For any $g_0 \in G(F)$ and $v'\in \sigma$, let $W: \sigma \to V$ be defined by \begin{equation}\label{Whittakerdef}W: v \mapsto W_v(g)=\int_{N(F)} \langle \sigma(g_0 n g)v,v'\rangle_\sigma \psi(-n)\,dn.\end{equation}
One can directly check that 
$$W_v(ng)=\psi(n)W_v(g) \quad \text{ and } \quad W_{\sigma(h)v}(g) = W_v({gh}).$$ 
Thus, if $g_0$ and $v'$ are be chosen so that the map $W$ is non-zero, then it follows that $W$ is an isomorphism onto the Whittaker model of $\sigma$. 

Let us return now to the situation at hand  introduced just before Proposition \ref{minlvec_projection_prop}. As in \cite[\S 3.2.2]{Hu} we can compute the Whittaker function $W_{\varphi'}$ of the minimal vector $\varphi'$ in the twist-minimal representation $\sigma'$ using \eqref{Whittakerdef}. 
\begin{mylemma}
Let $\varphi'$ be a minimal vector in a twist-minimal dihedral supercuspidal representation $\sigma'$ of sufficiently large conductor in the sense \eqref{csigmaprime_suff_large}.  
Its Whittaker function along the diagonal $W_{\varphi'}(a(x))$ is, up to a scalar, equal to $1_{-zBU_F(\lceil j/2\rceil)}$, where $z$ and $B$ are as in Lemma \ref{Q2lem2} with respect to an additive character $\psi$ with $c(\psi)=0$, and $j$ is as in Remark \ref{remarkafterQ2lem}. 
\end{mylemma}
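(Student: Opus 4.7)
The plan is to derive the Whittaker function entirely from the right-$J$-equivariance of the minimal vector, bypassing any explicit computation of the defining integral \eqref{Whittakerdef}. For any choice of $g_0$ and $v'$ making \eqref{Whittakerdef} into a nonzero intertwiner into the (unique, by genericity) Whittaker model of $\sigma'$, combining \eqref{Whittakerdef} with \eqref{minlvecdef} gives the functional equation
$$ W_{\varphi'}(gh) = \widetilde\theta'(h)\,W_{\varphi'}(g) \qquad \text{for all } g \in G(F),\, h \in J. $$
Specializing $g = a(x)$ and taking $h$ from two one-parameter families inside $J$ will yield both the support and the constancy of $W_{\varphi'}$.

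For the support, I would take $h = n(t)$ with $t$ ranging over the largest $\fp$-power for which $n(t) \in U_{\mathfrak{A}_e}^{(c(\sigma')-1)/2} \subset J$. A direct computation in the chain orders (using $\mathfrak{P}_2 = \varpi_{\mathfrak{A}_2}\mathfrak{A}_2$ with $\varpi_{\mathfrak{A}_2}^2 = \varpi I$ in the ramified case, and $\mathfrak{P}_1^k = \varpi^k\mathfrak{A}_1$ in the unramified case) reduces this condition to $t \in \fp^{\lfloor j/2\rfloor}$. The identity $a(x)n(t) = n(xt)a(x)$ together with left $N$-quasi-invariance of the Whittaker function gives $\psi(xt)\,W_{\varphi'}(a(x)) = \widetilde\theta'(n(t))\,W_{\varphi'}(a(x))$. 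To evaluate $\widetilde\theta'(n(t))$ via \eqref{theta_to_thetatilde}, realize $\alpha_{\theta'}$ as a $2\times 2$ matrix through \eqref{Oembedding}: by Corollary \ref{alphaxialphathetacor} and the twist-minimal relation $\alpha_{\xi'} = z\alpha_0$, one has $\alpha_{\theta'} \equiv z\bigl(\begin{smallmatrix} 0 & 1 \\ -B & -A\end{smallmatrix}\bigr)$ modulo an acceptable power of $\fp_E$, so $\Tr(\alpha_{\theta'}\cdot tE_{12}) = -zBt$. This forces $\psi((x+zB)t) = 1$ for all $t \in \fp^{\lfloor j/2\rfloor}$, i.e.\ $v(x+zB) \geq -\lfloor j/2\rfloor$; combined with $v(-zB) = -j$ (uniformly across $d = 0, 2, 3$, using Lemma \ref{Q2lem2}, Lemma \ref{minl_elts_in_Q2extns}, and Table \ref{table:xi}) this rearranges to $x \in -zB\,U_F(\lceil j/2\rceil)$.

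For constancy on this support, take $h = a(u)$ with $u \in U_F(\lceil j/2\rceil)$. Entrywise verification shows $(u-1)E_{11} \in \mathfrak{P}_e^{(c(\sigma')-1)/2}$, so $a(u) \in J$, and then $W_{\varphi'}(a(xu)) = \widetilde\theta'(a(u))\,W_{\varphi'}(a(x))$. The matrix $\alpha_{\theta'}\cdot(u-1)E_{11}$ is strictly lower triangular (its only nonzero entry being $-zB(u-1)$ in position $(2,1)$), hence has vanishing trace, so $\widetilde\theta'(a(u)) = 1$ via \eqref{theta_to_thetatilde}. Thus $W_{\varphi'}$ is constant on $-zB\,U_F(\lceil j/2\rceil)$, and nonvanishing there follows from the fact that $W_{\varphi'} \not\equiv 0$ in the Whittaker model combined with the injectivity of the Kirillov restriction $W \mapsto W|_{A(F)}$.

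The main technical obstacle is the unramified case $d=0$ with $c_0 = j$ odd, where $ZB^1$ (from \cite[Def.\ 3.11]{Hu}) is strictly smaller than $ZU_E(1)U_{\mathfrak{A}_1}^{\lfloor c_0/2\rfloor}$, so the equivariance relation used above must be replaced by the formula of \cite[Cor.\ 3.19]{Hu} rather than Lemma \ref{mx_coeff_ofminlvec}. One must also verify uniformly in the cases $d = 0, 2, 3$ that the support radius collapses to $\lceil j/2\rceil$, carefully tracking $v(z)$ from Lemma \ref{Q2lem2}, $v(B)$ from Lemma \ref{minl_elts_in_Q2extns}, and the sufficient-conductor hypothesis \eqref{csigmaprime_suff_large} needed to invoke Corollary \ref{alphaxialphathetacor} in the $d = 2, 3$ cases.
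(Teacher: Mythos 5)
Your argument is correct and takes a genuinely different route from the paper. The paper picks a specific normalization $g_0 = a(-1/zB)$, $v' = \varphi'$ in \eqref{Whittakerdef} and evaluates the resulting integral directly: it characterizes when $g_0 n(t) a(x) \in J$ and reads off the value from $\Phi_{\varphi'} = \widetilde\theta'$ on $J$. You instead bypass the integral: treating the $J$-equivariance $W_{\varphi'}(gh) = \widetilde\theta'(h)W_{\varphi'}(g)$ as a black box, you probe $W_{\varphi'}|_{A(F)}$ with the two one-parameter families $n(t) \subset U_{\mathfrak{A}_e}^j \subset J$ and $a(u) \subset U_{\mathfrak{A}_e}^j \subset J$, getting the support and the constancy separately, and get nonvanishing from Kirillov injectivity. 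Both routes ultimately rest on the same ingredients ($\alpha_{\theta'}$ realized as a matrix via \eqref{Oembedding}, the approximation $\alpha_{\theta'} \approx z\alpha_0$ from Corollary \ref{alphaxialphathetacor}, the character formula \eqref{theta_to_thetatilde}), but yours explains \emph{why} the answer has the stated shape without having to guess $g_0$. What the paper's method buys in exchange is the explicit constant $\langle\varphi',\varphi'\rangle\vol(\fp^{-\lceil j/2\rceil})$ on the support, which your method only determines up to a nonzero scalar --- but since the lemma only asserts equality up to a scalar, this loses nothing.

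One step you should make explicit rather than tuck under ``modulo an acceptable power of $\fp_E$'': when you replace $\alpha_{\theta'}$ by $z\alpha_0$ inside $\psi(\Tr(\alpha_{\theta'}\cdot))$, the error $\epsilon := \alpha_{\theta'} - z\alpha_0$ lies in $\fp_E^{-j-1} \subseteq \mathfrak{P}_e^{-j-1}$ at worst (Corollary \ref{alphaxialphathetacor} together with Tables \ref{table:xi}, \ref{table:xiprime}; here is where the hypothesis \eqref{csigmaprime_suff_large} enters), while both $t E_{12}$ with $t \in \fp^{\lfloor j/2\rfloor}$ and $(u-1)E_{11}$ with $u \in U_F(\lceil j/2\rceil)$ lie in $\mathfrak{P}_e^{j}$. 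Hence $\epsilon \cdot tE_{12}$ and $\epsilon \cdot (u-1)E_{11}$ lie in $\mathfrak{P}_e^{-1}$, whose trace is contained in $\cO$, so $\psi$ kills the error term and the substitution is legitimate. Your worry about the unramified $d=0$, $c_0$ odd case is not a serious obstacle: the explicit description \eqref{fsupercuspidaldef} shows that $ZB^1$ already contains $n(t)$ for $v(t) \geq \lfloor j/2\rfloor$ and $a(1+x)$ for $v(x) \geq \lceil j/2\rceil$, which is exactly what your probes need; in any case, the paper also defers this case to the external reference, so both approaches cite the same computation there.
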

\begin{proof}
The case that $E/F$ is unramified is given by \cite[Lem.\ 3.15]{Hu}, whose proof goes through with the extra assumption that $\sigma'$ is twist-minimal to permit the use of \cite[15.3 Thm.]{BushnellHenniart:06a} in the final step. 

We therefore assume that $E/F$ is ramified for the rest of the proof. 
We compute the integral in \eqref{Whittakerdef} along $A(F)$ using Lemma \ref{mx_coeff_ofminlvec}. To do so, we need to explicate the $J,\theta'$ used to construct $\sigma'$ by compact induction.

Recall the character $\theta'$ of $E^\times$ defined just above Corollary \ref{alphaxialphathetacor} that gives rise to $\sigma'$ by compact induction. Since $F=\Q_2$, the hypothesized lower bound on $c(\sigma')$ implies the condition of Lemma \ref{alphathetaalphaxi} is satisfied, so that by Corollary \ref{alphaxialphathetacor} we have 
$\alpha_{\theta'} \equiv \alpha_{\xi'} \pmod {\fp_E^{-j+c(\psi_E)}}$. 

Now let $z\in F^\times$ be as in Lemma \ref{Q2lem2} and Remark \ref{remarkafterQ2lem}, so that $\alpha_{\xi'} = z \alpha_0$. 
Therefore, choosing the embedding $E^\times \hookrightarrow G(F)$ of \eqref{Oembedding} in terms of $\alpha_0$, we have that $\alpha_{\theta'}$ is given in matrix form by
$$ \alpha_{\theta'} = z \zxz{}{1}{-B}{-A} \pmod {\mathfrak{P}_2^{-j+c(\psi_E)}}.$$

We choose $v'$ and $g_0$ in \eqref{Whittakerdef} to be given by $v'=\varphi'$ and $g_0=a(-1/zB)$, and $c(\psi)=0$.  By Lemma \ref{Q2chainorders} and since $$ U_{\mathfrak{A}_2}^j = 1+\mathfrak{P}_2^j = 1+ \zxz{p^{\lceil j/2\rceil} \cO}{p^{\lfloor j/2\rfloor} \cO}{p^{\lfloor j/2\rfloor+1} \cO}{p^{\lceil j/2\rceil} \cO}$$
 we have
$$g_0na(x) \in J \quad \text{ if and only if } \quad \begin{cases}x \in -zB U_F(\lceil j/2\rceil) \text{ and } \\ v(n) \geq - \lceil j/2\rceil. \end{cases}$$
Thus by Lemma \ref{mx_coeff_ofminlvec} we have
\begin{align}
W_{\varphi'}(a(x))&=\langle \varphi',\varphi' \rangle \int\limits_{v(n)\geq -\lceil j/2 \rceil}\psi\circ \Tr\left( z \zxz{0}{1}{-B}{-A} \zxz{-\frac{x}{zB} -1}{-\frac{1}{zB}n}{0}{0}  \right)    \psi(-n)dn\\
&=\langle \varphi',\varphi' \rangle \int\limits_{v(n)\geq -\lceil j/2 \rceil}dn. \notag
\end{align}
\end{proof}
By translating the minimal vector $\varphi'$ back to the minimal vector $\varphi \in \cInd_J^G \Lambda\simeq \sigma$, we have that the $L^2$-normalized \eqref{GinvtInnerProd} Whittaker function of $\varphi$ satisfies
\begin{equation}\label{minimal_vec_in_Kmodel}
W_\varphi(a(x)) = \vol(U_F(\lceil j/2\rceil))^{-1/2} \chi_m(x)  1_{-zBU_F(\lceil j/2\rceil)}(x).
\end{equation}

Next we express new vectors as a sum of translates of the minimal vector. Recall the Gauss sum 
$$\tau(\chi,\psi) = \sum_{(\cO/ p^{c(\chi)}\cO)^\times} \chi(x) \psi(x/p^{c(\chi)}).$$ 
\begin{mylemma}\label{Lem:minimaltonew}
Let $\varphi\in\sigma$ be the $\chi_m$-translate of the minimal vector $\varphi' \in \sigma'$ and $\varphi_0\in\sigma $ be a newvector. Then
\begin{enumerate}
\item when $d=0$ or $2$, 
\begin{multline}\label{eq:minimaltonew02}
\varphi_0= \frac{1}{\vol(U_F(\lceil j/2\rceil))^{-1/2}\tau(\chi_m,\psi)}   \sum\limits_{b\in  \cO^\times/U(j+1)}\chi_m(b)\sigma \Big(\zxz{1}{p^{-j-1}b}{}{1}\Big)\\ \times \sum\limits_{a\in (\cO/\fp^{\lceil j/2 \rceil})^\times }\chi_m^{-1}(a)\sigma\Big(\zxz{p^{-j}a}{}{}{1}\Big) \varphi,
\end{multline}
\item when $d=3$
\begin{equation}\label{eq:minimaltonew3}
\varphi_0 =\frac{1}{\vol(U_F(\lceil j/2\rceil))^{-1/2}} \sum\limits_{a\in \cO^\times/U(\lceil j/2\rceil) }\sigma\Big(\zxz{p^{-j}a}{}{}{1}\Big) \varphi.
\end{equation}
\end{enumerate}
\end{mylemma}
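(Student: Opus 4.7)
I plan to verify the claimed proportionalities by comparing Whittaker functions along the diagonal torus, using that the newvector in $\sigma$ (which has trivial central character) has Whittaker function $W_{\varphi_0}(a(x)) \propto 1_{\cO^\times}(x)$ in the Kirillov model. The two translation formulas I need are $W_{\sigma(a(y))\phi}(a(x)) = W_\phi(a(xy))$ and $W_{\sigma(n(y))\phi}(a(x)) = \psi(xy) W_\phi(a(x))$. The starting point is \eqref{minimal_vec_in_Kmodel}, namely $W_\varphi(a(x)) \propto \chi_m(x) 1_{-zBU_F(\lceil j/2 \rceil)}(x)$, and the key numerical observations are (i) $v(zB) = -j$ in all three cases $d=0,2,3$ (from Lemma \ref{minl_elts_in_Q2extns} and Table \ref{table:xi}), and (ii) $\chi_m(p^{-j}) = \chi_m(2)^{-j} = 1$ by Definition \ref{def:twistminimalQ2}.

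The first step is to apply $\sigma(a(p^{-j}a))$ and sum to shift the support of the Whittaker function from valuation $-j$ to valuation $0$. Indeed, using observation (ii),
\[
W_{\sigma(a(p^{-j}a))\varphi}(a(x)) \propto \chi_m(x)\chi_m(a)\, 1_{-zBU_F(\lceil j/2\rceil)}(xp^{-j}a).
\]
For $x \in \cO^\times$, the condition $xp^{-j}a \in -zBU_F(\lceil j/2\rceil)$ pins $a$ to the single residue class $-zBp^j x^{-1}$ in $(\cO/\fp^{\lceil j/2\rceil})^\times$ (using $v(zB)=-j$), while for $x \notin \cO^\times$ no term survives. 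Weighting by $\chi_m^{-1}(a)$ cancels the $\chi_m(a)$ from the translation, so
\[
\sum_{a \in (\cO/\fp^{\lceil j/2\rceil})^\times}\chi_m^{-1}(a)\,W_{\sigma(a(p^{-j}a))\varphi}(a(x)) \propto \chi_m(x)\, 1_{\cO^\times}(x).
\]
When $d=3$, Definition \ref{def:twistminimalQ2} sets $\chi_m = 1$, so this already equals $1_{\cO^\times}(x)$ and we recover $\varphi_0$, proving \eqref{eq:minimaltonew3}.

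For $d = 0$ or $2$, I still need to eliminate the factor $\chi_m(x)$. Applying $\sigma(n(p^{-j-1}b))$ with weight $\chi_m(b)$ and summing over $b \in \cO^\times/U(j+1)$ multiplies the Whittaker function by $\sum_b \chi_m(b)\psi(xp^{-j-1}b)$. For $x \in \cO^\times$, the substitution $u = xb$ converts this into $\chi_m^{-1}(x)\,\tau(\chi_m)$ where $\tau(\chi_m) = \sum_u \chi_m(u)\psi(up^{-j-1})$ is a Gauss sum. Lemmas \ref{minl_elts_in_Q2extns} and \ref{Q2lem2} combined with Definition \ref{def:twistminimalQ2} give $v(\alpha_{\chi_m}) = v(zA/2) = -j-1$, so $\chi_m$ is primitive of conductor $j+1$ and $\tau(\chi_m) \neq 0$. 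Thus the total Whittaker function is $\propto \chi_m(x)\chi_m^{-1}(x)\,\tau(\chi_m)\, 1_{\cO^\times}(x) \propto 1_{\cO^\times}(x)$, which is the newvector Whittaker function and proves \eqref{eq:minimaltonew02}.

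The main bookkeeping obstacle is keeping the valuation ledger consistent across the three cases: verifying $v(zB) = -j$ uniformly (so that a single factor $p^{-j}$ in the diagonal translation effects the needed shift) and confirming that $v(\alpha_{\chi_m}) = -j-1$ lines up with the modulus $p^{-j-1}$ in the unipotent translation, so that the Gauss sum is precisely of conductor $j+1$ and hence non-vanishing. Once these alignments are checked, everything reduces to the standard identification of the newvector with $1_{\cO^\times}$ in the Kirillov model.
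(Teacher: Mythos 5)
Your proposal is correct and follows essentially the same route as the paper: both work in the Kirillov model starting from \eqref{minimal_vec_in_Kmodel}, both apply the inner diagonal sum with weight $\chi_m^{-1}(a)$ to shift the support from $-zBU_F(\lceil j/2\rceil)$ to $\cO^\times$ and land on $\chi_m(x)1_{\cO^\times}(x)$, and both (in the $d=0,2$ cases) apply the outer unipotent sum to produce a Gauss sum that cancels the $\chi_m(x)$ factor via primitivity of $\chi_m$ at conductor $j+1$. The only difference is presentational: you explicitly verify the valuation identities $v(zB)=-j$ and $v(\alpha_{\chi_m})=-j-1$ by direct case-check against Lemma~\ref{minl_elts_in_Q2extns} and Table~\ref{table:xi}, where the paper defers to Definition~\ref{def:twistminimalQ2} and a cross-reference to [Hu, Lem.~3.25].
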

\begin{proof}
We compute in the Kirillov model $\mathcal{K}(\sigma,\psi)$ using \eqref{minimal_vec_in_Kmodel}, following \cite[\S 3.2.2, 3.3.2]{Hu}. We focus on the $d=0,2$ case as the $d=3$ case is strictly simpler.  

Consider the interior sum. Since $\chi_m(p)=1$ by definition, we have by \eqref{minimal_vec_in_Kmodel} that  
$$\sum\limits_{a\in \cO^\times/U(\lceil j/2\rceil) }\chi_m^{-1}(a)\sigma\Big(\zxz{p^{-j}a}{}{}{1}\Big) W_{\varphi}(a(x)) = \vol(U_F(\lceil j/2\rceil))^{-1/2}  \chi_m(x)1_{\cO^\times}(x) \in \mathcal{K}(\sigma,\psi).$$
By the defining property of the Whittaker model, the sum over $b$  on the right hand side of \eqref{eq:minimaltonew02} in the Kirillov model is equal  to 
$$\vol(U_F(\lceil j/2\rceil))^{-1/2} \sum\limits_{b\in  \cO^\times/U(j+1)}\chi_m(b) \psi \left( \frac{bx}{p^{j+1}}\right) \chi_m(x)1_{\cO^\times}(x).$$
The character $\chi_m$ has conductor $j+1$ in either case $d=0$ or $d=2$ (see Definition \ref{def:twistminimalQ2}), so that the Gauss sum above satisfies 
$$ \sum\limits_{b\in  \cO^\times/U(j+1)}\chi_m(b) \psi \left( \frac{bx}{p^{j+1}}\right) =  \chi_m(x)^{-1}\tau(\chi_m,\psi)$$
for $x \in \cO^\times$ (cf.\ \cite[Lem.\ 3.25]{Hu}).  
\end{proof}
Now we give a preliminary definition of the newform projector in the $p=2$ case. For  $\varphi \in \sigma$  the minimal vector and $\ell \geq 2-e$ recall the restricted matrix coefficient $\widetilde{\Phi}_{\varphi}$ and volume $V$ from \eqref{tildePhiphidef}. Let $c= \frac{1}{\vol(U_F(\lceil j/2\rceil))^{-1/2}}$ be the leading constant in   \eqref{eq:minimaltonew02} and \eqref{eq:minimaltonew3}. 
\begin{mydefi}\label{p=2_testfcn_def}
Set $f\in \cH_2$ to be the function satisfying 
\begin{enumerate}
\item when $d=0$ or $2$
$$ \overline{f(g)} =|c|^2V^{-1} \sum\limits_{b,b' \in \cO^\times/U(j+1)}\sum\limits_{a,a' \in \cO^\times/U(\lceil j/2\rceil) }\chi_m\left(\frac{ba'}{ab'}\right) \widetilde{\Phi}_{\varphi}\left(\zxz{p^{-j}a'}{p^{-j-1}b'}{}{1}^{-1} g \zxz{p^{-j}a}{p^{-j-1}b}{}{1}\right),$$
\item when $d=3$
$$ \overline{f(g)} =|c|^2V^{-1} \sum\limits_{a,a' \in \cO^\times/U(\lceil j/2\rceil)} \widetilde{\Phi}_{\varphi}\left(\zxz{p^{-j}a'}{}{}{1}^{-1} g \zxz{p^{-j}a}{}{}{1}\right).$$
\end{enumerate}
\end{mydefi}
\begin{mycoro}[of Proposition \ref{minlvec_projection_prop} and Lemma \ref{Lem:minimaltonew}]\label{p=2_newform_projector_cor}
For $\sigma$ a trivial central character supercuspidal representation of $G(F)$ with $$c(\sigma)\geq \begin{cases}5 & \text{ if } d=0, \\ 8 & \text{ if } d=2,\\ 11 & \text{ if } d=3\end{cases}$$ and $\ell$ as in Proposition \ref{minlvec_projection_prop}, 
\begin{enumerate}
\item the $f\in \cH_2$ constructed from these as in Definition \ref{p=2_testfcn_def} is a newform projector in the sense of Definition \ref{def:newformproj}, and 
\item the operator $\pi (f)$ is 0 unless there exists $\theta_1 \in \theta[\ell] 
$ such that $\pi \simeq \cInd_J^G \Lambda_1$. 
\end{enumerate}
\end{mycoro}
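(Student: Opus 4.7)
The plan is to compute $\pi(f)$ directly by unravelling Definition \ref{p=2_testfcn_def} and reducing to the two ingredients already at hand, namely Proposition \ref{minlvec_projection_prop} and Lemma \ref{Lem:minimaltonew}. Concretely, taking complex conjugates in the definition and performing the change of variables $g \mapsto h_{a',b'}g h_{a,b}^{-1}$ (where $h_{a,b} = \zxz{p^{-j}a}{p^{-j-1}b}{}{1}$ in the $d=0,2$ cases and the analogous simpler $h_a$ for $d=3$) in $\pi(f) = \int f(g) \pi(g)\,dg$, the operator factors as
\begin{equation*}
\pi(f) = \overline{c^2}\,V^{-1} \sum_{a,b,a',b'} \overline{\chi_m(ba'/ab')}\, \pi(h_{a',b'})\, \pi(\overline{\widetilde{\Phi}_\varphi})\, \pi(h_{a,b})^{-1}.
\end{equation*}

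Now I would invoke Proposition \ref{minlvec_projection_prop}. The lower bounds imposed on $c(\sigma)$ put us in the regime where the ramified compact-induction setup (Lemma \ref{choice_of_cusp_type}, Corollary \ref{alphaxialphathetacor}) is valid, and one checks that the parameter $\ell$ satisfies $2-e \leq \ell \leq \min(j, c(\theta')-1)$. Proposition \ref{minlvec_projection_prop} then says $\pi(\overline{\widetilde{\Phi}_\varphi}) = 0$ unless $\pi \simeq \cInd_J^G \Lambda_1$ for some $\theta_1 \in \theta[\ell]$; in the latter case $V^{-1}\pi(\overline{\widetilde{\Phi}_\varphi}) = P_{\min}^{(\pi)}$, the orthogonal projection onto the $L^2$-normalized minimal vector $\varphi_{\min}^{(\pi)}$ of $\pi$. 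Part (2) of the corollary follows at once.

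For part (1), on $\pi$ inside the support, substituting $V^{-1} \pi(\overline{\widetilde{\Phi}_\varphi}) = \langle \cdot, \varphi_{\min}^{(\pi)}\rangle \varphi_{\min}^{(\pi)}$ and separating the $(a',b')$ and $(a,b)$ sums yields
\begin{equation*}
\pi(f) v = \overline{c^2}\,\Bigl(\sum_{a',b'} \chi_m(b')\chi_m^{-1}(a')\,\pi(h_{a',b'})\varphi_{\min}^{(\pi)}\Bigr)\, \overline{\Bigl\langle \sum_{a,b}\chi_m(b)\chi_m^{-1}(a)\pi(h_{a,b})\varphi_{\min}^{(\pi)},\, v\Bigr\rangle}.
\end{equation*}
Applying Lemma \ref{Lem:minimaltonew} (to $\pi$ itself, using that every $\pi$ in $i(\theta[\ell])$ is built from $\cInd_J^G$ of a character extending a twist of $\theta$) identifies each of the two interior sums with a scalar multiple of a newvector $\varphi_0^{(\pi)} \in \pi$. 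Hence $\pi(f)$ has rank one with image the line of $\varphi_0^{(\pi)}$, and when the global scalar is properly normalized it is the orthogonal projection onto the $L^2$-normalized newform. The $d=3$ case is strictly simpler as the $(b,b')$ sums do not appear.

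The main obstacle I anticipate is pinning down the overall proportionality constant, since Lemma \ref{Lem:minimaltonew} only gives a ``$\propto$'' statement and a priori the constant of proportionality could vary as $\pi$ moves through the finite set $i(\theta[\ell])$. I would resolve this either by (a) making the proportionality explicit via the Kirillov-model description \eqref{minimal_vec_in_Kmodel} of $\varphi_{\min}^{(\pi)}$, noting that the twist-minimal cuspidal types underlying the different $\pi \in i(\theta[\ell])$ all share the same $\chi_m$ and the same twisted Whittaker normalization so the constant is independent of $\pi$, or (b) evaluating $\pi(f)\varphi_0^{(\pi)}$ directly and checking it equals $\varphi_0^{(\pi)}$, using Lemma \ref{projection_upper_bound} applied in reverse (i.e., verifying $\|f\|_2^2 = f(1)$ using the explicit value \eqref{f1p=2_formula}) to confirm idempotency. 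Either route gives that $\pi(f)$ is a genuine orthogonal projection, completing (1).
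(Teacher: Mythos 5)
Your unfolding of Definition \ref{p=2_testfcn_def} via the change of variables $g \mapsto h_{a',b'} g h_{a,b}^{-1}$, followed by Proposition \ref{minlvec_projection_prop} for the support statement and Lemma \ref{Lem:minimaltonew} to convert the two sums of translated minimal vectors into newvectors, is exactly the argument the paper intends; the corollary is stated as following from precisely these two results with no further input. The scalar subtlety you single out is handled the same way in the paper: the text immediately after the corollary concedes the projector property holds ``for an appropriate choice of scalar'' and pins the constant down only via Proposition \ref{Cor:twotestfunsame} together with Lemma \ref{projection_upper_bound}, which is essentially your option (b).
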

Here, recall that if $E/F$ is ramified, we have $\Lambda_1 = \widetilde{\theta}_1$ where $\widetilde{\theta}$ is the extension of $\theta$ from $E^\times$ to $J$ in \eqref{theta_to_thetatilde}. If $E/F$ is unramified, then $\Lambda_1$ is constructed from $\theta_1$ as in \cite[\S 3.2.1]{Hu}. See also Section \ref{compactinductionbackground} for more detailed references.

We have shown that $f$ satisfies the spectral assumption for an appropriate choice of scalar. Now, we give an alternate description of $f$ from which  Geometric Assumption \eqref{geo3} is obvious.

\begin{myprop}\label{Cor:twotestfunsame}
Suppose $F$ and $c(\sigma)$ are as in Corollary \ref{p=2_newform_projector_cor}, and choose $\ell=1$.  
\begin{enumerate}
\item If $d=0$, $$f=V^{-1}\overline{\Phi}_{\varphi_0}|_{ZK_0(j+1,-j-1)}= \frac{1}{\| \Phi_{\varphi_0}|_{ZK_0(j+1,-j-1)}\|_2^2} \overline{\Phi}_{\varphi_0}|_{ZK_0(j+1,-j-1)}.$$
\item If $d=2$, we have
$$f=V^{-1}\overline{\Phi}_{\varphi_0}|_{ZK_0(j+1,-j-1)}= \frac{1}{\| \Phi_{\varphi_0}|_{ZK_0(j+1,-j-1)}\|_2^2} \overline{\Phi}_{\varphi_0}|_{ZK_0(j+1,-j-1)}.$$
\item If $d=3$, we have
$$f=V^{-1}\overline{\Phi}_{\varphi_0}|_{ZK_0(j+1,-j)}= \frac{1}{\| \Phi_{\varphi_0}|_{K_0(j+1,-j)}\|_2^2} \overline{\Phi}_{\varphi_0}|_{K_0(j+1,-j)}.$$
\end{enumerate}
\end{myprop}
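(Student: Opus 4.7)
My plan is to combine the newvector-to-minimal-vector expansion of Lemma \ref{Lem:minimaltonew} with the support properties of the minimal vector matrix coefficient (Lemma \ref{mx_coeff_ofminlvec}) and the normalization afforded by Corollary \ref{p=2_newform_projector_cor}.

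First I would expand $\Phi_{\varphi_0}$ by taking inner products of the formula in Lemma \ref{Lem:minimaltonew} with itself. Writing $\beta_{a,b} = \zxz{p^{-j}a}{p^{-j-1}b}{0}{1}$ (with the obvious modification $\beta_a = \zxz{p^{-j}a}{0}{0}{1}$ in the $d=3$ case), Lemma \ref{Lem:minimaltonew} gives $\varphi_0 = c\sum_{a,b}\chi_m(b/a)\sigma(\beta_{a,b})\varphi$ for some proportionality constant $c$, and therefore
\begin{equation*}
\Phi_{\varphi_0}(g) = c^2 \sum_{a,b,a',b'} \chi_m(ba'/ab') \, \Phi_{\varphi}(\beta_{a',b'}^{-1} g \beta_{a,b}).
\end{equation*}
Using $\overline{\Phi_\varphi(h)} = \Phi_\varphi(h^{-1})$ and swapping the indices $(a,b) \leftrightarrow (a',b')$, the complex conjugate of this expression matches the right-hand side of Definition \ref{p=2_testfcn_def} for $\overline{f(g)}$, except that $\Phi_\varphi$ (not $\widetilde{\Phi}_\varphi$) appears.

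The central task is then to show these two expressions agree as functions on $G(F)$. Since $\widetilde{\Phi}_\varphi = \Phi_\varphi|_H$ (resp.\ $\Phi_\varphi|_{ZB^1}$ when $E/F$ is unramified), I need to verify: (i) if $g \in ZK_0(j+1,-j-1)$ and $\beta_{a',b'}^{-1}g\beta_{a,b}$ lies in $J$ (equivalently, in the support of $\Phi_\varphi$ by Lemma \ref{mx_coeff_ofminlvec}), then it in fact lies in $H$; and (ii) if $g \notin ZK_0(j+1,-j-1)$ (respectively $ZK_0(j+1,-j)$ when $d=3$), then $\beta_{a',b'}^{-1}g\beta_{a,b} \notin H$ for every choice of $a,b,a',b'$. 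Both are direct matrix calculations: for $g = \zxz{\alpha}{\beta}{\gamma}{\delta}$, the lower-left entry of $\beta_{a',b'}^{-1}g\beta_{a,b}$ is $\gamma p^{-j}a$, whose valuation forces $v(\gamma) \geq j+1$ in order to lie in any positive filtration level of $\mathfrak{A}_e$, and similarly the upper-right entry imposes $v(\beta) \geq -j-1$ (resp.\ $-j$ when $d=3$). The remaining conditions on diagonal entries lying in $U_E(1) \pmod{\mathfrak{P}_e^k}$ are then automatic from $K_0$-invariance and the explicit shape of $\beta_{a,b}$ together with the structure of $\mathfrak{A}_e$ in Lemma \ref{Q2chainorders}. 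This is the main obstacle, as it requires a careful case-by-case argument handling the three values of $d$ and the two different chain orders $\mathfrak{A}_1$ and $\mathfrak{A}_2$, and it is also the place where the parameter $\ell = 1$ specifically enters.

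Once the support analysis yields $f = c^2 V^{-1}\overline{\Phi}_{\varphi_0}|_{ZK_0(\ldots)}$, I would pin down both normalization constants using Corollary \ref{p=2_newform_projector_cor}. Since $f$ is a newform projector, Lemma \ref{projection_upper_bound} gives $f(1) = \|f\|_2^2$. Evaluating at the identity, $f(1) = c^2 V^{-1}$ (from $\|\varphi_0\|^2 = 1$), while $\|f\|_2^2 = c^4 V^{-2} \|\Phi_{\varphi_0}|_{ZK_0(\ldots)}\|_2^2$, so $V = c^2 \|\Phi_{\varphi_0}|_{ZK_0(\ldots)}\|_2^2$. A parallel direct evaluation of $\|\varphi_0\|^2 = 1$ using the expansion in Lemma \ref{Lem:minimaltonew} — again reducing to $\Phi_\varphi|_H = \widetilde\Phi_\varphi$ via the same support analysis from the previous paragraph — will produce $c^2 = 1$, simultaneously establishing both claimed equalities in each of the three cases.
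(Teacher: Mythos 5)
Your overall route is the same as the paper's: expand $\Phi_{\varphi_0}$ via Lemma \ref{Lem:minimaltonew}, compare supports term by term with Definition \ref{p=2_testfcn_def} (your claims (i) and (ii) are exactly the contrapositives of \eqref{Eq:suppcompare1} and \eqref{Eq:suppcompare2}), and fix the norm using the projector property $f(1)=\|f\|_2^2$ from Lemma \ref{projection_upper_bound}. However, your final normalization step contains a concrete error. Definition \ref{p=2_testfcn_def} already builds the factor $c^2$ into $f$, with the \emph{same} constant $c$ as in Lemma \ref{Lem:minimaltonew}; since your expansion of $\Phi_{\varphi_0}$ produces exactly the same factor $c^2$ in front of the same sum of translates of $\Phi_\varphi$, the constants cancel and the support analysis yields $f=V^{-1}\overline{\Phi}_{\varphi_0}|_{ZK_0(\cdot,\cdot)}$ directly, not $f=c^2V^{-1}\overline{\Phi}_{\varphi_0}|_{ZK_0(\cdot,\cdot)}$. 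Consequently the auxiliary claim you then need, $c^2=1$, is both unnecessary and false: computing $\|\varphi_0\|^2=1$ from the expansion gives $1=|c|^2\sum_{(a,b),(a',b')}\chi_m(\cdot)\Phi_\varphi(\beta_{a',b'}^{-1}\beta_{a,b})$, and since the diagonal terms alone contribute $\#\{(a,b)\}\geq 2$ (e.g.\ in the $d=3$ case the translates $\sigma(a(p^{-j}a))\varphi$ are pairwise orthogonal, so $|c|^2=\#(\cO^\times/U(\lceil j/2\rceil))^{-1}$), this computation would show $c^2\neq 1$ rather than $c^2=1$. With the constant tracked correctly, the second displayed equality follows as in the paper: $f$ is a newform projector by Corollary \ref{p=2_newform_projector_cor}, so $\|\Phi_{\varphi_0}|_{ZK_0(\cdot,\cdot)}\|_2^2=V\,\overline{\Phi}_{\varphi_0}(1)=V$, and no evaluation of $c$ is needed at all.

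One further caution on the support analysis, which you rightly identify as the crux: your entry-by-entry valuation sketch covers the passage $H\Rightarrow ZK_0$ and the filtration conditions, but the delicate point is excluding the part of the support of $\Phi_\varphi$ lying in $J\smallsetminus H$, i.e.\ cosets of $E^\times$ involving a uniformizer of $E$ in the ramified case. The paper handles this by showing that any $h\in E^\times U_{\mathfrak{A}_e}^{j}\smallsetminus ZU_E(1)U_{\mathfrak{A}_e}^{j}$ has $v(\det h)$ odd, which is incompatible with membership in $ZK_0(j+1,-j-1)$; your remark that the diagonal conditions are ``automatic from $K_0$-invariance'' does not obviously deliver this exclusion, so make sure your case-by-case argument includes a mechanism (such as this determinant-parity argument) for it.
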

\begin{proof}
We assume that $d=2$, as the $d=0,3$ cases are simpler. 
By Lemma \ref{Lem:minimaltonew}, 
$$\Phi_{\varphi_0}(g) = |c|^2 \sum\limits_{b,b',a,a'}\chi\left(\frac{ba'}{ab'}\right) {\Phi}_{\varphi}\left(\zxz{p^{-j}a'}{p^{-j-1}b'}{}{1}^{-1} g \zxz{p^{-j}a}{p^{-j-1}b}{}{1}\right).$$

We claim that for any $h\in H=ZU_E(\ell)U_{\mathfrak{A}_e}^j$
\begin{equation}\label{Eq:suppcompare1}
\zxz{p^{-j}a'}{p^{-j-1}b'}{}{1} h\zxz{p^{-j}a}{p^{-j-1}b}{}{1}^{-1} \in ZK_0(j+1,-j-1 ), 
\end{equation}
and for any $h\in E^\times U_{\mathfrak{A}_e}^j \smallsetminus H$,
\begin{equation}\label{Eq:suppcompare2}
\zxz{p^{-j}a'}{p^{-j-1}b'}{}{1} h\zxz{p^{-j}a}{p^{-j-1}b}{}{1}^{-1} \not\in ZK_0(j+1,-j-1 ).
\end{equation}

When $h\in H$ there exists $s \in F^\times$, $y,z\in F$ such that 
\begin{equation}\label{Eq:expressionh}
h=s(y+z\alpha_0)(1+x)=s\zxz{y}{z}{-zB}{y-Az}(1+x)
\end{equation} 
with $1+x\in U_{\mathfrak{A}_e}^j$, $v(y-1)\geq 1$ and $v(z)\geq 0$.  Then, using Lemma \ref{minl_elts_in_Q2extns} we can check directly by computing the valuation of each entry that \eqref{Eq:suppcompare1} is true. 

Now suppose that $h \in E^\times U_{\mathfrak{A}_e}^j \smallsetminus Z U_E(1) U_{\mathfrak{A}_e}^j$. If we can write $h$ as $h=s(y+z\alpha_0)(1+x)$ with $y \neq 0$, then $h = sy(1+\frac{z}{y} \alpha_0)(1+x)$, so that since $h \not \in H$ we must have $v_E(z\alpha_0/y)\leq 0$, i.e.\ $v(z)+1 \leq v(y)$. Now we look at the valuation of the determinant of $h$, which is $v(y^2-Ayz+z^2 B)+2v(s)$, but we have (by Lemma \ref{minl_elts_in_Q2extns}) that 
$$v(z^2 B) = 2v(z)+1 \leq v(zy)< v(Azy)\leq v(y^2).$$
Thus, $v(\det(h)) = v(z^2 B)+2v(s)$. If on the other hand $y=0$, then we have directly that $v(\det(h)) = v(z^2 B)+2v(s)$. In either case, $v(\det(h))$ is odd, which proves \eqref{Eq:suppcompare2}.



We thus have that
$$\Phi_{\varphi_0} \vert_{K_0(j+1,-j-1)} = |c|^2 \sum\limits_{b,b',a,a'}\chi\left(\frac{ba'}{ab'}\right) \widetilde{\Phi}_{\varphi}\left(\zxz{p^{-j}a'}{p^{-j-1}b'}{}{1}^{-1} g \zxz{p^{-j}a}{p^{-j-1}b}{}{1}\right).$$
This establishes the first equality of the Proposition.

For the second equality, $f = V^{-1}\overline{\Phi}_{\varphi_0}|_{K_0(j+1,-j-1)}$ is a newform projector, so by Lemma \ref{projection_upper_bound} we have $$  \| \Phi_{\varphi_0}|_{K_0(j+1,-j-1)}\|_2^2=V\overline{\Phi}_{\varphi_0}|_{K_0(j+1,-j-1)}(1) =V \overline{\Phi}_{\varphi_0}(1) = V.$$ 
\end{proof}

\begin{proof}[Proof of Theorem \ref{cor:SpecAssumption_supercuspidal_p2}]
Combine Corollary \ref{p=2_newform_projector_cor}(1) and Proposition \ref{Cor:twotestfunsame} to see that relevant test functions are newform projectors. Combine Remark \ref{size_thetaell_rem} and Table \ref{table:xi} with Corollary \ref{p=2_newform_projector_cor}(2) for the assertions on the size of the support of $\pi(f)$. 
\end{proof}

\subsection{Local generalized Kloosterman sums for supercuspidal representations}\label{sec:SCKloostermanSum}
Let $\sigma$ be a trivial central character dihedral supercuspidal representation of $\GL_2(F)$ and $(E/F,\xi)$ be a pair such that $\sigma$ correspond to $\Ind_{E}^{F} \xi$ by the LLC. 
Let $H_p(m,n;c)$ be the local Kloosterman sum (see \eqref{eq:KloostermanLocalFormula}) associated with the test function $f=f_\xi$ defined in either Theorem \ref{cor:SpecAssumption_supercuspidal} ($p$ odd) or Theorem \ref{cor:SpecAssumption_supercuspidal_p2} ($p$ even).  If $p=2$ assume further that $c(\sigma)$ is sufficiently large in the sense of Theorem \ref{cor:SpecAssumption_supercuspidal_p2}.
\begin{mytheo}\label{KlSumThm}
 If $k<\lceil c(\sigma)/2 \rceil$, then $H_p(m,n;p^k)$ vanishes identically. If $k \geq \lceil c(\sigma)/2 \rceil$, then $H_p(m , n; p^k)$ is given by 
\begin{equation}\label{eq:KlSumThm} H_p(m , n; p^k)= \overline{\gamma}(1-p^{-1})^{-1}f_{\xi}(1) p^{-\frac{d}{2}}\sum_{\substack{u \in (\cO_E/p^k \cO_E)^\times \\ \Nm(u) \equiv mn \shortmod{p^k}}} \xi(u) \psi\left( - \frac{\Tr( u)}{p^k}\right),\end{equation}
where $\gamma \in S^1$ depends only on the isomorphism class of $E$ and the choice of additive character $\psi$. In particular, $H_p(m,n;p^k)=0$ if $(mn,p) \neq 1$. 
\end{mytheo}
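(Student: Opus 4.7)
The vanishing for $k<\lceil c(\sigma)/2\rceil$ follows from Theorem \ref{thmKP}(6) applied to the local geometric conductor, combined with the explicit formulas for $k_p$ listed in \eqref{kp:3egs} and the identity $c(\sigma)=2c_0/e+d$; in each case $\lceil c(\sigma)/2\rceil$ agrees with $k_p=c(\xi)$, $c(\xi)/2+1$, or $c(\xi)/2+2$ as the ramification data dictate. So the entire content is to evaluate $H_p(m,n;p^k)$ for $k\ge \lceil c(\sigma)/2\rceil$.

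My plan is to insert the newform projector formula $f_\xi=V^{-1}\overline{\Phi}_{\varphi_0}\big|_{\text{supp}}$ from Theorem \ref{cor:SpecAssumption_supercuspidal} / Proposition \ref{Cor:twotestfunsame} into the definition \eqref{eq:KloostermanLocalFormula} and use the Bruhat factorization
\[
\begin{pmatrix}-t_1 & -c^{-2}-t_1t_2\\ 1 & t_2\end{pmatrix}=n(-t_1)\,a(c^{-2})\,w\,n(t_2)
\]
together with the Whittaker-model identity
\[
\Phi_{\varphi_0}(g)=\int_{F^\times}W_0(a(y)g)\,\overline{W_0(a(y))}\,d^\times y
\]
to separate the $t_1$ and $t_2$ variables. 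The $t_1$-integration pairs $\psi(mt_1)$ with $\psi(-yt_1)$ coming from the $n(-t_1)$ factor and forces $y\equiv m\pmod{p^k}$; after this step the expression reduces to a single integral over $t_2$ of $W_0(a(m/c^2)wn(t_2))\overline{W_0(a(m))}\psi(-nt_2)$, times the constant $V^{-1}$ and an indicator for the Kloosterman matrix to lie in $\supp f_\xi$.

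The main step is then to evaluate the remaining $t_2$-integral by realizing the newform as a finite linear combination of translates of minimal vectors (Lemma \ref{Lem:minimaltonew}, with the analogous $p$ odd statement implicit in \eqref{sctestfcn}). Under the embedding $E^\times\hookrightarrow G(F)$ given by \eqref{Oembedding}, a matrix $\gamma$ in the support of $\Phi_{\varphi}$ -- which sits inside $H=ZU_E(\ell)U_{\mathfrak A_e}^k$ or $ZB^1$ -- is parametrised by an element $u=x+y\alpha_0\in\cO_E^\times$, and Lemma \ref{mx_coeff_ofminlvec}/\eqref{theta_to_thetatilde} give $\Phi_{\varphi}(\gamma)=\widetilde\theta(\gamma)\propto\xi(u)$ modulo the prescribed twist by $\chi_m$. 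The determinant of $\gamma$ equals $\Nm(u)$ (up to a scalar from $Z$), which after tracking the normalizations becomes the congruence $\Nm(u)\equiv mn\pmod{p^k}$; the pullback of the additive phase $\psi(-nt_2)$ through this parametrisation yields exactly $\psi(-\Tr(u)/p^k)$, using Lemma \ref{Q2lem2}/Remark \ref{remarkafterQ2lem} to identify $\alpha_\xi$ with $z(A/2+\alpha_0)$ and compensate the half-trace with the twist $\chi_m$.

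What remains is bookkeeping: matching the Jacobian of the change of variable $(t_1,t_2)\leftrightarrow u$ against $\vol(Z\backslash H)=V$ given in Lemma \ref{Hvol}/Lemma \ref{lem:Vexplicit}; assembling the leading factor $\overline\gamma(1-p^{-1})^{-1}f_\xi(1)p^{-d/2}$ from (i) the newform-to-minimal-vector proportionality constant of Lemma \ref{Lem:minimaltonew} and the Gauss sum over $\chi_m$ appearing there, (ii) the normalisation $V^{-1}=f_\xi(1)$ of \eqref{sec:7.3fp(1)}/\eqref{f1p=2_formula}, and (iii) an epsilon-factor coming from the Weyl element $w$ in the Bruhat decomposition, which is classical for supercuspidals and produces the constant $\gamma\in S^1$ depending only on $E$ and $\psi$ (cf.\ the Langlands $\lambda$-factor for $E/F$). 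The only serious obstacle is going to be verifying that these constants assemble correctly in all three ramification regimes $d=0,2,3$ simultaneously (with the required care when $p=2$), and in particular showing that the $\chi_m$-twist used in Definition \ref{def:twistminimalQ2} exactly cancels the discrepancy between $\alpha_{\theta'}=z\alpha_0$ and the $\alpha_\xi$ that enters the desired answer; this can be checked case-by-case by comparing $\alpha_\xi$ modulo the relevant power of $\mathfrak p_E$ via Corollary \ref{alphaxialphathetacor}. The vanishing when $(mn,p)\neq 1$ then follows from the norm condition $\Nm(u)\equiv mn\pmod{p^k}$ combined with $u\in\cO_E^\times$.
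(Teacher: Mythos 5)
Your opening moves coincide with the paper's: the vanishing for $k<\lceil c(\sigma)/2\rceil$ is really just Lemma \ref{ccondition} applied to the support of $f_\xi$ (citing the sharp values of $k_p$ in \eqref{kp:3egs} is mildly circular, since their sharpness is deduced in Sections \ref{kp:SC} and \ref{hypfrom_intro:SC} downstream of Theorem \ref{KlSumThm}; also note $c(\sigma)=2c_0+d$, not $2c_0/e+d$, and the odd-$p$ ramified case $d=1$ is missing from your list of regimes), and the Whittaker-model expansion of the matrix coefficient with the $t_1$-integral forcing $y\equiv m\pmod{p^k}$ is exactly how the proof begins. But from that point the paper proceeds differently: Lemma \ref{drop_supp_cond_in_Klsum_computation} first shows the restriction to $ZK_0(m,n)$ can be dropped, then for $2k>c(\sigma)$ the constancy in $y$ (\cite[Prop.\ 2.12]{Hu:17a}) and Assing's explicit formula \cite[Lem.\ 3.1]{Assing_padic_Whittaker} for the newform Whittaker values are inserted, and the remaining $t$-integral is executed by orthogonality; the boundary case $2k=c(\sigma)$ needs a separate argument via Atkin--Lehner theory and Lemma \ref{Lem:Wi}. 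In this route the Langlands constant $\gamma$ and the factor $p^{-d/2}$ are imported wholesale from Assing's formula.

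The genuine gap is in the step you call ``bookkeeping.'' Your plan --- expand the newform into translates of minimal vectors and parametrize the support of $\Phi_\varphi$ by $u\in\cO_E^\times$ --- is the alternative the paper explicitly declines to pursue, and it is substantially harder than you indicate. The support of the minimal-vector coefficient is not $E^\times$ but $J=E^\times U_{\mathfrak{A}}^{\lfloor (n+1)/2\rfloor}$ (resp.\ $ZB^1$), and on the thickening the coefficient is the character $\psi(\Tr(\alpha_\theta\,\cdot))$ of \eqref{theta_to_thetatilde}; after averaging over the translation parameters of Lemma \ref{Lem:minimaltonew}, the unipotent integrals over these directions produce quadratic Gauss sums, and it is only from such sums that $\overline{\gamma}\,p^{-d/2}$ could emerge. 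Your attribution of $\gamma$ to ``an epsilon-factor coming from the Weyl element in the Bruhat decomposition'' has no counterpart in your own computation, since Lemma \ref{mx_coeff_ofminlvec} only ever contributes the unimodular values $\widetilde\theta(\gamma)$. This is exactly why the paper remarks that the formula obtained by the minimal-vector route in \cite[Def.\ 4.6]{Hu} looks quite different from \eqref{eq:KlSumThm}, and that reconciling the two requires a Fourier--Mellin transform together with $p$-adic stationary phase of the type in Lemma \ref{statphase_sc_Klsums_nohyponp} --- analysis the paper performs \emph{after} Theorem \ref{KlSumThm}, not in its proof. Until that computation (including the case $2k=c(\sigma)$ and uniformity in $d$ at $p=2$) is actually carried out, the proposal does not establish \eqref{eq:KlSumThm}.
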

\begin{myrema}\label{Remark_following_KlSumThm} More precisely, $\gamma=\lambda(E,\psi)$ is the Langlands constant as in \cite[Lem.\ 1.2(iv)]{JacquetLanglands}. For an explicit description of $\gamma$, see \cite[\S 34.3]{BushnellHenniart:06a}. 
Note that explicit formulas for $f_{\xi}(1)$ were given in \eqref{sec:7.3fp(1)} for $p$ odd and in \eqref{f1p=2_formula} for $p=2$. 
\end{myrema}

\begin{myrema}\label{KlSumThm_rem}
By Theorem \ref{cor:SpecAssumption_supercuspidal_p2}, when $p=2$ and $E/F$ is the unramified extension the sum on the right hand side of \eqref{eq:KlSumThm} may be restricted to $U_E(1)/U_E(k)$ without changing the validity of the equation. This assertion can also be (sanity) checked by decomposing $u=u_0+u'$ with $v_E(u')\geq 1$, using Lemma \ref{postnikov}, and noting that $U_{\Q_2}(0)= U_{\Q_2}(1)$. 
\end{myrema}

\begin{proof}
Recalling the definition of $H_p(m,n;c)$ from \eqref{eq:KloostermanLocalFormula} we choose the test function $f=f_p$ in the $p$ odd case from Theorem \ref{cor:SpecAssumption_supercuspidal} and in the $p=2$ case from Theorem \ref{cor:SpecAssumption_supercuspidal_p2}. Such an $f$ has support contained in $a(y)^{-1} ZKa(y)$ with \begin{equation}\label{yc0cases}y= \begin{cases} 
p^{c_0} & \text{ if }  d=0,\\
p^{c_0+1} & \text{ if } d=1 \text{ or } 2,\\
p^{c_0+2} & \text{ if }  d=3.
\end{cases}\end{equation}
We can unify these (see \eqref{c(pi)c0}) as 
$v_p(y) 
= \lceil c(\sigma)/2 \rceil .$ 

If $k<\lceil c(\sigma)/2 \rceil$, then $H_p(m,n,p^k)=0$ by Lemma \ref{ccondition}. This proves the first assertion of the Theorem. 
We now assume for the rest of the proof that $k \geq v_p(y)$, equivalently $2k\geq c(\sigma)$. 

\begin{mylemma}\label{drop_supp_cond_in_Klsum_computation}
For the above choice of $f$, when $2k\geq c(\sigma)$ we have \begin{equation}\label{eq:KlSumThm_step1}
H_p(m,n;p^k) =f_{\xi}(1) \iint\limits_{\substack{v(t_2) \geq -k \\  v(t_1) \geq -k}} \overline{\Phi}\left( n(t_1)^{-1} \left( \begin{smallmatrix} & -p^{-2k} \\ 1 & \end{smallmatrix}\right) n(t_2)\right) \psi(-mt_1 + nt_2)\,dt_1\,dt_2.
\end{equation}
If moreover $2k>c(\sigma)$, then one or both conditions $v(t_i)\geq -k$ on the integration may be replaced by $v(t_i)=-k$. 
\end{mylemma}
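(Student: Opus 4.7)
The plan is to unfold the definition \eqref{eq:KloostermanLocalFormula} of $H_p(m,n;p^k)$ with $f_p=f_\xi$, invoke the structural identification of $f_\xi$ as a scaled restriction of the diagonal newform matrix coefficient established in the previous subsections, and then reduce the resulting integral to the form appearing in the lemma by a change of variables combined with a $\mathrm{diag}(1,-1)$-symmetry of $\Phi$. The first claim, that $H_p(m,n;p^k)\equiv 0$ for $k<\lceil c(\sigma)/2\rceil$, is immediate from Lemma~\ref{ccondition}: by \eqref{yc0cases}, $f_\xi$ has support controlled by $y=p^{\lceil c(\sigma)/2\rceil}$, so the condition $c\in y\widehat{\Z}$ forces $k\geq\lceil c(\sigma)/2\rceil$ on any non-vanishing $H_p(m,n;p^k)$.

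For the main formula, assume $2k\geq c(\sigma)$. By Proposition~\ref{MxCoeffSpectralAssumption} (when $p$ is odd) and Proposition~\ref{Cor:twotestfunsame} (when $p=2$), one has $f_\xi=f_\xi(1)\overline{\Phi}|_{ZK'}$ for the explicit compact-mod-center subgroup $ZK'$ described in those results. Writing $w_{p^k}:=\left(\begin{smallmatrix}&-p^{-2k}\\1&\end{smallmatrix}\right)$ and substituting into \eqref{eq:KloostermanLocalFormula}, the change of variables $(t_1,t_2)\mapsto(-t_1,-t_2)$ converts the integrand into $\overline{\Phi}(n(t_1)w_{p^k}n(-t_2))\psi(-mt_1+nt_2)$, integrated over the pre-image of $ZK'$ in $\Q_p^2$. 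To match the shape $n(t_1)^{-1}w_{p^k}n(t_2)=n(-t_1)w_{p^k}n(t_2)$ appearing in the statement, I would apply the identity $\overline{\Phi}(n(t_1)w_{p^k}n(-t_2))=\overline{\Phi}(n(-t_1)w_{p^k}n(t_2))$. This holds because the two matrices are related by conjugation by $\mathrm{diag}(1,-1)$ up to the central scalar $-1$, and because the trivial central character of $\sigma$ combined with the description of newvectors as $K_0(p^{c(\sigma)})$-eigenvectors (with trivial eigencharacter here) forces $\sigma(\mathrm{diag}(1,-1))\varphi_0=\varphi_0$.

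It remains to show that the support condition $n(-t_1)w_{p^k}n(t_2)\in ZK'$ may be relaxed to $v(t_1),v(t_2)\geq -k$ without changing the integral. A direct conjugation-by-$a(y)$ calculation shows that $g_t:=n(-t_1)w_{p^k}n(t_2)\in ZK'$ is equivalent to $v(t_1),v(t_2)\geq -k$ together with a compatibility constraint on the valuation of $1+p^{2k}t_1t_2$. In the region where $v(t_i)\geq -k$ but this compatibility fails, the Iwasawa decomposition of $g_t$ places it in a Bruhat cell on which the newform matrix coefficient $\overline{\Phi}$ is known to vanish, by Lemma~\ref{Prop:MC}, Lemma~\ref{MxCoeffFurther}, and Lemma~\ref{Lem:refineSuppPhi} for $p$ odd, and by the explicit support description inherent in Proposition~\ref{Cor:twotestfunsame} for $p=2$. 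The same support analysis also yields the final refinement that $v(t_i)=-k$ is forced when $2k>c(\sigma)$, since any strict inequality $v(t_i)>-k$ would place $g_t$ in an Iwasawa stratum with parameter too small to lie in the support of $\overline{\Phi}$. The main obstacle is the bookkeeping across the sub-cases distinguished by the parity of $c(\sigma)$, the ramification index of $E/\Q_p$, and the residue characteristic --- particularly the $p=2$, $d=3$ case, where the support of $f_\xi$ is the asymmetric group $ZK_0(c_0+2,-c_0-1)$ and the $\mathrm{diag}(1,-1)$-symmetry interacts non-trivially with this shape.
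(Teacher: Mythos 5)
Your outline follows the paper's proof in its essentials: you express $f_\xi$ as $f_\xi(1)\,\overline{\Phi}\vert_{ZK'}$ via Theorems \ref{cor:SpecAssumption_supercuspidal}/\ref{cor:SpecAssumption_supercuspidal_p2} and Propositions \ref{MxCoeffSpectralAssumption}/\ref{Cor:twotestfunsame}, note that the determinant forces $v(t_i)\geq -k$, and then reduce the lemma to showing that on the Bruhat set $\{n(t_1)^{-1}w_{p^k}n(t_2): v(t_i)\geq -k\}$ the support of the \emph{unrestricted} matrix coefficient automatically lies in $ZK'$, using the Iwasawa-coordinate support description of $\Phi$; the forced equalities $v(t_i)=-k$ when $2k>c(\sigma)$ come out of the same support constraints, exactly as in the paper. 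Your extra step --- the change of variables $(t_1,t_2)\mapsto(-t_1,-t_2)$ combined with conjugation by $\mathrm{diag}(1,-1)$ and the trivial central character to fix the sign of the phase --- is correct (the newvector is $K_0(p^{c(\sigma)})$-invariant since the central character is trivial, and $ZK'$ is stable under this conjugation) and is if anything more careful than the paper on this point.

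The one place where your justification does not suffice as stated is the $p=2$ case of the key vanishing step. Proposition \ref{Cor:twotestfunsame} only identifies $f$ with $V^{-1}\overline{\Phi}_{\varphi_0}\vert_{ZK_0(m,n)}$; it says nothing about where the \emph{unrestricted} coefficient $\Phi_{\varphi_0}$ vanishes on the Bruhat set in question, which is precisely what you need in order to drop the restriction (indeed $\Phi_{\varphi_0}$ is genuinely nonzero at points of $G(\Q_2)$ outside $ZK_0(m,n)$, e.g.\ coming from the $E^\times$-part of $J$ beyond $Z U_E(1)$). The correct tool is Lemma \ref{Prop:MC} (together with Lemma \ref{MxCoeffFurther} where relevant), which is stated for supercuspidals of conductor exponent $\geq 2$ in any residue characteristic, combined with the explicit $ZK_0(m,n)$ shape of the support from Theorem \ref{cor:SpecAssumption_supercuspidal_p2}; this is how the paper runs the $p=2$ cases, via the bound $v(p^{-2k}+t_1t_2)\geq -c_0-k-e+1$ checked case by case in $k$ versus $c(\sigma)$. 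With that citation repaired, what remains is exactly the case bookkeeping you defer (the strata $k\geq c(\sigma)-1$, $k\leq c(\sigma)-2$ with $2k\neq c(\sigma)$, and $2k=c(\sigma)$, across $d=0,1,2,3$), which is where the actual content of the lemma sits, but your strategy for it is the right one.
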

\begin{proof}
By looking at the determinant (cf.\ \eqref{eq:matrixcondition}) we have that $v(t_1)$ and $v(t_2) \geq -k$ whenever $n(t_1)^{-1} \left( \begin{smallmatrix} & -p^{-2k} \\ 1 & \end{smallmatrix}\right) n(t_2)$ is in a diagonal conjugate of $ZK$. If $p \neq 2$ and $d=1$ or $p=2$ and $d=3$, then $\supp f$ is contained in a group $ZK_0(a,b)$ with $a+b>0$, so that we must in fact have $v(t_1) = v(t_2) = -k$. Suppose now that $d=0$ or ($p=2$ and $d=2$), and that $2k>c(\sigma)$. Then, $\supp f \subseteq Za(p^{c_0+d/2})^{-1}Ka(p^{c_0+d/2})$, so that $v(p^{-2k}+t_1t_2)\geq -k-(c_0+d/2)$. But if either $v(t_1)$ or $v(t_2)$ were $>-k$, we would have that $v(p^{-2k}+t_1t_2)=-2k$, which contradicts the assumption that $2k>c(\sigma)$. 

Now, to show \eqref{eq:KlSumThm_step1} it suffices to show that we can drop the restriction on the support of $\Phi$ that appears in Theorems \ref{cor:SpecAssumption_supercuspidal} and \ref{cor:SpecAssumption_supercuspidal_p2} for matrices of the form $ n(t_1)^{-1} \left( \begin{smallmatrix} & -p^{-2k} \\ 1 & \end{smallmatrix}\right) n(t_2) \in  \supp \Phi$. 

Lemma \ref{Prop:MC} describes the support of $\Phi$ in terms of the Iwasawa decomposition in Lemma \ref{Lem:Iwasawadecomp}. To implement this, we write 
\begin{equation}\label{nt1deltant2_decomposition} n(t_1)^{-1} \left( \begin{smallmatrix} & -p^{-2k} \\ 1 & \end{smallmatrix}\right) n(t_2) = \zxz{p^{-v(t_2)-2k}}{-p^{-2k} -t_1t_2}{}{t_2}\zxz{1}{}{p^{-v(t_2)}}{1}\zxz{p^{v(t_2)}t_2^{-1}}{}{}{1}.\end{equation}
 Lemma \ref{Prop:MC} breaks into cases depending on the size of $k$ and $c(\sigma)$. 
\begin{itemize}
\item If $k\geq c(\sigma)-1$ and $n(t_1)^{-1} \left( \begin{smallmatrix} & -p^{-2k} \\ 1 & \end{smallmatrix}\right) n(t_2) \in \supp \Phi$, then $v(p^{-2k}+t_1t_2)\geq v(t_2)-1$ by Lemma \ref{Prop:MC}(i). Since $c_0\geq 1$ and $v(t_2)\geq-k$ it follows that $v(p^{-2k}+t_1t_2)\geq -k-c_0$. 
\item If $k\leq c(\sigma)-2$ and $2k \neq c(\sigma)$, then $v(t_2)=-k$. So, when $n(t_1)^{-1} \left( \begin{smallmatrix} & -p^{-2k} \\ 1 & \end{smallmatrix}\right) n(t_2) \in \supp \Phi$ we have $v(p^{-2k}+t_1t_2) = -c(\sigma)$  by Lemma \ref{Prop:MC}(ii).   
\item If $2k=c(\sigma)$  and $-v(t_2)<c(\sigma)/2$, then Lemma \ref{Prop:MC}(ii) applies, so we get that $v(p^{-2k}+t_1t_2) = -c(\sigma)$. 
\item If $2k=c(\sigma)$ and $-v(t_2)=c(\sigma)/2$, then $v(p^{-2k}+t_1t_2) = v(t_2)-c(\sigma)/2$ by Lemma \ref{Prop:MC}(iii), which is $\geq -c(\sigma)$. 
\end{itemize}

We proceed by cases. If $p\neq 2$ and $d=0$ then $\Phi \vert_{ZK'} = \Phi$ by Theorem \ref{cor:SpecAssumption_supercuspidal} so there is nothing to show. 

Assume that $p\neq 2$ and $d=1$. According to Theorem \ref{cor:SpecAssumption_supercuspidal} we need to show that $n(t_1)^{-1} \left( \begin{smallmatrix} & -p^{-2k} \\ 1 & \end{smallmatrix}\right) n(t_2) \in   Za(p^{c_0})^{-1}Ka(p^{c_0})$. Multiplying out and scaling by the square root of the determinant, it suffices to show that $v(p^{-2k}+t_1t_2)\geq -c_0-k$. 
\begin{itemize} 
\item If $k\geq c(\sigma)-1$, then we already showed $v(p^{-2k}+t_1t_2)\geq -k-c_0$ without casework.  
\item If $k\leq c(\sigma)-2$ and $2k \neq c(\sigma)$, then we showed $v(p^{-2k}+t_1t_2) = -c(\sigma)$, which is $=-2c_0-1 \geq -c_0-k$ by \eqref{c(pi)c0} and since $p\neq 2$ and $d=1$.  
\end{itemize}

We move on to the $p=2$ cases. Multiplying out $n(t_1)^{-1} \left( \begin{smallmatrix} & -p^{-2k} \\ 1 & \end{smallmatrix}\right) n(t_2)$ and scaling by the square root of the determinant, according to Theorem \ref{cor:SpecAssumption_supercuspidal_p2} it suffices for the claim to show that $v(p^{-2k}+t_1t_2)\geq -c_0-k-e+1$.
\begin{itemize} 
\item If $k\geq c(\sigma)-1$, then we already showed $v(p^{-2k}+t_1t_2)\geq -k-c_0$ without casework.  
\item If $k\leq c(\sigma)-2$, then we showed $v(p^{-2k}+t_1t_2) \geq -c(\sigma)$, which is $=-2c_0-d $ by \eqref{c(pi)c0}. We have by assumption that $k\geq v_p(y)$, with $v_p(y)$ given by \eqref{yc0cases}, 
so that indeed  $v(p^{-2k}+t_1t_2)\geq -c_0-k-e+1$.
\end{itemize}
Note that when $2k>c(\sigma)$, the integral is restricted to $v(t_1)=-k$ and $v(t_2)=-k$, but either integration or both may be trivially extended to $v(t_i) \geq -k$ and \eqref{eq:KlSumThm_step1} remains valid. 
\end{proof}

Now we open the matrix coefficient in \eqref{eq:KlSumThm_step1} in the Whittaker model \eqref{GinvtInnerProd} to obtain 
\begin{multline} H_p(m,n;p^k) \\ =f_{\xi}(1) \iint\limits_{\substack{v(t_2)\geq -k \\ v(t_1)\geq -k}}\int_{F^\times}\overline{W}\left( a(y) \left( \begin{smallmatrix}  & -p^{-2k} \\ 1 &  \end{smallmatrix}\right) n(t_2) \right) W\left( a(y) n(t_1) \right)  d^\times y\, \psi(-mt_1+nt_2)\,dt_1\,dt_2,
\end{multline}
where $W$ is a $L^2$-normalized newform in the Whittaker model of $\sigma$. Now we swap order of integration and evaluate the $t_1$ integral. By the defining property of the Whittaker model we have $ W\left( a(y) n(t_1) \right) = \psi(yt_1)W\left(a(y)\right)$, so 
$$ \int\limits_{v(t_1)\geq -k} W\left( a(y) n(t_1) \right) \psi(-mt_1)\,dt_1 = p^k W\left(a(y)\right) \delta_{y \equiv m \shortmod{p^k}}.$$ Since $W(a(y))= 1_{\Z_p^\times}(y)$ (see Lemma \ref{NewformKirillov}), we already see that $H_p(m,n;p^k)=0$ unless $(mn,p)=1$, which we may freely assume for the rest of the proof. To compute $H_p(m,n;p^k)$ it suffices by Theorem \ref{thmKP}(4) to compute $H_p(m,1;p^k)$. Collecting the above, we have
\begin{equation}\label{KlSumThmStartingPoint2}  H_p(m,1;p^k) = f_{\xi}(1)p^k\int\limits_{v(t_2)\geq -k}\int\limits_{\substack{y\in\Z_p^\times \\ y \equiv m \shortmod{p^k}}}\overline{W}\left( a(y) \left( \begin{smallmatrix}  & -p^{-2k} \\ 1 &  \end{smallmatrix}\right) n(t_2) \right)    \psi(t_2) \,d^\times y\, \,dt_2.\end{equation}
Note that if $2k>c(\sigma)$ then the condition $v(t_2)\geq -k$ in \eqref{KlSumThmStartingPoint2} may be replaced by $v(t_2)= -k$. 

The computation now breaks into cases depending on whether $2k>c(\sigma)$ or $2k=c(\sigma)$. 

\textbf{Case $2k>c(\sigma)$:} We claim that the integrand in $y$ is a constant function on $y \equiv m \pmod {p^k}$ for fixed $t_2$. We would like to use \cite[Prop.\ 2.12]{Hu:17a} to accomplish this, so we need to decompose the argument of $\overline{W}$ according to Lemma \ref{Lem:Iwasawadecomp}. Precisely, we have
$$\zxz{y}{}{}{1}\zxz{0}{-p^{-2k}}{1}{0}\zxz{1}{t_2}{0}{1}=\zxz{t_2}{}{}{t_2}\zxz{\frac{y}{t_2p^k}}{\frac{-y}{t_2p^{2k}}}{}{1}\zxz{1}{}{p^k}{1}\zxz{t_2^{-1}p^{-k}}{}{}{1},$$
thus
\begin{equation*}
W \left( \zxz{y}{0}{0}{1}\zxz{0}{-p^{-2k}}{1}{0}\zxz{1}{t_2}{0}{1}\right)=\psi\left(-\frac{y}{t_2p^{2k}}\right) W\left(\zxz{\frac{y}{t_2p^k}}{}{}{1}\zxz{1}{}{p^k}{1}\right).
\end{equation*}
By \cite[Prop.\ 2.12]{Hu:17a} this is indeed a constant function of $y$ once we restrict to $y \equiv m \pmod{p^k}$. Collecting these calculations, we have proven the following. 
\begin{mylemma}
Suppose that $k>c(\sigma)/2$. If $(m,p) \neq 1$ then $H_p(m,1;p^k)=0$ and if $(m,p) =1$ then 
\begin{equation}\label{Eq:Ip}
 H_p(m,1;p^k) = (1-p^{-1})^{-1} f_{\xi}(1)\int\limits_{v(t)=-k} \overline{W}\left(a(mp^{-2k})wn(t)\right)\psi(t) dt,
\end{equation}
where $W$ is an $L^2$-normalized newform in the Whittaker model of $\sigma$ and $w = \left( \begin{smallmatrix} & -1 \\ 1 & \end{smallmatrix}\right)$. 
\end{mylemma}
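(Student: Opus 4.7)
The plan is to continue directly from the integral representation \eqref{KlSumThmStartingPoint2} established just before the lemma, together with the observation already recorded there that when $2k > c(\sigma)$ the outer integration in $t_2$ may be restricted to $v(t_2) = -k$. For the vanishing statement when $(m,p) \neq 1$, I would simply note that the integrand contains $W(a(y))$ (which implicitly restricts $y \in \Z_p^\times$) together with the congruence $y \equiv m \pmod{p^k}$. If $p \mid m$ these are incompatible, the region of integration is empty, and $H_p(m,1;p^k) = 0$.

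For the main case $(m,p)=1$, the key step is to show that the inner integrand is constant in $y$ on the coset $m + p^k \Z_p$. I would use the Iwasawa decomposition
\begin{equation*}
\zxz{y}{}{}{1}\zxz{0}{-p^{-2k}}{1}{0}\zxz{1}{t_2}{0}{1}=\zxz{t_2}{}{}{t_2}\zxz{\frac{y}{t_2p^k}}{\frac{-y}{t_2p^{2k}}}{}{1}\zxz{1}{}{p^k}{1}\zxz{t_2^{-1}p^{-k}}{}{}{1}
\end{equation*}
in conjunction with the trivial central character and the defining transformation property of the Whittaker model to get
\begin{equation*}
W\!\left(a(y)\zxz{0}{-p^{-2k}}{1}{0}n(t_2)\right) = \psi\!\left(-\tfrac{y}{t_2 p^{2k}}\right) W\!\left(\zxz{y/(t_2p^k)}{}{}{1}\zxz{1}{}{p^k}{1}\right).
\end{equation*}
Writing $y = m + p^k u$ with $u \in \Z_p$ and using $v(t_2) = -k$, one sees that $-y/(t_2p^{2k})$ differs from $-m/(t_2p^{2k})$ by an element of $\Z_p$, so the additive character factor depends only on $m$. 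For the Whittaker factor, I would invoke \cite[Prop.\ 2.12]{Hu:17a} (which is the step already signalled in the text): this gives exactly the invariance needed to conclude independence from $u$ as well.

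Once constancy is established, the computation reduces to evaluating
\begin{equation*}
\int_{\substack{y \in \Z_p^\times \\ y \equiv m \shortmod{p^k}}} d^\times y = (1-p^{-1})^{-1} p^{-k},
\end{equation*}
using the chosen normalization $d^\times y = (1-p^{-1})^{-1}\, dy/|y|_F$. Substituting into \eqref{KlSumThmStartingPoint2}, the factor $p^k$ cancels against $p^{-k}$, leaving the prefactor $(1-p^{-1})^{-1} f_\xi(1)$. The final task is cosmetic: writing $\left(\begin{smallmatrix}0 & -p^{-2k} \\ 1 & 0\end{smallmatrix}\right) = a(p^{-2k}) w$, the argument of $\overline W$ for the representative $y = m$ becomes $a(m p^{-2k}) w n(t_2)$, matching \eqref{Eq:Ip} exactly.

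The main obstacle, and really the only nontrivial input, is the application of Hu's Proposition 2.12 to assert that the Whittaker factor is genuinely constant on the coset $m + p^k\Z_p$; this requires checking that the hypotheses of that result apply in our normalization (trivial central character, newform $L^2$-normalization, correct range of $k$ versus $c(\sigma)$). Everything else is a direct bookkeeping of the Iwasawa decomposition, the Whittaker transformation law, and the measure of a coset.
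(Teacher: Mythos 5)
Your proposal is correct and follows essentially the same route as the paper: starting from \eqref{KlSumThmStartingPoint2} with $v(t_2)=-k$, using the identical Iwasawa-type matrix decomposition and the Whittaker transformation law, invoking \cite[Prop.\ 2.12]{Hu:17a} for constancy in $y$ on the coset $m+p^k\Z_p$, and then evaluating the multiplicative measure of that coset to produce the factor $(1-p^{-1})^{-1}$. Your explicit separation of the additive-character factor (handled by direct valuation count) from the Whittaker factor (handled by Hu's proposition) is just a slightly more detailed write-up of the same argument.
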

To continue the evaluation of the Kloosterman sum, we need to substitute in an expression for the Whittaker function. There are at least two different choices. One is to use minimal vectors as in \cite{Hu}.  Another choice is to use results of Assing, namely \cite[Lem.\ 3.1]{Assing_padic_Whittaker}, which is the path that we pursue in this paper. 

We use explicit expressions for the newform in the Whittaker model due to Assing \cite{Assing_padic_Whittaker}. Following the notation after (1.4) in loc.\ cit.\, let
\begin{equation}\label{Assing_After_14}
g_{t,l,v} = \zxz{0}{p^t}{-1}{-vp^{-l}} 
\end{equation} and note from the paragraph following (1.3) that Assing normalizes the additive Haar measure so that the total volume of $\cO_E$ is $p^{-\frac{d}{2}}$ whereas we have taken the volume of $\cO_E$ to be 1. Now, Assing's Lemma 3.1 asserts that (the $\Omega^{t/f}$ there equals $\varpi_E^{-ke_{E/\Q_p}}$ in our situation)
\begin{equation}\label{Eq:W}
W\left(a(mp^{-2k})wn(t)\right) =W \lb g_{-2k,k,\frac{tp^{k}}{m}}\rb 
=\gamma p^{k-\frac{d}{2}}\int_{\cO_E^\times}\xi^{-1}(x)\psi(\Tr(x)p^{-k}+\frac{t}{m}\Nm(x))\,dx,	
\end{equation}
where $\gamma$ is as in \cite[Lem.\ 1.2(iv)]{JacquetLanglands}. In particular, $|\gamma|=1$ and its value only depends on $E$.  

 Using \eqref{Eq:W} in \eqref{Eq:Ip}, we have
\begin{equation}
 H_p(m,1;p^k) =  \frac{\overline{\gamma}f_{\xi}(1)}{1-p^{-1}}  p^{k-\frac{d}{2}} \int\limits_{v(t)=-k} \int_{\cO_E^\times}\xi(x)\psi(-\Tr(x)p^{-k}-\frac{t}{m}\Nm(x))\,dx\, \psi(t) dt.
\end{equation}
Now we swap order of integration and execute the integral in $t$:
\begin{multline}
\label{penultimate_k>c0_case} 
H_p(m,1;p^k) = \frac{\overline{\gamma}f_{\xi}(1)}{1-p^{-1}} p^{k-\frac{d}{2}}  \int_{\cO_E^\times}\xi(x)\psi(-\Tr(x)p^{-k})
 \\ 
 \Big(  \int\limits_{v(t)\geq-k} -  \int\limits_{v(t)\geq-k+1}\Big) \psi\Big(\Big(1-\frac{\Nm(x)}{m} \Big)t\Big)\,  dt \, dx.\end{multline}
 Then for the $t$ integral on the smaller domain we have
\begin{multline}\label{vanishes_if_2nmidp}
\int_{\cO_E^\times}\xi(x)\psi(-\Tr(x)p^{-k})   \int\limits_{v(t)\geq-k+1} \psi\Big(\Big(1-\frac{\Nm(x)}{m}\Big)t\Big)\,  dt \, dx \\
= p^{k-1} \int_{ \cO_E^\times}\xi(x)\psi(-\Tr(x)p^{-k}) \delta_{\Nm(x)\equiv m \shortmod{p^{k-1}}}\,dx.
\end{multline}
Our goal is to show that \eqref{vanishes_if_2nmidp} vanishes. 

 Let $k'= k-1-\lfloor d/2 \rfloor.$ Set $x=x_0+\Delta$ with $v_E(\Delta)\geq ek'$. Note by \cite[41.2 Prop.(1)]{BushnellHenniart:06a} that $\Nm(x) \equiv m \pmod{p^{k-1}}$ if and only if $\Nm(x_0) \equiv m \pmod{p^{k-1}}$.  Also, since $2k>c(\sigma)$, we have $k\geq c_0+\lceil \frac{d+1}{2}\rceil$, so that $k'\geq c_0$. Therefore, $\xi(x)=\xi(x_0)$. Collecting these facts, we have 
 we have that the integral in \eqref{vanishes_if_2nmidp} equals
 \begin{equation}\label{vanishes_if_2=p}\sum_{x_0 \in (\cO_E / p^{k'}\cO_E)^\times} \xi(x_0)\psi(-\Tr(x_0)p^{-k}) \delta_{\Nm(x_0)\equiv m \shortmod{p^{k-1}}} \int_{\Delta \in p^{k'}\cO_E} \psi(-\Tr(\Delta)p^{-k})\,d\Delta.\end{equation} The additive character $\psi \circ \Tr$ of $E$ has conductor $-d$ by \eqref{add_char_cond}, and $v_E(\Delta p^{-k}) = e(k'-k) < -d$, so that the interior integral in \eqref{vanishes_if_2=p} vanishes. 
The result is that from \eqref{penultimate_k>c0_case} we obtain
\begin{equation}\label{penultimate2_k>c0_case}
H_p(m,1;p^k) = \frac{\overline{\gamma}f_{\xi}(1)}{1-p^{-1}} p^{k-\frac{d}{2}}  \int_{\cO_E^\times}\xi(x)\psi(-\Tr(x)p^{-k})
 \int\limits_{v(t)\geq-k}  \psi\Big(\Big(1-\frac{\Nm(x)}{m} \Big)t\Big)\,  dt \, dx.
\end{equation}
By orthogonality of additive characters, the interior integral in \eqref{penultimate2_k>c0_case} equals $p^k \delta( \Nm(x) \equiv m \pmod {p^k})$. Changing integrals to sums, 
 we conclude the  statement of the theorem in the case that $2k>c(\sigma)$.  

\textbf{Case $2k=c(\sigma)$:} This case can only occur when $E/F$ is unramified, or when $p=2$ and $d=2$. 
In this case, the condition $2k = c(\sigma)$ is equivalent to $k=c_0+d/2$. 

We pick up the calculation at \eqref{KlSumThmStartingPoint2}, and use Atkin-Lehner theory to continue. 
Indeed, let $\delta_\pi$ denote the eigenvalue of the newform $\varphi_0 \in V_\pi$ for  $\pi \in \overline{G}(F)^\wedge$ under the Atkin-Lehner operator:
\begin{equation}\label{ALdef}
\pi \left( \begin{smallmatrix} & 1 \\ p^{c(\pi)} & \end{smallmatrix} \right)\varphi_0 = \delta_\pi \varphi_0.
\end{equation}
If the central character of $\pi$ is trivial, one has $\delta_\pi = \pm 1$. Applying this in the Whittaker model of $\sigma$ (in which $W$ is an $L^2$-normalized newform), we have
$$ W\lb a(y) \zxz{0}{-p^{-2k}}{1}{0} n(t) \rb=\delta_\sigma W\lb  a(y) \zxz{0}{-p^{-2k}}{1}{0}n(t) \zxz{}{1}{p^{c(\sigma)}}{}\rb.$$
Write $i:=-v(t)\leq k$. Now one can verify that
\begin{multline*}a(y) \zxz{0}{-p^{-2k}}{1}{0}n(t) \zxz{}{1}{p^{c(\sigma)}}{}=a( -y/tp^i)\zxz{1}{}{p^{{c(\sigma)}-i}}{1}a(tp^i) \\ \in B\zxz{1}{}{p^{{c(\sigma)}-i}}{1}K_1(p^{c(\sigma)}).
\end{multline*}
We thus get that for $i=-v(t) \leq k$
\begin{equation}\label{eq:Lem3.16} W\lb a(y)\zxz{0}{-p^{-2k}}{1}{0} n(t) \rb=\delta_\sigma W\lb a( -y/tp^i)\zxz{1}{}{p^{{c(\sigma)}-i}}{1}\rb. \end{equation}
By \cite[Prop.\ 2.12]{hu_triple_2017}, this is $U(i)$-invariant in $y$. 
Inserting \eqref{eq:Lem3.16} in \eqref{KlSumThmStartingPoint2} we get
\begin{multline}\label{eq:beforeLemWi}  H_p(m,1;p^k) =f_{\xi}(1) p^{c_0} \delta_\sigma \int\limits_{v(t)=-i\geq -k}\int\limits_{\substack{y\in\Z_p^\times \\ y \equiv m \shortmod{p^k}}} \overline{W}\lb a( -y/tp^i)\zxz{1}{}{p^{{c(\sigma)}-i}}{1}\rb    \psi(t) \,d^\times y\, \,dt \\
= (1-p^{-1})^{-1}f_{\xi}(1) \delta_\sigma \int\limits_{v(t)= -i\geq -k}\overline{W}\lb a( -m/tp^i)\zxz{1}{}{p^{{c(\sigma)}-i}}{1}\rb    \psi(t) \, dt,
\end{multline}
where the 2nd line follows from the $U(i)$-invariance, since $i\leq k$.

With $W$ an $L^2$-normalized newform in the Whittaker model as before, set 
$$W^{(i)}(y):=W\left(\zxz{ y }{0}{0}{1}\zxz{1}{0}{p^i}{1}\right).$$
The following Lemma is a mild extension of \cite[Lem.\ 5.7]{HuSa:19}. 
\begin{mylemma}\label{Lem:Wi}
Let $\pi$ be a dihedral supercuspidal representation corresponding to $\Ind_E^{F} \xi$ by the LLC.  Let $W$ be an $L^2$-normalized newform in its Whittaker model. 
When $i\geq c(\pi)/2$ and $v(y)=0$, 
\begin{equation}
W^{(i)}(y)=\frac{\delta_\pi \gamma }{\zeta_\fp(1)}p^{\frac{c(\pi)-d}{2}}\int_{\cO_E^\times} \xi^{-1}(x) \psi\left(-\frac{\Nm(x)}{yp^{c(\pi)-i}} + \frac{\Tr(x)}{p^{c(\pi)/2}}\right)\,d^\times x,
\end{equation}
where $\zeta_\fp(1)= (1-q_E^{-1})^{-1}$ and $q_E$ is the cardinality of the residue field of $E$. 
\end{mylemma}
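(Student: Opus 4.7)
The plan is to reduce $W^{(i)}(y)$ to a value of the Whittaker function at a matrix of the form $a(\cdot)\,w\,n(\cdot)$ where Assing's formula \eqref{Eq:W} applies directly. The reduction proceeds in two stages: first an Atkin-Lehner manipulation, then a central-character factorization combined with a Bruhat-type decomposition.

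For the first stage, since $\varphi_0$ is an Atkin-Lehner eigenvector with eigenvalue $\delta_\pi = \pm 1$ (using trivial central character, cf.\ \eqref{ALdef}), we have $W(g) = \delta_\pi W(g \cdot w_{c(\pi)})$ where $w_{c(\pi)} = \left(\begin{smallmatrix} 0 & 1 \\ p^{c(\pi)} & 0 \end{smallmatrix}\right)$. Applied to $g = a(y) \left(\begin{smallmatrix} 1 & 0 \\ p^i & 1 \end{smallmatrix}\right)$, a direct computation gives $W^{(i)}(y) = \delta_\pi W\bigl( \left(\begin{smallmatrix} 0 & y \\ p^{c(\pi)} & p^i \end{smallmatrix}\right) \bigr)$. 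For the second stage, the triviality of the central character allows us to factor out the scalar $p^{c(\pi)}$ from the matrix, and then one checks by direct multiplication that
$$
\begin{pmatrix} 0 & y p^{-c(\pi)} \\ 1 & p^{i-c(\pi)} \end{pmatrix}
= a(-yp^{-c(\pi)})\, w \, n(p^{i-c(\pi)}),
$$
with $w = \left(\begin{smallmatrix} 0 & -1 \\ 1 & 0 \end{smallmatrix}\right)$. Hence $W^{(i)}(y) = \delta_\pi W\bigl(a(-yp^{-c(\pi)}) w n(p^{i-c(\pi)})\bigr)$.

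The next step is to invoke Assing's formula \eqref{Eq:W}, matching parameters $m p^{-2k} = -yp^{-c(\pi)}$ and $t = p^{i-c(\pi)}$, i.e.\ $k = c(\pi)/2$ and $m = -y$. With these choices the integrand becomes
$$
\xi^{-1}(x)\,\psi\!\left(\frac{\Tr(x)}{p^{c(\pi)/2}} - \frac{\Nm(x)}{y\,p^{c(\pi)-i}}\right),
$$
which is precisely the integrand appearing in the lemma. Finally, one converts between Assing's additive normalization (for which $\vol(\cO_E) = p^{-d/2}$) and our multiplicative Haar measure $d^\times x$ (for which $\vol(\cO_E^\times) = 1$): on $\cO_E^\times$, $|x|_E = 1$ and the ratio between the two measures produces the factor $1/\zeta_\fp(1) = 1 - q_E^{-1}$ and adjusts the power of $p$ from $p^{k - d/2}$ to $p^{(c(\pi) - d)/2}$, yielding the stated prefactor.

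The main obstacle is the case in which $c(\pi)$ is odd, so that $k = c(\pi)/2$ is not an integer and Assing's formula cannot be applied naively. To handle this, I would replace the power of the rational uniformizer $p$ by the appropriate power of $\varpi_E$ (recalling $e(E/F) = 2$ and $v_E(p) = 2$ in the ramified case), absorbing the discrepancy into a unit of $\cO_E$ that is unraveled by a substitution $x \mapsto ux$ inside the integral. A secondary technical point is verifying that the range $v(t) \geq -k$ implicit in \eqref{KlSumThmStartingPoint2} corresponds precisely to $i \geq c(\pi)/2$ in the Kirillov-model derivation of Assing's formula, which is handled by inspection of \cite[Lem.\ 3.1]{Assing_padic_Whittaker}. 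Since these are precisely the considerations resolved in \cite[Lem.\ 5.7]{HuSa:19} for the borderline case $i = c(\pi)/2$, the extension to $i \geq c(\pi)/2$ requires only tracking the extra factor $p^{i-c(\pi)}$ through Assing's integral, which is the content of the formula.
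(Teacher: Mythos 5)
Your reduction is the same as the paper's: both use the Atkin--Lehner eigenrelation together with the trivial central character to rewrite $W^{(i)}(y)$ as $\delta_\pi$ times the value of $W$ at the matrix $\left(\begin{smallmatrix} 0 & yp^{-c} \\ 1 & p^{i-c}\end{smallmatrix}\right)$, and both then rest on Assing's Lemma 3.1 plus the additive-to-multiplicative measure conversion (the factor $1-q_E^{-1}$). Your stage-one and stage-two matrix computations are correct and equivalent to the paper's decomposition $\left(\begin{smallmatrix} 1 & 0 \\ p^i & 1\end{smallmatrix}\right)=\left(\begin{smallmatrix} & 1 \\ 1 & \end{smallmatrix}\right)n(p^i)\left(\begin{smallmatrix} & 1 \\ 1 & \end{smallmatrix}\right)$ followed by the Atkin--Lehner step.

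The gap is in the step where you invoke \eqref{Eq:W} with $k=c(\pi)/2$, $m=-y$, $t=p^{i-c}$. In the paper, \eqref{Eq:W} is not a general identity: it is the output of one specific case of Assing's Lemma 3.1, established only for integer $k$ in the regime $2k>c(\sigma)$, $m\in\cO^\times$, $v(t)=-k$ (there the auxiliary parameter $v=tp^k/m$ is a unit). In your application $2k=c(\pi)$, $v(t)=i-c\geq -k$, and when $c(\pi)$ is odd $k$ is not even an integer; so the formula is being extrapolated exactly at the point where its validity has to be re-checked against Assing's case distinctions. Your proposed fix for odd $c(\pi)$ -- replacing powers of $p$ by powers of $\varpi_E$ and absorbing a unit -- does not work as stated, because the matrices $g_{t,l,v}$ live in $\GL_2(F)$ and their entries cannot involve $\varpi_E$. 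The clean repair is a re-parametrization rather than a change of uniformizer: if $v=tp^k/m$ has valuation $a=i-c/2\geq 0$, write $v=u p^{a}$ with $u\in\cO^\times$ and observe $g_{-2k,k,v}=g_{-2k,\,k-a,\,u}$, i.e.\ your matrix is $g_{-c,\,c-i,\,u}$ with \emph{integral} parameters and unit last entry (this is exactly the paper's $g_{-c,c-i,-y^{-1}}$ up to a unit in $K_1(p^{c})$). One then cites the relevant ("middle") case of Assing's Lemma 3.1 with $n=c$, checking that it applies uniformly for $i>c/2$ and for the boundary case $i=c/2$; this verification is the one substantive point of the lemma beyond matrix algebra, and it is precisely what the paper's proof supplies. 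With that substitution your argument closes, and the rest (the measure conversion yielding $1/\zeta_\fp(1)$; note $p^{k-d/2}$ is already $p^{(c-d)/2}$, the measure change does not alter the power of $p$) is fine.
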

\begin{proof}
Let us write $c=c(\pi)$ within this proof. 
We have \begin{align*}
W^{(i)}(y)&=W\Big(\zxz{y}{}{}{1}\zxz{}{1}{1}{} \zxz{1}{p^i}{}{1}\zxz{}{1}{1}{}\Big)\\
&=W\Big(\zxz{yp^{-c}}{}{}{1}\zxz{}{1}{1}{} \zxz{1}{p^{i-c}}{}{1}\zxz{}{1}{p^c}{}\Big)\\
&=\delta_\pi W\Big(\zxz{yp^{-c}}{}{}{1}\zxz{}{1}{1}{} \zxz{1}{p^{i-c}}{}{1}\Big)=\delta_\pi W(g_{-c, c-i,-y^{-1}} ),
\end{align*}
where $\delta_\pi$ is the eigenvalue of the Atkin-Lehner involution and $g_{t, l, v}$ is as in \eqref{Assing_After_14}. Note that for $W$ the newform in the Whittaker model of a supercuspidal representation, we have $\|W\|_2= W(1) = 1$, so that Assing's normalization matches the normalization here. Now we apply the middle case of  \cite[Lem.\ 3.1]{Assing_padic_Whittaker}, noting that Assing normalizes the measure on $\cO_E$ to have total volume $p^{-d/2}$, with $n=c$ in both the $i>c/2$ and $i=c/2$ cases to get 
\begin{equation*}
W^{(i)}(y)=\delta_\pi \gamma p^{\frac{c-d}{2}}\int_{\cO_E^\times}\xi^{-1}(x)\psi_E(\varpi_E^{-ce/2} x)\psi\left(- \frac{p^{i-c}}{y}\Nm(x)\right)dx. 
\end{equation*}

Converting additive to multiplicative measure yields the result.
\end{proof}
Applying Lemma \ref{Lem:Wi} to \eqref{eq:beforeLemWi} we get  
\begin{equation}
   \frac{f_{\xi}(1)}{(1-p^{-1})}  \frac{\delta_\sigma^2 \overline{\gamma} p^{\frac{c(\sigma)-d}{2}}}{\zeta_\fp(1)}\int\limits_{v(t)= -i\geq -k}   \psi(t) \int_{\cO_E^\times}  \xi(x) \psi\left( - \frac{t}{m} \Nm(x) -\Tr(xp^{-c(\sigma)/2})\right) \,d^\times x\, dt.
\end{equation}
Using orthogonality of additive characters to execute the $t$-integral, we obtain 
\begin{multline}
 H_p(m,1;p^k) = (1-p^{-1})^{-1} f_{\xi}(1)\frac{\overline{\gamma} }{\zeta_\fp(1)}p^{c_0+k} \int\limits_{\substack{x \in \cO_E^\times \\ \Nm(x) = m \shortmod{p^k}}}  \xi(x) \psi(-\Tr(x)p^{-k}) \,d^\times x \\
= \overline{\gamma} (1-p^{-1})^{-1} f_{\xi}(1)p^{-\frac{d}{2}}\sum_{\substack{x \in (\cO_E/p^{k} \cO_E)^\times \\  \Nm(x)=m \shortmod{p^{k}}}} \xi(x) \psi(-p^{-k}\Tr(x))
\end{multline}
after converting from multiplicative to additive measure. 
\end{proof}
Under the same hypotheses as Theorem \ref{KlSumThm}, we have that the supercuspidal Kloosterman sums degenerate into classical Kloosterman sums for $k\geq c(\sigma)$.  
 \begin{myprop}\label{Hpmnpk_is_classicalKl}
For $k\geq c(\sigma)$ and $(m,n,p)=1$ we have $$H_p(m,n,p^k) = f_{\xi}(1) \zeta_p(1) S(m,n,p^k).$$ 
\end{myprop}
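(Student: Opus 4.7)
My plan is to deduce Proposition \ref{Hpmnpk_is_classicalKl} from the explicit formula of Theorem \ref{KlSumThm}. For $k\geq c(\sigma)\geq \lceil c(\sigma)/2\rceil$ and $(mn,p)=1$, that theorem gives
\[
 H_p(m,n;p^k)=\overline\gamma (1-p^{-1})^{-1}f_\xi(1)\, p^{-d/2}\, T_k(mn),
\]
where $T_k(r)=\sum_{\substack{u\in(\cO_E/p^k\cO_E)^\times \\ \Nm(u)\equiv r\shortmod{p^k}}}\xi(u)\,\psi(-\Tr(u)/p^k)$. Since $T_k(m_1 n_1)=T_k(m_2 n_2)$ whenever $m_1 n_1\equiv m_2 n_2\pmod{p^k}$, and the classical Kloosterman sum satisfies $S(m,n;p^k)=S(mn,1;p^k)$ for $(mn,p)=1$, it suffices to establish the identity
\[
 \overline\gamma\, p^{-d/2}\, T_k(m) \;=\; S(m,1;p^k) \qquad \text{for } (m,p)=1,\ k\geq c(\sigma).
\]

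To prove this identity, I would exploit that $k\geq c(\sigma)=2c(\xi)/e+d$ forces $ke\geq 2c(\xi)+de\geq 2c(\xi)$, so $\xi$ factors through $(\cO_E/\fp_E^{c(\xi)})^\times$ with plenty of room inside $(\cO_E/p^k\cO_E)^\times$ for Hensel-style lifting. Specifically, I would parametrize $u=u_0(1+v)$ with $u_0$ ranging over a transversal of $U_E(c(\xi))$ in $(\cO_E/p^k\cO_E)^\times$ and $v\in\fp_E^{c(\xi)}/\fp_E^{ke}$, so that $\xi(u)=\xi(u_0)$, $\Tr(u)=\Tr(u_0)+\Tr(u_0 v)$, and $\Nm(u)=\Nm(u_0)(1+\Tr(v)+\Nm(v))$. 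Orthogonality in the $v$-integration then enforces $\Nm(u_0)\equiv m$ modulo an explicit intermediate power of $p$, the residual constraint being a linear equation that couples $v$ to $\Tr(u_0)$. Isolating the $v$-sum using the standard Gauss sum evaluation for $\psi\circ\Tr$ over $\fp_E^{c(\xi)}$ reduces $T_k(m)$ to a sum over the trace variable $x=\Tr(u_0)$, subject to $x^2-4m$ being a square in $\cO_E$ modulo an appropriate power, which is precisely the parametrization underlying the classical Kloosterman sum $S(m,1;p^k)=\sumstar_{a\shortmod{p^k}}\psi((ma+\overline a)/p^k)$ via $a+\overline a=x$, $a\overline a=m$.

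The main obstacle is extracting the exact constant $\gamma\, p^{d/2}$ during this reduction, and handling uniformly the three cases (unramified $E/F$, tamely ramified $E/F$ with $d=2$, and wildly ramified $d=3$) as well as the residue characteristic $p=2$. To circumvent tedious case analysis, I would interpret $T_k(m)$ as a partial Mellin--Fourier transform of the characteristic function of the fiber $\{\Nm=m\}$ twisted by $\xi$, and observe that the scalar $\gamma\, p^{d/2}$ is precisely the one appearing in the local functional equation for Tate's $\zeta$-integral of $E/F$ with respect to $\psi_E=\psi\circ\Tr$ (cf.\ \cite[Lem.\ 1.2(iv)]{JacquetLanglands} and \cite[\S 34.3]{BushnellHenniart:06a}); the desired identity is then a local-functional-equation incarnation valid in the stable range $k\geq c(\sigma)$.

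Alternatively, one could return to the integral representation in Lemma \ref{drop_supp_cond_in_Klsum_computation}, substitute $s_i=p^k t_i\in\cO^\times$, and observe that for $k\geq c(\sigma)$ the rescaled matrix $p^k\cdot n(t_1)^{-1}w_{p^{-2k}}n(t_2)$ lies in $a(y)^{-1}K_0(p^{c(\sigma)})a(y)$; using the $K_0(p^{c(\sigma)})$-invariance of the trivial-central-character newform and carrying out the Bruhat decomposition as in the proof of Lemma \ref{Lem:refineSuppPhi}, one computes $\Phi$ on the resulting cosets in closed form and the remaining $ds_1\,ds_2$-integral collapses directly to $f_\xi(1)\zeta_p(1)\,S(m,n;p^k)$, matching the constants given by \eqref{sec:7.3fp(1)} and \eqref{f1p=2_formula}.
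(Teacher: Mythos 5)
Your primary route --- feeding Theorem \ref{KlSumThm} into the statement and reducing everything to the identity $\overline{\gamma}\,p^{-d/2}\,T_k(mn)=S(mn,1;p^k)$ for $k\geq c(\sigma)$ --- is a genuinely different strategy from the paper's, but as written it has a real gap: given Theorem \ref{KlSumThm}, that identity \emph{is} the proposition, and your sketch stops exactly at the hard point. The decomposition $u=u_0(1+v)$ is a reasonable start, but evaluating the resulting norm-fiber Gauss sum with the exact constant $\gamma p^{d/2}$, uniformly over the cases $d=0,1,2,3$ (including $p=2$), is the entire content of the claim; the appeal to ``the local functional equation for Tate's zeta integral'' does not supply it. What would actually be needed on that route is a stability statement for epsilon factors under highly ramified twists (via inductivity, comparing $\lambda(E/F,\psi)\,\epsilon(\xi\chi_E,\psi_E)$ with $\epsilon(\chi,\psi)^2$ when $c(\chi)=k\geq c(\sigma)$), combined with Fourier--Mellin inversion over $(\Z/p^k\Z)^\times$ in the spirit of Proposition \ref{MellinXform} and \eqref{Hhatp_def_examplessec}; none of this is carried out. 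A smaller omission: the proposition assumes only $(m,n,p)=1$, not $(mn,p)=1$. When $p$ divides exactly one of $m,n$, Theorem \ref{KlSumThm} gives $H_p(m,n;p^k)=0$ and you must separately note that $S(m,n;p^k)=0$ for $k\geq c(\sigma)\geq 2$; your argument never addresses this case, and in fact the weaker hypothesis $(m,n,p)=1$ plays an active role in the paper's computation.

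Your closing ``alternatively'' paragraph is, in outline, the paper's actual proof: starting from Lemma \ref{drop_supp_cond_in_Klsum_computation} with $v(t_1)=v(t_2)=-k$, the decomposition \eqref{nt1deltant2_decomposition} together with right $K_1(p^{c(\sigma)})$-invariance of the newform (available because $k\geq c(\sigma)$) reduces $\overline{\Phi}$ to its values on matrices $\left(\begin{smallmatrix} u & t \\ & 1\end{smallmatrix}\right)$ with $u\in\Z_p^\times$, which by \eqref{RamifiedSpecial} equal $1$, $-\tfrac{1}{p-1}$, or $0$ according as $v(t)\geq 0$, $v(t)=-1$, $v(t)\leq -2$. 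The two nonvanishing regimes assemble into $f_\xi(1)\bigl(1+\tfrac{1}{p-1}\bigr)S(m,n;p^k)=f_\xi(1)\zeta_p(1)S(m,n;p^k)$, plus one residual character sum that vanishes precisely because $(m,n,p)=1$. If you flesh out that computation you recover the paper's argument; as it stands, both of your routes are outlines, and the first defers the essential constant evaluation rather than proving it.
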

\begin{proof}
We use the expression in Lemma \ref{drop_supp_cond_in_Klsum_computation} for $H(m,n;p^k)$, that is
$$H_p(m,n;p^k) =f_{\xi}(1) \iint\limits_{\substack{v(t_2) = -k \\  v(t_1) = -k}} \overline{\Phi}\left( n(t_1)^{-1} \left( \begin{smallmatrix} & -p^{-2k} \\ 1 & \end{smallmatrix}\right) n(t_2)\right) \psi(-mt_1 + nt_2)\,dt_1\,dt_2.$$
Now, using the matrix decomposition in \eqref{nt1deltant2_decomposition}, we get that this is equal to
$$ f_{\xi}(1) \iint\limits_{\substack{v(t_2) = -k \\  v(t_1) = -k}} \overline{\Phi}\left( \left( \begin{smallmatrix} (p^kt_2)^{-1} & \frac{-p^{-2k}-t_1t_2}{t_2} \\  & 1 \end{smallmatrix}\right) \right) \psi(-mt_1 + nt_2)\,dt_1\,dt_2.$$
By Proposition 3.1 of \cite{Hu:17a} this is 
\begin{equation}\label{ScKlsumdegenerates_eq1} f_{\xi}(1) \iint\limits_{\substack{v(t_2) =v(t_1)= -k \\  v\left(  \frac{-p^{-2k}-t_1t_2}{t_2}\right)  \geq 0 }} \psi(-mt_1 + nt_2)\,dt_1\,dt_2 - \frac{f_{\xi}(1)}{p-1} \iint\limits_{\substack{v(t_2) =v(t_1)= -k \\  v\left(  \frac{-p^{-2k}-t_1t_2}{t_2}\right)  =-1 }} \psi(-mt_1 + nt_2)\,dt_1\,dt_2.\end{equation}
The first of the two terms in \eqref{ScKlsumdegenerates_eq1} equals 
$$ f_{\xi}(1) \sum_{\substack{t_1,t_2 \in (\Z/p^k\Z)^\times \\ v(-1-t_1t_2)\geq k}} \psi\left(\frac{-mt_1+n t_2}{p^k}\right) =  f_{\xi}(1)S(m,n,p^k).$$ The second term (including the minus sign) in \eqref{ScKlsumdegenerates_eq1} equals
$$ - \frac{f_{\xi}(1)}{p-1} \iint\limits_{\substack{v(t_2) =v(t_1)= -k \\  v\left(  \frac{-p^{-2k}-t_1t_2}{t_2}\right)  \geq-1 }} \psi(-mt_1 + nt_2)\,dt_1\,dt_2 +  \frac{f_{\xi}(1)}{p-1} \iint\limits_{\substack{v(t_2) =v(t_1)= -k \\  v\left(  \frac{-p^{-2k}-t_1t_2}{t_2}\right)  \geq 0 }} \psi(-mt_1 + nt_2)\,dt_1\,dt_2.$$
The second of these again equals $ \frac{f_{\xi}(1)}{p-1}S(m,n;p^k)$, while the first equals  
 $$- \frac{f_{\xi}(1)}{p-1} \sum_{\substack{t_1,t_2 \in  (\Z/p^k\Z)^\times \\ v(-1-t_1t_2)\geq k-1}} \psi\left(\frac{-mt_1+n t_2}{p^k}\right)= - \frac{f_{\xi}(1)}{p-1}\sum_{\substack{t_1,t_2 \in  (\Z/p^k\Z)^\times \\ t_1t_2 \equiv 1 \shortmod{p^{k-1}}}} \psi\left(\frac{mt_1+n t_2}{p^k}\right).$$ 
 Since $k\geq 2$, writing $t_i=t_{i,0}+p^{k-1}t_{i,1}$, this is 
 $$ - \frac{f_{\xi}(1)}{p-1} \sum_{\substack{t_{1,0},t_{2,0} \in  (\Z/p^{k-1}\Z)^\times \\ t_1t_2 \equiv 1 \shortmod{p^{k-1}}}} \psi\left(\frac{mt_{1,0}+n t_{2,0}}{p^{k-1}}\right)\sum_{t_{1,1},t_{2,1} \in \Z/p\Z }  \psi\left(\frac{mt_{1,1}+n t_{2,1}}{p}\right)=0,$$
 since $(m,n,p)=1$. 
 \end{proof}

\subsection{$p$-adic stationary phase}\label{sec:intermediatefamily}
 Let $\alpha_0$ be a normalized minimal element as in Definition \ref{normminlelt}, which we moreover assume to have $\Tr (\alpha_0) = 0$ when $p$ is odd and to be given by Lemma \ref{minl_elts_in_Q2extns} when $p=2$. Let $D = (\Tr (\alpha_0))^2 -4 \Nm(\alpha_0)$ and $d=v(D)$. Recall $c_0$ from \eqref{c0def}.   
\begin{mylemma}\label{statphase_sc_Klsums_nohyponp}
Suppose $E/\Q_p$ is a quadratic extension and $\xi$ is a character of $E^\times$ such that $\xi \neq \overline{\xi}$ and $\xi \vert_{\Q_p^\times} = \eta_{E/\Q_p}$ is the unique non-trivial quadratic character of $\Q_p^\times$ that is trivial on $\Nm(E^\times) \subset \Q_p^\times$. If $p=2$ suppose moreover that $c_0\geq 2$.  Suppose $k\geq \max(c_0+\lceil d/2\rceil ,2)$. Let $u_0 \in \cO_E^\times$ with $\Nm(u_0) =m \pmod{p^k}$ and write $u_0=a+b\alpha_0$ with $a,b \in \cO$. 
The integral $R_{k,\xi}(u_0)$ given by $$ R_{k,\xi}(u_0)= \int_{\substack{v_E(u') \geq ek/2 \\ \Nm(u_0+u')\equiv m\shortmod{p^k}}} \xi(1+\frac{u'}{u_0}) \psi_E(-\frac{u'}{p^k})\,du'$$ 
vanishes if $p=2$, $d=0$ and $v(a)>0$. Suppose now that $v(a)=0$ if $p=2$ and $d=0$. 
\begin{enumerate} 
\item  If $v(2b\Nm(\alpha_0) + a \Tr(\alpha_0))<\lfloor\frac{k+(e-1)}{2}\rfloor$, then $$R_{k,\xi}(u_0)= p^{-\lceil \frac{3k-d}{2}\rceil } \delta\left( b D\equiv 2 \Tr \alpha_0 \alpha_\xi p^k \pmod{p^{\lfloor \frac{k+d}{2}\rfloor}}\right),$$ and
\item if $v(2b\Nm(\alpha_0) + a \Tr(\alpha_0))\geq \lfloor\frac{k+(e-1)}{2}\rfloor$, then  $$R_{k,\xi}(u_0)= p^{-\lceil \frac{3k-d}{2}\rceil } \delta\left(\lceil \frac{k-(e-1)}{2}\rceil \geq c_0 \right).$$ 
\end{enumerate}
\end{mylemma}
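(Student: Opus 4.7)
My plan is to evaluate $R_{k,\xi}(b)$ by first linearizing the integrand, then reducing the quadratic norm constraint to a linear trace constraint, and finally computing the resulting Gauss-type integral by $p$-adic Fourier analysis. For the linearization: the hypothesis $k\geq\lceil c(\sigma)/2\rceil$ combined with \eqref{c(pi)c0} gives $k\geq c_0$, so $c(\xi)=ec_0\leq ek$, and since $v_E(du/u_0)=v_E(du)\geq ek/2$, the second clause of Lemma \ref{postnikov} yields $\xi(1+du/u_0)=\psi_E(\alpha_\xi u_0^{-1}du)$. Simultaneously, expanding $\Nm(u_0+du)=\Nm u_0+\Tr(\overline{u_0}du)+\Nm du$ and observing that $v(\Nm du)\geq k$ reduces the norm constraint to the linear trace congruence $\Tr(\overline{u_0}du)\equiv 0\pmod{p^k}$. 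Writing $du=x+y\alpha_0$ and using $\overline{u_0}=\Tr(u_0)-u_0$ together with $\alpha_0^2=\Tr(\alpha_0)\alpha_0-\Nm(\alpha_0)$, one computes $\Tr(\overline{u_0}du)=A_0x+B_0y$ where $A_0=2a+b\Tr\alpha_0$ and $B_0=2b\Nm\alpha_0+a\Tr\alpha_0$, which explains why the valuation of $B_0$ governs the dichotomy in the conclusion.

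Next I would Fourier-dualize the congruence via $\delta(v(z)\geq k)=p^{-k}\int_{v(t)\geq -k}\psi(tz)\,dt$ and the identity $\psi(t\Tr(\overline{u_0}du))=\psi_E(t\overline{u_0}du)$, so that
\begin{equation*}
R_{k,\xi}(b)=p^{-k}\int_{v(t)\geq -k}\int_{v_E(du)\geq \lceil ek/2\rceil}\psi_E\bigl((\alpha_\xi u_0^{-1}-p^{-k}+t\overline{u_0})\,du\bigr)\,d(du)\,dt.
\end{equation*}
The inner $E$-Gauss integral evaluates, via the self-duality of $\psi_E$ and the fact that $c(\psi_E)=-d$, to $q_E^{-\lceil ek/2\rceil}$ times the indicator that $\alpha_\xi u_0^{-1}-p^{-k}+t\overline{u_0}\in\fp_E^{-\lceil ek/2\rceil-d}$. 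Decomposing this containment in the $F$-basis $\{\overline{u_0},\alpha_0\}$ of $E$ (a genuine basis provided $A_0\neq 0$) produces two independent linear $p$-adic conditions on $t\in F$; these together with $v(t)\geq -k$ cut out a ball in $F$ whose $F$-volume is then computed directly.

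In Case~(1), the hypothesis $v(B_0)<\lfloor(k+e-1)/2\rfloor$ makes the $\alpha_0$-component condition binding: it pins $t$ to a coset of precise radius, and substituting this $t$ into the remaining $\overline{u_0}$-component condition produces, after algebraic simplification using $D=\Tr(\alpha_0)^2-4\Nm(\alpha_0)$, exactly the congruence $bD\equiv 2\Tr(\alpha_0)\alpha_\xi p^k\pmod{p^{\lfloor(k+d)/2\rfloor}}$ asserted. In Case~(2), the $\alpha_0$-condition is automatic across the full $t$-range, the $b$-dependence drops out, and only the precision condition $\lceil(k-e+1)/2\rceil\geq c_0$ survives. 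In both cases the prefactor $p^{-k}q_E^{-\lceil ek/2\rceil}$ combined with the volume of the resulting ball collapses to $p^{-\lceil(3k-d)/2\rceil}$, using $q_E=p^{2/e}$. The separate vanishing assertion for $p=2$, $d=0$, $v(a)>0$ follows from an independent symmetry argument: in the unramified dyadic setting, $u_0\in\cO_E^\times$ with $v(a)>0$ forces $v(b)=0$, and the integrand then admits a translation in $du$ by a carefully chosen half-integral element of $\cO_E$ that flips the phase $\psi_E(-du/p^k)$ against itself, forcing pairwise cancellation. The main obstacle throughout is uniform precision tracking: extracting the precise integer exponents $\lceil(3k-d)/2\rceil$ and $\lfloor(k+d)/2\rfloor$ requires careful casework by the parity of $k$ and over the four ramification pairs $(e,d)\in\{(1,0),(1,1),(2,2),(2,3)\}$, with added delicacy at $p=2$ because $v(2)=1$ enters both the coefficient $A_0$ and the Postnikov error terms.
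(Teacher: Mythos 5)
Your overall route is genuinely different from the paper's: the paper writes $du=da+db\,\alpha_0$ and evaluates the two-variable integral directly, splitting the $da$-range according to the congruence $a'\,da+b'\,db\equiv 0$, whereas you dualize the norm congruence with an auxiliary variable $t\in\fp^{-k}$ and reduce to an $E$-Gauss integral plus a $t$-volume. The front end (Postnikov linearization of $\xi$, reduction of the norm constraint to $\Tr(\overline{u_0}\,du)\equiv 0\pmod{p^k}$, identification of $A_0,B_0$ with the paper's $a',b'$) is correct and matches the paper, and the dual method is viable: carried out correctly it yields $R_{k,\xi}(b)=p^{-\lceil\frac{3k-d}{2}\rceil}$ times the indicator of a single congruence which specializes to both cases of the lemma and to the dyadic vanishing statement, with exactly the bookkeeping $p^{-k}q_E^{-\lceil ek/2\rceil}\cdot(\text{ball volume})=p^{-\lceil\frac{3k-d}{2}\rceil}$ you predict.

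There are, however, two concrete problems in the way you propose to execute it. First, the decomposition step: membership of $w=\alpha_\xi u_0^{-1}-p^{-k}+t\overline{u_0}$ in the fractional ideal $\fp_E^{-\lceil ek/2\rceil-d}$ does \emph{not} split coordinatewise in the basis $\{\overline{u_0},\alpha_0\}$. Such a splitting requires a valuation-adapted basis, i.e.\ one for which $v_E(x\beta_1+y\beta_2)=\min\bigl(v_E(x\beta_1),v_E(y\beta_2)\bigr)$; Lemma \ref{normminlelt_lemma} provides this for $\{1,\alpha_0\}$ but it fails for $\{\overline{u_0},\alpha_0\}$ whenever $v(a+b\Tr\alpha_0)>0$, and your non-degeneracy criterion should in any case be $a+b\Tr\alpha_0\neq 0$, not $A_0\neq 0$. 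The repair is to divide by the unit $\overline{u_0}$, so the condition becomes $t\in z_0+\fp_E^{-\lceil ek/2\rceil-d}$ with $z_0=\Nm(u_0)^{-1}\bigl(p^{-k}u_0-\alpha_\xi\bigr)$, and then split $z_0$ along $\{1,\alpha_0\}$: the $\alpha_0$-component gives a $t$-free compatibility congruence (using $\Tr\alpha_\xi=0$, $\alpha_\xi=r(\alpha_0-\tfrac{\Tr\alpha_0}{2})$ and $2\Tr(\alpha_0\alpha_\xi)=rD$, it is precisely $bD\equiv 2\Tr(\alpha_0\alpha_\xi)p^k$ to the stated modulus), while the $1$-component only cuts out the $t$-ball whose volume supplies the exponent. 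Second, your treatment of the exceptional case $p=2$, $d=0$, $v(a)>0$ is not an argument: a cancellation-by-translation must preserve both the norm congruence and the $\xi$-factor while changing the total phase $\psi_E\bigl((\alpha_\xi u_0^{-1}-p^{-k})\,du\bigr)$, and you neither construct the required shift $\delta$ nor explain why $v(a)>0$ forces its existence; flipping only the factor $\psi_E(-du/p^k)$ is not obviously arrangeable. In fact no separate device is needed: in the corrected dual computation the compatibility congruence forces $v(b)\geq\lfloor k/2\rfloor>0$ (since $v(rp^k)=k-c_0+1\geq 1$ there), contradicting $v(b)=0$, so the indicator already vanishes --- which is also how the paper disposes of this case inside its main computation, via the auxiliary condition involving $a\Tr(\alpha_0\alpha_\xi)$.
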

\begin{proof}
Write $u'= a' + b' \alpha_0$, $a', b' \in \cO$. Since $v_E(u')\geq ek/2$, we have (Lemma \ref{normminlelt_lemma}) that $$\min(ev(a'),ev(b')+e-1)\geq ek/2, \quad \text{ i.e. } \quad v(a')\geq k/2, \text{ and } v(b') \geq \frac{k-(e-1)}{2}.$$
Thus,  
\begin{align*} \Nm(u) & = (a+a')^2 + \Tr(\alpha_0)(a+a')(b+b') + \Nm(\alpha_0) (b+b')^2 \\
& \equiv m+ 2 aa' + 2 \Nm(\alpha_0) bb' + \Tr(\alpha_0) (ab'+ba') \pmod{p^k}. \end{align*}
So, the condition $\Nm(u_0+u')=m\pmod{p^k}$ on the integration is equivalent to $$(2a + \Tr (\alpha_0) b) a' + (2b\Nm(\alpha_0)+\Tr(\alpha_0) a)b' \equiv 0 \pmod{p^k}.$$ Set $\widetilde{a}=2a + \Tr (\alpha_0) b$ and $\widetilde{b} = 2b\Nm(\alpha_0)+\Tr(\alpha_0) a$. 
Since $$v_E(\frac{u'}{u_0}) \geq \frac{ek}{2} \geq \frac{e}{2} (c_0+\lceil \frac{d}{2}\rceil) \geq \frac{c(\xi)}{2},$$
we have for any $\alpha_\xi \in E$ with $v_E(\alpha_\xi) = -c(\xi)+c(\psi_E)$ corresponding to $\xi$ by the Postnikov Lemma \ref{postnikov} that 
\begin{equation}\label{firstdisplay_statphaselem}
R_{k,\xi}(u_0)=\int_{v(a')\geq k/2} \int_{\substack{v(b') \geq (k-(e-1))/2 \\ \widetilde{a} a' + \widetilde{b}b' \equiv 0 \shortmod{p^k}}} \psi_E\Big(\Big( \frac{\alpha_\xi}{u_0} - \frac{1}{p^k}\Big) (a'+b' \alpha_0)\Big) \,da'\,db'.
\end{equation}
We have $$\psi_E(-(a'+b' \alpha_0)p^{-k})= \psi(-2p^{-k}a')\psi(-\Tr(\alpha_0)p^{-k}b')$$ and 
$$ \frac{\alpha_\xi}{u_0} (a'+b' \alpha_0) = \alpha_\xi \frac{aa' + \alpha_0 a b' + \overline{\alpha_0}ba' + \Nm(\alpha_0)bb'}{\Nm(u_0)}.$$  Since  $\xi(x)$ is trivial on norms from $E^\times$  we have $\xi(\overline{x})=\xi(x)^{-1}$ and thus $\Tr (\alpha_\xi)=0$. So,
\begin{equation}\label{statphaselem_tr_computation}\Tr \left( \frac{\alpha_\xi}{u_0}  (a'+b' \alpha_0) \right) = \Tr(\alpha_\xi \alpha_0) \frac{ab' -ba'}{\Nm (u_0)}.\end{equation} 
Thus, the integral in \eqref{firstdisplay_statphaselem} is equal to
\begin{multline}\label{intermediate_family_statphase_beforecases_p=2}
\int_{v(a')\geq k/2}\psi\Big(-\frac{2a'}{p^{k}}\Big) \psi\Big(\Tr(\alpha_\xi \alpha_0) \frac{ -ba' }{\Nm(u_0)}\Big)\\ \times \int_{\substack{v(b') \geq (k-(e-1))/2 \\ \widetilde{a} a' + \widetilde{b}b' \equiv 0 \shortmod{p^k}}}
\psi\Big(-\Tr(\alpha_0)\frac{b'}{p^k}\Big)
\psi(\Tr\Big(\alpha_\xi \alpha_0) 
\frac{a b' }{\Nm(u_0)}\Big) \,da'\,db'.
\end{multline}
  Note that ${v(}\Tr(\alpha_0\alpha_\xi){)} = -c_0$  
  by a case check using e.g.\ \cite[41.2 Prop.]{BushnellHenniart:06a} when $p=2$. 
  
  We now restrict to case (1), i.e.\ we have the hypothesis that $v(\widetilde{b})<\lfloor \frac{k-(e-1)}{2}\rfloor$.  We split the $a'$ integral into two ranges: $v(a') \geq k-v(\widetilde{a})$ and $k/2 \leq v(a')< k-v(\widetilde{a})$. 
  Consider the first one:
 \begin{multline}\label{intermediate_family_statphase_beforecases_p=2-2}
\int_{v(a')\geq \max(k-v(\widetilde{a}),k/2)}
\psi\Big(-\frac{2a'}{p^{k}}\Big) 
\psi\Big(\Tr(\alpha_\xi \alpha_0) \frac{ -ba' }{\Nm(u_0)}\Big)\\ \times \int_{\substack{v(b') \geq (k-(e-1))/2 \\ \widetilde{a} a' + \widetilde{b}b' \equiv 0 \shortmod{p^k}}}
\psi\Big(-\Tr(\alpha_0)\frac{b'}{p^k}\Big)\psi\Big(\Tr(\alpha_\xi \alpha_0) \frac{a b' }{\Nm(u_0)}\Big) \,da'\,db'.
\end{multline} 
In this case, the congruence $\widetilde{a}a' + \widetilde{b}b' \equiv 0 \pmod{p^k}$ is equivalent to $v(b')\geq k-v(\widetilde{b})$. 
The integral becomes
\begin{multline}\label{intermediate_family_statphase_beforecases_p=2-3}
\int_{v(a')\geq \max(k-v(\widetilde{a}),k/2)}\psi\Big(-\frac{2a'}{p^{k}}\Big) \psi\Big(\Tr(\alpha_\xi \alpha_0) \frac{ -ba' }{\Nm(u_0)}\Big) \,da'\\ \times \int_{v(b') \geq k-v(\widetilde{b})}
\psi\Big(-\Tr(\alpha_0)\frac{b'}{p^k})\psi(\Tr(\alpha_\xi \alpha_0) \frac{a b' }{\Nm(u_0)}\Big) \,db'.
\end{multline} 
 The integral in $b'$ is $$p^{-(k-v(\widetilde{b}))}\delta\left(\frac{a \Tr(\alpha_0\alpha_\xi)}{\Nm(u_0)} \equiv \frac{\Tr \alpha_0}{p^k} \pmod{p^{-(k-v(\widetilde{b}))}}\right).$$ 
 
Now consider the other part of the $a'$ integral, i.e.\ 
 \begin{multline}\label{intermediate_family_statphase_beforecases_p=2-4}
\int_{k/2 \leq v(a')< k-v(\widetilde{a})}\psi(-\frac{2a'}{p^{k}}) \psi(\Tr(\alpha_\xi \alpha_0) \frac{ -ba' }{\Nm(u_0)})\\ \times 
\int_{\substack{v(b') \geq (k-(e-1))/2 \\ \widetilde{a} a' + \widetilde{b}b' \equiv 0 \shortmod{p^k}}}
\psi\Big(-\Tr(\alpha_0)\frac{b'}{p^k}\Big)
\psi\Big(\Tr(\alpha_\xi \alpha_0) \frac{a b' }{\Nm(u_0)}\Big) \,da'\,db'.
\end{multline} 
 In this case, the congruence $ \widetilde{a} a' + \widetilde{b}b' \equiv 0 \pmod{p^k}$ implies the condition $v(\widetilde{a} a') = v(\widetilde{b}b')$, in the presence of which the condition $v(b') \geq (k-(e-1))/2$ is equivalent to $$ v(a') \geq \frac{k}{2} + \max(v(\widetilde{b})-v(\widetilde{a})-\frac{e-1}{2},0).$$  
We consider $a'$ to be a fixed variable and write the congruence condition for $b'$ as $$b'= -\frac{\widetilde{a} a'}{\widetilde{b}} + p^{k-v(\widetilde{b})}x',$$ where $x' \in \cO_F$.  The result of these transformations is that the integral in \eqref{intermediate_family_statphase_beforecases_p=2-4} is
\begin{multline}\label{intermediate_family_statphase_beforecases_p=2-5}
p^{-(k-v(\widetilde{b}))}
\int_{\substack{v(a')< k-v(\widetilde{a}) \\ v(a')\geq \frac{k}{2} + \max(v(\widetilde{b})-v(\widetilde{a})-\frac{e-1}{2},0)}}
\psi\Big(-\frac{2a'}{p^{k}}\Big) 
\psi\Big(\Tr(\alpha_\xi \alpha_0) \frac{ -ba' }{\Nm(u_0)}\Big)
\psi\Big(-\Tr(\alpha_0)\frac{-\widetilde{a} a' }{\widetilde{b}p^k}\Big)\\ \times 
\psi\Big(\Tr(\alpha_\xi \alpha_0) \frac{a (-\widetilde{a} a' ) }{\widetilde{b}\Nm(u_0)}\Big)
\,da' 
\int_{\cO}
\psi\Big(-\Tr(\alpha_0)\frac{ p^{k-v(\widetilde{b})}x'}{p^k}\Big)
\psi\Big(\Tr(\alpha_\xi \alpha_0) 
\frac{a ( p^{k-v(\widetilde{b})}x') }{\Nm(u_0)}\Big) \,dx'.
\end{multline} 
The integral in $x'$ is $$\delta\left(\frac{a \Tr(\alpha_0\alpha_\xi)}{\Nm(u_0)} \equiv \frac{\Tr \alpha_0}{p^k} \pmod{p^{-(k-v(\widetilde{b}))}}\right).$$
Putting the cases back together, we have that $R_{k,\xi}(u_0)$ is equal to \begin{equation}\label{da_int_in_2_pieces-2-condition}p^{-(k-v(\widetilde{b}))}\delta\left(\frac{a \Tr(\alpha_0\alpha_\xi)}{\Nm(u_0)} \equiv \frac{\Tr \alpha_0}{p^k} \pmod{p^{-(k-v(\widetilde{b}))}}\right)\end{equation} times
\begin{multline}\label{da_int_in_2_pieces-2} 
\int_{v(a')\geq \max(k-v(\widetilde{a}),k/2)}
\psi\Big(-\frac{2a'}{p^{k}}\Big) 
\psi\Big(\Tr(\alpha_\xi \alpha_0) \frac{ -ba' }{\Nm(u_0)}\Big)
 \\ +  
 \int_{\substack{v(a')< k-v(\widetilde{a}) \\ v(a')\geq \frac{k}{2} + \max(v(\widetilde{b})-v(\widetilde{a})-\frac{e-1}{2},0)}}
\psi\Big(-\frac{2a'}{p^{k}}\Big) 
\psi\Big(\Tr(\alpha_\xi \alpha_0) \frac{ -ba' }{\Nm(u_0)}\Big)
\psi\Big(-\Tr(\alpha_0)\frac{-\widetilde{a} a' }{\widetilde{b}p^k}\Big)\\ \times 
\psi\Big(\Tr(\alpha_\xi \alpha_0) 
\frac{a (-\widetilde{a} a' ) }{\widetilde{b}\Nm(u_0)}\Big)
\,da'.
\end{multline}
 Under the condition in \eqref{da_int_in_2_pieces-2-condition} we can combine the integrals in \eqref{da_int_in_2_pieces-2} as
 \begin{multline*} 
 \int_{ v(a')\geq \frac{k}{2} + \max(v(\widetilde{b})-v(\widetilde{a})-\frac{e-1}{2},0)}
 \psi\Big(-\frac{2a'}{p^{k}}\Big) 
 \psi\Big(\Tr(\alpha_\xi \alpha_0) \frac{ -ba' }{\Nm(u_0)}\Big) \,da'
 \\ 
 \times 
 \psi\Big(-\Tr(\alpha_0)\frac{-\widetilde{a} a' }{\widetilde{b}p^k}\Big) 
 \psi\Big(\Tr(\alpha_\xi \alpha_0) \frac{a (-\widetilde{a} a' ) }{\widetilde{b}\Nm(u_0)}\Big)
 \,da'.\end{multline*}
Note that 
$$b+ \frac{a\widetilde{a}}{\widetilde{b}} = \frac{2\Nm(u_0)}{\widetilde{b}},$$
so $R_{k,\xi}(u_0)$ is equal to the expression in \eqref{da_int_in_2_pieces-2-condition} times
\begin{equation}
\label{intermediate_family_statphase_beforecases_p=2-5.5} 
\int_{ v(a')\geq \frac{k}{2} + \max(v(\widetilde{b})-v(\widetilde{a})-\frac{e-1}{2},0)}
\psi\Big(-\frac{2a'}{p^{k}}\Big)
\psi\Big(\Tr(\alpha_0)\frac{\widetilde{a} a' }{\widetilde{b}p^k}\Big) 
\psi\Big(-2\Tr(\alpha_\xi \alpha_0) \frac{a' }{\widetilde{b}}\Big) 
\,da'.
\end{equation}
Note that 
$$ \frac{-2}{p^k} + \frac{\Tr (\alpha_0) \widetilde{a}}{\widetilde{b} p^k} = \frac{1}{\widetilde{b}p^k} ((\Tr \alpha_0)^2 - 4 \Nm(\alpha_0))b = \frac{bD}{\widetilde{b}p^k},$$
so that the integral in \eqref{intermediate_family_statphase_beforecases_p=2-5.5} equals
$$p^{-\lceil \frac{k}{2} + \max(v(\widetilde{b})-v(\widetilde{a})-\frac{e-1}{2},0)\rceil} \delta\left( \frac{1}{\widetilde{b}p^k} (b D - 2 \Tr \alpha_0 \alpha_\xi p^k)\equiv 0 \pmod{ p^{-\lceil\frac{k}{2} + \max(v(\widetilde{b})-v(\widetilde{a})-\frac{e-1}{2},0)\rceil}}\right).$$
We have the following table of cases.
\begin{table}[h!]
\centering
\begin{tabular}{|c|c|c|c|}
\hline
Case & $v(\widetilde{a})$ & $v(\widetilde{b})$ & $v(\widetilde{b})-v(\widetilde{a})-\frac{e-1}{2}$ \\
\hline \hline
$p = 2,$ $d=0, v(a)=0$ & $\geq 0$ & $=0$ & $\leq 0$ \\
\hline
$p=2$, $ d=0, v(a)>0$ & $=0$ & $\geq 1$ & $v(\widetilde{b})$ \\
\hline
$d=3$ & $=1$ & $=v(b)+2$ & $v(\widetilde{b})-3/2$ \\
\hline
$d=2$ & $\geq 1$ & $=1$ & $\leq 0$\\
\hline
$p \neq 2$, $d=0$, $v(a)=0$ & $=0$ & $ = v(b)$ & $v(\widetilde{b})$\\
\hline
$p \neq 2$, $d=0$, $v(a)>0$ & $\geq 1$ & $=0$ & $\leq 0$\\
\hline
$d=1$ & $=0$ & $=v(b)+1$ & $v(\widetilde{b})-1/2$\\
\hline
\end{tabular}
\caption{}
\label{table:a'b'cases}
\end{table}

Note that we have uniformly that
$$ - \max(v(\widetilde{b})-v(\widetilde{a})-\frac{e-1}{2},0) +v(\widetilde{b}) = \frac{d}{2}.$$
Collecting these computations,

\begin{equation}\label{intermediate_family_statphase_beforecases_p=2-6} R_{k,\xi}(u_0)= p^{-\lceil \frac{3k-d}{2}\rceil } \delta( b D\equiv 2 \Tr \alpha_0 \alpha_\xi p^k \pmod{p^{\lfloor \frac{k+d}{2}\rfloor}}) \delta(\frac{a \Tr( \alpha_0 \alpha_\xi)}{\Nm u_0 } \equiv \frac{\Tr \alpha_0}{p^k}\pmod {p^{-k+v(\widetilde{b})}}).\end{equation}
Note that $\frac{\Tr \alpha_0}{p^k} \equiv 0 \pmod {p^{-k+v(\widetilde{b})}})$ in every case except $p=2$,$d=0$, $v(a)>0$. However, in that exceptional case $v(\frac{a \Tr( \alpha_0 \alpha_\xi)}{\Nm u_0 })>v(\frac{\Tr \alpha_0}{p^k})$, so that the latter congruence of \eqref{intermediate_family_statphase_beforecases_p=2-6} can never be satisfied. Thus $R_{k,\xi}(u_0)$ is identically 0 if $p=2$, $d=0$ and $v(a)>0$. 

Excluding now the case $p=2$,$d=0$, $v(a)>0$, the expression in \eqref{intermediate_family_statphase_beforecases_p=2-6} simplifies to  
$$ p^{-\lceil \frac{3k-d}{2}\rceil } \delta( b D\equiv 2 \Tr \alpha_0 \alpha_\xi p^k \pmod{p^{\lfloor \frac{k+d}{2}\rfloor}} ) \delta(v(a) - c_0 \geq -k+v(\widetilde{b})).$$

If $p=2$, $d=0$, $v(a)=0$, the condition $\delta(v(a) - c_0 \geq -k+v(\widetilde{b}))$ is trivially satisfied by the hypothesis $k \geq c_0  + \lceil d/2 \rceil $ of the proposition. 

Now excluding the unramified $p=2$ case, we have that the congruence condition $b D\equiv 2 \Tr \alpha_0 \alpha_\xi p^k \pmod{p^{\lfloor \frac{k+d}{2}\rfloor}}$ implies that $v(a) - c_0 \geq -k+v(\widetilde{b})$, so in fact the latter condition can be omitted. The result is: if $v(\widetilde{b})<\lfloor \frac{k+(e-1)}{2}\rfloor$ then 
$$R_{k,\xi}(u_0)= \begin{cases} 0 & \text{ if } p=2, d=0, v(a)>0, \\  p^{-\lceil \frac{3k-d}{2}\rceil } \delta( b D\equiv 2 \Tr \alpha_0 \alpha_\xi p^k \pmod{p^{\lfloor \frac{k+d}{2}\rfloor}}) & \text{ otherwise. }\end{cases}$$

Now consider case (2), i.e.\ that $v(\widetilde{b})\geq \lfloor \frac{k+(e-1)}{2} \rfloor$. We pick up the calculation at \eqref{intermediate_family_statphase_beforecases_p=2}. In this case, the congruence condition $\widetilde{a}a' + \widetilde{b}b' \equiv 0 \pmod{p^k}$ becomes just $v(a')\geq k-v(\widetilde{a})$, so the two integrals separate, i.e.\ we have that 
\begin{multline*} 
R_{k,\xi}(u_0)= \int_{v(a')\geq k-v(\widetilde{a})}
\psi\Big(-\frac{2a'}{p^{k}}\Big) 
\psi\Big(\Tr(\alpha_\xi \alpha_0) \frac{ -ba' }{\Nm(u_0)}\Big) 
\,da' 
\\ \times  
\int_{v(b') \geq (k-(e-1))/2 }
\psi\Big(-\Tr(\alpha_0)\frac{b'}{p^k}\Big)
\psi\Big(\Tr(\alpha_\xi \alpha_0) \frac{a b' }{\Nm(u_0)}\Big)
\,db'.
\end{multline*}

First assume that $p=2$, $d=0$, and $v(a)>0$. In this case we have  $v(\frac{a \Tr( \alpha_0 \alpha_\xi)}{\Nm u_0 })>v(\frac{\Tr \alpha_0}{p^k})$, so that the $b'$ integral vanishes for all $k, \xi, u_0$. 

Now, excluding this case, it remains to consider only the cases $d=3$, ($p \neq2$, $d=0$, $v(a)=0$), and ($p \neq 2$, $d=1$), since only these cases may have $v(\widetilde{b}) \geq 1$, and by assumption $v(\widetilde{b}) \geq \lfloor \frac{k+(e-1)}{2} \rfloor$. (Note, the case $d=2$, $c_0=1$, $k=2$ is excluded by the hypothesis that $c_0\geq 2$  when $p=2$.)  
All of these cases conveniently have $\Tr \alpha_0=0$ and $v(\widetilde{a})=v(2)$, 
so
\begin{multline*} 
R_{k,\xi}(u_0)= \int_{v(a')\geq k- v(2)}
\psi\Big(-\frac{2a'}{p^{k}}\Big) 
\psi\Big(\Tr(\alpha_\xi \alpha_0) \frac{ -ba' }{\Nm(u_0)}\Big) 
\,da'
\\ 
\times  \int_{v(b') \geq (k-(e-1))/2 }
\psi\Big(\Tr(\alpha_\xi \alpha_0) \frac{a b' }{\Nm(u_0)}\Big)
\,db'.
\end{multline*}
Note that $\psi(-\frac{2a'}{p^{k}}) =1$, and since $$k-c_0 \geq \lceil \frac{d}{2} \rceil \geq v(2) \geq v(2)-v(b),$$ we have $\psi(\Tr(\alpha_\xi \alpha_0) \frac{ -ba' }{\Nm(u_0)})=1$ as well. Thus the $a'$ integral is equal to $p^{-k+v(2)}$. Meanwhile, the $b'$ integral equals 
$$ p^{-\lceil\frac{k-(e-1)}{2}\rceil}\delta( \lceil\frac{k-(e-1)}{2}\rceil \geq c_0).$$  So, the whole integral, under the hypothesis $v(\widetilde{b})\geq \lfloor \frac{k+(e-1)}{2} \rfloor$ is 
$$\begin{cases} 0 & \text{ if } p=2, d=0, v(a)>0, \\ p^{-\lceil\frac{3k-d}{2}\rceil} \delta\left(\lceil \frac{ k-(e-1)}{2}\rceil \geq c_0 \right) & \text{ otherwise. }\end{cases}$$
\end{proof}
   \begin{mylemma}\label{OFxU_En_lem}
Let $n\geq 1$. Then $y+z\alpha_0 \in \cO_F^\times U_E(n)$ if and only if $v(y)=0$ and $v_E(z\alpha_0)\geq n$. 
 \end{mylemma}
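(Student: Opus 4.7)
The plan is to prove both implications by decomposing elements of $E$ along the $F$-basis $\{1,\alpha_0\}$ and invoking Lemma~\ref{normminlelt_lemma} whenever an $E$-valuation of such a combination needs to be computed.

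For the easier ($\Leftarrow$) direction, I would assume $v(y)=0$ and $v_E(z\alpha_0)\geq n$ and factor
\[
y+z\alpha_0 = y\bigl(1+ (z/y)\alpha_0\bigr).
\]
Since $y\in\cO_F^\times$, multiplication by $y^{-1}$ preserves $E$-valuations, so $(z/y)\alpha_0 \in \fp_E^n$ and thus $1+(z/y)\alpha_0 \in U_E(n)$. This exhibits $y+z\alpha_0$ as an element of $\cO_F^\times U_E(n)$.

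For the ($\Rightarrow$) direction, I would write an arbitrary element of $\cO_F^\times U_E(n)$ as $u(1+w)$ with $u\in\cO_F^\times$ and $w\in\fp_E^n$, and decompose $w=a+b\alpha_0$ with $a,b\in F$. Matching coefficients against the basis $\{1,\alpha_0\}$ gives $y=u(1+a)$ and $z=ub$. Lemma~\ref{normminlelt_lemma} applied to $w=a+b\alpha_0$ yields $v_E(a)\geq n$ and $v_E(b\alpha_0)\geq n$; since $n\geq 1$, the first inequality forces $v(a)\geq 1$, so $1+a\in\cO_F^\times$ and hence $v(y)=v(u)+v(1+a)=0$. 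The second inequality, combined with $v_E(u)=0$, gives $v_E(z\alpha_0)=v_E(b\alpha_0)\geq n$, as required.

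There is no real obstacle here: once one knows how to read off the $F$-coordinates of an element of $U_E(n)$, Lemma~\ref{normminlelt_lemma} does all the work of separating the ``constant'' and ``$\alpha_0$'' parts of the $E$-valuation. The only mild subtlety is confirming that $v_E(a)\geq n\geq 1$ actually forces $v(a)\geq 1$ in both the ramified ($e=2$) and unramified ($e=1$) cases, which is immediate since $v_E(a)=e\cdot v(a)$ is integral.
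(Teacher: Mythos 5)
Your proof is correct and follows essentially the same route as the paper: the ($\Leftarrow$) direction via the factorization $y+z\alpha_0=y(1+(z/y)\alpha_0)$, and the ($\Rightarrow$) direction by writing the element as a unit of $\cO_F$ times an element of $U_E(n)$, expanding in the basis $\{1,\alpha_0\}$, and applying Lemma~\ref{normminlelt_lemma} to separate the two coordinates (your relabeling $w=a+b\alpha_0$ versus the paper's $a+b\alpha_0$ with $v_E(a-1+b\alpha_0)\geq n$ is purely cosmetic). No gaps.
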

 \begin{proof}
 If: Write $y+z\alpha_0 = y(1+\frac{z}{y}\alpha_0)$, which we are allowed since $v(y)=0$. Then $v_E(\frac{z}{y}\alpha_0) = v_E(z\alpha_0)\geq n$ by assumption. So $y+z\alpha_0 \in \cO_F^\times U_E(n)$. Only if: by hypothesis there exists $s,a,b \in \cO_F$ with $v(s)=0$, $v_E(a-1+b\alpha_0)\geq n$ and $y+z\alpha_0 = s(a+b\alpha_0)$. By Lemma \ref{normminlelt_lemma}, we have $\min(v_E(a-1),v_E(b\alpha_0))\geq n$. From these it follows that $v(a-1)\geq n/e$, $v_E(b\alpha_0)\geq n$, $y=sa$, and $z\alpha_0= sb\alpha_0$. Since $n\geq 1$, we have $v(a)=0$. Thus, $v(y)=0$ and $v_E(z\alpha_0)=v_E(b\alpha_0)\geq n$.
\end{proof}

Set \begin{equation}\label{I_def_intermediate_section} I_{\xi}(m,p^k) =  \sum_{\substack{u \in (\cO_E / p^k \cO_E)^\times \\ \Nm(u) \equiv m \shortmod{p^k}}}\xi(u) \psi(-\Tr(u)p^{-k}),\end{equation}
so that the supercuspidal Kloosterman sum $H(m,1,p^k)$ associated to $\Ind_E^F \xi$ is equal to $\delta_p \overline{\gamma}p^{-d/2} I_\xi(m,p^k)$ if $(m,p)=1$ and $k\geq c_0+\lceil d/2\rceil$ and $0$ otherwise, see Theorem \ref{KlSumThm}. 

Let $\xi$ be a character of $E^\times$, and for $0\leq n \leq c(\xi)$, recall \eqref{xi_n_def} the neighborhood $\xi[n]$ of characters around $\xi$, and for $0\leq i \leq n$ the equivalence relation $\sim_i$ on $\xi[n]$. 
\begin{myprop}\label{intermediate_family_mainprop}
 Set $i=1$ if the $E$ on which $\xi$ is defined is the unramified quadratic extension of $\Q_2$ and $i=0$ otherwise. Suppose $i\leq n < c(\xi)$ and $k\geq 2$.  If $k\geq c_0+\lceil d/2 \rceil -i + \lfloor \frac{n}{e}\rfloor$, then
\begin{equation}\label{intermediate_family_goal_pgeneral}\frac{1}{[\xi[n]:\xi[i]]} \sum_{\xi_1 \in \xi[n]/\sim_i}  I_{\xi_1}(m,p^k) 
= 
I_{\xi}(m,p^k) .  
\end{equation}
\end{myprop}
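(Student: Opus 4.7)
The strategy is to establish the identity by combining Pontryagin duality on the character side with the $p$-adic stationary phase evaluation of Lemma \ref{statphase_sc_Klsums_nohyponp}. Introduce the groups $G_j = \{\chi \in (E^\times)^\wedge : c(\chi)\leq j,\ \chi|_{F^\times}=1\}$, so that $\xi[n] = \xi\cdot G_n$ and the quotient $\xi[n]/\sim_i$ corresponds to $G_n/G_i$ with $[\xi[n]:\xi[i]] = [G_n:G_i]$. Writing $\xi_1 = \xi\chi$ and swapping the order of summation, the LHS becomes
\[
\sum_{\substack{u\in(\cO_E/p^k\cO_E)^\times\\ \Nm u\equiv m\shortmod{p^k}}}\xi(u)\psi\bigl(-\Tr(u)/p^k\bigr)\cdot\frac{1}{[G_n:G_i]}\sum_{\chi}\chi(u).
\]

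The first step is to show that under the stated hypothesis on $k$, the value $I_{\xi_1}(m,p^k)$ depends only on the $\sim_i$-class of $\xi_1$, equivalently that $I_{\xi\rho}(m,p^k)=I_{\xi}(m,p^k)$ for every $\rho\in G_i$. I would do this by applying Lemma \ref{statphase_sc_Klsums_nohyponp} to both $I_{\xi}$ and $I_{\xi\rho}$ after partitioning $u = u_0 + du$ with $v_E(du)\geq ek/2$. The lemma shows that (in its case (1)) the inner integral collapses to the single congruence $bD\equiv 2\Tr(\alpha_0\alpha_{\xi_1})p^k\pmod{p^{\lfloor(k+d)/2\rfloor}}$; since $\rho\in G_i$ has $c(\rho)\leq i$ and $\rho|_{F^\times}=1$, the Postnikov linearization $\alpha_\rho$ lies in $\varpi_E^{-i}\mathfrak{D}_{E/F}^{-1}\cap\alpha_0 E$ (the $F$-part being absorbed by $\rho|_{F^\times}=1$), so the shift $\alpha_{\xi\rho}=\alpha_\xi+\alpha_\rho$ alters the congruence only by terms in the ``null'' direction once the modulus $p^{\lfloor(k+d)/2\rfloor}$ is large enough, which is precisely what the bound $k\geq c_0+\lceil d/2\rceil-i+\lfloor n/e\rfloor$ guarantees (with room to spare for $i\leq n$).

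With the $G_i$-invariance of $I_{\xi\chi}$ established, the averaged character sum $\frac{1}{[G_n:G_i]}\sum_{\chi\in G_n/G_i}\chi(u)$ coincides with $\frac{1}{|G_n|}\sum_{\chi\in G_n}\chi(u)$, which by Pontryagin duality applied to the pairing between $G_n$ and $E^\times/F^\times U_E(n)$ equals $\mathbf{1}[u\in F^\times U_E(n)]$. Thus
\[
\text{LHS} = \sum_{\substack{u\in(\cO_E/p^k\cO_E)^\times\cap F^\times U_E(n)\\ \Nm u\equiv m\shortmod{p^k}}}\xi(u)\psi\bigl(-\Tr(u)/p^k\bigr).
\]
The proof concludes by showing, via a second application of Lemma \ref{statphase_sc_Klsums_nohyponp} with Lemma \ref{OFxU_En_lem}, that the complementary contribution $u\notin F^\times U_E(n)$ in $I_\xi(m,p^k)$ vanishes: using Lemma \ref{OFxU_En_lem} to rewrite $u=y(1+z\alpha_0)$, the condition $u\notin F^\times U_E(n)$ forces $v_E(z\alpha_0)<n$, and the stationary-phase congruence on the $b$-coefficient then fails under the lower bound on $k$, so that no critical points contribute.

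The main obstacle is the bookkeeping in the first paragraph: one must carefully track how the stationary-phase modulus $p^{\lfloor(k+d)/2\rfloor}$ in Lemma \ref{statphase_sc_Klsums_nohyponp} interacts with the filtration by $U_E(j)$ to control the effect of multiplying $\xi$ by any $\rho\in G_i$, and in particular to handle the $p=2$, $E/F$ unramified case with $i=1$ in which the offset encodes the unramified quadratic twist obstruction. This accounts for why the hypothesis on $k$ is shifted by exactly $-i$, and is the source of both cases in Lemma \ref{statphase_sc_Klsums_nohyponp} being needed in parallel.
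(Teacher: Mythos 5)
Your overall strategy is the paper's: use character orthogonality to convert the average over $\xi[n]/\sim_i$ into the indicator of $u\in\cO_F^\times U_E(n)$, and then use the $p$-adic stationary phase evaluation (Lemma \ref{statphase_sc_Klsums_nohyponp}, writing $u=u_0+du$ with $v_E(du)\geq ek/2$) together with Lemma \ref{OFxU_En_lem} to show that, under the bound $k\geq c_0+\lceil d/2\rceil -i+\lfloor n/e\rfloor$, the surviving critical classes $u_0$ already lie in $\cO_F^\times U_E(n)$, so the restricted sum equals $I_\xi(m,p^k)$. That part of your sketch, including the need for both cases of the stationary phase lemma and the casework on $v(b')$, matches the paper's proof.

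The gap is in your first step, the claim that $I_{\xi\rho}(m,p^k)=I_\xi(m,p^k)$ for $\rho\in G_i$ ``because the shift $\alpha_{\xi\rho}=\alpha_\xi+\alpha_\rho$ only perturbs the stationary-phase congruence below the relevant modulus.'' The congruence inside $R_{k,\cdot}(b)$ is indeed insensitive to $\alpha_\rho$, but that is not where $\rho$ acts: after the decomposition $u=u_0+du$, the character $\rho$ also multiplies the \emph{outer} sum by the prefactor $\rho(u_0)$, and in the only nontrivial case $i=1$ (so $E/\Q_2$ unramified) a character $\rho$ of conductor exactly $1$ trivial on $\Q_2^\times$ restricts to a nontrivial cubic character of $k_E^\times$ on $\cO_E^\times/U_E(1)$; it admits no Postnikov linearization there, and $\rho(u_0)\neq 1$ for $u_0\notin\cO_F^\times U_E(1)$. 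So the invariance cannot be deduced from a shift of $\alpha_\xi$ alone; it is genuinely false term-by-term and only holds because the $u_0$ outside $\cO_F^\times U_E(1)$ do not contribute. The paper avoids this by invoking Remark \ref{KlSumThm_rem} (resting on Theorem \ref{cor:SpecAssumption_supercuspidal_p2}), which lets it restrict the sums defining $I_{\xi_1}$ to $U_E(1)$ before applying the orthogonality relation \eqref{intermediate_family_ortho_rln} for characters of $U_E(i)$. Your argument can be repaired without the remark, but only by first proving the support statement $\supp R_{k,\xi_1}(b)\cap\cO_E^\times\subseteq\cO_F^\times U_E(n)$ (for every $\xi_1\in\xi[n]$, which is fine since $\alpha_{\xi_1}\equiv\alpha_\xi$ to the needed precision) and \emph{then} deducing both the invariance and the final identity from it; as written, the logical order of your steps presupposes an invariance whose proof mechanism does not work at $p=2$ unramified.
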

\begin{proof}
We have for any $0 \leq i \leq n\leq c(\xi)$
  $$\xi[n]/\sim_i = \{ \xi_1 \in (U_E(i))^\wedge : c(\xi_1 \xi^{-1})\leq n, \, \xi_1 \vert_{\cO_F^\times} = \xi \vert_{\cO_F^\times}\}=\xi \{\theta \in  (U_E(i))^\wedge : c(\theta) \leq n,\, \theta \vert_{\cO_F^\times}=1\}.$$ 
So, for $u\in U_E(i)$, we have
 \begin{equation}\label{intermediate_family_ortho_rln}  \frac{1}{[\xi[n]:\xi[i]]} \sum_{\xi_1 \in \xi[n]/\sim_i} \xi_1(u) = \frac{1}{[1[n]:1[i]]} \sum_{\theta \in 1[n]/1[i]} \xi(u) \theta(u) = \xi(u) \delta_{u \in \cO_F^\times U_E(n)}.\end{equation}
    We get for $i$ as in the statement of the proposition (using Remark \ref{KlSumThm_rem} when $i=1$) that 
  \begin{equation}\label{intermediate_family_goal0_pgeneral}\frac{1}{[\xi[n]:\xi[i]]} \sum_{\xi_1 \in \xi[n]/\sim_i}  I_{\xi_1}(m,p^k) = \sum_{\substack{u \in (\cO_E / p^k \cO_E)^\times \\ \Nm(u) \equiv m \shortmod{p^k}\\ u \in \cO_F^\times U_E(n)}} \xi(u)\psi(-\Tr(u)p^{-k}).\end{equation} So, to prove the proposition, it suffices to show that the right hand side of \eqref{intermediate_family_goal0_pgeneral} is equal to the right hand side of \eqref{intermediate_family_goal_pgeneral}. 
 Note that these clearly match if $n=i$ by Theorem \ref{KlSumThm} and Remark \ref{KlSumThm_rem}, so we may freely assume that $1 \leq n < c(\xi)$ for the remainder of the proof.

First suppose that $ k\geq c_0+\lceil d/2 \rceil -i + \lfloor \frac{n}{e}\rfloor$ and work from the right hand side of \eqref{intermediate_family_goal_pgeneral}. Note that $c_0+\lceil d/2 \rceil -i + \lfloor \frac{n}{e}\rfloor= c_0+d -v(2)+ \lfloor \frac{n}{e}\rfloor.$ 
Writing $u=u_0+u'$ with $v_E(u')\geq ek/2$ and $R_{k,\xi}(u_0)$ for the integral in Lemma \ref{statphase_sc_Klsums_nohyponp}, we have
\begin{equation}\label{intermediate_family_step1}I_{\xi}(m,p^k) = p^{2k}\sum_{\substack{u_0 \in \cO_E^\times /U_E(\lceil ek/2\rceil) \\ \Nm(u_0) \equiv m \shortmod{p^k}}}  \xi(u_0)\psi_E(-u_0p^{-k})R_{k,\xi}(u_0).\end{equation} Here, and in similar situations below (e.g.\ \eqref{statphase_bounds_1}) the sum on the right hand side runs over $$\{u_0 \in \cO_E^\times /U_E(\lceil ek/2\rceil) : \exists \text{ a lift } \tilde{u}_0 \in (\cO_E/p^{k}\cO_E)^\times \text{ of } u_0 \text{ with }  \Nm(\tilde{u}_0)\equiv m \shortmod{p^k}\}.$$ Write $u_0=a+b\alpha_0$. 
We claim that $\supp (R_{k,\xi}(u_0)) \cap \cO_E^\times \subseteq \cO_F^\times U_E(n)$, so that \eqref{intermediate_family_step1} matches the right hand side of \eqref{intermediate_family_goal0_pgeneral}.

Set $\widetilde{b}= 2b\Nm(\alpha_0) + a \Tr(\alpha_0)$ as in the proof of Lemma \ref{statphase_sc_Klsums_nohyponp}. Suppose first that $v(\widetilde{b})<\lfloor \frac{k+(e-1)}{2} \rfloor$. Then, Lemma \ref{statphase_sc_Klsums_nohyponp}(1) shows that $a+b\alpha_0 \in \supp (R_{k,\xi}(u_0))$ only if $$v(b) \geq \min(\lfloor \frac{k-d}{2}\rfloor , k-c_0-d+v(2)).$$ The second of these two possibilities is $\geq \lfloor n/e\rfloor$ by the case hypothesis, while for the first we have
$$ \frac{k-d}{2} = \frac{k-d-c_0+v(2)}{2} +\frac{c_0}{2} - \frac{v(2)}{2}\geq \frac{1}{2}\lfloor \frac{n}{e}\rfloor +\frac{c_0}{2} - \frac{v(2)}{2}\geq \lfloor \frac{n}{e}\rfloor - \frac{v(2)}{2}+\frac{1}{2} \geq \lfloor \frac{n}{e}\rfloor.$$ Then, 
$$v_E(b\alpha_0) = ev(b)+(e-1) \geq e\lfloor \frac{n}{e}\rfloor +(e-1) \geq n.$$ Since $0=v_E(a+b \alpha_0) =\min(v_E(a),v_E(b\alpha_0))$, we must have $v(a)=0$. By Lemma \ref{OFxU_En_lem}, those $u_0 = a+b \alpha_0 \in \supp (R_{k,\xi}(u_0)) \cap \cO_E^\times$ with $v(\widetilde{b})<\lfloor \frac{k+(e-1)}{2} \rfloor$  lie in $\cO_F^\times U_E(n)$.  

Now suppose that $v(\widetilde{b})\geq \lfloor \frac{k+(e-1)}{2} \rfloor$. We need some casework so refer to Table \ref{table:a'b'cases}. Since $\lfloor \frac{k+(e-1)}{2} \rfloor \geq 1$, and is $\geq 2$ when $d=2$ by the hypothesis that $c(\sigma)\geq 5$ when $p=2$, we have that $\supp (R_{k, \xi}(u_0))$ is only non-empty in the cases  $d=3$, ($p \neq2$, $d=0$, $v(a)=0$), and ($p \neq 2$, $d=1$). In these cases, $\widetilde{b}$ and $b$ are related by $v(\widetilde{b})= v(b)+e-1+v(2)$. So,
$$v(b) \geq \lfloor \frac{k+(e-1)}{2} \rfloor -(e-1) -v(2)$$ and $$ k\geq c_0 + d  + \lfloor \frac{n}{e} \rfloor -v(2)\geq 2 \lfloor \frac{n}{e} \rfloor+d-v(2) + 1,$$
so that $$v(b) \geq  \lfloor \frac{n}{e} \rfloor + \lfloor \frac{d-v(2)+1+(e-1)}{2}\rfloor -(e-1) -v(2) \geq \lfloor \frac{n}{e} \rfloor.$$
Therefore, $v_E(b\alpha_0) \geq n$ and so $\supp (R_{k,\xi}(u_0)) \cap \cO_E^\times \subseteq \cO_F^\times U_E(n)$ by Lemma \ref{OFxU_En_lem}. 
\end{proof}

Now write $\xi'$ for a twist-minimal character of $E^\times$ for which there exists a character $\chi$ of $F^\times$ with $\xi = \xi' \chi_E$, following Section \ref{sec:egSupercuspidal_even}. Recall that if $p \neq 2$ or $d=3$, then we may take $\xi'=\xi$ and if $p=2$ and $d=0$ or $2$, then we have that $c(\xi')=c(\xi)-1$, see Table \ref{table:xiprime}.
\begin{myprop}\label{intermediate_family_mainprop2}
 Set $i=1$ if the $E$ on which $\xi$ is defined is the unramified quadratic extension of $\Q_2$ and $i=0$ otherwise. Suppose $i\leq n < c(\xi')$ and $k\geq 2$.  If $k< c_0+\lceil d/2 \rceil -i + \lfloor \frac{n}{e}\rfloor$, then
\begin{equation}\label{intermediate_family_goal_pgeneral2}\frac{1}{[\xi[n]:\xi[i]]} \sum_{\xi_1 \in \xi[n]/\sim_i}  I_{\xi_1}(m,p^k)  \\ 
= 
0 .\end{equation}
\end{myprop}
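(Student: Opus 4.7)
The starting point is identical to that of Proposition \ref{intermediate_family_mainprop}. By the orthogonality relation \eqref{intermediate_family_ortho_rln}, the left-hand side of \eqref{intermediate_family_goal_pgeneral2} equals the restricted character sum
$$S_n := \sum_{\substack{u \in (\cO_E/p^k\cO_E)^\times \\ \Nm(u) \equiv m \shortmod{p^k} \\ u \in \cO_F^\times U_E(n)}} \xi(u) \psi(-\Tr(u)/p^k),$$
exactly as in the derivation of \eqref{intermediate_family_goal0_pgeneral}. Our task is therefore to show $S_n = 0$ under the hypothesis $k < c_0 + \lceil d/2 \rceil - i + \lfloor n/e \rfloor$. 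Note that when $k < c_0 + \lceil d/2 \rceil$ the statement is vacuous, as each $I_{\xi_1}$ already vanishes by Theorem \ref{KlSumThm}; the interesting range is $c_0 + \lceil d/2 \rceil \leq k < c_0 + \lceil d/2 \rceil - i + \lfloor n/e \rfloor$.

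The plan is to evaluate $S_n$ by combining the stationary phase computation of Lemma \ref{statphase_sc_Klsums_nohyponp} with explicit Postnikov expansions of $\xi$ near the identity. First, by Lemma \ref{OFxU_En_lem} one may parametrize $u = y(1+w)$ with $y \in \cO_F^\times$ and $w \in \fp_E^n/\fp_E^{ek}$ on the restricted domain. Using the factorization $\xi = \xi' \chi_E$ from the twist-minimal setup of Section \ref{sec:egSupercuspidal_even} and applying Lemma \ref{postnikov} to linearize both $\xi'(1+w)$ and $\chi(\Nm(1+w))$, while simultaneously solving the norm constraint $y^2 \Nm(1+w) \equiv m \pmod{p^k}$ for $y$ in terms of $w$, one rewrites $S_n$ (up to a nonzero multiplicative constant) as a sum
$$\sum_{w \in \fp_E^n/\fp_E^{ek}} \xi'(1+w)\,\psi_E(\beta w)\,(1 + \text{higher-order correction}),$$
where $\beta \in E$ is an explicit element whose valuation is determined by the combined contributions from $\psi(-\Tr(u)/p^k)$, from $\chi(\Nm(1+w))$, and from the dependence of $y$ on $w$ dictated by the norm constraint.

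The vanishing then follows from character orthogonality on $U_E(n)/U_E(n+1)$. Since by hypothesis $n < c(\xi')$, the character $\xi'$ is non-trivial on this quotient; equivalently, $v_E(\alpha_{\xi'}) + n < c(\psi_E)$. A case-by-case valuation comparison, paralleling the stationary phase computation in Lemma \ref{statphase_sc_Klsums_nohyponp} and using Table \ref{table:xi}, shows that under the assumption $k < c_0 + \lceil d/2 \rceil - i + \lfloor n/e \rfloor$ one has $v_E(\beta) > v_E(\alpha_{\xi'})$ at the relevant scale. Hence the twisted character $(1+w) \mapsto \xi'(1+w)\,\psi_E(\beta w)$ remains non-trivial on $U_E(n)/U_E(n+1)$, and grouping the outer sum into cosets of $\fp_E^{n+1}/\fp_E^{ek}$ inside $\fp_E^n/\fp_E^{ek}$ yields zero term by term.

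The main obstacle will be the valuation comparison in the last step: one must carry out a careful case analysis across the various possibilities corresponding to the three rows of Table \ref{table:xi} (and the refinement in Table \ref{table:a'b'cases}), correctly handle the quadratic Postnikov correction when $p = 2$, and carefully track how the twist character $\chi_m$ and the higher-order terms of the norm constraint contribute to $\beta$. In the boundary range where $\lceil ek/2 \rceil < n$, one additionally must take care that the parametrization $u = u_0 + du$ respects the multiplicative restriction $u \in \cO_F^\times U_E(n)$ rather than merely $u_0$. Once these bookkeeping issues are dispatched, the vanishing is just character orthogonality.
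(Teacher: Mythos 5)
Your plan takes a genuinely different route from the paper. The paper first reduces to the minimal radius $n_0=e(k-c_0-\lceil d/2\rceil+i+1)$, then takes the Fourier--Mellin transform in $m$: by Proposition \ref{MellinXform} each transform is a Gauss sum $\int_{\cO_E^\times}\xi_1\chi_E(u)\psi_E(-p^{-k}u)\,du$ supported on the single congruence class $u\equiv p^k(\alpha_{\xi_0}+\alpha_\chi)\pmod{\fp_E^{\lfloor (ek-d)/2\rfloor}}$, which is independent of $\xi_1\in\xi[n_0]$; averaging over $\xi_1$ via \eqref{intermediate_family_ortho_rln} further restricts $u$ to $\cO_F^\times U_E(n_0)$, and a case-by-case valuation check (namely $v_E(p^kz\alpha_0)=ek-d-c(\xi')$ is smaller than both $\lfloor(ek-d)/2\rfloor$ and $n_0$, cf.\ \eqref{intermediate_family_sufficient_condition}) shows the two conditions are incompatible, so every Mellin coefficient vanishes and hence the sum vanishes for every $m$. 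You instead attack the restricted sum $S_n$ directly at fixed $m$. That dual strategy is plausible, and your threshold is the right one: after eliminating $y$ through the norm constraint the first-order part of the additive phase cancels against the Jacobian term, and what survives should be shallower than $\alpha_{\xi'}$ precisely when $k<c_0+\lceil d/2\rceil-i+\lfloor n/e\rfloor$. But this route cannot simply quote Lemma \ref{statphase_sc_Klsums_nohyponp}; it has to redo that stationary-phase bookkeeping on the complementary range of $k$, including the $p=2$ square-root/Hensel issues, the two branches $\pm y(w)$, and the boundary case $2k=c(\sigma)$.

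As written, though, the decisive step is both unproven and mis-stated, so there is a genuine gap. First, the inequality $v_E(\beta)>v_E(\alpha_{\xi'})$ is the entire content of the proposition and is only asserted. Note that the raw frequency of $\psi(-\Tr(u)/p^k)$ has $v_E=-ek\le v_E(\alpha_{\xi'})=-c(\xi')-d$ throughout the range $k\ge\lceil c(\sigma)/2\rceil$, so the claim can only be true because of an exact cancellation between this term and the contribution of $dy/dw$ coming from $y(w)^2\Nm(1+w)\equiv m$; you never exhibit this cancellation, and it is exactly the place where the hypothesis on $k$ must enter. Second, the orthogonality mechanism is invalid at the granularity you state: grouping $w\in\fp_E^n/\fp_E^{ek}$ into cosets of $\fp_E^{n+1}$ and invoking non-triviality of $\xi'(1+w)\psi_E(\beta w)$ on $U_E(n)/U_E(n+1)$ proves nothing, since non-triviality on that quotient says nothing about a sum over a coset of $U_E(n+1)$, and when $c(\xi')>n+1$ the character does not even factor through $U_E(n)/U_E(n+1)$. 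The correct move is to sum over cosets at (roughly) the half-scale $\fp_E^{\lceil(ek-d)/2\rceil}$, where the combined phase linearizes exactly, and then one needs the effective frequency to remain non-trivial at that depth; this is the inequality $\lceil(ek-d)/2\rceil<c(\xi')$, verified case by case in the paper's argument. With the explicit cancellation computation and the half-scale orthogonality supplied (and the $p=2$ and $2k=c(\sigma)$ cases treated), your plan would yield an alternative fixed-$m$ proof; as it stands it does not.
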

\begin{proof}

The conditions for $n$ can be rewritten as
	$$  e\lb k-c_0-\lceil d/2\rceil+i+1\rb \leq n<c(\xi')$$
	For such $n$ to exist, we have in view of Theorem \ref{KlSumThm},
	\begin{equation}\label{Eq:krange}
		\lceil c(\sigma)/2\rceil\leq k< c_0+\lceil d/2\rceil-i-1+c(\xi')/e
	\end{equation}
	We first reduce to the case
	\begin{equation}\label{Eq:n0}
		n=n_0:= e\lb k-c_0-\lceil d/2\rceil+i+1\rb .
	\end{equation} 
	Indeed if the result is true for $n_0$, the sum in $\xi_1$ for general $n$ can be divided into a double sum 
	$$ \frac{1}{[\xi[n]:\xi[i]]} \sum\limits_{\xi_1\in\xi[n]/\sim_i} I_{\xi_1}(m,p^k)= \frac{1}{[\xi[n]:\xi[n_0]]} \sum\limits_{\xi_0\in \xi[n]/\sim_{n_0}}\frac{1}{[\xi[n_0]:\xi[i]]}\sum\limits_{\xi_1\in \xi_0[n_0]/\sim_i} I_{\xi_1}(m,p^k)$$
	and the vanishing result for $n_0$ can be applied to get the vanishing result for larger families.
	As the proof inevitably requires case by case checking, we collect here in a table all necessary information combining parameterization of supercuspidal representations with \eqref{Eq:krange} \eqref{Eq:n0}.
	\begin{table}[h!]
		\centering
		\begin{tabular}{ |c|c|c|c|c|c|c| } 
			\hline
			Case	&	$c(\sigma)$ & $c_0$ & $c(\xi)$ & $c(\xi')$ & range of $k$ & $n_0$ \\ 
			\hline\hline
			$p=2,d=0$ & $2j+2$ & $j+1$ & $j+1$ & $j$& $j+1\leq k<2j-1$& $k-j+1$\\ 
			\hline
			$p=2,d=2$ & $2j+2$ & $j$ & $2j$ & $2j-1$& $j+1\leq k<2j$ &$2k-2j$\\ 
			\hline
			$p=2,d=3$ & $2j+1$ & $j-1$ & $2j-2$ & $2j-2$ & $j+1\leq k<2j-1$ & $2k-2j$\\ 
			\hline
			$p>2,d=0$ & $2j$ & $j$ & $j$ & $j$& $j\leq k<2j-1$& $k-j+1$\\ 
			\hline
			$p>2,d=1$ & $2j+1$ & $j$ & $2j$ & $2j$& $j+1\leq k<2j$& $2k-2j$\\ 
			\hline
		\end{tabular}
		\caption{}
		\label{table:n0k}
	\end{table} 
	
To prove the proposition, it suffices to show that the Fourier/Mellin transform 
\begin{equation}\label{FMxform_of_intermediate_family_mp_2} 
\Sigma:= \frac{1}{\varphi(p^k)} \sumstar_{m \shortmod{p^k}} \frac{1}{[\xi[n]:\xi[i]]} \sum_{\xi_1 \in \xi[n]/\sim_i}I_{\xi_1}(m,p^k) \chi(m) 
\end{equation}
of \eqref{intermediate_family_goal_pgeneral2} vanishes.  Moving the sum over $m$ to the inside, we have by Proposition \ref{MellinXform} that 
$$ \Sigma = p^k \frac{1}{[\xi[n]:\xi[i]]} \sum_{\xi_1 \in \xi[n]/\sim_i} \int_{\cO_E^\times} \xi_1\chi_E(u)\psi_E(-p^{-k}u)du.$$
The inner Gauss integral is nonvanishing only if $c(\chi_E\xi_1)=ek-d$. For interpretation of later parts of the proof, it may be helpful to note that if $k>c(\sigma)/2$, then the condition $c(\chi_E\xi_1)=ek-d$ is only attainable when $c(\chi)=k$.

	Writing $u=u_0(1+u')$ with $v_E(u') \geq \lceil \frac{ek-d}{2}\rceil$ and $u_0\in \cO_E^\times/U_E(\lceil \frac{ek-d}{2}\rceil)$, we have
	\begin{align*}
		&\int\limits_{u\in \cO_E^\times}\xi_1\chi_E(u)\psi_E(-p^{-k}u)\,du\\
		=&\sum\limits_{u_0\in\cO_E^\times/U_E(\lceil \frac{ek-d}{2}\rceil)}\xi_1\chi_E(u_0)\psi_E(-p^{-k}u_0)
		\int\limits_{v_E(u') \geq \lceil \frac{ek-d}{2}\rceil}\psi_E((\alpha_{\xi_1}+\alpha_\chi-p^{-k}u_0)u')\,du'.
	\end{align*}
	
	From this we see that the nonzero contribution to inner Gauss sum comes from 
	$u_0$ satisfying 
	$$u_0\equiv p^k(\alpha_{\xi_1}+\alpha_\chi) \pmod{ \fp_E^{\lfloor \frac{ek-d}{2}\rfloor}}.
	$$
	We claim that this congruence requirement is actually independent of $\xi_1\in \xi[n_0]$.
	(Recall that $c(\xi)=-v_E(\alpha_\xi)-d$.) Indeed for $\xi_1,\xi_2\in \xi[n_0]$, we have
	$$v_E(p^k(\alpha_{\xi_1}-\alpha_{\xi_2}))=ek+(-d-c(\xi_1^{-1}\xi_2))\geq ek-d-n_0$$
	which is $\geq \lfloor\frac{ek-d}{2}\rfloor$
	using case by case check that
	\begin{equation}
		\lceil (ek-d)/2\rceil \geq n_0.
	\end{equation}

	We can thus fix $\xi_0\in \xi[n_0]$, impose the congruence condition for $u$ in $\Sigma$ and swap the order of sum and integral, getting
	\begin{multline*}
	\Sigma= p^k\int\limits_{u\equiv p^k(\alpha_{\xi_0}+\alpha_\chi)\shortmod{ \fp_E^{\lfloor \frac{ek-d}{2}\rfloor}}}\frac{1}{[\xi[n]:\xi[i]]} \sum_{\xi_1 \in \xi[n]/\sim_i}\xi_1\chi_E(u)\psi_E(-p^{-k}u)\,du\\
	= 	p^k\int\limits_{\substack{u \in \cO_F^\times U_E(n) \\ u\equiv p^k(\alpha_{\xi_0}+\alpha_\chi)\shortmod{ \fp_E^{\lfloor \frac{ek-d}{2}\rfloor}}}}\xi\chi_E(u)\psi_E(-p^{-k}u)\,du
	\end{multline*}
	by \eqref{intermediate_family_ortho_rln}. 
	We claim now that the two conditions on the integral are disjoint, i.e.\ that for any $u$ satisfying $u\equiv p^k(\alpha_{\xi_0}+\alpha_\chi) \pmod{\fp_E^{\lfloor \frac{ek-d}{2}\rfloor}}$, we have 
	$u\notin \cO_F^\times U_E(n_0)$. 
	
	To prove the claim, we write
	$$p^k(\alpha_{\xi_0}+\alpha_\chi)
	=p^k(z(A/2+\alpha_0)+\alpha_\chi)
	=p^k(zA/2+\alpha_\chi)+p^kz\alpha_0.
	$$
	If $p$ is odd, then $A=0$ so $v_E(\alpha_{\xi_0}) = ev(z)+(e-1)$ and then $v(z) = -c(\xi_0)/e -d$ by Lemma \ref{postnikov}.   
	If $p=2$, then $v(z)$ is given by Lemma \ref{Q2lem2}. We have that $v(p^k(zA/2+\alpha_\chi))\geq 0$ and that $v_E(z\alpha_0)$ is directly related to $c(\xi')$ by Proposition \ref{prop:twistminimalQ2}, while checking case by case shows that 
		\begin{equation}\label{intermediate_family_sufficient_condition}
		v_E(p^kz\alpha_{0})=ek-d-c(\xi_0')<\lfloor \frac{ek-d}{2}\rfloor.
	\end{equation}
	The inequality in \eqref{intermediate_family_sufficient_condition} reduces the problem to checking that $p^k(\alpha_{\xi_0}+\alpha_\chi)\notin \cO_F^\times U_E(n_0)$, as anything from $\fp_E^{\lfloor \frac{ek-d}{2}\rfloor} $ does not affect the criterion in Lemma \ref{OFxU_En_lem}.
	
	Then the claim follows from Lemma \ref{OFxU_En_lem} and checking case by case that
	\begin{equation}
		v_E(p^kz\alpha_{0})=ek-d-c(\xi_0')<n_0.
	\end{equation} 
\end{proof}

 We give one last application of the $p$-adic stationary phase Lemma \ref{statphase_sc_Klsums_nohyponp}. Suppose $\sigma$ is as in Theorem \ref{KlSumThm} and $H_p(m,n,p^k)$ is the associated generalized Kloosterman sum therein. We have the following crude bound. 
 \begin{myprop}\label{statphase_bounds}
Suppose that $k\geq \max(\lceil c(\sigma)/2\rceil,2)$. We have 
 \begin{equation}\label{eqn:statphase_bounds} |H_p(m,1,p^k)| \leq 64 \zeta_p(1) f_{\xi}(1) p^{\frac{k+\fa}{2}+\lfloor \frac{1}{2} \min(v(\frac{(p^k \Tr \alpha_0 \alpha_\xi)^2}{D} +m),\lceil \frac{k}{2} \rceil)\rfloor},\end{equation}
 where $\fa = \frac{1-(-1)^{k+d}}{2}.$
 If $p\neq 2$, the leading constant 64 may be replaced by 2. 
 \end{myprop}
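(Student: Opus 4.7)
The plan is to combine Theorem \ref{KlSumThm} with the $p$-adic stationary phase computation of Lemma \ref{statphase_sc_Klsums_nohyponp} and then reduce the residual count to a quadratic congruence. By Theorem \ref{KlSumThm}, it suffices to bound $|I_\xi(m,p^k)|$ with
\[
I_\xi(m,p^k) = \sum_{\substack{u \in (\cO_E/p^k\cO_E)^\times \\ \Nm(u) \equiv m \shortmod{p^k}}} \xi(u)\,\psi\!\left(-\frac{\Tr(u)}{p^k}\right).
\]
Changing variables $u = u_0 + du$ with $u_0 \in \cO_E^\times / U_E(\lceil ek/2\rceil)$ and $v_E(du) \geq ek/2$, and writing $u_0 = a + b\alpha_0$, one obtains, as in the derivation of Lemma \ref{statphase_sc_Klsums_nohyponp},
\[
I_\xi(m,p^k) = p^{2k} \sum_{\substack{u_0 \shortmod{U_E(\lceil ek/2\rceil)} \\ \Nm(u_0) \equiv m \shortmod{p^k}}} \xi(u_0)\,\psi_E(-u_0/p^k)\,R_{k,\xi}(b).
\]

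By Lemma \ref{statphase_sc_Klsums_nohyponp}, we have the uniform bound $|R_{k,\xi}(b)| \leq p^{-\lceil (3k-d)/2\rceil}$, and $R_{k,\xi}(b)$ is supported on those $b$ satisfying either the linear congruence from case (1), namely $bD \equiv 2\Tr(\alpha_0)\alpha_\xi p^k \pmod{p^{\lfloor (k+d)/2\rfloor}}$, or the degenerate case (2) where $b$ is essentially unconstrained modulo a slightly coarser residue. In the first case, since $v(D)=d$, this pins $b$ to a single coset modulo $p^{\lfloor (k-d)/2\rfloor}$, giving at most $p^{\mathfrak{a}}$ admissible values of $b$ in the relevant range, where $\mathfrak{a} = (1-(-1)^{k+d})/2$. (Case (2) only occurs in tightly constrained parameter ranges and contributes within the same count.) Combining with the prefactor $p^{-d/2}\zeta_p(1)f_\xi(1)$ from Theorem \ref{KlSumThm} yields
\[
|H_p(m,1;p^k)| \leq \zeta_p(1)\,f_\xi(1)\,p^{\lfloor(k+d)/2\rfloor - d/2}\cdot N_a\cdot p^{\mathfrak{a}} = \zeta_p(1)\,f_\xi(1)\,p^{(k-\mathfrak{a})/2}\cdot N_a \cdot p^{\mathfrak{a}},
\]
where $N_a$ is the number of admissible residues of $a$ for each fixed admissible $b$.

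For each such $b$, the condition $\Nm(u_0) \equiv m \pmod{p^k}$ becomes the quadratic congruence in $a$
\[
(2a + \Tr(\alpha_0) b)^2 \equiv 4m + D b^2 \pmod{p^{k+2v_F(2)}}.
\]
Substituting the congruence on $b$ from case (1) of Lemma \ref{statphase_sc_Klsums_nohyponp}, one checks that $4m + Db^2 \equiv 4\bigl(m + (p^k\Tr\alpha_0\alpha_\xi)^2/D\bigr)$ modulo a sufficiently high power of $p$, so the valuation of the right-hand side is controlled by $v\bigl((p^k\Tr\alpha_0\alpha_\xi)^2/D + m\bigr)$. Counting solutions to $y^2 \equiv \tau \pmod{p^{k'}}$ with $v(\tau)=v$ gives at most $2\,p^{\lfloor \min(v,k')/2\rfloor}$ solutions when $p$ is odd, and at most $32\,p^{\lfloor \min(v,k')/2\rfloor}$ solutions when $p=2$ (the extra factor absorbing the well-known pathologies of squares mod $2^n$, cf.\ the parity obstructions when writing $y=2a+\Tr(\alpha_0)b$). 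Truncating $v$ at $\lceil k/2\rceil$ reflects the fact that $a$ itself is only taken modulo $p^{\lceil k/2\rceil}$, beyond which no further solutions are counted.

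Assembling the three factors $p^{(k-\mathfrak{a})/2}\cdot p^{\mathfrak{a}}\cdot p^{\lfloor \min(v,\lceil k/2\rceil)/2\rfloor}$ and tracking the implicit multiplicative constants ($2$ for $p$ odd, $32$ for $p=2$, together with a factor of at most $2$ coming from case (2) when it occurs) gives the claimed bound with constants $2$ in the odd case and $64$ in general. The main technical obstacle will be the bookkeeping in the $p=2$ case: writing $u_0 = a+b\alpha_0$ in the ramified extensions ($d=2,3$) requires some care in the identification $\cO_E/\fp_E^{\lceil ek/2\rceil} \cong$ appropriate $\cO_F$-module, and the quadratic congruence analysis over $\Z_2$ has to absorb the factors of $4$ arising from $\mathrm{char}\,k_F = 2$. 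I expect the odd case to be essentially routine once the parametrization is fixed, while the even case will require a separate case-by-case verification guided by Table \ref{table:a'b'cases}.
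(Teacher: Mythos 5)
Your skeleton matches the paper's proof: reduce to $I_\xi(m,p^k)$ via Theorem \ref{KlSumThm}, insert the stationary-phase evaluation of Lemma \ref{statphase_sc_Klsums_nohyponp}, and count solutions of a quadratic congruence in $a$ using a square-root count as in \cite[Lem.\ 10]{KaplanPetrow2}. However, there are two genuine gaps.

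First, your parenthetical dismissal of case (2) of Lemma \ref{statphase_sc_Klsums_nohyponp} (``contributes within the same count'') is not an argument, and a trivial count there does not give the stated constants. In the paper the sum is split as $L+U$ according to $v(b')<\lfloor\frac{k+(e-1)}{2}\rfloor$ or not, and the $U$-piece is shown to \emph{vanish} in the relevant range $k<2c_0+d$ by an explicit computation: after Hensel-lifting the condition $a^2\equiv m$, the remaining sum over $b$ with $v(b)\geq c_0-1$ is $\sum_b\psi(\tfrac{b}{a}\Tr\alpha_\xi\alpha_0)=0$ because $v(\Tr\alpha_\xi\alpha_0)=-c_0$ exactly; for $k\geq 2c_0+d$ one instead invokes Proposition \ref{Hpmnpk_is_classicalKl} (degeneration to the classical Kloosterman sum) and the Weil bound. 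Without one of these inputs, the $U$-term alone can contribute on the order of $2\zeta_p(1)f_\xi(1)p^{(k+\fa)/2}$ (e.g.\ $p\neq2$, $d=0$, $k=2c_0-1$), so added to your $L$-estimate the leading constant $2$ for $p\neq 2$ is lost, and for $p=2$, $d=3$ you have not checked that the count stays inside $64$.

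Second, your reduction of the quadratic congruence, ``$4m+Db^2\equiv 4\bigl(m+(p^k\Tr\alpha_0\alpha_\xi)^2/D\bigr)$ modulo a sufficiently high power of $p$,'' fails precisely when $k+d$ is odd: the congruence on $b$ only pins $b$ modulo $p^{\lfloor\frac{k-d}{2}\rfloor}$ while $a$ is counted modulo $p^{\lceil k/2\rceil}$, so the cross term $2xb_0p^{\lfloor k/2\rfloor}\Nm(\alpha_0)$ in $Db^2$ is \emph{not} negligible relative to the modulus of the count. As a consequence your claimed uniform per-$b$ bound $2p^{\lfloor\frac12\min(v(\tau),\lceil k/2\rceil)\rfloor}$ is false for one exceptional value of $b$ (where the count can be $p^{\frac12\lceil k/2\rceil}$); the aggregate bound survives only because the other residues of $b$ contribute nothing, which is exactly the sub-case analysis (parametrizing $b=b_0+xp^{\lfloor k/2\rfloor}$ and treating $v(m-b_0^2\Nm\alpha_0)$ small/large separately) that the paper carries out and that produces the $p^{\fa}$ in the exponent. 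So the odd-$p$ case is not ``essentially routine'' as you suggest; it is where the delicate work sits, alongside the $p=2$ completion-of-the-square bookkeeping you correctly flag.
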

 \begin{myrema}
  Proposition \ref{statphase_bounds} does not exclude the possibility that $H_p(m,n,p^k)$ has worse than square-root cancellation. First of all, if $k+d$ is odd then there is an extra factor of $p^{1/2}$. It may be possible to remove this factor by working with the quadratic terms in the Postnikov formula Lemma \ref{postnikov} as in \cite[Lem.\ 12.3]{IK}, but we leave this aside. Second, if $k= c(\sigma)/2\geq 4$, $p \nmid m$, and $m \equiv -\frac{(p^k \Tr \alpha_0 \alpha_\xi)^2}{D} \pmod{p^2}$, then the bound in Proposition \ref{statphase_bounds} is worse than square-root by a factor of at least $p$. 
 \end{myrema}
 
 \begin{proof}
  Combining \eqref{eq:KlSumThm} and \eqref{intermediate_family_step1}, we have
 \begin{equation}\label{statphase_bounds_1}H_p(m,1,p^k) =  \overline{\gamma}\zeta_p(1)f_{\xi}(1) p^{2k-\frac{d}{2}} \sum_{\substack{u_0 \in \cO_E^\times /U_E(\lceil ek/2\rceil) \\ \Nm(u_0) \equiv m \shortmod{p^k}}}  \xi(u_0)\psi_E(-u_0p^{-k})R_{k,\xi}(u_0) \end{equation}
 with $u_0=a+b\alpha_0$ and $R_{k, \xi}(u_0)$ given by Lemma \ref{statphase_sc_Klsums_nohyponp}. Accordingly, split the sum on the right hand side of \eqref{statphase_bounds_1} as $L+U$ with $$L = \sum_{\substack{u_0 \in \cO_E^\times /U_E(\lceil ek/2\rceil) \\ \Nm(u_0) \equiv m \shortmod{p^k} \\ v(\widetilde{b})<\lfloor \frac{k+(e-1)}{2}\rfloor}}  \xi(u_0)\psi_E(-\frac{u_0}{p^{k}})R_{k,\xi}(u_0)\, \text{ and } \, U = \sum_{\substack{u_0 \in \cO_E^\times /U_E(\lceil ek/2\rceil) \\ \Nm(u_0) \equiv m \shortmod{p^k} \\ v(\widetilde{b})\geq \lfloor \frac{k+(e-1)}{2}\rfloor}}  \xi(u_0)\psi_E(-\frac{u_0}{p^{k}})R_{k,\xi}(u_0).$$
  
 By Lemma \ref{statphase_sc_Klsums_nohyponp}(1) we have that  
 \begin{equation*} |L |\leq p^{-\lceil \frac{3k-d}{2}\rceil} |S_L| \end{equation*}
 where $S_L$ is the set defined by 
 \begin{equation*} S_L =  \\ \{ u_0 \in \cO_E^\times /U_E(\lceil ek/2\rceil): \Nm(u_0) \equiv m \shortmod{p^{\lceil k/2 \rceil }}, \, b D\equiv 2 p^k \Tr \alpha_0 \alpha_\xi  \shortmod{p^{\lfloor \frac{k+d}{2}\rfloor}}\}.\end{equation*}
 The congruence $b D\equiv 2 p^k \Tr \alpha_0 \alpha_\xi  \pmod{p^{\lfloor \frac{k+d}{2}\rfloor}}$ determines (modulo $p^{\lceil \frac{k-(e-1)}{2}\rceil}$)
  \begin{itemize}
  \item exactly $p^\fa$ values of $b$ if $d=0$ or $1$,
  \item exactly $2$ values of $b$ if $d=2$, and 
  \item at most $4$ values of $b$ modulo if $d=3$.  
  \end{itemize}
  Next we estimate the size of the set 
  $$S_{L, b} = \{ a \in \cO/p^{\lceil k/2\rceil} \cO : \Nm(a+ b \alpha_0) \equiv m \shortmod{p^{\lceil k/2 \rceil }} \}$$
  for $b \equiv \frac{2 p^k \Tr \alpha_0 \alpha_\xi}{D} \pmod{p^{\lfloor \frac{k-d}{2}\rfloor}}$. We proceed by cases. Let us write $S(\ell,n)$ for the number of integers $x$ modulo $n$ for which $x^2 -\ell \equiv 0 \pmod{n}$.
  
 If $d=1$ or $d=0, p\neq 2$, and $k$ is even, then the congruence in $S_{L, b}$ is $$ a^2 - \frac{(p^k \Tr \alpha_0 \alpha_\xi)^2}{D}\equiv m \shortmod{p^{\lceil k/2 \rceil}}$$ 
 since $\Tr(\alpha_0)=0$ and $\Nm(\alpha_0)  p^{\lfloor \frac{k-d}{2}\rfloor}\equiv 0 \pmod{p^{\lceil k/2 \rceil }}$ in these cases. Thus, by e.g.\ \cite[Lem.\ 10]{KaplanPetrow2} we have $$|S_{L,b}|\leq S(\frac{(p^k \Tr \alpha_0 \alpha_\xi)^2}{D} +m,p^{\lceil \frac{k}{2}\rceil})\leq 2p^{\lfloor \frac{1}{2} \min(v(\frac{(p^k \Tr \alpha_0 \alpha_\xi)^2}{D} +m),\lceil \frac{k}{2} \rceil)\rfloor}.$$

  Now consider the case that $p=2$. We may complete the square to find that any $a \in S_{L, b}$ satisfies 
  $$ \left(a+ \frac{p^k \Tr \alpha_0 \alpha_\xi}{D} \Tr \alpha_0\right)^2- \frac{(p^k \Tr \alpha_0 \alpha_\xi)^2}{D} -m \equiv 0 \shortmod{p^{\lfloor \frac{k-d}{2}\rfloor}}.$$
  Thus, $|S_{L,b}|\leq S(\frac{(p^k \Tr \alpha_0 \alpha_\xi)^2}{D} +m,p^{\lfloor \frac{k-d}{2}\rfloor})$. Since $\lceil \frac{k}{2}\rceil- \lfloor \frac{k-d}{2}\rfloor \leq 2$, we have by e.g.\ \cite[Lem.\ 10]{KaplanPetrow2} that $$S(\frac{(p^k \Tr \alpha_0 \alpha_\xi)^2}{D} +m,p^{\lfloor \frac{k-d}{2}\rfloor}) \leq 4S(\frac{(p^k \Tr \alpha_0 \alpha_\xi)^2}{D} +m,p^{\lceil \frac{k}{2}\rceil}) \leq 16p^{\lfloor \frac{1}{2} \min(v(\frac{(p^k \Tr \alpha_0 \alpha_\xi)^2}{D} +m),\lceil \frac{k}{2} \rceil)\rfloor}.$$

  Lastly, let us consider the case that $d=0$, $p \neq 2$, and $k$ is odd. Writing $b_0 = \frac{2 p^k \Tr \alpha_0 \alpha_\xi}{D}$, we parametrize the possible values of $b$ by $b=b_0+ xp^{\lfloor k/2\rfloor}$, where $x$ runs modulo $p$. Then, 
  $$|S_{L,b}| = S(m-b_0^2\Nm(\alpha_0)-2xb_0p^{\lfloor k/2 \rfloor}\Nm(\alpha_0), p^{\lceil k/2 \rceil}).$$ If $v(b_0)>0$, then $b_0p^{\lfloor k/2 \rfloor} \equiv 0 \pmod{p^{\lceil k/2 \rceil}}$ and we have
  $$|S_{L,b}|\leq 2 p^{\lfloor \frac{1}{2} \min(v(m-b_0^2\Nm(\alpha_0))),\lceil \frac{k}{2} \rceil)\rfloor}$$ by a direct application of \cite[Lem.\ 10]{KaplanPetrow2}. So, we may assume $v(b_0)=0$ in the following. 
  
   If $v(m-b_0^2\Nm(\alpha_0))\leq \lceil \frac{k}{2}\rceil-2$, then $v(m-b_0^2\Nm(\alpha_0)-2xb_0p^{\lfloor k/2 \rfloor}\Nm(\alpha_0)) = v(m-b_0^2\Nm(\alpha_0))$, so that by loc.\ cit.\ $$|S_{L,b}| \leq 2p^{\lfloor \frac{1}{2} \min(v(m-b_0^2\Nm(\alpha_0)),\lceil \frac{k}{2} \rceil)\rfloor}.$$ If $v(m-b_0^2\Nm(\alpha_0))\geq \lceil \frac{k}{2}\rceil-1$ and $\lceil k/2 \rceil$ is odd, then similarly
  $$ |S_{L,b}| = p^{\frac{1}{2} ( \lceil \frac{k}{2}\rceil -1)} S\left( \frac{m-b_0^2\Nm(\alpha_0)-2xb_0p^{\lfloor k/2 \rfloor}\Nm(\alpha_0)}{p^{ \lceil \frac{k}{2}\rceil -1}},p\right)\leq 2 p^{\lfloor \frac{1}{2} \min(v(m-b_0^2\Nm(\alpha_0))),\lceil \frac{k}{2} \rceil)\rfloor}.$$  
  If $v(m-b_0^2\Nm(\alpha_0))\geq \lceil \frac{k}{2}\rceil-1$ and $\lceil k/2 \rceil$ is even, then $$ |S_{L,b}| = p^{\frac{1}{2} ( \lceil \frac{k}{2}\rceil -2)} S\left( \frac{m-b_0^2\Nm(\alpha_0)-2xb_0p^{\lfloor k/2 \rfloor}\Nm(\alpha_0)}{p^{ \lceil \frac{k}{2}\rceil -2}},p^2\right)$$ and $$v\left(\frac{m-b_0^2\Nm(\alpha_0)-2xb_0p^{\lfloor k/2 \rfloor}\Nm(\alpha_0)}{p^{ \lceil \frac{k}{2}\rceil -2}}\right) = \begin{cases} \geq 2 & \text{ if } x = \frac{m-b_0^2\Nm(\alpha_0)}{2b_0p^{\lfloor k/2 \rfloor}\Nm(\alpha_0)}, \\ 1 & \text{ otherwise.}\end{cases}$$ Thus by loc.\ cit.\
  $$|S_{L,b}| = \begin{cases}p^{\frac{1}{2}  \lceil \frac{k}{2}\rceil} & \text{ if } b= \frac{1}{2}(b_0+ \frac{m}{b_0\Nm(\alpha_0)}),  \\ 0 & \text{ otherwise,}\end{cases}$$
  and
  $$ \frac{1}{2}  \lceil \frac{k}{2}\rceil =\lceil \frac{1}{2} \min(v(m-b_0^2\Nm(\alpha_0)),\lceil \frac{k}{2} \rceil)\rceil \leq \lfloor \frac{1}{2} \min(v(m-b_0^2\Nm(\alpha_0)),\lceil \frac{k}{2} \rceil)\rfloor +1.$$
  Of course, $m-b_0^2\Nm(\alpha_0)= m+\frac{(p^k \Tr \alpha_0 \alpha_\xi)^2}{D}.$  
 
Drawing these cases together, we conclude that $$|S_L| \leq 2^6 p^{\fa + \lfloor \frac{1}{2} \min(v(\frac{(p^k \Tr \alpha_0 \alpha_\xi)^2}{D} +m),\lceil \frac{k}{2} \rceil)\rfloor},$$ where $2^6$ may be replaced by $2$ if $p\neq 2$. 
  
 Now let us consider the sum $U$. In similar fashion to the proof of Proposition \ref{intermediate_family_mainprop}, the condition $v(\widetilde{b}) \geq \lfloor \frac{k+(e-1)}{2}\rfloor$ excludes all cases except $d=3,1$, or ($p \neq 2, d=0$ and $v(a)=0$). In these cases, by Table \ref{table:a'b'cases} we have that $v(\widetilde{b})= v(b)+(e-1)+v(2)$.  If $k \geq 2c_0+d$, then by Proposition \ref{Hpmnpk_is_classicalKl}, the sum $H_p(m,1,p^k)$ is a classical Kloosterman sum, so that \eqref{eqn:statphase_bounds} holds by the classical Weil bound. We may therefore assume that $k< 2c_0+d$ for the remainder of the proof. 
 
 First let us suppose that $p$ is odd, which ensures that all terms in the sum $U$ with $v(a) \neq 0$ vanish. Moreover, we have by Lemma \ref{statphase_sc_Klsums_nohyponp}(2) that $U$ vanishes unless $\lceil (k-(e-1))/2\rceil \geq c_0$, so that the only case left to consider is when $k=2c_0+e-2$. Thus, when $p\neq 2$ and $k<2c_0+d$, the sum $U$ either vanishes, or 
 $$U =  p^{-\lceil \frac{3k-d}{2}\rceil } \sum_{\substack{b \in \cO/p^{c_0}\cO \\ v(b) \geq c_0-1}} \sum_{\substack{a \in (\cO/p^{c_0}\cO)^\times \\ a^2 + b^2 \Nm(\alpha_0) \equiv m \shortmod{p^{k}}}}  \xi(a+b\alpha_0)\psi(-\frac{2a}{p^{k}}).$$
 For $m \in \cO^\times$ the domain of summation on $a$ is 
 \begin{multline*}
 \{a \in (\cO/p^{c_0}\cO)^\times : \exists \text{ a lift } \tilde{a} \in (\cO/p^k\cO)^\times \text{ of } a \text{ with } \tilde{a}^2 \equiv m-b^2\Nm(\alpha_0) \shortmod{p^k}\} \\ 
 = \{ a \in (\cO/p^{c_0}\cO)^\times : a^2 \equiv m \shortmod{p^{c_0}}\}
 \end{multline*}
 by Hensel's lemma.  
   In particular, the domain is independent of $b$. The result of these transformations is 
 \begin{multline*} U= p^{-\lceil \frac{3k-d}{2}\rceil } \sum_{\substack{a \in (\cO/p^{c_0}\cO)^\times \\ a^2 = m \shortmod{p^{c_0}}}}  \sum_{\substack{b \in \cO/p^{c_0}\cO \\ v(b) \geq c_0-1}}  \xi(a+b\alpha_0)\psi(-\frac{2a}{p^{k}}) \\
  = p^{-\lceil \frac{3k-d}{2}\rceil } \sum_{\substack{a \in (\cO/p^{c_0}\cO)^\times \\ a^2 = m \shortmod{p^{c_0}}}}  \xi(a)\psi(-\frac{2a}{p^{k}}) \sum_{\substack{b \in \cO/p^{c_0}\cO \\ v(b) \geq c_0-1}} \psi(\frac{b}{a} \Tr \alpha_\xi \alpha_0) = 0.
  \end{multline*}

 If $d=3$, then essentially the same argument as for $p\neq 2$ goes through to show that $U=0$ when $k<2c_0+d$. We quickly note the necessary changes. Lemma \ref{statphase_sc_Klsums_nohyponp}(2) shows that $U$ vanishes, except possibly in the cases $2c_0\leq k\leq 2c_0+2$. We have $$ \begin{cases} b \in \cO/p^{c_0}\cO, v(b)\geq c_0-2  & \text{ if } k=2c_0, \\ b \in \cO/p^{c_0}\cO, v(b)\geq c_0-1  & \text{ if } k=2c_0+1, \\ b \in \cO/p^{c_0+1}\cO, v(b)\geq c_0-1 & \text{ if } k=2c_0+2,\end{cases}  \text{ and }  a \in \begin{cases} (\cO/p^{c_0}\cO)^\times & \text{ if } k=2c_0, \\ (\cO/p^{c_0+1}\cO)^\times & \text{ if } k\geq 2c_0 +1.\end{cases} $$
  Lastly, we use the hypothesis $c(\sigma)\geq 9$ from Theorem \ref{KlSumThm} to ensure that $b^2 \Nm(\alpha_0) \equiv 0 \pmod{p^{c_0}}$ and that Hensel's lemma continues to work in residue characteristic 2.
 \end{proof}
Proposition \ref{statphase_bounds} does not apply in the case $c(\sigma)=2$ and $k=1$ (i.e.\ $c_0=1$ and $E/F$ unramified) since the decomposition \eqref{intermediate_family_step1} is tautological in that case. Instead, we have the following bounds from $\ell$-adic cohomology.
\begin{myprop}[Deligne, Katz]\label{Katz_AG_bounds}
Suppose $E/F$ is an unramified quadratic extension, $q=|k_F|$, $c(\xi)\leq1$ and $\psi\neq 1$ an additive character of $F$ of conductor 0. For all $m$ we have
\begin{equation} 
\Big| \sum_{\substack{u \in (\cO_E/p\cO_E)^\times \\ \Nm(u)\equiv m \shortmod{p}}} \xi(u) \psi_E(-u p^{-1})\Big| \leq 2 \sqrt{q}.
\end{equation}
\end{myprop}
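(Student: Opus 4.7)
The plan is to reduce the sum modulo $p$ to a character sum on the $\F_q$-points of an anisotropic $1$-torus, and then bound it via Deligne's theorem.

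Since $c(\xi)\leq 1$ and $c(\psi)=0$, the integrand reduces modulo $\fp$: $\xi$ descends to a character $\bar\xi$ of $(\cO_E/\fp)^\times=\F_{q^2}^\times$, and $\psi_E(\,\cdot\,/p)$ descends to $\bar\psi\circ\Tr_{\F_{q^2}/\F_q}$ for some nontrivial additive character $\bar\psi:\F_q\to\C^\times$. Assuming $\bar m:=m\bmod p\in\F_q^\times$ (otherwise the sum is empty), the sum becomes
\[
\tilde S \;=\; \sum_{\substack{\bar u\in\F_{q^2}^\times\\ \Nm\bar u=\bar m}} \bar\xi(\bar u)\,\bar\psi(-\Tr\bar u),
\]
indexed by $T_{\bar m}(\F_q)$, where $T_{\bar m}:=\{u\in\mathrm{Res}_{\F_{q^2}/\F_q}\G_m:\Nm u=\bar m\}$ is an $\F_q$-torsor under the anisotropic norm-one torus $T^1$, with $|T_{\bar m}(\F_q)|=q+1$.

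Next I would fix $\bar u_0\in T_{\bar m}(\F_q)$ and parametrize $\bar u=\bar u_0\bar v$ with $\bar v\in T^1(\F_q)$. Using $\bar v^{-1}=\overline{\bar v}$ on $T^1$, one has $\Tr(\bar u_0\bar v)=\bar u_0\bar v+(\bar m/\bar u_0)/\bar v$, so $\tilde S/\bar\xi(\bar u_0)$ becomes a Kloosterman-like sum on $T^1/\F_q$ twisted by $\bar\xi|_{T^1(\F_q)}$. I would then realize this as a Frobenius trace sum by constructing a rank-$1$ lisse $\overline{\Q}_\ell$-sheaf $\mathcal{F}$ on $T^1/\F_q$: the tensor product of the Lang sheaf attached (via Lang's theorem on the connected commutative algebraic group $T^1$) to the character $\bar\xi|_{T^1(\F_q)}$, with the Artin-Schreier pullback of $\mathcal{L}_{\bar\psi}$ along the $\F_q$-morphism $\bar v\mapsto-\bar u_0\bar v-(\bar m/\bar u_0)/\bar v$. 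By construction the trace of Frobenius of $\mathcal{F}$ at $\bar v\in T^1(\F_q)$ recovers the summand, and the Grothendieck-Lefschetz trace formula reduces matters to bounding the action of Frobenius on $H^*_c(T^1_{\bar\F_q},\mathcal{F})$.

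For the cohomology, pass to $\bar\F_q$ and fix an isomorphism $T^1_{\bar\F_q}\simeq\G_m$ with coordinate $v$; under this, $\mathcal{F}$ pulls back to a product $\mathcal{L}_\chi\otimes\mathcal{L}_{\bar\psi(av+b/v)}$ with $ab=\bar m\neq 0$ and $\chi$ a tame character. The Artin-Schreier factor is wildly ramified at both $0$ and $\infty$ with Swan conductor $1$ at each (pole orders $1$ are coprime to $p$), while the Kummer factor is tame. The Euler-Poincar\'e formula gives $\chi_c(\G_m,\mathcal{F})=-\mathrm{sw}_0-\mathrm{sw}_\infty=-2$. Lissé rank $1$ on the affine $\G_m$ forces $H^0_c=0$, and nontrivial geometric monodromy (from the wild part) forces $H^2_c=0$, so $\dim H^1_c=2$. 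Deligne's purity theorem (Weil II) then bounds every Frobenius eigenvalue on $H^1_c$ by $\sqrt{q}$, yielding $|\tilde S|\leq 2\sqrt{q}$.

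The main obstacle is the construction of a Kummer-type sheaf on $T^1$ over $\F_q$ corresponding to $\bar\xi|_{T^1(\F_q)}$: since $\bar\xi$ need not factor through $\Nm$, no naive Kummer sheaf descends from $\F_{q^2}$. The resolution is to apply Lang's theorem directly to the $\F_q$-torus $T^1$, producing a rank-$1$ character sheaf associated to any character of the finite abelian group $T^1(\F_q)$. The remaining ingredients---the Swan conductor computation, Euler-Poincar\'e, and Weil II---are classical and closely parallel the Weil bound for the standard Kloosterman sum.
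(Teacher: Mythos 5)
Your proof is correct, but it takes a different route from the paper. The paper disposes of the statement by citation: Deligne (SGA $4\tfrac12$, Sommes trig.) suggested and Katz (GKM 8.8.5) proved that there is a lisse rank-$2$ sheaf $\Kl(\Res_{k_E/k_F}\G_m,\psi_E,\xi)$ on $\G_{m,k_F}$, pure of weight $1$, whose trace function at $m$ is exactly (minus) the sum in question, and the bound $2\sqrt q$ is then immediate from purity; the $m\equiv 0$ case is handled by the same trivial observation you make. You instead argue fiberwise and from scratch: you identify the norm-$m$ fiber as a torsor under the anisotropic norm-one torus $T^1$, translate by a base point $\bar u_0$ so that the trace becomes $\Tr(\bar u_0 v)=\bar u_0 v+(\bar m/\bar u_0)v^{-1}$ on $T^1(\F_q)$ (note this expression is an $\F_q$-morphism only when read as the restriction of $\Tr_{\F_{q^2}/\F_q}(\bar u_0\,\cdot)$ to $T^1$, since $\bar u_0\notin\F_q$ in general --- your construction does use it this way, which is fine), build the rank-$1$ sheaf as Lang character sheaf for $\bar\xi|_{T^1(\F_q)}$ tensored with the Artin--Schreier pullback, and then run Grothendieck--Ogg--Shafarevich (Swan $=1$ at both $0$ and $\infty$ after the identification $T^1_{\bar\F_q}\simeq\G_m$, so $\chi_c=-2$), kill $H^0_c$ and $H^2_c$, and apply Weil II. Your key observation --- that $\bar\xi$ need not factor through the norm, so one must use Lang's theorem on $T^1$ itself rather than descend a Kummer sheaf from $\F_{q^2}$ --- is exactly the point that makes the fiberwise argument work. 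The trade-off: the paper's citation yields at once a lisse, pure rank-$2$ sheaf in the parameter $m$ (with monodromy information potentially useful elsewhere), whereas your argument is self-contained, needs only standard tools, and in effect reconstructs the stalk of Katz's sheaf at each fixed $m$: your $H^1_c(T^1_{\bar\F_q},\mathcal F)$ is that stalk.
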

\begin{proof}
Let $\ell$ be a prime invertible in the residue field $k_F$. Deligne \cite[Sommes Trig.\ Rem.\ 7.18]{SGA4.5} suggested and Katz \cite[8.8.5 Thm.]{Katz} proved that there exists a lisse $\overline{\Q}_\ell$-sheaf $\Kl(\Res_{k_E/k_F}\G_m, \psi_E,\xi)$ of rank $2$ on $\G_{m,k_F}$, pure of weight 1, with trace function 
$$t_{\Kl(\Res_{k_E/k_F}\G_m, \psi_E,\xi)} (m) = - \sum_{\substack{u \in k_E^\times \\ \Nm_{k_E/k_F}(u)=m}}\xi(u)\psi_E(u/p).$$
Then, we have that $|t_{\Kl(\Res_{k_E/k_F}\G_m, \psi_E,\xi)} (m)| \leq 2 \sqrt{q}$ for all $m \in k_F^\times$ (see e.g.\ \cite[(3.4)]{FKMS}). If $m=0$ the sum clearly vanishes. 
\end{proof}
 
\section{Examples}\label{sec:examples}

\subsection{Classical family}\label{sec:egclassical}
Choose $c \in \Z_{\geq 0}$ and let \begin{equation}\label{classical_test_fcn} f_{\leq c} = \nu(p^c)1_{ZK_0(p^c)}.\end{equation} The function $f_{\leq c} \in \cH_p$ is the classical choice of test function matching \cite{knightly_kuznetsovs_2013}. 
\subsubsection{Geometric and Spectral Assumptions}\label{classic_gsassumptions}
It is clear that $f_{\leq c}$ satisfies Geometric Assumptions \eqref{geo2} and \eqref{geo3} with $y=p^{i}$, any $i \leq c$. It also satisfies the spectral assumption, by definition. 
\subsubsection{Local family}\label{sec:locfam-classical}
The operator $\pi(f_{\leq c}):V_\pi \to V_\pi$ is the orthogonal projection onto the space $V_\pi^{K_0(p^c)}$ of $K_0(p^c)$-fixed vectors in $V_\pi$. Therefore the local family $\cF_{\leq c}:=\cF_p(f_{\leq c})$ consists of $\pi \in {\overline{G}}(\Q_p)^{\wedge}$ that admit a non-zero $K_0(p^c)$-fixed vector. Equivalently, by newform theory 
\begin{equation}\label{localfam:classical}
\cF_{\leq c}= \{ \pi \in {\overline{G}}(\Q_p)^{\wedge}: c(\pi) \leq c\}.
\end{equation}
\subsubsection{Level}
It is clear that the local level $N_p$ of $f_{\leq c}$ satisfies $N_p = p^c$. 
\subsubsection{Diagonal weights}
By definition 
\begin{equation}\label{diagwt:classical}
\delta_p:= \int_{c(\pi) \leq c} \dim V_\pi^{K_0(p^c)}\, d\widehat{\mu}(\pi) = \int_{{\overline{G}(\Q_p)}^{\wedge}} \Tr \pi(f_{\leq c})\,d\widehat{\mu}(\pi),
\end{equation}
which by the Plancherel formula equals 
\begin{equation}  \int_{\Q_p} f_{\leq c} \left( \begin{smallmatrix} 1 & t \\ 0 & 1 \end{smallmatrix} \right) \psi_p(-mt)\,dt = \nu(p^c)  = f_{\leq c}(1)
\end{equation}
for any $m \in \Z_p^\times$. 
\subsubsection{Local Generalized Kloosterman Sums} 
\label{section:localgenKloostSums}
We have that 
\begin{equation}
\label{eq:localgenKloostSums}
H_p(m,n;p^k) = \begin{cases} 0 & \text{ if } k< c, \\
\delta_p S(m,n;p^k) & \text{ if } k \geq c \end{cases}
\end{equation}
by e.g.\ \cite[Prop.\ 3.7]{KLPetersson}.

\subsubsection{Local Geometric Conductor}\label{kp:classical}
Equation \eqref{eq:localgenKloostSums} shows that the local geometric conductor $k_p$ satisfies 
$k_p=c$ by the generic non-vanishing of classical Kloosterman sums. 
\subsubsection{Hypotheses from Section \ref{sec:intro:LSI}}\label{OLSI:classical}
Hypothesis \ref{hypCvF} (CvF) holds for  the classical family $\cF_{\leq c}$, since $p^{k_p} = p^c \geq \frac{2}{3} \nu(p^c) =\frac{2}{3} f_{\leq c}(1).$

To verify Hypothesis \ref{hypFTB} (FTB), we compute the Fourier/Mellin transform of $H_p$. A simple calculation shows that when $k\geq c$ and $c(\chi)\leq k$ 
\begin{equation}\label{Hhatp_def_examplessec}
\widehat{H}_p(\chi,k) := \frac{1}{\varphi(p^k)}\sumstar_{m \shortmod{p^k}} H_p(m,1;p^k)\overline{\chi(m)} = \nu(p^c) \frac{\tau(\overline{\chi})^2}{\varphi(p^k)},
\end{equation}
where $$\tau(\chi) = \sumstar_{m \shortmod{q}}\chi(m) e(m/q)$$ is the classical Gauss sum of $\chi$ as in e.g.\ \cite[Lem.\ 7.1]{PetrowYoungCoset}. In particular, we have
$$|\widehat{H}_p(\chi,k)| = \begin{cases} 
0 & \text{ if } k<c,  \\
f_{\leq c}(1) \zeta_p(1)  & \text{ if } c(\chi) = k \geq c, \\ f_{\leq c}(1) \zeta_p(1) p^{-1} & \text{ if } c(\chi)=0 \text{ and } k=1\geq c, \\ 0 & \text{ if } 0<c(\chi)< k \text{ and } k\geq c, \\ 0 & \text{ if } c(\chi)=0 \text{ and } k\geq 2, \end{cases}$$
so that Hypothesis \ref{hypFTB} (FTB) follows. As a side comment, the sum in \eqref{Hhatp_def_examplessec} is meaningless if $c(\chi)>k$, but the integral in \eqref{intro:HhatLocalDef} for $\widehat{H}_p(\chi,k)$ does make sense and returns 0 for $c(\chi)>k$.

\subsection{Principal series families}\label{sec:egPS}
Let $\chi$ be a character of $\Z_p^\times$ with $\chi^2$ non-trivial, i.e.\ a primitive non-quadratic Dirichlet character to some $p$-power modulus. Write $c=c(\chi)$, and if $p=2$ assume in addition that $c\geq 4$. We define a test function $f_\chi \in \cH_p$ by 
\begin{equation}\label{fchi_def}
f_\chi(g) := \frac{1}{\varphi(p^c)}\sum_{a,a' \in (\Z/p^c\Z)^{\times}} f_{\chi, a,a'} ,
\end{equation}
where 
\begin{equation}
f_{\chi, a,a'} = \chi(a)^{-1} \chi(a')  f_{\chi,0}(n(a'p^{-c})^{-1} g n(a p^{-c})),
\end{equation}
and
\begin{equation}
f_{\chi,0} (g) := \nu(p^c) 1_{ZK_0(p^c)} \overline{\chi(\alpha/\delta)} \quad \text{ for } \quad g= \left(\begin{smallmatrix} \alpha  & \beta \\ \gamma & \delta \end{smallmatrix}\right) \in G(\Q_p).
\end{equation}
Note in particular that $f_\chi(1)=f_{\chi,0}(1) = \nu(p^c)$. 

As we will see, the relative trace formula Theorem \ref{theoGeomSpec} associated to the choice $f_\chi$ at all ramified places matches the Bruggeman-Kuznetsov trace formula for $\cup_{m \mid q} (\cH(m, \chi^2) \otimes \overline{\chi})$ 
derived by classical means by the second and third authors in \cite{PetrowYoungWeyl} (here $\cH(m,\chi^2)$ is a basis of Hecke-Maass newforms of level $m \mid q$ and central character $\chi^2$, where $\chi$ is a primitive Dirichlet character modulo $q$). Note that in \cite{PetrowYoungWeyl}, the family used had $\cH(m, \overline{\chi}^2) \otimes \chi$ instead of $\cH(m, \chi^2) \otimes \overline{\chi}$, but of course these are identical.  

\subsubsection{Geometric and Spectral Assumptions}\label{PS_gsassumptions}
We can check by an explicit calculation that for any $\alpha,\alpha' \in (\Z/p^c\Z)^\times$, the support of $f_{\chi, 0}(n(\alpha'p^{-c})^{-1} g n(\alpha p^{-c}))$ is contained in $a(p^c)^{-1}Z K_p a(p^c)$. Therefore, $f_\chi$ satisfies Geometric Assumption \eqref{geo3} with $y=p^{c}$.  

In the case that $p$ is odd, the spectral assumption for $f_\chi$ was established by the first named author \cite[\S 3.3]{Hu}. Precisely, by Proposition 3.28 and the first sentence of Corollary 3.24 of loc.\ cit.\ we have that $\pi(f_\chi):V_\pi \to V_\pi$ is an orthogonal projection onto the line of the newform in $V_\pi$ if $\pi$ is isomorphic to a principal series representation $\pi(\mu,\mu^{-1})$ with $\mu \vert_{\Z_p^\times} = \chi$ and $\pi(f_\chi)= 0$ if $\pi$ is not such a representation (recall we have assumed that $\chi$ is not quadratic). Therefore $f_\chi$ is a newform projector, and hence satisfies the spectral assumption.

If $p=2$ then we may argue along similar lines to show that $f_\chi$ is a newform projector. We briefly give the details now. First, note that $c(\chi^2) = c-1$ since we have assumed $c\geq 4$, as can be seen by e.g.\ Lemma \ref{postnikov}. Next, denote by $\tilde{\theta}'$ the function on $ZK_0(p^{c})$ given by
$$\tilde{\theta}'\left(\begin{smallmatrix} \alpha & \beta \\ \gamma & \delta \end{smallmatrix} \right)=\chi^{-2}(\delta).$$
\begin{mylemma}\label{Lem:localizedfortwist}
	Let $\pi'$ be an irreducible smooth admissible representation of $\GL_2(\Q_2)$ and $c\geq 4$. Then the subspace of $V_{\pi'}$ on which $ZK_0(p^{c})$ acts by the character $\tilde{\theta}'$ is nontrivial only when $\pi'\simeq \pi(\nu,\nu^{-1}\chi^{-2})$ for some unramified character $\nu$, in which case it is two dimensional with a basis given by the newform $\varphi_0'\in V_{\pi'}$ and its translate $\varphi_1'=\pi'(a(p))\varphi_0'$.
\end{mylemma}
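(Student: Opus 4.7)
The plan is to reinterpret the hypothesis as a standard Atkin--Lehner newform condition, then classify the irreducible admissible $\pi'$ of $\GL_2(\Q_2)$ satisfying it. Suppose first that a non-zero $v$ lies in the subspace. Evaluating $\tilde{\theta}'$ on scalar matrices $z(t)$ gives $\omega'(t) = \chi^{-2}(t)$ for all $t \in \Q_2^\times$, where $\chi^{-2}$ is extended to $\Q_2^\times$ by $\chi^{-2}(p) = 1$. By Lemma \ref{postnikov} and the standing assumption $c \geq 4$, this central character $\omega' = \chi^{-2}$ has conductor exactly $c - 1$. Moreover the defining condition $\pi'(k)v = \chi^{-2}(\delta)v$ for $k \in K_0(p^c)$ coincides with the $\omega'$-isotypic new/old-form condition, so by standard Atkin--Lehner newform theory the subspace in question has dimension $c - c(\pi') + 1$ whenever $c(\pi') \leq c$, spanned by translates of the newform, and is zero otherwise.

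The bulk of the argument is then the classification of irreducible admissible $\pi'$ of $\GL_2(\Q_2)$ with $\omega' = \chi^{-2}$ and $c(\pi') \leq c$. For an irreducible principal series $\pi(\mu_1, \mu_2)$ with $\mu_1 \mu_2 = \omega'$, the constraints $c(\mu_1\mu_2) = c - 1$ and $c(\mu_1) + c(\mu_2) \leq c$ force, up to relabelling, $c(\mu_2) = c - 1$ and $c(\mu_1) \leq 1$. The point special to $p = 2$ is that \emph{over $\Q_2$ there are no characters of conductor exactly $1$}, since $\Z_2^\times = 1 + 2\Z_2$; hence $\mu_1 = \nu$ must be unramified, yielding $\pi' \simeq \pi(\nu, \nu^{-1}\chi^{-2})$ of conductor $c - 1$. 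Twists of Steinberg $\St \otimes \mu$ are excluded because $\omega' = \mu^2$ of conductor $c - 1$ forces $c(\mu) = c$ (via $c(\mu^2) = c(\mu) - 1$ over $\Q_2$ for $c(\mu) \geq 3$), whence $c(\pi') = 2c(\mu) > c$. For supercuspidal $\pi'$, a case-by-case check using the dihedral parametrization of Section \ref{sec:parametrization_dihedrals}, the conductor formula $c(\pi(\Ind_E^{\Q_2}\xi)) = \tfrac{2}{e}c(\xi) + d$, and the restriction bound $c(\xi|_{\Q_2^\times}) \leq \lceil c(\xi)/e \rceil$ together imply $c(\omega') < c - 1$ whenever $c(\pi') \leq c$; the finitely many non-dihedral supercuspidals of $\GL_2(\Q_2)$ occur only at small conductors, outside the relevant range once $c \geq 4$.

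Only the principal series case survives, giving the $2$-dimensional subspace with basis $\{\varphi_0', \pi'(a(p))\varphi_0'\}$ as claimed.

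The hardest part of the proof will be the supercuspidal exclusion: it requires carefully combining the conductor formula with the restriction bound on $\xi$ across all three ramification classes of $E/\Q_2$ (namely $d = 0, 2, 3$), and separately handling the finitely many non-dihedral supercuspidals of $\GL_2(\Q_2)$.
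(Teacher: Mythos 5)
Your overall route differs from the paper's: the paper does not classify representations by type at all, but instead quotes the proof of Casselman's Proposition 2 (the reference \cite{Casselman_restriction}) to conclude directly that a vector transforming under $K_0(p^c)$ by $\tilde\theta'$ can exist only in a principal series $\pi(\eta_1,\eta_2)$ with $c(\eta_1)+c(\eta_2)\leq c$ and $c(\eta_1\eta_2)=c-1$; after that, the absence of conductor-one characters of $\Q_2^\times$ and newform theory finish the argument exactly as in your principal-series case. Your alternative — reduce to the standard newvector condition for the central character $\omega'=\chi^{-2}$ of conductor $c-1$ and then exclude Steinberg twists and supercuspidals by hand — is a legitimate strategy in principle, and your principal-series and Steinberg discussions are essentially fine (modulo the small slip that $c(\mu^2)=c(\mu)-1$ fails at $c(\mu)=3$, where $\mu^2$ is unramified; the conclusion survives because $c(\omega')\geq 3$ forces $c(\mu)\geq 4$).

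The genuine gap is in the supercuspidal exclusion. First, in the dihedral case your stated ingredients do not yield $c(\omega')<c-1$: the central character is $\eta_{E/\Q_2}\,\xi|_{\Q_2^\times}$, and you ignore the factor $\eta_{E/\Q_2}$, whose conductor is $d\leq 3$. For $c=4$ and $E/\Q_2$ ramified with $d=3$ the bound you quote only gives $c(\omega')\leq\max\bigl(\lceil c(\xi)/2\rceil,3\bigr)$, which does not rule out $c(\omega')=3=c-1$; one must additionally use $c(\pi')=c(\xi)+3\leq 4$ together with the fact that $E$ has residue field $\F_2$, so there is no regular $\xi$ with $c(\xi)\leq 1$. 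Second, and more seriously, your dismissal of the non-dihedral (primitive) supercuspidals of $\GL_2(\Q_2)$ as occurring "only at small conductors, outside the relevant range once $c\geq 4$" is not a valid reason: the relevant range is $c(\pi')\leq c$, which \emph{contains} all small conductors, and primitive supercuspidals over $\Q_2$ have conductor exponents comparable to the values of $c$ in play. What you actually need is that no primitive supercuspidal has central character of conductor $\geq c(\pi')-1$ with that conductor $\geq 3$, and no argument for this is given; note that the naive bound $c(\pi)\geq 2c(\omega_\pi)$ you implicitly lean on already fails for wildly ramified dihedral representations (e.g.\ $d=3$, $c(\xi)=2$ gives $c(\pi)=5$, $c(\omega)=3$), so some genuine input about the dyadic primitive representations (or an argument in the style of Casselman's restriction theorem, as the paper uses) is required to close this case.
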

\begin{proof}
	If the subspace of $V_{\pi'}$ on which $K_0(p^{c})$ acts by the character $\tilde{\theta}'$ is nontrivial, it is necessary that $\pi'=\pi(\eta_1,\eta_2)$ (see e.g.\ \cite[Pf.\ of Prop.\ 2]{Casselman_restriction}) with $\sum c(\eta_i)\leq c$ and $c(\eta_1\eta_2)=c-1$. As there is no character over $\Q_2$ with level $1$, and the central character is determined, we have $\pi'\simeq \pi(\nu,\nu^{-1}\chi^{-2})$ for some unramified characters $\nu$. In that case we have $c(\pi')=c(\chi)-1$, thus by newform theory the corresponding subspace is 2-dimensional, spanned by the newform and its diagonal translate.
\end{proof}
Now we twist back. Denote by $\tilde{\theta}$ the following character on $ZK_0(p^{c})$
\begin{equation}\label{Eq:thetatilde}
	\tilde{\theta}\left(\begin{smallmatrix} \alpha & \beta \\ \gamma & \delta \end{smallmatrix}\right)=\chi(\alpha/\delta).
\end{equation}
\begin{mylemma}\label{Lem:Whittakerbeforetwist}
		Let $\pi$ be an irreducible smooth admissible representation of $\GL_2(\Q_2)$ and $c\geq 4$. Then the subspace of $V_\pi$ on which $ZK_0(p^{c})$ acts by the character $\tilde{\theta}$ is nontrivial only when $\pi\simeq \pi(\nu\chi,\nu^{-1}\chi^{-1})$ for some unramified characters $\nu$, in which case it is two dimensional with a basis $\{\varphi_0,\varphi_1\}$  given in the Whittaker model by
		$$W_0\lb\zxz{x}{0}{0}{1}\rb=\sqrt{1-p^{-1}}\begin{cases}
		p^{-v \lb x \rb /2}\chi\nu(x), &\text{\ if $v \lb x \rb \geq 0$,}\\
		0, &\text{\ otherwise.}
	\end{cases}$$
	$$W_1\lb\zxz{x}{0}{0}{1}\rb=\sqrt{1-p^{-1}}\begin{cases}
		p^{-(v \lb x \rb+1) /2}\chi\nu(x), &\text{\ if $v \lb x \rb \geq -1$,}\\
		0, &\text{\ otherwise.}
	\end{cases}$$
\end{mylemma}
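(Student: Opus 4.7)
The strategy is to reduce the statement to Lemma \ref{Lem:localizedfortwist} via the twist $\pi' := \pi \otimes \chi^{-1}$, where we extend $\chi$ to a character of $\Q_2^\times$ by setting $\chi(p)=1$ (any extension works; the formulas merely reflect this choice through the scaling of $\varphi_1$). First I would verify the twist relation: on $K_0(p^c)$ one has $\tilde\theta(\gamma) = \chi(\det \gamma)\,\tilde\theta'(\gamma)$. This follows from the congruence $\alpha \equiv (\det\gamma)/\delta \pmod{p^c}$, valid for any $\gamma = \bigl(\begin{smallmatrix}\alpha&\beta\\ p^c\gamma_0 & \delta\end{smallmatrix}\bigr) \in K_0(p^c)$, together with $c(\chi) = c$. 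Consequently a vector $v \in \pi = \pi'$ (as complex vector spaces) transforms by $\tilde\theta$ under $\pi|_{ZK_0(p^c)}$ if and only if it transforms by $\tilde\theta'$ under $\pi'|_{ZK_0(p^c)}$.

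Applying Lemma \ref{Lem:localizedfortwist} to $\pi'$, one concludes that the $\tilde\theta$-isotypic subspace of $\pi$ is non-trivial only when $\pi' \simeq \pi(\nu,\nu^{-1}\chi^{-2})$ for some unramified $\nu$, equivalently $\pi \simeq \pi'\otimes\chi \simeq \pi(\nu\chi,\nu^{-1}\chi^{-1})$, and that the space is then two-dimensional. The basis furnished by Lemma \ref{Lem:localizedfortwist} is $\{\varphi_0',\,\pi'(a(p))\varphi_0'\}$ for the newform $\varphi_0'\in\pi'$; under the identification $\pi\leftrightarrow\pi'$ of vector spaces, and using $\chi(\det a(p))=\chi(p)=1$ to identify $\pi(a(p))$ with $\pi'(a(p))$ on this element, this corresponds to $\{\varphi_0,\,\pi(a(p))\varphi_0\}$ in $\pi$, up to a scalar to be determined in the Whittaker computation.

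The Whittaker formulas then follow from (i) the standard explicit formula for the $L^2$-normalized newform in the Kirillov model of $\pi(\mu_1,\mu_2)$ when $\mu_1$ is unramified and $\mu_2$ is ramified, namely $W'_0(a(x)) = \sqrt{1-p^{-1}}\,\mu_1(x)|x|^{1/2}1_{v(x)\geq 0}(x)$, where the constant comes from $\int_{v(x)\geq 0}|x|\,d^\times x = (1-p^{-1})^{-1}$; and (ii) the twist identity $W_{\pi,v}(g) = \chi(\det g)\,W_{\pi',v}(g)$, valid for any common vector $v$ and Whittaker functional, because $\det n(x)=1$ so the Whittaker functional for $\pi'$ is automatically a Whittaker functional for $\pi$. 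Specializing $g=a(x)$ and taking $\mu_1=\nu$ yields the formula for $W_0$ directly. For $W_1$, I define $\varphi_1 := \nu(p)^{-1}\pi(a(p))\varphi_0$ and use the identity $W_{\pi(a(p))v}(g)=W_v(ga(p))$, which translates the support of $W_0(a(\cdot))$ from $v(\cdot)\geq 0$ to $v(\cdot)\geq -1$; the scaling $\nu(p)^{-1}$ absorbs the extra factor $(\chi\nu)(p)=\nu(p)$ produced by this translation under the convention $\chi(p)=1$.

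The main obstacle is purely bookkeeping: one must carefully track the extension of $\chi$ from $\Z_p^\times$ to $\Q_p^\times$, choose the scalar normalizing $\varphi_1$ so that the stated formula emerges cleanly, and confirm that the $L^2$-normalization under our Haar measure conventions produces the prefactor $\sqrt{1-p^{-1}}$. Once these are fixed the argument is mechanical, requiring no further input beyond Lemma \ref{Lem:localizedfortwist} and the well-known description of principal series newforms in the Kirillov model.
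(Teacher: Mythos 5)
Your argument is correct and is exactly the route the paper intends: the paper states this lemma without proof immediately after ``Now we twist back,'' i.e.\ it is meant to follow from Lemma \ref{Lem:localizedfortwist} by twisting by $\chi^{-1}$, which is precisely what you do. Your verification of the identity $\tilde{\theta}(\gamma)=\chi(\det\gamma)\tilde{\theta}'(\gamma)$ on $ZK_0(p^c)$, the transfer of the two-dimensional eigenspace and its basis $\{\varphi_0',\pi'(a(p))\varphi_0'\}$, and the Whittaker computation via the standard Kirillov formula for the newform of $\pi(\nu,\nu^{-1}\chi^{-2})$ together with the twisting identity $W_{\pi,v}(g)=\chi(\det g)W_{\pi',v}(g)$ (with the normalization $\sqrt{1-p^{-1}}$ matching the measure convention $\vol(\Z_p^\times,d^\times x)=1$) are all sound.
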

A similar statement to Lemma \ref{Lem:Whittakerbeforetwist} was previously given by Nelson \cite[Lem.\ 48]{nelson_microlocal_2016}.

The following is an analogue of \cite[Lem.\ 3.25]{Hu}:
\begin{mylemma}\label{Lem:projnewform}
	For $\pi$ as in Lemma \ref{Lem:Whittakerbeforetwist} and $i=0,1$, 
	$$\sum\limits_{a\in (\Z/p^{c}\Z)^\times}\chi(a)\pi\lb \zxz{1}{\frac{a}{p^{c}}}{0}{1}\rb W_{i}$$
	is a non-zero scalar multiple of the newform in $V_\pi$.
\end{mylemma}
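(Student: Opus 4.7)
The plan is to argue entirely in the Kirillov/Whittaker model. For any $x \in \Q_p^\times$ the defining Whittaker property together with the identity $a(x) n(t) = n(xt) a(x)$ give
\[
\bigl[\pi(n(a/p^c)) W_i\bigr]\bigl(a(x)\bigr) = \psi(ax/p^c)\, W_i\bigl(a(x)\bigr).
\]
Pulling $W_i(a(x))$ outside the finite sum over $a$, the quantity of interest becomes the product of $W_i(a(x))$ and the Gauss-type sum
\[
G(x) := \sum_{a \in (\Z/p^c\Z)^\times} \chi(a)\, \psi(ax/p^c).
\]

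The first step is a standard orthogonality analysis of $G(x)$: substituting $a \mapsto ua$ shows $G(ux) = \chi(u)^{-1} G(x)$ for $u \in \Z_p^\times$, so $G$ depends only on the valuation of $x$ up to a factor of $\overline{\chi}(x)$. Splitting the sum modulo $p^{c-v(x)}$ and using that $\chi$ has conductor exactly $p^c$ (so that its restriction to $1+p^j\Z_p$ is non-trivial for every $j<c$), the inner sums vanish whenever $v(x) \geq 1$, while for $v(x)=0$ one obtains $G(x) = \overline{\chi}(x)\, \tau(\chi)$ with $\tau(\chi) = \sum_a \chi(a)\psi(a/p^c)$ the primitive Gauss sum, which is non-zero (in fact of absolute value $p^{c/2}$). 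The range $v(x) \leq -1$ can be ignored for $i=0$ and requires only the valuation $v(x)=-1$ for $i=1$, where the same substitution argument gives $G(x) = 0$ as well (since $c \geq 4$ forces $c-v(x) \geq 5$).

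Inserting this into $G(x) W_i(a(x))$ and invoking the explicit formulae from Lemma \ref{Lem:Whittakerbeforetwist}, the factor $\chi\nu(x)$ appearing in $W_i$ cancels the $\overline{\chi}(x)$ coming from $G$, and the only surviving contribution is on $v(x)=0$. After the cancellation, the sum evaluated at $a(x)$ becomes a non-zero scalar (a fixed multiple of $\tau(\chi)\sqrt{1-p^{-1}}$, with an extra $p^{-1/2}$ when $i=1$) times $\nu(x)\cdot 1_{\Z_p^\times}(x)$, which is simply a non-zero constant on $\Z_p^\times$ since $\nu$ is unramified. This is precisely the Kirillov-model description of the newform of the ramified principal series $\pi(\nu\chi,\nu^{-1}\chi^{-1})$ with both inducing characters of conductor $c$; one can either quote this from \cite[\S2.4]{Schmidt:02a}, or, since the space of $\tilde\theta$-isotypic vectors in $\pi$ is the two-dimensional old-new space spanned by $\varphi_0$ and $\varphi_1 = \pi(a(p))\varphi_0$, one identifies the constructed vector as $\varphi_0$ because its Kirillov function has support in $\Z_p^\times$ (the translate $\varphi_1$ has strictly larger support, namely $p^{-1}\Z_p^\times \cup \Z_p^\times$).

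The main obstacle is the Gauss-sum vanishing on $v(x) \geq 1$; once that is in place, the rest is bookkeeping against Lemma \ref{Lem:Whittakerbeforetwist}. Non-vanishing of the final scalar is immediate from $\tau(\chi) \neq 0$.
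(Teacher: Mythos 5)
Your overall route is the same as the paper's: the paper's proof of Lemma \ref{Lem:projnewform} is exactly this Kirillov-model computation (it cites the analogous \cite[Lem.\ 3.25]{Hu}), with the single key input being that the Gauss sum $\sum_{a}\chi(a)\psi(ax/p^{c})$ vanishes unless $v(x)=0$, where it equals $\overline{\chi}(x)\tau(\chi)\neq 0$; your treatment of the range $v(x)\geq 0$, and the identification of the resulting Kirillov function (a non-zero constant on $\Z_p^\times$, zero elsewhere) with the newform of the conductor-$p^{2c}$ representation $\pi(\nu\chi,\nu^{-1}\chi^{-1})$, are correct and match the paper.

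There is, however, a genuine gap in the extra step you add for $i=1$. Your claimed vanishing of $G(x)$ at $v(x)=-1$ does not follow from the substitution/orthogonality argument: when $v(x)=-1$ the phase $\psi(ax/p^{c})$ depends on $a$ modulo $p^{c+1}$, while the sum runs over residues modulo $p^{c}$ only, so the identity $G(ux)=\chi(u)^{-1}G(x)$ and the splitting modulo $p^{c-v(x)}$ are no longer available, and such incomplete character sums need not vanish. In fact the expression $\sum_{a}\chi(a)\pi(n(a/p^{c}))W_1$ is independent of the choice of representatives $a$ only if $\pi(n(1))W_1=W_1$, and for the $W_1$ of Lemma \ref{Lem:Whittakerbeforetwist} one has $[\pi(n(1))W_1](a(x))=\psi(x)W_1(a(x))=-W_1(a(x))$ at $v(x)=-1$. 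The same computation shows that any vector on which $ZK_0(p^{c})$ acts through $\tilde\theta$ (note $\tilde\theta(n(1))=1$) must have Kirillov function supported in $\Z_p$; so the problematic range $v(x)=-1$ is an artifact of the stated support of $W_1$, and for the vectors the lemma is actually about, your $v(x)\geq 0$ analysis --- which is precisely the paper's argument --- already suffices. A smaller point: your alternative identification conflates $\varphi_0$ with the newform of $\pi$; by Lemma \ref{Lem:Whittakerbeforetwist} the Kirillov function of $\varphi_0$ is supported on all of $v(x)\geq 0$, whereas the newform of $\pi$ has Kirillov function constant on $\Z_p^\times$ and zero elsewhere, which is what your primary identification (via \cite[\S 2.4]{Schmidt:02a}) correctly uses, so you should rely on that route only.
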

\begin{proof}
	The proof is essentially the same as for \cite[Lem.\ 3.25]{Hu} using the Whittaker functions from Lemma \ref{Lem:Whittakerbeforetwist}, with the main step being that the Gauss sum
	$$
	\sum\limits_{a\in  \lb \Z/p^{c}\Z \rb ^\times}\psi\lb\frac{a x }{p^{c_1}}\rb \chi \lb a x  \rb 
	$$
	is non-vanishing only if $v(x)=0$, in which case the value is independent of $x$.
\end{proof}
For the purpose of comparison with \cite{Hu}, note that
$$f_{\chi,0} = \nu(p^c) \overline{\widetilde{\Phi}}_{0,0} = \frac{1}{\vol(ZK_0(p^c)/Z)}  \overline{\widetilde{\Phi}}_{0,0}, \, \text{ and } \, f_{\chi, a,a'}  =  \nu(p^c)  \overline{\widetilde{\Phi}}_{a,a'}= \frac{1}{\vol(ZK_0(p^c)/Z)}  \overline{\widetilde{\Phi}}_{a,a'},$$
where  $\widetilde{\Phi}_{0,0}$ and $\widetilde{\Phi}_{a,a'}$ are as in Definition 3.26 of loc.\ cit..
 Recall that $\pi$ is unitary with the pairing $\langle \cdot , \cdot \rangle$ given in the Kirillov model by \eqref{GinvtInnerProd}. 
\begin{mylemma}\label{p=2ps_orthogonality}
For $\pi$ as in Lemma \ref{Lem:Whittakerbeforetwist} and $u = n(\alpha p^{-c})$ with $\alpha \not \equiv 0 \pmod {p^c}$, 
\begin{enumerate}
	\item $\Span\{\pi(u)\varphi_0,\pi(u)\varphi_1\}\perp \Span\{\varphi_0,\varphi_1\},$ and
	\item   if $v\perp \Span\{\varphi_0,\varphi_1\}$, then $v\in \ker \pi\left(f_{\chi,0}\right)$.
\end{enumerate}
\end{mylemma}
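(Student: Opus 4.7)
The plan is to handle the two parts by direct computation, exploiting the explicit Whittaker models from Lemma \ref{Lem:Whittakerbeforetwist} for (1) and the projection-operator interpretation of $f_{\chi,0}$ for (2). Before starting I would note that since $\chi^2 \neq 1$ and the central character of $\pi(\nu\chi,\nu^{-1}\chi^{-1})$ is trivial, $\pi$ cannot be complementary series, so $\nu$ is unitary and $|\chi\nu(y)| = 1$ throughout.

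For part (1), I would compute $\langle \pi(u)\varphi_i,\varphi_j\rangle$ for $i,j \in \{0,1\}$ in the Whittaker model with pairing \eqref{GinvtInnerProd}. The relation $a(y)n(x) = n(xy)a(y)$ and the defining Whittaker property give $\pi(u)W(a(y)) = \psi(\alpha p^{-c}y) W(a(y))$ for $u = n(\alpha p^{-c})$. Substituting the explicit formulas from Lemma \ref{Lem:Whittakerbeforetwist} and breaking the integral over $F^\times$ into shells $v(y) = k$ reduces the computation to sums of the form
\begin{equation*}
\int_{\Z_p^\times} \psi(\lambda u)\,d^\times u = \begin{cases} 1 & v(\lambda) \geq 0 \\ -(p-1)^{-1} & v(\lambda) = -1 \\ 0 & v(\lambda) \leq -2 \end{cases}
\end{equation*}
with $\lambda = \alpha p^{k-c}$. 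Writing $\beta = v(\alpha) < c$, the contributions from $k \geq c-\beta$ sum to $p^{1-c+\beta}/(p-1)$ and the single $k = c-\beta-1$ term contributes the exact negative $-p^{1-c+\beta}/(p-1)$; these cancel, yielding all four orthogonalities. The case $(i,j) = (1,1)$ needs the additional check that the extra $v(y) = -1$ shell contributes zero (which it does since then $v(\lambda) = -c-1+\beta \leq -2$).

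For part (2), I would observe that on its support $ZK_0(p^c)$, the function $f_{\chi,0}$ equals $\nu(p^c) \overline{\tilde\theta}$ for $\tilde\theta$ as in \eqref{Eq:thetatilde} --- this uses that $\tilde\theta$ is trivial on $Z(\Z_p)$ so the formula is $Z$-invariant. With the measure normalization giving $\mathrm{vol}(ZK_0(p^c)/Z) = \nu(p^c)^{-1}$ in $\bar G$, the operator $\pi(f_{\chi,0})$ becomes the averaging
\begin{equation*}
P = \nu(p^c)\int_{K_0(p^c)/Z(\Z_p)} \overline{\tilde\theta(k)}\,\pi(k)\,dk.
\end{equation*}
A standard Fubini calculation, using that $\tilde\theta$ is a character, shows $P^2 = P$, and self-adjointness of $P$ is immediate, so $P$ is the orthogonal projection onto the $\tilde\theta$-isotypic subspace $V_{\tilde\theta} = \{v : \pi(k)v = \tilde\theta(k)v \; \forall k \in K_0(p^c)\}$. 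By Lemma \ref{Lem:Whittakerbeforetwist}, $V_{\tilde\theta} = \Span\{\varphi_0,\varphi_1\}$; consequently $\ker \pi(f_{\chi,0}) = \ker P = \Span\{\varphi_0,\varphi_1\}^\perp$, which is exactly claim (2).

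No step looks genuinely difficult. The most delicate bookkeeping is in part (1), where one must correctly track the measure normalizations on shells $v(y) = k$ (each has $d^\times y$-volume $1$) and ensure the extension of $\chi\nu$ to $F^\times$ is unitary so the products $W_i\overline{W_j}$ reduce to explicit powers of $p^{-v(y)}$; once that is in place, the cancellation identity is a one-line Ramanujan-sum computation.
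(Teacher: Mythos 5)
Your proposal is correct. For part (1) you are taking the same route as the paper, which simply says ``use the unitary pairing and the explicit Whittaker functions of Lemma \ref{Lem:Whittakerbeforetwist}'' and leaves the computation to the reader; your shell-by-shell reduction to the normalized Ramanujan sums and the cancellation between the tail $k\geq c-\beta$ and the single shell $k=c-\beta-1$ is exactly that computation, and your remark that $\nu$ must be unitary (no complementary series can occur since $\chi$ is non-quadratic) is a hypothesis the paper leaves implicit but which is genuinely needed for the exact cancellation. For part (2) your route differs from the paper's: the paper never checks idempotency; it only moves $\pi(f_{\chi,0})$ across the pairing via $\langle \varphi_i,v\rangle=\langle \pi(f_{\chi,0})\varphi_i,v\rangle=\langle\varphi_i,\pi(f_{\chi,0})v\rangle$ (self-adjointness from $|\tilde{\theta}|=1$), notes $\pi(f_{\chi,0})v$ lies in $\Span\{\varphi_0,\varphi_1\}$, and concludes from the explicit Gram matrix $\left(\begin{smallmatrix}1&p^{-1/2}\\ p^{-1/2}&1\end{smallmatrix}\right)$ of $W_0,W_1$, which is non-degenerate. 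Your argument instead verifies $f_{\chi,0}*f_{\chi,0}=f_{\chi,0}$, which works precisely because $\tilde{\theta}$ is multiplicative on $ZK_0(p^c)$ (this uses $c(\chi)\leq c$) together with $\vol(ZK_0(p^c)/Z)=\nu(p^c)^{-1}$, and then uses self-adjointness to get an orthogonal projection onto the full $\tilde{\theta}$-isotypic space. This buys a slightly stronger conclusion (in particular $\pi(f_{\chi,0})\varphi_i=\varphi_i$, which the paper uses later in the proof of the following proposition), at the cost of the extra convolution check; the paper's Gram-matrix argument is more economical for the lemma as stated. One small imprecision: what you need is invariance of $\tilde{\theta}$ under all of $Z(\Q_p)$, not just $Z(\Z_p)$ — this holds since $\alpha/\delta$ is unchanged under scaling — so the point is harmless.
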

\begin{proof}
To verify (1), one can use the unitary pairing $\langle \cdot, \cdot \rangle$ on the Kirillov model and the expression of Whittaker functions from Lemma \ref{Lem:Whittakerbeforetwist}. To see (2), we note that
\begin{align*}
	\langle \varphi_i, v \rangle = \langle \pi\left(f_{\chi,0}\right) \varphi_i, v \rangle = \langle \varphi_i,\pi\left(f_{\chi,0}\right) v\rangle,
\end{align*}
where the last equality follows from the fact that $\tilde{\theta}$ is a character on the support with $|\tilde{\theta}|=1$. Thus $v\perp \Span\{\varphi_0,\varphi_1\}$  if and only if $\pi\left(f_{\chi,0}\right) v\perp \Span\{\varphi_0,\varphi_1\}$, if and only if $\pi\left(f_{\chi,0}\right)=0$. The last equivalence follows from the fact that $\langle\cdot,\cdot \rangle$ is non-degenerate on $\imag \pi\left(f_{\chi,0}\right)$. Indeed by Lemma \ref{Lem:Whittakerbeforetwist} we have
$$
\zxz{\langle W_0,W_0\rangle}{\langle W_0,W_1\rangle}{\langle W_1,W_0\rangle}{\langle W_1,W_1\rangle}=\zxz{1}{p^{-1/2}}{p^{-1/2}}{1}
$$
which has nonzero determinant and is thus non-degenerate. 
\end{proof}
\begin{mylemma}\label{Lem:convolutionlaw}
	$$f_{\chi,a,a'}*f_{\chi,b,b'}=\begin{cases}
		f_{\chi,b,a'} & \text{ if }a\equiv b'\pmod{ p^{c}},\\
		0 & \text{ otherwise}.
	\end{cases}$$
	\end{mylemma}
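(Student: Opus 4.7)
The plan is to verify the identity on the operator side via the group Fourier transform and then invoke Fourier inversion. From the definition of $f_{\chi,a,a'}$ as a double-sided translate of $f_{\chi,0}$ by unipotent elements, a direct change of variables gives, for every smooth irreducible unitary $\pi$ of $\overline{G}(\Q_p)$,
\begin{equation*}
\pi(f_{\chi,a,a'}) = \chi(a)^{-1}\chi(a')\, \pi(n(a'p^{-c}))\, \pi(f_{\chi,0})\, \pi(n(ap^{-c}))^{-1}.
\end{equation*}
Composing two such operators and collecting the inner piece, one gets
\begin{equation*}
\pi(f_{\chi,a,a'})\pi(f_{\chi,b,b'}) = \chi(a)^{-1}\chi(a')\chi(b)^{-1}\chi(b')\, \pi(n(a'p^{-c}))\, \bigl[\pi(f_{\chi,0})\pi(\gamma^{-1})\pi(f_{\chi,0})\bigr]\, \pi(n(bp^{-c}))^{-1},
\end{equation*}
where $\gamma := n((a-b')p^{-c})$. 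The claim therefore reduces to showing that the bracketed sandwich equals $\pi(f_{\chi,0})$ if $a\equiv b'\pmod{p^c}$ and vanishes otherwise.

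I would next identify $\pi(f_{\chi,0})$ as the orthogonal projection onto the $\tilde{\theta}$-isotypic subspace $V^\pi_{\tilde{\theta}}$ of $ZK_0(p^c)$ in $V_\pi$: the normalization $\nu(p^c)$ is precisely what makes $\pi(f_{\chi,0})$ act as the identity on $V^\pi_{\tilde{\theta}}$, and self-adjointness follows from $\tilde{\theta}(g^{-1})=\overline{\tilde{\theta}(g)}$. Lemma \ref{Lem:localizedfortwist} (after untwisting by $\chi$) then forces $V^\pi_{\tilde{\theta}}=0$ unless $\pi\simeq\pi(\nu\chi,\nu^{-1}\chi^{-1})$ for some unramified character $\nu$, in which case Lemma \ref{Lem:Whittakerbeforetwist} exhibits the basis $\{\varphi_0,\varphi_1\}$.

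The two cases of the statement then correspond to two dichotomous behaviors of $\pi(\gamma^{-1})$ on $V^\pi_{\tilde{\theta}}$. When $a\equiv b'\pmod{p^c}$ the element $\gamma$ lies in $N(\Z_p)\subset K_0(p^c)$ with $\tilde{\theta}(\gamma)=1$, so $\pi(\gamma^{-1})$ acts by the identity on $V^\pi_{\tilde{\theta}}$ and the sandwich collapses to $\pi(f_{\chi,0})^2=\pi(f_{\chi,0})$; since $\chi(a)=\chi(b')$ in this regime, the character prefactor simplifies to $\chi(a')\chi(b)^{-1}$ and matches $\pi(f_{\chi,b,a'})$ exactly. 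When $a\not\equiv b'\pmod{p^c}$ the element $\gamma^{-1}=n(\alpha p^{-c})$ has $\alpha\not\equiv 0\pmod{p^c}$, so Lemma \ref{p=2ps_orthogonality}(1) guarantees $\pi(\gamma^{-1})V^\pi_{\tilde{\theta}}\perp V^\pi_{\tilde{\theta}}$ and Lemma \ref{p=2ps_orthogonality}(2) guarantees that $\pi(f_{\chi,0})$ annihilates this orthogonal complement; for all other $\pi$ the projector is already zero. Having verified the operator identity $\pi(f_{\chi,a,a'}*f_{\chi,b,b'})=\pi(f_{\chi,b,a'})$ (resp.\ $0$) for every $\pi\in\overline{G}(\Q_p)^{\wedge}$, injectivity of the group Fourier transform on $C_c^\infty(\overline{G}(\Q_p))$ (cf.\ Section \ref{sec:localspectral} and \cite[\S 18.2.3]{Dixmier}) delivers the identity of functions.

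The main obstacle is the vanishing in the second case: pure support reasoning does not suffice, since the double coset $ZK_0(p^c)\cdot\gamma^{-1}\cdot ZK_0(p^c)$ is strictly larger than $ZK_0(p^c)$ and the integrand defining the convolution is not identically zero pointwise. The vanishing is a genuinely spectral phenomenon, and rests on the orthogonality of the two copies of the two-dimensional $\tilde{\theta}$-isotypic subspace moved apart by unipotent translation, which is precisely what Lemma \ref{p=2ps_orthogonality} encodes.
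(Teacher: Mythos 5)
Your proposal is correct and follows essentially the same route as the paper: the vanishing for $a\not\equiv b'\pmod{p^c}$ is obtained by passing to the operator side via Plancherel/Fourier injectivity, restricting to $\pi\simeq\pi(\nu\chi,\nu^{-1}\chi^{-1})$ by Lemma \ref{Lem:Whittakerbeforetwist}, and invoking the orthogonality statements of Lemma \ref{p=2ps_orthogonality}, exactly as in the paper's proof. The only (cosmetic) difference is that you also run the easy case $a\equiv b'$ through the spectral side and Fourier inversion, whereas the paper verifies that case directly on the function side after a change of variables in the convolution integral.
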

\begin{proof}
	By definition and change of variable, 
\begin{align*}
		&f_{\chi,a,a'}*f_{\chi,b,b'}(g)\\
		=&\overline{\chi}\left(\frac{ab}{a'b'}\right)\int\limits_{h\in G}f_{\chi,0}\left(\zxz{1}{-a'p^{-c}}{0}{1} gh^{-1}\zxz{1}{ap^{-c}}{0}{1}
		\right)f_{\chi,0}\left(\zxz{1}{-b'p^{-c}}{0}{1} h\zxz{1}{bp^{-c}}{0}{1}\right)dh\\
		=&\overline{\chi}\left(\frac{ab}{a'b'}\right)\int\limits_{h\in G}f_{\chi,0}\left(\zxz{1}{-a'p^{-c}}{0}{1} g\zxz{1}{bp^{-c}}{0}{1}h^{-1}
		\right)f_{\chi,0}\left(\zxz{1}{(a-b')p^{-c}}{0}{1}h \right)dh.
\end{align*}
The conclusion is clear when $a\equiv b'\pmod{ p^{c}}$. We need to show that when $a \not\equiv b' \pmod {p^{c}}$, the integral is always vanishing for any $g$, i.e.\
$$ 
f_{\chi,0}*f_{\chi,0,b'-a}=0.
$$
By the Plancherel formula \eqref{PlDef},  it suffices to prove that $$\pi\left(f_{\chi,0}*f_{\chi,0,b'-a}\right)=\pi\left(f_{\chi,0}\right)\pi\left(f_{\chi,0,b'-a}\right)$$ is vanishing for any $\pi$. From Lemma \ref{Lem:Whittakerbeforetwist}, we can restrict to the case $\pi\simeq \pi(\nu\chi,\nu^{-1}\chi^{-1})$. In this case let $\varphi_0,\varphi_1$ be as in Lemma \ref{Lem:Whittakerbeforetwist}. Then by a change of variable, 
$$\imag\left(\pi\left(f_{\chi,0,b'-a}\right)\right)=\Span\{\pi(u)\varphi_0,\pi(u)\varphi_1\}
$$
for the unipotent matrix $$u=\zxz{1}{(b'-a)p^{-c}}{0}{1}.$$ 
The required vanishing now follows from Lemma \ref{p=2ps_orthogonality}. 
\end{proof}
\begin{myprop}
If $\pi \simeq \pi(\mu,\mu^{-1})$ with $\mu \vert_{\Z_p^\times} = \chi$, then $\pi(f_\chi)$ is a projection operator onto the space of newforms in $V_\pi$, and otherwise $\pi(f_\chi)=0$. 
\end{myprop}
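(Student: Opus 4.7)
The plan is to exploit the convolution law together with the structural information about the $\tilde\theta$-isotypic subspace to pin down $\pi(f_\chi)$. First I would verify that $f_\chi$ is a convolution idempotent. Applying Lemma \ref{Lem:convolutionlaw} to the double sum
$$f_\chi * f_\chi = \frac{1}{\varphi(p^c)^2} \sum_{a,a',b,b'} f_{\chi,a,a'}*f_{\chi,b,b'}
= \frac{1}{\varphi(p^c)^2} \sum_{\substack{a,a',b,b' \\ a \equiv b' \shortmod{p^c}}} f_{\chi,b,a'},$$
the constraint $a = b'$ (both lying in $(\Z/p^c\Z)^\times$) removes one summation variable, giving $f_\chi * f_\chi = \frac{1}{\varphi(p^c)} \sum_{a',b} f_{\chi,b,a'} = f_\chi$. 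Hence $\pi(f_\chi)^2 = \pi(f_\chi)$ is a projection for every $\pi$.

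Next I would show that $\pi(f_\chi) = 0$ unless $\pi \simeq \pi(\mu,\mu^{-1})$ with $\mu|_{\Z_p^\times} = \chi$. From the definition of $f_{\chi,a,a'}$, a change of variables gives $\pi(f_{\chi,a,a'}) = \chi(a)^{-1}\chi(a')\,\pi(n(a'/p^c))\,\pi(f_{\chi,0})\,\pi(n(a/p^c))^{-1}$, so $\pi(f_{\chi,0}) = 0$ forces $\pi(f_\chi) = 0$. A direct check using $\tilde\theta(g^{-1}) = \overline{\tilde\theta(g)}$ on $K_0(p^c)$ shows $f_{\chi,0}^* = f_{\chi,0}$, and combining this with the fact that $\pi(f_{\chi,0})$ acts by the identity on the $\tilde\theta$-isotypic subspace $V_{\tilde\theta}$ (by averaging, since $\nu(p^c)\vol(\overline{K_0(p^c)}) = 1$) and annihilates its orthogonal complement (Lemma \ref{p=2ps_orthogonality}(2)) identifies $\pi(f_{\chi,0})$ as the orthogonal projection onto $V_{\tilde\theta}$. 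By Lemma \ref{Lem:Whittakerbeforetwist} this subspace is zero unless $\pi \simeq \pi(\nu\chi,\nu^{-1}\chi^{-1})$ for some unramified $\nu$, which is equivalent to $\mu|_{\Z_p^\times} = \chi$.

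For such a $\pi$, I would show the image of $\pi(f_\chi)$ is contained in the line of the newform. Writing $\pi(f_\chi) = S_1 \pi(f_{\chi,0}) S_1^*$ with $S_1 = \frac{1}{\sqrt{\varphi(p^c)}}\sum_{a'} \chi(a') \pi(n(a'/p^c))$, the operator $\pi(f_{\chi,0})$ projects any vector into $V_{\tilde\theta} = \mathrm{span}\{W_0,W_1\}$, and Lemma \ref{Lem:projnewform} says that $S_1 W_i$ is a nonzero scalar multiple of the newform $\varphi_{\mathrm{new}}$ for $i = 0,1$. Hence the image of $\pi(f_\chi)$ is contained in $\C\varphi_{\mathrm{new}}$.

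The main obstacle is the non-vanishing step: ruling out $\pi(f_\chi) = 0$. For this I would compute $\pi(f_\chi)\varphi_{\mathrm{new}}$ in the Kirillov model, where $\varphi_{\mathrm{new}}$ is (up to scalar) $1_{\Z_p^\times}$. A direct Gauss-sum calculation gives $S_1^*\varphi_{\mathrm{new}}(a(y)) \propto \chi(y)\tau(\overline{\chi}) 1_{v(y)=0}$, and then using the explicit formula for $W_0$ from Lemma \ref{Lem:Whittakerbeforetwist} together with $\chi \overline\chi = 1$ and $\nu|_{\Z_p^\times} = 1$ yields $\langle S_1^* \varphi_{\mathrm{new}}, W_0\rangle \neq 0$. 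Thus $\pi(f_\chi)\varphi_{\mathrm{new}}$ is a nonzero scalar multiple of $\varphi_{\mathrm{new}}$; applying $\pi(f_\chi)^2 = \pi(f_\chi)$ forces that scalar to equal $1$, and $\pi(f_\chi)$ is therefore precisely the rank-one projection onto the line of the newform.
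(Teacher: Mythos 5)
Your proof is correct in substance and, for the first three steps, follows essentially the same route as the paper: idempotency of $f_\chi$ from Lemma \ref{Lem:convolutionlaw}, vanishing of $\pi(f_\chi)$ unless the $\tilde\theta$-isotypic space is nonzero (Lemma \ref{Lem:Whittakerbeforetwist}), and containment of the image in the newform line via Lemma \ref{Lem:projnewform}. Where you diverge is the non-vanishing step. The paper tests $\pi(f_\chi)$ on the translate $v_0=\pi(n(a_0p^{-c}))\varphi_0$ of an isotypic vector: by Lemma \ref{p=2ps_orthogonality} the inner average collapses to $\chi(a_0)^{-1}\varphi_0$, so $\pi(f_\chi)v_0$ is a visibly nonzero multiple of $\sum_{a'}\chi(a')\pi(n(a'p^{-c}))\varphi_0$, which is nonzero by Lemma \ref{Lem:projnewform}; no Gauss-sum evaluation or positivity input is needed. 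You instead test on the newform itself and compute $S_1^*\varphi_{\mathrm{new}}$ in the Kirillov model; the Gauss-sum computation and the conclusion $\langle S_1^*\varphi_{\mathrm{new}},W_0\rangle\neq 0$ are correct (the Kirillov newform is indeed $1_{\Z_p^\times}$ here, both inducing characters being ramified).

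However, your final inference ``thus $\pi(f_\chi)\varphi_{\mathrm{new}}$ is a nonzero multiple of $\varphi_{\mathrm{new}}$'' has a small jump you should close. Since $S_1W_0$ and $S_1W_1$ are both proportional to the newform, $S_1$ has a one-dimensional kernel inside the two-dimensional space $\Span\{W_0,W_1\}$, so knowing that $\pi(f_{\chi,0})S_1^*\varphi_{\mathrm{new}}$ is nonzero does not by itself rule out that this vector lies in that kernel and is annihilated by $S_1$. The fix is one line, using the self-adjointness of $f_{\chi,0}$ you already checked: $\langle \pi(f_\chi)\varphi_{\mathrm{new}},\varphi_{\mathrm{new}}\rangle=\langle \pi(f_{\chi,0})S_1^*\varphi_{\mathrm{new}},S_1^*\varphi_{\mathrm{new}}\rangle=\|\pi(f_{\chi,0})S_1^*\varphi_{\mathrm{new}}\|^2>0$, where positivity uses $\pi(f_{\chi,0})=\pi(f_{\chi,0})^*=\pi(f_{\chi,0})^2$, and non-vanishing of $\pi(f_{\chi,0})S_1^*\varphi_{\mathrm{new}}$ follows from $\langle S_1^*\varphi_{\mathrm{new}},W_0\rangle\neq 0$ because $W_0$ lies in the image of the orthogonal projection $\pi(f_{\chi,0})$. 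With this line added your argument is complete; the paper's choice of test vector simply sidesteps this issue.
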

\begin{proof}
First, note that $f_\chi *f_\chi =f_\chi$ by the definition of $f_\chi$ and Lemma \ref{Lem:convolutionlaw}. Thus, $\pi(f_\chi)$ is a projection operator. Next, note that for any $v\in V_\pi$, 
\begin{equation}\label{pifchiexpansion} \pi(f_\chi)v 
 = \frac{1}{\varphi(p^c)}\sum_{a' \in (\Z/p^c\Z)^{\times}} \chi(a') \pi(n(a'p^{-c})) \pi(f_{\chi,0}) \sum_{a \in (\Z/p^c\Z)^{\times}} \chi(a)^{-1}\pi(n(-a p^{-c}))v.\end{equation}
Note that $ZK_0(p^c)$ acts on $\imag \pi(f_{\chi, 0})$ through the character $\widetilde{\theta}$, so by Lemma \ref{Lem:Whittakerbeforetwist} $\pi(f_\chi)=0$ unless $\pi \simeq \pi(\mu,\mu^{-1})$ with $\mu \vert_{\Z_p^\times} = \chi$. If $\pi$ is such a principal series, then by Lemmas \ref{Lem:Whittakerbeforetwist} and \ref{Lem:projnewform}, the operator $\pi(f_\chi)$ has image in the line of the newform. Lastly, choose any $a_0 \in (\Z/p^c\Z)^\times$ and let $v_0 = \pi(n(a_0p^{-c}))\varphi_0$. We have by Lemma \ref{p=2ps_orthogonality} that $$ \pi(f_{\chi,0})\sum_{a \in (\Z/p^c\Z)^{\times}} \chi(a)^{-1}\pi(n(-a p^{-c}))v_0 = \chi(a_0)^{-1} \pi(f_{\chi,0})\varphi_0$$ and $\pi(f_{\chi,0})\varphi_0 = \varphi_0$, so that
$$\pi(f_\chi)v_0  
= \frac{\chi(a_0)^{-1}}{\varphi(p^c)}\sum_{a' \in (\Z/p^c\Z)^{\times}} \chi(a') \pi(n(a'p^{-c}))\varphi_0,$$ which is non-zero by Lemma \ref{Lem:projnewform}.
\end{proof}

 By Lemma \ref{specControlonH}(2), $f_\chi$ also satisfies Geometric Assumption \eqref{geo2}. 
\subsubsection{Local family}\label{sec:locfam-PS} 
Given $\chi$ a character as above, define
\begin{equation}\label{localfam:PS}
\cF_\chi := \{ \pi(\mu,\mu^{-1}) \in {\overline{G}}^{\wedge}: \mu \vert_{\Z_p^\times} = \chi\}. 
\end{equation}
By the discussion in Section \ref{PS_gsassumptions}, we have that the local family $\cF_p(f_\chi) = \cF_\chi$. 
\subsubsection{Level}\label{sec:PSlevel}
The local level $N_p$ of $f_{\chi}$ satisfies $N_p = p^{2c}$. Indeed, since $f_\chi$ is a newform projector, Proposition \ref{mainprop} applies, and $f_\chi$ is bi-$K_0(p^{2c})$-invariant as $c(\pi(\mu,\mu^{-1}))=2c$. Thus, $f_\chi$ is bi-$K(p^{2c})$-invariant and so $N_p \mid p^{2c}$. On the other hand, suppose $f_\chi$ were bi-$K( p^{2c-1})$-invariant. Then, it would be bi-invariant by the product $K(p^{2c-1})K_0(p^{2c})= K_0(p^{2c-1})$. Indeed, the inclusion $\subseteq$ is clear and for the other direction, note that $$ \zxz{1}{0}{\frac{c}{a}p^{2c-1}}{1-\frac{bc}{ad} p^{2c-1}}\zxz{a}{b}{0}{d}= \zxz{a}{b}{cp^{2c-1}}{d} =g$$ for any $g \in K_0(p^{2c-1})$.  But then $\pi(f_\chi)V_\pi$ for $\pi \simeq \pi(\mu,\mu^{-1})$ would have a non-zero $K_0(p^{2c-1})$-fixed vector, which it does not. Thus $N_p=p^{2c}$. 
\subsubsection{Diagonal weights}
By definition, 
\begin{equation}\label{diagwt:PS}
\delta_p:= \int_{\cF_\chi} \frac{1}{\mathcal{L}_\pi(1)}\, d\widehat{\mu}(\pi) = (1-p^{-1})^{-1} \widehat{\mu}(\cF_\chi),
\end{equation}
since $\mathcal{L}_\pi(1)= (1-p^{-1})$ is constant on $\cF_\chi$, as $c(\pi)\geq 2$ for all $\pi \in \cF_\chi$ (recall \eqref{Appidef} for the definition)
By the Plancherel formula,  
\begin{equation}  \delta_p = (1-p^{-1})^{-1} f_\chi(1) = \frac{\nu(p^c)}{1-p^{-1}}.
\end{equation}
\subsubsection{Local Generalized Kloosterman Sums}
The local generalized Kloosterman sums $H_p(m,n;c)$ associated to $f_\chi$ were computed in \cite[Cor.\ 4.12]{Hu} and go through in the case $p=2$. We have 
\begin{equation}\label{PSgenKLsum}
H_p(m,n;p^k) = \begin{cases}\delta_p \overline{\chi(m)} \chi(n) S_{\chi^2}(m,n;p^k) & \text{ if } k \geq c(\chi) \text{ and } (p,mn)=1, \\ 0 & \text{ if } k<c(\chi) \text{ or } p \mid mn.\end{cases}
\end{equation}
For comparison to the supercuspidal Kloosterman sums below, it is pleasing to note that \begin{equation}\label{PSgenKLsum2}  \overline{\chi(m)} \chi(n) S_{\chi^2}(m,n;p^k) =  \sumstar_{\substack{x,y \shortmod{p^k} \\ xy =mn}} \overline{\chi(x)}\chi(y)e\left( \frac{x+y}{p^k}\right).\end{equation}

\begin{myrema} Note that the formula \eqref{PSgenKLsum} for the generalized Kloosterman sums differs from the Kloosterman sums that appear via the classical procedure (as in \cite{PetrowYoungWeyl}) by the factor of $(1-p^{-1})$. The extra factor of $(1-p^{-1})^{-1}$ may be accounted for by the observation that the harmonic weights in Theorem \ref{theoGeomSpec} and the harmonic weights in the classically derived formula are not exactly the same. The former are attached to forms of conductor $2c$ and trivial central character, while the latter are attached to forms of level $c$ and non-trivial central character. \end{myrema}
\subsubsection{Local Geometric Conductor}\label{kp:PS}
The previous subsection shows that the local geometric conductor $k_p$ satisfies $k_p=c$ by the generic non-vanishing of classical Kloosterman sums. 
\subsubsection{Hypotheses from Section \ref{sec:intro:LSI}}\label{OLSI:PS}
Hypothesis \ref{hypCvF} (CvF) holds for the family $\cF_{\chi}$, since $p^{k_p} = p^c \geq \frac{2}{3} \nu(p^c) =\frac{2}{3} f_{\chi}(1).$

To verify Hypothesis \ref{hypFTB} (FTB), we compute the Fourier/Mellin transform of $H_p$.  A simple calculation shows that when $k\geq c$ and $c(\alpha)\leq k$ 
\begin{equation}
\widehat{H}_p(\alpha) := \frac{1}{\varphi(p^k)}\sumstar_{m \shortmod{p^k}} H_p(m,1;p^k) \overline{\alpha(m)}= \delta_p \frac{\tau(\overline{\alpha \chi})\tau(\overline{\alpha}\chi)}{\varphi(p^k)},
\end{equation}
where $$\tau(\chi) = \sumstar_{m \shortmod{q}}\chi(m) e(m/q)$$ is the classical Gauss sum of $\chi$ as in e.g.\ \cite[Lem.\ 7.1]{PetrowYoungCoset}. In particular, since $c(\overline{\alpha}\chi)$ and $c(\overline{\alpha \chi})$ are 
both 
$\leq k$ whenever $H_p(m,1;p^k) \neq 0$ (see \eqref{PSgenKLsum})
we have
$$|\widehat{H}_p(\alpha)| \leq (1-p^{-1})^{-2} f_{\chi}(1)$$
for all characters $\alpha$ of $\Z_p^\times$ so that Hypothesis \ref{hypFTB} (FTB) follows. 

\subsection{Supercuspidal families}\label{sec:examples_supercuspidal}
Let $F=\Q_p$ with ring of integers $\cO=\Z_p$. If $p \neq 2$ suppose we are given an admissible pair $(E/F,\xi) \in \P_2(F)$ with $\xi \vert_{F^\times} = \eta_{E/F}$, and if $p=2$ suppose we are given $(E/F, \xi) \in \P_2(F)_{\geq 9}^1$, and moreover that $c(\xi)\geq 8$ when $d=3$. 

Let $\sigma$ be the supercuspidal representation corresponding to the pair $(E/F,\xi)$ by Theorem \ref{TameParametrizationTheorem} or Corollary \ref{cor_p2bijection} and $\Phi= \Phi_\sigma$ the diagonal matrix coefficient of an $L^2$-normalized newform in $\sigma$. Recall $c_0 = c(\xi)/e$, $d=v_p(\disc E/F)$, and the compact open subgroups $K_0(m,n)$ from \eqref{K0mn_def}. Following Theorems \ref{cor:SpecAssumption_supercuspidal} and \ref{cor:SpecAssumption_supercuspidal_p2} we set
\begin{equation}\label{sec7_fxi_def}
f_\xi = \frac{\overline{\Phi} \vert_{ZK_0(m,n)}}{\|\Phi \vert_{ZK_0(m,n)}\|_2^2 },
\end{equation}
with $$(m,n) = \begin{cases} (c_0,-c_0) &\text{ if } d=0,  \\
(c_0+1,-c_0) & \text{ if }  d=1, \\ 
(c_0+1,-c_0-1) & \text{ if } d=2,\\
(c_0+2, -c_0-1) & \text{ if } d=3. \end{cases}$$

\subsubsection{Geometric and Spectral Assumptions}\label{SC_gsassumptions}
It is clear from its definition \eqref{sec7_fxi_def} that $f_\xi$ satisfies Geometric Assumption \eqref{geo3}. By Theorems \ref{cor:SpecAssumption_supercuspidal} and \ref{cor:SpecAssumption_supercuspidal_p2} $f_\xi$ satisfies the spectral assumption, \`a fortiori Geometric Assumption \eqref{geo2} by Lemma \ref{specControlonH}(2). 

\subsubsection{Local family}\label{sec:locfam-SC} 
With hypotheses as above, by Theorems \ref{cor:SpecAssumption_supercuspidal} and \ref{cor:SpecAssumption_supercuspidal_p2} we have
\begin{equation}\label{localfam:SC} \cF_p(f_\xi) = \cF_\xi := \begin{cases} \{\sigma\}  & \text{ if } p \neq 2 \text{ and } d=0, \\
 \{\sigma, \sigma \times \eta\} & \text{ if } d \geq 1,\\
 i( \xi[1]) & \text{ if } p=2 \text{ and } d=0,\end{cases}
 \end{equation}
where $i$ is the map in Corollary \ref{cor_p2bijection}. Note, if $p=2$ and $d=0$, then $|\cF_\xi|=3$ and $\sigma \in \cF_\xi$.

\subsubsection{Level} The local level $N_p$ of $f_\xi$ satisfies $N_p = p^{c(\sigma)}$. Indeed, since $f_\xi$ is a newform projector, Proposition \ref{mainprop} applies, and so $f_\xi$ is bi-$K_0(p^{c(\sigma)})$-invariant, in particular bi-$K(p^{c(\sigma)})$-invariant so that $N_p \mid p^{c(\sigma)}$. On the other hand, if $f_\xi$ were bi-$K( p^{c(\sigma)-1})$-invariant, then it would be bi-invariant by the product $K(p^{c(\sigma)-1})K_0(p^{c(\sigma)})= K_0(p^{c(\sigma)-1})$ (see Section \ref{sec:PSlevel}). But then $\pi(f_\xi)$ would project into the space of $K_0(p^{c(\sigma)-1})$-fixed vectors, which it does not. Thus $N_p=p^{c(\sigma)}$.

\subsubsection{Diagonal weights}
By definition, 
\begin{equation}
\delta_p= \int_{\cF_\xi} \frac{1}{\mathcal{L}_\pi(1)}\, d\widehat{\mu}(\pi) = (1-p^{-1})^{-1} \widehat{\mu}(\cF_\xi),
\end{equation}
since $\mathcal{L}_\pi(1)= (1-p^{-1})$ is constant on $\cF_{\xi}$ (recall \eqref{Appidef} for the definition). By the Plancherel formula, \eqref{sec:7.3fp(1)} and \eqref{f1p=2_formula},  
\begin{equation}\label{diagwt:SC}  \delta_p = (1-p^{-1})^{-1} f_{\xi}(1)  = \begin{cases} 
p^{c_0} & \text{ if } p \neq 2 \text{ and } d=0,\\
\nu( p^{c_0+1}) & \text{ if } p \neq 2 \text{ and } d=1,\\
\nu( p^{c_0+1}) & \text{ if } p = 2 \text{ and } d\neq 3,\\
\nu( p^{c_0+2}) & \text{ if } p = 2 \text{ and } d= 3 . \end{cases}
\end{equation}

\subsubsection{Local Generalized Kloosterman Sums}\label{Hmnc:SC} A natural expression for the local Kloosterman sum $H_p(m,n;p^k)$ was given formally in Theorem \ref{KlSumThm}. Briefly, we have that
\begin{equation}\label{sec7_kl_sum_formula}
H_p(m , n; p^k)= \begin{cases} \delta_p \overline{\gamma} p^{-\frac{d}{2}}\sum_{\substack{u \in (\cO_E/p^k \cO_E)^\times \\ \Nm(u) \equiv mn \shortmod{p^k}}} \xi(u) \psi\left( - \frac{\Tr( u)}{p^k}\right) & \text{ if } k \geq \lceil c(\sigma)/2 \rceil \text{ and } (mn,p)=1, \\ 
0 & \text{ otherwise.} \end{cases}
\end{equation}
See the beginning of Section \ref{sec:SCKloostermanSum} for an explanation of the notation in \eqref{sec7_kl_sum_formula} and some relevant remarks.

\subsubsection{Local Geometric Conductor}\label{kp:SC}
By Lemma \ref{ccondition} and the Definition \eqref{sec7_fxi_def}, we have \begin{equation}\label{kp_SC_formula} k_p \geq  \begin{cases} c_0 & \text{ if } d=0, \\ c_0+1 & \text{ if } d=1 \text{ or } 2, \\ c_0+2 & \text{ if } d=3.\end{cases}\end{equation}

In fact, the inequality is sharp. We can check this when $p\neq 2$ as follows. Suppose first that $d=0$, i.e.\ $c(\sigma)$ is even. Then applying \cite[Prop.\ 3.1(iii)]{Hu:17a} and Lemma \ref{MxCoeffFurther} with $i=c_0=c(\sigma)/2$, we see that $\Phi\left( \big( \begin{smallmatrix} a & m \\ 0 & 1 \end{smallmatrix}\big) \big( \begin{smallmatrix} 1 & \\ p^{c_0} & 1 \end{smallmatrix} \big)\right) \neq 0$ for some $a \in \cO^\times$ and some $m \in F$ with $v(m)=-c_0$. By the left-$A(\cO)$-invariance of $\Phi$, we have that $\Phi\left( \big( \begin{smallmatrix} 1 & n \\ 0 & 1 \end{smallmatrix}\big) \big( \begin{smallmatrix} 1 & \\ p^{c_0} & 1 \end{smallmatrix} \big)\right) \neq 0$ for some $n \in F$ with $v(n)=-c_0$.  Then Lemma \ref{lem:admmodulus} applies with $c=p^{c_0}$ and $g=  \left(\begin{smallmatrix} 1+np^{c_0} & np^{c_0} \\ 1 & 1 \end{smallmatrix}\right)$ (also using Lemmas \ref{*defect} and \ref{moduli}), so that $k_p \leq c_0$ thus $k_p=c_0$. 

Now suppose that $d=1$, i.e.\ $c(\sigma)$ is odd. Then we apply \cite[Prop.\ 3.1(i),(ii)]{Hu:17a} with $i=c_0+1=\frac{c(\sigma)+1}{2}$, obtaining in similar fashion to the $c(\sigma)$ even case above that $\Phi\left( \big( \begin{smallmatrix} 1 & n \\ 0 & 1 \end{smallmatrix}\big) \big( \begin{smallmatrix} 1 & \\ p^{c_0+1} & 1 \end{smallmatrix} \big)\right) \neq 0$ for some $n \in F$ with $v(n)=-c_0$. Thus Lemma \ref{lem:admmodulus} applies with $c= p^{c_0+1}$ and $g=  \left(\begin{smallmatrix} 1+np^{c_0+1} & np^{c_0+1} \\ 1 & 1 \end{smallmatrix}\right)$ (also using Lemmas \ref{*defect} and \ref{moduli}), so that $k_p \leq c_0+1$ thus $k_p=c_0+1$. 

\subsubsection{Hypotheses from Section \ref{sec:intro:LSI}}\label{hypfrom_intro:SC}

Next we compute the Fourier/Mellin transform of the supercuspidal Kloosterman sum. Recall from \eqref{intro:HhatLocalDef} that 
\begin{equation}\label{examples:HhatLocalDef}
\widehat{H}_p(\overline{\chi},k) :=  \frac{1 }{\varphi(p^k)} \sumstar_{m \shortmod{p^k}} H_p(m,1;p^k)\chi(m) =  \zeta_p(1)\int_{ \cO^\times} H_p(m,1;p^k) \chi(m) \,dm.
\end{equation}

\begin{myprop}\label{MellinXform}
If $k<\max(c(\chi),c(\sigma)/2)$, then $\widehat{H}_p(\overline{\chi},k)=0$. If $ k \geq \max(c(\chi), c(\sigma)/2)$, then 
$$\widehat{H}_p(\overline{\chi},k) =\delta_p (1-p^{-1})^{-1}\overline{\gamma} \frac{p^{k- \frac{d}{2}}}{\zeta_\fp(1)} \int\limits_{ \cO_E^\times}\chi_E\xi(x)\psi_E(-x p^{-k})\,d^\times x,$$
where $(E/F,\xi)$ is as in Theorem \ref{KlSumThm}, $\chi_E = \chi \circ \Nm$ and $\psi_E = \psi \circ \Tr$. In particular, $\widehat{H}_p(\overline{\chi},k) \neq 0$ if and only if $c(\chi_E \xi) = ek-d$, and in this case $|\widehat{H}_p(\overline{\chi},k)| =(1-p^{-1})^{-2}f_\xi(1) =(1-p^{-1})^{-1} \delta_p$. 
\end{myprop}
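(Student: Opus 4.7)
The plan is to split the proof according to the relative sizes of $k$, $c(\chi)$, and $c(\sigma)/2$ and handle each case by direct substitution from Theorem \ref{KlSumThm} together with standard Gauss-integral analysis.

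First, the two vanishing regimes are easy. If $k < \lceil c(\sigma)/2\rceil$, then Theorem \ref{KlSumThm} gives $H_p(m,1;p^k) \equiv 0$, so $\widehat{H}_p(\overline{\chi},k) = 0$. If instead $k < c(\chi)$, I would use the integral formulation in \eqref{examples:HhatLocalDef}: decompose $\cO^\times$ into cosets $m_0(1+p^k\cO)$. The $N(\Z_p)$-invariance of $f_\xi$ (from Lemma \ref{specControlonH} or Theorem \ref{thmKP}(2)) implies that $H_p(\cdot,1;p^k)$ is constant on such cosets, so
\begin{equation*}
\widehat{H}_p(\overline{\chi},k) = \sum_{m_0 \in (\cO/p^k\cO)^\times} H_p(m_0,1;p^k)\chi(m_0) \int_{1+p^k\cO}\chi(u)\,du,
\end{equation*}
and the inner integral vanishes since $\chi\vert_{1+p^k\cO}$ is nontrivial when $c(\chi) > k$.

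For $k \geq \max(\lceil c(\sigma)/2\rceil, c(\chi))$, I plan to substitute the formula for $H_p(m,1;p^k)$ from Theorem \ref{KlSumThm}, swap the order of summation between $m$ and $u$, and use the identity $\chi(\Nm u) = \chi_E(u)$ to collapse the $m$-sum. This yields
\begin{equation*}
\widehat{H}_p(\overline{\chi},k) = \frac{\delta_p\overline{\gamma} p^{-d/2}}{\varphi(p^k)} \sum_{u \in (\cO_E/p^k\cO_E)^\times} (\chi_E\xi)(u)\psi_E(-u/p^k).
\end{equation*}
Converting this sum to an integral over $\cO_E^\times$ against the multiplicative Haar measure (using the same measure normalization for $E^\times$ that is implicitly fixed in the proof of Theorem \ref{KlSumThm}, so that the conversion factor between finite sums and multiplicative integrals combines with $\varphi(p^k)^{-1}$ to produce the constant $p^{k-d/2}/\zeta_\fp(1)$) gives the stated formula.

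For the non-vanishing criterion and the absolute value, set
$G := \int_{\cO_E^\times} (\chi_E\xi)(x)\,\psi_E(-xp^{-k})\,d^\times x$. The additive character $x \mapsto \psi_E(-xp^{-k})$ has conductor $c(\psi_E) + ek = ek - d$, using $c(\psi_E) = -d$ from \eqref{add_char_cond}. A standard Tate-type Gauss integral analysis for $E$ (analogous to \cite[Lem.\ 7.1]{PetrowYoungCoset} but over $E$) shows that $G$ vanishes unless the multiplicative conductor matches, i.e.\ $c(\chi_E\xi) = ek-d$, and that when this holds $|G| = \zeta_\fp(1)p^{-(k-d/2)}$, yielding $|\widehat{H}_p(\overline{\chi},k)| = \delta_p$ after combining with the prefactor. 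The main technical hurdle will be carefully reconciling the normalization of Haar measure on $E^\times$ used here with the one implicit in the proof of Theorem \ref{KlSumThm} (tracking the factors $\zeta_\fp(1)$ versus $\zeta_E(1)$ in the additive-to-multiplicative conversion), and in the $p=2$ unramified case adapting the argument via Remark \ref{KlSumThm_rem} so that the $u$-sum is restricted to $U_E(1)/U_E(k)$ without changing the final answer.
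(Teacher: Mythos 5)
Your proposal is correct and follows essentially the same route as the paper: the two vanishing regimes come from Theorem \ref{KlSumThm} and the $p^k$-periodicity of $H_p$, and in the main range one substitutes the supercuspidal Kloosterman formula, swaps the $m$- and $u$-sums, converts to a Gauss sum/integral over $E$, and reads off the formula, the nonvanishing criterion and the absolute value from the standard conductor-matching evaluation (the paper cites \cite[Lem.\ 2.3]{CorbettSaha} where you invoke a Tate-type analysis over $E$). The one small point to add is the paper's observation that regularity of $\xi$ forces $\chi_E\xi$ to be nontrivial on $\cO_E^\times$, which rules out the unramified-character, depth-one exception (the Ramanujan-sum case appearing in \cite[Lem.\ 7.1]{PetrowYoungCoset}) that would otherwise spoil the clean claim that the Gauss integral vanishes unless $c(\chi_E\xi)=ek-d$.
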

\begin{proof}
If $2k<c(\sigma)$ then $H_p(y,1;p^k)$ vanishes identically, so $\widehat{H}_p(\overline{\chi},k)$ does as well. If $c(\chi)>k$ then $\widehat{H}_p(\overline{\chi},k)$ vanishes identically by the $p^k$-periodicity of $H_p$. This is the first assertion. 

It remains to consider the case that $c(\chi) \leq k$ and $2k\geq c(\sigma)$. Under these assumptions, $$\widehat{H}_p(\overline{\chi},k) = \overline{\gamma} (1-p^{-1})^{-2}f_\xi(1) p^{2k- \frac{d}{2}}  \int_{\cO^\times} \chi(m) \int\limits_{\Nm(x)\equiv m \shortmod{p^{k}}}\xi(x)\psi(-p^{-k}\Tr(x))\,dx\, dm.$$ Swapping order of integration gives $$\widehat{H}_p(\overline{\chi},k) = \overline{\gamma} (1-p^{-1})^{-2}f_\xi(1) p^{k- \frac{d}{2}} \int_{\cO_E^\times}\chi_E\xi(x)\psi(-p^{-k}\Tr(x))\,dx,$$ which is a Gauss sum over $E$. Switching from additive to multiplicative Haar measure shows the 2nd assertion of the proposition. 

For the third assertion, it suffices to evaluate the Gauss sum, and such evaluations for Gauss sums over non-archimedean local fields are well-known. Note that since $\xi$ is regular, we have that $(\chi_E \xi)^\sigma \neq \chi_E \xi$, so that this character is non-trivial on $\cO_E^\times$. Then, by e.g.\ \cite[Lem.\ 2.3]{CorbettSaha} the Fourier/Mellin transform $\widehat{H}_p(\overline{\chi},k) $ is non-vanishing if and only if $c(\chi_E \xi) = ek-d$ and in this case 
\begin{equation}\widehat{H}_p(\overline{\chi},k) = \overline{\gamma}(1-p^{-1})^{-2}f_{\xi}(1)  \epsilon(1/2, (\chi_E\xi)^{-1}, \psi'_E)(\chi_E\xi)^{-1}(-1),\end{equation}
where $\psi'_E$ is the additive character of conductor 0 defined by $\psi'_E: x \mapsto \psi_E(\varpi_E^{d} x)$  and $\epsilon(1/2, (\chi_E\xi)^{-1}, \psi'_E)$ is the root number associated to $(\chi_E\xi)^{-1}$ and $\psi'_E$. We have in particular that 
\begin{equation}\label{MellinTransformModulus}|\widehat{H}_p(\overline{\chi},k)| = (1-p^{-1})^{-2}f_\xi(1) \delta_{c(\chi_E \xi) = ek-d}.\end{equation}
\end{proof}

Perhaps in practice it is useful to look at \eqref{MellinTransformModulus} in cases depending on $k,c(\sigma)$ and $c(\chi)$. 
If $k>\max(c(\chi),c(\sigma)/2)$, then $$c(\chi_E \xi) \leq \max(c(\chi_E),c(\xi)) \leq \max(\psi_{E/F}(c(\chi))-(e-1), \frac{e}{2} (c(\sigma)-d))<ek-d,$$ where $\psi_{E/F}$ is the Hasse-Herbrand function (see \cite[Ch.\ V]{SerreLocalFields}), so that $\widehat{H}_p(\overline{\chi},k)=0$.  
If $2k>c(\sigma)$ and $c(\chi)=k>d$, then $c(\chi_E \xi) = c(\chi_E) =ek-d$ by loc.\ cit.\ Corollary 3, so $\widehat{H}_p(\overline{\chi},k) \neq 0$. 
If $2k= c(\sigma)$ and $c(\chi)<k$, then $c(\chi_E)\leq ec(\chi)-d<ek-d= c(\xi)$, so $c(\chi_E\xi) = c(\xi)= ek-d$, so $\widehat{H}_p(\overline{\chi},k) \neq 0$. 
If $2k=c(\sigma)$ and $c(\chi)=k>d$, then $c(\xi) = ek-d= c(\chi_E)$ loc.\ cit.\ Corollary 3, so whether $\widehat{H}_p(\overline{\chi},k) = 0$ or not depends on whether the conductor of $\chi_E \xi$ drops or not. 

In particular, the last assertion of Proposition \ref{MellinXform} shows that Hypothesis \ref{hypFTB} (FTB) of Section \ref{sec:intro:LSI} holds for $f_p =  f_{\xi}$. 

From the above case analysis of Proposition \ref{MellinXform}, one can quickly check that the inequality in \eqref{kp_SC_formula} is in fact an equality. 
Therefore, Hypothesis \ref{hypCvF} (CvF) of Section \ref{sec:intro:LSI} holds locally for $f_{\xi}$, since we may check that $p^{k_p} \geq f_\xi(1)$ by comparing e.g.\ \eqref{diagwt:SC} and \eqref{kp_SC_formula}. 

\subsection{Neighborhood of a supercuspidal representation}\label{sec:neighborhood_of_a_SC}
Let $\sigma$ be a trivial central character dihedral supercuspidal representation corresponding to a pair $(E/F,\xi)$ as in Section \ref{sec:examples_supercuspidal}. For $0\leq n < c(\xi)$, recall \eqref{xi_n_def} the neighborhood $\xi[n]$ of characters around $\xi$, and for $0\leq a \leq n$ the equivalence relation $\sim_a$ on $\xi[n]$.

Write $\xi'$ for a twist-minimal character of $E^\times$ for which there exists a character $\chi$ of $F^\times$ with $\xi = \xi' \chi_E$, following Section \ref{sec:egSupercuspidal_even}. Recall that if $p \neq 2$ or $d=3$, then we may take $\xi'=\xi$ and if $p=2$ and $d=0$ or $2$, then we have that $c(\xi')=c(\xi)-1$, see Table \ref{table:xiprime}.

Now, set $a=1$ if the $E$ on which $\xi$ is defined is the unramified quadratic extension of $\Q_2$ and $a=0$ otherwise. 
Suppose that $a\leq n <c(\xi')$, so that no $\xi_1 \in \xi[n]$ is of the form $\chi_E$ for some character $\chi$ of $F^\times$. That is to say, all $\xi_1 \in \xi[n]$ are regular in the sense of Section \ref{sec:parametrization_dihedrals}.
Let $f_{\xi,n}\in \cF_{\rm fin}$ be defined by 
\begin{equation}\label{fxin_def}f_{\xi,n} = \sum_{\xi_1 \in \xi[n]/\sim_a} f_{\xi_1},\end{equation}
where $f_\xi$ is the supercuspidal projection operator defined in \eqref{sec7_fxi_def}.

The test function $f_{\xi,n}$ clearly is a newform projector because each $f_\xi$ is a newform projector. Moreover, since each $\xi_1 \in \xi[n]$ is defined over the same field as $\xi$ and has $c(\xi_1)=c(\xi)$, it follows from the definition of $f_\xi$ that $f_{\xi,n}$ satisfies the geometric assumptions as well.

Clearly,
$$ \cF_p(f_{\xi,n}) =
 i(  \xi[n]/\sim_a) ,$$ where $i$ is the LLC parametrization map of Section \ref{sec:parametrization_dihedrals}. 
 Since all $\pi \in \cF_p(f_{\xi,n}) $ have the same conductor exponent, the diagonal weight \eqref{eq:deltapnewformproj} is given by 
 \begin{equation}\label{diagwt:nbhd_of_a_SC}\delta_p = [\xi[n]:\xi[a]] \zeta_p(1) f_\xi(1).\end{equation} An explicit formula for $\zeta_p(1) f_\xi(1)$ was given in \eqref{diagwt:SC}. 
 
The local generalized Kloosterman sums corresponding to $f_{\xi,n}$ are computed by combining Theorem \ref{KlSumThm}, and Propositions \ref{intermediate_family_mainprop} and \ref{intermediate_family_mainprop2}. Writing $H_{\xi,p}(m_1,m_2;c)$ for the generalized Kloosterman sum attached to $\xi$ as in \eqref{Hmnc:SC}, the result is that
\begin{equation}\label{intermediate_family_sc_sum_section7}H_p(m_1,m_2;p^k) = \begin{cases}[\xi[n]:\xi[a]]H_{\xi,p}(m_1,m_2,p^k) & \text{ if } k\geq c_0+\lceil d/2 \rceil -a + \lfloor \frac{n}{e}\rfloor, \\ 
0 & \text{ if } k< c_0+\lceil d/2 \rceil -a + \lfloor \frac{n}{e}\rfloor.\end{cases}\end{equation} In particular, by referring to the results of Sections \ref{kp:SC} and \ref{hypfrom_intro:SC} we obtain that the local geometric conductor of $f_{\xi,n}$ is 
\begin{equation}\label{kp_nbhd_of_a_SC_formula} k_p = c_0+\lceil d/2 \rceil -a + \lfloor \frac{n}{e}\rfloor.\end{equation}
With \eqref{intermediate_family_sc_sum_section7} in hand, the details of the Fourier/Mellin transform of $H_p(m,1,p^k)$ can be read off directly from Section \ref{hypfrom_intro:SC}. In particular, the local version \eqref{hyp_FTB_local_version} of Hypothesis \ref{hypFTB} (FTB) is merely  that of $f_\xi$ times $[\xi[n]:\xi[a]]$ on both sides. Meanwhile, the local version \eqref{hypCvF_local_version} of Hypothesis \ref{hypCvF} (CvF) follows from \eqref{kp_nbhd_of_a_SC_formula}, \eqref{fxin_def}, \eqref{diagwt:SC}, and Remark \ref{size_thetaell_rem}. 

\subsection{Representations of a given conductor exponent $\geq 3$}\label{sec:f=c}
Let $c\geq 3$ and recall the definition of $K_0(m,n)$ from \eqref{K0mn_def}. Set 
$$ f_{m,n} = \frac{1}{\vol( Z \backslash ZK_{0}(m,n))} 1_{ZK_0(m,n)},$$ and define
\begin{equation}\label{f=c_def} f_{=c} = f_{c,0} - f_{c,-1}- f_{c-1,0} + f_{c-1,-1} .\end{equation}
Then, by \cite[Cor.\ 5]{Nelson_analytic_isolation}, the test function $f$ is a newform projector onto irreducible generic representations $\pi$ with $c=c(\pi)$. It clearly satisfies the geometric assumptions. 

The test function $f_{=c}$ has support controlled by $y=p^{c-1}$, so that by Lemma \ref{ccondition}, we have $k_p\geq c-1$. On the other hand, applying Lemma \ref{lem:admmodulus} with $N=M=p^c$ and $g= \left( \begin{smallmatrix} 1+p^{c-2} & p^{c-2} \\ 1 & 1 \end{smallmatrix}\right)$ shows that $p^{c-1} \in \cC(\cF(f_{=c}))$ is an admissible modulus. Thus $k_p=c-1$. 

The local generalized Kloosterman sums assocated to $f_{=c}$ can be deduced from \cite[(4)]{Nelson_analytic_isolation}. See Section \ref{sec:holomorphicunrefined} for our definition of Fourier coefficients and the Petersson formula, and \eqref{Petersson_Normalization} for our normalization of Petersson inner products. One finds 
\begin{equation}
H_p(m,n;p^k) = \sum_{d|(m,n,p^c)} \mu(d) d^{2} \sum_{e|p^c} \mu(e) \nu \Big( \frac{p^c}{de}\Big) \sum_{\substack{r \equiv 0 \shortmod{p^c/de} \\ dr=p^k}} S\Big(\frac{m}{d},\frac{n}{d}; r\Big).
\end{equation}
In particular, if $p \nmid (m,n)$, then 
\begin{equation} 
H_p(m,n,p^k) = \begin{cases} 0 & \text{ if } k\leq c-2, \\ 
- \nu(p^{c-1}) S(m,n;p^k) & \text{ if } k=c-1, \\
p^c(1-\frac{1}{p^2})S(m,n;p^k) & \text{ if } k\geq c.\end{cases}
\end{equation}
Now, the local Fourier/Mellin transform (see \eqref{intro:HhatLocalDef}) of $H_p(m,n;p^k)$  is given by 
\begin{equation}
\widehat{H}_p(\overline{\chi},k) = \begin{cases} 0 & \text{ if } k\leq c-2, \\ 
- \nu(p^{c-1}) \frac{\tau(\chi)^2}{\varphi(p^k)} & \text{ if } k=c-1, \\
p^c(1-\frac{1}{p^2})\frac{\tau(\chi)^2}{\varphi(p^k)} & \text{ if } k\geq c,\end{cases}
\end{equation}
using e.g.\ \eqref{Hhatp_def_examplessec}. Meanwhile, from the definition of $f_{=c}$ we have
\begin{equation}
f_{=c}(1) = p^c \left(1-\frac{1}{p^2}\right)\left(1- \frac{1}{p}\right),
\end{equation}
so that $|\widehat{H}_p(\chi,k)|\leq 4f_{=c}(1)$ for all $p, \chi, k$, i.e.\ the local version \eqref{hyp_FTB_local_version} of Hypothesis \ref{hypFTB} (FTB) holds for $f_{=c}(1)$. 

On the other hand, we have
$$\frac{p^{k_p}}{f_{=c}(1)} =\frac{1}{p \left(1-\frac{1}{p^2}\right)\left(1- \frac{1}{p}\right)},$$ so that 
local version \eqref{hypCvF_local_version} of Hypothesis \ref{hypCvF} (CvF) fails in ``horizontal'' ($p \to \infty$) aspect for $f_{=c}$.

\section{Statements}
\subsection{Conflicts of Interest}
Conflicts of interest: none.

\subsection{Financial Support}
Y.H.\ is supported by the National Key Research and Development Program of China (No.\ 2021YFA1000700). I.P.'s work on this paper is supported by the Engineering and Physical Sciences Research Council award EP/W009838/1.
This material is based upon work supported by the National Science Foundation under agreement No.\
DMS-2302210 (M.Y.). Any opinions, findings and conclusions or recommendations expressed in this material
are those of the authors and do not necessarily reflect the views of the National Science Foundation.

\newcommand{\etalchar}[1]{$^{#1}$}

\end{document}